\newtheorem{theorem}{Theorem}
\newtheorem{corollary}[theorem]{Corollary}
\newtheorem{lemma}[theorem]{Lemma}
\newtheorem{proposition}[theorem]{Proposition}
\newtheorem{remark}[theorem]{Remark}
\newtheorem{definition}[theorem]{Definition}
\numberwithin{theorem}{subsection}
\numberwithin{figure}{subsection}
\numberwithin{equation}{subsection}
\DeclareMathOperator{\CR}{CR}
\DeclareMathOperator{\cov}{cov}
\DeclareMathOperator{\dist}{dist}
\DeclareMathOperator{\SLE}{SLE}
\DeclareMathOperator{\CLE}{CLE}
\DeclareMathOperator{\GFF}{GFF}
\DeclareMathOperator{\inte}{int}
\DeclareMathOperator{\dimH}{dim}
\begin{document}

\begin{frontmatter}

\title{Level Lines of Gaussian Free Field I: Zero-Boundary GFF}
\runtitle{Level Lines of GFF I}

% indicate corresponding author with \corref{}
% \author{\fnms{John} \snm{Smith}\corref{}\ead[label=e1]{smith@foo.com}\thanksref{t1}}
% \thankstext{t1}{Thanks to somebody} 
% \address{line 1\\ line 2\\ printead{e1}}
% \affiliation{Some University}

\author{\fnms{Menglu} \snm{Wang}\ead[label=e1]{mengluw@math.mit.edu}}
\address{Department of Mathematics, Massachusetts Institute of Technology \\\printead{e1}}
%\affiliation{Department of Mathematics, Massachusetts Institute of Technology\printead{e1}}
\and
\author{\fnms{Hao} \snm{Wu}\ead[label=e2]{hao.wu.proba@gmail.com}}
\address{NCCR/SwissMAP, Geneva University, Switzerland \textit{and} Yau Mathematical Sciences Center, Tsinghua University, China\\\printead{e2}}
%\affiliation{NCCR/SwissMAP, Geneva University, Switzerland \textit{and} Yau Mathematical Sciences Center, Tsinghua University, China\printead{e2}}

%\runauthor{???}

\begin{abstract}
We study level lines of Gaussian Free Field $h$ emanating from boundary points. The article has two parts.  In the first part, we show that the level lines are random continuous curves which are variants of $\SLE_4$ path.
We show that the level lines with different heights satisfy the same monotonicity behavior as the level lines of smooth functions.
We prove that the time-reversal of the level line coincides with the level line of $-h$. This implies that the time-reversal of $\SLE_4(\underline{\rho})$ process is still an $\SLE_4(\underline{\rho})$ process. We prove that the level lines satisfy ``target-independent" property. In the second part, we discuss the relation between Gaussian Free Field and Conformal Loop Ensemble (CLE). A CLE is a collection of disjoint $\SLE$-loops. Since the level lines of $\GFF$ are $\SLE_4$ paths, the collection of level loops of $\GFF$ corresponds to $\CLE_4$. We study the coupling between $\GFF$ and $\CLE_4$ with time parameter which sheds lights on the conformal invariant metric on $\CLE_4$.
\end{abstract}

\begin{keyword}[class=MSC]
\kwd[Primary ]{60G60}
%\kwd{}
\kwd[; secondary ]{60J67}
\end{keyword}

\begin{keyword}
\kwd{Gaussian Free Field}
\kwd{Schramm Loewner Evolution}
\kwd{Conformal Loop Ensemble}
\end{keyword}

\end{frontmatter}

\newcommand{\eps}{\epsilon}
\newcommand{\ov}{\overline}
\newcommand{\U}{\mathbb{U}}
\newcommand{\T}{\mathbb{T}}
\newcommand{\HH}{\mathbb{H}}
\newcommand{\LA}{\mathcal{A}}
\newcommand{\LC}{\mathcal{C}}
\newcommand{\LF}{\mathcal{F}}
\newcommand{\LK}{\mathcal{K}}
\newcommand{\LE}{\mathcal{E}}
\newcommand{\LL}{\mathcal{L}}
\newcommand{\LU}{\mathcal{U}}
\newcommand{\LV}{\mathcal{V}}
\newcommand{\LZ}{\mathcal{Z}}
\newcommand{\R}{\mathbb{R}}
\newcommand{\C}{\mathbb{C}}

\newcommand{\N}{\mathbb{N}}
\newcommand{\Z}{\mathbb{Z}}
\newcommand{\E}{\mathbb{E}}
\newcommand{\PP}{\mathbb{P}}
\newcommand{\QQ}{\mathbb{Q}}
\newcommand{\A}{\mathbb{A}}
\newcommand{\bn}{\mathbf{n}}
\newcommand{\cond}{\,|\,}
\newcommand{\la}{\langle}
\newcommand{\ra}{\rangle}
\newcommand{\tree}{\Upsilon}
\section{Introduction}
The two dimensional Gaussian Free Field (GFF) is a natural time analog of Brownian motion \cite{SheffieldGFFMath} that has been extensively used as a basic building block in Conformal Field Theories. Like Brownian motion, it plays an important role in statistical physics, random surfaces, and quantum field theory. The geometry of the two-dimensional GFF---the fact that one can describe geometric lines in this very irregular distribution---has been discovered recently \cite{DubedatSLEFreeField, SchrammSheffieldDiscreteGFF, SchrammSheffieldContinuumGFF, MillerSheffieldIG1}, and led to a number of recent developments. The GFF also corresponds to the scaling limit of discrete models, for instance, the height function of dimer models \cite{KenyonDimer}.
In the current paper, we focus on the level lines of GFF in the upper half plane $\HH$. This is the first in a two-paper series that also includes \cite{WangWuLevellinesGFFII}. The latter paper will study the level lines of GFF in the whole-plane. Before we talk about the level lines of GFF, we need to introduce two other important random planar objects: Schramm Loewner Evolution (SLE) and Conformal Loop Ensemble (CLE).

Oded Schramm's SLE was introduced to understand the scaling limits of discrete models \cite{SchrammFirstSLE}. A chordal SLE is a random non-self-traversing curve in simply connected domains joining two distinct boundary points. It is the only one-parameter family of random curves (usually indexed by a positive real number $\kappa$) that satisfies the conformal invariance and domain Markov property (the precise meaning is recalled in Section \ref{subsec::chordal_sle}). Since its introduction, SLE curves have been proved to be the scaling limits of many discrete models, for instance, $\SLE_2$ is the scaling limit of loop-erased random walk \cite{LawlerSchrammWernerLERWUST}, $\SLE_3$ is the scaling limit of the interface in critical Ising model \cite{ChelkakSmirnovIsing, CDCHKSConvergenceIsingSLE}, $\SLE_4$ is the scaling limit of the level line of discrete GFF \cite{SchrammSheffieldDiscreteGFF}.

CLE was introduced when one tries to understand the scaling limit of the ``entire" discrete models (in contrast with one interface which turns out to be SLE curves). A simple CLE \cite{SheffieldExplorationTree, SheffieldWernerCLE} is a random countable collection of disjoint simple loops in simply connected domains (non-empty, other than $\C$) that are non-nested. It is the only one-parameter family of random collection of loops that satisfies the conformal invariance and domain Markov property (the precise meaning is recalled in Section \ref{subsec::cle}). In \cite{SheffieldWernerCLE}, the authors prove that each loop in simple CLE is an $\SLE_{\kappa}$-type loop for $\kappa\in (8/3,4]$.

In \cite{SchrammSheffieldContinuumGFF}, the authors show that, for a special constant $\lambda=\pi/2$, if boundary conditions of the GFF are set to be $+\lambda$ on $\R_+$ and $-\lambda$ on $\R_-$, then one can make sense of the zero level line of the GFF whose law is chordal SLE$_4$; furthermore, the zero level line is a path-valued function of the field. Therefore, we say that $\SLE_4$ is the level line of GFF. 
The current paper has two parts. In the first part, we generalize this method to introduce the level lines of GFF whose boundary value is piecewise constant: Theorems \ref{thm::boundary_levelline_gff_coupling} and \ref{thm::boundary_levelline_gff_deterministic}. We show that the level lines of GFF are continuous curves: Theorem \ref{thm::sle_chordal_continuity_transience}. We explain the interaction between two level lines: Theorem \ref{thm::boundary_levelline_gff_interacting}. We show that the time-reversal of the level line of GFF $h$ is the level line of $-h$: Theorem \ref{thm::sle_chordal_reversibility}.
We prove the ``target-independent" property of the level lines of GFF: Theorem \ref{thm::boundary_levellines_targetindependence}. In a series of papers \cite{MillerSheffieldIG1, MillerSheffieldIG2, MillerSheffieldIG3, MillerSheffieldIG4} by Miller and Sheffield, they study the flow lines and the counterflow lines of GFF. These curves in GFF correspond to $\SLE_{\kappa}$ for $\kappa\in (0,4)\cup(4,\infty)$. The first part of the  current paper study the properties of level lines of GFF, and this part can be viewed as a make up for \cite{MillerSheffieldIG1, MillerSheffieldIG2, MillerSheffieldIG3} for $\kappa=4$.  The relation between the current paper and these papers is discussed in detail in Section \ref{subsec::relation_previous_works}.

In the second part of the paper, we discuss the relation between GFF and $\CLE_4$. Since the level lines of $\GFF$ are $\SLE_4$ paths, the collection of level loops of $\GFF$ corresponds to $\CLE_4$. 
In \cite{WernerWuCLEExploration}, the authors introduce a conformally invariant growing mechanism of loops and it leads to a conformal invariant way to describe distances between loops in $\CLE$. 
We show that there exists a coupling between $\GFF$ and this conformally invariant growing process of $\CLE_4$. In this coupling, the loops in $\CLE_4$ corresponds to a certain collection of level loops of $\GFF$, and the structure of $\GFF$ gives in turn better understanding of this growing process and sheds lights on the conformal invariant metric between loops in $\CLE_4$.  

\subsection{Boundary emanating level lines of $\GFF$}
For convenience and concreteness, we state our results in the upper half plane $\HH$. Recall that $\SLE_{\kappa}$ is the random curve satisfying the conformal invariance and domain Markov property. Oded Schramm found that the Loewner evolution is suitable to describe the domain Markov property; and $\SLE$ curves can also be defined through Loewner evolution. 
Suppose that $\gamma$ is a continuous curve in $\HH$ starting from 0 targeted at $\infty$ (parameterized appropriately), let $g_t$ be the conformal map from $\HH\setminus\gamma[0,t]$ onto $\HH$ such that $\lim_{z\to\infty}|g_t(z)-z|=0$. Then the family $(g_t,t\ge 0)$ satisfies the Loewner evolution
\[\partial_t g_t(z)=\frac{2}{g_t(z)-W_t},\]
where $W_t$ is the image of the tip of the curve $\gamma(t)$ under $g_t$. In fact, the curve $\gamma$ is determined by the process $W$; and we also say that $\gamma$ is the Loewner chain driven by $W$. Chordal $\SLE_{\kappa}$ is the Loewner chain driven by $W=\sqrt{\kappa}B$ where $B$ is a one-dimensional Brownian motion. 

More generally, an $\SLE_{\kappa}(\underline{\rho})$ process is a variant of $\SLE_{\kappa}$ where one keeps track of multiple additional points, which are called force points. Suppose that $\underline{x}^L=(x^{l,L}<\cdots<x^{1,L}\le 0)$ and $\underline{x}^R=(0\le x^{1,R}<\cdots<x^{r,R})$ are the force points of which we want to keep track, where the superscripts $L,R$ mean ``left" and ``right" respectively. Associate with each force point $x^{i,q}$, for $q\in\{L,R\}$, a weight $\rho^{i,q}\in\R$.
We denote by $\underline{\rho}=(\underline{\rho}^L;\underline{\rho}^R)$ the vector of weights.
An $\SLE_{\kappa}(\underline{\rho})$ process is a variant of $\SLE_{\kappa}$ process which can be well-defined up until the ``continuation threshold".
It is a measure on continuously growing compact hulls $K_t$---compact subsets of $\overline{\HH}$ so that $\HH\setminus K_t$ is simply connected.
We will provide more discussion of $\SLE_{\kappa}(\underline{\rho})$ process in Section \ref{subsec::chordal_sle}.

\begin{theorem}\label{thm::boundary_levelline_gff_coupling}
Let $(K_t,t\ge 0)$ be the Loewner chain of the $\SLE_4(\underline{\rho}^L;\underline{\rho}^R)$ process in $\HH$ starting from 0 targeted at $\infty$ with force points $(\underline{x}^L;\underline{x}^R)$. Let $(g_t,t\ge 0)$ be the sequence of corresponding conformal maps and set $f_t=g_t-W_t$. There exists a coupling $(h,K)$ where $h$ is a zero-boundary $\GFF$ on $\HH$ such that the following is true. Suppose that $\tau$ is any finite stopping time less than the continuation threshold for $K$. Let $\eta_t^0$ be the harmonic function in $\HH$ with boundary values
\[\left\{
                 \begin{array}{ll}
                   -\lambda(1+\sum_0^j\rho^{i,L}), & \text{if } x\in [f_t(x^{j+1,L}),f_t(x^{j,L})); \\
                   \lambda(1+\sum_0^j\rho^{i,R}), & \text{if } x\in [f_t(x^{j,R}),f_t(x^{j+1,R}))
                 \end{array}
               \right.
\]
where $\rho^{0,L}=\rho^{0,R}=0$, $x^{0,L}=0^-$, $x^{l+1,L}=-\infty$, $x^{0,R}=0^+$, $x^{r+1,R}=\infty$. See Figure \ref{fig::thm_boundary_levelline_gff_coupling}. Define
\[\eta_t(z)=\eta_t^0(f_t(z)).\]
Then the conditional law of $(h+\eta_0)|_{\HH\setminus K_{\tau}}$ given $K_{\tau}$ is equal to the law of $\eta_{\tau}+h\circ f_{\tau}$.
\end{theorem}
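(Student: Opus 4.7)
The plan is to follow the coupling strategy of Schramm and Sheffield \cite{SchrammSheffieldContinuumGFF}, refined by Miller and Sheffield, specialized to $\kappa=4$ with force points. Starting from the $\SLE_4(\underline{\rho})$ Loewner chain $K_t$, I would verify that for each fixed interior point $z\in\HH$ the process $t\mapsto\eta_t(z)$ defined in the statement is a continuous local martingale up to the continuation threshold, and that the cross-variation satisfies $d\langle\eta(z),\eta(w)\rangle_t=-dG_{\HH\setminus K_t}(z,w)$, where $G$ denotes the Dirichlet Green's function. Together these two identities characterize the joint Gaussian law of $(\eta_\tau(z)-\eta_0(z))_{z\in\HH\setminus K_\tau}$ as that of the conditional expectation, given $K_\tau$, of a zero-boundary GFF on $\HH$ (with covariance equal to $G_\HH-G_{\HH\setminus K_\tau}$); adding an independent zero-boundary GFF on $\HH\setminus K_\tau$ and transporting back through $f_\tau$ then produces the coupling $(h,K)$ with the required conditional law.

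The main concrete step is a stochastic calculus computation. One first reads off from the piecewise constant boundary data the explicit harmonic formula
\[
\eta_t^0(w)=\lambda\Big(1+\sum_{i\ge 1}\rho^{i,R}\Big)-\frac{\lambda}{\pi}\Big[2\arg w+\sum_{i,q}\rho^{i,q}\arg(w-f_t(x^{i,q}))\Big],
\]
so that $\eta_t(z)=\eta_t^0(f_t(z))$. The special feature of $\kappa=4$ is that the complex It\^o formula gives $d\log f_t(z)=-dW_t/f_t(z)$, because the Loewner drift $2f_t(z)^{-2}$ is exactly cancelled by the $\tfrac{\kappa}{2}f_t(z)^{-2}$ It\^o correction. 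Using this together with the $\SLE_4(\underline{\rho})$ driving equation and taking imaginary parts, the drift of $2\,d\arg f_t(z)$ coming from the force-point contributions to $dW_t$ cancels against the drift of $\sum\rho^{i,q}\,d\arg(f_t(z)-f_t(x^{i,q}))$ (these latter terms are of finite variation since the $dW_t$'s subtract out). What remains is $d\eta_t(z)$ as a pure Brownian differential with coefficient a real multiple of $\operatorname{Im}(1/f_t(z))$. The cross-variation $d\langle\eta(z),\eta(w)\rangle_t$ is then a bilinear form in $\operatorname{Im}(1/f_t(z))$ and $\operatorname{Im}(1/f_t(w))$, which one matches with $-dG_{\HH\setminus K_t}(z,w)$ by differentiating the conformal identity $G_{\HH\setminus K_t}(z,w)=G_\HH(f_t(z),f_t(w))$ along the Loewner flow.

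Given these identities, testing against a smooth $\phi$ compactly supported in $\HH$ and applying optional stopping at the first time $K_t$ reaches the support of $\phi$ yields that $(\eta_\tau-\eta_0,\phi)$ is a centered Gaussian with variance $\int\!\!\int\phi(z)\phi(w)(G_\HH(z,w)-G_{\HH\setminus K_\tau}(z,w))\,dz\,dw$, exactly matching the variance of the harmonic projection of a zero-boundary GFF on $\HH$; the coupling then follows on that event, and sending the truncation to infinity extends it to $\tau$. The main obstacle is the quadratic variation computation: matching the bilinear It\^o form with the Hadamard derivative of the Dirichlet Green's function along the Loewner flow, with the constants lining up exactly, is the geometric heart of the coupling and is what forces both $\kappa=4$ and $\lambda=\pi/2$. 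Additional care is also needed to handle the degenerate origin conventions $x^{0,L}=0^-$, $x^{0,R}=0^+$ so that the $2\arg w$ term correctly encodes the jump at the tip, and to verify that the force-point drift cancellation is stable in the presence of several coinciding force points on each side.
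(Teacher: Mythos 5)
Your proposal follows essentially the same route as the paper's proof of Proposition \ref{prop::boundary_levellines_gff_coupling}: establish that $\eta_t(z)$ is a continuous local martingale with cross-variation $d\langle\eta(z),\eta(w)\rangle_t=-dG_{\HH\setminus K_t}(z,w)$, pass to test functions to identify the Gaussian law of $(\eta_\tau,\phi)$, and build the coupling by adding independent zero-boundary GFF's in the complementary components. Your explicit It\^o verification of the $\kappa=4$ drift cancellation is the computation the paper delegates to Proposition \ref{prop::chordal_sle4_mart}, but it is the same argument, and your Green's-function matching is precisely the paper's second and third steps.
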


\begin{figure}[ht!]
\begin{center}
\includegraphics[width=0.63\textwidth]{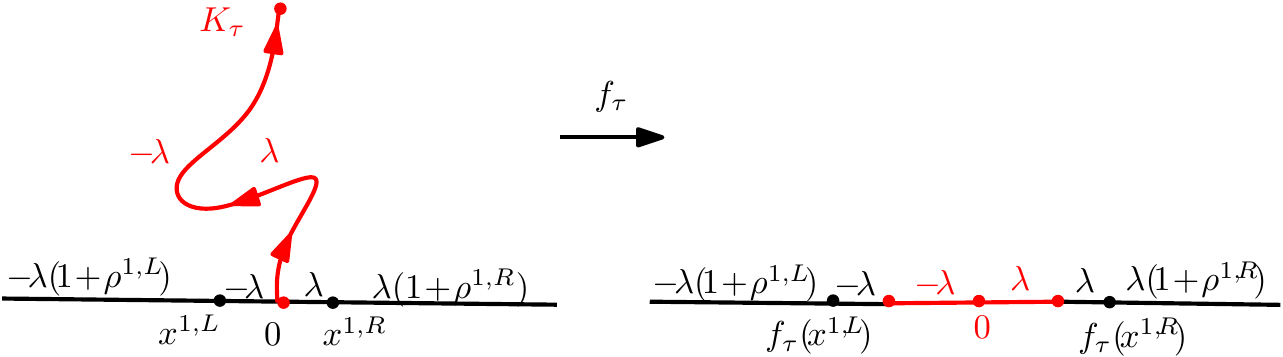}
\end{center}
\caption{\label{fig::thm_boundary_levelline_gff_coupling} The function $\eta_{\tau}^0$ in Theorem \ref{thm::boundary_levelline_gff_coupling} is the harmonic function with boundary values depicted in the right panel. The function $\eta_{\tau}=\eta_{\tau}^0(f_{\tau})$ is the harmonic function with boundary values depicted in the left panel.}
\end{figure}

\begin{theorem}\label{thm::boundary_levelline_gff_deterministic}
Suppose that $h$ is a $\GFF$ on $\HH$ and $\gamma$ is an $\SLE_4(\underline{\rho})$ process. If $(h,\gamma)$ are coupled as in Theorem \ref{thm::boundary_levelline_gff_coupling}, then $\gamma$ is almost surely determined by $h$.
\end{theorem}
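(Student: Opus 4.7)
The plan is to adapt the two-curve uniqueness argument of Dub\'edat and Schramm--Sheffield. Suppose $(h, \gamma^1)$ and $(h, \gamma^2)$ are two couplings as in Theorem~\ref{thm::boundary_levelline_gff_coupling} sharing the same field $h$; by enlarging the probability space we may assume that $\gamma^1$ and $\gamma^2$ are conditionally independent given $h$. It suffices to show $\gamma^1 = \gamma^2$ almost surely, since this forces the regular conditional distribution of $\gamma$ given $h$ to be a point mass and hence $\gamma$ to be $\sigma(h)$-measurable.

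The first step is to iterate the coupling. Fix stopping times $\tau_1, \tau_2$ strictly below the continuation thresholds of $\gamma^1, \gamma^2$. By Theorem~\ref{thm::boundary_levelline_gff_coupling} the conditional law of $h$ given $\gamma^i[0,\tau_i]$ is $\eta^i_{\tau_i}$ plus a zero-boundary GFF on $\HH \setminus K^i_{\tau_i}$. Conditional independence of $\gamma^1, \gamma^2$ given $h$, combined with the conformal covariance of the coupling, implies that given $\gamma^1[0,\tau_1]$ the path $\gamma^2$ is again coupled with the residual GFF on $\HH \setminus K^1_{\tau_1}$ via the rule of Theorem~\ref{thm::boundary_levelline_gff_coupling}, now with force points transported by $f_{\tau_1}$ and weights $\underline{\rho}$ unchanged. (The values $\mp\lambda$ created on the two sides of $\gamma^1[0,\tau_1]$ already match the values of $\eta^0$ next to $0^{\pm}$ when no force point sits at $0$, so no new force points are introduced.) Iterating, one obtains an explicit description of the conditional law of $h$ given $(\gamma^1[0,\tau_1], \gamma^2[0,\tau_2])$ as a harmonic function $\bar\eta$ on $\HH \setminus (K^1_{\tau_1} \cup K^2_{\tau_2})$ plus a zero-boundary GFF on the same domain.

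The crux is then a consistency check. By the tower property the conditional mean $\bar\eta = \E[h \mid \gamma^1[0,\tau_1], \gamma^2[0,\tau_2]]$ equals the result of applying the $\gamma^1$-coupling followed by the $\gamma^2$-coupling (and also the reverse); its boundary values on each side of each $K^i$ must be the prescribed $\mp\lambda$, corrected by force-point shifts. If $K^1_{\tau_1}$ and $K^2_{\tau_2}$ do not coincide on their overlap, a portion of (say) $\gamma^2$ lies strictly inside $\HH \setminus K^1_{\tau_1}$; along that portion the $\gamma^1$-only coupling predicts $\bar\eta$ to be smooth and harmonic, while the joint coupling predicts a $2\lambda$ jump. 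This contradiction forces $K^1_{\tau_1}$ and $K^2_{\tau_2}$ to agree wherever both are defined. Taking $\tau_1, \tau_2$ up to the continuation thresholds and using continuity of the paths (Theorem~\ref{thm::sle_chordal_continuity_transience}) yields $\gamma^1 = \gamma^2$ almost surely.

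The main obstacle is handling general piecewise-constant boundary data in the consistency step. In the pure $\SLE_4$ case (Schramm--Sheffield) the harmonic function $\bar\eta$ is determined by the sets $K^i_{\tau_i}$ via simple harmonic measure, but with force points one must carefully track how the force points of $\gamma^2$ sit relative to $K^1_{\tau_1}$ (they may be swallowed by the hull, remain on the boundary, or lie on either side of the curve), and symmetrically after applying $f_{\tau_1}$; this is precisely where the generality of piecewise-constant boundary data in Theorem~\ref{thm::boundary_levelline_gff_coupling} is essential. A secondary difficulty is taking the limit as $\tau_i$ tends to the continuation threshold, where Theorem~\ref{thm::sle_chordal_continuity_transience} is needed to rule out pathological driver behavior and to guarantee that the curves—not just their hulls—agree.
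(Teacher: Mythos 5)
Your high-level frame — two couplings with the same $h$, made conditionally independent given $h$, then argue they coincide — is the right family of ideas and is also how the paper sets things up; but the specific curves you compare and the step where you extract a contradiction both differ from the paper, and the contradiction step as written has a gap.

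The paper does \emph{not} compare two copies of the same level line started from $0$. Instead (Lemma~\ref{lem::boundary_levellines_deterministic_nonintersecting_coincide}, Proposition~\ref{prop::boundary_levellines_deterministic_nonintersecting_coincide}) it takes $\gamma$ = level line of $h$ from $0$ and $\gamma' $ = level line of $-h$ from the opposite boundary point $z_0$, conditionally independent given $h$, and shows $\gamma=\gamma'$. The mechanism is geometric: given $\gamma'[0,\tau']$, the residual field has boundary data $+\lambda$ on one side of $\gamma'$ and $-\lambda$ on the other, and the ``tip'' $\gamma'(\tau')$ is the unique place where $\gamma$ can exit $\T\setminus\gamma'[0,\tau']$ (Lemma~\ref{lem::boundary_levellines_deterministic_case2}, ultimately Proposition~\ref{prop::dubedat_lemma15}). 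Letting $\tau'$ range over a dense set and symmetrizing forces $\gamma$ and $\gamma'$ to trace the same set; conditional independence given $h$ then gives $\sigma(h)$-measurability. The extension to general $\underline{\rho}$ is a separate induction on the number of force points in Sections~\ref{subsec::boundary_levellines_deterministic_general}--\ref{subsec::boundary_levellines_continuity_general}, relying on the intersecting-case construction of Lemmas~\ref{lem::boundary_levellines_deterministic_intersecting_1}--\ref{lem::boundary_levellines_deterministic_intersecting_2} and on Proposition~\ref{prop::gff_absolutecontinuity}.

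The gap in your proposal is the ``consistency check.'' You assert that where a portion of $\gamma^2$ lies strictly inside $\HH\setminus K^1_{\tau_1}$, ``the $\gamma^1$-only coupling predicts $\bar\eta$ to be smooth and harmonic, while the joint coupling predicts a $2\lambda$ jump,'' and call this a contradiction. But $\bar\eta=\LC_{A_1\cup A_2}$ is the conditional mean given \emph{both} curves, and it is entirely consistent for it to have a $2\lambda$ jump across that piece of $\gamma^2$; the object that is harmonic throughout $\HH\setminus K^1_{\tau_1}$ is $\LC_{A_1}=\E[h\mid\gamma^1[0,\tau_1]]$, a genuinely different conditional expectation. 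There is no tower-property identity forcing $\LC_{A_1}$ and $\LC_{A_1\cup A_2}$ to agree on $\HH\setminus K^1_{\tau_1}$, so no contradiction is produced. To actually rule out $\gamma^2$ diverging from $\gamma^1$ you must argue about where $\gamma^2$ \emph{can} hit $\gamma^1$, via a forbidden-interval statement such as Proposition~\ref{prop::sle_chordal_forbidden_intervals} or Lemma~\ref{lem::levelline_forbidden_intervals}, which you do not invoke. Moreover, because $\gamma^1$ and $\gamma^2$ share the same starting point $0$, conditioning on $\gamma^1[0,\tau_1]$ leaves $\gamma^2$ starting from a prime end of the slit domain $\HH\setminus\gamma^1[0,\tau_1]$ pinched between the two $\pm\lambda$ sides of the slit; it is not automatic that the residual-field level line must track the slit to its tip, and you would need a separate argument there. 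This is exactly the awkwardness the paper sidesteps by comparing against the level line of $-h$ launched from the \emph{other} endpoint, for which the boundary data near the tip is unambiguous. Finally, the force-point induction you flag as a ``secondary difficulty'' is in fact the bulk of the paper's work for this theorem, and your proposal offers no concrete plan for it beyond noting that the force points get transported by $f_{\tau_1}$.
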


Throughout this paper, we focus on $\GFF$ with piecewise constant boundary value that changes only finitely many times, as in Theorem \ref{thm::boundary_levelline_gff_coupling}. To be concise, we will use ``piecewise constant boundary value" to indicate piecewise constant boundary value that changes only finitely many times. 

If the $\SLE_4(\underline{\rho})$ process $\gamma$ and the GFF $h$ are coupled as in Theorem \ref{thm::boundary_levelline_gff_coupling}, we call $\gamma$ the \textbf{level line} of $h+\eta_0$. Generally, for any $u\in\R$, we call $\gamma$ the level line of a GFF $h$ with height $u$ if it is the level line of $h+u$. From the coupling of $\SLE_4$ paths with GFF, we can prove the continuity of the level lines which implies the continuity of $\SLE_4(\underline{\rho})$ process.

\begin{theorem}\label{thm::sle_chordal_continuity_transience}
Suppose that $h$ is a $\GFF$ on $\HH$ whose boundary value is piecewise constant. Then the level line of $h$ is almost surely continuous up to and including the continuation threshold.

In particular, this implies the continuity and the transience of $\SLE_4(\underline{\rho})$ process. Suppose that
$\gamma$ is an $\SLE_4(\underline{\rho})$ process in $\HH$ starting from 0 targeted at $\infty$. Then $\gamma$ is almost surely continuous up to and including the continuation threshold. On the event that the continuation threshold is not hit before $\gamma$ reaches $\infty$, we have that $\gamma$ is almost surely transient:  $\lim_{t\to\infty}\gamma(t)=\infty$.
\end{theorem}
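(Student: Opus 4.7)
The plan is to deduce the continuity of the level line from the continuity of plain $\SLE_4$ (established by Schramm--Sheffield \cite{SchrammSheffieldContinuumGFF}) via a local absolute continuity argument away from force points, and then to handle the approach to force points (and to the continuation threshold) by conformal coordinate changes that reduce the number of finite force points. Once the level line is a continuous curve, the ``in particular'' statements about $\SLE_4(\underline{\rho})$ follow immediately from the coupling of Theorem \ref{thm::boundary_levelline_gff_coupling}, since that coupling identifies the $\SLE_4(\underline{\rho})$ trace with the level line.

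For the interior regime, I would fix a stopping time $\tau$ strictly less than both the continuation threshold and the first time at which $f_t(x^{i,q}) = 0$ for some force point. On the interval $[0,\tau]$ the driving function $W$ of $\SLE_4(\underline{\rho})$ satisfies an SDE of the form $dW_t = 2\, dB_t + \sum_{i,q}(\rho^{i,q}/(W_t - g_t(x^{i,q})))\, dt$, and because each denominator is bounded away from zero on $[0,\tau]$, Girsanov's theorem shows that the law of $W|_{[0,\tau]}$ is mutually absolutely continuous with that of $2B|_{[0,\tau]}$, i.e.\ with plain $\SLE_4$ stopped at the same capacity time. Continuity of the $\SLE_4$ trace then transfers to continuity of $\gamma|_{[0,\tau]}$. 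Exhausting by a countable family of such stopping times gives continuity on the open interval strictly before any force point is swallowed.

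The hardest part, and the main obstacle, is continuity at the moments when a force point is swallowed and at the continuation threshold itself. Here I would perform a Möbius transformation $\psi$ of $\HH$ sending the offending force point to $\infty$; by the conformal invariance built into the coupling of Theorem \ref{thm::boundary_levelline_gff_coupling}, $\psi(\gamma)$ (time-changed) is again an $\SLE_4(\underline{\rho}')$ process coupled with the pushed-forward GFF, now with one fewer finite force point and a possibly different target. An induction on the number of finite force points reduces the problem to a base case that is essentially plain $\SLE_4$, where continuity is known. At the continuation threshold, I would use the GFF coupling itself to identify the limiting location of $\gamma$: as $t$ approaches the threshold, the boundary values of the harmonic function $\eta_t$ on the two sides of the tip necessarily become incompatible with continuing the level line, and harmonic-measure considerations for $\eta_t$ (pulled back by $f_t$) single out a well-defined limiting boundary point of $\gamma$.

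For transience, assuming the continuation threshold is never hit, I would combine continuity with the fact that the half-plane capacity $\mathrm{hcap}(K_t) = 2t$ grows linearly, so $\gamma$ cannot remain in any compact subset of $\overline{\HH}$. To upgrade ``leaves every compact set'' to $\lim_{t\to\infty}\gamma(t)=\infty$, I would rule out accumulation at interior points (via local absolute continuity with $\SLE_4$, which does not hit interior points) and at finite boundary points (via the coupling, since boundary data of $\eta_t$ on the remaining domain would otherwise force an inconsistent configuration, a reduction to the same force-point analysis used in the previous paragraph). Together these exclude all finite accumulation points, giving transience.
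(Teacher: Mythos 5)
Your outer architecture — local absolute continuity with plain $\SLE_4$ away from the force points, plus an induction on the number of force points, plus a separate argument for the transience — superficially matches the shape of the paper's Section \ref{subsec::boundary_levellines_continuity_general}. But the mechanism you propose for the induction step does not close, and this is exactly where the paper has to do the real work.

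The gap is the base case: an $\SLE_4(\rho)$ with a single boundary force point of weight $\rho \in (-2,0)$. Such a curve touches the boundary, so its law is not absolutely continuous with plain $\SLE_4$ near the touching times, and Girsanov gives you nothing there. Your remedy is to apply a M\"obius map sending the force point to $\infty$. But under the Schramm--Wilson coordinate change, the old target $\infty$ acquires weight $\kappa-6-\rho = -2-\rho$, which again lies in $(-2,0)$; the transformed process is another boundary-intersecting $\SLE_4(\rho')$, not plain $\SLE_4$. The induction therefore never reaches a base case that is ``essentially plain $\SLE_4$''; it just cycles among boundary-touching processes. This is precisely why the paper devotes all of Section \ref{subsec::boundary_levellines_deterministic_general} to establishing continuity of $\SLE_4(\rho^L;\rho^R)$ with $\rho^L,\rho^R>-2$ by a completely different device: it sandwiches the path between two auxiliary non-boundary-intersecting level lines of appropriately chosen heights (Proposition \ref{prop::boundary_levellines_nonintersecting_monotonicity}, Lemma \ref{lem::boundary_levellines_deterministic_intersecting_1}), identifies the conditional law of the middle curve via the GFF coupling and the martingale characterization (Proposition \ref{prop::chordal_sle4_mart}), and extracts continuity because the conformal maps onto $\HH$ of the pockets between the sandwich curves extend to homeomorphisms of the closures. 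Only after that base case is in hand does the paper run the induction on force-point count (Lemmas \ref{lem::boundary_levellines_continuity_tworight_forcepoints_4}, \ref{lem::boundary_levellines_continuity_allright_forcepoints}), and even then it needs auxiliary inputs your proposal lacks: the zero-hitting-probability Lemma \ref{lem::boundary_levellines_zerohittingprobability}, and a reversibility fact (comparison with the level line of $-h$ from $\infty$) to rule out the curve sticking at a force point. Your treatment of the continuation threshold (``harmonic-measure considerations for $\eta_t$ single out a limiting boundary point'') is also too vague to survive scrutiny; the paper handles this by M\"obius-transforming to a transient situation, and again relies on the base case.

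On transience, the half-plane capacity argument gives only that the hull eventually leaves every compact set; it does not by itself give $\lim_{t\to\infty}\gamma(t)=\infty$ (the tip could a priori oscillate back toward finite boundary points while the hull grows). The paper's route is different and cleaner: by the coincidence of the level line of $h$ from $0$ with the level line of $-h$ from $\infty$ (Proposition \ref{prop::boundary_levellines_nonintersecting_all} and its extensions), continuity of the reversed curve at time $0$ immediately yields transience of $\gamma$. Your plan to exclude finite accumulation points ``via the coupling'' would in effect have to rediscover that reversibility.

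Summarizing: the absolute-continuity-in-the-bulk step is fine and matches Remark \ref{rem::boundary_levellines_continuity_exceptforcepoints}, but the M\"obius/induction reduction to plain $\SLE_4$ fails at the boundary-intersecting base case, which is the actual content of the theorem, and the transience argument is incomplete as stated.
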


We also study the interaction between two level lines with different heights and starting points. In contrast with the case that $h$ is smooth, these level lines can bounce off of each other, but they still have the same monotonicity behavior in their starting points and heights as in the smooth case. See Figure \ref{fig::thm_boundary_levelline_gff_interacting}.

\begin{figure}[ht!]
\begin{subfigure}[b]{0.48\textwidth}
\begin{center}
\includegraphics[width=0.625\textwidth]{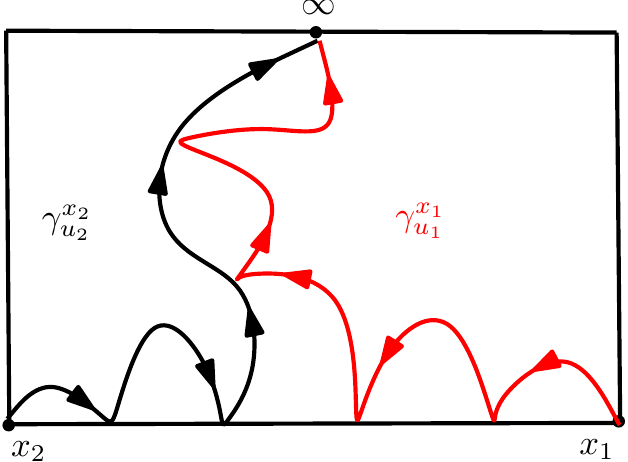}
\end{center}
\caption{If $u_2>u_1$, the level line $\gamma_{u_2}^{x_2}$ stays to the right of $\gamma_{u_1}^{x_1}$.}
\end{subfigure}
$\quad$
\begin{subfigure}[b]{0.48\textwidth}
\begin{center}\includegraphics[width=0.62\textwidth]{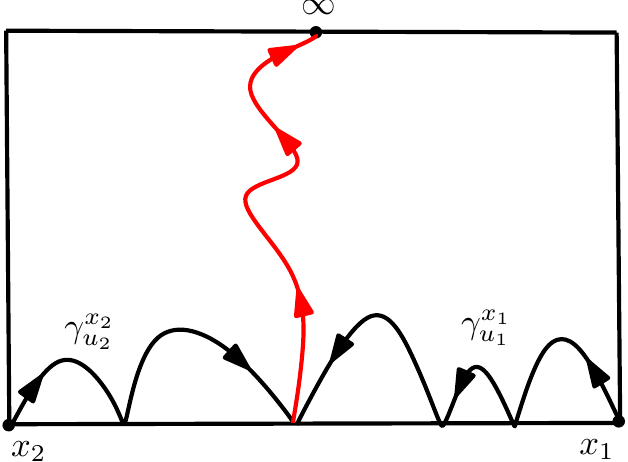}
\end{center}
\caption{If $u_2=u_1$, the two level lines merge upon intersecting.}
\end{subfigure}
\caption{\label{fig::thm_boundary_levelline_gff_interacting} The level lines of $\GFF$ satisfy the same monotonicity behavior as in the smooth case.}
\end{figure}

\begin{theorem}\label{thm::boundary_levelline_gff_interacting}
Suppose that $h$ is a $\GFF$ on $\HH$ whose boundary value is piecewise constant. For each $u\in\R$ and $x\in\partial\HH$, let $\gamma_u^x$ be the level line of $h$ with height $u$ starting from $x$. Fix $x_2\le x_1$.
\begin{enumerate}
\item [(1)] If $u_2>u_1$, then $\gamma_{u_2}^{x_2}$ almost surely stays to the left of $\gamma_{u_1}^{x_1}$.
\item [(2)] If $u_2=u_1$, then $\gamma_{u_2}^{x_2}$ may intersect $\gamma_{u_1}^{x_1}$ and, upon intersecting, the two curves merge and never separate.
\end{enumerate}
\end{theorem}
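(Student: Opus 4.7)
The plan is to couple the two level lines as deterministic functionals of a single GFF, condition on one of them, and reduce each statement to analyzing a level line in a sub-domain.

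First, I would invoke Theorem~\ref{thm::boundary_levelline_gff_deterministic} to realize both $\gamma_1 := \gamma_{u_1}^{x_1}$ and $\gamma_2 := \gamma_{u_2}^{x_2}$ as almost-sure functionals of the same GFF $h$, which yields a canonical coupling between the two. I then condition on $\gamma_1$. By Theorem~\ref{thm::boundary_levelline_gff_coupling}, the conditional law of $h$ in each connected component of $\HH \setminus \gamma_1$ is that of a GFF with explicit piecewise-constant boundary data; in particular, along the left (respectively right) side of $\gamma_1$, the boundary value of $h$ equals $-\lambda - u_1$ (respectively $\lambda - u_1$), and on $\partial \HH$ it is inherited from the original data. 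Let $D$ denote the connected component of $\HH \setminus \gamma_1$ whose boundary contains $x_2$ as a prime end; for $x_2 \le x_1$, this is (up to the starting direction in the case $x_2 = x_1$) the left component. Combining the target-independent property (Theorem~\ref{thm::boundary_levellines_targetindependence}) with the deterministic property applied inside $D$, I would identify the initial segment of $\gamma_2$ that lies in $D$ with the level line of $h|_D$ with height $u_2$ starting from $x_2$.

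For part~(1), with $u_2 > u_1$, this identification together with Theorem~\ref{thm::sle_chordal_continuity_transience} shows that $\gamma_2|_D$ is a continuous $\SLE_4(\underline{\rho})$-type curve contained in $\overline{D}$. As such it cannot cross its own boundary component $\gamma_1$, so $\gamma_2$ stays weakly to the left of $\gamma_1$. The edge case $x_2 = x_1$ is handled by a brief separate check that $\gamma_2$ enters the left component at $t = 0^+$, which follows from the sign of $u_2 - u_1$ translating into the appropriate sign of the drift in the initial Loewner driving function for $\gamma_2$.

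For part~(2), with $u_2 = u_1$, the boundary value of $h + u_2$ along $\gamma_1$ as seen from $D$ is exactly $-\lambda$, which is the critical left-side value of a standard level line. Hence if $\gamma_2$ touches $\gamma_1$ at some point $z$ at stopping times $\sigma_1,\sigma_2$ respectively, then conditioning additionally on $\gamma_1|_{[0,\sigma_1]}$ and $\gamma_2|_{[0,\sigma_2]}$ and re-applying Theorem~\ref{thm::boundary_levelline_gff_coupling} in the sub-domain bounded by these two initial segments and $\partial \HH$, both continuations $\gamma_1|_{[\sigma_1, \infty)}$ and $\gamma_2|_{[\sigma_2, \infty)}$ are level lines of the same conditional GFF, started from the common point $z$, with the same height and with matching boundary data. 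Theorem~\ref{thm::boundary_levelline_gff_deterministic}, applied in that sub-domain, then forces the two continuations to coincide almost surely, which proves merging.

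The main obstacle I anticipate is the clean identification, on the event that $\gamma_2$ approaches or touches $\gamma_1$, of the restriction of $\gamma_2$ to $D$ with the level line of the conditional GFF in $D$: this requires stringing together the coupling, the target-independent property, and the deterministic property at the appropriate stopping times, and keeping the boundary data book-keeping consistent. A related subtlety in part~(2) is upgrading merging at the first hit to merging on the entire future, which again reduces to the uniqueness statement of Theorem~\ref{thm::boundary_levelline_gff_deterministic} after the hitting time.
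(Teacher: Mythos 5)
Your approach is the conditioning argument of Miller--Sheffield \cite[Section 7.2]{MillerSheffieldIG1}: condition on $\gamma_1$, pass to the component $D$ of $\HH\setminus\gamma_1$ containing $x_2$, and try to realize $\gamma_2$ restricted to $D$ as the level line of the conditional field $h|_D$. The paper deliberately uses a different mechanism, advertised in the introduction as its novel contribution: it introduces an intermediate level line $\gamma'$ of $-h$ with height $-u$ (with $u\in(u_1,u_2)$) started from the common target $\infty$, stops it at the first time $T'$ it hits the interval $[x_2,x_1]$, and proves (through Lemmas \ref{lem::boundary_levellines_monotonicity_reverse_threshold}, \ref{lem::boundary_levellines_monotonicity_threshold}, \ref{lem::boundary_levellines_monotonicity_threshold_bottom}, \ref{lem::boundary_levellines_monotonicity_threshold_bottom_general} and \ref{lem::boundary_levellines_monotonicity_threshold_bottomleftright}) that $\gamma_2$ and $\gamma_1$ fall on opposite sides of $\gamma'[0,T']$. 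Conditioning on a stopped, finite segment of the \emph{reverse} curve is the key structural change: in the remaining domain, the ``where does the level line first exit'' lemmas (\ref{lem::boundary_levellines_case2_threshold}, \ref{lem::boundary_levellines_case3_threshold}) apply directly to the two forward level lines, and one never needs to identify any piece of a forward curve with a level line of a conditional GFF.

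The step you flag as your ``main obstacle'' is a genuine gap, not bookkeeping. Writing ``identify the initial segment of $\gamma_2$ that lies in $D$ with the level line of $h|_D$'' is circular: the object $\gamma_2|_D$ is unambiguously defined only if $\gamma_2\subseteq\overline D$, which is precisely what part~(1) asserts. Breaking the circle requires the full IG1 Section 7.2 program --- run $\gamma_2$ only up to its first exit of $D$, prove locality of the union $\gamma_1\cup\gamma_2[0,\cdot]$, control the conditional mean on the contact set via Proposition \ref{prop::gff_localsets_boundarybehavior}, establish continuity of the conditional Loewner driving function, then rule out crossing --- and this is delicate exactly when $u_2-u_1\in(0,2\lambda)$, where the boundary value $-\lambda+(u_2-u_1)$ seen along $\gamma_1$ from $D$ lies in $(-\lambda,\lambda)$ and the contact set is a fractal; Theorem~\ref{thm::boundary_levelline_gff_deterministic} and Lemma~\ref{lem::levelline_forbidden_intervals} alone do not deliver it, and the introduction states that some of the IG1 reasoning does not carry over to $\kappa=4$. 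Additionally, Theorem~\ref{thm::boundary_levellines_targetindependence}, which you invoke for the identification, is proved in Section~\ref{subsec::boundary_levellines_reversibility_general} \emph{after} Theorem~\ref{thm::boundary_levelline_gff_interacting}, and it concerns two level lines of the \emph{same} field with a common start point and distinct targets, not a level line of $h$ versus a level line of the conditional field $h|_D$; it is thus both logically out of order here and not the right tool for the sub-domain identification you need. Your merging heuristic for part~(2) is correct in spirit, but the paper's Lemma~\ref{lem::boundary_levellines_merge_threshold_bottom} realizes it by showing that, given the stopped reverse curve $\gamma'[0,T']$, both $\gamma_1$ and $\gamma_2$ merge into $\gamma'$ at $\gamma'(T')$ --- again sidestepping any identification of $\gamma_2|_D$ with a conditional-field level line.
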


\begin{remark} 
Assume the same notations as in Theorem \ref{thm::boundary_levelline_gff_interacting}. Fix $u_2>u_1$. 
We have the following facts about the intersection of the level lines $\gamma^{x_1}_{u_1}$ and $\gamma^{x_2}_{u_2}$.
\begin{enumerate}
\item [(1)]  When $u_2-u_1\ge 2\lambda$, the level lines $\gamma^{x_1}_{u_1}$ and $\gamma^{x_2}_{u_2}$ do not intersect each other almost surely.
\item [(2)] When $0<u_2-u_1<2\lambda$, the Hausdorff dimension of the intersection is given by the following.
\[\dimH_H\left(\gamma^{x_1}_{u_1}\cap\gamma^{x_2}_{u_2}\cap\HH\right)=2-\frac{1}{8}\left((u_2-u_1)/\lambda+2\right)^2,\]
almost surely on the event $\left\{\gamma^{x_1}_{u_1}\cap\gamma^{x_2}_{u_2}\cap\HH\neq\emptyset\right\}$.
\end{enumerate}
In \cite{MillerWuSLEIntersection}, the authors proved the Hausdorff dimension of the intersection of flow lines which corresponds to $\kappa\in (0,4)$. The same proof works for the intersection of level lines.  
\end{remark}

The following theorem tells the reversibility of the level lines of $\GFF$.

\begin{theorem}\label{thm::sle_chordal_reversibility}
Suppose that $h$ is a $\GFF$ on $\HH$ whose boundary value is piecewise constant. Let $\gamma$ be the level line of $h$ starting from 0 targeted at $\infty$; and $\gamma'$ be the level line of $-h$ staring from $\infty$ targeted at 0. Then, on the event that the two paths do not hit the continuation thresholds before they reach the target points, the two paths $\gamma'$  and $\gamma$ are equal (viewed as sets) almost surely.

This implies the reversibility of $\SLE_4(\underline{\rho})$ process.
Suppose that $\gamma$ is an $\SLE_4(\underline{\rho})$ process in $\HH$ starting from 0 targeted at $\infty$. Then, conditioned on the event that the continuation threshold is not hit before $\gamma$ reaches $\infty$, we have that the time-reversal of $\gamma$ has the law of $\SLE_4(\underline{\rho})$ process starting from $\infty$ targeted at $0$ (with appropriate weights and force points) conditioned on the event that the continuation threshold is not hit before it reaches $0$.
In particular, fix $\rho_1>-2,\rho_2>-2$. Suppose that $\gamma$ is an $\SLE_4(\rho_1;\rho_2)$ process in $\HH$ starting from 0 targeted at $\infty$ with two force points next to the starting point. Then the time-reversal of $\gamma$ has the law of $\SLE_4(\rho_2;\rho_1)$ process in $\HH$ starting from $\infty$ targeted at 0 with two force points next to the starting point.
\end{theorem}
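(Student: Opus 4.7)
The plan is to reduce the reversibility of $\SLE_4(\underline\rho)$ to the coincidence, inside a single GFF coupling, of the forward level line of $h$ and the backward level line of $-h$, and then to establish that coincidence using the domain Markov property, target-independence, and the merging behavior of level lines. By Theorems \ref{thm::boundary_levelline_gff_coupling} and \ref{thm::boundary_levelline_gff_deterministic}, a level line is a deterministic function of the underlying field. If $h$ is a $\GFF$ on $\HH$ with piecewise constant boundary data producing the $\SLE_4(\underline\rho)$ process $\gamma$ from $0$ to $\infty$, then applying the coupling theorem to $-h$ (with the boundary data negated, the starting point $\infty$ in place of $0$, and $0$ in place of $\infty$) realizes $\gamma'$ as the unique level line of $-h$ from $\infty$ to $0$. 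Both curves are thus measurable functions of the same field $h$, so it suffices to show $\gamma=\gamma'$ as subsets of $\overline{\HH}$ almost surely on the event that neither curve hits its continuation threshold; the reversibility in law follows immediately.

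I would prove the set-theoretic equality by showing $\gamma\subseteq\gamma'$ and using the symmetric argument (swapping $h$ with $-h$) for the reverse inclusion. Fix a rational stopping time $\tau$ for $\gamma$ strictly before its continuation threshold. Conditionally on $\gamma[0,\tau]$, Theorem \ref{thm::boundary_levelline_gff_coupling} describes $h$ on $\HH\setminus\gamma[0,\tau]$ as a $\GFF$ on the slit domain whose boundary data jumps by $2\lambda$ across $\gamma$ and agrees with the original data on $\partial\HH$. Apply the coupling theorem once more to $-h$ on this slit domain, starting a level line from $\infty$ and \emph{targeting the tip} $\gamma(\tau)$; this is legitimate because the jump across $\gamma(\tau)$ has the sign that makes $\gamma(\tau)$ an admissible boundary target for $-h$. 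Call this path $\tilde\gamma'_\tau$. Since $\gamma(\tau)$ sits on the boundary of the slit domain, $\tilde\gamma'_\tau$ is well-defined and, on our event, reaches $\gamma(\tau)$.

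The final step is to identify $\tilde\gamma'_\tau$ with a portion of $\gamma'$. Because $\tilde\gamma'_\tau$ and $\gamma'$ are level lines of the same field $-h$ differing only in their targets, Theorem \ref{thm::boundary_levellines_targetindependence} (target-independence) gives that they agree as curves up to the first time either one disconnects $\gamma(\tau)$ from $0$. Combining this with Theorem \ref{thm::boundary_levelline_gff_interacting}, which guarantees that two level lines of equal height merge upon first touching and thereafter coincide, forces $\gamma'$ to pass through $\gamma(\tau)$. Running $\tau$ through a countable dense set of rational stopping times and invoking the continuity of $\gamma$ supplied by Theorem \ref{thm::sle_chordal_continuity_transience} yields $\gamma\subseteq\gamma'$, and the symmetric argument produces the reverse inclusion. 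The $\SLE_4(\rho_1;\rho_2)\to\SLE_4(\rho_2;\rho_1)$ statement for two force points then drops out by tracking how the boundary data and force-point weights of $-h$ relate to those of $h$.

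The step I expect to be the main obstacle is the slit-domain step: verifying that the conditional boundary data of $-h$ on $\HH\setminus\gamma[0,\tau]$ actually makes $\gamma(\tau)$ an admissible boundary target for a level line of $-h$, and confirming that this level line almost surely reaches $\gamma(\tau)$ rather than triggering a spurious continuation threshold. This requires careful bookkeeping of weights after re-centering the harmonic function $\eta_0$ at $\gamma(\tau)$, and a verification that the effective $\underline\rho$ vector of $\tilde\gamma'_\tau$ satisfies the continuation criterion hidden inside Theorem \ref{thm::sle_chordal_continuity_transience}. Once this technical check is in place, the combination of target-independence and merging closes the argument cleanly.
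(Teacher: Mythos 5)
Your setup---realizing both $\gamma$ and $\gamma'$ as deterministic functions of the same field $h$, reducing reversibility to the almost-sure set identity $\gamma=\gamma'$, and attacking that by showing each $\gamma(\tau)$ lies on $\gamma'$---matches the paper's opening strategy, but the engine you propose for the last step has a circularity and leaves unproved the actual technical content.

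The circularity is the use of Theorem~\ref{thm::boundary_levellines_targetindependence}. In the paper, that theorem is proved \emph{after} the reversibility lemmas, in the same Section~\ref{subsec::boundary_levellines_reversibility_general}, and its proof invokes Lemma~\ref{lem::boundary_levellines_reversibility_right}, which is the base case of the induction that establishes Theorem~\ref{thm::sle_chordal_reversibility}. You therefore cannot use target independence as a black box here without first proving it by other means. There is also a hypothesis mismatch: even granting target independence, your $\tilde\gamma'_\tau$ is a level line of the \emph{restriction} of $-h$ to $\HH\setminus\gamma[0,\tau]$, while $\gamma'$ is a level line of $-h$ on all of $\HH$; Theorem~\ref{thm::boundary_levellines_targetindependence} compares two level lines of the same field in the same domain from the same starting point, so you would still need an intermediate local-set argument (in the spirit of Lemma~\ref{lem::boundary_levellines_deterministic_nonintersecting_coincide}) to identify $\gamma'$, up to its first hitting time of the slit, with a level line in the slit domain. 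Most importantly, the assertion that ``$\tilde\gamma'_\tau$ reaches $\gamma(\tau)$ on our event'' is not a bookkeeping check but exactly where the real work lies: a level line of $-h|_{\HH\setminus\gamma[0,\tau]}$ from $\infty$ can a priori hit the sides of the slit, or hit a continuation threshold on $\partial\HH$, before reaching the tip, and ruling this out in the boundary-intersecting regime is precisely the content of Lemmas~\ref{lem::boundary_levellines_case2_threshold}, \ref{lem::boundary_levellines_case3_threshold}, and \ref{lem::boundary_levellines_coincide_threshold}. The paper avoids both problems by not invoking target independence at all: it proves reversibility by a double induction on the number of boundary jumps on $\R_-$ and $\R_+$ (Lemma~\ref{lem::boundary_levellines_reversibility_right}, Lemma~\ref{lem::boundary_levellines_reversibility_right_oneleft}, Lemmas~\ref{lem::boundary_levellines_reversibility_right_twoleft}--\ref{lem::boundary_levellines_reversibility_right_threeleft}, Proposition~\ref{prop::boundary_levellines_reversibility_leftright}), at each step conditioning on a carefully chosen auxiliary level line or height-varying level line of $\pm h$ so that in the relevant slit component the boundary data has fewer jumps or falls into the non-intersecting case of Proposition~\ref{prop::boundary_levellines_nonintersecting_all}. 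If you want to salvage your route, you would need to first prove target independence without reversibility and then actually establish the ``reaches the tip'' claim; as written the argument assumes what the paper has to prove.
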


\begin{figure}[ht!]
\begin{center}
\includegraphics[width=0.325\textwidth]{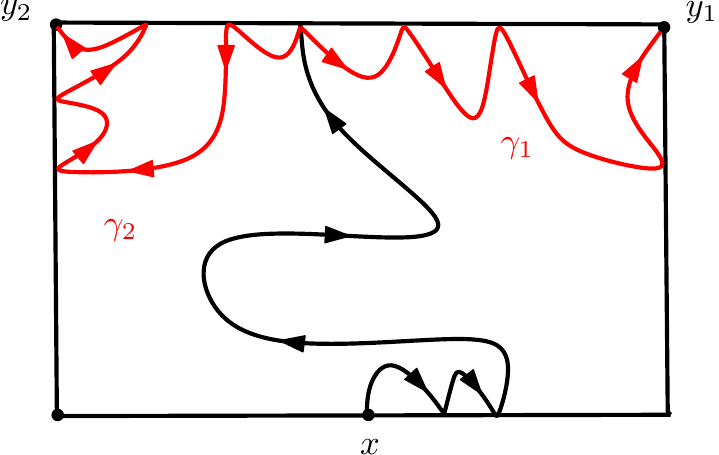}
\end{center}
\caption{\label{fig::boundary_levellines_targetindependence} Consider the two level lines starting from the same point targeted at distinct points: they coincide up to the first disconnecting time, after which they continue towards their target points independently.}
\end{figure}

Finally, we state the result which is called the ``target-independent" property of the level lines of $\GFF$.
\begin{theorem}\label{thm::boundary_levellines_targetindependence}
Suppose that $h$ is a $\GFF$ on $\HH$ whose boundary value is piecewise constant. Fix three distinct boundary points $y_2<x<y_1$. For $i=1,2$, let $\gamma_i$ be the level line of $h$ starting from $x$ targeted at $y_i$; define $T_i$ to be the first disconnecting time: $T_i$ is the inf of $t$ such that $y_1$ and $y_2$ are not on the boundary of the same connected component of $\HH\setminus\gamma_i[0,t]$. See Figure \ref{fig::boundary_levellines_targetindependence}. Then, almost surely, the paths $\gamma_1$ and $\gamma_2$ coincide up to and including the first disconnecting time (modulo time-change); given $(\gamma_1[0,T_1],\gamma_2[0,T_2])$, the two paths continue towards their target points independently.
\end{theorem}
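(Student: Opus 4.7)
The plan is to combine the coupling and determinism statements (Theorems \ref{thm::boundary_levelline_gff_coupling} and \ref{thm::boundary_levelline_gff_deterministic}), extended to level lines with arbitrary boundary start and target points by precomposing with a Möbius map of $\HH$ that sends $(x, y_i)$ to $(0, \infty)$, together with the domain Markov property of the GFF. For each $i \in \{1,2\}$, couple $\gamma_i$ with the same GFF $h$ via this extended level line coupling; by Theorem \ref{thm::boundary_levelline_gff_deterministic} each $\gamma_i$ is then almost surely a measurable function of $h$.

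The key step is to show $\gamma_1[0, T_1] = \gamma_2[0, T_2]$ as closed subsets of $\overline{\HH}$. Fix $i \in \{1, 2\}$ and a stopping time $t < T_i$. By definition of $T_i$, the point $y_{3-i}$ lies in the same connected component $D_t^i$ of $\HH \setminus \gamma_i[0,t]$ as the tip of $\gamma_i(t)$. The coupling then says that the conditional law of $h|_{D_t^i}$ given $\gamma_i[0,t]$ is that of a zero-boundary GFF on $D_t^i$ plus the harmonic extension of the data consisting of $\pm \lambda$ on the two sides of $\gamma_i[0,t]$ and the original piecewise constant values on $\partial \HH \cap \partial D_t^i$. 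Crucially, this boundary data does not distinguish $y_i$ from $y_{3-i}$: it is intrinsic to the curve and to the original boundary values of $h$. Consequently, $\gamma_i[0,t]$ is equally a valid instance of the coupling for the level line from $x$ aimed at $y_{3-i}$, stopped before its own disconnecting time. Applying Theorem \ref{thm::boundary_levelline_gff_deterministic} to that level line forces the corresponding initial segment of $\gamma_{3-i}$ to coincide with $\gamma_i[0, t]$ as sets, and letting $t \uparrow T_i$ yields the claimed coincidence of traces up to and including $T_i$, modulo time-change. The main obstacle is ensuring that no continuation threshold is crossed before the disconnecting time in either coupling, and that the comparison remains valid at $t = T_i$; I expect to handle this by a Girsanov-type absolute continuity argument relating the driving functions of the two $\SLE_4(\underline{\rho})$ laws up to any stopping time strictly before disconnection, and then a continuity/limit passage to include $T_i$ itself (using Theorem \ref{thm::sle_chordal_continuity_transience}).

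For the conditional independence assertion, let $K := \gamma_1[0, T_1] = \gamma_2[0, T_2]$; by definition of the disconnecting time, $K$ separates $y_1$ from $y_2$ in $\HH \setminus K$, placing them in distinct connected components $D^{(1)}$ and $D^{(2)}$. The domain Markov property of the GFF gives that $h|_{D^{(1)}}$ and $h|_{D^{(2)}}$ are conditionally independent given $K$ together with the relevant boundary trace of $h$ along $K$. Since the continuation of $\gamma_i$ after $T_i$ is, by the extended coupling, the level line of (the conditional distribution of) $h|_{D^{(i)}}$ from $\gamma_i(T_i)$ to $y_i$, and such a level line is almost surely a measurable function of $h|_{D^{(i)}}$ by Theorem \ref{thm::boundary_levelline_gff_deterministic} applied in $D^{(i)}$, the two continuations are conditionally independent as asserted.
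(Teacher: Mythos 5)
Your approach is genuinely different from the paper's. The paper proves Theorem \ref{thm::boundary_levellines_targetindependence} as a quick corollary of reversibility: it uses Lemma \ref{lem::boundary_levellines_reversibility_right} (a case of Theorem \ref{thm::sle_chordal_reversibility}) to identify $\gamma_i$ with the level line $\gamma_i'$ of $-h$ from $y_i$ to $x$, and then Lemma \ref{lem::boundary_levellines_coincide_threshold} to show that, given $\gamma_1$, the path $\gamma_2'$ first hits $\gamma_1$ at $\gamma_1(T_1)$ and merges with it; coincidence of traces and the conditional independence then drop out, and the general boundary data is handled by the usual conditioning argument.

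Your route instead tries to show that $\gamma_1[0,T_1]$ is \emph{simultaneously} a coupled level line for both targets and then invokes determinism. The weak point is the sentence ``$\gamma_i[0,t]$ is equally a valid instance of the coupling for the level line from $x$ aimed at $y_{3-i}$.'' The GFF-side of the coupling (the conditional law of $h$ given $\gamma_1[0,t]$) is indeed indifferent to the target, as you observe. But the coupling in Theorem \ref{thm::boundary_levelline_gff_coupling} also carries an SLE-side requirement: the curve, parametrized by half-plane capacity in the $(x,y_2)\mapsto(0,\infty)$ coordinate, must be an $\SLE_4(\underline{\rho}_2)$ Loewner chain up to the disconnecting time. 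To establish this from the coupling for target $y_1$ one needs (a) that $\gamma_1$ admits a continuous Loewner driving function in the new coordinate (this requires a version of Proposition \ref{prop::loewner_chordal_driving_existence_continuity}, and is not automatic for boundary-touching curves), and (b) an identification of that driving law via the martingale characterization (Proposition \ref{prop::chordal_sle4_mart}) or a Girsanov/Schramm--Wilson coordinate change. You foresee (b) as a ``Girsanov-type absolute continuity argument'' but do not carry it out, and (a) is not mentioned at all. There is a second, more subtle gap: the final step ``Applying Theorem \ref{thm::boundary_levelline_gff_deterministic} ... forces the corresponding initial segment of $\gamma_{3-i}$ to coincide'' really needs \emph{uniqueness} of the coupling, not merely that each coupled curve is a measurable function of $h$. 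Uniqueness does hold, but it is a consequence of the merging/reversibility machinery (Lemma \ref{lem::boundary_levellines_deterministic_nonintersecting_coincide} and its descendants), not of the bare statement of Theorem \ref{thm::boundary_levelline_gff_deterministic}. So in effect your shortcut would re-derive part of what the paper already has available through Theorem \ref{thm::sle_chordal_reversibility}. Your treatment of the conditional independence part, via the domain Markov property of the GFF together with determinism applied on each side of $K$, is correct and matches the spirit of the paper's conclusion.
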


\subsection{Couplings between $\GFF$ and $\CLE_4$}
Note that $\SLE_4$ paths can be viewed as level lines of GFF; and that $\CLE_4$ is a collection of $\SLE_4$-type loops. It is natural to expect that the collections of level loops of $\GFF$ correspond to $\CLE_4$. In this section, we will describe two couplings between $\GFF$ and $\CLE_4$. Before this, 
we first recall a standard result about Brownian motion. Consider a one-dimensional standard Brownian motion $(B_t,t\ge 0)$. We take the reflected Brownian motion $(Y_t=|B_t|, t\ge 0)$ and it is well known that , if we decompose this $Y$ at zero set of the Brownian motion $Z=\{t: B_t=0\}$, then this reflected Brownian motion $Y$ can be decomposed into countably many Brownian excursions (a Brownian excursion $(e(t), 0\le t\le \tau)$ is a Brownian path with $e(0)=0, e(\tau)=0$ and $e(t)>0$ for $0<t<\tau$). Consider the local time process $(L_t, t\ge 0)$ of the Brownian motion, it is increasing on zero set of the Brownian motion and is constant inside each excursion. If we parameterize these Brownian excursions by the local time process, then the sequence of these Brownian excursions is a Poisson point process. We can also reverse this procedure to construct a Brownian motion from a Poisson point process of Brownian excursions. Given a Poisson point process of Brownian excursions $(e_u,u\ge 0)$, there are two ways to reconstruct a Brownian motion:
\begin{enumerate}
\item [(a)] Sample i.i.d. coin tosses $\sigma_u$ for each excursion $e_u$, let the excursion to be positive or negative according to the sign $\sigma_u e_u$; then concatenate these signed excursions. The process we get is a Brownian motion.
\item [(b)] Concatenate all these excursions and get the reflected Brownian motion $(Y_t,t\ge 0)$. Define the local time process $(L_t,t\ge 0)$ of $Y$. Then the process $(Y_t-L_t, t\ge 0)$ has the same law as a Brownian motion.
\end{enumerate}

In the following of this section, we will describe somewhat analogous pair of couplings between GFF and $\CLE_4$. The first coupling between GFF and $\CLE_4$ is stated by Jason Miller and Scott Sheffield in 2011 and a proof can be found as a special case in \cite[Theorem 1.2]{MillerWatsonWilsonCLENestingfield}.

Let $\Gamma$ be a $\CLE_4$ in $\U$. For each loop $L\in\Gamma$, sample $\sigma_L$ to be $+1$ or $-1$ with equal probability $1/2$. We also call $\sigma_L$ as the orientation of $L$, i.e. $\sigma_L=+1$ (resp. $\sigma_L=-1$) corresponds to $L$ being oriented clockwise (resp. counterclockwise). All these orientations are sampled in the way that, given $\Gamma$, they are conditionally independent. The the obtained sample $((L,\sigma_L), L\in\Gamma)$ is called \textbf{$\CLE_4$ with symmetric orientations}.

\begin{theorem}\label{thm::coupling_gff_cle4_symmetric}
Suppose that $h$ is a zero-boundary $\GFF$ on $\U$ and that $((L,\sigma_L),L\in\Gamma)$ is a $\CLE_4$ with symmetric orientations in $\U$. There exists a coupling between $h$ and $((L,\sigma_L), L\in\Gamma)$ such that the following is true. Given $((L,\sigma_L), L\in\Gamma)$, for each loop $L$, the conditional law of $h$ restricted to the interior of $L$, denoted by $\inte(L)$, is the same as $\GFF$ with boundary value $2\lambda\sigma_L$; for different loops, the restrictions of the field are conditionally independent.
\end{theorem}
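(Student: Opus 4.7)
The strategy is to construct the coupling via the branching exploration tree of $\GFF$ level lines, using the results from the first part of the paper. By conformal invariance of the $\GFF$, we work in $\HH$ rather than $\U$, and let $h$ denote a zero-boundary $\GFF$ on $\HH$. Fix a boundary point $x_0$ and a uniform sign $\sigma_0 \in \{\pm 1\}$ independent of $h$. Using Theorem \ref{thm::boundary_levelline_gff_coupling} with a harmonic piece $\eta_0$ chosen to have boundary values jumping between $\pm\sigma_0\lambda$ at $x_0$, I would couple $h + \eta_0$ with an $\SLE_4(-2;-2)$-type level line $\gamma_0$ from $x_0$ back to $x_0$; the level line traces a single loop $L_0$ whose interior, again by Theorem \ref{thm::boundary_levelline_gff_coupling}, has harmonic boundary data exactly $2\sigma_0\lambda$.

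Next, I would iterate this construction inside each pocket created by $\gamma_0$, using the target-independent property (Theorem \ref{thm::boundary_levellines_targetindependence}), continuity and transience (Theorem \ref{thm::sle_chordal_continuity_transience}), and the monotonicity/merging behaviour (Theorem \ref{thm::boundary_levelline_gff_interacting}), together with the Markov property of the $\GFF$. With independent fresh coin flips at each branching, this produces the branching $\SLE$ exploration tree described in \cite{SheffieldExplorationTree}, and by \cite{SheffieldWernerCLE} the collection $\Gamma$ of outermost loops it produces has the law of $\CLE_4$ in $\HH$.

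Each $L \in \Gamma$ inherits an orientation $\sigma_L \in \{\pm 1\}$ from the coin flip at the branching event producing $L$; these are conditionally i.i.d.\ Bernoulli $\pm 1$ given $\Gamma$, yielding $\CLE_4$ with symmetric orientations. Theorem \ref{thm::boundary_levelline_gff_coupling} then identifies the harmonic boundary data of $h$ at $L$, viewed from $\inte(L)$, as the constant $2\lambda\sigma_L$, and the Markov property of the $\GFF$ across $L$ gives that, conditionally on $(L, \sigma_L)$, the field $h|_{\inte(L)}$ is a $\GFF$ with boundary value $2\lambda\sigma_L$; conditional independence across distinct loops is automatic, since each loop is generated by an independent branch of the exploration.

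The main obstacle lies in the branching step: rigorously implementing the branching exploration and proving that the collection of loops it produces is indeed a $\CLE_4$. One must show in particular that the small loops at all scales are correctly discovered, and that the resulting loop configuration has the conformal invariance and domain Markov property characterizing $\CLE_4$. This is where the continuity result Theorem \ref{thm::sle_chordal_continuity_transience} and the equal-height merging phenomenon of Theorem \ref{thm::boundary_levelline_gff_interacting}(2) are essential, since they guarantee that the branches close into disjoint loops without pathological overlaps and that the iterated Markov property of the $\GFF$ can be applied cleanly at every stage of the exploration.
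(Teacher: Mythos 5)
The paper does not contain a proof of Theorem~\ref{thm::coupling_gff_cle4_symmetric}. The authors state explicitly that ``a proof can be found as a special case in \cite[Theorem 1.2]{MillerWatsonWilsonCLENestingfield},'' and they reserve their own work for Theorem~\ref{thm::coupling_gff_cle4time}, the time-parameter coupling. So there is no paper argument against which to compare your proposal; the result is cited, not proved here.

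Judged on its own, your sketch has genuine gaps. The pivotal claim---that a single level line of $h+\eta_0$ closes into a loop $L_0$ whose interior carries boundary data exactly $2\sigma_0\lambda$---contradicts what the level-loop machinery actually delivers. Lemma~\ref{lem::levellines_interior_levelloop} shows that the level loop $L_u$ of a zero-boundary GFF at height $u \in (-\lambda,\lambda)$ produces interior boundary data $\lambda - u$ when $L_u$ is clockwise and $-\lambda - u$ when counterclockwise; obtaining $\pm 2\lambda$ would require $u = \mp\lambda$, the degenerate endpoint that needs the careful $u \to -\lambda$ limit of Lemmas~\ref{lem::levellines_interior_limit_counterclockwise}--\ref{lem::levellines_interior_limit_identify}. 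Your ``$\SLE_4(-2;-2)$-type level line from $x_0$ back to $x_0$'' is likewise not an object Theorem~\ref{thm::boundary_levelline_gff_coupling} produces: with weights $-2$ and both force points adjacent to the start, the continuation threshold is hit at time zero. A further structural problem is the ordering of randomness. You draw $\sigma_0$ independently of $h$ and then ask the loop's interior to read $2\lambda\sigma_0$, but by Theorem~\ref{thm::boundary_levelline_gff_deterministic} the level line, hence the loop and its orientation, is determined by $h + \eta_0$; the orientation that actually emerges is a function of $h$ and need not match your pre-specified $\sigma_0$, and the resulting interior boundary data is $\pm\lambda$, not $\pm 2\lambda$. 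Finally, invoking the branching $\SLE$ exploration tree of \cite{SheffieldExplorationTree} hides the real difficulty: for CLE$_4$ the branching weight would be $\kappa-6 = -2$, exactly at the continuation threshold, which is why \cite{SheffieldWernerCLE} constructs CLE$_\kappa$ for $\kappa\in(8/3,4]$ via a Poisson point process of bubbles rather than a direct branching tree, and why Section~\ref{sec::interior_levellines} of the paper needs the upward height-varying exploration and its $u\to-\lambda$ limit. Your overall strategy---explore the GFF by level lines to discover the CLE$_4$ loops and read off boundary data---is the standard and correct one, but the entire content lies in producing the exact $\pm 2\lambda$ values, and that step is the one your sketch skips over.
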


We refer to the coupling between GFF and CLE$_4$ with symmetric orientations in Theorem \ref{thm::coupling_gff_cle4_symmetric} as the first coupling between $\GFF$ and $\CLE_4$. As we can see, it can be viewed as the analog of the first reconstruction of Brownian motion from Brownian excursions.

In the current paper, we focus on another coupling between GFF and $\CLE_4$: the coupling between GFF and CLE$_4$ with time parameter, which can be viewed as the analog of the second reconstruction of Brownian motion. $\CLE_4$ with time parameter is a $\CLE_4$ where each loop has a time parameter (precise construction will be recalled in Section \ref{subsec::cle}). Roughly speaking, the time parameter for each loop is the counterpart of the local time for each excursion in the second reconstruction of Brownian motion. The main theorem is the following.
\begin{theorem}\label{thm::coupling_gff_cle4time}
Suppose that $h$ is a zero-boundary $\GFF$ on $\U$ and that $((L,t_L),L\in\Gamma)$ is a $\CLE_4$ with time parameter in $\U$. There exists a coupling between $h$ and $((L,t_L), L\in\Gamma)$ such that the following is true. Given $((L,t_L), L\in\Gamma)$, for each loop $L$, the conditional law of $h$ restricted to the interior of $L$, denoted by $\inte(L)$, is the same as $\GFF$ with boundary value $2\lambda(1-t_L)$; for different loops, the restrictions of the field are conditionally independent.
\end{theorem}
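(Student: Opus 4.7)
The plan is to construct the coupling by running an exploration of loops that is simultaneously driven by the GFF $h$ and parameterized in the Werner--Wu fashion. The key inputs are the level-line theory from the first part of the paper (Theorems \ref{thm::boundary_levelline_gff_coupling}--\ref{thm::boundary_levellines_targetindependence}) and the Werner--Wu growth construction of $\CLE_4$ with time parameter, recalled from \cite{WernerWuCLEExploration}.

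Transport the level-line theorems from $\HH$ to $\U$ by a conformal map, and consider the shifted field $\tilde h = h + 2\lambda$, which has constant boundary value $2\lambda$. The Werner--Wu growth process produces $\CLE_4$ with time parameter by successively exploring the domain through $\SLE_4(-2)$-type paths starting from boundary points, attaching to each newly closed loop a time $t_L$ given by a local-time-like functional of the exploration. Each of these paths is, by Theorem \ref{thm::boundary_levelline_gff_coupling}, the level line of $\tilde h$ with a specific piecewise constant boundary condition, and by Theorem \ref{thm::boundary_levelline_gff_deterministic} it is a measurable function of the field. Running the entire Werner--Wu exploration using these GFF-determined level lines yields a coupling between $h$ and a collection $((L, t_L), L \in \Gamma)$, where $\Gamma$ is $\CLE_4$ with the correct time parameter: conformal invariance and the target-independence Theorem \ref{thm::boundary_levellines_targetindependence} ensure the correct joint law of the loops, while the $t_L$ are assigned exactly by Werner--Wu's recipe.

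The conditional law inside a loop is then read off by iterating the coupling of Theorem \ref{thm::boundary_levelline_gff_coupling}: at the instant the exploration closes a loop $L$, the boundary value of the field on $\inte(L)$ equals the height of the relevant level line at that instant, which in the Werner--Wu parameterization is precisely $2\lambda(1 - t_L)$. The domain Markov property then gives that, conditional on everything explored up to that moment, $h|_{\inte(L)}$ is a zero-boundary $\GFF$ plus this harmonic function, and applying the argument to every loop yields the stated conditional independence. The main obstacle is the bookkeeping for the countably many nested loops generated by the exploration: one must show that the procedure a.s.\ exhausts all loops of $\Gamma$, that the $(t_L)$ are measurable with respect to $h$, and that the conditional law inside each loop is unaffected by the order in which neighboring loops are explored. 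These points are handled by invoking the continuity of level lines up to the continuation threshold (Theorem \ref{thm::sle_chordal_continuity_transience}) together with repeated use of target-independence (Theorem \ref{thm::boundary_levellines_targetindependence}), which together allow one to compare explorations with different target points inside a given loop and conclude that the resulting loop/time decomposition is intrinsic to the GFF.
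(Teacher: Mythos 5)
Your proposal gestures at the right ingredients (level-line determinism, target-independence, the Werner--Wu growth process), but there is a real gap at the centre of the argument: you assert, without construction, that the Werner--Wu exploration ``is'' a GFF-determined family of level lines and that the time parameter $t_L$ is a ``local-time-like functional'' that automatically reads out as $2\lambda(1-t_L)$ on $\inte(L)$. In fact this identification is precisely the content of the theorem and cannot be invoked; it has to be built. The Werner--Wu process is driven by a Poisson point process of $\SLE_4$ bubbles with an infinite intensity measure $M$, and a bubble is not itself a level line with a piecewise constant boundary condition. The loops $L$ and their times $t_L$ are only produced after a nontrivial iteration-and-limit procedure, and nothing in your proposal explains how $h$ sees the exponential time parameter.

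The paper's proof supplies the missing bridge via a discretization that your argument lacks. It introduces \emph{upward height-varying level lines}: starting from height $-\lambda + r\lambda$, one runs a level loop targeted at $z$, and each time the loop closes counterclockwise one increments the height by $r\lambda$ and continues; the procedure stops at the first clockwise loop, after a geometrically distributed number $N$ of steps with parameter $r/2$. Proposition \ref{prop::levelloops_upward_limit} then shows that as $r = 2^{-k}\lambda \to 0$, the rescaled step count $2^{-k-1}N^k$ converges a.s.\ to an exponential random variable $t^{\infty}(z)$, the loops $L^k(z)$ converge in Carath\'eodory topology to a $\CLE_4$ loop, and the conditional law of $h$ inside the limit loop is GFF with boundary value $2\lambda(1-t^{\infty}(z))$. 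The identification of the limiting bubble measure with the $\SLE_4$ bubble measure $M$ (Lemma \ref{lem::levellines_interior_limit_identify}) is what ties this GFF-internal construction to the Werner--Wu growth process, and the target-independence of the upward height-varying trees (not just of single level lines) is what makes the construction consistent across target points. None of these steps --- the discretization, the geometric-to-exponential limit, the conformal-radius convergence, or the bubble-measure identification --- is present or implicit in your proposal, so as written it does not establish the coupling.
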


The second coupling between GFF and $\CLE_4$ is stated by Scott Sheffield, Samuel Watson, and Hao Wu in 2012. The authors thank Scott Sheffield and Samuel Watson for allowing us to write up the details of the proof.  
\subsection{Relation to previous works and outline}
\label{subsec::relation_previous_works}
We prove Theorems \ref{thm::boundary_levelline_gff_coupling} to \ref{thm::boundary_levellines_targetindependence} in Section \ref{sec::boundary_levellines}. Following is the outline of Section \ref{sec::boundary_levellines}.
\begin{itemize}
\item Section \ref{subsec::chordal_sle} is an introduction to chordal $\SLE$ process.
\item Section \ref{subsec::boundary_levellines_gff} is an introduction to zero-boundary GFF.
The proofs of Theorems \ref{thm::boundary_levelline_gff_coupling} and \ref{thm::boundary_levelline_gff_deterministic} can be found in previous works: \cite{DubedatSLEFreeField, SchrammSheffieldContinuumGFF, MillerSheffieldIG1}. To be self-contained, we still write the proof of Theorem \ref{thm::boundary_levelline_gff_coupling} in Section \ref{subsec::boundary_levellines_gff}.
\item Sections \ref{subsec::boundary_levellines_boundaryavoiding} to \ref{subsec::boundary_levellines_continuity_general} complete the proofs of Theorems \ref{thm::boundary_levelline_gff_deterministic} and \ref{thm::sle_chordal_continuity_transience}. The proofs in these sections are similar to those in \cite[Section 4, Section 5, Section 6]{MillerSheffieldIG1}, whereas some part of the reasoning in \cite{MillerSheffieldIG1} does not apply to $\kappa=4$ case. Thus, even though similar, we choose to rewrite and reorganize these proofs to treat $\kappa=4$ case.
\item Section \ref{subsec::boundary_levellines_interacting_general} completes the proof of Theorem \ref{thm::boundary_levelline_gff_interacting}. 
In \cite[Section 7.2]{MillerSheffieldIG1}, the authors prove a similar merging and monotonicity result for flow lines (i.e. for $\SLE_{\kappa}$ with $\kappa\in (0,4)$). In the current paper, we give a different proof for level line merging and monotonicity result based on the special property of the level lines: reversibility.
\item Section \ref{subsec::boundary_levellines_reversibility_general} completes the proofs of Theorems \ref{thm::sle_chordal_reversibility} and \ref{thm::boundary_levellines_targetindependence}. The discussion about the reversibility of level lines of $\GFF$ can be found in \cite{SchrammSheffieldContinuumGFF} for the setting that the curves are standard $\SLE_4$ paths. We generalize this result to $\SLE_4(\underline{\rho})$ process. The discussion about reversibility for SLE paths can also be found in \cite{ZhanReversibility, ZhanReversibilityMore, DubedatSLEFreeField,  MillerSheffieldIG2, MillerSheffieldIG3} for different settings. 
\end{itemize}
We prove Theorem \ref{thm::coupling_gff_cle4time} in Section \ref{sec::interior_levellines}. Following is the outline of Section \ref{sec::interior_levellines}.
\begin{itemize}
\item Section \ref{subsec::radial_sle} is an introduction to radial $\SLE$ process.
\item Section \ref{subsec::cle} is an introduction to CLE. This section is a summary of the results in \cite{SheffieldWernerCLE, WernerWuCLEExploration}.
\item In Section \ref{subsec::interiror_levellines_interior}, we introduce level lines of $\GFF$ targeted at interior points (in contrast with the level lines targeted at boundary points studied in Section \ref{sec::boundary_levellines}). We explain the monotonicity property and the target-independent property of the level lines targeted at interior points, which are analogous to Theorems \ref{thm::boundary_levelline_gff_interacting} and \ref{thm::boundary_levellines_targetindependence} for the level lines targeted at boundary points.
\item Sections \ref{subsec::interior_levellines_upward} and \ref{subsec::interior_levellines_tree} complete the proof of Theorem \ref{thm::coupling_gff_cle4time}
\end{itemize}
Our contribution in the current paper.
\begin{itemize}
\item We summarize various properties of level lines of $\GFF$ that start from boundary points and are targeted at boundary points (Section \ref{sec::boundary_levellines}). 
\item We give new proofs for Theorem \ref{thm::boundary_levelline_gff_interacting} (the interaction behaviors between level lines), Theorem \ref{thm::sle_chordal_reversibility} (the reversibility of level lines), and Theorem \ref{thm::boundary_levellines_targetindependence} (the target-independence of level lines).  
\item We describe the behavior of level lines of $\GFF$ that start from boundary points and are targeted at interior points (Section \ref{sec::interior_levellines}). We give a complete proof for the existence of the coupling between $\GFF$ and $\CLE_4$ with time parameter: Theorem \ref{thm::coupling_gff_cle4time}. 
The close study on $\CLE_4$ exploration process in Section \ref{sec::interior_levellines} is an essential ingredient in the study of the level lines of $\GFF$ emanating from an interior point in a forthcoming paper \cite{WangWuLevellinesGFFII}. All this work plays an important role in a larger program on the study of conformal invariant metric on $\CLE$ which includes \cite{WernerWuCLEExploration, SheffieldWatsonWuSimpleCLEDoubly, WangWuLevellinesGFFII, SheffieldWatsonWuConformalInvariantMetricCLE4}.  
\end{itemize}
In this paper, we focus on $\GFF$ with piecewise constant boundary conditions. It is natural to wonder the situation of $\GFF$ when the boundary condition is not piecewise constant. However, the techniques in this paper and the previous papers on the level lines of GFF do not apply directly to the general case.
In \cite{PowellWuLevellinesGFF}, the authors study the properties of level lines of $\GFF$ whose boundary condition is regulated without continuation threshold, and the important ingredient there is \cite{KemppainenSmirnovRandomCurves}. The results in Section \ref{sec::boundary_levellines} provide the platform for further studies there. As discussed in the introduction of \cite{PowellWuLevellinesGFF}, the properties of the level lines of $\GFF$ when the boundary condition is not piecewise constant and does have continuation threshold is still open. 

\noindent\textbf{Acknowledgements.} We thank Richard Kenyon, Jason Miller, Scott Sheffield, Samuel Watson, and Wendelin Werner for helpful discussions. H.\ Wu's work is funded by NSF DMS-1406411.

\section{Boundary emanating Level Lines of $\GFF$}\label{sec::boundary_levellines}
\subsection{Chordal $\SLE$}\label{subsec::chordal_sle}
%\subsubsection{Chordal $\SLE_{\kappa}$}
\begin{comment}
We call a compact subset $K$ of $\overline{\HH}$ a hull if $H=\HH\setminus K$ is simply connected. Riemann's mapping theorem asserts that there exists a conformal map $\Psi$ from $H$ onto $\HH$ that $\Psi(\infty)=\infty$. Moreover, there exists a unique conformal map $\Psi$ from $H=\HH\setminus K$ onto $\HH$ such that $\Psi(z)=z+0+O(1/z)$, as $z\to\infty$.
We call such a conformal map as the conformal map from $H=\HH\setminus K$ onto $\HH$ normalized at $\infty$, and denote it as $\Psi_K$.
In particular, there exists a real $a=a(K)$ such that
\[\Psi(z)=z+\frac{2a}{z}+o(\frac{1}{z}),\quad\text{as }z\to\infty.\]
We can check that $a(K)$ is a non-negative increasing function of the set $K$. Thus this number $a(K)$ is a way to measure the size of $K$.
We call $a(K)$ as the capacity of $K$ in $\HH$ seen from $\infty$ or \textbf{half-plane capacity}.
\end{comment}
Suppose that $(W_t,t\ge 0)$ is a continuous real function with $W_0=0$. For each $z\in\overline{\HH}$, define the function $g_t(z)$ as the solution to \textit{Chordal Loewner Equation}
\[\partial_t g_t(z)=\frac{2}{g_t(z)-W_t},\quad g_0(z)=z.\]
This is well-defined as long as $g_t(z)-W_t$ does not hit 0. Define
\[T(z)=\sup\{t>0: \min_{s\in[0,t]}|g_s(z)-W_s|>0\}.\]
This is the largest time up to which $g_t(z)$ is well-defined. Set
\[K_t=\{z\in\overline{\HH}: T(z)\le t\},\quad H_t=\HH\setminus K_t.\]
We can check that $g_t$ is a conformal map from $H_t$ onto $\HH$ normalized at $\infty$ and that, for each $t$,
$g_t(z)=z+2t/z+o(1/z),$ as $z\to\infty$.
The family $(K_t,t\ge 0)$ is called the \textbf{Loewner chain} driven by $(W_t,t\ge 0)$.
Here we collect some general results about chordal Loewner chain.
\begin{proposition}\label{prop::loewner_chordal_driving_zeroset}
Suppose that $\gamma$ is a continuous curve in $\overline{\HH}$ from $0$ to $\infty$ with a continuous Loewner driving function $W$. Then the set $\{t:\gamma(t)\in\R\}$ has Lebesgue measure zero.
\end{proposition}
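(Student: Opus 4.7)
The plan is to argue by contradiction using the Lebesgue density theorem together with the capacity parameterization of the chordal Loewner chain. Suppose for contradiction that $A := \{t : \gamma(t)\in\R\}$ has positive Lebesgue measure, and choose a Lebesgue density point $t_0\in A$ at which $A$ has density one. Using the domain Markov property of Loewner chains, namely that the shifted family $(g_{t_0}(K_{t_0+s}\setminus K_{t_0}))_{s\ge 0}$ is itself a chordal Loewner chain driven by the continuous function $(W_{t_0+s})_{s\ge 0}$, I would reduce to the case $t_0=0$, $\gamma(0)=W_0=0$, and $0$ is a right-density point of $A$.

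The main quantitative ingredient is the capacity identity $\operatorname{hcap}(K_t)=2t$, which holds whenever the driving function $W$ is continuous. Combined with the isoperimetric-type bound $\operatorname{hcap}(K)\le r^2$ for any compact hull $K\subset\overline{B(x,r)}\cap\overline{\HH}$ attached to $\partial\HH$, this yields the lower bound $\operatorname{diam}(K_t)\ge\sqrt{2t}$ for small $t>0$. For $t\in A$, the tip $\gamma(t)$ belongs to $\R$, so the hull's diameter cannot be realized along $\R$ alone (subsets of $\R$ have zero half-plane capacity); it must be accounted for by an excursion of $\gamma$ into $\HH$ of vertical extent $\gtrsim\sqrt{t}$.

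To leverage the density-$1$ assumption, I would iterate this estimate over a fine partition of $[0,\epsilon]$ with endpoints in $A$. For each large integer $N$, choose $0=s_0<s_1<\cdots<s_N\le\epsilon$ with $s_i\in A$ and $s_i-s_{i-1}\approx\epsilon/N$, which is possible because $A$ has density $1$ at $0$. Applying the capacity-versus-diameter bound to each Markov-shifted sub-chain on $[s_{i-1},s_i]$, and controlling the conformal distortion of $g_{s_{i-1}}$ near $\gamma(s_{i-1})\in\R$ (via Schwarz reflection when the boundary of $H_{s_{i-1}}$ is locally flat at $\gamma(s_{i-1})$, or via Koebe-type estimates in the general case, uniformly in $i$ as $s_{i-1}\to 0$), one concludes that $\gamma$ must exhibit a genuine spatial excursion of size $\gtrsim\sqrt{\epsilon/N}$ on each of the $N$ sub-intervals.

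The main obstacle is converting these $N$ per-interval excursion bounds into an outright contradiction with the continuity of $\gamma$ at $0$. A direct summation of excursion sizes, giving a lower bound $\gtrsim\sqrt{N\epsilon}$, fails to contradict continuity because continuous curves can have infinite total variation. The resolution must come from a finer comparison: either a quadratic-variation inequality of the form $\sum_i|\gamma(u_i)-\gamma(s_{i-1})|^2\ge c^2\epsilon$ to be contradicted via an upper bound from the modulus of continuity of $\gamma$, or a geometric packing argument exploiting that all $N$ excursions lie inside a single small neighborhood of $0$ in $\overline{\HH}$ (by continuity of $\gamma$) so their number and sizes cannot simultaneously be arbitrary. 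Either way, taking $N\to\infty$ with $\epsilon>0$ suitably small produces the required contradiction.
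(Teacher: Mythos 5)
Your proposal has two problems, one quantitative and one structural (which you already flag yourself). Quantitatively, the claimed vertical-excursion bound $\gtrsim\sqrt{t}$ does not follow from continuity of $W$ alone: the capacity estimate does give $\mathrm{diam}(K_t)\gtrsim\sqrt{t}$, but this diameter may be realized essentially horizontally. A rectangular path rising from $0$ to height $h$, traversing distance $w$, and descending to $\R$ produces a hull of half-plane capacity $\asymp hw$, traced in Loewner time $t\asymp hw$ but with maximal height only $h$; taking $h\ll w$ gives $h\ll\sqrt{hw}\asymp\sqrt{t}$ while the driving function, which moves by $\approx w$ over this time, remains continuous (merely not $\tfrac12$-H\"older). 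So thin wide hulls defeat the height lower bound, and you would need $\tfrac12$-H\"older control on $W$, which is not assumed. Structurally, the obstacle you name at the end is fatal rather than technical: the per-interval estimate ``one excursion of size $\sim\sqrt{\Delta t}$ per interval of length $\Delta t$'' is exactly the generic diffusive scaling satisfied by every Loewner trace, including SLE traces, all of which are continuous. A lower bound on discretized quadratic variation of the type $\sum_i|\gamma(u_i)-\gamma(s_{i-1})|^2\gtrsim\epsilon$ is therefore perfectly consistent with continuity --- continuous curves with nonvanishing quadratic variation are the norm in this subject --- so the per-interval excursion bounds cannot, by themselves, contradict anything, and the positive-measure hypothesis on $A$ has not actually been put to work.

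The paper's proof is the citation of \cite[Lemma 2.5]{MillerSheffieldIG1}, and the argument there is of a different nature: it works directly with the Loewner dynamics rather than with quantitative geometry of the curve. The key observation is that if $\gamma(t)\in\R$ for a (simple) continuous Loewner trace, then $W_t$ coincides with $V^L_t$ or $V^R_t$, the images under $g_t$ of the extremal points of $K_t\cap\R$; these processes satisfy the reflected Loewner ODE $\partial_t V=2/(V-W)$ off the contact set, and finiteness of $V$ forces the time integral of $1/(V_s-W_s)$ to converge, which in turn forces $\{t:V_t=W_t\}$ to have Lebesgue measure zero. This is a soft structural fact about the Loewner equation --- analogous to the instantaneous reflection built into Definition \ref{def::chordal_sle_general} for $\SLE_\kappa(\underline{\rho})$ --- and it bypasses the density-point and capacity-diameter route entirely. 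If you want to prove the Proposition from scratch, that is the route to take; the one you propose does not close.
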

\begin{proof}\cite[Lemma 2.5]{MillerSheffieldIG1}
\end{proof}
\begin{proposition}\label{prop::loewner_chordal_driving_existence_continuity}
Suppose that $T\in (0,\infty]$. Let $\gamma: [0,T)\to\overline{\HH}$ be a continuous, non-crossing curve with $\gamma(0)=0$. Assume that $\gamma$ satisfies the following:
\begin{enumerate}
\item [(1)] $\gamma(t,T)$ is contained in the closure of the unbounded connected component of $\HH\setminus\gamma(0,t)$;
\item [(2)] $\gamma^{-1}(\gamma[0,t]\cup\R)$ has empty interior in $(t,T)$.
\end{enumerate}
For each $t>0$, let $g_t$ be the conformal map from the unbounded connected component of $\HH\setminus\gamma[0,t]$ onto $\HH$ with $\lim_{z\to\infty}|g_t(z)-z|=0$. After reparameterization, $(g_t,t\ge 0)$ solves the Loewner equation
\[\partial_t g_t(z)=\frac{2}{g_t(z)-W_t},\quad g_0(z)=z,\]
with continuous driving function $W$.
\end{proposition}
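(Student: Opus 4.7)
The plan is to follow the classical route for associating a Loewner chain to a growing family of hulls: first choose a canonical parametrization by half-plane capacity, then define the driving function as the image of the tip of $\gamma$ under $g_t$, and finally verify continuity of the driving function and derive the Loewner ODE. For each $t \in [0,T)$ let $H_t$ be the unbounded connected component of $\HH \setminus \gamma[0,t]$ and $K_t = \HH \setminus H_t$; Riemann's theorem plus the normalization at $\infty$ give a unique conformal $g_t\colon H_t \to \HH$ with $g_t(z) = z + a(t)/z + o(1/z)$ as $z \to \infty$. Monotonicity $K_s \subset K_t$ for $s \le t$ follows from assumption (1), so $a$ is nondecreasing.

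Next I would prove that $a$ is continuous and strictly increasing on $[0,T)$. Continuity: as $s \to t$ the set $\gamma[s \wedge t, s \vee t]$ shrinks to a point, so $K_s \to K_t$ in the Carath\'eodory kernel sense from $\infty$; standard kernel-convergence theorems then give $g_s \to g_t$ locally uniformly on $H_t$, forcing $a(s) \to a(t)$. Strict monotonicity: if $a(s) = a(t)$ for some $s < t$, then the hydrodynamically normalized maps $g_s$ and $g_t$ must coincide and hence $H_s = H_t$; combined with condition (1) this forces $\gamma(u) \in \gamma[0,s] \cup \R$ for every $u \in [s,t]$, so $(s,t)$ lies in the interior of $\gamma^{-1}(\gamma[0,t] \cup \R)$, contradicting (2). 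After reparametrizing by $a$ we may assume $a(t) = 2t$. Condition (1) implies that $\gamma(t)$ is accessible from $H_t$, so the prime end it represents has a well-defined image in $\R$ under $g_t$; set $W_t := g_t(\gamma(t))$.

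The main obstacle will be continuity of $W$, because this is where assumptions (1) and (2) really interact. I would argue as follows. Fix $t$; for $s$ slightly larger than $t$ the hull $K_s \setminus K_t$ is connected and, by uniform continuity of $\gamma$ on compacts of $[0,T)$, has small diameter in $\overline{\HH}$. Its image under $g_t$ is then a small hull in $\overline{\HH}$ attached to $\R$ at $W_t$, because condition (2) rules out the pathology where $\gamma$ spends a positive-measure set of time inside $\gamma[0,t] \cup \R$ and thereby disconnects a macroscopic region of $H_t$ from $\infty$ as $s$ crosses $t$. Applying the composition $g_s \circ g_t^{-1}$ (the conformal map removing this small hull from $\HH$) and invoking the standard estimate that a hull of small half-plane capacity attached near a point has small diameter, one obtains $|W_s - W_t| \to 0$ as $s \to t^+$; the symmetric argument handles $s \to t^-$. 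Once $W$ is continuous, the Loewner ODE $\partial_t g_t(z) = 2/(g_t(z) - W_t)$ is obtained by the usual computation: for fixed $z \in H_t$ the map $g_{t+h} \circ g_t^{-1}$ removes a hull of half-plane capacity $2h$ near $W_t$ from $\HH$, and the standard expansion of such maps gives $(g_{t+h}(z) - g_t(z))/h \to 2/(g_t(z) - W_t)$ uniformly on compact subsets of $\HH \setminus \{W_t\}$, yielding the right derivative; continuity of the right-hand side in $t$ upgrades this to full differentiability.
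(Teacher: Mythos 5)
The paper proves this proposition by citation only, referring to Lawler's book (Proposition 4.3) and Miller--Sheffield (Proposition 6.12), which follow exactly the route you outline: half-plane capacity parametrization, strict monotonicity and continuity of the capacity, definition of $W_t$ as the prime-end image of the tip, continuity of $W$, and then the standard expansion to obtain the ODE. Your strict-monotonicity argument is correct: $a(s)=a(t)$ forces $H_s=H_t$, and condition (1) then pins $\gamma(s,t)$ to $\gamma[0,s]\cup\R$, contradicting condition (2).

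The gap is in the continuity-of-$W$ step. You claim that $K_s\setminus K_t$ has small diameter in $\overline\HH$ by uniform continuity of $\gamma$, but this is false in general: uniform continuity only controls $\operatorname{diam}(\gamma[t,s])$, whereas $K_s\setminus K_t$ also contains any region that $\gamma[t,s]$ pinches off from $H_t$, and such a region can have macroscopic diameter even when $s-t$ is tiny (think of a curve that spends a long time tracing a large near-loop and then closes it during $[t,s]$). What is actually needed, and what suffices for the Loewner expansion, is that the \emph{image} hull $\tilde A_{t,s}=g_t(K_s\setminus K_t)$ has small diameter. This does not follow from your stated reasoning; it requires a separate estimate, e.g.\ that to reach the pinched-off region from $\infty$ inside $H_t$ one must pass within $\operatorname{diam}(\gamma[t,s])$ of $\gamma(t)$, so the extremal length (or harmonic measure from $\infty$) of that region is small; conformal invariance then bounds $\operatorname{diam}(\tilde A_{t,s})$. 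Separately, your appeal to condition (2) here is misplaced: (2) only rules out $\gamma$ sitting on $\gamma[0,t]\cup\R$ for a time interval (needed for strict monotonicity of $a$), and it does not prevent a single touching time from disconnecting a macroscopic bubble. The continuity of $W$ rests on condition (1) together with the extremal-length estimate above, not on condition (2).
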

\begin{proof}
\cite[Proposition 4.3]{LawlerConformallyInvariantProcesses} and \cite[Proposition 6.12]{MillerSheffieldIG1}
\end{proof}
\medbreak
\textbf{Chordal $\SLE_{\kappa}$} for $\kappa\ge 0$ is the Loewner chain driven by $W_t=\sqrt{\kappa}B_t$ where $B$ is a 1-dimensional Brownian motion. Here are several basic properties of chordal SLE:
\begin{itemize}
\item It is scale-invariant: For any $\lambda>0$, the process $(\lambda^{-1}K_{\lambda^2t},t\ge 0)$ has the same law as $K$ itself.
\item It satisfies domain Markov property: For any finite stopping time $\tau$, the process $(f_{\tau}(K_{t+\tau}),t\ge 0)$ has the same law as $K$ itself where $f_t:=g_t-W_t$.
%\item It is symmetric with respect to the imaginary axis: $(-K_t,t\ge 0)$ has the same law as $K$ itself.
\end{itemize}
\begin{proposition}
For all $\kappa\in [0,4]$, chordal $\SLE_{\kappa}$ is almost surely generated by a simple continuous curve, i.e. there exists a simple continuous curve $\gamma$ such that
$K_t=\gamma[0,t]$ for all $t\ge 0$. Moreover, the curve is almost surely transient: $\lim_{t\to\infty}\gamma(t)=\infty$.
\end{proposition}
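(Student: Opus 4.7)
The plan is to establish the three assertions in sequence: (i) existence of a continuous curve generating the Loewner chain, (ii) simpleness when $\kappa\in[0,4]$, (iii) transience. For (i) I would invoke the classical Rohde--Schramm trace theorem. Concretely, working with the backward Loewner flow driven by $-\sqrt{\kappa}B_t$ (time-reversed), one analyzes the derivative $|f_t'(W_t+iy)|$ of the inverse map $f_t=g_t^{-1}$ and proves a moment bound of the form $\mathbb{E}\bigl[|f_t'(W_t+iy)|^p\bigr] \le C\, y^{-\beta(p,\kappa)}$ for a range of $p$, using Itô's formula on an explicit power of $|f_t'|$ together with the SDE satisfied by the driving process. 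Combining this with the distortion (Koebe) estimate, one shows via a Kolmogorov continuity / Borel--Cantelli argument that the limit $\gamma(t):=\lim_{y\downarrow 0}f_t(W_t+iy)$ exists and is (Hölder) continuous in $t$, and that $K_t$ is the hull generated by $\gamma[0,t]$ (the closure of the complement of the unbounded component of $\HH\setminus\gamma[0,t]$).

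For (ii) the key tool is the Bessel process description of the boundary flow. For any $x\in\R\setminus\{0\}$, the process $V^x_t:=g_t(x)-W_t$ satisfies
\[
dV^x_t=\frac{2}{V^x_t}\,dt-\sqrt{\kappa}\,dB_t,
\]
so $|V^x_t|$ is (after a linear time change) a Bessel process of dimension $d=1+4/\kappa$. When $\kappa\le 4$ we have $d\ge 2$, so $V^x$ almost surely never reaches $0$; equivalently, no boundary point (other than the starting point) is ever swallowed. Together with step (i) this forces $K_t=\gamma[0,t]$ with no bounded complementary components forming, which is exactly the statement that $\gamma$ is a simple curve that stays in $\overline\HH$ and touches $\R$ only at $0$ (using Proposition~\ref{prop::loewner_chordal_driving_zeroset} to rule out the curve spending positive Lebesgue-measure time on $\R$).

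For (iii) I would combine the Bessel estimates with scale invariance. Writing $V^{\pm 1}_t$ for the images of the force-free points $\pm 1$, the Bessel process of dimension $d\ge 2$ is recurrent when $d=2$ (i.e.\ $\kappa=4$) and transient when $d>2$; in either case $|V^{\pm 1}_t|\to\infty$ in the appropriate sense as $t\to\infty$. This, together with the Koebe $1/4$ theorem applied to $f_t$ at the tip (giving $\mathrm{dist}(\gamma(t),\R)\asymp |f_t'(W_t+i)|$ up to a capacity-dependent factor) and the fact that the half-plane capacity $2t\to\infty$, implies $|\gamma(t)|\to\infty$. The cleanest way to package this is to use scaling: for any $R>0$, $(R^{-1}K_{R^2 t})$ has the same law as $K_t$, so the event $\{\limsup_{t\to\infty}|\gamma(t)|<\infty\}$ is scale-invariant, hence has probability $0$ or $1$; a direct computation using the Bessel SDE rules out probability $1$.

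The main obstacle is step (i): the derivative moment estimate and the passage from continuity of $y\mapsto f_t(W_t+iy)$ for fixed small $y$ to continuity of the boundary limit $\gamma(t)$. At $\kappa=4$ the admissible range of exponents $p$ becomes tight and care is needed so that the Hölder exponent extracted from Kolmogorov's criterion is strictly positive; this is exactly where the Rohde--Schramm argument has to be read carefully in the $\kappa=4$ regime. Since this existence and continuity are established in the literature (and stated in the form above in, e.g., \cite{LawlerConformallyInvariantProcesses}), my write-up would cite it and concentrate the new work on the Bessel/swallowing analysis that gives simpleness and transience.
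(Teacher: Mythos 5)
The paper dispatches this proposition with a single citation to \cite[Theorem 5.1]{RohdeSchrammSLEBasicProperty}; you instead sketch a proof from scratch, so the comparison is really between your sketch and the classical Rohde--Schramm argument that the paper is citing. Your steps (i) and (ii) are the standard route, and the Bessel-dimension computation $d=1+4/\kappa\ge 2$ for $\kappa\le 4$ is correct. One caveat on (ii): showing that $V^x_t=g_t(x)-W_t$ never hits zero for real $x\neq 0$ only proves that $\gamma$ avoids $\R\setminus\{0\}$ after time zero; to upgrade this to \emph{simplicity} of $\gamma$ (no self-intersections in $\HH$) you need to invoke the domain Markov property to map a hypothetical interior self-touch at time $t>s$, after conformally unrolling $\gamma[0,s]$ by $g_s$, to a boundary hit of a fresh $\SLE_\kappa$, and then apply the Bessel argument to that. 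Your write-up only gestures at this, so the logical bridge is missing.

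Step (iii) has a genuine gap. For $\kappa=4$ the Bessel dimension is $d=2$, and the two-dimensional Bessel process started from $1$ is \emph{neighborhood recurrent}: $\liminf_{t\to\infty}|V^{\pm 1}_t|=0$ and $\limsup_{t\to\infty}|V^{\pm 1}_t|=\infty$ almost surely. So the assertion that ``in either case $|V^{\pm1}_t|\to\infty$ in the appropriate sense'' is false precisely at the endpoint $\kappa=4$ that the proposition is required to cover. Separately, the remark that scale invariance ``hence'' gives a $0$--$1$ law is incorrect as stated: an event fixed by the measure-preserving map $K\mapsto\lambda^{-1}K_{\lambda^2\cdot}$ need not have probability in $\{0,1\}$; you would also need to argue that the event lies in the tail $\sigma$-algebra of the driving Brownian motion (so that Kolmogorov's $0$--$1$ law applies) or establish ergodicity of the scaling flow, neither of which is done. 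The cheap observation is that since the half-plane capacity of $K_t$ equals $2t\to\infty$ while any fixed bounded hull has finite capacity, one gets $\limsup_{t\to\infty}|\gamma(t)|=\infty$ for free; but transience, i.e. $\liminf_{t\to\infty}|\gamma(t)|=\infty$, is strictly stronger and is exactly the point of Rohde--Schramm's Theorem~7.1, whose proof does not reduce to monotone Bessel behaviour and must handle the recurrent case $\kappa=4$ by a different argument. As written, your proposal does not close this.
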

\begin{proof}
\cite[Theorem 5.1]{RohdeSchrammSLEBasicProperty}
\end{proof}
Fix $\kappa\ge 0$ and
\[\underline{\rho}^L=(\rho^{l,L},...,\rho^{1,L}), \quad\underline{\rho}^R=(\rho^{1,R},...,\rho^{r,R});\quad \underline{x}^{L}=(x^{l,L}<\cdots<x^{1,L}\le 0),\quad  \underline{x}^R=(0\le x^{1,R}<\cdots<x^{r,R}).\] Take the convention
\[\rho^{0,R}=\rho^{0,L}=0,\quad x^{0,L}=0^-,\quad x^{0,R}=0^+,\quad x^{l+1,L}=-\infty, \quad x^{r+1,R}=\infty.\]
Define
\[\overline{\rho}^{j,L}=\sum_{i=0}^j\rho^{i,L},\quad \text{ for }0\le j\le l;\quad \overline{\rho}^{j,R}=\sum_{i=0}^j\rho^{i,R},\quad \text{ for }0\le j\le r.\]

\begin{definition}\label{def::chordal_sle_general}
Let $B_t$ be a standard Brownian motion. We will say that the process $(W_t,V^{i,q}_t)$ describe an \textbf{$\SLE_{\kappa}(\underline{\rho}^L;\underline{\rho}^R)$ process with force points $(\underline{x}^L;\underline{x}^R)$} if they are adapted to the filtration of $B$ and the following hold:
\begin{enumerate}
\item [(1)] The processes $W_t,V^{i,q}_t$ and $B_t$ satisfy the following SDE on the time intervals on which $W_t$ does not collide with any of $V^{i,q}_t$.
\[dW_t=\sqrt{\kappa}dB_t+\sum_{q\in\{L,R\}}\sum_{i}\frac{\rho^{i,q}dt}{W_t-V^{i,q}_t},\quad dV^{i,q}_t=\frac{2dt}{V^{i,q}_t-W_t},\quad \text{for } q\in\{L,R\}.\]
\item [(2)] We have instantaneous reflection of $W_t$ off of the $V^{i,q}_t$, i.e. it is almost surely the case that for Lebesgue almost all times $t$ we have that $W_t\neq V^{i,q}_t$ for each $i,q$.
\item [(3)] We also have almost surely that, for each $i,q$,
\[V^{i,q}_t=x^{i,q}+\int_0^t\frac{2ds}{V^{i,q}_s-W_s}.\]
\end{enumerate}
\end{definition}

We define the \textbf{continuation threshold} to be the infimum of the $t$ values for which
\[\text{either }\sum_{i:V^{i,L}_t=W_t}\rho^{i,L}\le -2,\quad \text{or }\sum_{i:V^{i,R}_t=W_t}\rho^{i,R}\le -2.\]

\begin{proposition}\label{prop::chordal_sle_existence_uniqueness}
Definition \ref{def::chordal_sle_general} uniquely determines a joint law for $(B_t, W_t,V^{i,q}_t)$ --each defined for all $t$ up to the continuation threshold. Under this law, the process $(B_t, W_t,V^{i,q}_t)$ is a continuous multidimensional Markovian process indexed by $t$.
\end{proposition}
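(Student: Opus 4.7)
The plan is to generalize the two-force-point existence and uniqueness argument, which treats separately the cases (a) both $x^{1,L},x^{1,R}$ strictly away from $0$, (b) exactly one force point at $0^\pm$ and the others strictly away, and (c) force points on both sides at $0^\pm$. I will decompose the time axis into phases separated by the successive collision times $\tau_k$ at which $W_t$ first touches a force point $V^{i,q}_t$ that was previously strictly away from it, and then build $(B,W,V^{i,q})$ phase by phase.

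On each phase, every active force point $V^{i,q}_t$ is either strictly separated from $W_t$ or sits immediately at $0^-$ or $0^+$ in the conformal image. For the strictly separated force points I would obtain the joint law by Girsanov, weighting the law of standard $\SLE_\kappa$ (or, if some force point is already at $0^-$ or $0^+$, the law of the corresponding single-force-point $\SLE_\kappa(\rho)$ obtained from a $\sqrt{\kappa}$-Bessel process of dimension $1+2(\rho+2)/\kappa>1$) by the local martingale
\[
M_t \;=\; \prod_{(i,q)}\bigl(g_t'(x^{i,q})\bigr)^{\alpha_{i,q}}\, \bigl|g_t(x^{i,q})-W_t\bigr|^{\beta_{i,q}} \prod_{(i,q)\neq(j,q')}\bigl|g_t(x^{i,q})-g_t(x^{j,q'})\bigr|^{\gamma_{i,q,j,q'}},
\]
with exponents $\alpha,\beta,\gamma$ chosen by a direct It\^o computation so that the drift of $W$ under the weighted law matches Condition~(1) of Definition~\ref{def::chordal_sle_general}. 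This uniquely determines the joint law on the interior of each phase; the reflection requirement at force points not currently touching $W$ is automatic since those $V^{i,q}_t$ simply evolve by the ODE in Condition~(1).

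At the end of a phase, $W_t$ collides with some $V^{i_0,q_0}_t$. If the sum of $\rho$-weights of all coordinates currently touching $W$ on side $q_0$ is $\le -2$, this is the continuation threshold and the process terminates; otherwise the merged force point now sits at $0^{q_0}$ with the accumulated weight, and I restart the construction using the Bessel description for the newly merged coordinate together with the Girsanov weighting above for the remaining strictly separated force points. Because each phase absorbs at least one previously separated force point, after at most $l+r$ phases all force points are either absorbed into $W$ or the continuation threshold has been hit, and the procedure yields a unique joint law up to the threshold.

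It remains to handle the case in which the initial data already satisfies $x^{1,L}=0^-$ and/or $x^{1,R}=0^+$, so that the first phase starts with coincident driving coordinates and the Girsanov setup breaks down. Following the standard approach, I would approximate by the well-defined processes obtained by replacing the boundary-adjacent force points by $\pm\varepsilon$, then pass to the limit using scale invariance and the geometric coupling across the stopping times $\tau^{(k)}_n=\inf\{t\ge 0: |V^{1,L}_t-W_t|\vee|V^{1,R}_t-W_t|\ge e^n\}$. The main obstacle I anticipate is precisely this limit step with many additional force points present: the two-force-point coupling relies on exact scale invariance at each level $n$, and one has to verify that the far-away $V^{i,q}_t$, which merely evolve smoothly via the ODE, do not disrupt the exponential coupling rate of the two closest coordinates. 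Once the limit is identified, the continuous Markovian property follows automatically from the pathwise construction, and uniqueness is inherited from the uniqueness in each of the intermediate steps.
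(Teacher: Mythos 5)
The paper itself does not spell out a proof: it simply invokes \cite[Theorem 2.2]{MillerSheffieldIG1}, and the phase-by-phase decomposition you describe (Girsanov to add strictly separated force points, Bessel description for a coordinate sitting at $0^\pm$, induction on the number of absorbed force points) is indeed the outline of that proof. So in spirit your proposal is the right one. The commented-out text in the source even sketches your three-case split for two force points, including the stopping-time coupling you cite.

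The genuine gap is the one you flag yourself but do not resolve: the passage to the limit when both $x^{1,L}=0^-$ and $x^{1,R}=0^+$ with additional force points present. Your plan is to approximate \emph{all} coordinates simultaneously, replacing the degenerate pair by $\pm\varepsilon$ while keeping the far-away force points fixed, and then to run the exponential coupling across the levels $\tau_n^{(k)}$. But that coupling uses exact scale invariance of the triple $(V^L,W,V^R)$; the far-away $V^{i,q}$ break it, and you give no argument that the coupling rate survives. The clean way to close this (and, I believe, what Miller--Sheffield actually do) is to reverse the order of operations: first construct the pure $\SLE_\kappa(\rho^{1,L};\rho^{1,R})$ process with exactly the two coincident force points at $0^\pm$, so that the scale-invariance/coupling argument applies verbatim; only then tilt by the Girsanov martingale that introduces the remaining $x^{i,q}$. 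Because those remaining points are at strictly positive distance from $0$, every factor $|g_t(x^{i,q})-W_t|$, $|g_t(x^{i,q})-V^{1,L}_t|$, $|g_t(x^{i,q})-V^{1,R}_t|$, $g_t'(x^{i,q})$ in the weight is bounded away from $0$ and $\infty$ on an initial time interval, so the tilt is nondegenerate at time $0$ and uniquely determines the law up to the first time $W$ meets one of the new force points. Your induction then takes over as before. Restating your final paragraph in this order would turn the ``obstacle you anticipate'' into a verified step rather than a conjecture.
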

\begin{proof}
\cite[Theorem 2.2]{MillerSheffieldIG1}.
\end{proof}

\begin{proposition}\label{prop::chordal_sle_continuity_nonintersecting}
Assume that
\[\overline{\rho}^{i,q}\ge \kappa/2-2, \quad \text{for all }i,q.\] Then $\SLE_{\kappa}(\underline{\rho}^L;\underline{\rho}^R)$ process is a continuous curve.
\end{proposition}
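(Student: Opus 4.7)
The plan is to deduce continuity of $\SLE_\kappa(\underline{\rho}^L;\underline{\rho}^R)$ from that of standard chordal $\SLE_\kappa$ via Girsanov reweighting. Standard $\SLE_\kappa$ is almost surely generated by a continuous transient curve (Rohde--Schramm, cited above), so any measure that is locally absolutely continuous with respect to it will inherit continuity on the relevant time windows. The hypothesis $\overline{\rho}^{i,q} \ge \kappa/2-2$ will enter precisely to ensure that this absolute continuity extends to all time.

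Starting from standard $\SLE_\kappa$ with driving $W_t = \sqrt{\kappa}B_t$ and conformal maps $g_t$, I would introduce the strictly positive local martingale
\[
M_t = \prod_{q,i} \bigl|g_t(x^{i,q})-W_t\bigr|^{\rho^{i,q}/\kappa}\, g_t'(x^{i,q})^{\rho^{i,q}(\rho^{i,q}+4-\kappa)/(4\kappa)}\prod_{(i,q)\neq(j,q')}\bigl|g_t(x^{i,q})-g_t(x^{j,q'})\bigr|^{\rho^{i,q}\rho^{j,q'}/(2\kappa)}.
\]
An It\^o calculation shows that $M_t$ is a local martingale and that the SDE satisfied by $W$ under the reweighted measure $M_{t\wedge \tau}\,d\PP$, for any stopping time $\tau$ on which $M$ is bounded, coincides with the SDE of Definition \ref{def::chordal_sle_general}. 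Thus on $[0,\tau]$ the laws of $\SLE_\kappa$ and $\SLE_\kappa(\underline{\rho}^L;\underline{\rho}^R)$ are mutually absolutely continuous, and the continuity of the former transfers to the latter.

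The main step, and the main obstacle, is to verify that under the cumulative-weight hypothesis the curve almost surely does not collide with any image $V^{j,q}_t = g_t(x^{j,q})$, so that $\tau$ may be taken arbitrarily large. The argument is Bessel-type: at the first approach of $W_t$ to $V^{j,q}_t$, all nearer force points on side $q$ must already have been absorbed into $W$, so $Z_t := W_t - V^{j,q}_t$ has drift coefficient $\overline{\rho}^{j,q}/Z_t$ near the collision, i.e., $Z$ behaves as $\sqrt\kappa$ times a Bessel process of dimension $1 + 2(\overline{\rho}^{j,q}+2)/\kappa \ge 2$. Such a Bessel process almost surely does not reach $0$. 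Making this rigorous requires a careful recursive Markovian bookkeeping at the merger stopping times of $W$ with the successive force points, identifying at each stage the correct effective weight $\overline{\rho}^{j,q}$ and invoking the non-hitting result on each resulting excursion; this is the technically delicate part.

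Finally, when some force points sit at $0^\pm$ the martingale $M_t$ degenerates at $t=0$, so I would apply the argument on $[s,\infty)$ for each $s>0$, at which time the triple $(W_s, V^{i,q}_s)$ is strictly ordered by Proposition \ref{prop::chordal_sle_existence_uniqueness}, yielding continuity on $[s,\infty)$. Continuity at $t=0$ then follows from $W_0 = 0$ together with the half-plane-capacity normalisation $g_t(z) = z + 2t/z + o(1/z)$, which forces $\gamma(t)\to 0$ as $t\to 0$. Piecing this together gives continuity of $\SLE_\kappa(\underline{\rho}^L;\underline{\rho}^R)$ on all of $[0,\infty)$.
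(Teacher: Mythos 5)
Your approach is essentially the paper's: the proof given there is a single sentence asserting that under the hypothesis $\overline{\rho}^{i,q}\ge\kappa/2-2$ the hull never swallows any force point (nor any other boundary point away from $0$), so the law is absolutely continuous with respect to standard $\SLE_\kappa$ on an exhausting family of stopping times and continuity transfers. You have simply filled in the details---the explicit Girsanov martingale $M_t$, the Bessel comparison at a collision (effective weight $\overline{\rho}^{j,q}$ giving dimension $1+2(\overline{\rho}^{j,q}+2)/\kappa\ge 2$), and the treatment of degenerate starting configurations by working on $[s,\infty)$ for $s>0$. One small slip: in your expression for $M_t$ the second product over ordered pairs $(i,q)\neq(j,q')$ double-counts each unordered pair, so the exponent should be $\rho^{i,q}\rho^{j,q'}/(4\kappa)$, or equivalently the product should be taken over unordered pairs with exponent $\rho^{i,q}\rho^{j,q'}/(2\kappa)$, matching the two-force-point case.
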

\begin{proof}
Under the assumption that $\overline{\rho}^{i,q}\ge \kappa/2-2$, for all $i,q$, the curve can not hit the boundary and thus it is absolutely continuous with respect to $\SLE_{\kappa}$, and then it is almost surely continuous.
\end{proof}

\begin{proposition}\label{prop::chordal_sle4_mart}
Suppose we are given a random continuous curve $\gamma$ in $\overline{\HH}$ from $0$ to $\infty$ whose Loewner driving function $W$ is almost surely continuous. Suppose $(K_t,t\ge 0)$ is the corresponding Loewner chain and $(g_t,t\ge 0)$ is the corresponding sequence of conformal maps. Set $f_t=g_t-W_t$. Suppose that $V^{i,q}_t$ is the image of $x^{i,q}$ under $g_t$. Let $\eta_t(z)$ be the function defined in Theorem \ref{thm::boundary_levelline_gff_coupling}.
Then $W_t$ and $V^{i,q}_t$ can be coupled with a standard Brownian motion to describe an $\SLE_4(\underline{\rho}^L;\underline{\rho}^R)$ process with force points $(\underline{x}^L;\underline{x}^R)$ up to the continuation threshold if and only if $\eta_t(z)$ evolves as a Brownian motion when parameterized by the log of the conformal radius of $\HH\setminus K_t$ seen from $z$, for each fixed $z\in\HH$, until the time $t$ that $z$ is swallowed by $K_t$.
\end{proposition}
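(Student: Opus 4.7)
My plan is to adapt the Itô-calculus martingale characterization used in \cite{SchrammSheffieldContinuumGFF, MillerSheffieldIG1} for a single or small number of force points to the full multiple-force-point setting. I would realise $\eta_t(z)$ as the imaginary part of an explicit holomorphic function of $g_t(z),W_t,V^{i,q}_t$ and then read off both implications from the Itô expansion of that function. Using the representation of a piecewise constant boundary function as a sum of $\arg$-jumps, together with the identity $f_t(z)-(V^{i,q}_t-W_t)=g_t(z)-V^{i,q}_t$, one obtains $\eta_t(z)=\Im M_t(z)+\text{const}$ where
\[
M_t(z)\;=\;-\tfrac{2\lambda}{\pi}\log(g_t(z)-W_t)\;-\;\sum_{q\in\{L,R\}}\sum_{i\ge 1}\tfrac{\lambda\rho^{i,q}}{\pi}\log(g_t(z)-V^{i,q}_t).
\]
By Schwartz's integral formula applied to a small loop around $z$, $\eta_t(z)$ is a continuous local martingale in $t$ for every $z$ if and only if $M_t(z)$ is one.

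For the ``only if'' direction, assume $(W,V^{i,q})$ solves the $\SLE_4(\underline\rho)$ SDE and apply Itô termwise. The cross terms give $d\log(g_t(z)-V^{i,q}_t)=-2\,dt/[(g_t(z)-W_t)(V^{i,q}_t-W_t)]$, pure drift. For $d\log(g_t(z)-W_t)$, the Loewner drift $2\,dt/(g_t(z)-W_t)^2$ is cancelled by the quadratic-variation correction $\tfrac12 d\la W\ra_t/(g_t(z)-W_t)^2$ exactly when $\kappa=4$; this is the only place the method forces $\kappa=4$. After the cancellation the remaining drifts from $\log(g_t(z)-W_t)$ combine with the cross-term drifts and one checks directly that
\[
dM_t(z)\;=\;\tfrac{4\lambda}{\pi}\cdot\tfrac{dB_t}{g_t(z)-W_t}
\]
has vanishing total drift. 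Taking imaginary parts gives $d\la\eta_t(z)\ra=(16\lambda^2/\pi^2)\bigl(\Im(g_t(z)-W_t)^{-1}\bigr)^2\,dt$. A short Möbius computation (using the conformal map from $\HH\setminus K_t$ to $\HH$ that fixes $z=i$ and tracks its derivative) shows that $C_t(z):=\log\CR(\HH\setminus K_t;z)$ satisfies $dC_t(z)=-4\bigl(\Im(g_t(z)-W_t)^{-1}\bigr)^2\,dt$, so with $\lambda=\pi/2$ we obtain $d\la\eta_t(z)\ra=-dC_t(z)$, which is exactly the Brownian-motion clock in the statement.

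For the ``if'' direction, suppose $\eta_t(z)$ evolves as Brownian motion in $-C_t(z)$ for every $z$, hence in particular is a continuous local martingale. Since $g_t(z)$ and the $V^{i,q}_t$ are absolutely continuous in $t$, the Itô expansion of $M_t(z)$ forces $\log(g_t(z)-W_t)$ to be a semimartingale, hence $W_t=m_t+A_t$ with $m$ a continuous local martingale and $A$ of finite variation. The vanishing-drift identity for $dM_t(z)$, multiplied through by $(g_t(z)-W_t)^2$, becomes
\[
\bigl(d\la m\ra_t-4\,dt\bigr)\;+\;(g_t(z)-W_t)\Bigl(2\,dA_t+\sum_{q,i}\tfrac{2\rho^{i,q}}{V^{i,q}_t-W_t}\,dt\Bigr)\;=\;0.
\]
Since $z\mapsto g_t(z)-W_t$ is non-constant holomorphic on $\HH\setminus K_t$, requiring the identity to hold for all such $z$ forces both coefficients to vanish: $d\la m\ra_t=4\,dt$ (so $m_t=2B_t$ for a standard Brownian motion $B$) and $dA_t=\sum_{q,i}\rho^{i,q}/(W_t-V^{i,q}_t)\,dt$, which is exactly the $\SLE_4(\underline\rho)$ SDE. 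Condition (2) of Definition \ref{def::chordal_sle_general} follows because $\{t:W_t=V^{i,q}_t\}\subset\{t:\gamma(t)\in\R\}$ has Lebesgue measure zero by Proposition \ref{prop::loewner_chordal_driving_zeroset}. Condition (3) follows because continuity of $\eta_t(z)$ in $t$ rules out any singular (local-time) increment on top of the absolutely continuous evolution of $V^{i,q}_t$.

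The main technical obstacle is the coefficient-matching step above: the identity derived from $dM_t(z)$ is a priori only an $\omega$-by-$\omega$, $dt$-almost-everywhere equality for each fixed $z$, and one has to promote it to a polynomial identity in $g_t(z)-W_t$ \emph{jointly} in $z$. The cleanest route is to view $z\mapsto M_t(z)$ as an analytic family of continuous semimartingales and read the two coefficients off as the first two terms of the Laurent expansion at the pole $z\to\gamma(t)$, which are uniquely determined by analyticity and must each vanish in order for the whole expansion to be a martingale. Once this is made rigorous, the remainder of the argument is routine stochastic calculus.
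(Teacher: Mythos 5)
Your proposal is correct and follows essentially the same argument as the paper (whose stated proof is a citation to \cite[Theorem 2.6]{MillerSheffieldIG1}; the paper also contains a commented-out two-force-point version of precisely this Itô computation, including the complex local martingale $M_t(z)$, the coefficient-matching step, and the conformal-radius calculation $dC_t(z)=-4\bigl(\Im(g_t(z)-W_t)^{-1}\bigr)^2dt$, which you reproduce and extend to arbitrarily many force points). Your remedy for the coefficient-matching step—reading the two coefficients off the Laurent expansion of the analytic family $z\mapsto M_t(z)$ at the tip—is a slightly more careful phrasing of the paper's ``evaluate at $z$ with $g_t(z)-W_t$ close to zero'' and is sound.
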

\begin{proof}
\cite[Theorem 2.6]{MillerSheffieldIG1}.
\end{proof}

The following two propositions are results about the interacting behavior of SLE process with the boundary that we will use later in the paper.
\begin{proposition}\label{prop::dubedat_lemma15}
Fix $\kappa\in [0,4]$. Suppose that $K$ is an $\SLE_{\kappa}(\underline{\rho}^R)$ process with force points $(\underline{x}^R)$ where $0<x^{1,R}<\cdots<x^{r,R}$. Set
\[\overline{\rho}^{j,R}=\sum_{i=1}^j\rho^{i,R},\quad \text{for }1\le j\le r.\] Let $\tau^{1,R}$ be the first time that $K$ swallows $x^{1,R}$.
\begin{enumerate}
\item[(1)] Assume that, for some $k$, we have that $\overline{\rho}^{i,R}\ge\kappa/2-2$ for $i<k$; and $\overline{\rho}^{i,R}\le \kappa/2-4$ for $i\ge k$. Then almost surely, as $t\uparrow\tau^{1,R}$, $K_t$ accumulates at $x^{k,R}$ without hitting any other point in $[x^{1,R},\infty)$.
\item[(2)] Assume that, for some $k$, we have that $\overline{\rho}^{i,R}\ge\kappa/2-2$ for $i<k$; $\overline{\rho}^{k,R}\in (\kappa/2-4,\kappa/2-2)$; and $\overline{\rho}^{i,R}\le \kappa/2-4$ for $i> k$. Then almost surely, as $t\uparrow\tau^{1,R}$, $K_t$ accumulates at a point in $[x^{k,R},x^{k+1,R}]$ without hitting any other point in $[x^{1,R}, x^{k,R})\cup(x^{k+1,R},\infty)$.
\end{enumerate}
\end{proposition}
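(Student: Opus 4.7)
My strategy is to analyze the joint dynamics of the Bessel-type processes $Z^j_t := V^{j,R}_t - W_t > 0$ for $j=1,\ldots,r$. From Definition~\ref{def::chordal_sle_general}, each $Z^j$ satisfies
\[
dZ^j_t = -\sqrt{\kappa}\,dB_t + \frac{(2+\rho^{j,R})\,dt}{Z^j_t} + \sum_{i \neq j}\frac{\rho^{i,R}\,dt}{Z^i_t},
\]
and the differences $Y^i_t := V^{i,R}_t - V^{1,R}_t = Z^i_t - Z^1_t \geq 0$ satisfy the \emph{deterministic} ODE $dY^i_t/dt = 2/Z^i_t - 2/Z^1_t \leq 0$, so each $Y^i_t$ is non-increasing with a limit $Y^i_{\tau^{1,R}} := \lim_{t\uparrow\tau^{1,R}} Y^i_t \geq 0$. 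A key identity is $Y^i_{\tau^{1,R}} = Z^i_{\tau^{1,R}}$ (using $V^{1,R}_{\tau^{1,R}} = W_{\tau^{1,R}}$), and $K_t$ accumulates at $x^{i,R}$ as $t \uparrow \tau^{1,R}$ iff $i$ is the largest index with $Y^i_{\tau^{1,R}} = 0$. The proposition therefore reduces to identifying which $Y^i$'s vanish.

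The heart of the argument will be a merging/comparison lemma: when a cluster $Z^1_t, \ldots, Z^{i}_t$ is simultaneously small and of comparable size, while $Z^{i+1}_t, \ldots, Z^r_t$ remain bounded away from $0$, the ``radial" component of the cluster behaves like a single Bessel process of dimension $d_i := 1 + 2(\overline{\rho}^{i,R}+2)/\kappa$. This is because in that regime the cross-drifts within the cluster accumulate to the total weight $\overline{\rho}^{i,R}$ (plus the Loewner $+2$), while the contributions from the distant points are asymptotically negligible. The classical Bessel dichotomy then gives: $d_i \geq 2$ implies the cluster cannot collectively hit $0$, whereas $d_i < 2$ implies it does hit $0$ in finite time.

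Applying this to Part~(1): the hypothesis $\overline{\rho}^{i,R} \geq \kappa/2 - 2$ for $i < k$ gives $d_i \geq 2$, so no cluster $\{x^{1,R}, \ldots, x^{j,R}\}$ with $j < k$ can collapse in isolation; hence swallowing of $x^{1,R}$ forces simultaneous swallowing of all of $x^{1,R}, \ldots, x^{k-1,R}$ together with some further point. The condition $\overline{\rho}^{k,R} \leq \kappa/2 - 4$ gives $d_k \leq 2 - 4/\kappa \leq 1$ (since $\kappa \leq 4$), which provides enough attraction to drag in $x^{k,R}$ as well, so $Y^k_{\tau^{1,R}} = 0$. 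The non-accumulation at $x^{k+1,R}$ follows from the comparison argument applied to the next cluster: the scale of $Z^{k+1}_t$ stays of order $|x^{k+1,R} - x^{k,R}|$ or larger throughout $[0, \tau^{1,R}]$ while $Z^1_t \to 0$, so $Y^{k+1}_{\tau^{1,R}} = Z^{k+1}_{\tau^{1,R}} > 0$. Part~(2) runs in parallel with $d_k \in (2 - 4/\kappa, 2)$: the cluster up to index $k$ does hit $0$, but the dimension is not low enough to pull $x^{k+1,R}$ in, so accumulation occurs at some random point in $[x^{k,R}, x^{k+1,R}]$ short of $x^{k+1,R}$.

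The main obstacle will be making the ``effective Bessel dimension" heuristic fully rigorous for the coupled multivariate system, since the $Z^j$'s are tightly coupled through the common Brownian driver and the mutual drift terms, and are only approximately Bessel-like in the asymptotic merging regime. The cleanest implementation is via the scale-invariant system obtained from the time-change $du = dt/(Z^1_t)^2$: in this time parametrization $Z^1$ has unit-scale drift, and the ratios $Z^i_t/Z^1_t$ evolve as a scale-invariant Markov process whose $u \to \infty$ asymptotic behavior determines the limits $Y^i_{\tau^{1,R}}$. Alternatively, one can proceed by induction on $r$, using the Markov property at each merging event to reduce the number of force points, combined with the standard single-force-point $\SLE_{\kappa}(\rho)$ analysis where the Bessel dichotomy is classical.
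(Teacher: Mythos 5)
Your overall strategy---tracking the coupled processes $Z^j_t = V^{j,R}_t - W_t$, noting that the pairwise differences evolve by a deterministic non\textendash increasing ODE, and invoking an effective Bessel dimension in the regime where a leading cluster of the $Z^j$'s becomes small---is the right one, and it is consistent with how Dub\'edat proves this in \cite[Lemma~15]{DubedatSLEDuality}, which is all the paper cites here (there is no in\textendash house argument). Your SDE for $Z^j_t$ and the monotonicity of $Y^i_t = Z^i_t - Z^1_t$ are both correct.

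Two points deserve more care. First, the stated reduction is an ``iff'' that is not actually an equivalence: if $i$ is the largest index with $Y^i_{\tau^{1,R}}=0$, the accumulation point lies in $[x^{i,R}, x^{i+1,R})$, but it need not equal $x^{i,R}$---it can be a strictly interior point of that interval. To conclude accumulation is \emph{at} $x^{k,R}$ in Part~(1), you must rule out the open interval $(x^{k,R},x^{k+1,R})$, which is exactly what Proposition~\ref{prop::sle_chordal_forbidden_intervals} does under the assumption $\overline{\rho}^{k,R}\le\kappa/2-4$; likewise $(x^{k-1,R},x^{k,R})$ is forbidden because $\overline{\rho}^{k-1,R}\ge\kappa/2-2$. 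For Part~(2) the interval $(x^{k,R},x^{k+1,R})$ is genuinely admissible, which is why the conclusion there is weaker. So the vanishing of $Y^i$'s only localizes the accumulation point to within an interval; the forbidden\textendash intervals result finishes the localization.

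Second, the step ``$d_k\le 1$ provides enough attraction to drag in $x^{k,R}$, so $Y^k_{\tau^{1,R}}=0$'' is the crux, and as written it is a heuristic rather than an argument. The cleaner route is by contradiction: if $Y^k_{\tau^{1,R}}>0$, then near $\tau^{1,R}$ the process $Z^k_t$ is bounded below by some $\delta>0$ while $Z^1_t,\dots,Z^{k-1}_t\to 0$; but then, on this near\textendash critical window, the drift of the inner scale is controlled by $\overline{\rho}^{k-1,R}+O(Z^{k-1}/\delta)$, so the inner scale is dominated from below by a Bessel process of dimension $\ge 2 - \eps$, which does not hit $0$ in finite time, contradicting $Z^1_{\tau^{1,R}}=0$. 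Whether one implements this via the scale\textendash invariant time change $du = dt/(Z^1_t)^2$ or by induction on $r$ together with the Markov property at successive swallowing events (both of which you suggest), the comparison has to be made uniform, since the ``bounded terms'' from far force points become negligible only after a stopping\textendash time localization. As you acknowledge, that is the bulk of the work; the proposal as it stands identifies the right objects and the right thresholds but does not supply the estimate.
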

\begin{proof}
\cite[Lemma 15]{DubedatSLEDuality}.
\end{proof}

\begin{proposition}\label{prop::sle_chordal_forbidden_intervals} Fix $\kappa\in [0,4]$.
Suppose that $K$ is an $\SLE_{\kappa}(\underline{\rho}^R)$ process with force points $(\underline{x}^R)$ where $0<x^{1,R}<\cdots<x^{r,R}$. Set
\[\overline{\rho}^{j,R}=\sum_{i=1}^j\rho^{i,R},\quad \text{for }1\le j\le r.\] Assume that, for some stopping time $0<T_0<\infty$, $(K_t,0\le t\le
T_0)$ is almost surely generated by a continuous curve $(\gamma(t),0\le t\le T_0)$. Then $\gamma[0,T_0]$ almost surely does not intersect any interval $(x^{j,R},x^{j,R+1})$ such that
\[\overline{\rho}^{j,R}\ge \kappa/2-2,\quad \text{or}\quad \overline{\rho}^{j,R}\le \kappa/2-4.\]
\end{proposition}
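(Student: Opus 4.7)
The plan is a Bessel-dimension argument at the would-be swallow time of $x^{j,R}$, combined with a topological reduction and a Markov-property iteration over the preceding swallows. First, suppose for contradiction that $\gamma(\sigma)\in(x^{j,R},x^{j+1,R})$ for some $\sigma\in(0,T_0]$. Since $\gamma$ is a continuous simple curve in $\overline{\HH}$ (simple because $\kappa\in[0,4]$) starting at $0$ and with $\gamma(\sigma)$ strictly to the right of $x^{j,R}$, the Jordan curve $\gamma[0,\sigma]\cup[0,\gamma(\sigma)]$ in $\overline{\HH}$ disconnects $x^{1,R},\dots,x^{j,R}$ from $\infty$. Hence each of these force points is swallowed by time $\sigma$, while $x^{j+1,R}$ is not, and in particular the swallow time $\tau^{j,R}$ of $x^{j,R}$ satisfies $\tau^{j,R}\le T_0$.

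Next, I iterate over the preceding swallows using the Markov property of $\SLE_\kappa(\underline{\rho})$ implicit in Proposition~\ref{prop::chordal_sle_existence_uniqueness}: after each swallow $\tau^{k,R}$ with $k<j$, the remaining curve viewed in the coordinates $g_{\tau^{k,R}}$ is an $\SLE_\kappa(\overline{\rho}^{k,R};\rho^{k+1,R},\dots)$ whose tip carries the single accumulated weight $\overline{\rho}^{k,R}$. If some $\overline{\rho}^{k,R}\le -2$ for $k<j$, the continuation threshold is triggered at $\tau^{k,R}<\tau^{j,R}$, at which time $\gamma$ accumulates at $x^{k,R}$ (or in a larger forbidden interval by Proposition~\ref{prop::dubedat_lemma15}), contradicting $\tau^{j,R}\le T_0$. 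So I may assume $\overline{\rho}^{k,R}>-2$ for $1\le k\le j-1$ and focus on the final phase $[\tau^{j-1,R},\tau^{j,R})$.

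On this phase set $Y_t:=V^{j,R}_t-W_t>0$ and let $U_t$ denote the accumulated tip-force position, so $U_t=W_t$ at $\tau^{j-1,R}$ and $V^{j,R}_t-U_t$ is non-decreasing, staying at least $Y_{\tau^{j-1,R}}>0$ throughout. A direct computation from Definition~\ref{def::chordal_sle_general} gives
\[
dY_t \;=\; -\sqrt{\kappa}\,dB_t \;+\; \frac{\overline{\rho}^{j,R}+2}{Y_t}\,dt \;+\; R_t\,dt,
\]
where $R_t$ is a drift bounded uniformly on $\{Y_t\le\varepsilon\}$ for small $\varepsilon$, because the right force points $x^{i,R}$, $i>j$, and the left force points $\underline{x}^L$ stay at positive distance from the tip, and $U_t$ stays at distance at least $Y_{\tau^{j-1,R}}-Y_t$ from $V^{j,R}_t$. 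After the time change $ds=Y_u^{-2}\,du$ and a Girsanov removal of the bounded $R_t$, the leading-order behaviour of $Y$ is a Bessel process of dimension $d=1+2(\overline{\rho}^{j,R}+2)/\kappa$. If $\overline{\rho}^{j,R}\ge\kappa/2-2$ then $d\ge 2$, so the Bessel process does not reach $0$, giving $\tau^{j,R}=\infty$ and contradicting $\tau^{j,R}\le T_0$; while if $\overline{\rho}^{j,R}\le\kappa/2-4$, then since $\kappa\le 4$ we have $\overline{\rho}^{j,R}\le -2$, so the continuation threshold is reached exactly at $\tau^{j,R}$, with $\gamma(\tau^{j,R})=x^{j,R}\notin(x^{j,R},x^{j+1,R})$, and the curve cannot continue into the open interval within $[0,T_0]$.

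The main obstacle is the rigorous Bessel identification in the last paragraph, specifically showing that the $j-1$ previously swallowed force points indeed combine into the effective weight $\overline{\rho}^{j-1,R}$ at the tip and produce the clean Bessel drift $(\overline{\rho}^{j,R}+2)/Y_t$. The key enabling observation is the monotonicity $V^{j,R}_t-U_t\ge Y_{\tau^{j-1,R}}>0$, which keeps the accumulated tip force at a uniformly positive distance from $V^{j,R}_t$ as $Y_t\to 0$, so it produces only an $O(1)$ perturbation of the $Y_t$-dynamics rather than an $O(1/Y_t)$ term; with this in hand, the standard Bessel comparison by Girsanov (on the localizing event $\{Y_t\le\varepsilon\}$) closes the argument.
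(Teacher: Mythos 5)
Your topological reduction and the Markov iteration over preceding swallows are sound, but the Bessel identification---which you yourself flag as the crux---rests on a false monotonicity claim and is, moreover, internally inconsistent. By the instantaneous-reflection convention of Definition~\ref{def::chordal_sle_general}, a swallowed right force point's $V$-process stays to the \emph{right} of the driving function: after $\tau^{j-1,R}$ the processes $V^{1,R}_t,\dots,V^{j-1,R}_t$ merge into a single process $U_t$ with $W_t\le U_t\le V^{j,R}_t$. Hence $0\le V^{j,R}_t-U_t\le Y_t$, and
\[
d\bigl(V^{j,R}_t-U_t\bigr)=\frac{2\,dt}{V^{j,R}_t-W_t}-\frac{2\,dt}{U_t-W_t}\le 0,
\]
so $V^{j,R}_t-U_t$ is non-\emph{increasing} and tends to $0$ along with $Y_t$; it is certainly not bounded below by $Y_{\tau^{j-1,R}}$. (Your picture seems to place the accumulated force to the left of the tip; in the Loewner coordinates it remains to the right.)

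Consequently, with $Z_t=U_t-W_t\in[0,Y_t]$, the singular part of $dY_t$ is $\frac{(\rho^{j,R}+2)\,dt}{Y_t}+\frac{\overline{\rho}^{j-1,R}\,dt}{Z_t}$, and since $Z_t\le Y_t$ the second term is \emph{at least} as singular as $1/Y_t$---it is not a bounded remainder $R_t$. There is also a contradiction built into the claimed SDE: if the absorbed force really stayed a bounded distance from $W_t$, its contribution would be $O(1)$ and the Bessel weight would come out as $\rho^{j,R}$ alone, not $\overline{\rho}^{j,R}$. To get the accumulated weight $\overline{\rho}^{j,R}$ one needs precisely $Z_t/Y_t\to 1$ as $Y_t\to 0$, the \emph{opposite} of what your observation would give. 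Establishing this asymptotic, and deducing that the hull then accumulates at $x^{j,R}$ rather than inside the open interval, is the genuine content of Proposition~\ref{prop::dubedat_lemma15} and of \cite[Lemma 5.2 and Remark 5.3]{MillerSheffieldIG1}, which the paper cites rather than reproves; it requires an honest analysis of the two-dimensional diffusion $(Z_t,Y_t)$ (equivalently of the ratio $Z_t/Y_t$), not a one-dimensional Bessel comparison with a bounded perturbation.
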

\begin{proof}
\cite[Lemma 5.2 and Remark 5.3]{MillerSheffieldIG1}.
\end{proof}

\subsection{The Zero-Boundary $\GFF$}\label{subsec::boundary_levellines_gff}
Suppose that $D\subsetneq \C$ is a proper domain with harmonically non-trivial boundary (i.e. a Brownian motion started at a point in $D$ hits $\partial D$ almost surely.) For $f,g\in L^2(D)$, we denote by $(f,g)$ the inner product of $L^2(D)$:
\[(f,g)=\int_D f(z)g(z)d^2z,\]
where $d^2z$ is the Lebesgue area measure. Denote by $H_s(D)$ the space of real-valued smooth functions which are compactly supported in $D$. This space has a \textbf{Dirichlet inner product} defined by
\[(f,g)_{\nabla}=\frac{1}{2\pi}\int_D\nabla f(z)\cdot\nabla g(z)d^2z.\]
Denote by $H(D)$ the Hilbert space completion of $H_s(D)$.

The \textbf{zero-boundary $\GFF$} on $D$ is a random sum of the form
$h=\sum_{j=1}^{\infty}\alpha_jf_j$, 
where the $\alpha_j$ are i.i.d. one-dimensional standard Gaussians (with mean zero and variance 1) and the $f_j$ are an orthonormal basis for $H(D)$. This sum almost surely does not converge within $H(D)$; however, it does converge almost surely within the space of distributions--- that is, the limit $\sum_j \alpha_j(f_j,p)$ almost surely exists for all $p\in H_s(D)$, and the limiting values, denoted by $(h,p)$, as a function of $p$ is almost surely a continuous functional on $H_s(D)$.
For any $f\in H_s(D)$, let $p=-\Delta f\in H_s(D)$, and define
\[(h,f)_{\nabla}:=\frac{1}{2\pi}(h,p).\]
Then $(h,f)_{\nabla}$ is a mean-zero Gaussian with variance
\[\frac{1}{4\pi^2}\sum_j(f_j,p)^2=\sum_j(f_j,f)_{\nabla}^2=(f,f)_{\nabla}^2.\]
The zero-boundary $\GFF$ on $D$ is the only random distribution on $D$ with the property that, for each $f\in H_s(D)$, $(h,f)_{\nabla}$ is a mean-zero Gaussian with variance $(f,f)_{\nabla}$.

When $z\in D$ is fixed, let $\tilde{G}_z(w)$ be the harmonic extension to $w\in D$ of the function of $w$ on $\partial D$ given by $-\log|z-w|$. Then the \textbf{Green's function} in the domain $D$ is given by
\begin{equation}\label{eqn::greenfunction_definition}G_D(z,w)=-\log|z-w|-\tilde{G}_z(w).\end{equation}
For any $p\in H_s(D)$, define $\Delta^{-1}p$ on $D$ by
\[\Delta^{-1}p(\cdot):=-\frac{1}{2\pi}\int_D G_D(\cdot,y)p(y)dy.\]
This is a smooth function in $D$ whose Laplacian is $p$ and whose boundary value is zero on $\partial D$. We point out that the Green's function is conformally invariant: if $\phi$ is a conformal map on $D$, then, for any $z,w\in D$, we have
\begin{equation}\label{eqn::greenfunction_conformalinvariance}
G_D(z,w)=G_{\phi(D)}(\phi(z),\phi(w)).
\end{equation}
Note that, for any $f_1, f_2, p_1, p_2\in H_s(D)$, we have that
\[\cov((h,f_1)_{\nabla},(h,f_2)_{\nabla})=\frac{1}{2\pi}\int_D\nabla f_1(z)\cdot\nabla f_2(z) d^2z,\quad 
\cov((h,p_1),(h,p_2))=\iint_{D\times D}p_1(z)p_2(w)G_D(z,w)d^2zd^2w.\]
For any deterministic open subset $W\subsetneq D$, there is a natural inclusion $\iota$ of $H(W)$ into $H(D)$ by
$\iota(f)(z)=f(z)1_{z\in W}$.
We can see that $H(D)$ admits the $(\cdot,\cdot)_{\nabla}$-orthogonal decomposition
\begin{equation}\label{eqn::gff_space_decomposition}
H(D)=H(W)\oplus H^{\bot}(W),
\end{equation}
where $H^{\bot}(W)$ is the space of functions in $H(D)$ that are harmonic in $W$. The reason is the following. For any $f\in H_s(D)$, let $f_W^{\bot}$ be the function that equals $f$ on $D\setminus W$ and be harmonic in $W$ and let $f_W=f-f_W^{\bot}$. Then $f_W^{\bot}\in H^{\bot}(W), f_W\in H(W)$, and $(f_W^{\bot},f_W)_{\nabla}=\frac{1}{2\pi}(-\Delta f_W^{\bot},f_W)=0$.

The decomposition in Equation (\ref{eqn::gff_space_decomposition}) leads to a decomposition of the $\GFF$ on $D$:
\begin{equation}\label{eqn::gff_distribution_decomposition}
h=h_W+h_W^{\bot},
\end{equation}
where $h_W$ and $h_W^{\bot}$ are distributions on $H(D)$ such that, for any $f\in H(D)$,
\[(h_W,f)_{\nabla}=(h,f_W)_{\nabla},\quad (h_W^{\bot},f)_{\nabla}=(h,f_W^{\bot})_{\nabla}.\]
Clearly, $h_W$ and $h_W^{\bot}$ are independent.

For any distribution $h$ on $H(D)$, we define the restriction of $h$ to $W$, denoted by $h|_W$, to be $h$ restricted to the functions that are compactly supported in $W$. If $h$ is a zero-boundary $\GFF$ on $D$, then $h_W|_W$ is a zero-boundary $\GFF$ on $W$ and $h_W^{\bot}|_W$ is almost surely harmonic. Thus, the conditional law of $h|_W$ given $h|_{D\setminus W}$ is that of the zero-boundary $\GFF$ on $W$ plus the harmonic extension of $h|_{D\setminus W}$ to $W$. This is called ``domain Markov property" of the $\GFF$.

Suppose that $f$ is a piecewise continuous function on $\partial D$, and that $F$ is the harmonic extension of $f$ to $D$. We define the $\GFF$ on $D$ with mean $F$ to be the sum of a zero-boundary $\GFF$ plus $F$. Sometimes, we use the term ``the $\GFF$ with boundary value $f$" to refer to the $\GFF$ with mean $F$.

\begin{proposition}\label{prop::gff_absolutecontinuity}
Suppose that $D_1,D_2$ are simply connected domains with $D_1\cap D_2\neq\emptyset$. For $i=1,2$, let $h_i$ be a zero-boundary $\GFF$ on $D_i$ and $F_i$ be harmonic on $D_i$. Fix a simply connected open domain $U\subset D_1\cap D_2$.
\begin{enumerate}
\item [(1)] If $\dist(U,\partial D_i)>0$ for $i=1,2$, then the law of
\[(h_1+F_1)|_U,\quad\text{and}\quad (h_2+F_2)|_U\]
are mutually absolutely continuous.
\item [(2)] Suppose that there is a neighborhood $U'$ of the closure $\overline{U}$ such that $D_1\cap U'=D_2\cap U'$, and that $F_1-F_2$ tends to zero as one approaches points in the sets $\partial D_i\cap U'$. Then the laws of
    \[(h_1+F_1)|_U,\quad\text{and}\quad (h_2+F_2)|_U\]
are mutually absolutely continuous.
\end{enumerate}
\end{proposition}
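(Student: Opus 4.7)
Both parts reduce to a Cameron--Martin (equivalently, Feldman--Hajek) comparison of two Gaussian measures that agree up to a smooth harmonic perturbation. The main tool is the domain Markov decomposition in Equation~(\ref{eqn::gff_distribution_decomposition}): after stripping off a common zero-boundary GFF on a carefully chosen intermediate domain $W$, the restrictions $(h_i+F_i)|_U$ differ only through a Gaussian random harmonic function plus a deterministic harmonic function, both smooth on $U$, and thus amenable to a Gaussian-equivalence argument.

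For part (1), I will use the positive-distance assumption to pick a simply connected open set $W$ with $\overline{U}\subset W$ and $\overline{W}\subset D_1\cap D_2$. Applying Equation~(\ref{eqn::gff_distribution_decomposition}) on $W$ inside $D_i$, I write $h_i=h_{i,W}+h_{i,W}^{\perp}$, where $h_{i,W}|_W$ is a zero-boundary GFF on $W$ (whose law is independent of $i$) and $h_{i,W}^{\perp}|_W$ is an independent random harmonic function on $W$ with covariance kernel $G_{D_i}(z,w)-G_W(z,w)$; the logarithmic singularity cancels, leaving a smooth (in fact real-analytic) kernel on $\overline{U}\times\overline{U}$. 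I would couple so that $h_{1,W}|_W=h_{2,W}|_W$, so that it suffices to compare the two Gaussian random harmonic functions $X_i:=h_{i,W}^{\perp}|_U+F_i|_U$. Because both covariance kernels are smooth on the compact set $\overline{U}\times\overline{U}$, their difference induces a trace-class (hence Hilbert--Schmidt) operator; because $F_1-F_2$ is harmonic on a neighborhood of $\overline{U}$, it lies in the associated Cameron--Martin space. Feldman--Hajek then gives mutual absolute continuity of the laws of $X_1$ and $X_2$, and hence of $(h_1+F_1)|_U$ and $(h_2+F_2)|_U$.

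For part (2), I would shrink $U'$ slightly to $U''\subset\subset U'$ and let $W$ be a simply connected open subset of $D_1\cap U''=D_2\cap U''$ containing $\overline{U}$, chosen so that $\partial W\cap U'$ is contained in the common boundary $\partial D_1\cap U'=\partial D_2\cap U'$ while the remainder of $\partial W$ is well inside both $D_i$. The same Markov decomposition and coupling reduce the claim to the comparison of $X_1$ and $X_2$ on $U$. The new feature is that $\overline{U}$ may touch $\partial D_i$, so the Cameron--Martin analysis must accommodate the boundary. Here the hypothesis $F_1-F_2\to 0$ on $\partial D_i\cap U'$ is used: together with the fact that $G_{D_1}(z,w)-G_{D_2}(z,w)$ is harmonic in each variable on $D_1\cap D_2$ and vanishes on the common boundary portion in $U'$ (by the maximum principle applied to $\tilde G^1_z-\tilde G^2_z$), it guarantees that both the covariance difference and the mean difference $F_1-F_2$ extend continuously to zero on $\partial D_i\cap U'\cap\overline{U}$ and are smooth harmonic functions in the interior. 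This yields the Hilbert--Schmidt condition for the covariance difference and places $F_1-F_2$ in the Cameron--Martin space, and Feldman--Hajek again applies.

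The main obstacle is the rigorous execution of this last step in part (2): one must check that, despite $\overline{U}$ possibly touching the common boundary, the two Gaussian measures sit on a common function space on which the Feldman--Hajek criteria are verifiable, and that the relevant operator-theoretic quantities (Hilbert--Schmidt norm of the covariance difference, Cameron--Martin norm of the mean difference) are finite. This requires some care about boundary regularity of $G_{D_i}$ and of $F_i$ near $\partial D_i\cap U'$, and explains why the comparatively strong hypothesis $F_1-F_2\to 0$ on the common boundary is imposed rather than, say, mere boundedness of $F_1-F_2$.
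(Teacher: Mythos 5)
The paper itself just cites \cite[Proposition 3.2]{MillerSheffieldIG1} for this statement, so the relevant comparison is with that proof, which takes a genuinely different route from yours. Miller--Sheffield never compare the two harmonic remainders to each other. After choosing an intermediate domain $W$ with $\overline{U}\subset W$ and writing the Markov decomposition $h_i=h_{i,W}+\phi_i$ with $\phi_i$ harmonic on $W$ and independent of the zero-boundary GFF $h_{i,W}$, they show that \emph{each} $(h_i+F_i)|_U$ is mutually absolutely continuous with respect to the single reference law of $h_{1,W}|_U$. The mechanism is a cutoff plus the ordinary Cameron--Martin shift for the GFF: $\phi_i+F_i$ is a.s.\ smooth on a neighborhood of $\overline{U}$, so one multiplies by a smooth bump function to produce a compactly supported $g_i\in H_s(W)$ agreeing with $\phi_i+F_i$ on $U$; then the law of $h_{i,W}+g_i$ and the law of $h_{i,W}$ are mutually absolutely continuous, and restricting to $U$ gives the claim. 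In part (2) the same argument runs with $W=D_1\cap U''=D_2\cap U''$; the hypothesis $F_1-F_2\to 0$ along $\partial D_i\cap U'$ (combined with the fact that $\phi_i\to 0$ there, since the $h_i$ have zero boundary data) is what guarantees the cut-off difference has zero trace and lies in $H(W)$, so the same Cameron--Martin shift applies.

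Your proposal instead couples $h_{1,W}=h_{2,W}$ and then tries to compare the residual Gaussian laws of $X_1,X_2$ directly via Feldman--Hajek, and this is where there is a genuine gap. The Feldman--Hajek criterion is not that $C_1-C_2$ be trace class (or Hilbert--Schmidt); it requires that $C_1^{1/2}H=C_2^{1/2}H$, that $C_1^{-1/2}C_2C_1^{-1/2}-I$ be Hilbert--Schmidt on the closure of that space, and that the mean difference lie in $C_1^{1/2}H$. Because the covariances of the random harmonic remainders are highly degenerate (analytic kernels, rapidly decaying eigenvalues), a trace-class difference $C_1-C_2$ does \emph{not} by itself give Hilbert--Schmidt of the normalized operator: writing $\lambda_n,\mu_n$ for the eigenvalues, you need $\sum(\mu_n/\lambda_n-1)^2<\infty$, and the small denominators $\lambda_n$ can destroy this even when $\sum|\mu_n-\lambda_n|<\infty$. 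Likewise, the assertion that $F_1-F_2$ being harmonic places it in the (very small) Cameron--Martin space of those laws is not justified. These are exactly the issues the cutoff $+$ Cameron--Martin route sidesteps, by never comparing $X_1$ to $X_2$ and instead comparing each field to the common zero-boundary GFF on $W$. The concerns you flag about boundary regularity in part (2) are real, but they are handled in the cited proof precisely because only the deterministic Cameron--Martin shift needs to be controlled, not a full Gaussian-equivalence criterion.
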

\begin{proof}\cite[Proposition 3.2]{MillerSheffieldIG1}.
\end{proof}

Suppose that $D$ is a simply connected domain, and that $A$ is a random closed subset of $\overline{D}$. For $\delta>0$, let $A_{\delta}$ denote the closed set containing all points in $D$ whose distance from $A$ is at most $\delta$. Let $\LA_{\delta}$ be the smallest $\sigma$-algebra in which $A$ and the restriction of $h$ to the interior of $A_{\delta}$ are measurable. Let $\LA=\cap_{\delta\in\QQ,\delta>0}\LA_{\delta}$. Intuitively, this is the smallest $\sigma$-algebra in which $A$ and the values of $h$ in an infinitesimal neighborhood of $A$ are measurable.

\begin{proposition}\label{prop::gff_localset_definition}
Suppose that $(h,A)$ is a random variable which is a coupling of an instance of the $\GFF$ and a random closed subset $A\subset \overline{D}$. Then the following are equivalent:
\begin{enumerate}
\item [(1)] For any deterministic open set $U\subseteq D$, we have that, given the orthogonal projection of $h$ onto $H^{\bot}(U)$, the event $\{A\cap U=\emptyset\}$ is independent of the orthogonal projection of $h$ onto $H(U)$. In other words, the conditional probability of $\{A\cap U=\emptyset\}$ given $h$ is a measurable function of the orthogonal projection of $h$ onto $H^{\bot}(U)$.
\item [(2)] Given $\LA$, the conditional law of $h$ is that of $h_1+h_2$ where $h_2$ is a zero-boundary $\GFF$ on $D\setminus A$ and $h_1$ is an $\LA$-measurable random distribution which is almost surely harmonic on $D\setminus A$.
\end{enumerate}
\end{proposition}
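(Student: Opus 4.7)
The plan is to prove the two implications separately. The direction $(2)\Rightarrow(1)$ is a direct verification from the conditional decomposition, while $(1)\Rightarrow(2)$ requires bootstrapping from conditional independence on deterministic test sets to a global decomposition of $h$ given $\mathcal{A}$.

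For $(2)\Rightarrow(1)$, fix a deterministic open $U\subseteq D$; the goal is to check that $\PP[A\cap U=\emptyset\mid h]$ is a function of the $H^\perp(U)$-projection $h_U^\perp$ of $h$. Condition on $\mathcal{A}$ and write $h=h_1+h_2$ as in (2). On the event $\{A\cap U=\emptyset\}$ we have $U\subseteq D\setminus A$, so $h_1$ is harmonic on $U$ and an integration by parts shows that its $H(U)$-projection vanishes; thus $h_1$ contributes only to $h_U^\perp$. The zero-boundary GFF $h_2$ on $D\setminus A$ decomposes via the splitting $H(D\setminus A)=H(U)\oplus H^\perp(U)$ into independent pieces whose $H(U)$-component is itself a zero-boundary GFF on $U$. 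Consequently, on this event and conditionally on $\mathcal{A}$, the $H(U)$-projection $h_U$ of $h$ is a zero-boundary GFF on $U$ that is independent of $h_U^\perp$ and of $\mathcal{A}$ itself. A short Gaussian factorization then yields that $1_{\{A\cap U=\emptyset\}}$ is conditionally independent of $h_U$ given $h_U^\perp$, which is exactly (1).

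For $(1)\Rightarrow(2)$, the strategy is to reconstruct the conditional law of $h$ given $\mathcal{A}$ from the pointwise-in-$U$ conditional independence. Pick a countable family $\{U_n\}$ of open sets in $D$ (say, finite unions of open balls with rational centers and radii) such that for every closed $A\subseteq\overline{D}$ the sub-collection $\{U_n:A\cap U_n=\emptyset\}$ increases up to $D\setminus A$. By (1), for each $U_n$ the indicator $1_{\{A\cap U_n=\emptyset\}}$ is $\sigma(h_{U_n}^\perp)$-measurable. Combining this with the GFF Markov decomposition (\ref{eqn::gff_distribution_decomposition}) applied to $U_n$, one deduces that on $\{A\cap U_n=\emptyset\}$ and conditionally on $\mathcal{A}$ together with $h_{U_n}^\perp$, the $H(U_n)$-projection $h_{U_n}$ is a zero-boundary GFF on $U_n$, independent of the conditioning. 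Letting $n$ range so that the $U_n$'s exhaust $D\setminus A$ from inside and gluing via a monotone-class argument yields a random distribution $h_2$ that, conditionally on $\mathcal{A}$, is a zero-boundary GFF on the random domain $D\setminus A$, together with a residual $h_1:=h-h_2$ which is $\mathcal{A}$-measurable and almost surely harmonic on $D\setminus A$.

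The main obstacle will be the passage in $(1)\Rightarrow(2)$ from the pointwise-in-$U_n$ statements to a coherent global decomposition. Two points require care: first, one must verify that the zero-boundary GFFs on increasing $U_n$'s glue consistently, which follows from uniqueness of the Markov decomposition (\ref{eqn::gff_distribution_decomposition}) together with the fact that a zero-boundary GFF on a larger domain, restricted to a smaller domain with its harmonic extension subtracted, is a zero-boundary GFF on the smaller domain; second, one must verify that the residual $h_1$ is measurable with respect to the infinitesimal $\sigma$-algebra $\mathcal{A}=\bigcap_{\delta\in\QQ,\,\delta>0}\mathcal{A}_\delta$ from the definition, rather than some larger $\sigma$-algebra that sees more of $h$. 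Circumventing the randomness of $D\setminus A$ by working with the deterministic countable family $\{U_n\}$ and a Dynkin-class argument, in place of genuinely random open sets, is the technical crux.
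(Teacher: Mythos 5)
The paper offers no argument of its own here---it cites \cite[Lemma 3.9]{SchrammSheffieldContinuumGFF}---so you are attempting a proof the paper does not actually write out, and the comparison has to be on the merits of your argument alone.

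Your direction $(2)\Rightarrow(1)$ is sound. Conditioning on $\LA$ and splitting $h=h_1+h_2$, on $\{A\cap U=\emptyset\}$ the harmonic $h_1$ projects to zero on $H(U)$, and the $H(U)$-projection of $h_2$ (which is the GFF on $D\setminus A\supseteq U$) is a zero-boundary GFF on $U$ independent of $\LA$ and of $h_U^\perp$; a factorization of $\E[1_{\{A\cap U=\emptyset\}}\phi(h_U)\psi(h_U^\perp)]$ then matches $\E[\PP[A\cap U=\emptyset\mid h_U^\perp]\phi(h_U)\psi(h_U^\perp)]$ using that $h_U$ is a GFF independent of $h_U^\perp$ both conditionally and unconditionally. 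That is the natural route and it works.

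The direction $(1)\Rightarrow(2)$ contains a genuine gap. You write that, by (1), the indicator $1_{\{A\cap U_n=\emptyset\}}$ is $\sigma(h_{U_n}^\perp)$-measurable. That is not what (1) says. Condition (1) says the \emph{conditional probability} $\PP[A\cap U_n=\emptyset\mid h]$ is $\sigma(h_{U_n}^\perp)$-measurable---equivalently, that given $h_{U_n}^\perp$ the event $\{A\cap U_n=\emptyset\}$ is independent of $h_{U_n}$. It does not say the event is \emph{determined} by $h_{U_n}^\perp$, and indeed it cannot: $A$ need not be a measurable function of $h$ at all (take $A$ independent of $h$, so that $\PP[A\cap U=\emptyset\mid h]$ is a constant, trivially $\sigma(h_U^\perp)$-measurable, while $1_{\{A\cap U=\emptyset\}}$ is not a function of $h$). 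Everything downstream in your sketch rests on this misreading. Moreover, even if you replace that step by the correct conditional-independence reading of (1), the claim that follows---that on $\{A\cap U_n=\emptyset\}$, conditionally on $\LA$ together with $h_{U_n}^\perp$, the projection $h_{U_n}$ is a zero-boundary GFF independent of the conditioning---is precisely the hard part of the lemma, not a consequence of the Markov decomposition (\ref{eqn::gff_distribution_decomposition}). One must rule out that $\LA$, which records $h$ in arbitrarily small neighborhoods of the random set $A$, carries extra information about $h_{U_n}$ beyond what $h_{U_n}^\perp$ carries, and (1) by itself speaks only to the pair $(1_{\{A\cap U_n=\emptyset\}},h_{U_n})$ given $h_{U_n}^\perp$, not to $\LA$. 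This is exactly where the Schramm--Sheffield argument does real work (their Lemma 3.9 passes through several intermediate equivalent formulations), and the present sketch does not bridge it.
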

\begin{proof}
\cite[Lemma 3.9]{SchrammSheffieldContinuumGFF}.
\end{proof}

We say a random closed set $A$ coupled with an instance $h$ of $\GFF$ is a \textbf{local set} for $h$ if one of the equivalent items in Proposition \ref{prop::gff_localset_definition} holds. For any coupling of $A$ and $h$, we use the notation $\LC_A$ to describe the conditional expectation of $h$ given $\LA$. When $A$ is local, $\LC_A$ is the $h_1$ described in Item (2) in Proposition \ref{prop::gff_localset_definition}. We use the notation $(A,h|_A)$ to refer to $\LA$ and also say that $\LC_A$ is the conditional expectation of $h$ given $A$ and $h|_A$. By convention, we write $\LC$ the mean of $h$.

\begin{proposition}\label{prop::gff_localsets_union}
Suppose that $h$ is a $\GFF$ and $A_1,A_2$ are random closed subsets of $D$ and that $(h,A_1)$ and $(h,A_2)$ are couplings for which $A_1,A_2$ are local. Let $A=A_1\tilde{\cup}A_2$ denote the random closed subset of $D$ which is given by first sampling $h$, then sampling $A_1,A_2$ conditionally independent given $h$, and then taking the union of $A_1$ and $A_2$. Then $A$ is also local for $h$. Moreover, given $(A_1, A_2, A, h|_A)$, the conditional law of $h$ is given by $\LC_A$ plus an instance of zero-boundary $\GFF$ on $D\setminus A$.
\end{proposition}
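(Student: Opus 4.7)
The plan is to verify the two assertions using the equivalent characterizations of local sets in Proposition~\ref{prop::gff_localset_definition}.

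First I would check that $A = A_1 \tilde\cup A_2$ is local by verifying criterion~(1). Fix a deterministic open set $U \subseteq D$. By the conditional independence of $A_1$ and $A_2$ given $h$,
\[\PP(A \cap U = \emptyset \mid h) = \PP(A_1 \cap U = \emptyset \mid h) \cdot \PP(A_2 \cap U = \emptyset \mid h).\]
Locality of each $A_i$ asserts precisely that the $i$-th factor is measurable with respect to the projection of $h$ onto $H^{\bot}(U)$, and a product of two such factors retains the same measurability. This verifies criterion~(1), hence $A$ is local.

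Next I would promote the argument to the pair $(A_1, A_2)$, aiming to identify the conditional law of $h$ given the enlarged $\sigma$-algebra
\[\widetilde{\LA} = \bigcap_{\delta\in\QQ,\,\delta>0} \sigma\bigl(A_1, A_2, h|_{(A_1 \cup A_2)_\delta}\bigr),\]
which contains $\sigma(A_1, A_2, A, h|_A)$ since $A = A_1 \cup A_2$ and $h|_A$ is the $\delta \downarrow 0$ limit of $h$ restricted to $(A)_\delta$. The joint local property I would prove is: for every deterministic open $U \subseteq D$ and every event of the form $F = \{A \cap U = \emptyset\} \cap F_1 \cap F_2$, with each $F_i$ measurable with respect to $A_i$ and $h$ restricted to a neighborhood of $A_i$ disjoint from $U$, the probability $\PP(F \mid h)$ is $h|_{H^{\bot}(U)}$-measurable. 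Conditional independence of $A_1, A_2$ given $h$ factors this into a product of two single-set probabilities, each of which is $h|_{H^{\bot}(U)}$-measurable by individual locality. Running the (1)$\Rightarrow$(2) implication of Proposition~\ref{prop::gff_localset_definition} with this joint criterion then yields that, conditional on $\widetilde{\LA}$, $h$ decomposes as an $\widetilde{\LA}$-measurable distribution $\widetilde{\LC}$ that is almost surely harmonic on $D \setminus A$ plus an independent zero-boundary GFF on $D \setminus A$.

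The final step, which I expect to be the main obstacle, is to identify $\widetilde{\LC}$ with $\LC_A$ on $D \setminus A$. Writing $h - \LC_A = (h - \widetilde{\LC}) + (\widetilde{\LC} - \LC_A)$, the first summand is a zero-boundary GFF on $D \setminus A$ independent of $\widetilde{\LA}$, while the second summand $\widetilde{\LC} - \LC_A$ is $\widetilde{\LA}$-measurable and harmonic on $D \setminus A$. On the other hand, the locality of $A$ already established in Step~1, applied via criterion~(2), gives that $h - \LC_A$ is itself a zero-boundary GFF on $D \setminus A$ independent of $\LA = \sigma(A, h|_A)$. Matching these two descriptions and invoking uniqueness of conditional expectation, together with harmonic uniqueness of extensions from the boundary data encoded by $h|_A$ on $\partial A$, forces $\widetilde{\LC} = \LC_A$ almost surely on $D \setminus A$. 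Since $\sigma(A_1, A_2, A, h|_A) \subseteq \widetilde{\LA}$ and the residual GFF is independent of $\widetilde{\LA}$, it is in particular independent of this smaller $\sigma$-algebra, completing the proof of the stated conditional law.
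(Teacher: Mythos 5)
Your Step 1 is correct: given $h$, the sets $A_1$ and $A_2$ are conditionally independent, so $\PP(A\cap U=\emptyset\mid h)$ factors into $\PP(A_1\cap U=\emptyset\mid h)\,\PP(A_2\cap U=\emptyset\mid h)$, and each factor is $h|_{H^{\bot}(U)}$-measurable by locality of $A_i$; this verifies criterion (1) of Proposition \ref{prop::gff_localset_definition} for $A$. Step 2 also has the right idea, but the factorization you invoke requires each $\PP(\{A_i\cap U=\emptyset\}\cap F_i\mid h)$ to be $h|_{H^{\bot}(U)}$-measurable, which is strictly \emph{stronger} than criterion (1) of Proposition \ref{prop::gff_localset_definition} (which speaks only of the bare event $\{A_i\cap U=\emptyset\}$). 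This stronger statement is item (i) of Schramm--Sheffield's Lemma 3.9, not reproduced in this paper; you should either quote it or derive it from criterion (2) before using it, rather than treating it as an immediate consequence of criterion (1).

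The genuine gap is in Step 3. The appeal to ``harmonic uniqueness of extensions from the boundary data encoded by $h|_A$ on $\partial A$'' cannot be made rigorous as stated: $h$ is a random distribution, so neither $\widetilde{\LC}$ nor $\LC_A$ has a classically defined trace on $\partial A$, and there is no boundary function to which a harmonic-uniqueness argument could apply. Likewise, ``uniqueness of conditional expectation'' only delivers $\E[\widetilde{\LC}\mid\LA]=\LC_A$ via the tower property, and does not by itself rule out that $\widetilde{\LC}$ has nontrivial conditional variance given $\LA$. What actually forces $\widetilde{\LC}=\LC_A$ is a variance comparison. Fix a test function $p$. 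Criterion (2) applied to the local set $A$ gives $\mathrm{Var}[(h,p)\mid\LA]=E_A(p)$, the variance of $(h_2,p)$ for a zero-boundary GFF $h_2$ on $D\setminus A$; this quantity is $\LA$-measurable. Your Step 2 gives $\mathrm{Var}[(h,p)\mid\widetilde{\LA}]=E_A(p)$ as well. The law of total variance then yields
\[E_A(p)=\mathrm{Var}[(h,p)\mid\LA]=\E\bigl[\mathrm{Var}[(h,p)\mid\widetilde{\LA}]\mid\LA\bigr]+\mathrm{Var}\bigl[(\widetilde{\LC},p)\mid\LA\bigr]=E_A(p)+\mathrm{Var}\bigl[(\widetilde{\LC},p)\mid\LA\bigr],\]
so $\mathrm{Var}[(\widetilde{\LC},p)\mid\LA]=0$, i.e.\ $(\widetilde{\LC},p)$ is $\LA$-measurable, and therefore $(\widetilde{\LC},p)=\E[(\widetilde{\LC},p)\mid\LA]=(\LC_A,p)$ almost surely. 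Letting $p$ range over a countable dense family of test functions supported away from $A$ completes the identification $\widetilde{\LC}|_{D\setminus A}=\LC_A|_{D\setminus A}$.
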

\begin{proof}
\cite[Lemma 3.10]{SchrammSheffieldContinuumGFF}.
\end{proof}

\begin{proposition}\label{prop::gff_localsets_boundarybehavior}
Let $A_1,A_2$ be connected local sets which are conditionally independent and $A=A_1\tilde{\cup}A_2$. Then $\LC_A-\LC_{A_2}$ is almost surely a harmonic function in $D\setminus A$ that tends to zero along all sequences of points in $D\setminus A$ that tend to a limit in a connected component of $A_2\setminus A_1$ (which consists of more than a single point) or that tend to a limit on a connected component of $A_1\cap A_2$ (which consists of more than a single point) at a point that is a positive distance from either $A_2\setminus A_1$ or $A_1\setminus A_2$.
\end{proposition}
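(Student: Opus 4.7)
The plan is to reduce the statement to a claim about local sets of a \emph{zero-boundary} GFF, where boundary behavior is standard. The harmonicity is immediate: $A=A_1\tilde\cup A_2$ is local by Proposition \ref{prop::gff_localsets_union}, so $\LC_A$ is harmonic on $D\setminus A$ by Proposition \ref{prop::gff_localset_definition}(2), while $\LC_{A_2}$ is harmonic on the larger domain $D\setminus A_2\supseteq D\setminus A$. Hence $\LC_A-\LC_{A_2}$ is harmonic on $D\setminus A$.

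For the boundary behavior, I would first condition on $(A_2,h|_{A_2})$, writing $h=\LC_{A_2}+\tilde h$ on $D\setminus A_2$ where $\tilde h$ is a zero-boundary GFF on $D\setminus A_2$. The conditional independence of $A_1$ and $A_2$ given $h$, together with the locality of $A_1$ for $h$, should allow one to verify (using the projection characterization in Proposition \ref{prop::gff_localset_definition}(1)) that $B:=A_1\setminus A_2$ is a local set for $\tilde h$ inside $D\setminus A_2$, with $\tilde h|_B$ a measurable function of $h|_A$ and $\LC_{A_2}$. By the tower property of conditional expectations,
\[
\LC_A-\LC_{A_2}=\E\bigl[\tilde h\bigm| B,\,\tilde h|_B\bigr],
\]
so the task reduces to showing that the conditional expectation of a zero-boundary GFF on $D\setminus A_2$, given a local set $B$, tends to zero as one approaches a point $x\in\partial(D\setminus A_2)=\partial A_2\cup\partial D$ that lies at a positive distance from $B$, provided the relevant portion of $\partial A_2$ near $x$ is nondegenerate. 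The hypotheses of the proposition are exactly tailored to this: a limit point in a component of $A_2\setminus A_1$ (more than one point) automatically sits at positive distance from $B\subseteq A_1\setminus A_2$ via a small ball avoiding $A_1$, while for a limit point in a component of $A_1\cap A_2$ the explicit positive-distance hypothesis to $A_1\setminus A_2$ gives the same, and the "more than one point" clauses ensure the component carries positive harmonic measure from $D\setminus A$.

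Carrying out the reduction step requires a local comparison argument near $x$: take a small ball $U$ around $x$ disjoint from $B$, and note that on $U\cap(D\setminus A)$ the function $\LC_A-\LC_{A_2}$ is a bounded harmonic function whose only "source" comes from the local set $B$, which lies outside $\overline U$. Using absolute continuity (Proposition \ref{prop::gff_absolutecontinuity}(2)) to compare $\tilde h$ restricted to $U\cap(D\setminus A_2)$ with a zero-boundary GFF on a smoothly bounded approximation of the domain, one sees that $\E[\tilde h\mid B,\tilde h|_B]$ must agree on $\partial A_2\cap U$ (in the sense of nontangential limits, weighted by harmonic measure from $D\setminus A$) with the boundary value of $\tilde h$ itself, namely zero. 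The "more than one point" hypothesis is used exactly here, through a Beurling-type estimate ensuring that Brownian motion from a nearby point in $D\setminus A$ hits this component of the boundary with positive probability before exiting a neighborhood of $x$.

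The main obstacle is the final step: giving a rigorous meaning to the boundary values of the random harmonic function $\E[\tilde h\mid B,\tilde h|_B]$ for a rough local set $B$ and rough domain $D\setminus A_2$. This is the technical heart of the proof and is handled by passing to compactly contained smooth subdomains, applying Proposition \ref{prop::gff_absolutecontinuity}, and using standard harmonic measure estimates (Beurling's projection theorem) to control the contributions coming from $B$ and from the rest of $\partial(D\setminus A_2)$ separately.
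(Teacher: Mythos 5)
The paper's own proof is a bare citation to \cite{SchrammSheffieldContinuumGFF} and \cite{MillerSheffieldIG1}, so there is no in-paper argument to compare against; I evaluate your sketch on its own terms. The harmonicity is immediate, as you say, and the identity $\LC_A - \LC_{A_2} = \E[\tilde h\mid B,\tilde h|_B]$ after conditioning on $(A_2,h|_{A_2})$, with $B = A_1\setminus A_2$ a local set for the conditional field $\tilde h$ in $D\setminus A_2$, is the correct decomposition. However, the locality of $B$ for $\tilde h$ is not an off-the-shelf fact; it is the content of the argument behind Proposition \ref{prop::gff_localsets_union}, so you are quoting a step that itself needs to be extracted rather than assumed.

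The concrete gap is in the second boundary case. The statement allows the limit point $x$ in a nondegenerate component of $A_1\cap A_2$ to be at positive distance from $A_2\setminus A_1$ \emph{or} from $A_1\setminus A_2$, an inclusive ``or''. Your argument requires a ball around $x$ disjoint from $B=A_1\setminus A_2$, which exists only in the latter sub-case; you say so explicitly (``the explicit positive-distance hypothesis to $A_1\setminus A_2$ gives the same''). If $x$ is far from $A_2\setminus A_1$ but $A_1\setminus A_2$ accumulates at $x$, no such ball exists, and the reduction to ``a boundary point of $D\setminus A_2$ far from the local set $B$'' fails outright. This sub-case needs a separate argument, e.g.\ the symmetric conditioning on $(A_1,h|_{A_1})$, which places $x$ far from $A_2\setminus A_1$ in that framework, followed by a comparison of the boundary behaviors of $\LC_{A_1}$ and $\LC_{A_2}$ along the common nondegenerate piece of $A_1\cap A_2$ near $x$; this is genuinely different from what you wrote and is not covered by it. Finally, for the technical step you flag as the heart (zero boundary value of $\E[\tilde h\mid B,\tilde h|_B]$ at points of $\partial(D\setminus A_2)$ far from $B$), the proofs in \cite{SchrammSheffieldContinuumGFF} and \cite{MillerSheffieldIG1} rest on the martingale structure of $\LC_{Z(t)}$ for growing families of local sets (cf.\ Proposition \ref{prop::gff_localsets_bm}) and a variance/conformal-radius computation; absolute continuity to smooth subdomains plus Beurling alone does not obviously deliver this for a general rough local set $B$, since $\LC_B$ is a priori an unbounded random distribution, and the Beurling estimate controls harmonic measure of boundary arcs rather than the conditional mean of the field. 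You have correctly located the hard point, but the tools proposed to close it would need to be replaced by the martingale/variance argument.
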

\begin{proof}
\cite[Lemma 3.11]{SchrammSheffieldContinuumGFF} and \cite[Proposition 3.6]{MillerSheffieldIG1}.
\end{proof}

\begin{proposition}\label{prop::gff_localsets_interacting}
Let $A_1,A_2$ be connected local sets which are conditionally independent and $A=A_1\tilde{\cup}A_2$. Suppose that $C$ is a $\sigma(A_1)$-measurable connected component of $D\setminus A_1$ such that $\{C\cap A_2=\emptyset\}$ almost surely. Then $\LC_A|_C=\LC_{A_1}|_C$ almost surely, given $A_1$. In particular, $h|_C$ is independent of the pair $(h|_{D\setminus C},A_2)$ given $A_1$.
\end{proposition}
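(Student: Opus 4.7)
The plan is to identify $C$ as a connected component not only of $D\setminus A_1$, but also of $D\setminus A$. The containment $C\subset D\setminus A$ follows from $C\cap A_1=\emptyset$ (by definition) together with $C\cap A_2=\emptyset$ a.s. Any connected subset of $D\setminus A$ strictly containing $C$ would also be a connected subset of $D\setminus A_1\supseteq D\setminus A$, contradicting maximality of $C$ in $D\setminus A_1$. This dual role of $C$ is the geometric input for everything that follows.

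To show $\LC_A|_C=\LC_{A_1}|_C$, I will argue one test function at a time via a variance decomposition. Fix $f\in H_s(D)$ supported in $C$, and set $\LA:=\sigma(A_1,A_2,h|_{A_1\cup A_2})$ as in Proposition~\ref{prop::gff_localsets_union}. By Proposition~\ref{prop::gff_localset_definition}(2) applied to $A_1$, the conditional law of $(h,f)$ given $\LA_1$ is Gaussian with mean $(\LC_{A_1},f)$ and variance $v_f:=(f,G_Cf)$, where $G_C$ denotes the zero-boundary Green's function on $C$. Since $C$ is also a component of $D\setminus A$, Proposition~\ref{prop::gff_localsets_union} likewise gives that the conditional law of $(h,f)$ given $\LA$ is Gaussian with mean $(\LC_A,f)$ and the \emph{same} variance $v_f$. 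As $\LA_1\subset\LA$, the total-variance identity
\[v_f=\mathrm{Var}\bigl((h,f)\,\big|\,\LA_1\bigr)=E\!\left[\mathrm{Var}\bigl((h,f)\,\big|\,\LA\bigr)\,\big|\,\LA_1\right]+\mathrm{Var}\!\left((\LC_A,f)\,\big|\,\LA_1\right)=v_f+\mathrm{Var}\!\left((\LC_A,f)\,\big|\,\LA_1\right)\]
forces $\mathrm{Var}((\LC_A,f)\,|\,\LA_1)=0$, and the tower property $E[(\LC_A,f)\mid\LA_1]=(\LC_{A_1},f)$ pins down the mean. Hence $(\LC_A,f)=(\LC_{A_1},f)$ almost surely; ranging $f$ over a countable dense family supported in $C$ yields $\LC_A|_C=\LC_{A_1}|_C$ almost surely.

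For the independence statement, I combine the identity just proved with Proposition~\ref{prop::gff_localsets_union}, which says that given $\LA$ the field $h|_C$ is distributed as $\LC_A|_C$ plus an independent zero-boundary $\GFF$ on $C$, and that the restrictions of $h$ to different components of $D\setminus A$ are conditionally independent given $\LA$. Substituting $\LC_A|_C=\LC_{A_1}|_C$, the conditional law of $h|_C$ given $\LA$ matches $\LC_{A_1}|_C$ plus a zero-boundary $\GFF$ on $C$, which is exactly the conditional law of $h|_C$ given $\LA_1$ coming from Proposition~\ref{prop::gff_localset_definition}(2). Since $A_1\cup A_2\subset D\setminus C$ a.s., we have $\LA\subset\sigma(\LA_1,A_2,h|_{D\setminus C})$; a routine tower argument then shows that conditioning further on $(A_2,h|_{D\setminus C})$ beyond $\LA_1$ does not alter the law of $h|_C$, which is the required independence.

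The only genuine step is recognizing that $C$ is a component of both $D\setminus A_1$ and $D\setminus A$, so that the two conditional variances of $(h,f)$ coincide. Once this is in hand, everything reduces to the variance decomposition plus the tower property, and in particular sidesteps the more delicate boundary-behavior analysis of Proposition~\ref{prop::gff_localsets_boundarybehavior}, which would be awkward to apply uniformly over points of $A_1\cap A_2$ lying on $\partial C$.
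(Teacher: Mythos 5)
Your proof is correct, and the key step is exactly the right one. The paper itself offers no argument here (it delegates to \cite[Proposition 3.7]{MillerSheffieldIG1}), so there is no in-text proof to compare against, but what you have written is a clean and self-contained reconstruction. Two remarks.

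First, the observation that drives everything --- that $C$ is a connected component not only of $D\setminus A_1$ but also of $D\setminus A$, because $D\setminus A\subset D\setminus A_1$ and $C\cap A_2=\emptyset$ a.s.\ --- is stated and used correctly. It is precisely this that makes the two conditional variances of $(h,f)$ coincide at the same value $(f,G_Cf)$, since the Green's function of $D\setminus A_1$ (resp.\ $D\setminus A$) restricted to $C\times C$ is just $G_C$. The law of total variance applied to $\LA_1\subset\LA$ then forces $\mathrm{Var}((\LC_A,f)\mid\LA_1)=0$, and the tower identity $\E[(\LC_A,f)\mid\LA_1]=(\LC_{A_1},f)$ pins the value. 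This is the natural argument, and the variance decomposition packages it efficiently. Your remark that this sidesteps Proposition~\ref{prop::gff_localsets_boundarybehavior} is apt: the boundary-behavior lemma gives no information at points of $A_1\cap A_2$ on $\partial C$, which would otherwise require a separate continuity argument of the kind carried out in Proposition~\ref{prop::boundary_levellines_nonintersecting_conditionalmean}.

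Second, in the independence step the phrase ``a routine tower argument'' slightly undersells what is being used. The inclusion $\LA\subset\sigma(\LA_1,A_2,h|_{D\setminus C})$ alone does not give the conclusion; what you actually need (and do have, from the component-wise conditional independence you quote from Proposition~\ref{prop::gff_localsets_union}) is the identity of $\sigma$-algebras $\sigma(\LA_1,A_2,h|_{D\setminus C})=\sigma(\LA,\,h|_{D\setminus(C\cup A)})$, together with the fact that given $\LA$ the field on $C$ is independent of the field on the other components of $D\setminus A$. Conditioning on $h|_{D\setminus(C\cup A)}$ beyond $\LA$ therefore changes nothing, and the first part of the proposition collapses the conditional law given $\LA$ to that given $\LA_1$. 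That is the actual chain; it would be worth spelling out, since it is the conditional independence across components rather than the tower property that does the work. A last cosmetic point: the proposition's phrase ``given $A_1$'' should, as you implicitly read it, be understood as conditioning on $\LA_1=(A_1,h|_{A_1})$; independence given only $\sigma(A_1)$ would generally fail, e.g.\ when $A_1$ is deterministic but $h|_{A_1}$ is not.
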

\begin{proof}
\cite[Proposition 3.7]{MillerSheffieldIG1}.
\end{proof}
\begin{proposition}\label{prop::gff_localsets_bm}
Let $h$ be a $\GFF$ on $D$ and suppose that $(Z(t),t\ge 0)$ is an increasing family of closed sets such that $Z(\tau)$ is local for $h$ for every $Z$-stopping time $\tau$; and, for a fixed $z\in D$, that $\CR(D\setminus Z(t);z)$ is almost surely continuous and monotonic in $t$. Then $\LC_{Z(t)}(z)-\LC_{Z(0)}(z)$ has a modification which is a Brownian motion when parameterized by \[\log\CR(D\setminus Z(0);z)-\log\CR(D\setminus Z(t);z)\] up until the first time that $Z(t)$ accumulates at $z$. In particular, $\LC_{Z(t)}(z)$ has a modification which is almost surely continuous in $t$.
\end{proposition}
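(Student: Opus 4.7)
The plan is to apply Lévy's characterization of Brownian motion after reparameterizing time by the log conformal radius. Set $\psi(t) := \log\CR(D\setminus Z(0);z) - \log\CR(D\setminus Z(t);z)$, which by hypothesis is continuous and monotone non-decreasing in $t$, and define $M_t := \LC_{Z(t)}(z) - \LC_{Z(0)}(z)$. I aim to show that $M$ is a continuous martingale in the filtration generated by $(Z(t))$ with $\langle M \rangle_t = \psi(t)$; then the time-changed process $u \mapsto M_{\psi^{-1}(u)}$ will be a standard Brownian motion up until $Z(t)$ first accumulates at $z$.

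For the martingale property, I would use the iterated conditioning inherent in Proposition \ref{prop::gff_localsets_union}: if $\sigma \le \tau$ are stopping times, then conditional on $(Z(\sigma),h|_{Z(\sigma)})$, the field decomposes as $\LC_{Z(\sigma)}$ plus a zero-boundary GFF $h^\sigma$ on $D\setminus Z(\sigma)$, and $Z(\tau)$ remains local for $h^\sigma$ relative to $D\setminus Z(\sigma)$. Applying Proposition \ref{prop::gff_localsets_boundarybehavior} (or directly the harmonic-extension definition of $\LC$) yields $\LC_{Z(\tau)} - \LC_{Z(\sigma)} = \LC^{h^\sigma}_{Z(\tau)\setminus Z(\sigma)}$ on $D\setminus Z(\tau)$, whose conditional mean vanishes because $h^\sigma$ has mean zero. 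Evaluating at $z$ gives $\E[M_\tau - M_\sigma \mid \mathcal{F}_\sigma] = 0$.

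The heart of the argument is the quadratic variation computation. Conditional on $(Z(\sigma),h|_{Z(\sigma)},Z(\tau),h|_{Z(\tau)})$, with the conformal radius $r := \CR(D\setminus Z(\tau);z)$ treated as part of the conditioning, $\LC_{Z(\tau)}(z)-\LC_{Z(\sigma)}(z)$ equals the value at $z$ of the harmonic extension of $h^\sigma$ from $Z(\tau)\setminus Z(\sigma)$. Plugging the test function $\xi^z_\epsilon$ (the difference of the Green's functions of $D\setminus Z(\sigma)$ and $D\setminus Z(\tau)$ centered at $z$, mollified on scale $\epsilon$) into $h^\sigma$ shows the conditional law is Gaussian with variance $\log\CR(D\setminus Z(\sigma);z) - \log\CR(D\setminus Z(\tau);z) = \psi(\tau)-\psi(\sigma)$, an identity that follows from the standard Green's-function computation $\tilde{G}_z(z) = -\log\CR(\cdot;z)$. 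Taking expectations, $\E[(M_\tau-M_\sigma)^2 \mid \mathcal{F}_\sigma] = \E[\psi(\tau)-\psi(\sigma)\mid\mathcal{F}_\sigma]$, which after optional stopping and discretization identifies $\langle M\rangle_t = \psi(t)$.

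Finally, continuity of a modification follows from the $L^2$-increment bound $\E[(M_t-M_s)^2] = \E[\psi(t)-\psi(s)]$ combined with the assumed continuity of $\psi$: since increments are jointly Gaussian after conditioning and $\psi$ is continuous, Kolmogorov's criterion (applied along rationals and extended by $L^2$-continuity) produces a continuous version. Lévy's characterization then finishes the proof. The main obstacle is rigorously justifying the variance identity in step three, since it is effectively the content of the (here unstated) Gaussian-variance lemma for local sets with deterministic conformal radius; the clean way to handle it is to first prove that lemma by circle-average regularization of $\LC_A(z)$ and the independence of $h_\epsilon(z)-\LC_A(z)$ from $(A,h|_A)$, and then apply it conditionally on $(Z(\sigma),h|_{Z(\sigma)},Z(\tau),\CR(D\setminus Z(\tau);z))$.
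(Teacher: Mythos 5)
Your proposal is built on the right ingredients — the Gaussian-variance lemma for local sets (proved via circle-average regularization and the independence of $h_\epsilon(z)-\LC_A(z)$ from $(A,h|_A)$), the iterated-conditioning structure, and the identification of the quadratic variation with the log-conformal-radius drop. These are exactly the tools the paper's cited proof (Proposition 6.5 of Miller--Sheffield) uses. However, there are two places where your route is more awkward than the source and at risk of circularity.

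First, your step three overstates the conditioning. If you condition on $(Z(\sigma),h|_{Z(\sigma)},Z(\tau),h|_{Z(\tau)})$, the quantity $\LC_{Z(\tau)}(z)-\LC_{Z(\sigma)}(z)$ is already deterministic, so there is no ``conditional law'' left to compute. The correct statement is: conditional only on $(Z(\sigma),h|_{Z(\sigma)})$, and restricted to a stopping time of the form $\tau(u)=\inf\{t:\CR(D\setminus Z(t);z)=e^{-u}\CR(D\setminus Z(0);z)\}$ — for which $\CR(D\setminus Z(\tau(u));z)$ \emph{is} deterministic — the increment is Gaussian with the stated variance. For a generic stopping time $\tau$ the conformal radius $\CR(D\setminus Z(\tau);z)$ is random given $\mathcal{F}_\sigma$, and the Gaussian-variance lemma does not apply to it directly; one only recovers the expectation identity $\E[(M_\tau-M_\sigma)^2\mid\mathcal{F}_\sigma]=\E[\psi(\tau)-\psi(\sigma)\mid\mathcal{F}_\sigma]$ after a more careful calculation that still uses the conformal-radius stopping times as an intermediate step.

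Second, you want to invoke L\'evy's characterization, but L\'evy requires a \emph{continuous} local martingale, and continuity is precisely what must be established; and your Kolmogorov argument applied to the unreparameterized process $M_t$ does not close this loop, since $\E[(M_t-M_s)^2]=\E[\psi(t)-\psi(s)]$ is not a H\"older modulus in $|t-s|$ (you only know $\psi$ is continuous). The clean fix is to reparameterize \emph{first}: define $N_u:=M_{\tau(u)}$, observe that $N$ has independent unconditionally Gaussian increments of variance $u'-u$ (so Kolmogorov applies with a fourth-moment bound to give a continuous modification), and then recover the continuity of $M$ from $M_t=N_{\psi(t)}$ and the continuity of $\psi$. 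This is exactly what the cited proof does, and once one has independent Gaussian increments in the reparameterized time there is nothing left for L\'evy to do — the process is a Brownian motion by inspection. So your strategy is essentially the same as the paper's, but doing the reparameterization first, rather than last, removes both the L\'evy circularity and the awkward conditioning on the conformal radius.
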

\begin{proof}
\cite[Proposition 6.5]{MillerSheffieldIG1}.
\end{proof}
\begin{comment}
\begin{proof}
For each $s>0$, set
\[\tau(s):=\inf\{t\ge 0: \log\CR(D\setminus Z(0);z)-\log\CR(D\setminus Z(t);z)=s\}.\]
We only need to show that, for any fixed $s<t$, we have
\begin{enumerate}
\item [(a)] $\LC_{Z(\tau(t))}-\LC_{Z(\tau(s))}$ is independent of $\LC_{Z(\tau(s))}$;
\item [(b)] $\LC_{Z(\tau(t))}-\LC_{Z(\tau(s))}$ has the law of a Gaussian with mean zero and variance $(t-s)$.
\end{enumerate}
From Lemma \ref{lem::gff_localsets_cr}, we know that, given $(Z(\tau(s)), h|_{Z(\tau(s))})$, the conditional law of $\LC_{Z(\tau(t))}-\LC_{Z(\tau(s))}$ is the same as a Gaussian with mean zero and variance
\[\log\CR(D\setminus Z(\tau(s));z)-\log\CR(D\setminus Z(\tau(t));z)=t-s.\]
Thus, $\LC_{Z(\tau(t))}-\LC_{Z(\tau(s))}$ has the law of a Gaussian with mean zero and variance $(t-s)$ and is independent of $(Z(\tau(s)), h|_{Z(\tau(s))})$; in particular, it is independent of $\LC_{Z(\tau(s))}$. This completes the proof.
\end{proof}
\end{comment}

We close this section by the proof of Theorem \ref{thm::boundary_levelline_gff_coupling}. To simplify the notations, we only prove the conclusion for the case when there is only one right force point and one left force point. The general case can be proved similarly. We state it as a proposition.

\begin{figure}[ht!]
\begin{subfigure}[b]{0.47\textwidth}
\begin{center}
\includegraphics[width=\textwidth]{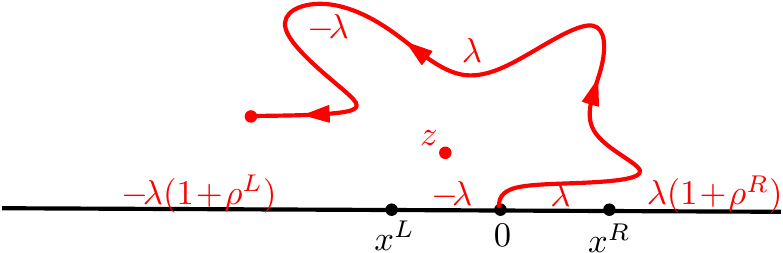}
\end{center}
\caption{Before the time that $z$ is swallowed, $\eta_t(z)$ is the harmonic extension to $z$ of the boundary value along the boundary of $\HH\setminus K_t$ as indicated in the figure.}
\end{subfigure}
$\quad$
\begin{subfigure}[b]{0.47\textwidth}
\begin{center}\includegraphics[width=\textwidth]{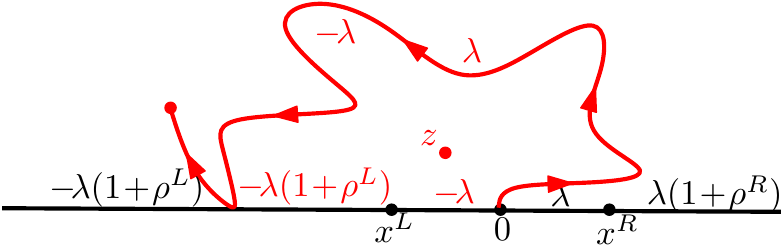}
\end{center}
\caption{After the time that $z$ is swallowed, $\eta_t(z)$ is the harmonic extension to $z$ of the boundary value along the connected component containing $z$.}
\end{subfigure}
\caption{\label{fig::boundary_levellines_coupling_eta} For fixed $t$, the function $\eta_t(\cdot)$ is harmonic in each connected component of $\HH\setminus \partial_t$. }
\end{figure}

\begin{proposition}\label{prop::boundary_levellines_gff_coupling}
Fix $\rho^L,\rho^R\in\R$ and $x^L\le 0\le x^R$. Suppose that the process $(W_t,V^L_t,V^R_t)$ describe an $\SLE_4(\rho^L;\rho^R)$ process with force points $(x^L;x^R)$. Let $(K_t,t\ge 0)$ be the corresponding Loewner chain and let $(g_t,t\ge 0)$ be the corresponding sequence of conformal maps, and set $f_t=g_t-W_t$. There exists a coupling $(K,h)$ where $h$ is a zero-boundary $\GFF$ on $\HH$ such that the following domain Markov property is true. Suppose that $\tau$ is any finite stopping time less than the continuation threshold for $K$. Let $\eta_t^0$ be the function which is harmonic in $\HH$ with boundary values
\[\begin{cases}
\lambda(1+\rho^R)& \text{if}\quad x\ge V^R_t-W_t,\\
\lambda& \text{if}\quad0\le x<V^R_t-W_t,\\
-\lambda&\text{if}\quad V^L_t-W_t\le x<0,\\
-\lambda(1+\rho^L)&\text{if}\quad x<V^L_t-W_t.
\end{cases}\]
Define, for $z\in\HH\setminus K_t$,
\[\eta_t(z)=\eta_t^0(f_t(z)).\] Then, given $K_{\tau}$, the conditional law of $(h+\eta_0)|_{\HH\setminus K_{\tau}}$ is equal to the law of $h\circ f_{\tau}+\eta_{\tau}$.
\end{proposition}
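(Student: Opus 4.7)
The plan is to follow the Dub\'edat--Schramm--Sheffield martingale coupling. Sample the $\SLE_4(\rho^L;\rho^R)$ process $K$ together with a zero-boundary $\GFF$ $\tilde h$ on $\HH$ that is independent of $K$, and set $\hat h_\tau := \tilde h\circ f_\tau + \eta_\tau$, viewed as a random distribution on $\HH\setminus K_\tau$. By conformal invariance of the Green's function (Equation~(\ref{eqn::greenfunction_conformalinvariance})), the conditional law of $\tilde h\circ f_\tau$ given $K_\tau$ is that of a zero-boundary $\GFF$ on $\HH\setminus K_\tau$, so $\hat h_\tau$ already has, by construction, the conditional law claimed for $(h+\eta_0)|_{\HH\setminus K_\tau}$ given $K_\tau$. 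What remains is to check that $\hat h_\tau$ also has the correct unconditional law: for every compactly supported $\phi\in H_s(\HH)$ whose support avoids $K_\tau$,
\[
(\hat h_\tau,\phi)\ \stackrel{d}{=}\ (h+\eta_0,\phi)\ \sim\ \mathcal{N}\!\bigl((\eta_0,\phi),\ \iint G_\HH(z,w)\phi(z)\phi(w)\,dz\,dw\bigr),
\]
where $h$ denotes a zero-boundary $\GFF$ on $\HH$.

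Since, conditionally on $K_\tau$, $(\tilde h\circ f_\tau,\phi)$ is centered Gaussian with variance $\iint G_{\HH\setminus K_\tau}\phi\phi$, matching characteristic functions reduces to the identity
\[
\E\!\left[\exp\!\Bigl(isM_\tau(\phi)+\tfrac{s^2}{2}V_\tau(\phi)\Bigr)\right]=1\quad\text{for all }s\in\R,
\]
where $M_t(\phi):=(\eta_t-\eta_0,\phi)$ and $V_t(\phi):=\iint[G_\HH-G_{\HH\setminus K_t}](z,w)\phi(z)\phi(w)\,dz\,dw$. To obtain this, I will show that $M_t(\phi)$ is a continuous local martingale with $\langle M(\phi)\rangle_t=V_t(\phi)$. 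Given these, $\exp(isM_t+\tfrac{s^2}{2}\langle M\rangle_t)$ is a complex local martingale; after localizing at a stopping time where $\langle M\rangle$ is bounded, it becomes a bounded true martingale of expectation one, and a dominated convergence argument recovers the identity at time $\tau$.

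The local martingale property of $M_t(\phi)$, together with the explicit form of its quadratic variation, follows from Proposition~\ref{prop::chordal_sle4_mart}: the complex primitive of $\eta_t^0$ (built from logarithms of $g_t(z)-W_t$, $g_t(z)-V^L_t$, $g_t(z)-V^R_t$ with suitable weights) is a local martingale whose imaginary half-part satisfies
\[
d\eta_t(z)=\Im\!\bigl(2/(g_t(z)-W_t)\bigr)\,dB_t.
\]
Integrating against $\phi(z)\,dz$ gives that $dM_t(\phi)=\bigl(\int\phi(z)\,\Im(2/(g_t(z)-W_t))\,dz\bigr)\,dB_t$ is a martingale differential, and
\[
d\langle M(\phi)\rangle_t=4\!\iint\!\phi(z)\phi(w)\,\frac{\Im g_t(z)\,\Im g_t(w)}{|g_t(z)-W_t|^2\,|g_t(w)-W_t|^2}\,dz\,dw\,dt.
\]
To identify this with $dV_t(\phi)$ I will apply Hadamard's variation formula: writing $G_{\HH\setminus K_t}(z,w)=G_\HH(g_t(z),g_t(w))=\log\bigl|(g_t(z)-\overline{g_t(w)})/(g_t(z)-g_t(w))\bigr|$ and differentiating in $t$ via $\partial_t g_t(z)=2/(g_t(z)-W_t)$ gives after direct algebra
\[
\partial_t G_{\HH\setminus K_t}(z,w)=-\,\frac{4\,\Im g_t(z)\,\Im g_t(w)}{|g_t(z)-W_t|^2\,|g_t(w)-W_t|^2},
\]
which matches $-d\langle M(\phi)\rangle_t/dt$ after integration against $\phi\otimes\phi$.

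For the localization, take $\sigma_n$ to be the first time $K_t$ enters the $(1/n)$-neighborhood of $\operatorname{supp}\phi$; on $[0,\tau\wedge\sigma_n]$ the integrands above are uniformly bounded, giving the expectation identity, and $n\to\infty$ via dominated convergence recovers it at the full stopping time $\tau$. Matching characteristic functions against all compactly supported test functions pins down the law of $\hat h_\tau$ as a distribution on $\HH\setminus K_\tau$, completing the coupling at the fixed stopping time $\tau$; consistency across all such $\tau$ follows from a standard monotone class argument. The main obstacle I anticipate is the Hadamard/It\^o bookkeeping that delivers the two-point Green's function identity above---all other ingredients (Proposition~\ref{prop::chordal_sle4_mart}, conformal invariance of $G_\HH$, and the Girsanov-type localization) are by now routine.
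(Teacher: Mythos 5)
Your proposal takes essentially the same route as the paper's proof — the Dub\'edat--Schramm--Sheffield martingale coupling — and the key technical content is correct: the martingale property of $\eta_t(z)$ via Proposition~\ref{prop::chordal_sle4_mart}, the Hadamard-type identity $\partial_t G_{\HH\setminus K_t}(z,w) = -4\,\Im g_t(z)\,\Im g_t(w)/\bigl(|g_t(z)-W_t|^2|g_t(w)-W_t|^2\bigr)$ matching the cross-variation of $\eta_t(z)$ and $\eta_t(w)$, and the exponential-martingale argument with localization to match characteristic functions. There is a mild organizational difference: the paper builds the field $\tilde h := (\eta_\infty - \eta_0)$ plus independent zero-boundary GFFs on the connected components of $\HH\setminus\partial_\infty$ and verifies the marginal law once, whereas you build $\hat h_\tau := \tilde h\circ f_\tau + \eta_\tau$ from a separate, independent GFF $\tilde h$. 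Both produce the same coupling; yours has the advantage that the conditional law given $K_\tau$ is manifest.

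The one genuine soft spot is the closing claim that ``consistency across all such $\tau$ follows from a standard monotone class argument.'' For the proposition you need a \emph{single} field $h$ on $\HH$, coupled with the whole trajectory $K$, such that the domain Markov property holds for \emph{every} stopping time $\tau$ simultaneously; but your $\hat h_\tau$ for different $\tau$'s are genuinely different random variables built from the same $\tilde h$ and are not nested restrictions of one another, so there is no consistency to extract by a monotone class argument. The correct fix is to fix one terminal time $\tau^*$ (e.g.\ the continuation threshold), set $h+\eta_0 := \hat h_{\tau^*}$ (extended by an independent GFF inside the swallowed parts), and then verify for each stopping time $\sigma<\tau^*$ that, given $K_\sigma$, the conditional law of $\hat h_{\tau^*}$ on $\HH\setminus K_\sigma$ is $\eta_\sigma$ plus a zero-boundary GFF on $\HH\setminus K_\sigma$. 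This is precisely the conditional version of the exponential-martingale computation you already carry out at $\sigma=0$: the conditional mean is $\eta_\sigma$ by the martingale property, and the conditional characteristic function reduces to $\E\bigl[\exp\bigl(is(M_{\tau^*}(\phi)-M_\sigma(\phi))+\tfrac{s^2}{2}(V_{\tau^*}(\phi)-V_\sigma(\phi))\bigr)\,\big|\,K_\sigma\bigr]=1$, which follows since the integrand is the exponential martingale started at $1$ at time $\sigma$. You should write this out; it is the step the paper also asserts tersely at the end of its proof, so the level of detail is comparable, but the justification is the exponential martingale at intermediate stopping times, not a monotone class argument.
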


\begin{proof}
For $t\ge 0$, define
\[\partial_t=\cup_{0\le s\le t}\partial K_s, \quad \partial_{\infty}=\cup_{0\le s\le T}\partial K_s,\]
where $T$ is the continuation threshold. From Proposition \ref{prop::chordal_sle_continuity_nonintersecting}, when $\rho^L\ge 0,\rho^R\ge 0$, we know that $\SLE_4(\rho^L;\rho^R)$ is generated by a continuous curve, and $\partial_{\infty}$ is exactly this continuous curve. We will prove that, for general $\rho^L$ and $\rho^R$, the process $\SLE_4(\rho^L;\rho^R)$ is also generated by a continuous curve up to and including the continuation threshold. But we do not assume the continuity of SLE$_4(\underline{\rho})$ process in the proof of Theorem \ref{thm::boundary_levelline_gff_coupling}. In fact, the proof of the continuity for general $\underline{\rho}$, that we will show later, based on the coupling between $\GFF$ and the Loewner chain of $\SLE_4(\underline{\rho})$ in Theorem \ref{thm::boundary_levelline_gff_coupling}.

\textit{First}, we analyze the function $\eta_t(z)$. It is well-defined when $z\in\HH\setminus K_t$, and when $z$ is swallowed by $K_t$, we define $\eta_t(z)$ to be the limiting value of $\eta_s(z)$ as $s$ approaches the first time at which $z$ is swallowed by $K$. Note that, for fixed $t$, the function $\eta_t(\cdot)$ is harmonic in $\HH\setminus K_t$, and it is also harmonic in the finite connected component of $\HH\setminus \partial_t$ with certain boundary value, see Figure \ref{fig::boundary_levellines_coupling_eta}. From Proposition \ref{prop::chordal_sle4_mart}, we know that $\eta_t(z)$ is a continuous martingale up to the first time that $z$ is swallowed by $K$. Suppose that $m_t(\cdot)$ is a M\"{o}bius transformation of $\HH$ such that $m_t\circ f_t$ is a conformal map from $\HH\setminus K_t$ onto $\HH$ that preserves $z$. Define
\[C_t(z)=\log 2-\Re \log m_t'(f_t(z))-\Re \log g_t'(z),\]
which is the log of the conformal radius of $\HH\setminus K_t$ seen from $z$. We have that
\[d\la\eta_t(z)\ra=\left(\Im{\frac{2}{f_t(z)}}\right)^2dt=-dC_t(z).\]
\smallbreak
\textit{Second}, we analyze the product $\eta_t(z)\eta_t(w)$ for $z,w\in\HH$. Recall that the Green's function of the upper-half plane is given by
\[G(z,w)=\log|\frac{z-\bar{w}}{z-w}|,\quad z,w\in\HH.\]
Fix $z,w\in\HH$, define
\[G_t(z,w)=G(f_t(z),f_t(w)),\quad \text{if }z,w\in\HH\setminus K_t;\]
and when at least of one of $z,w$ is swallowed by $K_t$, we define $G_t(z,w)$ to be the limiting value of $G_s(z,w)$ when $s$ approaches the first time at which at least one of $z,w$ is swallowed. Note that, when $z,w\in\HH\setminus K_t$, $G_t(z,w)$ is the Green's function of the domain $\HH\setminus K_t$; when $z,w$ are not in the same connected component of $\HH\setminus\partial_t$, $G_t(z,w)$ becomes zero; and when $z,w$ are in the same connected component of $\HH\setminus\partial_t$, $G_t(z,w)$ is just the Green's function of that connected component.

We will show that $\eta_t(z)\eta_t(w)+G_t(z,w)$ is a continuous martingale up to the first time that at least one of $z,w$ is swallowed. Note that
\[d\la\eta_t(z),\eta_t(w)\ra=\Im{\frac{2}{f_t(z)}}\Im{\frac{2}{f_t(w)}}dt.\]
By It\^{o}'s formula,
\begin{align*}
dG_t(z,w)&=d\Re{\log\left(g_t(z)-\overline{g_t(w)}\right)}-d\Re{\log\left(g_t(z)-g_t(w)\right)}\\
&=\Re{\frac{-2dt}{f_t(z)\overline{f_t(w)}}}+\Re{\frac{2dt}{f_t(z)f_t(w)}}\\
&=-\Im{\frac{2}{f_t(z)}}\Im{\frac{2}{f_t(w)}}dt.
\end{align*}
Thus $d\la\eta_t(z),\eta_t(w)\ra=-dG_t(z,w)$ and $\eta_t(z)\eta_t(w)+G_t(z,w)$ is a local martingale. Note that $\eta_t(z)$ and $\eta_t(w)$ are continuous and bounded, and that $G_t(z,w)$ is continuous and non-increasing in $t$. These imply that $\eta_t(z)\eta_t(w)+G_t(z,w)$ is a continuous martingale.
\smallbreak
\textit{Third,} for any test function $p\in H_s(\HH)$, define
\[E_t(p)=\iint p(z)p(w)G_t(z,w)d^2zd^2w,\]
and we will explain that $(\eta_t,p)$ is a continuous martingale and
\begin{equation}\label{eqn::boundary_levelines_coupling_test_variation_1}d\la(\eta_t,p)\ra=-dE_t(p).\end{equation}
Since $\eta_t(z)$ is a continuous martingale and is bounded uniformly over $z$, by Fubini's theorem, the integral $(\eta_t,p)$ is also a bounded continuous martingale. To show Equation (\ref{eqn::boundary_levelines_coupling_test_variation_1}), it suffices to show that $(\eta_t,p)^2+E_t(p)$ is a martingale. Note that
\begin{equation}\label{eqn::boundary_levelines_coupling_test_variation_2}(\eta_t,p)^2+E_t(p)=\iint p(z)p(w)\left(\eta_t(z)\eta_t(w)+G_t(z,w)\right)d^2zd^2w.\end{equation}
We know that $\eta_t(z)\eta_t(w)+G_t(z,w)$ is a continuous martingale; and that $\eta_t(z),\eta_t(w)$ are bounded (uniformly over $z,w$); and that $G_t(z,w)$ is non-increasing in $t$. Thus, by Fubini's theorem again, the right-hand side of Equation (\ref{eqn::boundary_levelines_coupling_test_variation_2}) is a continuous martingale.
\smallbreak
\textit{Finally,} we will explain how to construct the coupling that satisfies the domain Markov property. Define, for $z\in\HH$,
\[\eta_{\infty}(z)=\lim_{t\to\infty}\eta_t(z).\]
The limit exists almost surely for fixed $z$ since $\eta_t(z)$ is a bounded martingale. Define, for $z,w\in\HH$ and $p\in H_s(\HH)$ which is non-negative,
\[G_{\infty}(z,w)=\lim_{t\to\infty}G_t(z,w),\quad E_{\infty}(p)=\lim_{t\to\infty}E_t(p).\]
The limits exist because that $G_t(z,w)$ and $E_t(p)$ are non-increasing functions in $t$.
Let $\tilde{h}$ equal to $\eta_{\infty}-\eta_0$ plus a sum of independent zero-boundary $\GFF$'s, one in each connected component of $\HH\setminus\partial_{\infty}$. The marginal law of $\tilde{h}$ is the same as a zero-boundary $\GFF$ and the reason is the following. For any test function $p\in H_s(\HH)$ which is non-negative, and any $\mu\ge 0$, we have that
\begin{align*}
\E[\exp(-\mu(\tilde{h},p))]&=\E[\E[\exp(-\mu(\tilde{h},p))\cond K]]\\
&=\E\left[\exp\left(-\mu(\eta_{\infty}-\eta_0,p)-\frac{\mu^2}{2}E_{\infty}(p)\right)\right]\\
&=\E\left[\exp\left(-\mu(\eta_{\infty}-\eta_0,p)+\frac{\mu^2}{2}(E_0(p)-E_{\infty}(p))\right)\right]\exp\left(-\frac{\mu^2}{2}E_0(p)\right)\\
&=\exp\left(-\frac{\mu^2}{2}E_0(p)\right),
\end{align*}
where the last equality is due to the fact that $(\eta_t,p)$ is a continuous bounded martingale with mean $(\eta_0,p)$ and quadratic variation $\la(\eta_t,p)\ra=E_0(p)-E_t(p)$.
To complete the proof, we need to explain that the coupling $(K,\tilde{h})$ satisfies the domain Markov property. In fact, for any test function $p\in H_s(\HH)$, the conditional law of $((\tilde{h}+\eta_0)|_{\HH\setminus K_{\tau}},p)$ given $K_{\tau}$ is the same as a Gaussian with mean $(\eta_{\tau},p)$ and variance $E_{\tau}(p)$.
\end{proof}

If $K$ and a $\GFF$ $h$ are coupled as in Theorem \ref{thm::boundary_levelline_gff_coupling}, we say that the path $\gamma=\cup_{0\le s\le T}\partial K_s$, where $T$ is the continuation threshold of $K$, is the \textbf{level line} of $h+\eta_0$. Generally, for any simply connected domain $D$ with two distinct boundary points $x$ and $y$ and a fixed number $u\in\R$, we say that the path $\gamma$ is the level line of a $\GFF$ $h$ with height $u$ in $D$ starting from $x$ targeted at $y$ if $\Phi(\gamma)$ is the level line of $h\circ\Phi^{-1}+u$ where $\Phi$ is any conformal map from $D$ onto $\HH$ that sends $x$ to $0$ and $y$ to $\infty$.

\subsection{Proof of Theorems \ref{thm::boundary_levelline_gff_deterministic}-\ref{thm::sle_chordal_reversibility}---non-boundary-intersecting case}\label{subsec::boundary_levellines_boundaryavoiding}
In this section, we mostly work in the horizontal strip:
\[\T:=\R\times(0,1).\]
Write the upper-boundary and the lower-boundary of $\T$ in the following way:
\[\partial_U\T:=\R\times\{1\},\quad \partial_L\T:=\R\times\{0\}.\]
\begin{lemma}\label{lem::boundary_levellines_deterministic_case1}
Suppose that $h$ is a $\GFF$ on the strip $\T$ whose boundary value is as in Figure \ref{fig::boundary_levellines_deterministic_nonintersecting_cases}(a) and let $\gamma$ be the level line of $h$ starting from $0$. If $a\ge \lambda$, then $\gamma$ almost surely accumulates at $-\infty$; if $a\le-\lambda$, then $\gamma$ almost surely accumulates at $+\infty$. In both cases, $\gamma$ almost surely does not hit $\partial_U\T$. If $a\in (-\lambda,\lambda)$, then $\gamma$ almost surely accumulates in $\partial_U\T$; and after it accumulates in $\partial_U\T$, $\gamma$ can be continued when it is targeted to $-\infty$ or $+\infty$---i.e. the continuation threshold is not hit when $\gamma$ first accumulates in $\partial_U\T$.
\end{lemma}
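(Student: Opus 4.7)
The plan is to transport the problem to $\HH$ via an explicit conformal map, identify the image of $\gamma$ as a single-force-point $\SLE_4(\rho)$ process, and then read off the three cases from Propositions \ref{prop::chordal_sle_continuity_nonintersecting} and \ref{prop::dubedat_lemma15}. Concretely, I would take $\Phi\colon\T\to\HH$, $\Phi(z)=e^{\pi z}-1$. This map sends $0\mapsto 0$, both right ends $+\infty$ of $\T$ to $\infty\in\partial\HH$, the pinched left end $-\infty$ of the strip to $-1$, the open lower boundary to $(-1,0)\cup(0,+\infty)$, and the open upper boundary $\partial_U\T$ to $(-\infty,-1)$. Using the conformal covariance built into the coupling of Theorem \ref{thm::boundary_levelline_gff_coupling}, $\Phi(\gamma)$ is the level line in $\HH$ of a $\GFF$ whose boundary values are $+\lambda$ on $(0,+\infty)$, $-\lambda$ on $(-1,0)$, and $a$ on $(-\infty,-1)$. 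Matching with the prescription of Theorem \ref{thm::boundary_levelline_gff_coupling} via $a=-\lambda(1+\rho^{1,L})$ identifies $\Phi(\gamma)$ as the $\SLE_4(\rho)$ process in $\HH$ with a single left force point at $-1$ and weight $\rho=-1-a/\lambda$.

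The remaining work is a case analysis in $\rho$. When $a\le-\lambda$, so that $\rho\ge 0=\kappa/2-2$, Proposition \ref{prop::chordal_sle_continuity_nonintersecting} gives that $\Phi(\gamma)$ avoids $\partial\HH\setminus\{0\}$ and, being absolutely continuous to $\SLE_4$, is transient at $\infty$; pulling back, $\gamma$ avoids $\partial_U\T$ and accumulates at $+\infty$ in $\T$. When $a\ge\lambda$, so that $\rho\le-2=\kappa/2-4$, I would apply the left-force-point mirror (obtained via the reflection $z\mapsto-\bar z$ of $\HH$) of Proposition \ref{prop::dubedat_lemma15}(1) with $k=1$: it says $\Phi(\gamma)$ accumulates at $-1$ as $t\uparrow\tau^{1,L}$ without hitting any other point of $(-\infty,-1]$, and the continuation threshold coincides with $\tau^{1,L}$ since the only weight is $\le-2$. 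Pulling back, $\gamma$ tends to the pinched end $-\infty$ of $\T$ and never touches $\partial_U\T$.

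When $a\in(-\lambda,\lambda)$, so that $\rho\in(-2,0)=(\kappa/2-4,\kappa/2-2)$, the analogous mirror of Proposition \ref{prop::dubedat_lemma15}(2) with $k=1$ yields accumulation of $\Phi(\gamma)$ at some point of the closure $[-\infty,-1]$ at time $\tau^{1,L}$, without hitting any point of $(-1,0]$. To upgrade this to accumulation strictly inside the open interval $(-\infty,-1)$, I would invoke that $W-V^L$ is $2$ times a Bessel process of dimension $d=1+(\rho+2)/2\in(1,2)$, hence instantaneously reflecting at $0$ and not absorbed, so $\Phi(\gamma)$ cannot be pinned at the force point $-1$ itself. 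Thus the accumulation lies at some $y_0\in(-\infty,-1)$, which corresponds to a finite point of $\partial_U\T$. Since $\rho>-2$ the continuation threshold is strictly larger than $\tau^{1,L}$ (in fact never reached for this single force point), so $\gamma$ can be continued from its first accumulation toward either $+\infty$ or $-\infty$.

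The main obstacle will be the first step: carefully applying the conformal covariance of Theorem \ref{thm::boundary_levelline_gff_coupling} to conclude that $\Phi(\gamma)$ really is the claimed $\SLE_4(\rho)$. Once this is in hand the case analysis is essentially bookkeeping of Propositions \ref{prop::chordal_sle_continuity_nonintersecting} and \ref{prop::dubedat_lemma15}. A secondary subtlety, namely that the mirror of Proposition \ref{prop::dubedat_lemma15}(2) must be combined with the non-absorbing property of the Bessel driver to separate accumulation at the pinched end $-\infty$ of $\T$ from accumulation in the open upper boundary $\partial_U\T$, will need a short Bessel-recurrence argument.
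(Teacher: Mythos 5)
The cases $a\ge\lambda$ and $a\le-\lambda$ are handled correctly (though you used a different normalization than the paper, sending $+\infty\mapsto\infty$ and the pinched end $-\infty$ to $-1$, rather than the paper's map which sends $-\infty\mapsto\infty$). The gap is in the middle case $a\in(-\lambda,\lambda)$. Your single-map strategy forces you to prove that the accumulation point of $\Phi(\gamma)$ at time $\tau^{1,L}$ lies strictly inside $(-\infty,-1)$, i.e.\ to exclude both endpoints of $[-\infty,-1]$. Excluding $\infty$ is fine: the Bessel driver of dimension $d\in(1,2)$ hits zero in finite Loewner time, so the hull has finite half-plane capacity, hence is bounded. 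But excluding the force point $-1$ is a genuinely different statement, and ``instantaneous reflection / not absorbed'' of the Bessel process does not prove it. Those are properties of the driving pair $(W,V^L)$; they say the process $Z_t=W_t-V^L_t$ spends Lebesgue-null time at $0$ and continues after the collision, but they say nothing about \emph{where in $(-\infty,-1]$} the curve physically lands at time $\tau^{1,L}$. In fact $Z$ hits zero in exactly the same way whether $\gamma(\tau^{1,L})=-1$ or $\gamma(\tau^{1,L})=x$ for some $x<-1$: in both cases $g_t(-1)$ and $W_t$ collapse together. Showing that the first-hit law of $\SLE_4(\rho)$ on $(-\infty,-1]$ has no atom at the force point is a non-trivial fact (in the spirit of Lemma~\ref{lem::boundary_levellines_zerohittingprobability}, proved much later), and you would need a separate argument for it.

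The paper sidesteps this entirely by using two conformal maps. With $\psi_+$ (sending $-\infty\mapsto\infty$), Proposition~\ref{prop::dubedat_lemma15}(2) gives accumulation in $[1,\infty)$, i.e.\ in $\partial_U\T\cup\{+\infty\}$: this rules out $-\infty$ as the first accumulation point without caring whether the endpoint $1$ is hit. With $\psi_-$ (sending $+\infty\mapsto\infty$), one gets accumulation in $(-\infty,-1]$, i.e.\ in $\partial_U\T\cup\{-\infty\}$, ruling out $+\infty$. Intersecting the two conclusions gives accumulation in $\partial_U\T$, with no need to decide whether either image curve lands exactly on its force point. If you want to keep your single-map approach, you should either import a second map $\psi_+$-style to rule out accumulation at $-1$ (which essentially reproduces the paper's sandwich), or supply an honest argument that the first-hit distribution has no atom at the force point. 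As written, the step ``so $\Phi(\gamma)$ cannot be pinned at the force point $-1$'' does not follow.
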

\begin{proof}
\textit{Case 1:} $a\ge \lambda$. Let $\psi$ be the conformal map from $\T$ onto $\HH$ which sends 0 to 0, $+\infty$ to $+1$, and $-\infty$ to $\infty$. Then $\psi(\gamma)$ has the law of $\SLE_4(\rho^R)$ with force point at $1$ where $\rho^R=a/\lambda-1\ge 0$. From Proposition \ref{prop::dubedat_lemma15}, we know that $\psi(\gamma)$ accumulates at $\infty$ without hitting the boundary.
\smallbreak
\textit{Case 2:} $a\le -\lambda$. This case can be proved similarly.
\smallbreak
\textit{Case 3:} $a\in (-\lambda,\lambda)$. We have the following two observations:
\begin{enumerate}
\item [(a)] Let $\psi_+$ be the conformal map from $\T$ onto $\HH$ that sends 0 to 0, $+\infty$ to 1, $-\infty$ to $\infty$. Then $\psi_+(\gamma)$ has the law of $\SLE_4(\rho^R)$ process with force point 1 where $\rho^R=a/\lambda-1\in (-2,0)$. From Proposition \ref{prop::dubedat_lemma15}, we know that $\psi_+(\gamma)$ accumulates in $[1,\infty)$. This implies that $\gamma$ accumulates in $\partial_U\T$ or $+\infty$ before reaches $-\infty$.
\item [(b)] Let $\psi_-$ be the conformal map from $\T$ onto $\HH$ that sends 0 to 0, $+\infty$ to $\infty$, $-\infty$ to $-1$. Then $\psi_-(\gamma)$ has the law of $\SLE_4(\rho^L)$ process with force point $-1$ where $\rho^L=-a/\lambda-1\in (-2,0)$. From Proposition \ref{prop::dubedat_lemma15}, we know that $\psi_-(\gamma)$ accumulates in $(-\infty,-1]$. This implies that $\gamma$ accumulates in $\partial_U\T$ or $-\infty$ before reaches $+\infty$.
\end{enumerate}
Combining these two facts, we know that $\gamma$ almost surely accumulates in $\partial_U\T$ before reaches $\pm\infty$.
\end{proof}
\begin{figure}[ht!]
\begin{subfigure}[b]{0.3\textwidth}
\begin{center}
\includegraphics[width=\textwidth]{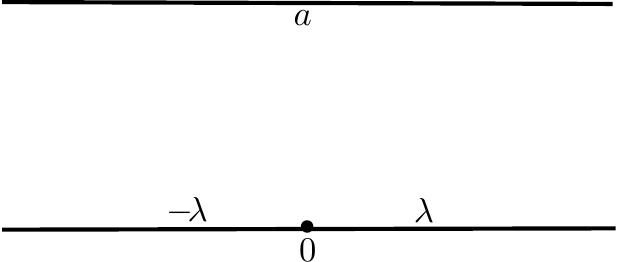}
\end{center}
\caption{The boundary values of $h$ in Lemma \ref{lem::boundary_levellines_deterministic_case1}.}
\end{subfigure}
$\quad$
\begin{subfigure}[b]{0.3\textwidth}
\begin{center}\includegraphics[width=\textwidth]{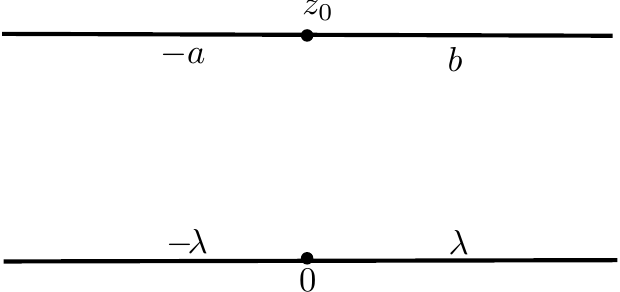}
\end{center}
\caption{The boundary values of $h$ in Lemma \ref{lem::boundary_levellines_deterministic_case2}.}
\end{subfigure}
$\quad$
\begin{subfigure}[b]{0.3\textwidth}
\begin{center}\includegraphics[width=\textwidth]{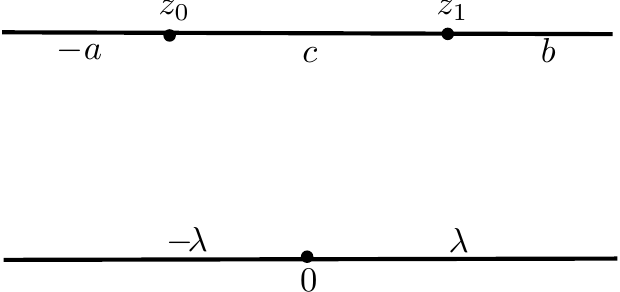}
\end{center}
\caption{The boundary values of $h$ in Lemma \ref{lem::boundary_levellines_deterministic_case3}.}
\end{subfigure}
\caption{\label{fig::boundary_levellines_deterministic_nonintersecting_cases} The boundary values of the $\GFF$ on $\T$.}
\end{figure}

\begin{remark}\label{rem::boundary_levellines_deterministic_case1}
The conclusions in Lemma \ref{lem::boundary_levellines_deterministic_case1} hold more generally when the boundary data of $h$ on $\partial_L\T$ is piecewise constant, and is at least $\lambda$ to the right of 0 and is at most $-\lambda$ to the left of 0. Furthermore, the level line $\gamma$ is almost surely a continuous curve until it first accumulates in $\partial_U\T$.
\end{remark}
\begin{proof}
From Proposition \ref{prop::dubedat_lemma15}, we know that $\gamma$ almost surely does not hit $\partial_L\T$ after time 0.
Thus, up to the first time $T$ that $\gamma$ accumulates in $\partial_U\T$, the law of $\gamma$ is absolutely continuous with respect to the law of $\SLE_4$. Therefore $\gamma$ is continuous up to $T$.
\end{proof}
\begin{lemma}\label{lem::boundary_levellines_deterministic_case2}
Suppose that $h$ is a $\GFF$ on the strip $\T$ whose boundary value is as in Figure \ref{fig::boundary_levellines_deterministic_nonintersecting_cases}(b) and let $\gamma$ be the level line of $h$ starting from $0$. If $a,b\ge\lambda$, then $\gamma$ almost surely exits $\T$ at $z_0$ without otherwise hitting $\partial_U\T$.
\end{lemma}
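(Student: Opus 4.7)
The plan is to conformally pull back the problem from $\T$ to $\HH$ in such a way that the target point $z_0$ is sent to $\infty$. Let $\psi:\T\to\HH$ be the conformal map determined by $\psi(0)=0$, $\psi(z_0)=\infty$, $\psi(+\infty)=1$, and $\psi(-\infty)=-1$. Tracing the boundary orientation, $\psi$ sends the two halves of $\partial_L\T$ (to the left and right of $0$) to $(-1,0)$ and $(0,1)$ respectively, and the two arcs of $\partial_U\T$ (to the left and right of $z_0$) to $(-\infty,-1)$ and $(1,\infty)$ respectively. Reading off the boundary values of $h\circ\psi^{-1}$ from Figure \ref{fig::boundary_levellines_deterministic_nonintersecting_cases}(b) and applying Theorem \ref{thm::boundary_levelline_gff_coupling}, the image $\psi(\gamma)$ is an $\SLE_4(\underline{\rho}^L;\underline{\rho}^R)$ process in $\HH$ from $0$ to $\infty$ with force points at $(0^-,-1;\,0^+,1)$ whose weights are read off from the boundary jumps as
\[\rho^{1,L}=\tfrac{a}{\lambda}-1,\qquad \rho^{2,L}=1-\tfrac{a}{\lambda},\qquad \rho^{1,R}=\tfrac{b}{\lambda}-1,\qquad \rho^{2,R}=1-\tfrac{b}{\lambda}.\]

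Next, I would inspect the cumulative weights. A direct computation gives $\overline{\rho}^{1,L}=a/\lambda-1$, $\overline{\rho}^{2,L}=0$, $\overline{\rho}^{1,R}=b/\lambda-1$, and $\overline{\rho}^{2,R}=0$. Under the hypothesis $a,b\ge\lambda$, all of these are at least $\kappa/2-2=0$. Consequently, Proposition \ref{prop::chordal_sle_continuity_nonintersecting} yields continuity of $\psi(\gamma)$, which is furthermore absolutely continuous with respect to standard chordal $\SLE_4$ on any compact time interval and hence transient. Proposition \ref{prop::sle_chordal_forbidden_intervals}, applied with each of $\overline{\rho}^{2,L}$ and $\overline{\rho}^{2,R}$ equal to $0\ge \kappa/2-2$, then forbids $\psi(\gamma)$ from intersecting $(-\infty,-1)\cup(1,\infty)=\psi(\partial_U\T\setminus\{z_0\})$.

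Pulling back by $\psi^{-1}$, the curve $\gamma$ is continuous up to and including the time it exits $\T$, avoids $\partial_U\T\setminus\{z_0\}$ almost surely, and accumulates at $z_0$ (since its image in $\HH$ is transient and reaches $\infty$). This gives the desired conclusion. The only delicate point is the boundary-value bookkeeping: one has to trace the orientation of $\partial\T$ under $\psi$ carefully and apply the sign convention in Theorem \ref{thm::boundary_levelline_gff_coupling} to write down the weights correctly, then verify that all cumulative weights meet the threshold $\kappa/2-2=0$. Once this is done, everything else is a direct application of the chordal $\SLE_\kappa(\underline{\rho})$ results recorded in Section \ref{subsec::chordal_sle}, in complete parallel with Case 1 of Lemma \ref{lem::boundary_levellines_deterministic_case1}.
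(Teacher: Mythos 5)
Your overall plan — pull back to $\HH$ and read off an $\SLE_4(\underline{\rho})$ process — is the same as the paper's, but you make a different choice of where to send $\infty$, and this choice is where the argument runs into trouble. The paper's $\psi$ sends $-\infty\mapsto\infty$, so that $z_0$ is carried to a \emph{finite} point $x^{2,R}$; the process then has only right-hand force points with $\overline\rho^{1,R}=b/\lambda-1\ge 0$ and $\overline\rho^{2,R}=-a/\lambda-1\le -2$, and Proposition~\ref{prop::dubedat_lemma15}(1) (with $k=2$) says directly that $K_t$ accumulates at $x^{2,R}$ without hitting any other point of $[x^{1,R},\infty)$. That one application handles both the ``arrives at $z_0$'' and the ``avoids $\partial_U\T$ elsewhere'' claims in one stroke, with no asymptotic transience argument needed. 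Your $\psi$ sends $z_0\mapsto\infty$, which turns the target into $\infty$ and forces you to prove transience of the resulting $\SLE_4(\underline{\rho})$ process — and that is exactly the step that is not available.

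The key gap is the sentence ``absolutely continuous with respect to standard chordal $\SLE_4$ on any compact time interval and hence transient.'' Absolute continuity on compact time intervals controls the law of $\gamma|_{[0,T]}$ for each fixed $T$, but transience ($\lim_{t\to\infty}\gamma(t)=\infty$) is a statement about the tail behavior and does not follow. Proposition~\ref{prop::chordal_sle_continuity_nonintersecting} gives continuity, not transience, and the paper's transience result for $\SLE_4(\underline{\rho})$ with nonnegative cumulative weights only appears in Proposition~\ref{prop::boundary_levellines_nonintersecting_all}, whose proof uses the present lemma — so invoking it here would be circular. You could in principle get transience by applying Proposition~\ref{prop::dubedat_lemma15}(1) with $k=r+1$ (so that $x^{k,R}=\infty$), but the paper's statement of that proposition is for right-hand force points only, whereas your configuration has force points on both sides. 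The same remark applies to your use of Proposition~\ref{prop::sle_chordal_forbidden_intervals}, also stated only for $\SLE_\kappa(\underline\rho^R)$.

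Two smaller points. First, the map $\psi$ is over-determined: you cannot prescribe four boundary point images for a conformal map between two simply connected domains; you should fix three (say $0\mapsto 0$, $z_0\mapsto\infty$, $+\infty\mapsto 1$) and let $\psi(-\infty)$ be whatever negative value results. Second, the weights are not right. With your claimed weights, the boundary value on $(0,1)$ would be $\lambda(1+\rho^{1,R})=b$ and on $(1,\infty)$ would be $\lambda(1+\overline\rho^{2,R})=\lambda$ — exactly swapped relative to what is needed ($\lambda$ on the arc coming from $\partial_L\T$, $b$ on the arc coming from $\partial_U\T$). The correct assignment is $\rho^{1,L}=\rho^{1,R}=0$, $\rho^{2,L}=a/\lambda-1$, $\rho^{2,R}=b/\lambda-1$ (or simply drop the dummy points at $0^\pm$ and use one force point per side). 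By coincidence your (mis)computed cumulative weights are still all $\ge 0$, so the downstream conclusion is unaffected, but the intermediate computation is wrong as written.
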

\begin{proof}
Let $\psi$ be the conformal map from $\T$ onto $\HH$ that sends 0 to 0, $+\infty$ to 1, and $-\infty$ to $\infty$. Put $x^{2,R}=\psi(z_0)$. Then $\psi(\gamma)$ has the law of $\SLE_4(\rho^{1,R},\rho^{2,R})$ with force points $(1,x^{2,R})$ where
\[\rho^{1,R}=b/\lambda-1\ge 0,\quad \rho^{1,R}+\rho^{2,R}=-a/\lambda-1\le -2.\]
From Proposition \ref{prop::dubedat_lemma15}, we know that $\psi(\gamma)$ almost surely accumulates at $x^{2,R}$ without otherwise hitting the boundary.
\end{proof}
\begin{remark}\label{rem::boundary_levellines_deterministic_case2}
The conclusion in Lemma \ref{lem::boundary_levellines_deterministic_case2} holds more generally when the boundary data of $h$ is piecewise constant, and is \[\begin{array}{ll}
\text{at most } -\lambda \text{ to the left of } z_0 \text{ on } \partial_U\T, & \text{at least } \lambda \text{ to the right of } z_0 \text{ on } \partial_U\T, \\
\text{at most } -\lambda \text{ to the left of } 0 \text{ on } \partial_L\T, & \text{at least } \lambda \text{ to the right of } 0 \text{ on } \partial_L\T.
\end{array}\]
Furthermore, the level line $\gamma$ is almost surely a continuous curve from 0 to $z_0$.
\end{remark}
\begin{lemma}\label{lem::boundary_levellines_deterministic_case3}
Suppose that $h$ is a $\GFF$ on the strip $\T$ whose boundary value is as in Figure \ref{fig::boundary_levellines_deterministic_nonintersecting_cases}(c) and let $\gamma$ be the level line of $h$ starting from $0$. If $a,b\ge \lambda$ and $c\in (-\lambda,\lambda)$, then $\gamma$ almost surely exits $\T$ in $[z_0,z_1]$ without otherwise hitting $\partial_U\T$.
\end{lemma}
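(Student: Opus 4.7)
The plan is to reduce the statement to a direct application of Proposition \ref{prop::dubedat_lemma15} after transporting the picture to the upper half plane. Let $\psi\colon\T\to\HH$ be the conformal map that sends $0\mapsto 0$, $+\infty\mapsto 1$, and $-\infty\mapsto\infty$. Under $\psi$, the lower boundary $\partial_L\T$ is mapped to $(-\infty,0]\cup[0,1]$, while the upper boundary $\partial_U\T$ is mapped to $[1,\infty)$ in such a way that, reading from $+\infty$ leftwards along $\partial_U\T$, we first meet $z_1$ at $x^{2,R}:=\psi(z_1)$ and then $z_0$ at $x^{3,R}:=\psi(z_0)$, so $1<x^{2,R}<x^{3,R}<\infty$.

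Applying Theorem \ref{thm::boundary_levelline_gff_coupling} (in its piecewise constant form) to the transported field $h\circ\psi^{-1}$, the image path $\psi(\gamma)$ is the Loewner chain of an $\SLE_4(\underline{\rho}^R)$ process with force points $\underline{x}^R=(1,x^{2,R},x^{3,R})$; no nontrivial left force points appear because the value on $\partial_L\T$ to the left of $0$ is $-\lambda$. Reading the boundary values of $\eta_0$ in Figure \ref{fig::boundary_levellines_deterministic_nonintersecting_cases}(c) and matching them with $\lambda(1+\overline{\rho}^{j,R})$, I obtain the cumulative weights
\[
\overline{\rho}^{1,R}=b/\lambda-1,\qquad \overline{\rho}^{2,R}=c/\lambda-1,\qquad \overline{\rho}^{3,R}=-a/\lambda-1.
\]
The hypotheses $a,b\ge\lambda$ and $c\in(-\lambda,\lambda)$ translate into
\[
\overline{\rho}^{1,R}\ge 0=\kappa/2-2,\qquad \overline{\rho}^{2,R}\in(-2,0)=(\kappa/2-4,\kappa/2-2),\qquad \overline{\rho}^{3,R}\le -2=\kappa/2-4,
\]
with $\kappa=4$. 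This is exactly the assumption of Proposition \ref{prop::dubedat_lemma15}(2) with $k=2$.

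The conclusion of that proposition gives that, almost surely, as $t\uparrow\tau^{1,R}$, the hull $\psi(K_t)$ accumulates at some point in $[x^{2,R},x^{3,R}]=[\psi(z_1),\psi(z_0)]$ without otherwise hitting $[1,x^{2,R})\cup(x^{3,R},\infty)$, i.e.\ without otherwise hitting $\psi(\partial_U\T)$. Pulling back by $\psi^{-1}$, the level line $\gamma$ exits $\T$ at a point of $[z_0,z_1]$ without otherwise hitting $\partial_U\T$, which is the required statement.

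There is no real obstacle; the only point to be careful about is the combinatorial one of reading off which interval in $\HH$ corresponds to which piece of the boundary of $\T$, and hence identifying the cumulative weights with the correct signs so that the hypotheses of Proposition \ref{prop::dubedat_lemma15}(2) are verified for $k=2$ (as opposed to $k=1$ or $k=3$, which would correspond to $\gamma$ exiting to the left of $z_0$ or to the right of $z_1$ respectively).
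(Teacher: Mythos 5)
Your proof is correct and follows exactly the argument in the paper: map the strip to $\HH$ via the same conformal map $\psi$, read off the cumulative weights $\overline{\rho}^{1,R}=b/\lambda-1\ge 0$, $\overline{\rho}^{2,R}=c/\lambda-1\in(-2,0)$, $\overline{\rho}^{3,R}=-a/\lambda-1\le -2$, and invoke Proposition \ref{prop::dubedat_lemma15}(2) with $k=2$. Your write-up is slightly more explicit than the paper's in naming which part of the proposition applies, but the route is identical.
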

\begin{proof}
Let $\psi$ be the conformal map from $\T$ onto $\HH$ that sends 0 to 0, $+\infty$ to 1, and $-\infty$ to $\infty$. Put
\[x^{2,R}=\psi(z_1)>0,\quad x^{3,R}=\psi(z_0)>x^{2,R}.\] Then $\psi(\gamma)$ has the law of $\SLE_4(\rho^{1,R},\rho^{2,R},\rho^{3,R})$ with force points $(1, x^{2,R},x^{3,R})$ where
\[\rho^{1,R}=b/\lambda-1\ge0,\quad \rho^{1,R}+\rho^{2,R}=c/\lambda-1\in(-2,0),\quad \rho^{1,R}+\rho^{2,R}+\rho^{3,R}=-a/\lambda-1\le -2.\]
Thus $\psi(\gamma)$ will first accumulates in $[x^{2,R},x^{3,R}]$ without hitting other boundary points by Proposition \ref{prop::dubedat_lemma15}.
\end{proof}
\begin{remark}\label{rem::boundary_levellines_deterministic_case3}
The conclusion in Lemma \ref{lem::boundary_levellines_deterministic_case3} holds more generally when the boundary data of $h$ is piecewise constant, and is \[\begin{array}{ll}
\text{at most } -\lambda \text{ to the left of } z_0 \text{ on } \partial_U\T, & \text{at least } \lambda \text{ to the right of } z_1 \text{ on } \partial_U\T, \\
\text{at most } -\lambda \text{ to the left of } 0 \text{ on } \partial_L\T, & \text{at least } \lambda \text{ to the right of } 0 \text{ on } \partial_L\T.
\end{array}\]
Furthermore, the level line $\gamma$ is almost surely a continuous curve until it first accumulates in $[z_0,z_1]$.
\end{remark}

\begin{proposition}\label{prop::boundary_levellines_deterministic_nonintersecting_coincide}
Suppose that $h$ is a $\GFF$ on the strip $\T$ whose boundary value is as in Figure \ref{fig::boundary_levellines_deterministic_nonintersecting_coincide}(a). Let $\gamma$ be the level line of $h$ starting from 0 and $\gamma'$ be the level line of $-h$ starting from $z_0$, and assume that the triple $(h,\gamma,\gamma')$ are coupled so that $\gamma$ and $\gamma'$ are conditionally independent given $h$. Then almost surely $\gamma$ and $\gamma'$ (viewed as sets) are equal. In particular, the level line $\gamma$ is almost surely determined by $h$.
\end{proposition}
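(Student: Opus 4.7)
The plan is to deploy the local set framework from Propositions \ref{prop::gff_localset_definition}--\ref{prop::gff_localsets_interacting} and pin down the conditional mean of $h$ given both paths. First I would verify that both $\gamma$ and $\gamma'$ are almost surely simple continuous curves joining $0$ and $z_0$ in $\overline{\T}$ without otherwise touching $\partial\T$. For $\gamma$ this follows from Lemma \ref{lem::boundary_levellines_deterministic_case2} together with Remark \ref{rem::boundary_levellines_deterministic_case2} applied to $h$ with the boundary data of Figure \ref{fig::boundary_levellines_deterministic_nonintersecting_coincide}(a); for $\gamma'$ the same lemma applies to $-h$, whose boundary data satisfies the non-boundary-intersecting hypothesis with the roles of $0$ and $z_0$ swapped. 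By Theorem \ref{thm::boundary_levelline_gff_coupling}, $\gamma$ is a local set for $h$, and by symmetry $\gamma'$ is a local set for $-h$, hence also for $h$.

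Next I would let $A := \gamma \,\tilde{\cup}\, \gamma'$ be the union obtained by taking conditionally independent samples given $h$. By Proposition \ref{prop::gff_localsets_union}, $A$ is a local set for $h$, and given $(A, h|_A)$ the conditional law of $h|_{\T \setminus A}$ is $\LC_A$ plus an independent zero-boundary $\GFF$ on $\T \setminus A$. Using Proposition \ref{prop::gff_localsets_boundarybehavior} I would read off the boundary values of $\LC_A$. On arcs of $\gamma$ at positive distance from $\gamma'$, $\LC_A$ agrees with $\LC_\gamma$, giving boundary values $-\lambda$ on the left and $+\lambda$ on the right of $\gamma$ (oriented from $0$ to $z_0$). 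On arcs of $\gamma'$ at positive distance from $\gamma$, $\LC_A$ agrees with $\LC_{\gamma'}$, and since $\gamma'$ is a level line of $-h$, this gives boundary values $+\lambda$ on the left and $-\lambda$ on the right of $\gamma'$ (oriented from $z_0$ to $0$). Reversing the orientation of a curve swaps its left and right sides, so these two boundary prescriptions are globally consistent precisely when $\gamma = \gamma'$ as subsets of $\T$.

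I would then argue by contradiction: suppose that with positive probability $\gamma \neq \gamma'$. Then there is a connected component $U$ of $\T \setminus A$ whose boundary contains a nontrivial arc $\alpha \subset \gamma$ disjoint from $\gamma'$. Let $C$ denote the connected component of $\T \setminus \gamma$ containing $U$. If $C \cap \gamma' = \emptyset$, Proposition \ref{prop::gff_localsets_interacting} gives $\LC_A|_C = \LC_\gamma|_C$, but then $\gamma'$ cannot reach $0$ through the side of $\gamma$ containing $U$, contradicting the established endpoint behavior of $\gamma'$. Otherwise $\gamma'$ enters $C$; in this case I would rerun the analysis of Lemma \ref{lem::boundary_levellines_deterministic_case2} applied to $-h$ restricted to $C$ with its conditional boundary data (equal to $-\lambda$ on $\gamma \cap \partial C$ and $\le -\lambda$ on $\partial C \cap \partial\T$). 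By Proposition \ref{prop::sle_chordal_forbidden_intervals}, the level line of $-h$ inside $C$ is forbidden from entering $\partial C$ along any such interval, forcing the restriction $\gamma'|_C$ to trace $\gamma$ itself along $\alpha$ and contradicting $\alpha \cap \gamma' = \emptyset$. In either case we obtain a contradiction, so $\gamma = \gamma'$ almost surely; in particular $\gamma$ is a measurable function of $h$.

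The main obstacle is the final step, namely ruling out nontrivial ``bubbles'' that might form between $\gamma$ and $\gamma'$. The delicate point is that the constant boundary value $\pm\lambda$ inherited from both sides on such a bubble formally matches, so one must exploit more than just harmonicity of $\LC_A$: the key is that after conditioning on $\gamma$, the boundary data of $-h$ in each component of $\T \setminus \gamma$ is uniformly at most $-\lambda$, which by Proposition \ref{prop::sle_chordal_forbidden_intervals} combined with the uniqueness of the non-boundary-intersecting level line (Remark \ref{rem::boundary_levellines_deterministic_case2}) leaves no room for $\gamma'$ to deviate from $\gamma$.
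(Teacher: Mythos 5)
Your preliminary steps are fine: both $\gamma$ and $\gamma'$ are almost surely simple continuous curves from $0$ to $z_0$ avoiding the rest of $\partial\T$, both are local sets, and $A=\gamma\,\tilde\cup\,\gamma'$ is local with the boundary values of $\LC_A$ you describe. However, the key step of ruling out ``bubbles'' has a genuine gap. Contrary to your claim, the boundary prescriptions on a bubble $U$ between $\gamma$ and $\gamma'$ are in fact internally consistent: on the $\gamma$-arc of $\partial U$ the value $\LC_A$ is $\pm\lambda$ depending on which side $U$ lies, and on the $\gamma'$-arc it is the opposite sign, so $\LC_A|_U$ is a perfectly well-defined nonconstant harmonic function with values $-\lambda$ and $+\lambda$ on two complementary arcs. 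There is no contradiction from harmonicity or from Proposition~\ref{prop::gff_localsets_boundarybehavior} alone; you acknowledge this yourself, but the patch you propose does not close the gap. In Case~1 ($C\cap\gamma'=\emptyset$) there is no contradiction from endpoint behavior, since both components of $\T\setminus\gamma$ have $0$ and $z_0$ on their boundary and $\gamma'$ could a priori live inside either one. In Case~2 the invocation of Proposition~\ref{prop::sle_chordal_forbidden_intervals} does not yield the asserted conclusion that $\gamma'|_C$ ``traces $\gamma$ itself along $\alpha$'': forbidden intervals forbid hitting certain boundary arcs, but they do not force the curve to hug a boundary arc, and you have not set up the conditional field and starting/ending data in $C$ carefully enough to extract a contradiction.

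The paper's argument avoids this issue by a cleaner stopping-time-and-density scheme (Lemma~\ref{lem::boundary_levellines_deterministic_nonintersecting_coincide}). It conditions on $\gamma'[0,\tau']$ for an arbitrary $\gamma'$-stopping time $\tau'$, uses Proposition~\ref{prop::gff_localsets_union} to show that, given $\gamma'[0,\tau']$, the path $\gamma$ is the level line of the restricted field $\tilde h$ in $\T\setminus\gamma'[0,\tau']$, and then reads off the boundary data of $\tilde h$: it is $\le -\lambda$ everywhere to the left of the tip $\gamma'(\tau')$ and $\ge\lambda$ everywhere to the right (in the appropriate sense), so Lemma~\ref{lem::boundary_levellines_deterministic_case2} (resting on the exit behavior of $\SLE_4(\underline\rho)$ in Proposition~\ref{prop::dubedat_lemma15}) forces $\gamma$ to first exit $\T\setminus\gamma'[0,\tau']$ exactly at $\gamma'(\tau')$. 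Applying this to a dense countable collection of stopping times shows $\gamma$ hits a dense set of points along $\gamma'$; by symmetry the reverse also holds, and continuity of both simple curves forces $\gamma=\gamma'$. This is the mechanism that replaces your contradiction step. I would suggest restructuring your argument around that single quantitative hitting statement rather than trying to derive a contradiction from $\LC_A$ directly.
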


\begin{figure}[ht!]
\begin{subfigure}[b]{0.3\textwidth}
\begin{center}
\includegraphics[width=\textwidth]{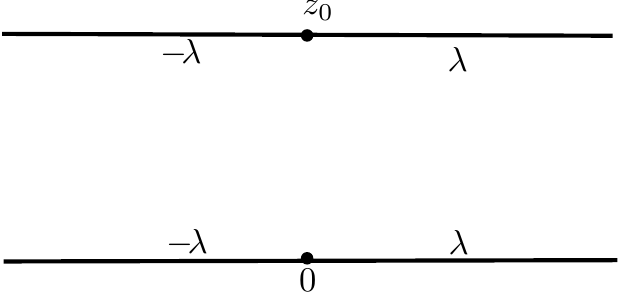}
\end{center}
\caption{}
\end{subfigure}
$\quad$
\begin{subfigure}[b]{0.3\textwidth}
\begin{center}\includegraphics[width=\textwidth]{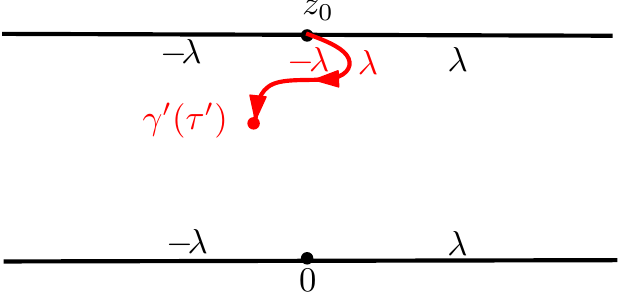}
\end{center}
\caption{}
\end{subfigure}
$\quad$
\begin{subfigure}[b]{0.3\textwidth}
\begin{center}\includegraphics[width=\textwidth]{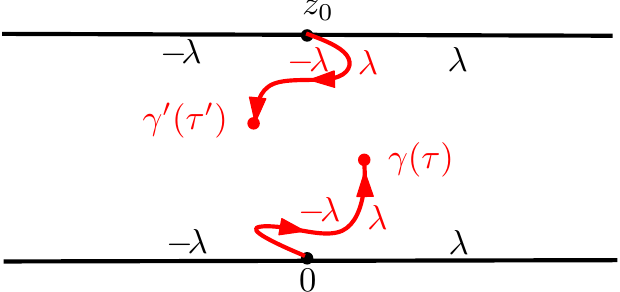}
\end{center}
\caption{}
\end{subfigure}
\caption{\label{fig::boundary_levellines_deterministic_nonintersecting_coincide} The boundary values of the fields in the proof of Lemma \ref{lem::boundary_levellines_deterministic_nonintersecting_coincide}.}
\end{figure}

To prove Proposition \ref{prop::boundary_levellines_deterministic_nonintersecting_coincide}, we first prove the following lemma.

\begin{lemma}\label{lem::boundary_levellines_deterministic_nonintersecting_coincide}
Suppose the same assumption as in Proposition \ref{prop::boundary_levellines_deterministic_nonintersecting_coincide}. Let $\tau'$ be any $\gamma'$-stopping time. Then, given $\gamma'[0,\tau']$, the level line $\gamma$ almost surely first exits $\T\setminus\gamma'[0,\tau']$ at $\gamma'(\tau')$.
\end{lemma}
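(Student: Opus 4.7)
The plan is to sample $\gamma'$ first, condition on $\gamma'[0,\tau']$, and then identify the conditional law of $\gamma$ as a level line in a strip-like subdomain for which the exit point is already pinned down by Lemmas \ref{lem::boundary_levellines_deterministic_case2}--\ref{lem::boundary_levellines_deterministic_case3} (together with Remarks \ref{rem::boundary_levellines_deterministic_case2}--\ref{rem::boundary_levellines_deterministic_case3}). Concretely, I would begin by applying Theorem \ref{thm::boundary_levelline_gff_coupling} to the pair $(-h,\gamma')$: this makes $\gamma'[0,\tau']$ a local set for $-h$, hence for $h$, and describes the conditional law of $h$ on $\T\setminus \gamma'[0,\tau']$ as a GFF whose boundary values agree with those of Figure \ref{fig::boundary_levellines_deterministic_nonintersecting_coincide}(a) on $\partial\T$ and which equal $+\lambda$ on the left side of $\gamma'$ and $-\lambda$ on the right side of $\gamma'$ (with respect to the orientation of $\gamma'$ from $z_0$), since the corresponding values for $-h$ are $-\lambda$ and $+\lambda$. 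Because $\gamma$ and $\gamma'$ are conditionally independent given $h$, the conditional law of $\gamma$ given $\gamma'[0,\tau']$ is still that of a level line of the (conditional) field $h$ started from $0$.

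Next I would isolate the connected component $C_0$ of $\T\setminus \gamma'[0,\tau']$ whose boundary contains $0$, and observe that $\gamma'(\tau')$ lies on $\partial C_0$. Walking along $\partial C_0$ from $0$ in the two directions one encounters: on one side the original boundary data of Figure \ref{fig::boundary_levellines_deterministic_nonintersecting_coincide}(a) (at least $\lambda$ to the right of $0$) followed by the $-\lambda$ side of $\gamma'$ up to $\gamma'(\tau')$; on the other side the original data (at most $-\lambda$ to the left of $0$) followed by the $+\lambda$ side of $\gamma'$ up to $\gamma'(\tau')$. In particular, every boundary segment to the right of $0$ and of $\gamma'(\tau')$ has value at least $\lambda$ and every segment to the left has value at most $-\lambda$, which is exactly the configuration described in the extended remark after Lemma \ref{lem::boundary_levellines_deterministic_case2}.

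Now I would map $C_0$ conformally to the strip $\T$ sending $0$ to $0$ and $\gamma'(\tau')$ to $z_0$, and invoke Remark \ref{rem::boundary_levellines_deterministic_case2}: the level line of the transformed field starting at $0$ is almost surely a continuous path terminating at the image of $\gamma'(\tau')$ without otherwise hitting the rest of $\partial C_0$. Pulling back shows that, given $\gamma'[0,\tau']$, the level line $\gamma$ first exits $\T\setminus \gamma'[0,\tau']$ precisely at $\gamma'(\tau')$, as desired.

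The main technical obstacle is handling the boundary data cleanly at the two endpoints of $\gamma'[0,\tau']$, namely at $z_0$ (where the conditional $\pm\lambda$ values on $\gamma'$ meet the original $\partial_U\T$ data) and at $\gamma'(\tau')$ (where they meet at the tip); one must check that the net boundary values on each side of $0$ and of $\gamma'(\tau')$ in $C_0$ satisfy the $\geq \lambda / \leq -\lambda$ hypothesis of the remark uniformly, which in the edge case that $\tau'$ is already the continuation threshold amounts to ensuring that $\gamma'(\tau')\in\partial_U\T$ is compatible with the original $\pm\lambda$ data there. A subsidiary issue is that $\tau'$ is only a $\gamma'$-stopping time, so one should first verify the identity for a dense countable family (rational times or dyadic approximations of $\tau'$) and pass to the limit using continuity of $\gamma'$ up to and including $\tau'$.
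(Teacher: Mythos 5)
Your overall strategy matches the paper's: condition on $\gamma'[0,\tau']$, identify the conditional law of $\gamma$ as a level line of the conditional field, and then invoke Lemma~\ref{lem::boundary_levellines_deterministic_case2} (or Remark~\ref{rem::boundary_levellines_deterministic_case2}) to pin down the exit point at $\gamma'(\tau')$. The setup via Theorem~\ref{thm::boundary_levelline_gff_coupling} applied to $(-h,\gamma')$ and the description of the boundary values on the two sides of $\gamma'$ are exactly right.

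However, there is a genuine gap at the step where you write ``Because $\gamma$ and $\gamma'$ are conditionally independent given $h$, the conditional law of $\gamma$ given $\gamma'[0,\tau']$ is still that of a level line of the (conditional) field $h$ started from $0$.'' This is the crux of the lemma and it does not follow from conditional independence alone. At this point in the development it is \emph{not} yet known that $\gamma$ is a deterministic function of $h$ (indeed, that is the conclusion of Proposition~\ref{prop::boundary_levellines_deterministic_nonintersecting_coincide}, which this lemma is a stepping stone toward). Conditional independence tells you that the conditional law of $\gamma$ given $(h,\gamma'[0,\tau'])$ equals its conditional law given $h$, but that is a different statement from the one you need: that, conditionally on $\gamma'[0,\tau']$, the pair $(\gamma,\tilde h)$ satisfies the level-line coupling in the sense of Theorem~\ref{thm::boundary_levelline_gff_coupling}, i.e.\ that for \emph{every} $\gamma$-stopping time $\tau$ (not yet reaching $\gamma'[0,\tau']$) the conditional law of $h$ off $\gamma[0,\tau]\cup\gamma'[0,\tau']$ is a GFF with the boundary data of Figure~\ref{fig::boundary_levellines_deterministic_nonintersecting_coincide}(c). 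The paper establishes precisely this via the union-of-local-sets machinery: $\gamma[0,\tau]$ and $\gamma'[0,\tau']$ are both local sets for $h$, they are conditionally independent given $h$, and Proposition~\ref{prop::gff_localsets_union} then shows that their union is local and identifies the conditional law of $h$ on the complement. Without this step you have not actually shown that $\gamma$ remains a level line of the conditioned field, so Lemma~\ref{lem::boundary_levellines_deterministic_case2} cannot yet be applied.

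On a smaller note, both of the ``technical obstacles'' you flag are not issues in this setting. The boundary data in Figure~\ref{fig::boundary_levellines_deterministic_nonintersecting_coincide}(a) is uniformly $\geq\lambda$ or $\leq-\lambda$ away from the marked points, which forces $\gamma'$ to stay off $\partial\T$ after time $0$; so $\gamma'(\tau')$ is interior for $\tau'>0$ and no edge-case compatibility check at $\partial_U\T$ is needed. Likewise, Theorem~\ref{thm::boundary_levelline_gff_coupling} and the local-set propositions already hold for arbitrary stopping times, so no dyadic approximation of $\tau'$ is required.
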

\begin{proof}
Given $\gamma'[0,\tau']$, denote by $\tilde{h}$ the restriction of $h$ to $\T\setminus\gamma'[0,\tau']$. Since $\gamma'$ and $-h$ are coupled as in Theorem \ref{thm::boundary_levelline_gff_coupling}, we know that the conditional law of $\tilde{h}$ given $\gamma'[0,\tau']$ is the same as a $\GFF$ on $\T\setminus\gamma'[0,\tau']$ with boundary data as in Figure \ref{fig::boundary_levellines_deterministic_nonintersecting_coincide}(b).

We argue that, given $\gamma'[0,\tau']$, the path $\gamma$ and the field $\tilde{h}$ are coupled so that $\gamma$ is the level line of $\tilde{h}$. Assuming this is true, then, from Lemma \ref{lem::boundary_levellines_deterministic_case2}, we know that $\gamma$ almost surely exits $\T\setminus\gamma'[0,\tau']$ at $\gamma'(\tau')$ which implies the conclusion. Thus we only need to show that, given $\gamma'[0,\tau']$, the path $\gamma$ and the field $\tilde{h}$ are coupled so that $\gamma$ is the level line of $\tilde{h}$.

Suppose $\tau$ is any $\gamma$-stopping time. We know that $\gamma[0,\tau]$ is a local set for $h$, that $\gamma'[0,\tau']$ is a local set for $h$, and that $\gamma[0,\tau]$ and $\gamma'[0,\tau']$ are conditionally independent given $h$. From Proposition \ref{prop::gff_localsets_union}, we know that the union $\gamma[0,\tau]\cup\gamma'[0,\tau']$ is also a local set for $h$; furthermore, given $\gamma[0,\tau]$ and $\gamma'[0,\tau']$, and on the event $\{\gamma[0,\tau]\cap\gamma'[0,\tau']=\emptyset\}$, the conditional law of $h|_{\T\setminus (\gamma[0,\tau]\cup\gamma'[0,\tau'])}=\tilde{h}|_{(\T\setminus\gamma'[0,\tau'])\setminus\gamma[0,\tau]}$ is the same as a $\GFF$ in $\T\setminus(\gamma[0,\tau]\cup\gamma'[0,\tau'])$ with boundary data as in Figure \ref{fig::boundary_levellines_deterministic_nonintersecting_coincide}(c). This implies that, given $\gamma'[0,\tau']$, the path $\gamma$ and the field $\tilde{h}$ are coupled so that $\gamma$ is the level line of $\tilde{h}$ up until the first time that $\gamma$ hits $\gamma'[0,\tau']$. This completes the proof.
\end{proof}

\begin{proof}[Proof of Proposition \ref{prop::boundary_levellines_deterministic_nonintersecting_coincide}]
From Lemma \ref{lem::boundary_levellines_deterministic_nonintersecting_coincide}, we have almost surely that $\gamma$ hits $\gamma'[0,\tau']$ for the first time at $\gamma'(\tau')$. Since this holds for any $\gamma'$-stopping time $\tau'$, we know that $\gamma$ hits a dense countable set of points along $\gamma'$ (in reverse chronological order). By symmetry, $\gamma'$ hits a dense countable set of points along $\gamma$. Since both $\gamma$ and $\gamma'$ are continuous simple curves, the two paths (viewed as sets) are equal.
\end{proof}

\begin{remark}\label{rem::boundary_levellines_deterministic_nonintersecting_coincide}
The conclusions in Lemma \ref{lem::boundary_levellines_deterministic_nonintersecting_coincide} and Proposition \ref{prop::boundary_levellines_deterministic_nonintersecting_coincide} hold more generally when the boundary data of $h$ is piecewise constant, and is
\[\begin{array}{ll}
\text{at most } -\lambda \text{ to the left of } z_0 \text{ on } \partial_U\T, & \text{at least } \lambda \text{ to the right of } z_0 \text{ on } \partial_U\T, \\
\text{at most } -\lambda \text{ to the left of } 0 \text{ on } \partial_L\T, & \text{at least } \lambda \text{ to the right of } 0 \text{ on } \partial_L\T.
\end{array}\]
\end{remark}

From Proposition \ref{prop::boundary_levellines_deterministic_nonintersecting_coincide} and Remark \ref{rem::boundary_levellines_deterministic_nonintersecting_coincide}, we finish the proof of Theorems \ref{thm::boundary_levelline_gff_deterministic} to \ref{thm::sle_chordal_reversibility} for the case that the level lines are non-boundary-intersecting. We record these results in the following proposition.

\begin{proposition}\label{prop::boundary_levellines_nonintersecting_all}
Suppose that $h$ is a $\GFF$ on $\HH$ whose boundary value is piecewise constant, and is
\[\begin{array}{ll}
\text{at most } -\lambda \text{ to the left of } 0 \text{ on } \partial\HH, & \text{at least } \lambda \text{ to the right of } 0 \text{ on } \partial\HH.
\end{array}\]
Let $\gamma$ be the level line of $h$ starting from 0 targeted at $\infty$ and $\gamma'$ be the level line of $-h$ starting from $\infty$ targeted at 0. Then we have the following conclusions.
\begin{enumerate}
\item [(1)] The level line $\gamma$ is almost surely determined by $h$.
\item [(2)] The level line $\gamma$ is almost surely continuous and transient.
\item [(3)] The level lines $\gamma'$ and $\gamma$ are equal.
\end{enumerate}
\end{proposition}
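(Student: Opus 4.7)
The plan is to pull everything back to the strip $\T$, where the analogous statement has already been established as Proposition \ref{prop::boundary_levellines_deterministic_nonintersecting_coincide} together with Remark \ref{rem::boundary_levellines_deterministic_nonintersecting_coincide}. Fix any point $z_0$ on $\partial_U\T$ and let $\psi:\HH\to\T$ be the conformal map sending $0\in\partial\HH$ to $0\in\partial_L\T$ and the prime end at $\infty$ to $z_0\in\partial_U\T$ (with a third boundary correspondence chosen to make $\psi$ unique). The boundary $\partial\HH$ is split by $0$ and $\infty$ into two rays; tracing the boundary correspondence, the two prime ends $\pm\infty$ of $\T$ are pulled back to two real points $a>0>b$, and the arcs $(b,0)$ and $(0,a)$ of $\partial\HH$ map to $\partial_L\T$, while $(-\infty,b)$ and $(a,\infty)$ map to $\partial_U\T$.

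First, I would verify that the pullback $\tilde h:=h\circ\psi^{-1}$ satisfies exactly the hypotheses of Remark \ref{rem::boundary_levellines_deterministic_nonintersecting_coincide}. Using the assumption on $h$ (boundary data $\le -\lambda$ on $(-\infty,0)$ and $\ge\lambda$ on $(0,\infty)$), the pulled-back boundary data of $\tilde h$ is piecewise constant, is $\ge\lambda$ to the right of $0$ on $\partial_L\T$ and to the right of $z_0$ on $\partial_U\T$, and is $\le-\lambda$ to the left of these two points. By conformal invariance of the $\GFF$, $\tilde h$ is itself a $\GFF$ on $\T$ with this boundary data.

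Second, I would invoke the conformal invariance of the coupling in Theorem \ref{thm::boundary_levelline_gff_coupling}: it transforms $(h,\gamma)$ in $\HH$ into $(\tilde h,\tilde\gamma):=(h\circ\psi^{-1},\psi(\gamma))$ in $\T$ coupled in exactly the same way, and similarly transforms $(-h,\gamma')$ into $(-\tilde h,\tilde\gamma'):=(-\tilde h,\psi(\gamma'))$. Applying Proposition \ref{prop::boundary_levellines_deterministic_nonintersecting_coincide} together with Remark \ref{rem::boundary_levellines_deterministic_nonintersecting_coincide} to this strip configuration yields three facts: $\tilde\gamma$ is almost surely determined by $\tilde h$; $\tilde\gamma$ is almost surely a continuous simple path from $0$ to $z_0$ in $\overline\T$; and $\tilde\gamma=\tilde\gamma'$ as sets.

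Finally, I would pull back via $\psi^{-1}$: determinism of $\tilde\gamma$ from $\tilde h$ immediately gives (1); continuity of $\psi^{-1}$ on $\overline\T$ gives the continuity half of (2), and transience is automatic since $\tilde\gamma(s)\to z_0$ at its terminal time forces $\gamma(s)=\psi^{-1}(\tilde\gamma(s))\to\psi^{-1}(z_0)=\infty$; coincidence of $\tilde\gamma$ and $\tilde\gamma'$ yields (3) in $\HH$. The only real obstacle is bookkeeping: checking that the $\SLE_4(\underline\rho)$ structure of $\gamma$ in $\HH$ transports under $\psi$ to the level-line structure of $\tilde h$ on $\T$ in the sense of Theorem \ref{thm::boundary_levelline_gff_coupling}. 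This follows from conformal invariance of chordal Loewner chains together with the fact that the harmonic function $\eta_t$ in Theorem \ref{thm::boundary_levelline_gff_coupling} is itself conformally invariant, so nothing beyond a careful reading of the coupling statement is needed.
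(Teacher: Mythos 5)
Your proof is in essence the paper's: pull back to the strip $\T$ via the conformal map $\psi$, invoke the strip-domain results, and push forward. Indeed, by the paper's definition of a level line in a general simply connected domain (the paragraph following the proof of Proposition \ref{prop::boundary_levellines_gff_coupling}), the level line of $\tilde h=h\circ\psi^{-1}$ in $\T$ from $0$ to $z_0$ \emph{is} $\psi(\gamma)$ by fiat, so the ``bookkeeping'' you flag at the end is definitional rather than a check on $\SLE_4(\underline\rho)$ covariance — your instinct that nothing more is needed there is correct.

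Where you and the paper genuinely diverge is on transience, and here a small caution is warranted. You derive it by pushing forward the assertion that $\tilde\gamma$ is a continuous curve terminating at $z_0$, which you attribute to Proposition \ref{prop::boundary_levellines_deterministic_nonintersecting_coincide} and Remark \ref{rem::boundary_levellines_deterministic_nonintersecting_coincide}; but those only give determinism and coincidence of $\tilde\gamma$ and $\tilde\gamma'$. The continuity-to-the-endpoint claim lives in Remark \ref{rem::boundary_levellines_deterministic_case2}, which is stated without a displayed proof; and what is actually justified (in the proof of Remark \ref{rem::boundary_levellines_deterministic_case1}, by absolute continuity with respect to chordal $\SLE_4$) is continuity on $[0,T)$ where $T$ is the time of accumulation in $\partial_U\T$ — convergence of $\tilde\gamma(s)$ to the single point $z_0$ as $s\uparrow T$ is precisely the content of transience and does not follow from local absolute continuity alone. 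The paper avoids this by giving a dedicated merging argument: for any positive finite $\gamma'$-stopping time $\tau'$, Remark \ref{rem::boundary_levellines_deterministic_nonintersecting_coincide} shows $\gamma$ first exits $\HH\setminus\gamma'[0,\tau']$ at $\gamma'(\tau')$ and thereafter coincides with $\gamma'$, so letting $\tau'\downarrow 0$ and using continuity of $\gamma'$ at its start forces $\gamma(t)\to\infty$. That merging step is the self-contained way to close the gap that your citation of Remark \ref{rem::boundary_levellines_deterministic_case2} papers over.
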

\begin{proof} We only need to explain the transience. Let $\tau'$ be any $\gamma'$-stopping time that is positive and finite. From Remark \ref{rem::boundary_levellines_deterministic_nonintersecting_coincide}, we know that, given $\gamma'[0,\tau']$, the level line $\gamma$ first exits $\HH\setminus\gamma'[0,\tau']$ at $\gamma'(\tau')$ and then merges with $\gamma'$ afterwards. Therefore $\gamma$ is transient by the continuity of $\gamma'$ at time $0$.
\end{proof}

In this section, we will consider the relation between two level lines of the same $\GFF$. Suppose that $h$ is a $\GFF$, for any $u\in\R$, we define the level line of $h$ with height $u$ to be the level line of $h+u$. We will show that the level lines of $h$ enjoy the same monotonicity property as if $h$ were a smooth function. Namely, if $u_1<u_2$ and $\gamma_{u_i}$ is the level line of $h$ with height $u_i$ for $i=1,2$. Then almost surely $\gamma_{u_1}$ lies to the right of $\gamma_{u_2}$.

\begin{proposition}\label{prop::boundary_levellines_nonintersecting_monotonicity_reverse}
Suppose that $h$ is a $\GFF$ on $\T$ with boundary data as in Figure \ref{fig::boundary_levellines_nonintersecting_monotonicity_reverse}(a). Assume that $a,b,a',b'\ge\lambda$ and let $\gamma'$ be the level line of $-h$ starting from $z_0$. Fix $u$ such that
\begin{equation}\label{eqn::boundary_levellines_nonintersecting_monotonicity_reverse}
\lambda-b\le u\le a-\lambda,
\end{equation}
and let $\gamma_u$ be the level line of $h$ with height $u$ starting from 0 and stopped at the first time that it accumulates in $\partial_U\T$. If $u>0$, then $\gamma_u$ almost surely passes to the left of $\gamma'$; if $u<0$, then $\gamma_u$ almost surely passes to the right of $\gamma'$.
\end{proposition}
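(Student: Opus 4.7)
The plan is to condition on an initial segment of $\gamma'$ and realize $\gamma_u$ as a level line in the complementary sub-domain via the local-set framework, then use the $\SLE_4(\underline{\rho})$ boundary-hitting analysis to identify which side of $\gamma'$ the level line $\gamma_u$ lies on.

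First I would fix a $\gamma'$-stopping time $\tau'$ before the continuation threshold. By applying Theorem \ref{thm::boundary_levelline_gff_coupling} to the coupling $(-h, \gamma')$, the conditional law of $h$ on $D := \T \setminus \gamma'[0,\tau']$ is a GFF whose boundary data agrees with the original on $\partial\T$ and equals $+\lambda$, $-\lambda$ on the two sides of $\gamma'[0,\tau']$ (the sign flip coming from $\gamma'$ being the level line of $-h$, not $h$). By Proposition \ref{prop::boundary_levellines_nonintersecting_all} the level line $\gamma_u$ is determined by $h$ and hence local; Proposition \ref{prop::gff_localsets_union} then gives that $\gamma_u\,\tilde{\cup}\,\gamma'[0,\tau']$ is local with the expected conditional field on the complement. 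Consequently, conditional on $\gamma'[0,\tau']$, the curve $\gamma_u$ is the level line of $h|_D + u$ in $D$ up to the first time it hits $\gamma'[0,\tau']$.

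Next I would view $\gamma_u$ in $D$ as an $\SLE_4(\underline{\rho})$ process under a conformal map $D \to \HH$ sending $0$ to $0$. Going counterclockwise around $\partial D$ from $0^+$, the boundary values of $h+u$ on the successive intervals are $b+u$, $b'+u$, $\lambda+u$, $-\lambda+u$, $-a'+u$, $-a+u$, yielding cumulative right-weights $(b+u)/\lambda - 1$, $(b'+u)/\lambda - 1$, $u/\lambda$, $u/\lambda - 2$, $(u-a')/\lambda - 1$ and a cumulative left-weight $(a-u)/\lambda - 1$. The hypothesis $\lambda - b \leq u \leq a - \lambda$ together with $a, b, a', b' \geq \lambda$ makes the weights immediately adjacent to $0$ non-negative, so $\gamma_u$ does not hit $\partial_L\T$. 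For $u > 0$ in the regime where Proposition \ref{prop::dubedat_lemma15}(2) applies with $k$ equal to the index of the ``$-\infty$ side of $\gamma'$'' interval, $\gamma_u$ accumulates on this interval, which corresponds to merging with $\gamma'[0,\tau']$ from the $-\infty$ side of the strip. Sending $\tau'$ to the continuation threshold yields that $\gamma_u$ lies in the left component of $\T \setminus \gamma'$; the case $u < 0$ is symmetric after replacing $h$ with $-h$ and $u$ with $-u$.

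The main obstacle is the second step: the cumulative weight $u/\lambda - 2$ along one side of $\gamma'$ falls in the interior range $(-2, 0)$ for $u \in (0, 2\lambda)$, so Proposition \ref{prop::sle_chordal_forbidden_intervals} alone does not forbid $\gamma_u$ from hitting either side of $\gamma'$, and a refined application of Proposition \ref{prop::dubedat_lemma15}(2) with careful sign bookkeeping is required. Moreover, Proposition \ref{prop::dubedat_lemma15}(2) is stated for right force points only, so parameter ranges in which $\overline{\rho}^{5,R} \in (-2, -1]$ or in which the left force point becomes active must be treated separately --- for instance by a non-crossing argument using Proposition \ref{prop::gff_localsets_boundarybehavior} applied to $\gamma_u\,\tilde{\cup}\,\gamma'$, which forces an inconsistency in the conditional mean at any hypothetical interior crossing point unless $u = 0$, combined with a continuity-in-$u$ argument anchored at $\gamma_0 = \gamma'$ (from Proposition \ref{prop::boundary_levellines_nonintersecting_all}) that propagates the left-of-$\gamma'$ conclusion from small $u > 0$ throughout the hypothesized range.
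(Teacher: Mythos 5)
Your setup — conditioning on a $\gamma'$-stopping time $\tau'$, using the coupling and local-set machinery to realize $\gamma_u$ as a level line in $\T\setminus\gamma'[0,\tau']$, and computing the $\SLE_4(\underline{\rho})$ weights in the slit domain — is exactly what the paper does, and your weight computation is correct. You also correctly flag the crux: the cumulative weight $u/\lambda-2\in(-2,0)$ on the near side of the slit means $\gamma_u$ may hit $\gamma'[0,\tau']$ there, and neither Proposition~\ref{prop::dubedat_lemma15} nor Proposition~\ref{prop::sle_chordal_forbidden_intervals} by itself rules out $\gamma_u$ subsequently passing around the tip $\gamma'(\tau')$ onto the far side.

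But the two remedies you sketch for that gap do not work as stated, and this is a real hole. There is no ``interior crossing point'': $\gamma_u$ and $\gamma'$ are simple curves that touch but do not cross, and at a touching point the two boundary data live on different boundary arcs of the complementary component, so Proposition~\ref{prop::gff_localsets_boundarybehavior} alone produces no contradiction. The ``continuity-in-$u$ anchored at $\gamma_0=\gamma'$'' idea has no mechanism — level lines are not continuous in the height parameter in any exploitable sense, and establishing the conclusion for small $u>0$ already appears to require the monotonicity you are trying to prove. The paper's actual resolution is the missing piece: after using Lemma~\ref{lem::boundary_levellines_deterministic_case3} and Remark~\ref{rem::boundary_levellines_deterministic_case3} to show $\gamma_u$ first exits $\T\setminus\gamma'[0,\tau']$ on the left side at some time $\tau$, suppose it wraps around; pick $\tau_\delta>\tau$ the first time $\gamma_u$ is in the right component of $\T\setminus(\gamma'[0,\tau']\cup\gamma_u[0,\tau])$ at distance $\ge\delta$ from $\gamma'[0,\tau']$. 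Conditionally on the local set $\gamma'[0,\tau']\cup\gamma_u[0,\tau_\delta]$, the continuation of $\gamma_u$ is a level line in that component whose boundary value on the far side of $\gamma'[0,\tau']$ is $\lambda+u\ge\lambda$, an arc it can never touch by Lemma~\ref{lem::levelline_forbidden_intervals} — contradiction. The positive-distance cutoff is what lets the forbidden-interval lemma apply cleanly and sidesteps the degeneracy at the tip. (One small terminological point: after $\gamma_u$ hits the near side of $\gamma'[0,\tau']$ it \emph{bounces}, not \emph{merges}; merging is only for level lines of equal height.)
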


To prove the proposition, we need the following lemma.
\begin{lemma}\label{lem::levelline_forbidden_intervals}
Suppose that $h$ is a $\GFF$ on $\T$ whose boundary data is as in Figure \ref{fig::levelline_forbidden_intervals}. Let $\gamma$ be the level line of $h$ starting from 0. If $(\gamma(t), 0\le t\le T_0)$ is almost surely continuous for some $\gamma$-stopping time $0<T_0<\infty$, then $[\gamma[0,T_0]\cap J=\emptyset]$ almost surely.
\end{lemma}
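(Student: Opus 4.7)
The plan is to reduce the statement to Proposition \ref{prop::sle_chordal_forbidden_intervals} via a conformal map to the upper half plane. First I would let $\psi$ be a conformal transformation from $\T$ onto $\HH$ sending $0$ (the starting point of $\gamma$) to $0$ and sending the target point of $\gamma$ to $\infty$, chosen so that the forbidden interval $J$ is mapped to a sub-interval of either $\R_+$ or $\R_-$. Let $\tilde{\gamma} := \psi(\gamma)$, reparameterized by half-plane capacity. By Theorem \ref{thm::boundary_levelline_gff_coupling} applied to the conformally transformed field $h\circ\psi^{-1}$, the curve $\tilde{\gamma}$ has the law of an $\SLE_4(\underline{\rho}^L;\underline{\rho}^R)$ process in $\HH$ whose force points $\underline{x}^L$, $\underline{x}^R$ are the images under $\psi$ of the discontinuity points of the boundary data of $h$, and whose weights $\rho^{i,q}$ are determined by the jumps in that boundary data via the relation boundary value $=\lambda(1+\overline{\rho}^{j,R})$ on the right and $-\lambda(1+\overline{\rho}^{j,L})$ on the left.

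Next I would inspect Figure \ref{fig::levelline_forbidden_intervals} to read off the boundary data on the interval $J$. The key arithmetic step is to verify that the cumulative weights $\overline{\rho}^{j,q}$ attached to every sub-interval of $\psi(J)$ satisfy
\[
\overline{\rho}^{j,q}\ge \kappa/2-2=0, \qquad\text{or}\qquad \overline{\rho}^{j,q}\le \kappa/2-4=-2,
\]
for $\kappa=4$. In the $\GFF$ language this is the statement that on $J$ the boundary value of $h$ (appropriately signed according to which side of the starting point one sits on) lies outside the open interval $(-\lambda,\lambda)$; this is precisely the configuration depicted in the figure and is what distinguishes the forbidden intervals from the reachable ones.

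Given the continuity hypothesis on $\gamma|_{[0,T_0]}$, the image $\tilde{\gamma}|_{[0,T_0]}$ is also a continuous curve in $\overline{\HH}$ (continuity is preserved under $\psi$ since $\psi$ extends continuously to the boundary away from the pre-image of $\infty$, and on the interior $\psi$ is a conformal homeomorphism). Proposition \ref{prop::sle_chordal_forbidden_intervals} then applies directly and yields $\tilde{\gamma}[0,T_0]\cap\psi(J)=\emptyset$ almost surely, which, pulling back through $\psi$, gives $\gamma[0,T_0]\cap J=\emptyset$ almost surely, as desired.

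The only non-routine ingredient is the bookkeeping of weights: one must check that the boundary data in Figure \ref{fig::levelline_forbidden_intervals} really does translate, interval by interval along $J$, into the forbidden regimes $\overline{\rho}\ge 0$ or $\overline{\rho}\le -2$ required by Proposition \ref{prop::sle_chordal_forbidden_intervals}, and that the conformal map $\psi$ places $J$ entirely on one side of the origin so that Proposition \ref{prop::sle_chordal_forbidden_intervals} (which is stated for right force points) applies after possibly reflecting (the left-side case being symmetric). Once that verification is done, the lemma is essentially an immediate consequence of the $\SLE_4(\underline{\rho})/\GFF$ coupling together with the forbidden-interval result.
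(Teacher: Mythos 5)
Your proposal is correct and matches the paper's approach exactly: the paper's proof is the one-line statement ``This is a direct consequence of Proposition~\ref{prop::sle_chordal_forbidden_intervals},'' and what you have written out is precisely the unpacking of that reduction (conformal map to $\HH$, translation of boundary data into cumulative weights $\overline{\rho}^{j,q}$, verification that $c_k\notin(-\lambda,\lambda)$ corresponds to $\overline{\rho}\ge 0$ or $\overline{\rho}\le -2$ when $\kappa=4$). Your observation that Proposition~\ref{prop::sle_chordal_forbidden_intervals} as stated covers only right force points but extends by the cited reference \cite[Lemma 5.2, Remark 5.3]{MillerSheffieldIG1} to the two-sided case is a fair point, and your bookkeeping is sound.
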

\begin{proof}
This is a direct consequence of Proposition \ref{prop::sle_chordal_forbidden_intervals}.
\end{proof}
\begin{figure}[ht!]
\begin{center}
\includegraphics[width=0.3\textwidth]{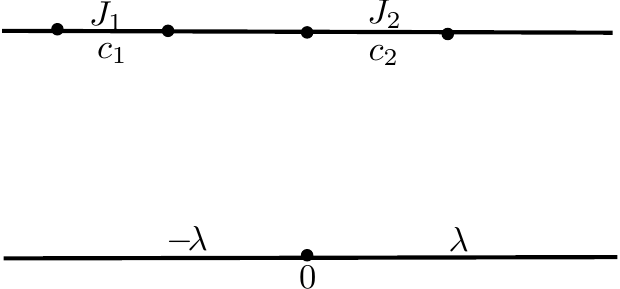}
\end{center}
\caption{\label{fig::levelline_forbidden_intervals} Suppose that $h$ is a $\GFF$ on $\T$ and let $J\subset\partial_U\T$ be open. Write $J=\cup_k J_k$ where the $J_k$ are disjoint open intervals and assume that $h|_{J_k}\equiv c_k$ for given constant $c_k\not\in (-\lambda,\lambda)$.}
\end{figure}

\begin{figure}[ht!]
\begin{subfigure}[b]{0.3\textwidth}
\begin{center}
\includegraphics[width=\textwidth]{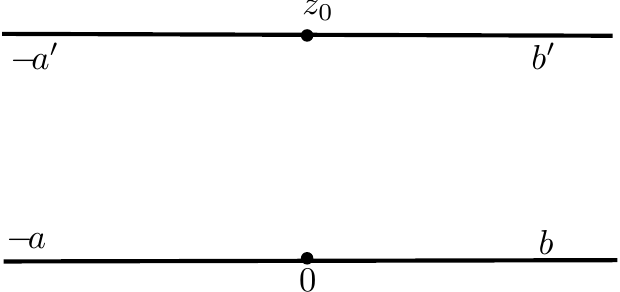}
\end{center}
\caption{The boundary value of the field $h$ in the horizontal strip $\T$.}
\end{subfigure}
$\quad$
\begin{subfigure}[b]{0.3\textwidth}
\begin{center}\includegraphics[width=\textwidth]{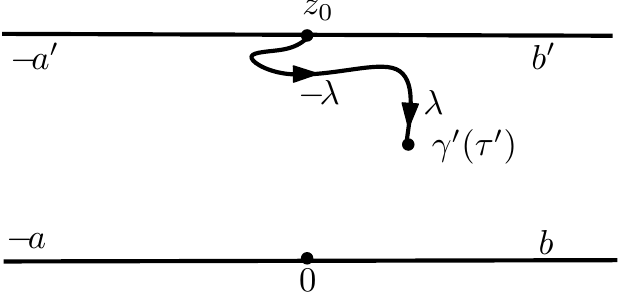}
\end{center}
\caption{The boundary value of $h$ restricted to $\T\setminus\gamma'[0,\tau']$.}
\end{subfigure}
$\quad$
\begin{subfigure}[b]{0.3\textwidth}
\begin{center}\includegraphics[width=\textwidth]{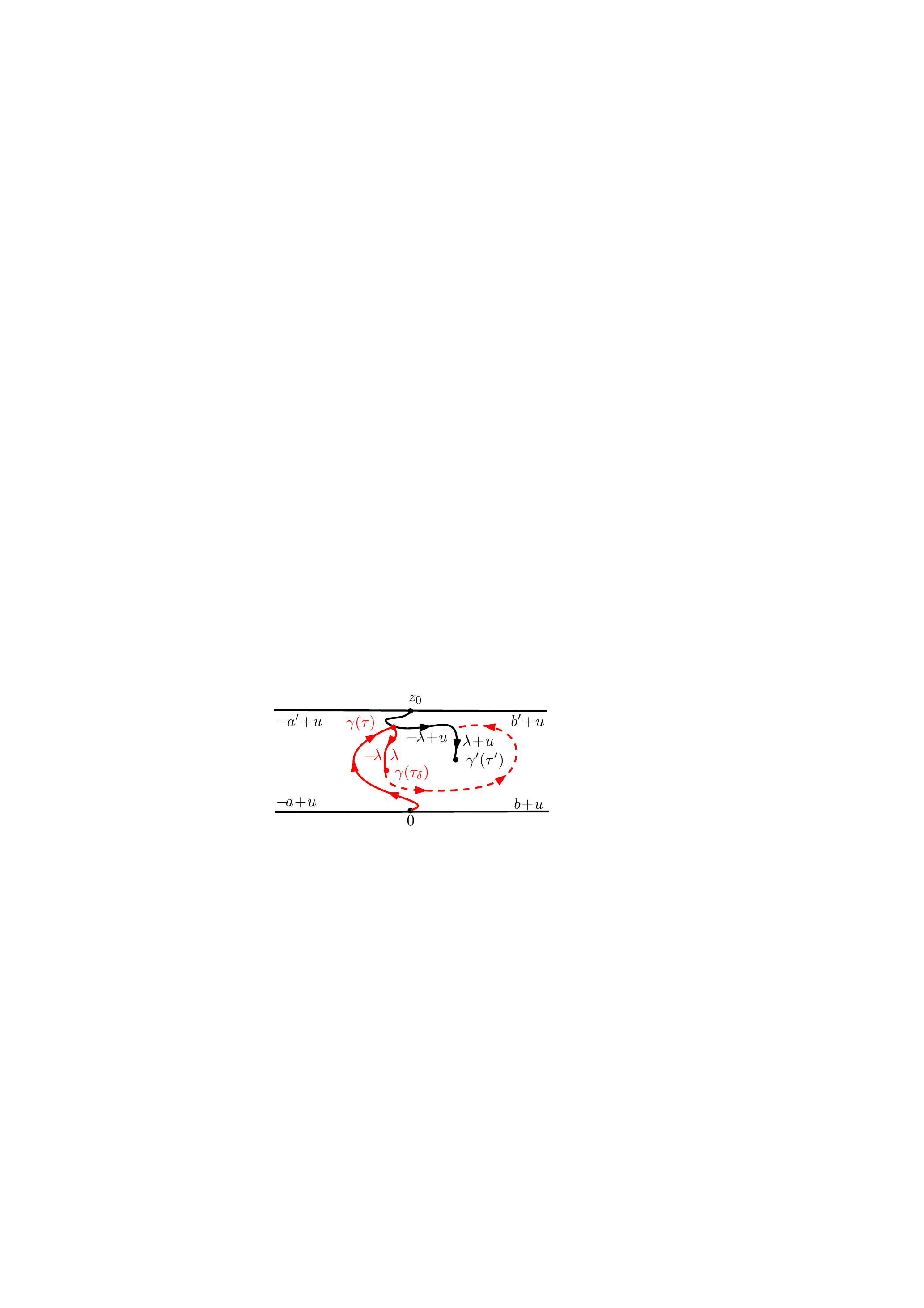}
\end{center}
\caption{The boundary value of $h\!\!+\!\!u$ restricted to $\T\!\setminus\!(\!\gamma'[\!0,\tau'\!]\!\cup\!\gamma[\!0,\tau_{\delta}\!]\!)$.}
\end{subfigure}
\caption{\label{fig::boundary_levellines_nonintersecting_monotonicity_reverse}The boundary values of the fields in the proof of Proposition \ref{prop::boundary_levellines_nonintersecting_monotonicity_reverse}. }
\end{figure}

\begin{proof}[Proof of Proposition \ref{prop::boundary_levellines_nonintersecting_monotonicity_reverse}]
We only need to show the result for $u>0$. Let $\gamma'$ be the level line of $-h$ starting from $z_0$, and $\gamma_u$ be the level line of $h$ with height $u$ starting from 0. The hypothesis that $a,b,a',b'\ge \lambda$ implies that $\gamma'$ almost surely first exits $\T$ at 0 without hitting any other boundary point (except $0$ and $z_0$) and $\gamma'$ is almost surely continuous. The hypothesis in Equation (\ref{eqn::boundary_levellines_nonintersecting_monotonicity_reverse}) implies that $\gamma_u$ almost surely accumulates in $\partial_U\T$ or tends to $\pm\infty$ before hitting $\partial_L\T$ after time 0 and $\gamma_u$ is almost surely continuous up to the first time that it hits $\partial_U\T$. Let $\tau'$ be any $\gamma'$-stopping time before it hits 0. Let $\tilde{h}$ be $h$ restricted to $\T\setminus\gamma'[0,\tau']$. Then, given $\gamma'[0,\tau']$, the conditional law of $\tilde{h}$ is the same as a $\GFF$ with boundary data as in Figure \ref{fig::boundary_levellines_nonintersecting_monotonicity_reverse}(b). Furthermore, $\gamma_u$ is the level line of $\tilde{h}+u$.

We say the left (resp. right) side of $\partial_U\T\cup\gamma'[0,\tau']$ the union of the left (resp. right) side of $\gamma'[0,\tau']$ and the part of $\partial_U\T$ that is to the left (resp. right) of $z_0$. From Lemma \ref{lem::boundary_levellines_deterministic_case3} and Remark \ref{rem::boundary_levellines_deterministic_case3}, we know that $\gamma_u$ almost surely exits $\T\setminus\gamma'[0,\tau']$ on the left side of $\partial_U\T\cup\gamma'[0,\tau']$, say at time $\tau$, or does not hit $\partial_U\T$.

We will argue that $\gamma'(\tau')$ is almost surely to the right of $\gamma_u$. If $\gamma_u(\tau)$ is in the part of $\partial_U\T$ that is to the left of $z_0$, then we are done. If this does not hold, then $\gamma'(\tau')$ is to the left of $\gamma_u$, and therefore $\gamma_u$ hits the left side of $\gamma'[0,\tau']$ at time $\tau$ and, after time $\tau$, the path $\gamma_u$ wraps around $\gamma'(\tau')$ and then hits the right side of $\gamma'[0,\tau']$. See Figure \ref{fig::boundary_levellines_nonintersecting_monotonicity_reverse}(c). Let $\tau_{\delta}$ be the first time after $\tau$ that $\gamma_u(t)$ is in the right connected component of $\T\setminus(\gamma'[0,\tau']\cup\gamma_u[0,\tau])$ and $\dist(\gamma_u(t),\gamma'[0,\tau'])\ge \delta$ (set $\tau_{\delta}=\infty$ if this never happens). Then for $\delta$ small enough, the probability of the event $[\tau_{\delta}<\infty]$ is positive. Given $\gamma'[0,\tau']\cup\gamma_u[0,\tau_{\delta}]$, the conditional law of $h+u$ restricted to the right connected component of $\T\setminus(\gamma'[0,\tau']\cup\gamma_u[0,\tau_{\delta}])$ is the same as a $\GFF$ with boundary value as in Figure \ref{fig::boundary_levellines_nonintersecting_monotonicity_reverse}(c). Furthermore, $(\gamma_u(t),t\ge \tau_{\delta})$ is the level line of this field. From Lemma \ref{lem::levelline_forbidden_intervals}, $(\gamma_u(t),t\ge \tau_{\delta})$ will never hits the right side of $\gamma'[0,\tau']$, contradiction.
\end{proof}

\begin{proposition}\label{prop::boundary_levellines_nonintersecting_monotonicity}
Suppose that $h$ is a $\GFF$ on $\T$ with boundary data as in Figure \ref{fig::boundary_levellines_nonintersecting_monotonicity_reverse}(a). Assume that $a,b\ge \lambda$. Fix $u_1,u_2$ such that
\[\lambda-b\le u_1<u_2\le a-\lambda.\]
For $i=1,2$, let $\gamma_{u_i}$ be the level line of $h$ with height $u_i$ starting from 0 and let $\tau_i$ be the first time that $\gamma_{u_i}$ accumulates in $\partial_U\T$. Then, almost surely, $\gamma_{u_2}[0,\tau_2]$ lies to the left of $\gamma_{u_1}[0,\tau_1]$. We emphasize that there is no restriction for the boundary data of $h$ on $\partial_U\T$.
\end{proposition}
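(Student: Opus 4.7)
The plan is to reduce the general monotonicity to the reverse monotonicity of Proposition \ref{prop::boundary_levellines_nonintersecting_monotonicity_reverse}, via a shift in the heights combined with an absolute continuity argument to handle the arbitrary boundary data of $h$ on $\partial_U\T$.

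First, I would fix $c\in(u_1,u_2)$ so that $u_1-c<0<u_2-c$, and introduce an auxiliary field $\tilde h$ on $\T$ with the same boundary data as $h$ on $\partial_L\T$ and with piecewise constant boundary data $-M$ and $+M$ to the left and right of some fixed point $z_0\in\partial_U\T$, where $M$ is chosen large enough that $M-|c|\ge \lambda$. Then $\tilde h+c$ satisfies the boundary hypotheses of Proposition \ref{prop::boundary_levellines_nonintersecting_monotonicity_reverse}. Writing $\tilde\gamma_{u_i}$ for the level line of $\tilde h$ with height $u_i$ (equivalently, the level line of $\tilde h+c$ with height $u_i-c$) and writing $\tilde\gamma'$ for the level line of $-(\tilde h+c)$ starting from $z_0$, Proposition \ref{prop::boundary_levellines_nonintersecting_monotonicity_reverse} applied with the positive height $u_2-c$ gives that $\tilde\gamma_{u_2}$ passes to the left of $\tilde\gamma'$ up to the first time it reaches $\partial_U\T$, and applied with the negative height $u_1-c$ gives that $\tilde\gamma_{u_1}$ passes to the right of $\tilde\gamma'$. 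Hence $\tilde\gamma_{u_2}$ lies to the left of $\tilde\gamma_{u_1}$ up to the corresponding stopping times.

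Next, to transfer the monotonicity from $\tilde h$ back to $h$, I would fix $\eps>0$ and let $\sigma_i^{\eps}$ be the first time $\gamma_{u_i}$ reaches the horizontal line $\R\times\{1-\eps\}$. Setting $U_{\eps}=\R\times(0,1-\eps/2)$, and using that $h$ and $\tilde h$ share the same boundary data on $\partial_L\T$ while $\overline{U_{\eps}}$ is bounded away from $\partial_U\T$, Proposition \ref{prop::gff_absolutecontinuity}(2) yields that the laws of $h|_{U_{\eps}}$ and $\tilde h|_{U_{\eps}}$ are mutually absolutely continuous. Each stopped level line $\gamma_{u_i}[0,\sigma_i^{\eps}]$ is contained in $U_{\eps}$, and, via the local set property in Theorem \ref{thm::boundary_levelline_gff_coupling} together with the almost sure determinism from Proposition \ref{prop::boundary_levellines_nonintersecting_all}, should be measurable with respect to $h|_{U_{\eps}}$. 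Consequently the monotonicity event up to $\sigma_i^{\eps}$ will have probability one under $h$ as well. Letting $\eps\downarrow 0$ and invoking the continuity of the level lines up to $\tau_i$ will yield the full conclusion, since any crossing between $\gamma_{u_1}$ and $\gamma_{u_2}$ below $\partial_U\T$ would occur at positive distance from $\partial_U\T$ and hence be detected by some $E_{\eps}$.

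The hard part will be the measurability claim in the transfer step: that the stopped level lines $\gamma_{u_i}[0,\sigma_i^{\eps}]$ are determined by $h|_{U_{\eps}}$. This should follow from the local set characterization of level lines combined with their determinism, but it has to be carefully adapted from the half-plane setting to the strip and checked against the arbitrary, possibly continuation-threshold-triggering, upper boundary data on $\partial_U\T$.
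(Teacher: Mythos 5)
Your proof is correct, and it is essentially a variant of the paper's argument. The common skeleton is the same: first establish the monotonicity for a field $\tilde h$ with large, piecewise-constant upper boundary data (so that the reverse monotonicity of Proposition \ref{prop::boundary_levellines_nonintersecting_monotonicity_reverse} applies), then transfer to the given $h$ using the mutual absolute continuity of $h|_{\R\times(0,1-\eps)}$ and $\tilde h|_{\R\times(0,1-\eps)}$ from Proposition \ref{prop::gff_absolutecontinuity}.

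The genuine difference is in how the two level lines $\tilde\gamma_{u_1}$ and $\tilde\gamma_{u_2}$ get separated by the auxiliary reverse path. The paper normalizes by setting the shift so that $u_1 = 0$, applies the reverse monotonicity only once to place $\tilde\gamma_{u_2}$ to the left of $\tilde\gamma'$, and then invokes the coincidence of the ranges of $\tilde\gamma'$ and $\tilde\gamma_{u_1}$ (via Proposition \ref{prop::boundary_levellines_deterministic_nonintersecting_coincide} / Remark \ref{rem::boundary_levellines_deterministic_nonintersecting_coincide}) to conclude. You instead choose a shift $c$ strictly between $u_1$ and $u_2$ and apply Proposition \ref{prop::boundary_levellines_nonintersecting_monotonicity_reverse} twice, once with the positive height $u_2-c$ and once with the negative height $u_1-c$, which sandwiches $\tilde\gamma'$ between $\tilde\gamma_{u_1}$ and $\tilde\gamma_{u_2}$ directly. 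This bypasses the reversibility/coincidence lemma entirely, at the cost of a second invocation of the reverse monotonicity; the two routes are of comparable length, and yours is marginally more self-contained at this point of the development.

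On the measurability concern you raise in your final paragraph: it is indeed the point that needs care, but it is handled by exactly the mechanism used in Lemma \ref{lem::boundary_levellines_locatsets}. Since $\gamma_{u_i}$ is a local set determined by $h$, one shows that on the event $\{\gamma_{u_i}[0,\sigma_i^{\eps}]\cap (\R\times(1-\eps/2,1)) = \emptyset\}$ (which holds by the definition of $\sigma_i^{\eps}$), the stopped path $\gamma_{u_i}[0,\sigma_i^{\eps}]$ is measurable with respect to $h$ restricted to the complement of $\R\times(1-\eps/2,1)$, hence with respect to $h|_{\R\times(0,1-\eps/2)}$. Combined with the parameters of the proposition ($a,b\ge\lambda$ and $\lambda-b\le u_1<u_2\le a-\lambda$), the continuation threshold cannot be reached on $\partial_L\T$, so nothing goes wrong there; the potential thresholds on $\partial_U\T$ are irrelevant before time $\sigma_i^{\eps}$. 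With that filled in, your proof is complete.
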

\begin{proof}
We first assume that $a'\ge \lambda+u_1$ and $b'\ge \lambda-u_1$. By replacing $h$ with $h+u_1$, we may assume that $u_1=0$. Let $\gamma'$ be the level line of $-h$ starting from $z_0$. From Proposition \ref{prop::boundary_levellines_nonintersecting_monotonicity_reverse}, we know that $\gamma_{u_2}$ almost surely stays to the left of $\gamma'$. We also know that the range of $\gamma'$ is the same as the range of $\gamma_{u_1}$. These imply the conclusion.

Now we treat the case when the boundary data of $h$ on $\partial_U\T$ is general. Fix $\eps>0$, for $i=1,2$, let $\tau_i^{\eps}$ be the first time $t$ that $\gamma_{u_i}$ gets within distance $\eps$ of $\partial_U\T$. It suffices to show that $\gamma_{u_2}[0,\tau_2^{\eps}]$ almost surely lies to the left of $\gamma_{u_1}[0,\tau_1^{\eps}]$ for every $\eps>0$.

Let $\tilde{h}$ be a $\GFF$ on $\T$ whose boundary data is the same as $h$ on $\partial_L\T$ and, on $\partial_U\T$, is at most $-\lambda-u_1$ to the left of $z_0$ and is at least $\lambda-u_1$ to the right of $z_0$. For $i=1,2$, let $\tilde{\gamma}_{u_i}$ be the level line of $\tilde{h}$ with height $u_i$ starting from 0. On the one hand, from the above analysis, $\tilde{\gamma}_{u_2}$ almost surely lies to the left of $\tilde{\gamma}_{u_1}$. On the other hand, the laws of $\tilde{h}|_{\R\times (0,1-\eps)}$ and $h|_{\R\times (0,1-\eps)}$ are mutually absolutely continuous, see Proposition \ref{prop::gff_absolutecontinuity}. Combining these two facts, we know that $\gamma_{u_2}[0,\tau_2^{\eps}]$ almost surely lies to the left of $\gamma_{u_1}[0,\tau_1^{\eps}]$.
\end{proof}

\begin{corollary}\label{cor::boundary_levellines_nonintersecting_monotonicity}
Suppose that $h$ is a $\GFF$ on $\HH$ whose boundary data is $b$ on $\R_+$ and $-a$ on $\R_-$. Assume that $a,b\ge \lambda$. Fix $u_1,u_2$ such that
\[\lambda-b\le u_1<u_2\le a-\lambda.\]
For $i=1,2$, let $\gamma_{u_i}$ be the level line of $h$ with height $u_i$ starting from 0. Then almost surely $\gamma_{u_2}$ lies to the left of $\gamma_{u_1}$.
\end{corollary}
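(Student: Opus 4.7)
The plan is to derive this from Proposition \ref{prop::boundary_levellines_nonintersecting_monotonicity} by conformally mapping $\HH$ to the horizontal strip $\T$. Choose a conformal map $\phi:\HH\to\T$ sending $0\in\partial\HH$ to $0\in\partial_L\T$ and $\infty$ to a prescribed point $z_0\in\partial_U\T$, with the orientation convention that $\phi(\R_+)$ is the boundary arc joining $0$ to $z_0$ via $+\infty$. Setting $\tilde h=h\circ\phi^{-1}$, the pushforward GFF on $\T$ has boundary values $b$ and $-a$ to the right and left of $0$ on $\partial_L\T$, respectively, and equally $b$ and $-a$ to the right and left of $z_0$ on $\partial_U\T$. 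This is exactly the boundary configuration of Proposition \ref{prop::boundary_levellines_nonintersecting_monotonicity} with $a'=a$ and $b'=b$, and the assumptions $a,b\ge\lambda$ and $\lambda-b\le u_1<u_2\le a-\lambda$ coincide with those of that proposition.

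By the conformal invariance built into the definition of level lines in Section \ref{subsec::boundary_levellines_gff}, the image $\phi(\gamma_{u_i})$ is precisely the level line $\tilde\gamma_{u_i}$ of $\tilde h$ with height $u_i$ started from $0$. Applying Proposition \ref{prop::boundary_levellines_nonintersecting_monotonicity} yields that $\tilde\gamma_{u_2}[0,\tau_2]$ lies to the left of $\tilde\gamma_{u_1}[0,\tau_1]$, where $\tau_i$ is the first time $\tilde\gamma_{u_i}$ accumulates in $\partial_U\T$. By Proposition \ref{prop::boundary_levellines_nonintersecting_all}, each $\gamma_{u_i}$ is continuous, transient in $\HH$, and disjoint from $\R\setminus\{0\}$; transporting these properties through $\phi$ (the forbidden-interval statement Proposition \ref{prop::sle_chordal_forbidden_intervals} gives the same conclusion intrinsically on the strip since the boundary data of $\tilde h+u_i$ on $\partial_U\T$ avoids $(-\lambda,\lambda)$ on both arcs flanking $z_0$), one sees that $\tilde\gamma_{u_i}$ avoids $\partial_U\T\setminus\{z_0\}$ and accumulates at $z_0$ only as its time parameter tends to infinity. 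Hence $\tilde\gamma_{u_i}[0,\tau_i]$ is the entire level line, and pulling back by the orientation-preserving map $\phi^{-1}$ gives the corollary.

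The only point that genuinely requires checking is the orientation bookkeeping: with $\phi$ chosen as above, ``$\tilde\gamma_{u_2}$ is to the left of $\tilde\gamma_{u_1}$'' in $\T$ pulls back to ``$\gamma_{u_2}$ is to the left of $\gamma_{u_1}$'' in $\HH$ once the curves are oriented from their common starting point toward their target. No serious obstacle is expected here, since all the real work---the strip monotonicity---has already been carried out in Propositions \ref{prop::boundary_levellines_nonintersecting_monotonicity_reverse} and \ref{prop::boundary_levellines_nonintersecting_monotonicity}.
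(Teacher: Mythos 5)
Your argument is correct and is essentially the one the paper intends: the corollary is left without proof precisely because it is the pullback of Proposition~\ref{prop::boundary_levellines_nonintersecting_monotonicity} under a conformal map $\HH\to\T$ fixing $0$ and sending $\infty$ to $z_0$, which gives $a'=a$, $b'=b$. Your additional observation---that the hypotheses $a,b\ge\lambda$ and $\lambda-b\le u_i\le a-\lambda$ make $\gamma_{u_i}$ boundary-avoiding and transient (Proposition~\ref{prop::boundary_levellines_nonintersecting_all}), so that the strip curves only accumulate in $\partial_U\T$ at $z_0$ and $\tau_i=\infty$---is exactly what is needed to upgrade the stopped statement of the proposition to the full-curve statement of the corollary.
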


Suppose that $h$ is a $\GFF$ on $\HH$ whose boundary value is as in Figure \ref{fig::boundary_levellines_nonintersecting_interacting}(a). For each $u\in\R$, let $\gamma_u$ be the level line of $h$ with height $u$ starting from 0. Fix $u_1<u_2$ and assume that $a,b$ are large enough so that Corollary \ref{cor::boundary_levellines_nonintersecting_monotonicity} is applicable to $\gamma_{u_1}$ and $\gamma_{u_2}$. We know from Corollary \ref{cor::boundary_levellines_nonintersecting_monotonicity} that $\gamma_{u_1}$ almost surely lies to the right of $\gamma_{u_2}$. The purpose of the rest of this section is to calculate the conditional mean of $h$ given both $\gamma_{u_1}$ and $\gamma_{u_2}$, and to show that the Loewner driving function of $\gamma_{u_1}$, viewed as a path in the right connected component of $\HH\setminus\gamma_{u_2}$, exists and is continuous, and likewise when the roles of $\gamma_{u_1}$ and $\gamma_{u_2}$ are swapped. We emphasize that, in this section, the results will be for paths which do not intersect the boundary where we have the almost sure continuity of the level lines at this point.

\begin{lemma}\label{lem::boundary_levellines_locatsets}
Suppose that $\gamma_1,...,\gamma_k$ are continuous paths such that, for each $1\le i\le k$, we have that
\begin{enumerate}
\item [(1)] $\gamma_i[0,\tau]$ is a local set for $h$ for any $\gamma_i$-stopping time $\tau$;
\item [(2)] $\gamma_i$ is almost surely determined by $h$.
\end{enumerate}
Suppose that $\tau_1$ is a stopping time for $\gamma_1$ and, for each $2\le j\le k$, inductively, let $\tau_j$ be a stopping time for the filtration $\LF^j_t$ generated by $\gamma_1|_{[0,\tau_1]},...,\gamma_{j-1}|_{[0,\tau_{j-1}]}$ and $\gamma_j|_{[0,t]}$. Then $\cup_{1\le i\le k} \gamma_i[0,\tau_i]$ is a local set for $h$ and is almost surely determined by $h$.
\end{lemma}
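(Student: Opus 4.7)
The plan is to proceed by induction on $k$. The base case $k=1$ is exactly what hypotheses (1) and (2) give us for $\gamma_1$ and the stopping time $\tau_1$. For the inductive step, set $A':=\cup_{i<k}\gamma_i[0,\tau_i]$ and assume it is local and a.s.\ determined by $h$; the goal is to show the same for $A:=A'\cup\gamma_k[0,\tau_k]$.

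The ``a.s.\ determined by $h$'' half is straightforward. Each $\gamma_i$ is $\sigma(h)$-measurable by hypothesis (2), and by induction each $\tau_i$ for $i<k$ is $\sigma(h)$-measurable; then $\LF_t^k=\sigma(\gamma_1|_{[0,\tau_1]},\dots,\gamma_{k-1}|_{[0,\tau_{k-1}]},\gamma_k|_{[0,t]})\subseteq\sigma(h)$, so $\tau_k\in\sigma(h)$ and hence $A\in\sigma(h)$.

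For locality, I plan to apply Proposition \ref{prop::gff_localsets_union} to the pair $A_1:=A'$ and $A_2:=\gamma_k[0,\tau_k]$. Because both are $\sigma(h)$-measurable, sampling them conditionally independently given $h$ just gives the deterministic pair back, so $A_1\tilde{\cup}A_2=A$ almost surely; it therefore suffices to verify that $A_2$ is itself local for $h$. To see this I would condition on $(A',h|_{A'})$: by characterization (2) of Proposition \ref{prop::gff_localset_definition}, this fixes $A'$ and leaves $h|_{D\setminus A'}$ as $\LC_{A'}$ plus an independent zero-boundary GFF on $D\setminus A'$. Under this conditioning the filtration $\LF_t^k$ collapses to the natural filtration of $\gamma_k$, so $\tau_k$ becomes a $\gamma_k$-stopping time in the conditional world, and hypothesis (1) applied in that world gives that $\gamma_k[0,\tau_k]$ is local for the conditional field.

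The main obstacle, and the step that requires care, is transferring this conditional locality back to unconditional locality of $A_2$ for $h$. Concretely, for a deterministic open $U\subseteq D$ one must show that $P(A_2\cap U=\emptyset\mid h)$ is measurable with respect to the projection of $h$ onto $H^\perp(U)$. My plan is to do this by a tower argument: since $A'$ is local, the conditional law of $A'$ given $h$ is a function of $h|_{H^\perp(U)}$ on the event $\{A'\cap U=\emptyset\}$; on this event the conditional field in $D\setminus A'$ is governed only by data measurable with respect to $h|_{H^\perp(U)}$ and the zero-boundary GFF piece on $D\setminus A'$ transforms compatibly under the orthogonal decomposition $H(D)=H(U)\oplus H^\perp(U)$, so the conditional locality of $A_2$ can be integrated over the law of $A'$ to verify condition (1) of Proposition \ref{prop::gff_localset_definition} for $A_2$. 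Once $A_2$ is known to be local, Proposition \ref{prop::gff_localsets_union} closes the induction, and iterating $k-1$ times completes the proof.
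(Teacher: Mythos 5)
Your decomposition $A=A'\cup\gamma_k[0,\tau_k]$ is natural, but the plan---prove $A_2:=\gamma_k[0,\tau_k]$ local for $h$ on its own and then invoke Proposition \ref{prop::gff_localsets_union}---founders on a genuine obstruction that the closing ``tower argument'' cannot repair. Since $\tau_k$ is only an $\LF^k_t$-stopping time and not a $\gamma_k$-stopping time, it depends on $\gamma_1|_{[0,\tau_1]},\dots,\gamma_{k-1}|_{[0,\tau_{k-1}]}$. If one of these earlier pieces intersects a deterministic open set $U$, its exact shape inside $U$ depends on $h|_U$; through $\tau_k$, the random set $\gamma_k[0,\tau_k]$ then also depends on $h|_U$, and the indicator $1_{\{\gamma_k[0,\tau_k]\cap U=\emptyset\}}$ need not be measurable with respect to the projection of $h$ onto $H^\perp(U)$. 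In short, $\gamma_k[0,\tau_k]$ is in general \emph{not} a local set for $h$, so the union proposition cannot be applied in the way you propose. Your conditioning argument is correct as far as it goes---it shows $\gamma_k[0,\tau_k]$ is local for the conditional field given $(A',h|_{A'})$---but the restriction to the event $\{A'\cap U=\emptyset\}$ in your final integration step is not a cosmetic choice: it is precisely on the complementary event that locality of $A_2$ fails, and there is no way to complete the argument there.

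The paper's proof avoids this by never asserting locality of the $j$-th piece in isolation. Instead it verifies the local-set characterization for the union $A_j=\cup_{i\le j}\gamma_i[0,\tau_i]$ directly, inducting on $j$ with the strengthened statement that for every deterministic open $U$, the event $\{A_j\cap U=\emptyset\}$, and on that event the set $A_j$ itself, are a.s.\ determined by $h|_{U^c}$. The induction step rests on the identity
\[\{A_j\cap U=\emptyset\}=\{A_{j-1}\cap U=\emptyset\}\cap\{\tau_j\le\tau_j^U\},\]
where $\tau_j^U$ is the first time $\gamma_j$ hits $\overline{U}$. Hypotheses (1)--(2) for $\gamma_j$ show $\gamma_j[0,\tau_j^U]$ is determined by $h|_{U^c}$, and on the event $\{A_{j-1}\cap U=\emptyset\}$ the induction hypothesis forces $A_{j-1}$---hence the data in $\LF^j_t$ needed to decide $\{\tau_j\le\tau_j^U\}$---to be determined by $h|_{U^c}$. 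The restriction to this event is exactly what rescues the argument: if $A_{j-1}$ intersects $U$, then so does $A_j$, so that case contributes nothing to $\{A_j\cap U=\emptyset\}$. This is the structural point missing from your proposal: because the later stopping times borrow information from the earlier paths, locality must be proved for the union head-on rather than factored through locality of the individual pieces.
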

\begin{proof}
For $1\le j\le k$, set $A_j=\cup_{1\le i\le j}\gamma_i[0,\tau_i]$. Fix $U\subseteq\HH$ open. We are going to prove that the event $[A_j\cap U=\emptyset]$ is almost surely determined by $h|_{U^c}$ and that, on the event $[A_j\cap U=\emptyset]$, the set $A_j$ is almost surely determined by $h|_{U^c}$. We will prove this by induction on the number of the paths.
The hypotheses for $\gamma_1$ imply that this is true for $j=1$.
Suppose the result holds for $j-1$ paths for $j\ge 2$ fixed. We will show that it holds for $j$ paths.

Let $\tau_j^U$ be the first time that $\gamma_j$ hits $\overline{U}$. The hypotheses of $\gamma_j$ imply that $\gamma_j[0,\tau_j^U]$ is almost surely determined by $h|_{U^c}$. Note that
\begin{enumerate}
\item [(a)] $\{A_j\cap U=\emptyset\}=\{A_{j-1}\cap U=\emptyset\}\cap\{\tau_j\le \tau_j^U\}$;
\item [(b)] the event $\{\tau_j\le \tau_j^U\}$ is almost surely determined by $A_{j-1}$ and $h|_{U^c}$ (since $\tau_j$ is a $\LF^j_t$-stopping time);
\item [(c)] the event $\{A_{j-1}\cap U=\emptyset\}$ is determined by $h|_{U^c}$, and on the event $\{A_{j-1}\cap U=\emptyset\}$, the set $A_{j-1}$ is almost surely determined by $h|_{U^c}$.
\end{enumerate}
Combining these three facts, we have that, on the event $\{A_{j-1}\cap U=\emptyset\}$, the event $\{\tau_j\le \tau_j^U\}$ is almost surely determined by $h|_{U^c}$. Therefore, the event $\{A_j\cap U=\emptyset\}$ is almost surely determined by $h|_{U^c}$; moreover, on the event $\{A_j\cap U=\emptyset\}$, since $A_{j-1}$ and $\gamma_j[0,\tau_j]$ are almost surely determined by $h|_{U^c}$, $A_j$ is also almost surely determined by $h|_{U^c}$. This completes the proof of the induction step.
\end{proof}

In the rest of this section, we set
\[A(t)=\gamma_{u_2}\cup\gamma_{u_1}[0,t],\quad \LF_t=\sigma(\gamma_{u_2},\gamma_{u_1}|_{[0,t]}).\]
\begin{proposition}\label{prop::boundary_levellines_nonintersecting_conditionalmean}
Suppose that $h$ is a $\GFF$ on $\HH$ whose boundary value is as in Figure \ref{fig::boundary_levellines_nonintersecting_interacting}(a). Fix an $\LF_t$-stopping time $\tau$. Let $C$ be any connected component of $\HH\setminus A(\tau)$. Then, given $A(\tau)$, the conditional law of $h|_C$ is the same as a $\GFF$ with mean $\eta_C$ which is harmonic in $C$ with certain boundary value that will be described in the following. There are three types of $C$ and we will describe the boundary value of $\eta_C$ one by one, see Figure \ref{fig::boundary_levellines_nonintersecting_interacting}(b).
\begin{enumerate}
\item [(1)] $C$ is the connected component that stays to the left of $\gamma_{u_2}$. Then $\eta_C$ is $-a$ on $\R_-$ and $-\lambda-u_2$ to the left of $\gamma_{u_2}$.
\item [(2)] $C$ is any connected component between $\gamma_{u_2}$ and $\gamma_{u_1}[0,\tau]$. Then $\eta_C$ is $\lambda-u_2$ to the right of $\gamma_{u_2}$ and $-\lambda-u_1$ to the left of $\gamma_{u_1}$.
\item [(3)] $C$ is the connected component whose boundary contains $\R_+$. Then $\eta_C$ is $\lambda-u_2$ to the right of $\gamma_{u_2}$, $-\lambda-u_1$ to the left of $\gamma_{u_1}$, $\lambda-u_1$ to the right of $\gamma_{u_1}$, and $b$ on $\R_+$.
\end{enumerate}
\end{proposition}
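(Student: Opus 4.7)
The plan is to establish two things in turn: that $A(\tau)$ is a local set determined by $h$ so that $\LC_{A(\tau)}$ makes sense, and that $\LC_{A(\tau)}$ has the specified boundary values on each connected component of $\HH\setminus A(\tau)$.

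For the first step, Proposition~\ref{prop::boundary_levellines_nonintersecting_all} gives that each of $\gamma_{u_1},\gamma_{u_2}$ is a continuous simple curve almost surely determined by $h$. Applying Lemma~\ref{lem::boundary_levellines_locatsets} with $\gamma_1=\gamma_{u_2}$ (taken in full, i.e.\ $\tau_1=\infty$) and $\gamma_2=\gamma_{u_1}$ stopped at the $\LF^2_t$-stopping time $\tau_2=\tau$, I conclude that $A(\tau)=\gamma_{u_2}\cup\gamma_{u_1}[0,\tau]$ is a local set for $h$ and is itself determined by $h$, so $\LC_{A(\tau)}$ is a well defined harmonic function on $\HH\setminus A(\tau)$.

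For the second step, set $A_1=\gamma_{u_2}$ and $A_2=\gamma_{u_1}[0,\tau]$. Both are connected local sets, and since each is a function of $h$, they are trivially conditionally independent given $h$, so $A(\tau)=A_1\tilde{\cup}A_2$ in the sense of Proposition~\ref{prop::gff_localsets_union}. Applying Theorem~\ref{thm::boundary_levelline_gff_coupling} separately to each level line (viewing $\gamma_{u_i}$ as the level line of $h+u_i$ and subtracting the shift $u_i$ from the usual $\pm\lambda$ boundary data), $\LC_{A_i}$ is the bounded harmonic function on $\HH\setminus A_i$ with boundary values $-a$ on $\R_-$, $b$ on $\R_+$, $-\lambda-u_i$ on the left of $A_i$, and $\lambda-u_i$ on the right of $A_i$. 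Proposition~\ref{prop::gff_localsets_boundarybehavior} then tells me that $\LC_{A(\tau)}-\LC_{A_j}$ is harmonic on $\HH\setminus A(\tau)$ and tends to zero near points in the interior of any connected component of $A_j\setminus A_{3-j}$ at positive distance from the other set, for each $j\in\{1,2\}$. On each connected component $C$ of $\HH\setminus A(\tau)$, this pins down the boundary values of $\LC_{A(\tau)}$ off the singular set $S:=\{0,\gamma_{u_1}(\tau)\}\cup(A_1\cap A_2)$: they match $\LC_{A_1}$ on $\partial C\cap(\partial\HH\cup(A_1\setminus A_2))$ and $\LC_{A_2}$ on $\partial C\cap(A_2\setminus A_1)$. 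Enumerating the three component types and reading off gives the stated values. For case~(1) one may alternatively cite Proposition~\ref{prop::gff_localsets_interacting}, since by monotonicity the left component of $\HH\setminus A_1$ is $\sigma(A_1)$-measurable and disjoint from $A_2$.

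The main technical point I expect to handle is verifying that $S$ is a polar boundary subset for $\LC_{A(\tau)}$, so that the a.e.\ prescribed boundary values determine this bounded harmonic function uniquely on each component. The key input is that $A_1\cap A_2$ consists only of isolated points when $u_1\neq u_2$: indeed, on any connected component of $A_1\cap A_2$ with more than one point, Proposition~\ref{prop::gff_localsets_boundarybehavior} would force both $\LC_{A(\tau)}-\LC_{A_1}$ and $\LC_{A(\tau)}-\LC_{A_2}$ to vanish there, hence $\LC_{A_1}=\LC_{A_2}$ near that arc, but the prescribed $\pm\lambda$ jumps differ by $u_2-u_1\neq 0$, a contradiction. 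Combined with the two extra points $0$ and $\gamma_{u_1}(\tau)$, $S$ is a countable set of isolated points, hence removable for bounded harmonic functions, and the identification in each of the three cases follows.
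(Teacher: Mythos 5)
Your overall structure is right—$A(\tau)$ is a local set determined by $h$ via Lemma~\ref{lem::boundary_levellines_locatsets}, and Propositions~\ref{prop::gff_localsets_union}, \ref{prop::gff_localsets_boundarybehavior}, \ref{prop::gff_localsets_interacting} determine the boundary values of $\LC_{A(\tau)}$ away from the singular points. But the final step—your ``removability'' argument—has a real gap that the paper's proof is specifically designed to avoid.

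The phrase ``this bounded harmonic function'' in your last paragraph is doing work that has not been earned. $\LC_{A(\tau)}|_C$ is, a priori, just a harmonic function on $C$ (Item (2) of Proposition~\ref{prop::gff_localset_definition} only gives an $\LA$-measurable distribution that is a.s.\ harmonic in the complement). Nothing you have written shows it is bounded. Proposition~\ref{prop::gff_localsets_boundarybehavior} pins down the boundary limits of $\LC_{A(\tau)}$ off the two exceptional points $x_0,y_0\in\partial C\cap A_1\cap A_2$ (and similarly off $\{0,\gamma_{u_1}(\tau)\}$ in the later cases), but a harmonic function with those boundary limits could still differ from $\eta_C$ by a Poisson-kernel-type term supported at $x_0$ or $y_0$. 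Removability for a finite set is a statement about \emph{bounded} harmonic functions; without boundedness, the argument does not close. You cannot get boundedness from the boundary values either, since that is precisely what you are trying to establish—it would be circular. This is why the paper does \emph{not} argue by removability: instead, for $y_0=\gamma_{u_1}(t_0)$ it uses Proposition~\ref{prop::gff_localsets_bm} to get a continuous-in-$t$ modification of $\LC_{A(t)}$ and lets $t\uparrow t_0$, transporting the already-established boundary behavior to the tip; for $x_0=\gamma_{u_1}(s_0)$, the same continuity argument does not apply directly (the pocket does not yet exist at time $s_0$), so the paper brings in the level line $\gamma'$ of $-h$ with height $-u_1$ from $\infty$ (reversibility, $A_1'=A_1$) to run the continuity argument from the other end. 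These two steps—continuity-in-time plus reversibility—are the substantive content missing from your proposal.

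A secondary issue: your description of $A_1\cap A_2$ as ``a countable set of isolated points'' is not correct. For $0<u_2-u_1<2\lambda$ the intersection $\gamma_{u_1}\cap\gamma_{u_2}$ has positive Hausdorff dimension (see the remark following Theorem~\ref{thm::boundary_levelline_gff_interacting}); it is a Cantor-type set—totally disconnected but uncountable, with no isolated points. Your contradiction argument does rule out connected components that are arcs, but totally disconnected does not imply countable or isolated. What you actually need is the local statement that for a given pocket component $C$, $\partial C\cap A_1\cap A_2$ consists of exactly the two points $x_0,y_0$ (for the component of type (3) when $\tau<\infty$ the situation is more intricate, with the further exceptional points $0$ and $\gamma_{u_1}(\tau)$, which the paper handles with the same continuity device). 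Defining the global singular set $S=\{0,\gamma_{u_1}(\tau)\}\cup(A_1\cap A_2)$ obscures this.
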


\begin{figure}[ht!]
\begin{subfigure}[b]{0.3\textwidth}
\begin{center}
\includegraphics[width=\textwidth]{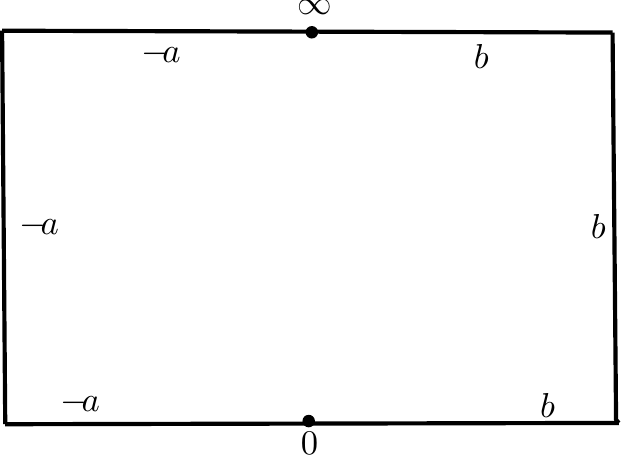}
\end{center}
\caption{The boundary value for the $\GFF$ $h$ in $\HH$.}
\end{subfigure}
$\quad$
\begin{subfigure}[b]{0.3\textwidth}
\begin{center}\includegraphics[width=\textwidth]{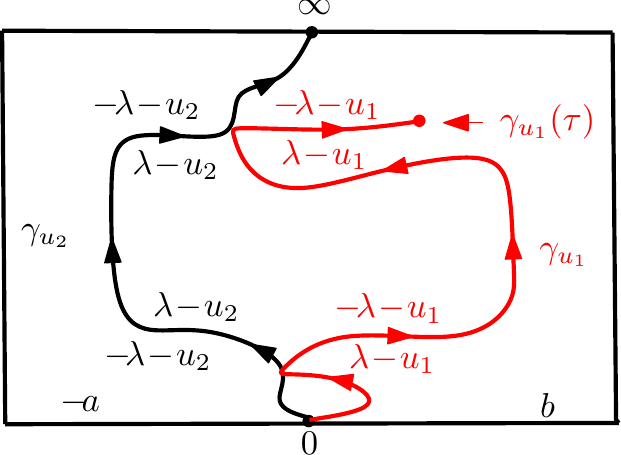}
\end{center}
\caption{The boundary value for connected components of $\HH\setminus\! A\!(\!\tau\!)$.}
\end{subfigure}
$\quad$
\begin{subfigure}[b]{0.3\textwidth}
\begin{center}\includegraphics[width=\textwidth]{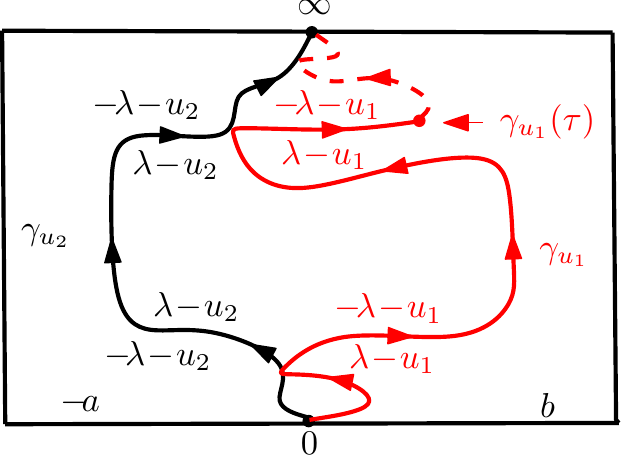}
\end{center}
\caption{The dashed red path is \\$A\setminus A(\tau)$.}
\end{subfigure}
\caption{\label{fig::boundary_levellines_nonintersecting_interacting}The boundary values of the fields in the proof of Proposition \ref{prop::boundary_levellines_nonintersecting_conditionalmean}.}
\end{figure}

\begin{proof}
From Lemma \ref{lem::boundary_levellines_locatsets}, we know that $A(\tau)$ is a local set for $h$. Since $\gamma_{u_1}$ and $\gamma_{u_2}$ are continuous, the connected components of $\gamma_{u_1}\setminus\gamma_{u_2}$ and of $\gamma_{u_2}\setminus\gamma_{u_1}$ consist of more than a single point. For $i=1,2$, let $A_i$ be the range of $\gamma_{u_i}$, and set $A=A_1\cup A_2$.
\smallbreak
\textit{First,} assume that $\tau=\infty$ and $C$ is the connected component to the left of $\gamma_{u_2}$ or the connected component to the right of $\gamma_{u_1}$. We only need to explain the result for $C$ that is the connected component to the left of $\gamma_{u_2}$. Note that $A$ and $A_2$ are local sets determined by $h$. Apply Proposition \ref{prop::gff_localsets_interacting} to $A$ and $A_2$, we have that $\LC_A|_C=\LC_{A_2}|_C$ almost surely given $A_2$. Therefore, given $A$, the conditional mean $\LC_A|_C$ agrees with $\eta_C$ almost surely.
\smallbreak
\textit{Second,} assume that $\tau=\infty$ and $C$ is any connected component between $\gamma_{u_1}$ and $\gamma_{u_2}$. Then $\partial C$ has two special points, say $x_0$ and $y_0$, which are contained in $A_1\cap A_2$. For any fixed point $z\in\partial C\cap A_2$ (resp. $z\in \partial C\cap A_1$) other than $x_0,y_0$, from Proposition \ref{prop::gff_localsets_boundarybehavior}, we know that $\LC_A-\LC_{A_2}$ (resp. $\LC_A-\LC_{A_1}$) tends to zero along any sequence in $C$ which converges to $z$. Thus $\LC_A|_C$ agrees with $\eta_C$ on $\partial C\setminus\{x_0,y_0\}$. Then we need to show that $\LC_A|_C$ also agrees with $\eta_C$ at $x_0$ and $y_0$.

Assume $x_0=\gamma_{u_1}(s_0)$ and $y_0=\gamma_{u_1}(t_0)$ with $s_0<t_0$. Proposition \ref{prop::gff_localsets_bm} implies that $\LC_{A(t)}$ has a continuous modification in $t$ since that $A(\sigma)$ is local for any $\LF_t$-stopping time $\sigma$ and that $\gamma_{u_1}$ is continuous. The continuity of $\LC_{A(t)}$ in $t$ implies that $\LC_A$ has the same boundary behavior as $\eta_C$ near $y_0$ since that the boundary data of $\LC_{A(t)}$ agrees with $\eta_C$ as $t\uparrow t_0$. This leaves us to deal with the boundary behavior near $x_0$.

Let $\gamma'$ be the level line of $-h$ with height $-u_1$ starting from $\infty$, and let $A_1'$ be the range of $\gamma'$. Then almost surely $A_1'=A_1$. Apply Proposition \ref{prop::gff_localsets_interacting} to the sets $A$ and $A_1'\cup A_2$, we have that $\LC_{A_1'\cup A_2}|_C=\LC_A|_C$ almost surely given $A$. An analogous continuity argument implies that $\LC_{A_1'\cup A_2}|_C$ has the same boundary behavior as $\eta_C$ near $x_0$. Consequently, $\LC_A|_C$ also has the same boundary behavior as $\eta_C$ near $x_0$.
\smallbreak
\textit{Third,} assume that $\tau<\infty$ and $C$ is the connected component to the left of $\gamma_{u_2}$. Apply Proposition \ref{prop::gff_localsets_interacting} to $A_2$ and $A(\tau)$, which are local and are determined by $h$, we have that $\LC_{A(\tau)}|_C=\LC_{A_2}|_C$ given $A_2$. This implies that $\LC_{A(\tau)}|_C$ agrees with $\eta_C$.
\smallbreak
\textit{Fourth,} assume that $\tau<\infty$ and $C$ is any connected component between $\gamma_{u_2}$ and $\gamma_{u_1}[0,\tau]$. Apply Proposition \ref{prop::gff_localsets_interacting} to $A(\tau)$ and $A$, we have that $\LC_{A(\tau)}|_C=\LC_A|_C$ given $A(\tau)$. Since $\LC_A|_C$ agrees with $\eta_C$, $\LC_{A(\tau)}|_C$ also agrees with $\eta_C$.
\smallbreak
\textit{Finally,} assume that $\tau<\infty$ and $C$ is the connected component whose boundary contains $\R_+$, see Figure \ref{fig::boundary_levellines_nonintersecting_interacting}(c). Take a point $z$ on $\partial C$. If $z$ is at positive distance from $A(\tau)\setminus A_2$, by applying Proposition \ref{prop::gff_localsets_boundarybehavior} to the sets $A_2$ and $A(\tau)$, we know that $\LC_{A(\tau)}-\LC_{A_2}$ tends to zero along any sequence in $C$ that converges to $z$, thus $\LC_{A(\tau)}|_C$ agrees with $\eta_C$ at $z$. If $z$ is at positive distance from $A\setminus A(\tau)$, by applying Proposition \ref{prop::gff_localsets_boundarybehavior} to the sets $A$ and $A(\tau)$, we know that $\LC_{A(\tau)}-\LC_{A}$ tends to zero along any sequence in $C$ that converges to $z$, thus $\LC_{A(\tau)}|_C$ agrees with $\eta_C$ at $z$. If $z$ is not in the previous two cases, then a similar continuity argument as above will imply that $\LC_{A(\tau)}|_C$ agrees with $\eta_C$ at $z$.
\end{proof}

\begin{figure}[ht!]
\begin{center}
\includegraphics[width=0.3\textwidth]{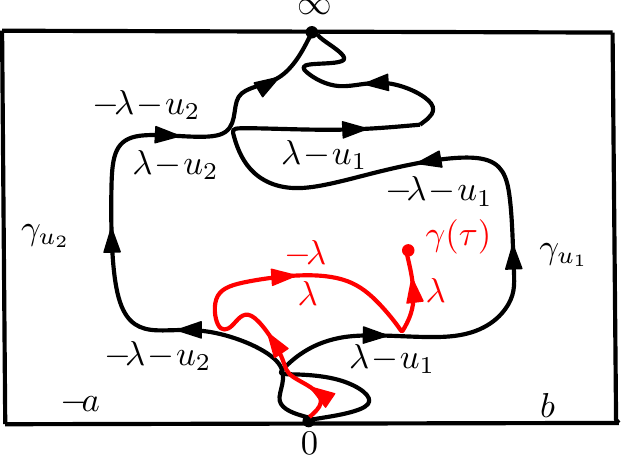}
\end{center}
\caption{\label{fig::boundary_levellines_nonintersecting_interacting_three} The boundary values of the fields in Remark \ref{rem::boundary_levellines_nonintersecting_threelevellines}.}
\end{figure}

\begin{remark}\label{rem::boundary_levellines_nonintersecting_threelevellines}[Three level lines]
Suppose that $u_1<0<u_2$ and $\gamma=\gamma_0$. A statement analogous to Proposition \ref{prop::boundary_levellines_nonintersecting_conditionalmean} also holds for the conditional mean of $h$ given $\gamma_{u_1}$, $\gamma_{u_2}$ and $\gamma[0,\tau]$ where $\tau$ is any stopping time for the filtration $\LF_t=\sigma(\gamma_{u_1},\gamma_{u_2},\gamma|_{[0,t]})$. The boundary data are depicted in Figure \ref{fig::boundary_levellines_nonintersecting_interacting_three}.
\end{remark}

The rest of this section is to establish the existence and continuity of the Loewner driving function for $\gamma_{u_1}$ viewed as a path in the right connected component of $\HH\setminus\gamma_{u_2}$. We will use Proposition \ref{prop::loewner_chordal_driving_existence_continuity}.

\begin{proposition}\label{prop::boundary_levellines_conditionallaw_loewner_continuity}
Let $\psi$ be a conformal map from the right connected component of $\HH\setminus\gamma_{u_2}$ onto $\HH$ with $\psi(0)=0$ and $\psi(\infty)=\infty$. Then $\psi(\gamma_{u_1})$ has a continuous Loewner driving function as a path in $\overline{\HH}$ from $0$ to $\infty$.
\end{proposition}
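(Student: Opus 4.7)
The plan is to apply Proposition \ref{prop::loewner_chordal_driving_existence_continuity} to $\tilde{\gamma}:=\psi(\gamma_{u_1})$. Write $C$ for the right connected component of $\HH\setminus\gamma_{u_2}$. By Corollary \ref{cor::boundary_levellines_nonintersecting_monotonicity} we have $\gamma_{u_1}\subset\overline{C}$, and by Proposition \ref{prop::boundary_levellines_nonintersecting_all} both $\gamma_{u_1}$ and $\gamma_{u_2}$ are simple continuous curves, transient at $\infty$. Since $\psi(0)=0$ and $\psi(\infty)=\infty$, the proposed driving function will be for a curve from $0$ to $\infty$ in $\overline{\HH}$. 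Three things must be checked: continuity of $\tilde{\gamma}$ in $\overline{\HH}$, the non-crossing condition that $\tilde{\gamma}(t,\infty)$ sits in the closure of the unbounded component of $\HH\setminus\tilde{\gamma}(0,t)$, and the empty-interior condition on $\tilde{\gamma}^{-1}(\tilde{\gamma}[0,t]\cup\R)$.

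First I would handle the continuity of $\tilde{\gamma}$. For any $t$ with $\gamma_{u_1}(t)\in C$, continuity of $\tilde{\gamma}$ at $t$ is immediate because $\psi$ is conformal on $C$. At times $t$ where $\gamma_{u_1}(t)\in\partial C=\gamma_{u_2}\cup\partial\HH$, the issue is to extend $\psi$ to the prime-end boundary of $C$ and show that $\psi(\gamma_{u_1}(s))$ has a limit as $s\to t$. Since $\gamma_{u_2}$ is a simple curve in $\overline{\HH}$ from $0$ to $\infty$, the boundary $\partial C$ is locally connected away from $0,\infty$, so prime ends of $C$ correspond to points of $\partial C$ (with two prime ends for each interior point of $\gamma_{u_2}$). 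Because $\gamma_{u_1}$ is continuous, approaches $\gamma_{u_2}$ from the right side only, and does not loop around $\gamma_{u_2}$, each visit to $\gamma_{u_2}$ selects a single prime end of $C$; Carath\'eodory's theorem then gives a continuous limit for $\psi$ along $\gamma_{u_1}$.

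Next, condition (1) of Proposition \ref{prop::loewner_chordal_driving_existence_continuity} for $\tilde{\gamma}$ is equivalent to the statement that, for every $t$, the continuation $\gamma_{u_1}[t,\infty)$ stays in the closure of the connected component of $C\setminus\gamma_{u_1}[0,t]$ whose boundary contains $\R_+$. This follows from Proposition \ref{prop::boundary_levellines_nonintersecting_conditionalmean}: given $A(t)=\gamma_{u_2}\cup\gamma_{u_1}[0,t]$, the field $h$ restricted to each component of $\HH\setminus A(t)$ is a $\GFF$ with explicit piecewise constant boundary data, and $\gamma_{u_1}|_{[t,\infty)}$ is the level line of the conditional field in the component $C^*$ containing $\R_+$; applying Proposition \ref{prop::boundary_levellines_nonintersecting_all} to this conditional field (its boundary values on the two new arcs are $\pm\lambda-u_1$, which are separated by $2\lambda$ so the level line there is non-boundary-intersecting on the $\gamma_{u_1}[0,t]$ side) shows the continuation stays in $\overline{C^*}$ until it reaches $\infty$. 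For condition (2), the set $\{t:\gamma_{u_1}(t)\in\R\}$ has Lebesgue measure zero by Proposition \ref{prop::loewner_chordal_driving_zeroset} applied to $\gamma_{u_1}$ in $\HH$; and the set $\{t:\gamma_{u_1}(t)\in\gamma_{u_2}\}$ has empty interior because, by Proposition \ref{prop::gff_absolutecontinuity} and the conditional mean description, $\gamma_{u_1}$ is, on any subinterval disjoint from its hitting times of $\gamma_{u_2}$, locally absolutely continuous with a standard $\SLE_4(\underline{\rho})$ in $C^*$, for which the analogous exceptional set has empty interior.

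The main obstacle will be the first step, namely verifying the continuity of $\tilde{\gamma}$ at times when $\gamma_{u_1}$ touches $\gamma_{u_2}$. The delicate point is to rule out that, near a single exceptional time $t$, the curve $\gamma_{u_1}$ pinches $\gamma_{u_2}$ infinitely often in a way that the prime-end assignment becomes ambiguous. This is handled by noting that $\gamma_{u_2}$ is a simple curve and that $\gamma_{u_1}$ approaches $\gamma_{u_2}$ from one side only, so the excursions of $\gamma_{u_1}$ away from $\gamma_{u_2}$ form a well-ordered collection and Carath\'eodory extension gives a genuine continuous limit. Once continuity is established, the remaining conditions fall out of the conditional mean description of Proposition \ref{prop::boundary_levellines_nonintersecting_conditionalmean} and absolute continuity with $\SLE_4$.
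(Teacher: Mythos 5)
Your overall framework (verify the two conditions of Proposition~\ref{prop::loewner_chordal_driving_existence_continuity}) is the same as the paper's, and your conclusion about the continuity of $\tilde\gamma=\psi(\gamma_{u_1})$ is correct, although your route there is unnecessarily complicated and slightly off. The right connected component $C$ of $\HH\setminus\gamma_{u_2}$ is a Jordan domain (its boundary is the Jordan curve $\gamma_{u_2}\cup[0,\infty]$ in $\hat\C$), so Carath\'eodory's theorem immediately gives that $\psi$ extends to a homeomorphism $\overline C\to\overline\HH$, and the continuity of $\tilde\gamma$ is automatic. Each interior point of $\gamma_{u_2}$ gives exactly \emph{one} prime end of $C$ — your ``two prime ends'' statement would be right for $\HH\setminus\gamma_{u_2}$ but not for the single component $C$ — and the worries about pinching and ``selecting a prime end per visit'' are moot once the Jordan domain observation is in place.

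For Condition~(1) your argument appeals to the conditional law of $\gamma_{u_1}|_{[t,\infty)}$ given $A(t)$ being a level line and then to Proposition~\ref{prop::boundary_levellines_nonintersecting_all}; this is heavier machinery than needed, and the invocation of Proposition~\ref{prop::boundary_levellines_nonintersecting_all} does not literally apply since the boundary values on the arcs of $\gamma_{u_2}$ and on $\R_+$ need not satisfy its hypotheses. The paper's argument is purely topological and more robust: if Condition~(1) failed, $\gamma_{u_1}$ would have to enter a bounded component of $\HH\setminus(\gamma_{u_2}\cup\gamma_{u_1}[0,t_0])$ after touching $\gamma_{u_2}$, and since $\gamma_{u_1}$ lies to the right of $\gamma_{u_2}$, exiting that pocket would force a self-intersection of $\gamma_{u_1}$, contradicting simplicity.

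The real gap is in your treatment of Condition~(2). You need to rule out that $I=\{t:\gamma_{u_1}(t)\in\gamma_{u_2}\}$ contains an open interval, but your argument — that on subintervals \emph{disjoint} from $I$ the curve is locally absolutely continuous with an $\SLE_4(\underline\rho)$ — says nothing about what can happen \emph{inside} an interval contained in $I$. If $\gamma_{u_1}$ genuinely traced an arc of $\gamma_{u_2}$ over an open time interval, your absolute-continuity statement would not apply there, so no contradiction is produced. The paper derives the contradiction directly from the boundary behavior of conditional means: if $\gamma_{u_1}(I_0)\subset\gamma_{u_2}$ for a nondegenerate open interval $I_0$, pick $T_0\in I_0$ with $\gamma_{u_1}(T_0)$ at positive distance from $\gamma_{u_2}\setminus\gamma_{u_1}$, take a sequence $z_k$ in the component to the left of $\gamma_{u_2}$ tending to $\gamma_{u_1}(T_0)$, and use Proposition~\ref{prop::gff_localsets_boundarybehavior} with $A_1=\gamma_{u_1}$, $A_2=\gamma_{u_2}$ to conclude that $\LC_{\gamma_{u_1}\cup\gamma_{u_2}}(z_k)$ tends to both $-\lambda-u_1$ and $-\lambda-u_2$, which is impossible. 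You need an argument of this kind, not absolute continuity on the complement.
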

\begin{proof}
Since $\gamma_{u_2}$ is continuous, the right connected component of $\HH\setminus\gamma_{u_2}$, denoted by $C$, is almost surely a Jordan domain. Thus $\psi$ extends as a homeomorphism from $\overline{C}$ onto $\overline{\HH}$, and $\psi(\gamma_{u_1})$ is almost surely a continuous curve in $\overline{\HH}$ from 0 to $\infty$. We will check the two criteria in Proposition \ref{prop::loewner_chordal_driving_existence_continuity}.
\smallbreak
\textit{Proof of Condition (1).} The only way this could fail is if the following occurs. After intersecting $\gamma_{u_2}$, say at time $t_0$, $\gamma_{u_1}$ enters a bounded connected component of $\HH\setminus(\gamma_{u_2}\cup\gamma_{u_1}[0,t_0])$, denoted by $C_0$. Since $\gamma_{u_1}$ lies to the right of $\gamma_{u_2}$, this would force $\gamma_{u_1}$ to have a self-intersecting upon exiting $C_0$. This contradicts with the fact that $\gamma_{u_1}$ is simple.
\smallbreak
\textit{Proof of Condition (2).} It suffices to show that the set $I$ of times $t\in [0,\infty)$ such that $\gamma_{u_1}(t)$ is contained in the range of $\gamma_{u_2}$ is nowhere dense in $[0,\infty)$ almost surely. Since $I$ is closed, we only need to show that the event $E=[I \text{contains an open interval}]$ has probability zero. We prove by contradiction. Suppose that $\PP[E]>0$. Let $\LF=\sigma(\gamma_{u_1},\gamma_{u_2})$, and let $T_0$ be an $\LF$-measurable random variable taking values in $[0,\infty)$ such that, on the event $E$, $T_0$ is almost surely contained in an open interval $I_0$ of $I$. On the event $E$, since $\gamma_{u_1}$ and $\gamma_{u_2}$ are simple, we can find a sequence $(z_k)$ in the connected component that is to the left of $\gamma_{u_2}$ converging to $\gamma_{u_1}(T_0)$. Since that $\gamma_{u_1}(I_0)$ is connected and contains more than a single point, and that $\gamma_{u_1}(T_0)$ is at positive distance from $\gamma_{u_2}\setminus\gamma_{u_1}$, from Proposition \ref{prop::gff_localsets_boundarybehavior}, we know that $\LC_{\gamma_{u_1}\cup\gamma_{u_2}}(z_k)$ converges to both $-\lambda-u_1$ and $-\lambda-u_2$, contradiction.
\end{proof}

\begin{remark}\label{rem::boundary_levellines_conditionallaw_loewner_continuity_threelevellines}[Three level lines]
Suppose that $u_1<0<u_2$, and $\gamma=\gamma_0$. A statement analogous to Proposition \ref{prop::boundary_levellines_conditionallaw_loewner_continuity} also holds for the path $\gamma$ given $\gamma_{u_1}$ and $\gamma_{u_2}$. Let $C$ be any connected component of $\HH\setminus(\gamma_{u_1}\cup\gamma_{u_2})$ which lies between $\gamma_{u_1}$ and $\gamma_{u_2}$ and let $x_0,y_0$ be the first and the last points on $\partial C$ traced by $\gamma_{u_1}$. Let $\psi$ be a conformal map from $C$ onto $\HH$ with $\psi(x_0)=0$ and $\psi(y_0)=\infty$. Then almost surely $\psi(\gamma)$ has a continuous Loewner driving function as a curve in $\overline{\HH}$.
\end{remark}

\subsection{Proof of Theorem \ref{thm::boundary_levelline_gff_deterministic}---general case}\label{subsec::boundary_levellines_deterministic_general}
In this section, we will first prove Theorems \ref{thm::boundary_levelline_gff_deterministic} to \ref{thm::sle_chordal_reversibility} in the special case of two force points $x^L=0^-$ and $x^R=0^+$ with weights $\rho^L>-2,\rho^R>-2$. Then by an induction argument, we will complete the proof of Theorem \ref{thm::boundary_levelline_gff_deterministic} for multiple force points case.

\begin{figure}[ht!]
\begin{subfigure}[b]{0.28\textwidth}
\begin{center}
\includegraphics[width=\textwidth]{figures/boundary_levellines_nonintersecting_interacting_1}
\end{center}
\caption{The boundary value for the $\GFF$ $h$ in $\HH$.}
\end{subfigure}
$\quad$
\begin{subfigure}[b]{0.68\textwidth}
\begin{center}\includegraphics[width=\textwidth]{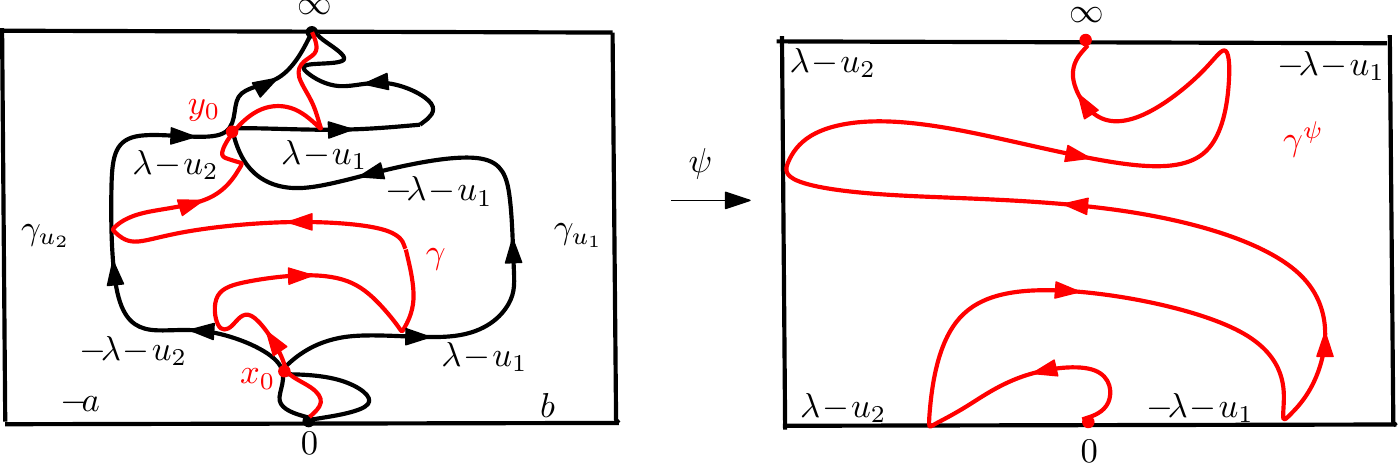}
\end{center}
\caption{Let $\psi$ be any conformal map from $C$ onto $\HH$ that sends $x_0$ to 0 and $y_0$ to $\infty$. The curve $\psi(\gamma|_C)$ is continuous with continuous Loewner driving function.}
\end{subfigure}
\caption{\label{fig::boundary_levellines_intersecting_interacting} The boundary values of the fields in the proof of Lemma \ref{lem::boundary_levellines_deterministic_intersecting_1}.}
\end{figure}

We will work in the setting of Remark \ref{rem::boundary_levellines_nonintersecting_threelevellines} and Remark \ref{rem::boundary_levellines_conditionallaw_loewner_continuity_threelevellines}. Suppose that $h$ is a $\GFF$ on $\HH$ whose boundary data is as depicted in Figure \ref{fig::boundary_levellines_intersecting_interacting}(a) and assume that $a,b$ are large enough so that all the level lines we will consider do not intersect the boundary. Fix $u_1<0<u_2$. For $i=1,2$, let $\gamma_{u_i}$ be the level line of $h$ with height $u_i$ starting from 0 and let $\gamma$ be the level line of $h$ starting from 0. From Proposition \ref{prop::boundary_levellines_deterministic_nonintersecting_coincide} and Corollary \ref{cor::boundary_levellines_nonintersecting_monotonicity}, we know that $\gamma_{u_1}$, $\gamma$, $\gamma_{u_2}$ are all almost surely continuous and are determined by $h$, that $\gamma_{u_1}$ stays to the right of $\gamma$, and that $\gamma$ stays to the right of $\gamma_{u_2}$. Fix a connected component $C$ of $\HH\setminus(\gamma_{u_1}\cup\gamma_{u_2})$ which lies between $\gamma_{u_1}$ and $\gamma_{u_2}$. Let $x_0$ be the first point in $\partial C$ traced by $\gamma_{u_1}$ and $y_0$ be the last point. Let $h|_C$ be the restriction of $h$ to $C$ and let $\gamma|_C$ be the restriction of $\gamma$ to the time interval in which it takes values in $\overline{C}$. Let $\psi:C\to\HH$ be any conformal map from $C$ onto $\HH$ that sends $x_0$ to 0, $y_0$ to $\infty$. Note that, by the continuity of $\gamma_{u_1}$ and $\gamma_{u_2}$, the map $\psi$ can be extended as a homeomorphism from $\overline{C}$ onto $\overline{\HH}$, and $\psi(\gamma|_C)$ is almost surely a continuous curve in $\overline{\HH}$ from $0$ to $\infty$ with continuous Loewner driving function, see Figure \ref{fig::boundary_levellines_intersecting_interacting}(b). Define
\[h^{\psi}=(h|_C)\circ\psi^{-1},\quad \gamma^{\psi}=\psi(\gamma|_C).\]

\begin{lemma}\label{lem::boundary_levellines_deterministic_intersecting_1}
We have that $\gamma^{\psi}$ has the law of $\SLE_4(\rho^L;\rho^R)$ process with force points $(0^-;0^+)$ where
\[\rho^L=u_2/\lambda-2,\quad \rho^R=-u_1/\lambda-2.\]
Moreover, $\gamma^{\psi}$ is almost surely continuous with $\lim_{t\to\infty}\gamma^{\psi}(t)=\infty$ and $(\gamma^{\psi},h^{\psi})$ are coupled as in Theorem \ref{thm::boundary_levelline_gff_coupling}.
\end{lemma}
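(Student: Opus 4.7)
The plan is to verify the hypotheses of the martingale characterization of $\SLE_4(\underline{\rho})$ given in Proposition \ref{prop::chordal_sle4_mart} for the image curve $\gamma^\psi$. Continuity of its Loewner driving function is already supplied by Remark \ref{rem::boundary_levellines_conditionallaw_loewner_continuity_threelevellines}, so the remaining task is to show that the natural conditional-mean process, evaluated at a fixed interior point, is a Brownian motion when parameterized by minus the log of the conformal radius, and that its ``instantaneous'' value coincides with the harmonic function $\eta_t$ prescribed by Proposition \ref{prop::chordal_sle4_mart} with $\rho^L=u_2/\lambda-2$ and $\rho^R=-u_1/\lambda-2$.

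First I would read off the boundary data of $h^\psi$ on $\R$. Since $\gamma|_C$ runs with $\gamma_{u_2}$ on its left and $\gamma_{u_1}$ on its right, and $\psi$ is orientation-preserving and sends $x_0\mapsto 0$, $y_0\mapsto\infty$, the $\gamma_{u_2}$-arc of $\partial C$ maps to $\R_-$ and the $\gamma_{u_1}$-arc to $\R_+$. Proposition \ref{prop::boundary_levellines_nonintersecting_conditionalmean}(2) then tells me that the conditional mean of $h|_C$ given $(\gamma_{u_1},\gamma_{u_2})$ has boundary value $\lambda-u_2$ on the $\gamma_{u_2}$-arc and $-\lambda-u_1$ on the $\gamma_{u_1}$-arc. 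Matching these via $\psi$ against $-\lambda(1+\rho^L)$ on $\R_-$ and $\lambda(1+\rho^R)$ on $\R_+$ gives exactly the weights announced in the statement.

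Second, I would use Remark \ref{rem::boundary_levellines_nonintersecting_threelevellines} to compute the conditional mean of $h$ given $\gamma_{u_1}, \gamma_{u_2}$ and $\gamma[0,\tau]$ for any $\LF_\tau$-stopping time $\tau$. Transported by $\psi$, this conditional mean restricted to $C\setminus\gamma[0,\tau]$ becomes precisely the harmonic function $\eta^\psi_{\tilde{\tau}}$ of Proposition \ref{prop::chordal_sle4_mart} with the boundary values above. For a fixed $z\in C$, I would then apply Proposition \ref{prop::gff_localsets_bm} to the growing family $A(t)=\gamma_{u_1}\cup\gamma_{u_2}\cup\gamma[0,t]$, which is local for $h$ by Lemma \ref{lem::boundary_levellines_locatsets}, to conclude that $\LC_{A(t)}(z)$ is a Brownian motion in the time parameter $\log\CR(C\setminus A(0);z)-\log\CR(C\setminus A(t);z)$. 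Since conformal radius transforms under $\psi$ by the multiplicative factor $|\psi'(z)|$, this reparameterizes to a Brownian motion in $\log\CR(\HH;\psi(z))-\log\CR(\HH\setminus\psi(\gamma[0,t]);\psi(z))$, which is exactly the parametrization demanded by Proposition \ref{prop::chordal_sle4_mart}.

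Applying the martingale characterization then yields that $\gamma^\psi$ is an $\SLE_4(\rho^L;\rho^R)$ process with the stated weights and that $(\gamma^\psi,h^\psi)$ are coupled as in Theorem \ref{thm::boundary_levelline_gff_coupling}. Continuity of $\gamma^\psi$ and the limit $\lim_{t\to\infty}\gamma^\psi(t)=\infty$ are inherited from the continuity of $\gamma|_C$ together with the fact that $\partial C$ is a Jordan curve, so $\psi$ extends to a homeomorphism of $\overline{C}$ onto $\overline{\HH}$ sending $y_0$ to $\infty$. The main obstacle I anticipate is the careful bookkeeping needed to pull the three-level-line conditional mean of Remark \ref{rem::boundary_levellines_nonintersecting_threelevellines} through the conformal map $\psi$ and align it with the harmonic function $\eta^\psi$ of Proposition \ref{prop::chordal_sle4_mart}: this is not conceptually deep but requires tracking signs, orientations, and the boundary behavior at the three arcs meeting at the tip of $\gamma(t)$.
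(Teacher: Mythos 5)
Your proposal is correct and follows essentially the same route as the paper's own proof: both invoke Remark~\ref{rem::boundary_levellines_conditionallaw_loewner_continuity_threelevellines} for the continuous Loewner driving function, Remark~\ref{rem::boundary_levellines_nonintersecting_threelevellines} and Lemma~\ref{lem::boundary_levellines_locatsets} for the conditional mean and locality of $A(\tau)=\gamma_{u_1}\cup\gamma[0,\tau]\cup\gamma_{u_2}$, Proposition~\ref{prop::gff_localsets_bm} for the Brownian motion in conformal-radius time, and Proposition~\ref{prop::chordal_sle4_mart} to conclude. The only cosmetic difference is that you apply Proposition~\ref{prop::gff_localsets_bm} in the original domain $\HH$ and then transport the conformal-radius parametrization through $\psi$, whereas the paper first verifies that $\gamma^\psi[0,\tau^\psi]$ is local for $h^\psi$ and applies the proposition after mapping; the two are equivalent since the factor $|\psi'(z)|$ drops out of the logarithmic increment.
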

\begin{proof}
\textit{First}, we show that $\gamma^{\psi}[0,\tau^{\psi}]$ is a local set for $h^{\psi}$ for every $\gamma^{\psi}$-stopping time $\tau^{\psi}$. Let $\tau$ be the time that $\gamma(\tau)=\psi^{-1}(\gamma^{\psi}(\tau^{\psi}))$, and define
\[A(\tau)=\gamma_{u_1}\cup\gamma[0,\tau]\cup\gamma_{u_2}.\]
From Remark \ref{rem::boundary_levellines_nonintersecting_threelevellines}, we know that $A(\tau)$ is a local set for $h$ and is determined by $h$; and that, given $A(\tau)$, the conditional law of $h|_{C\setminus A(\tau)}$ is the same as a $\GFF$ on $C\setminus A(\tau)$ whose boundary data is as depicted in Figure \ref{fig::boundary_levellines_nonintersecting_interacting_three}. Thus, the conditional expectation $\LC_{A(\tau)}$ restricted to $C\setminus A(\tau)$ given $(A(\tau),h|_{A(\tau)})$ is harmonic with boundary data as depicted in Figure \ref{fig::boundary_levellines_nonintersecting_interacting_three}. Define
\[\LC^{\psi}_{\gamma^{\psi}[0,\tau^{\psi}]}=(\LC_{A(\tau)}|_{C\setminus A(\tau)})\circ\psi^{-1}.\]
The above analysis implies that $\gamma^{\psi}$ and $h^{\psi}$ are coupled so that, given $(\gamma^{\psi}[0,\tau^{\psi}], h^{\psi}|_{\gamma^{\psi}[0,\tau^{\psi}]})$, the conditional law of $h^{\psi}|_{\HH\setminus\gamma^{\psi}[0,\tau^{\psi}]}$ is the same as a $\GFF$ on $\HH\setminus\gamma^{\psi}[0,\tau^{\psi}]$ with mean $\LC^{\psi}_{\gamma^{\psi}[0,\tau^{\psi}]}$ which is harmonic in $\HH\setminus\gamma^{\psi}[0,\tau^{\psi}]$. This implies that $\gamma^{\psi}[0,\tau^{\psi}]$ is a local set for $h^{\psi}$. This also implies that $(\gamma^{\psi},h^{\psi})$ are coupled as in Theorem \ref{thm::boundary_levelline_gff_coupling}.
\smallbreak
\textit{Second}, we show the law of $\gamma^{\psi}$. Define, for $z\in\HH$ and $t\ge 0$,
\[\eta_t(z)=\LC^{\psi}_{\gamma^{\psi}[0,t]}(z).\]
By the continuity of $\gamma^{\psi}$, we know that the conformal radius $\CR(z;\HH\setminus\gamma^{\psi}[0,t])$ is almost surely continuous. From Proposition \ref{prop::gff_localsets_bm}, $(\eta_t(z),t\ge 0)$ has a modification which is a Brownian motion when parameterized by minus the log of  the conformal radius. Then Proposition \ref{prop::chordal_sle4_mart} implies the law of $\gamma^{\psi}$.
\end{proof}

\begin{lemma}\label{lem::boundary_levellines_deterministic_intersecting_2}
Almost surely, $\gamma^{\psi}$ is determined by $h^{\psi}$.
\end{lemma}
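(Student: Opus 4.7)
My plan is to adapt the bi-chordal argument of Proposition~\ref{prop::boundary_levellines_deterministic_nonintersecting_coincide} to the possibly boundary-intersecting setting. By Lemma~\ref{lem::boundary_levellines_deterministic_intersecting_1}, the pair $(h^{\psi},\gamma^{\psi})$ is already a Theorem~\ref{thm::boundary_levelline_gff_coupling}-type coupling for $\gamma^{\psi}$ run from $0$ to $\infty$. The first task is to produce a second, ``reverse'' coupling for the same pair. In the original picture, Proposition~\ref{prop::boundary_levellines_nonintersecting_all}(3) identifies $\gamma$ with the level line $\gamma'$ of $-h$ from $\infty$ to $0$ as sets, so $\gamma'|_C=\gamma|_C$ and $\psi(\gamma'|_C)=\gamma^{\psi}$. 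Running the argument of Lemma~\ref{lem::boundary_levellines_deterministic_intersecting_1} with $(-h,\gamma')$ in place of $(h,\gamma)$, and post-composing $\psi$ with a M\"{o}bius self-map of $\HH$ that swaps $0$ and $\infty$, yields that $(h^{\psi},\gamma^{\psi})$ is also coupled in the Theorem~\ref{thm::boundary_levelline_gff_coupling} sense with $\gamma^{\psi}$ viewed as a level line of $-h^{\psi}$ from $\infty$ to $0$.

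With both couplings in hand, sample $(h^{\psi},\tilde{\gamma}_1,\tilde{\gamma}_2)$ so that, conditionally on $h^{\psi}$, the curves $\tilde{\gamma}_1$ and $\tilde{\gamma}_2$ are independent, $\tilde{\gamma}_1$ is a level line of $h^{\psi}$ from $0$ to $\infty$, and $\tilde{\gamma}_2$ is a level line of $-h^{\psi}$ from $\infty$ to $0$. It suffices to show $\tilde{\gamma}_1=\tilde{\gamma}_2$ as sets almost surely: the conditional independence then forces both to be $\sigma(h^{\psi})$-measurable, and transporting this back through the equality of joint laws gives that $\gamma^{\psi}$ is determined by $h^{\psi}$. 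To prove the equality, fix a stopping time $\tau_2$ of $\tilde{\gamma}_2$ at which $\tilde{\gamma}_2[0,\tau_2]$ is at positive distance from $\partial\HH\setminus\{\infty\}$; such stopping times are dense by the continuity of $\tilde{\gamma}_2$. For any $\tilde{\gamma}_1$-stopping time $\tau$, Proposition~\ref{prop::gff_localsets_union} gives that $\tilde{\gamma}_1[0,\tau]\,\tilde{\cup}\,\tilde{\gamma}_2[0,\tau_2]$ is local for $h^{\psi}$, and Proposition~\ref{prop::gff_localsets_boundarybehavior} identifies the conditional mean on the slit domain $\HH\setminus\tilde{\gamma}_2[0,\tau_2]$ as harmonic with boundary values $\pm\lambda$ on the two sides of $\tilde{\gamma}_2[0,\tau_2]$ and the unchanged values $-\lambda-u_1,\lambda-u_2$ on the remaining portions of $\partial\HH$. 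Up to the first hit of $\tilde{\gamma}_2[0,\tau_2]$, the path $\tilde{\gamma}_1$ is therefore the level line of this conditional field, and the boundary values jump by $2\lambda$ exactly at the tip $\tilde{\gamma}_2(\tau_2)$. Conformally mapping the slit domain to a strip and applying Lemma~\ref{lem::boundary_levellines_deterministic_case3} together with Remark~\ref{rem::boundary_levellines_deterministic_case3} and the absolute continuity of Proposition~\ref{prop::gff_absolutecontinuity} forces $\tilde{\gamma}_1$ to exit at exactly $\tilde{\gamma}_2(\tau_2)$. Ranging $\tau_2$ over a countable dense family and invoking the continuity of both curves yields $\tilde{\gamma}_1=\tilde{\gamma}_2$ as sets.

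The main obstacle is that, unlike in Lemma~\ref{lem::boundary_levellines_deterministic_nonintersecting_coincide}, the weights $\rho^L=u_2/\lambda-2$ and $\rho^R=-u_1/\lambda-2$ may both lie in $(-2,0)$, so each of $\tilde{\gamma}_1$ and $\tilde{\gamma}_2$ can touch $\R$. The cure is to stop $\tilde{\gamma}_2$ strictly before it approaches $\partial\HH$, thereby guaranteeing that the slit $\tilde{\gamma}_2[0,\tau_2]$ has no accumulation with the boundary arcs $(-\infty,0)$ and $(0,\infty)$; the conformally mapped strip configuration then satisfies precisely the hypotheses of Remark~\ref{rem::boundary_levellines_deterministic_case3} (the extreme $\pm\lambda$ values appear immediately adjacent to the target $\tilde{\gamma}_2(\tau_2)$), so Lemma~\ref{lem::boundary_levellines_deterministic_case3} gives the required identification of the exit point even though $\tilde{\gamma}_1$ may hit $\R$ at other times. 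A subtle point in the argument is that we are still in the process of establishing the determinism of $\tilde{\gamma}_1$, so when applying Proposition~\ref{prop::gff_localsets_union} we use conditional independence given $h^{\psi}$ rather than invoking Theorem~\ref{thm::boundary_levelline_gff_deterministic}.
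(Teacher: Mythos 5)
Your high-level bi-chordal strategy mirrors the one used in Proposition~\ref{prop::boundary_levellines_deterministic_nonintersecting_coincide} for the non-intersecting case, but the key technical step does not go through as written, and the paper in fact takes a completely different route for precisely this reason.

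The core problem is your appeal to Lemma~\ref{lem::boundary_levellines_deterministic_case3} and Remark~\ref{rem::boundary_levellines_deterministic_case3}. After conformally mapping the slit domain $\HH\setminus\tilde{\gamma}_2[0,\tau_2]$ to the strip $\T$, the arcs $\R_-$ and $\R_+$ become $\partial_L\T$, with boundary values $\lambda-u_2$ and $-\lambda-u_1$. In the regime of interest ($u_2\in(0,2\lambda)$, $u_1\in(-2\lambda,0)$, i.e.\ $\rho^L,\rho^R\in(-2,0)$), these values lie strictly in $(-\lambda,\lambda)$. Remark~\ref{rem::boundary_levellines_deterministic_case3}, however, explicitly requires the boundary data on $\partial_L\T$ to be $\le-\lambda$ to the left of $0$ and $\ge\lambda$ to the right of $0$ --- precisely the non-intersecting constraint. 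You acknowledge $\tilde{\gamma}_1$ may hit $\R$ ``at other times'', but that is exactly what the hypotheses of Lemma~\ref{lem::boundary_levellines_deterministic_case3}/Remark~\ref{rem::boundary_levellines_deterministic_case3} rule out. The correct generalization, Lemma~\ref{lem::boundary_levellines_case3_threshold}, which drops the lower-boundary restriction, is established only in Section~\ref{subsec::boundary_levellines_interacting_general} and depends on the continuity results of Section~\ref{subsec::boundary_levellines_continuity_general}, which in turn rest on Proposition~\ref{prop::boundary_levellines_deterministic_intersecting} and hence on the very lemma you are trying to prove; invoking it here would be circular. A secondary but also real issue: the collection of $\tilde{\gamma}_2$-stopping times $\tau_2$ for which the entire initial segment $\tilde{\gamma}_2[0,\tau_2]$ is at positive distance from $\partial\HH\setminus\{\infty\}$ is not dense --- once $\tilde{\gamma}_2$ first touches $\R$, every later initial segment touches $\R$ as well --- so even if the exit-point identification worked, it would only cover an initial portion of $\tilde{\gamma}_2$.

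The paper sidesteps both obstacles by avoiding the bi-chordal argument in the intersecting regime entirely. Instead, it uses the already-established determinism of the \emph{non-intersecting} level lines $\gamma_{u_1}$, $\gamma$, $\gamma_{u_2}$ by $h$ (Proposition~\ref{prop::boundary_levellines_deterministic_nonintersecting_coincide} and Remark~\ref{rem::boundary_levellines_deterministic_nonintersecting_coincide}) together with the independence of $(\gamma^\psi,h^\psi)$ from $(\gamma_{u_1},\gamma_{u_2})$ (Lemma~\ref{lem::boundary_levellines_deterministic_intersecting_1}). It then reduces the claim to showing that, given $(\gamma_{u_1},\gamma_{u_2})$, the pair $(\gamma|_C,h|_C)$ is conditionally independent of the restrictions of $h$ to the other connected components of $\HH\setminus(\gamma_{u_1}\cup\gamma_{u_2})$, which is done via repeated applications of Proposition~\ref{prop::gff_localsets_interacting}. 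In other words, determinism of the boundary-hitting level line is inherited from the ambient non-intersecting picture via a conditional-independence transfer, not re-proved from scratch. If you want to salvage your bi-chordal route, you would first need boundary-intersecting analogues of Lemmas~\ref{lem::boundary_levellines_deterministic_case2}--\ref{lem::boundary_levellines_deterministic_case3} established by means independent of this lemma.
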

\begin{proof}
Let $h_1'$ (resp. $h_2'$) be the restriction of $h$ to the connected component of $\HH\setminus(\gamma_{u_1}\cup\gamma_{u_2})$ that lies to the right of $\gamma_{u_1}$ (resp. lies to the left of $\gamma_{u_2}$).
For the connected components of $\HH\setminus (\gamma_{u_1}\cup\gamma_{u_2})$ that lie between $\gamma_{u_1}$ and $\gamma_{u_2}$, we can put an ordering by saying that $A<B$ for two connected components $A,B$ if and only if $\gamma$ intersects $A$ before $B$. Let $h_3'$ (resp. $h_4'$) be the restriction of $h$ to the connected components which come strictly before (resp. strictly after) $C$ in this ordering. We summarize the facts that we know in the following.
\begin{enumerate}
\item [(a)] Given $(\gamma_{u_1},\gamma_{u_2})$, the field $h$ is determined by $(h_1',h_2',h_3',h_4',h|_C)$.
\item [(b)] From Proposition \ref{prop::boundary_levellines_deterministic_nonintersecting_coincide} and Remark \ref{rem::boundary_levellines_deterministic_nonintersecting_coincide}, we know that $(\gamma_{u_1},\gamma,\gamma_{u_2})$ is almost surely determined by $h$.
\item [(c)] From Lemma \ref{lem::boundary_levellines_deterministic_intersecting_1}, we know that $(\gamma^{\psi},h^{\psi})$ is independent of $(\gamma_{u_1},\gamma_{u_2})$.
\end{enumerate}
Combining these three facts, to show the conclusion, we only need to show that, given $(\gamma_{u_1},\gamma_{u_2})$, the couple $(\gamma|_C,h|_C)$ is independent of $(h_1',h_2',h_3',h_4')$. Assume $x_0=\gamma(s_0)$ and $y_0=\gamma(t_0)$ for $s_0<t_0$.
\smallbreak
\textit{First}, we show that, given $(\gamma_{u_1},\gamma_{u_2})$, the multiple $(\gamma, h_3', h|_C, h_4')$ is independent of $(h_1',h_2')$. This can be obtained by applying Proposition \ref{prop::gff_localsets_interacting} to the sets $\gamma$ and $\gamma_{u_1}\cup\gamma_{u_2}$.
\smallbreak
\textit{Second}, we show that, given $(\gamma_{u_1},\gamma_{u_2})$, the triple $(\gamma[0,t_0], h_3', h|_C)$ is independent of $h_4'$. Applying Proposition \ref{prop::gff_localsets_interacting} to the sets $\gamma_{u_1}\cup\gamma_{u_2}$ and $\gamma_{u_1}\cup\gamma[0,t_0]\cup\gamma_{u_2}$, we know that, given $(\gamma_{u_1},\gamma_{u_2})$, the triple $(\gamma_{u_1}\cup\gamma[0,t_0]\cup\gamma_{u_2}, h_3', h|_C)$ is independent of $h_4'$. In particular, given $(\gamma_{u_1},\gamma_{u_2})$, the triple $(\gamma[0,t_0], h_3', h|_C)$ is independent of $h_4'$.
\smallbreak
\textit{Finally}, we show that, given $(\gamma_{u_1},\gamma_{u_2})$, the couple $(\gamma[s_0,t_0], h|_C)$ is independent of $h_3'$. By a similar analysis as in the second step, we know that, given $(\gamma_{u_1},\gamma_{u_2})$, the triple $(\gamma[s_0,\infty], h_4', h|_C)$ is independent of $h_3'$ (by considering the level line of $-h$ starting from $\infty$ which merges with $\gamma$ almost surely.) In particular, the couple $(\gamma[s_0,t_0],h|_C)$ is independent of $h_3'$. This completes the proof.
\end{proof}

By combining Lemma \ref{lem::boundary_levellines_deterministic_intersecting_1} and Lemma \ref{lem::boundary_levellines_deterministic_intersecting_2}, we have obtained Theorems \ref{thm::boundary_levelline_gff_deterministic} to \ref{thm::sle_chordal_reversibility} in the special case of $\SLE_4(\rho^L;\rho^R)$ process with force points $(0^-;0^+)$. We record it in the following proposition.

\begin{proposition}\label{prop::boundary_levellines_deterministic_intersecting}
Suppose that $h$ is a $\GFF$ on $\HH$ whose boundary value is $-a$ on $\R_-$ and is $b$ on $\R_+$. Assume that
\[a>-\lambda,\quad b>-\lambda.\]
Let $\gamma$ be the level line of $h$ starting from 0 targeted at $\infty$ and let $\gamma'$ be the level line of $-h$ starting from $\infty$ targeted at $0$. Then we have the following conclusions.
\begin{enumerate}
\item [(1)] The level line $\gamma$ is almost surely determined by $h$.
\item [(2)] The level line $\gamma$ is continuous and transient.
\item [(3)] The level lines $\gamma'$ and $\gamma$ are equal.
\end{enumerate}
\end{proposition}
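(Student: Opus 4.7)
The plan is to treat this proposition as a repackaging of Lemmas \ref{lem::boundary_levellines_deterministic_intersecting_1} and \ref{lem::boundary_levellines_deterministic_intersecting_2} combined with a reversibility transfer, and to read off the three conclusions from the conformal-transfer construction those lemmas already set up. Given $a,b>-\lambda$, set $u_{2}:=\lambda+a>0$ and $u_{1}:=-\lambda-b<0$, and introduce an auxiliary GFF $\tilde h$ on $\HH$ with boundary data $-\tilde a$ on $\R_{-}$ and $\tilde b$ on $\R_{+}$, where $\tilde a,\tilde b$ are chosen so large that the level lines $\gamma_{u_{1}},\gamma_{0},\gamma_{u_{2}}$ of $\tilde h$ at heights $u_{1},0,u_{2}$ are all non-boundary-intersecting (Remarks \ref{rem::boundary_levellines_deterministic_case1}--\ref{rem::boundary_levellines_deterministic_case3} make explicit how large is enough). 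Proposition \ref{prop::boundary_levellines_nonintersecting_all} applied to $\tilde h$ then guarantees that each of these three level lines is almost surely continuous, transient, and determined by $\tilde h$, and that $\gamma_{0}$ coincides as a set with the level line $\tilde\gamma'$ of $-\tilde h$ starting from $\infty$; Corollary \ref{cor::boundary_levellines_nonintersecting_monotonicity} places them in the order $\gamma_{u_{2}}$, $\gamma_{0}$, $\gamma_{u_{1}}$ from left to right.

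Next I would let $C$ be the connected component of $\HH\setminus(\gamma_{u_{1}}\cup\gamma_{u_{2}})$ whose boundary contains $0$, and let $y_{0}$ be the opposite distinguished boundary point (the first point after $0$ at which $\gamma_{u_{1}}$ and $\gamma_{u_{2}}$ meet, or $\infty$ if they never meet again). Choose a conformal map $\psi\colon C\to\HH$ with $\psi(0)=0$ and $\psi(y_{0})=\infty$, and define $h^{\psi}:=\tilde h|_{C}\circ\psi^{-1}$ and $\gamma^{\psi}:=\psi(\gamma_{0}|_{C})$. The arithmetic $\lambda-u_{2}=-a$ and $-\lambda-u_{1}=b$ shows that $h^{\psi}$ is a GFF on $\HH$ with precisely the boundary data assumed of $h$ in the proposition. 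Lemma \ref{lem::boundary_levellines_deterministic_intersecting_1} then says that $(h^{\psi},\gamma^{\psi})$ is coupled as in Theorem \ref{thm::boundary_levelline_gff_coupling} with $\gamma^{\psi}$ distributed as $\SLE_{4}(a/\lambda-1;b/\lambda-1)=\SLE_{4}(\rho^{L};\rho^{R})$, and that $\gamma^{\psi}$ is almost surely continuous with $\lim_{t\to\infty}\gamma^{\psi}(t)=\infty$, which is conclusion (2). Lemma \ref{lem::boundary_levellines_deterministic_intersecting_2} gives conclusion (1), namely that $\gamma^{\psi}$ is almost surely determined by $h^{\psi}$.

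For (3) I would exploit the set equality $\tilde\gamma'|_{C}=\gamma_{0}|_{C}$ from the first paragraph to write $\psi(\tilde\gamma'|_{C})=\gamma^{\psi}$, and then identify $\psi(\tilde\gamma'|_{C})$ as the level line of $-h^{\psi}$ starting from $\infty$ by running the proof of Lemma \ref{lem::boundary_levellines_deterministic_intersecting_1} in the reversed direction: applied to $-\tilde h$ with heights $-u_{2}<0<-u_{1}$ and to the curve $\tilde\gamma'$ from $\infty$, the same local-set plus conformal-covariance argument produces a coupling of $-h^{\psi}$ with $\psi(\tilde\gamma'|_{C})$ satisfying Theorem \ref{thm::boundary_levelline_gff_coupling} from $\infty$ to $0$, which is what it means to be the reversed level line. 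Finally, the conditional-law data prescribed by Theorem \ref{thm::boundary_levelline_gff_coupling} pin down the joint law of $(h,\gamma)$ given the marginal of $h$, so properties (1)--(3) established for the explicit pair $(h^{\psi},\gamma^{\psi})$ transfer to the coupling $(h,\gamma)$ of the statement. The genuinely delicate step is the reversibility argument: one has to verify carefully that the mirror version of Lemma \ref{lem::boundary_levellines_deterministic_intersecting_1} for $-\tilde h$ from $\infty$ produces, under $\psi$, precisely the level line of $-h^{\psi}$ from $\infty$ and not some other object, which requires careful orientation bookkeeping; beyond this, no new ideas past Lemmas \ref{lem::boundary_levellines_deterministic_intersecting_1}--\ref{lem::boundary_levellines_deterministic_intersecting_2} are needed.
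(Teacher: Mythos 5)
Your proof is correct and follows essentially the same route as the paper's: the paper also embeds the target coupling inside an auxiliary non-boundary-intersecting field $\tilde h$, bounds $\gamma_0$ between $\gamma_{u_1}$ and $\gamma_{u_2}$, and passes through a conformal map $\psi$ of one connected component $C$ onto $\HH$, with parts (1) and (2) read off from Lemmas \ref{lem::boundary_levellines_deterministic_intersecting_1} and \ref{lem::boundary_levellines_deterministic_intersecting_2} and part (3) from the reversibility $\tilde\gamma'=\tilde\gamma$ in the non-intersecting case (Proposition \ref{prop::boundary_levellines_deterministic_nonintersecting_coincide}). Two small observations: the paper allows $C$ to be \emph{any} connected component between $\gamma_{u_1}$ and $\gamma_{u_2}$, whereas you fix the one with $0$ on its boundary — both give a valid realization of the coupling, so this is a cosmetic difference; and you are actually more careful than the paper's one-line ``From Lemma \ref{lem::boundary_levellines_deterministic_intersecting_1}, we know that \ldots\ $\gamma'$ is the level line of $-h$ starting from $\infty$,'' correctly flagging that this step requires rerunning the local-set/conformal-covariance argument of Lemma \ref{lem::boundary_levellines_deterministic_intersecting_1} with the roles of $\tilde h$ and $-\tilde h$ exchanged and with the reversed curves $\tilde\gamma'$, $\gamma_{u_i}$ treated as level lines of $-\tilde h$ from $\infty$ at heights $0$, $-u_i$. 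The final ``transfer'' paragraph, invoking uniqueness of the joint law from the conditional-law prescription of Theorem \ref{thm::boundary_levelline_gff_coupling} together with the known $\SLE_4(\underline\rho)$ marginal, is a legitimate and slightly more explicit account of why conclusions established for the explicit pair $(h^\psi,\gamma^\psi)$ carry over to the abstract coupling $(h,\gamma)$ in the statement.
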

\begin{proof} We only need to show that $\gamma'$ and $\gamma$ are equal. Suppose that $\tilde{h}$ is a $\GFF$ on $\HH$ whose boundary data is $\tilde{b}$ on $\R_+$ and $-\tilde{a}$ on $\R_-$. Assume that $\tilde{a}$ and $\tilde{b}$ satisfy
\[\tilde{a}\ge2\lambda+a,\quad \tilde{b}\ge 2\lambda+b.\]
Set
\[u_1=-\lambda-b\in (\lambda-\tilde{b},0),\quad u_2=\lambda+a\in (0,\tilde{a}-\lambda).\]
For $i=1,2$, let $\tilde{\gamma}_{u_i}$ be the level line of $\tilde{h}$ with height $u_i$ starting from 0. Let $\tilde{\gamma}$ be the level line of $\tilde{h}$ starting from 0 and $\tilde{\gamma}'$ be the level line of $-\tilde{h}$ starting from $\infty$. We know that $\tilde{\gamma}_{u_1}$ stays to the right of $\tilde{\gamma}$ and that $\tilde{\gamma}$ stays to the right of $\tilde{\gamma}_{u_2}$. From Proposition \ref{prop::boundary_levellines_deterministic_nonintersecting_coincide}, $\tilde{\gamma}'$ almost surely merges with $\tilde{\gamma}$.

Given $(\tilde{\gamma}_{u_1},\tilde{\gamma}_{u_2})$, let $C$ be any connected component of $\HH\setminus(\tilde{\gamma}_{u_1}\cup\tilde{\gamma}_{u_2})$ that lies between $\tilde{\gamma}_{u_1}$ and $\tilde{\gamma}_{u_2}$ and let $x_0$ be the first point on $\partial C$ traced by $\gamma_{u_1}$ and $y_0$ be the last point. Let $\psi$ be any conformal map from $C$ onto $\HH$ that sends $x_0$ to $0$ and $y_0$ to $\infty$. Define
\[\gamma=\psi(\tilde{\gamma}|_C),\quad \gamma'=\psi(\tilde{\gamma}'|_C),\quad h=\psi(\tilde{h}|_C).\]
From Lemma \ref{lem::boundary_levellines_deterministic_intersecting_1}, we know that $\gamma$ is the level line of $h$ starting from 0 and that $\gamma'$ is the level line of $-h$ starting from $\infty$. From the above analysis, $\gamma'$ merges with $\gamma$. This completes the proof.
\end{proof}

\begin{remark}\label{rem::boundary_levellines_deterministic_intersecting}
The conclusions in Proposition \ref{prop::boundary_levellines_deterministic_intersecting} also hold when the boundary value of the $\GFF$ is $b>-\lambda$ on $\R_+$ and is piecewise constant, and is at most $-\lambda$ on $\R_-$.
\end{remark}
The technique we use to prove Proposition \ref{prop::boundary_levellines_deterministic_intersecting} can be applied to multiple level lines, we obtain as a consequence the following proposition.
\begin{proposition}\label{prop::boundary_levellines_deterministic_intersecting_conditionallaw}
Suppose that $h$ is a $\GFF$ on $\HH$ whose boundary data is as depicted in Figure \ref{fig::boundary_levellines_intersecting_interacting}(a). Fix heights $u_1<u_2$ and assume that
\[-\lambda-b<u_1<u_2<\lambda+a.\]
For $i=1,2$, let $\gamma_{u_i}$ be the level line of $h$ with height $u_i$ starting from 0. Then almost surely $\gamma_{u_1}$ lies to the right of $\gamma_{u_2}$. Moreover, given $\gamma_{u_2}$, the curve $\gamma_{u_1}$ has the law of $\SLE_4((u_2-u_1)/\lambda-2; (b+u_1)/\lambda-1)$ independently in each connected component of $\HH\setminus\gamma_{u_2}$ that lie to the right of $\gamma_{u_2}$. Similarly, given $\gamma_{u_1}$, the curve $\gamma_{u_2}$ has the law of $\SLE_4((a-u_2)/\lambda-1; (u_2-u_1)/\lambda-2)$ independently in each connected component of $\HH\setminus\gamma_{u_1}$ that lie to the left of $\gamma_{u_1}$.
\end{proposition}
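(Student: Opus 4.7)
The plan is to establish monotonicity by embedding into a $\GFF$ with larger boundary data, and then to identify the conditional law via the local-set and conditional-mean machinery developed in Section \ref{subsec::boundary_levellines_gff}.

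\emph{Step 1 (Monotonicity via embedding).} Fix $\tilde a \ge 2\lambda + a$ and $\tilde b \ge 2\lambda + b$, let $\tilde h$ be a $\GFF$ on $\HH$ with boundary value $\tilde b$ on $\R_+$ and $-\tilde a$ on $\R_-$, and set
\[
v_1 = -\lambda - b, \qquad v_2 = \lambda + a,
\]
so that $\lambda - \tilde b \le v_1 < u_1 < u_2 < v_2 \le \tilde a - \lambda$. Corollary \ref{cor::boundary_levellines_nonintersecting_monotonicity}, applied pairwise to the four heights $v_1, u_1, u_2, v_2$, orders the corresponding level lines of $\tilde h$ almost surely from left to right as $\tilde\gamma_{v_2}, \tilde\gamma_{u_2}, \tilde\gamma_{u_1}, \tilde\gamma_{v_1}$. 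Let $C$ be the connected component of $\HH \setminus (\tilde\gamma_{v_1} \cup \tilde\gamma_{v_2})$ lying between these outermost two curves and let $\psi : C \to \HH$ be a conformal map with $\psi(0) = 0$, $\psi(\infty) = \infty$. By Theorem \ref{thm::boundary_levelline_gff_coupling} the conditional boundary values of $\tilde h$ on $\partial C$ are $\lambda - v_2 = -a$ on the right side of $\tilde\gamma_{v_2}$ and $-\lambda - v_1 = b$ on the left side of $\tilde\gamma_{v_1}$, so $\tilde h^\psi := (\tilde h|_C) \circ \psi^{-1}$ has the same law as $h$. Running the argument of Lemma \ref{lem::boundary_levellines_deterministic_intersecting_1} with the two intermediate level lines $\tilde\gamma_{u_1}, \tilde\gamma_{u_2}$ in place of the single middle line there identifies $(\psi(\tilde\gamma_{u_1}|_C), \psi(\tilde\gamma_{u_2}|_C))$ jointly with $\tilde h^\psi$ as the level lines of $\tilde h^\psi$ with heights $u_1, u_2$. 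Consequently this pair has the same joint law as $(\gamma_{u_1}, \gamma_{u_2})$ and inherits the left-to-right ordering, giving the claimed monotonicity. The same embedding, combined with Proposition \ref{prop::boundary_levellines_deterministic_nonintersecting_coincide}, also yields that each $\gamma_{u_i}$ is almost surely determined by $h$.

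\emph{Step 2 (Conditional law).} Let $D$ be any connected component of $\HH \setminus \gamma_{u_2}$ lying to the right of $\gamma_{u_2}$, write $x_0, y_0$ for the first and last points on $\partial D$ traced by $\gamma_{u_2}$, and let $\psi : D \to \HH$ be a conformal map with $\psi(x_0) = 0$, $\psi(y_0) = \infty$. By Theorem \ref{thm::boundary_levelline_gff_coupling}, conditional on $\gamma_{u_2}$ the restriction $h|_D$ is a $\GFF$ on $D$ with boundary value $\lambda - u_2$ on the $\gamma_{u_2}$-portion of $\partial D$ and the inherited piecewise-constant value on the $\partial \HH$-portion (which in the component containing $\R_+$ is just $b$). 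I will show that $\psi$ applied to $\gamma_{u_1}$ restricted to $\overline D$ is coupled with the transferred field $h^\psi$ as the level line of height $u_1$. Granting this, Theorem \ref{thm::boundary_levelline_gff_coupling} lets one read off the $\SLE_4(\rho^L; \rho^R)$ weights from the boundary data via
\[
-\lambda(1 + \rho^L) = \lambda - u_2 + u_1, \qquad \lambda(1 + \rho^R) = b + u_1,
\]
producing $\rho^L = (u_2 - u_1)/\lambda - 2$ and $\rho^R = (b + u_1)/\lambda - 1$. To establish the coupling, for every $\gamma_{u_1}$-stopping time $\tau$ the set $\gamma_{u_2} \cup \gamma_{u_1}[0, \tau]$ is local for $h$ and $h$-measurable by Lemma \ref{lem::boundary_levellines_locatsets} together with the determinism from Step 1; the conditional-mean computation of Proposition \ref{prop::boundary_levellines_nonintersecting_conditionalmean} then extends to the intersecting setting via the continuity of $t \mapsto \LC_{\gamma_{u_2} \cup \gamma_{u_1}[0,t]}$ guaranteed by Proposition \ref{prop::gff_localsets_bm}, and identifies the conditional mean in $D \setminus \gamma_{u_1}[0, \tau]$ as the harmonic extension required by the hypothesis of Proposition \ref{prop::chordal_sle4_mart}. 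Independence across distinct right-components of $\HH \setminus \gamma_{u_2}$ then follows from the Markov property of the $\GFF$ applied on each component.

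\emph{Main obstacle.} The main technical difficulty is verifying that $\psi(\gamma_{u_1} \cap \overline D)$ admits a continuous Loewner driving function in the intersecting regime $u_2 - u_1 < 2\lambda$, where $\gamma_{u_1}$ can touch $\gamma_{u_2}$ infinitely often and hence the conformal image can hit $\partial \HH$ repeatedly. The plan is to verify the two hypotheses of Proposition \ref{prop::loewner_chordal_driving_existence_continuity}: hypothesis (1) holds because $\gamma_{u_1}$ is simple and lies to the right of $\gamma_{u_2}$ by Step 1, so it cannot re-enter a bounded complementary component; for hypothesis (2), I will adapt the harmonic-measure contradiction in the proof of Proposition \ref{prop::boundary_levellines_conditionallaw_loewner_continuity}, now invoking that on a putative open interval on which $\psi(\gamma_{u_1}(t)) \in \psi(\gamma_{u_2})$ the values of the conditional mean $\LC_{\gamma_{u_1} \cup \gamma_{u_2}}$ approached from the two sides of $\gamma_{u_2}$ differ by $2\lambda$, contradicting the boundary behaviour supplied by Proposition \ref{prop::gff_localsets_boundarybehavior}.
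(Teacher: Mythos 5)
Your Step~1 is the paper's approach exactly: embed into the auxiliary field $\tilde h$ with boundary data $\tilde b\ge 2\lambda+b$, $-\tilde a\le -2\lambda-a$, sandwich the heights $u_1,u_2$ between $v_1=-\lambda-b$ and $v_2=\lambda+a$, and use the argument of Lemma~\ref{lem::boundary_levellines_deterministic_intersecting_1} (now with two intermediate level lines instead of one) to identify $(\gamma_{u_1},\gamma_{u_2})$ in law with the $\psi$-images of $(\tilde\gamma_{u_1}|_C,\tilde\gamma_{u_2}|_C)$. That part is correct and matches the technique the paper references.

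Step~2, however, detours into a direct computation inside the boundary-intersecting field $h$, and the ``main obstacle'' you identify (continuity of the Loewner driving function of $\psi(\gamma_{u_1}\cap\overline D)$ when $\gamma_{u_1}$ can repeatedly hit $\gamma_{u_2}$ and $\R$) is an artifact of that detour rather than an intrinsic difficulty of the proposition. The paper's route stays entirely inside $\tilde h$: since $\tilde\gamma_{v_1},\tilde\gamma_{u_1},\tilde\gamma_{u_2},\tilde\gamma_{v_2}$ are all non-boundary-intersecting level lines of $\tilde h$, Remark~\ref{rem::boundary_levellines_conditionallaw_loewner_continuity_threelevellines} gives the Loewner continuity of $\tilde\gamma_{u_1}$ viewed in a component between $\tilde\gamma_{u_2}$ and $\tilde\gamma_{v_1}$ with no extra work, Remark~\ref{rem::boundary_levellines_nonintersecting_threelevellines} (and its four-curve analogue, proved identically using Lemma~\ref{lem::boundary_levellines_locatsets} and Propositions~\ref{prop::gff_localsets_boundarybehavior}--\ref{prop::gff_localsets_bm}) pins down the conditional mean, and Proposition~\ref{prop::chordal_sle4_mart} then identifies the conditional law of $\tilde\gamma_{u_1}$ given $(\tilde\gamma_{v_1},\tilde\gamma_{u_2},\tilde\gamma_{v_2})$ as $\SLE_4\bigl((u_2-u_1)/\lambda-2;(b+u_1)/\lambda-1\bigr)$ in each such component. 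Conditioning additionally on $\tilde\gamma_{v_2}$ is harmless by Proposition~\ref{prop::gff_localsets_interacting}, since $\tilde\gamma_{v_2}$ lies in a separate complementary component. Pushing forward under $\psi$ and invoking the joint-law identification from Step~1 then gives the conditional law of $\gamma_{u_1}$ given $\gamma_{u_2}$ directly, with independence across right-components of $\HH\setminus\gamma_{u_2}$ inherited from the corresponding independence inside $\tilde h$. Your sketched adaptation of the harmonic-measure contradiction for hypothesis~(2) of Proposition~\ref{prop::loewner_chordal_driving_existence_continuity} would likely go through, but it would also require handling times when $\gamma_{u_1}$ hits $\partial\HH$ (here one would invoke Proposition~\ref{prop::loewner_chordal_driving_zeroset}) and is unnecessary once you realize the whole computation should be performed upstream in $\tilde h$ before the uniformizing map is applied.
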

\begin{remark}\label{rem::boundary_levellines_deterministic_monotonicity_generalization}
The conclusion in Proposition \ref{prop::boundary_levellines_nonintersecting_monotonicity} also holds when we replace the restriction on $a,b,u_1,u_2$ by the following:
\[-\lambda-b<u_1<u_2<\lambda+a.\]
\end{remark}
\begin{proof}[Proof of Theorem \ref{thm::boundary_levelline_gff_deterministic}]
We will now complete the proof of Theorem \ref{thm::boundary_levelline_gff_deterministic}. Namely, we will show that level line $\gamma$, whose law is $\SLE_4(\underline{\rho}^L;\underline{\rho}^R)$, of the $\GFF$ $h$ is almost surely determined by $h$.

Write
\[\underline{\rho}^L=(\rho^{l,L},...,\rho^{1,L}),\quad \underline{\rho}^R=(\rho^{1,R},...,\rho^{r,R}).\]
We are going to prove the result by induction on $l$ and $r$. We may assume $x^{1,L}=0^-, x^{1,R}=0^+$ by possibly adding zero weight force points. By Proposition \ref{prop::boundary_levellines_deterministic_intersecting}, the conclusion holds when $l,r\le 1$.
Let $(K_t,t\ge 0)$ be the Loewner chain of $\gamma$ and $(f_t,t\ge 0)$ be the sequence of centered conformal maps $f_t:\HH\setminus K_t\to \HH$. Assume that the conclusion holds for some $l,r\ge 1$. We are going to prove that the conclusion holds for $l+1$ left force points and $r$ right force points.
Let $\tau$ be the first time $t$ that $K_t$ accumulates in $(-\infty,x^{l+1,L}]$ (set $\tau=\infty$ if this never happens).
\smallbreak
\textit{First}, we explain that $K|_{[0,\tau]}$ is almost surely determined by $h$. Let $\tilde{h}$ be the $\GFF$ on $\HH$ whose boundary data is  the same as $h$ on $(x^{l,L},\infty)$ and is $-\lambda(1+\rho^{1,L}+\cdots+\rho^{l,L})$ on $(-\infty,x^{l,L})$; and let $\tilde{K}$ be the Loewner chain of the level line of $\tilde{h}$. Note that $\tilde{K}$ has $l$ left force points and $r$ right force points. For $\eps>0$, let $\tau_{\eps}$ (resp. $\tilde{\tau}_{\eps}$) be the first time $t$ that $K_t$ (resp. $\tilde{K}_t$) gets within distance $\eps$ of $(-\infty,x^{l+1,L})$. Define $H_{\eps}$ to be the open set obtained by removing from $\HH$ the points that are within distance $\eps$ of $(-\infty,x^{l+1,L})$. It suffices to show that $K|_{[0,\tau_{\eps}]}$ is almost surely determined by $h$. Note that
\begin{enumerate}
\item [(a)] From the induction hypothesis, we have that $\tilde{K}|_{[0,\tilde{\tau}_{\eps}]}$ is almost surely determined by $\tilde{h}|_{H_{\eps}}$.
\item [(b)] From Proposition \ref{prop::gff_absolutecontinuity}, we have that $\tilde{h}|_{H_{\eps}}$ and $h|_{H_{\eps}}$ are mutually absolutely continuous.
\end{enumerate}
Combining these two facts, the set $K|_{[0,\tau_{\eps}]}$ is almost surely determined by $h|_{H_{\eps}}$ as desired.

Note that if $\tau$ is the continuation threshold, then we are done.
\smallbreak
\textit{Next}, we assume that $\tau$ is not the continuation threshold and we explain that $K|_{(\tau,\infty)}$ is almost surely determined by $h$. Suppose that the rightmost point of $K_{\tau}\cap\R$ is contained in $[x^{j_0,R},x^{j_0+1,R})$. Then the conditional law of $(f_{\tau}(K_t),t\ge \tau)$ given $K_{\tau}$ is an $\SLE_4(\overline{\rho}^L;\tilde{\underline{\rho}}^R)$ process in $\overline{\HH}$ from 0 to $\infty$ where
\[\overline{\rho}^L=\sum_{i=1}^{l+1}\rho^{i,L},\quad \tilde{\rho}^{1,R}=\sum_{i=1}^{j_0}\rho^{i,R},\quad \tilde{\rho}^{2,R}=\rho^{j_0+1,R},\quad\dots,\quad \tilde{\rho}^{r-j_0+1,R}=\rho^{r,R}.\]
By the induction hypothesis, we know that $(f_{\tau}(K_t),t\ge \tau)$ is almost surely determined by $h\circ f_{\tau}^{-1}$, hence it is determined by $h|_{\HH\setminus K_{\tau}}$ given $K_{\tau}$. This implies the conclusion.
\end{proof}

\subsection{Proof of Theorem \ref{thm::sle_chordal_continuity_transience}---general case}\label{subsec::boundary_levellines_continuity_general}
We will complete the proof of Theorem \ref{thm::sle_chordal_continuity_transience}---the continuity of $\SLE_4(\underline{\rho}^L;\underline{\rho}^R)$ process---by extending the special case proved in Proposition \ref{prop::boundary_levellines_deterministic_intersecting}.

\begin{remark}\label{rem::boundary_levellines_continuity_exceptforcepoints}
Suppose that $\gamma$ is an $\SLE_4(\underline{\rho}^L;\underline{\rho}^R)$ process, we have the following observations.
\begin{enumerate}
\item [(1)] $\gamma$ is almost surely continuous when it is away from the boundary $\partial\HH$.
\item [(2)] When $\gamma$ hits $\partial\HH$, say at time $\tau$, between force points before the continuation threshold is hit, from the absolute continuity in Proposition \ref{prop::gff_absolutecontinuity}, we know that $(\gamma(\tau+s),0\le s\le \eps)$ locally evolves like an $\SLE_4(\rho)$ process with one force point of weight $\rho>-2$ (since $\tau$ is not the continuation threshold). Therefore, $\gamma$ is continuous at time $\tau$.
\end{enumerate}
Combining these two facts, to get the continuity of $\gamma$, we need to rule out pathological behavior when $\gamma$ interacts with a force point or hits the continuation threshold.
\end{remark}

\begin{figure}[ht!]
\begin{subfigure}[b]{0.48\textwidth}
\begin{center}
\includegraphics[width=0.6\textwidth]{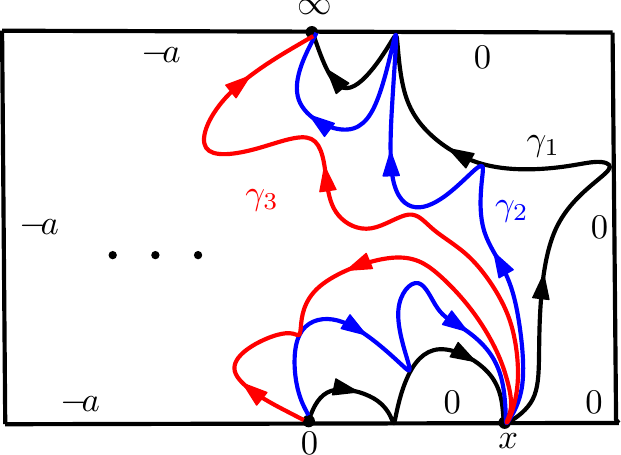}
\end{center}
\caption{If $\gamma_n$ hits $x$, then all $\gamma_1,...,\gamma_{n-1}$ have to hit $x$.}
\end{subfigure}
\begin{subfigure}[b]{0.48\textwidth}
\begin{center}\includegraphics[width=0.6\textwidth]{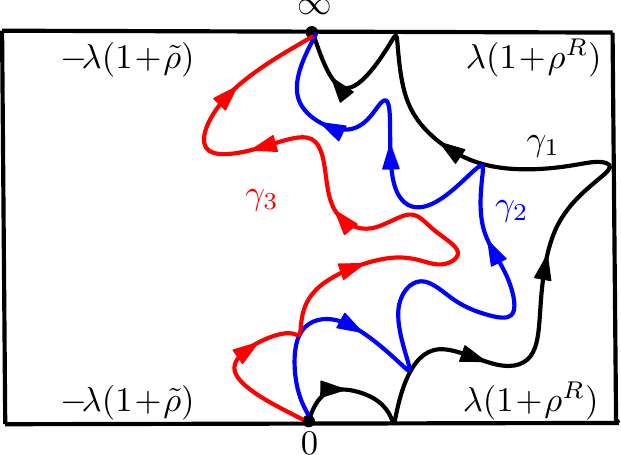}
\end{center}
\caption{Given $\gamma_2$, the conditional law of $\gamma_1$ is $\SLE_4(\rho^L;\rho^R)$.}
\end{subfigure}
\caption{\label{fig::boundary_levellines_zerohittingprobability} The explanation of the behaviour of paths in the the proof of Lemma \ref{lem::boundary_levellines_zerohittingprobability}.}
\end{figure}

\begin{lemma}\label{lem::boundary_levellines_zerohittingprobability}
Suppose that $\gamma$ is an $\SLE_4(\rho^L;\rho^R)$ process with force points $(0^-;0^+)$ where $\rho^L>-2,\rho^R\in (-2,0)$. Then the Lebesgue measure of $\gamma\cap\partial\HH$ is almost surely zero. In particular, for any $x\in\partial\HH\setminus\{0\}$, the probability that $\gamma$ hits $x$ is zero.
\end{lemma}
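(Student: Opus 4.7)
The Lebesgue-measure assertion reduces, via Fubini and scale invariance of $\SLE_4(\rho^L;\rho^R)$, to the stronger hitting-probability claim $\PP[x\in\gamma]=0$ for every fixed $x\in\partial\HH\setminus\{0\}$: since both force points are at $0$, the probability $p:=\PP[x\in\gamma]$ is constant in $x>0$ and constant in $x<0$, so $\E[\operatorname{Leb}(\gamma\cap[1,2])]=p$, which vanishes if and only if $p=0$. I therefore focus on proving $p=0$ for $x>0$ (the other case is symmetric), arguing by contradiction assuming $p>0$.

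Couple $\gamma$ as the level line at height $0$ of a GFF $h$ on $\HH$ with boundary values $-\lambda(1+\rho^L)$ on $\R_-$ and $\lambda(1+\rho^R)$ on $\R_+$. For a suitable auxiliary height $u$, Proposition \ref{prop::boundary_levellines_deterministic_intersecting_conditionallaw} ensures that the level line $\gamma_u$ is on one side of $\gamma$ and that, conditionally on $\gamma_u$, the curve $\gamma$ in the appropriate complementary component has law $\SLE_4(\rho^L;\rho^R)$; the exact value of $u$ is computed from the boundary data so that the parameters match. Iterating this construction inside the resulting subdomain, and using that the conditional boundary data admits the same ``self-similar'' computation after conformal mapping, produces a nested chain $\HH=D_0\supset D_1\supset\cdots\supset D_n$ together with auxiliary level lines $\gamma_u^{(j)}\subset D_{j-1}$, $j=1,\dots,n$, such that the restriction of $\gamma$ to $D_n$ is still $\SLE_4(\rho^L;\rho^R)$ after uniformization and the conformal image of $x$ remains a marked boundary point of each $D_j$.

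The contradiction then comes from combining two facts, corresponding to Figure \ref{fig::boundary_levellines_zerohittingprobability}. From panel (b) and conformal/scale invariance, each auxiliary curve $\gamma_u^{(j)}$ has a fixed hitting probability $q\in[0,1)$ for its corresponding target, and by the chain rule together with the conditional independence of the auxiliary curves given the previous layers, $\PP\bigl[\bigcap_{j=1}^n\{\gamma_u^{(j)}\text{ hits its target}\}\bigr]=q^n$. From panel (a), on the event $\{x\in\gamma\}$ the pocket structure created by $\gamma$ at $x$, together with monotonicity (Remark \ref{rem::boundary_levellines_deterministic_monotonicity_generalization}) and the merging of equal-height level lines (Theorem \ref{thm::boundary_levelline_gff_interacting}(2)), forces each auxiliary curve to hit its corresponding target in $D_{j-1}$. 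Together these yield $p\leq q^n$ for each $n$, forcing $p=0$, the desired contradiction.

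The main obstacle I anticipate is the geometric step of panel (a): rigorously showing that when $\gamma$ hits $x$ and creates a pocket, each auxiliary curve $\gamma_u^{(j)}$ --- though sharing the same starting point $0$ and lying at a specific relative height --- is really forced by the merging property to exit its pocket exactly at the corresponding target rather than at some other point along the pocket's boundary. This requires careful use of the merging of equal-height level lines to translate between heights $0$ and $u$ in each successive subdomain, together with a verification that the continuation thresholds of the auxiliary curves are not activated inside the relevant pockets; a secondary subtlety is choosing the auxiliary height $u$ so that both the conditional-law preservation and the geometric pocket argument apply simultaneously.
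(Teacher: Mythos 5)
Your high-level idea --- sandwich $\gamma$ between auxiliary level lines and multiply conditional hitting probabilities --- is the same as the paper's, but the specific ``self-similar'' choice you propose cannot be executed, and the gap is not a secondary subtlety as you suggest but a genuine obstruction. To have the conditional law of $\gamma$ given $\gamma_u$ equal to $\SLE_4(\rho^L;\rho^R)$ in the complementary component, the auxiliary height must satisfy $u=-\lambda(2+\rho^R)$ (if you condition on a curve to the right of $\gamma$) or $u=\lambda(2+\rho^L)$ (to the left). In either case the auxiliary level line $\gamma_u$ acquires a force-point weight exactly $-2$ at the starting point $0^\pm$: the boundary value of $h+u$ on the relevant side of $0$ is exactly $\pm\lambda$, so by Remark \ref{rem::boundary_levellines_values_allowedhit}(1) the level line $\gamma_u$ has zero length --- it immediately hits its continuation threshold. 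Your nested chain $D_0\supset D_1\supset\cdots$ therefore never gets started. Requiring ``the restriction of $\gamma$ to $D_n$ is still $\SLE_4(\rho^L;\rho^R)$'' and simultaneously having a nondegenerate auxiliary curve are incompatible.

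The paper's proof deliberately does \emph{not} preserve the pair $(\rho^L,\rho^R)$. In the first step it builds a chain of level lines $\gamma_1,\dots,\gamma_n$ of a single GFF (boundary value $0$ on $\R_+$, $-a$ on $\R_-$) at heights $u_j=\lambda(2j-1+j\rho^R)$: the left weight decreases layer by layer, while the right weight $\rho^R_j=j(2+\rho^R)-2$ \emph{increases}. The lever that makes the argument close is that, for $n$ large enough, $\rho^R_n\ge 0$, so $\gamma_n$ cannot touch $\R_+\setminus\{0\}$ at all; hence $\PP[\gamma_n\text{ hits }x]=0$ immediately, and the telescoping product $p(\rho^L_1)\cdots p(\rho^L_n)$ (which equals $\PP[\gamma_n\text{ hits }x]$ because each $\gamma_j$ separates $\gamma_{j+1}$ from $\R_+$) forces $p(\tilde\rho^L)=0$ for some specific weight $\tilde\rho^L\ge 0$. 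A separate second step --- conditioning on a third level line to compare the conditional laws $\SLE_4(\rho^L;\rho^R)$ and $\SLE_4(\tilde\rho^L;\rho^R)$ --- then transfers the conclusion to the given $\rho^L>-2$. Your sketch is missing both ingredients: you have no terminal level line whose hitting probability you \emph{know} to be zero, and no mechanism to bring the answer back to the original $(\rho^L,\rho^R)$. The appeals to ``merging of equal-height level lines'' also do not apply here, since the auxiliary curves are at different heights from $\gamma$ and therefore stay on one side of $\gamma$ rather than merging with it.
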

\begin{proof}
Fix $x\in\R_+$. We only need to show that the probability that $\gamma$ hits $x$ is zero.
\smallbreak
\textit{First}, we show that there exists some $\tilde{\rho}^L\ge 0$ such that the probability that $\SLE_4(\tilde{\rho}^L;\rho^R)$ hits $x$ is zero. Suppose that $h$ is a $\GFF$ on $\HH$ whose boundary data is $0$ on $\R_+$ and $-a$ on $\R_-$, see Figure \ref{fig::boundary_levellines_zerohittingprobability}(a). Since $\rho^R\in (-2,0)$, we can pick $n\in\N$ so that
\[2n-2+n\rho^R\ge 0.\]
Fix $a\ge \lambda(2n+n\rho^R)$. For $1\le j\le n$, set $u_j=\lambda(2j-1+j\rho^R)$ and let $\gamma_j$ be the level line of $h$ with height $u_j$ starting from 0. For $1\le j\le n$, set
\[\rho^L_j=a/\lambda-2j-j\rho^R\ge 0,\quad \rho^R_j=2j-2+j\rho^R.\]
From Proposition \ref{prop::boundary_levellines_deterministic_intersecting_conditionallaw}, we have the following facts.
\begin{enumerate}
\item [(a)] For $1\le j\le n$, the marginal law of $\gamma_j$ is $\SLE_4(\rho_j^L;\rho_j^R)$.
\item [(b)] For $2\le j\le n$, given $\gamma_{j-1}$, the conditional law of $\gamma_j$ is $\SLE_4(\rho^L_j;\rho^R)$.
\end{enumerate}
From the scale-invariance of $\SLE_4(\rho^L_j;\rho^R)$ process, we know that, for any fixed $y$ on the right part of the boundary, the probability that $\SLE_4(\rho^L_j;\rho^R)$ process hits $y$ is $p(\rho^L_j)$ which is independent of $y$. Thus
\[\PP[\gamma_n\text{ hits }x]=p(\rho^L_1)\cdots p(\rho^L_n).\]
By the choice of $n$, we know that $\rho_n^R\ge 0$, therefore $\PP[\gamma_n\text{ hits }x]=0$. Thus there exists some $k_0$ such that $p(\tilde{\rho}^L)=0$ where $\tilde{\rho}^L:=\rho^L_{k_0}$.
\smallbreak
\textit{Next}, we show the conclusion for general $\rho^L>-2$. Fix $\tilde{\rho}>\rho^L\vee\tilde{\rho}^L+2$. Suppose that $h$ is a $\GFF$ on $\HH$ whose boundary value is $\lambda(1+\rho^R)$ on $\R_+$ and is $-\lambda(1+\tilde{\rho})$ on $\R_-$, see Figure \ref{fig::boundary_levellines_zerohittingprobability}(b). Set
\[u_1=0,\quad u_2=\lambda(2+\rho^L)>0,\quad u_3=\lambda(2+\tilde{\rho}^L)\ge 2\lambda.\]
For $i=1,2,3$, let $\gamma_i$ be the level line of $h$ with height $u_i$ starting from 0. From Proposition \ref{prop::boundary_levellines_deterministic_intersecting_conditionallaw}, we have the following facts.
\begin{enumerate}
\item [(a)] Given $\gamma_2$, the conditional law of $\gamma_1$ is $\SLE_4(\rho^L;\rho^R)$.
\item [(b)] Given $\gamma_3$, the conditional law of $\gamma_1$ is $\SLE_4(\tilde{\rho}^L;\rho^R)$.
\item [(c)] The marginal law of $\gamma_3$ is $\SLE_4(\tilde{\rho}-2-\tilde{\rho}^L; 2+\rho^R+\tilde{\rho}^L)$.
\end{enumerate}
Since $2+\rho^R+\tilde{\rho}^L\ge 0$, we know that the probability that $\gamma_3$ hits $x$ is zero. From the choice of $\tilde{\rho}^L$, we know that, given $\gamma_3$, the probability that $\gamma_1$ hits $x$ is zero. Combining these two facts, we know that the probability that $\gamma_1$ hits $x$ is zero. Consequently, given $\gamma_2$, the probability that $\gamma_1$ hits $x$ is also zero. This completes the proof.
\end{proof}

Suppose that $h$ is a $\GFF$ on $\HH$ whose boundary value is $b$ on $\R_+$ and $-a$ on $\R_-$. Fix heights $u_1,...,u_k$ and assume that
\begin{align}
\label{eqn::boundary_levellines_heigthvarying_boundaryrestriction}
&\text{for }1\le i\le k, \quad b>-u_i-\lambda,\quad a>u_i-\lambda;\\
\label{eqn::boundary_levellines_heigthvarying_differencerestriction}
&\text{for }1\le i<j\le k, \quad |u_i-u_j|<2\lambda.
\end{align}
Let $\gamma_{u_1}$ be the level line of $h$ with height $u_1$ starting from 0 and let $\tau_1$ be a $\gamma_{u_1}$-stopping time. For each $2\le j \le k$, we inductively let $\gamma_{u_1\cdots u_j}$ be the level line of $h$ given $\gamma_{u_1\cdots u_{j-1}}[0,\tau_{j-1}]$ with height $u_j$ starting from $\gamma_{u_1\cdots u_{j-1}}(\tau_{j-1})$ and let $\tau_j$ be a $\gamma_{u_1\cdots u_j}$-stopping time. The restriction in Equation (\ref{eqn::boundary_levellines_heigthvarying_boundaryrestriction}) guarantees that $\gamma_{u_1\cdots u_k}$ does not hit the continuation threshold when it hits $\partial\HH$ and the restriction in Equation (\ref{eqn::boundary_levellines_heigthvarying_differencerestriction}) guarantees that $\gamma_{u_1\cdots u_k}$ does not hit the continuation threshold when it hits itself. We call $\gamma_{u_1\cdots u_k}$ a \textbf{height-varying level line} of $h$ starting from 0 with heights $u_1,...,u_k$ with respect to the height change times $\tau_1,...,\tau_{k-1}$. By Theorem \ref{thm::boundary_levelline_gff_deterministic} and an induction argument, we know that $\gamma_{u_1\cdots u_k}$ is almost surely determined by $h$.

\begin{lemma}\label{lem::boundary_levellines_heightvarying_monotoncity}
Suppose that $h$ is a $\GFF$ on $\HH$ whose boundary data is $b$ on $\R_+$ and $-a$ on $\R_-$. Fix heights $u_1,u_2,c_1,c_2$ and assume that
\[c_1<u_1,u_2<c_2, \quad |u_1-u_2|<2\lambda, \quad a>c_2-\lambda,\quad b>-c_1-\lambda.\]
For $i=1,2$, let $\gamma_{c_i}$ be the level line of $h$ with height $c_i$ starting from 0. Let $\gamma_{u_1u_2}$ be the height-varying level line of $h$ with height change time $\tau_1>0$. Then we have the following conclusions.
\begin{enumerate}
\item [(1)] $\gamma_{u_1u_2}$ is almost surely continuous and $\lim_{t\to\infty}\gamma_{u_1u_2}(t)=\infty$.
\item [(2)] $\gamma_{c_1}$ almost surely passes to the right of $\gamma_{u_1u_2}$ and $\gamma_{c_2}$ almost surely passes to the left of $\gamma_{u_1u_2}$.
\end{enumerate}
\end{lemma}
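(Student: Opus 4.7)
The plan is to bootstrap both parts of the lemma from the monotonicity for constant-height level lines (Corollary \ref{cor::boundary_levellines_nonintersecting_monotonicity} together with its extension in Remark \ref{rem::boundary_levellines_deterministic_monotonicity_generalization}), using the domain Markov property of the GFF at the height-change time to handle the continuation past $\tau_1$.

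First I would treat the initial segment $\gamma_{u_1u_2}|_{[0, \tau_1]}$. Let $\gamma_{u_1}$ denote the level line of $h$ with height $u_1$ starting from $0$, so that this segment coincides with $\gamma_{u_1}|_{[0,\tau_1]}$. Since $c_1 < u_1 < c_2$ and the assumptions $a > c_2 - \lambda$, $b > -c_1 - \lambda$ imply $a > u_1 - \lambda$ and $b > -u_1 - \lambda$, the extended monotonicity (Remark \ref{rem::boundary_levellines_deterministic_monotonicity_generalization}) applied to the pairs $(c_1, u_1)$ and $(u_1, c_2)$ yields that almost surely $\gamma_{c_2}$ stays to the left of $\gamma_{u_1}$ and $\gamma_{u_1}$ stays to the left of $\gamma_{c_1}$. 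In particular the sandwich property holds on $[0, \tau_1]$, and continuity on that interval follows from Proposition \ref{prop::boundary_levellines_deterministic_intersecting} applied to $\gamma_{u_1}$.

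Next I would analyze the continuation $\gamma_{u_1u_2}|_{[\tau_1, \infty)}$ by conditioning on $\gamma_{u_1}|_{[0, \tau_1]}$. By Theorem \ref{thm::boundary_levelline_gff_coupling}, the restriction of $h$ to the unbounded component $U$ of $\HH \setminus \gamma_{u_1}[0, \tau_1]$ is a GFF with boundary data $b$ on $\R_+$, $-a$ on $\R_-$, $-\lambda - u_1$ on the left side and $\lambda - u_1$ on the right side of $\gamma_{u_1}[0, \tau_1]$; by definition the continuation is the level line of this conditional field with height $u_2$ starting from $\gamma_{u_1}(\tau_1)$. The hypothesis $|u_2 - u_1| < 2\lambda$ ensures that the effective values on the two sides of $\gamma_{u_1}[0, \tau_1]$ sit in the non-threshold range, while $a > c_2 - \lambda$, $b > -c_1 - \lambda$ together with $c_1 < u_2 < c_2$ ensure the same along $\R$. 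Mapping $(U, \gamma_{u_1}(\tau_1), \infty)$ to $(\HH, 0, \infty)$ and invoking Remark \ref{rem::boundary_levellines_deterministic_monotonicity_generalization} once more, the continuation is sandwiched by $\tilde{\gamma}_{c_1}$ and $\tilde{\gamma}_{c_2}$, the level lines of the conditional field with heights $c_1, c_2$ starting from $\gamma_{u_1}(\tau_1)$. To close the loop I would then identify the portion of the original $\gamma_{c_i}$ that lies in $U$ with $\tilde{\gamma}_{c_i}$: because $\gamma_{c_i}$ is determined by $h$ (Theorem \ref{thm::boundary_levelline_gff_deterministic}) and stays on one designated side of $\gamma_{u_1}$ (by Step 1), Proposition \ref{prop::gff_localsets_union} applied to the local sets $\gamma_{u_1}[0, \tau_1]$ and $\gamma_{c_i}$ shows that the piece of $\gamma_{c_i}$ inside $U$ is a level line of the conditional field, and the merging in Proposition \ref{prop::boundary_levellines_deterministic_nonintersecting_coincide} forces it to coincide with $\tilde{\gamma}_{c_i}$. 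This propagates the sandwich property to all of $\gamma_{u_1u_2}$, giving part~(2); continuity throughout follows by Proposition \ref{prop::boundary_levellines_deterministic_intersecting} applied to the $\SLE_4(\underline{\rho})$ law of the continuation, and transience then follows from the sandwich together with the transience of $\gamma_{c_1}$ (or $\gamma_{c_2}$).

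The main obstacle will be the identification step just described, namely matching up the piece of $\gamma_{c_i}$ in $U$ with the restart line $\tilde{\gamma}_{c_i}$. The delicacy is that both $\gamma_{c_i}$ and $\gamma_{u_1}$ start from $0$, so the entry point of $\gamma_{c_i}$ into $U$ need not be $\gamma_{u_1}(\tau_1)$; an absolute-continuity argument away from the common starting point (via Proposition \ref{prop::gff_absolutecontinuity}) combined with Proposition \ref{prop::gff_localsets_interacting} should rule out pathological behavior and allow the merging statement to be applied component by component of $U \setminus \tilde{\gamma}_{c_i}$.
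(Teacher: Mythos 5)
Your plan diverges from the paper's in an essential way, and the step you flag as the ``main obstacle'' is not merely a delicacy to be handled but a point where the approach can actually break down.

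The most serious problem is that the restart level lines $\tilde{\gamma}_{c_1}, \tilde{\gamma}_{c_2}$ in $U$ starting from $\gamma_{u_1}(\tau_1)$ need not exist as non-trivial curves. The hypotheses give only $c_1 < u_1, u_2 < c_2$ and $|u_1-u_2| < 2\lambda$; there is no upper bound on $c_2 - u_1$ or $u_1 - c_1$. The boundary data of the conditional field on the two sides of $\gamma_{u_1}[0,\tau_1]$ is $\pm\lambda - u_1$, so if, say, $c_2 \ge 2\lambda + u_1$, then both sides of the would-be level line of height $c_2$ started at $\gamma_{u_1}(\tau_1)$ sit at or above $\lambda$ and the continuation threshold is hit immediately (see Remark \ref{rem::boundary_levellines_values_allowedhit}). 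Correspondingly, Remark \ref{rem::boundary_levellines_deterministic_monotonicity_generalization} requires $-\lambda - b < u_1 < u_2 < \lambda + a$ with $a,b$ being the local boundary values, which in $U$ translates to $u_1 - 2\lambda < c_1$ and $c_2 < u_1 + 2\lambda$ — conditions that are not hypotheses of the lemma. So the sandwich inside $U$ is not available in general.

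Even when the restart lines exist, the identification you attempt — matching $\tilde{\gamma}_{c_i}$ with the piece of $\gamma_{c_i}$ in $U$ — only gives merging, not an ordering usable for a sandwich: level lines with the same height from different boundary starting points merge on intersecting (Theorem \ref{thm::boundary_levelline_gff_interacting}(2)) but are not a priori ordered before the merge time. Proposition \ref{prop::boundary_levellines_deterministic_nonintersecting_coincide} is about reversibility and does not supply the needed comparison. The paper sidesteps both difficulties by not using forward restart lines at all: it runs the level line $\gamma'$ of $-h$ with height $-c_2$ starting from $\infty$, whose range coincides with that of $\gamma_{c_2}$ by reversibility, and which always exists regardless of how large $c_2$ is. It then compares $\gamma'$ with the whole path $\gamma_{u_1u_2}$ at once via the exit-behavior Lemma \ref{lem::boundary_levellines_deterministic_case3}/Remark \ref{rem::boundary_levellines_deterministic_case3} and the argument of Proposition \ref{prop::boundary_levellines_nonintersecting_monotonicity_reverse}; the sign conditions along both sides of $\gamma_{u_1u_2}[0,\tau]$ are compatible in both phases because $|u_1-u_2|<2\lambda$.

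Finally, you do not address the possibility that the continuation hits the origin after time $\tau_1$ — the origin is where the boundary data of $h$ jumps from $-a$ to $b$ and, in the mapped picture of $U$, sits between the two sides of the cut with boundary values $\pm\lambda - u_1$. The paper devotes a separate step (constructing trapping level lines $\gamma_{\tilde c_1}, \gamma_{\tilde c_2}$ with $u_1 - 2\lambda < \tilde c_1 < u_1, u_2 < \tilde c_2 < u_1 + 2\lambda$) precisely to rule this out; without this, the claim that the continuation is continuous and transient by Proposition \ref{prop::boundary_levellines_deterministic_intersecting} is not justified, since that proposition handles two force points at $0^{\mp}$, not the configuration that arises here.
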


\begin{figure}[ht!]
\begin{subfigure}[b]{0.3\textwidth}
\begin{center}
\includegraphics[width=\textwidth]{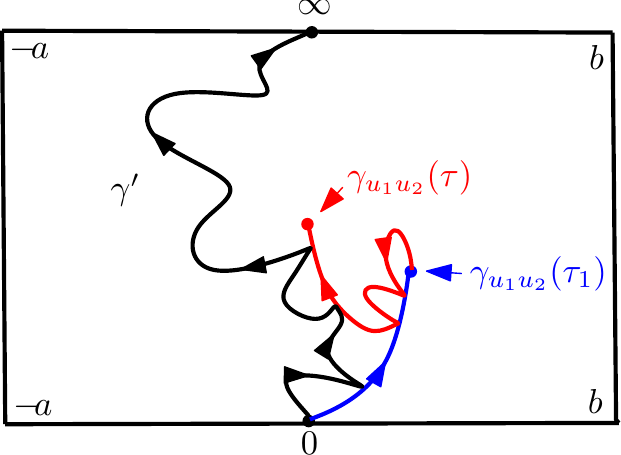}
\end{center}
\caption{$\gamma'$ can not hit the right side of $\gamma_{u_1u_2}[0,\tau]$.}
\end{subfigure}
$\quad$
\begin{subfigure}[b]{0.3\textwidth}
\begin{center}\includegraphics[width=\textwidth]{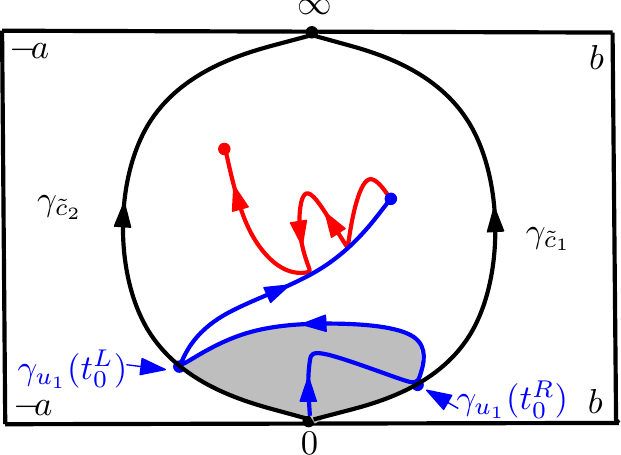}
\end{center}
\caption{$(\gamma_{u_1u_2}(\tau_1+s),s\ge 0)$ can not enter the grey region.}
\end{subfigure}
$\quad$
\begin{subfigure}[b]{0.3\textwidth}
\begin{center}\includegraphics[width=\textwidth]{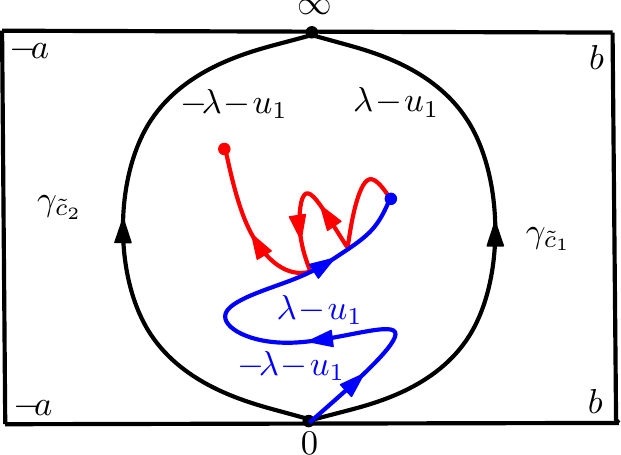}
\end{center}
\caption{The conditional law of the red path is $\SLE_{\!4}\!(\!\rho^L\!;\!\rho^R\!)$.}
\end{subfigure}
\caption{\label{fig::boundary_levellines_heightvarying_monotoncity} The explanation of the behaviour of paths in the proof of Lemma \ref{lem::boundary_levellines_heightvarying_monotoncity}.}
\end{figure}

\begin{proof}
We start by explaining that we only need to show the conclusions for large $a,b$. Suppose that the conclusions hold for large $a,b$, then we could apply the same argument used to prove Proposition \ref{prop::boundary_levellines_deterministic_intersecting} by conditioning on level lines with appropriately chosen heights and use the results of $\gamma_{u_1u_2}$ for large $a,b$.

In the rest of the proof, we suppose that $a,b$ are large enough so that none of level lines that we use later hits the boundary $\partial\HH$ (except at 0 and $\infty$). For $\eps>0$, let $T_{\eps}$ be the first time after $\tau_1$ that $\gamma_{u_1u_2}$ gets within distance $\eps$ of the origin.
\smallbreak
\textit{First}, we explain that $\gamma_{u_1u_2}|_{[0,T_{\eps}]}$  is continuous. Since that $(\gamma_{u_1u_2}(s),0\le s\le \tau_1)$ is the level line of $h$ with height $u_1$, it is almost surely continuous. Given $\gamma_{u_1u_2}[0,\tau_1]$, let $\tilde{h}$ be the restriction of $h$ to $\HH\setminus\gamma_{u_1u_2}[0,\tau_1]$, then the conditional law of $\tilde{h}$ is the same as a $\GFF$ on $\HH\setminus\gamma_{u_1u_2}[0,\tau_1]$ whose boundary value is consistent with $h$ on $\partial\HH$, is $\lambda-u_1$ to the right of $\gamma_{u_1u_2}[0,\tau_1]$ and is $-\lambda-u_1$ to the left of $\gamma_{u_1u_2}[0,\tau_1]$. Furthermore, given $\gamma_{u_1u_2}[0,\tau_1]$, $(\gamma_{u_1u_2}(t+\tau_1),t\ge 0)$ is the level line of $\tilde{h}$ with height $u_2$. From Remark \ref{rem::boundary_levellines_continuity_exceptforcepoints}, it is continuous up to $T_{\eps}$.
\smallbreak
\textit{Second}, we explain that $\gamma_{u_1u_2}[0,T_{\eps}]$ almost surely passes to the right of $\gamma_{c_2}$. Symmetrically, we will have that $\gamma_{u_1u_2}[0,T_{\eps}]$ almost surely passes to the left of $\gamma_{c_1}$. Suppose that $\tau$ is any $\gamma_{u_1u_2}$-stopping time such that $\tau\le T_{\eps}$.
Let $\gamma'$ be the level line of $-h$ with height $-c_2$ starting from $\infty$.
From Lemma \ref{lem::boundary_levellines_deterministic_case3} and Remark \ref{rem::boundary_levellines_deterministic_case3}, we know that, given $\gamma_{u_1u_2}[0,\tau]$, the level line $\gamma'$ first exits $\HH\setminus\gamma_{u_1u_2}[0,\tau]$ through the left side of $\gamma_{u_1u_2}[0,\tau]$. Since that $(\gamma_{u_1u_2}(s),0\le s\le \tau)$ and $\gamma'$ are continuous, we can use the similar proof as the proof of Proposition \ref{prop::boundary_levellines_nonintersecting_monotonicity_reverse} to show that $\gamma_{u_1u_2}[0,\tau]$ stays to the right of $\gamma'$, see Figure \ref{fig::boundary_levellines_heightvarying_monotoncity}(a). Since the range of $\gamma'$ coincides with the range of $\gamma_{c_2}$, this implies the conclusion.
\smallbreak
\textit{Third}, we explain that $\gamma_{u_1u_2}$ does not hit the origin after time 0 and hence it is continuous by Remark \ref{rem::boundary_levellines_continuity_exceptforcepoints}. Choose constants $\tilde{c}_1,\tilde{c}_2$ satisfying
\[u_1-2\lambda<\tilde{c}_1<u_1,u_2<\tilde{c}_2<2\lambda+u_1.\]
The restriction that $|u_1-u_2|<2\lambda$ guarantees that we are able to choose such $\tilde{c}_1,\tilde{c}_2$. For $i=1,2$, let $\gamma_{\tilde{c}_i}$ be the level line of $h$ with height $\tilde{c}_i$ starting from 0. Note that $\gamma_{u_1}$ stays between $\gamma_{\tilde{c}_1}$ and $\gamma_{\tilde{c}_2}$, and that, given $\gamma_{\tilde{c}_1}$ and $\gamma_{\tilde{c}_2}$, the conditional law of $\gamma_{u_1}$ is $\SLE_4(\tilde{\rho}^L;\tilde{\rho}^R)$ with force points next to the starting point where
\[\tilde{\rho}^L=(\tilde{c}_2-u_1)/\lambda-2\in (-2,0),\quad \tilde{\rho}^R=(u_1-\tilde{c}_1)/\lambda-2\in (-2,0).\]
Define $t_0^L$ (resp. $t_0^R$) to be the sup of the times $t\le\tau_1$ such that $\gamma_{u_1}(t)\in\gamma_{\tilde{c}_2}$ (resp. $\gamma_{u_1}(t)\in\gamma_{\tilde{c}_1}$) with the conventions that $\sup\emptyset=0$. By the choice of $\tilde{c}_1,\tilde{c}_2$, we have that $t_0^L>0,t_0^R>0$, see Figure \ref{fig::boundary_levellines_heightvarying_monotoncity}(b). From the above analysis, we know that, $\gamma_{u_1u_2}[0,T_{\eps}]$ stays between $\gamma_{\tilde{c}_1}$ and $\gamma_{\tilde{c}_2}$ for any $\eps>0$. Thus, the path $\gamma_{u_1u_2}|_{[\tau_1,T_{\eps}]}$ can not enter the bounded domain which is bounded by $\gamma_{\tilde{c}_1}$, $\gamma_{\tilde{c}_2}$ and $\gamma_{u_1}[t_0^L\wedge t_0^R, t_0^L\vee t_0^R]$. This implies that the distance $\dist(0,\gamma_{u_1u_2}[\tau_1,\infty))$ is strictly positive. Thus $\gamma_{u_1u_2}$ never hits the origin after time 0.
\smallbreak
\textit{Fourth}, we explain that $\gamma_{u_1u_2}$ almost surely stays between $\gamma_{c_1}$ and $\gamma_{c_2}$. We have the following facts.
\begin{enumerate}
\item [(a)] $\gamma_{u_1u_2}[0,T_{\eps}]$ almost surely stays between $\gamma_{c_1}$ and $\gamma_{c_2}$.
\item [(b)] The distance $\dist(0,\gamma_{u_1u_2}[\tau_1,\infty))$ is strictly positive.
\end{enumerate}
Combining these two facts, we have that $\gamma_{u_1u_2}$ almost surely stays between $\gamma_{c_1}$ and $\gamma_{c_2}$.
\smallbreak
\textit{Finally}, we explain the transience of $\gamma_{u_1u_2}$, namely $\lim_{t\to\infty}\gamma_{u_1u_2}(t)=\infty$. Choose
\[\tilde{c}_1=u_1-2\lambda,\quad \tilde{c}_2=u_1+2\lambda.\]
For $i=1,2$, let $\gamma_{\tilde{c}_i}$ be the level line of $h$ with height $\tilde{c}_i$ starting from 0. We know that $\gamma_{u_1u_2}$ stays between $\gamma_{\tilde{c}_1}$ and $\gamma_{\tilde{c}_2}$, see Figure \ref{fig::boundary_levellines_heightvarying_monotoncity}(c); furthermore, given $(\gamma_{\tilde{c}_1}, \gamma_{\tilde{c}_2}, \gamma_{u_1}[0,\tau_1])$, the conditional law of $(\gamma_{u_1u_2}(s+\tau_1),s\ge 0)$ is $\SLE_4(\rho^L;\rho^R)$ with force points next to the starting point where
\[\rho^L=(u_1-u_2)/\lambda>-2,\quad \rho^R=(u_2-u_1)/\lambda>-2.\]
From Proposition \ref{prop::boundary_levellines_deterministic_intersecting}, we have the desired transience.
\end{proof}

\begin{figure}[ht!]
\begin{center}
\includegraphics[width=0.3\textwidth]{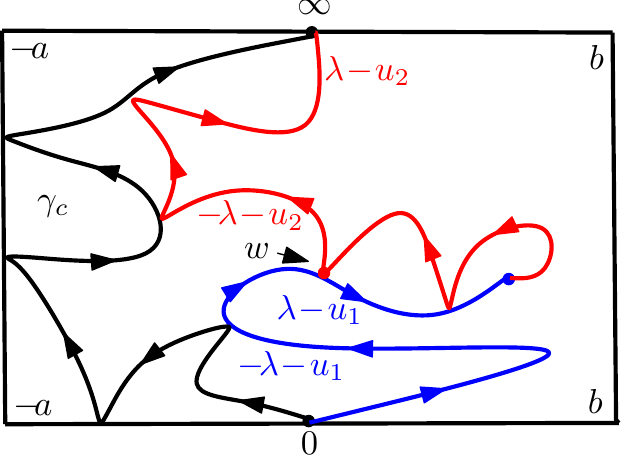}
\end{center}
\caption{\label{fig::boundary_levellines_heightvarying_conditionallaw} The explanation of the behaviour of paths in the proof of Lemma \ref{lem::boundary_levellines_heightvarying_conditionallaw}.}
\end{figure}

\begin{lemma}\label{lem::boundary_levellines_heightvarying_conditionallaw}
Suppose that $h$ is a $\GFF$ on $\HH$ whose boundary data is $b$ on $\R_+$ and $-a$ on $\R_-$. Fix heights $u_1,u_2,c$ and assume that
\[u_1,u_2<c,\quad |u_1-u_2|<2\lambda,\quad a>c-\lambda,\quad a\ge u_1\vee u_2+\lambda,\quad b\ge -(u_1\wedge u_2)-\lambda.\]
Let $\gamma_c$ be the level line of $h$ with height $c$ starting from 0. Let $\gamma_{u_1u_2}$ be the height-varying level line of $h$ starting from 0 with heights $u_1,u_2$ and height change time $\tau_1>0$. Then, given $\gamma_{u_1u_2}$, the conditional law of $\gamma_c$ is $\SLE_4(\rho^{1,L}; \rho^{1,R},\rho^{2,R})$ process in the left connected component of $\HH\setminus\gamma_{u_1u_2}$ where
\[\rho^{1,L}=(a-c)/\lambda-1,\quad \rho^{1,R}=(c-u_1)/\lambda-2,\quad \rho^{1,R}+\rho^{2,R}=(c-u_2)/\lambda-2.\]
\end{lemma}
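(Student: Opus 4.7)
The plan is to identify $\gamma_c$ as the level line of height $c$ of the GFF obtained by conditioning $h$ on $\gamma_{u_1u_2}$, restricted to the left connected component $C$ of $\HH\setminus\gamma_{u_1u_2}$, and then to read off the $\SLE_4(\underline{\rho})$ weights by comparing the piecewise constant boundary values of this conditional field to the standard boundary data $-\lambda(1+\overline{\rho}^{j,L})$, $\lambda(1+\overline{\rho}^{j,R})$ appearing in Theorem \ref{thm::boundary_levelline_gff_coupling}.

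First I would check that $\gamma_c$ almost surely lives in $C$. The hypotheses $u_1,u_2<c$, $|u_1-u_2|<2\lambda$, together with $a>c-\lambda$ and the lower bounds on $a,b$, place us in the setting of Lemma \ref{lem::boundary_levellines_heightvarying_monotoncity} with $c_2=c$ (and with $c_1$ chosen to be extremely negative, after the usual absolute continuity reduction of Proposition \ref{prop::gff_absolutecontinuity} to enlarge $b$ if needed); that lemma gives that $\gamma_c$ passes to the left of $\gamma_{u_1u_2}$.

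Next I would compute the conditional law of $h|_C$ given $\gamma_{u_1u_2}$. Applying the coupling of Theorem \ref{thm::boundary_levelline_gff_coupling} to $\gamma_{u_1u_2}[0,\tau_1]$ as a level line of height $u_1$, and then iterating with its continuation as a level line of height $u_2$ (using Proposition \ref{prop::gff_localsets_union} to combine the two local sets), the conditional mean on $C$ is harmonic with boundary values $-a$ on $\R_-$, $-\lambda-u_1$ along the left side of $\gamma_{u_1u_2}[0,\tau_1]$, and $-\lambda-u_2$ along the left side of $\gamma_{u_1u_2}[\tau_1,\infty)$. To see that $\gamma_c$ remains the level line of height $c$ of this conditional field in $C$, I would fix any $\gamma_c$-stopping time $\tau_c$, invoke Proposition \ref{prop::gff_localsets_union} to get that $\gamma_{u_1u_2}\cup\gamma_c[0,\tau_c]$ is local for $h$, and then compute the conditional mean on any component of $C\setminus\gamma_c[0,\tau_c]$ using Propositions \ref{prop::gff_localsets_interacting} and \ref{prop::gff_localsets_boundarybehavior}, arguing exactly as in Proposition \ref{prop::boundary_levellines_nonintersecting_conditionalmean}. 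The resulting boundary data matches the coupling of Theorem \ref{thm::boundary_levelline_gff_coupling} for the level line of height $c$ in $C$ from $0$ to $\infty$.

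Finally I would take any conformal map $\psi:C\to\HH$ with $\psi(0)=0$, $\psi(\infty)=\infty$, under which $\R_-$ maps to $\R_-$ and the left side of $\gamma_{u_1u_2}$ maps to $\R_+$ with a single jump point at $\psi(\gamma_{u_1u_2}(\tau_1))$. Subtracting $c$, the boundary values $-a-c$, $-\lambda-u_1-c$ and $-\lambda-u_2-c$ become exactly $-\lambda(1+\overline{\rho}^{1,L})$, $\lambda(1+\overline{\rho}^{1,R})$ and $\lambda(1+\overline{\rho}^{1,R}+\overline{\rho}^{2,R})$ with $\overline{\rho}^{1,L}=(a-c)/\lambda-1$, $\overline{\rho}^{1,R}=(c-u_1)/\lambda-2$, $\overline{\rho}^{1,R}+\overline{\rho}^{2,R}=(c-u_2)/\lambda-2$, yielding the claimed SLE$_4$ weights. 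The step I expect to be most delicate is verifying that the conditional mean has exactly the stated piecewise constant boundary data near the switching point $\gamma_{u_1u_2}(\tau_1)$: this is the point where the boundary value of $h$ along $\gamma_{u_1u_2}$ jumps, and ruling out an extra harmonic singularity there requires the continuity-of-conditional-mean argument of Proposition \ref{prop::gff_localsets_bm} combined with the $x_0,y_0$ reasoning from the proof of Proposition \ref{prop::boundary_levellines_nonintersecting_conditionalmean}.
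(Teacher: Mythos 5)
Your approach is essentially the paper's: show $\gamma_c$ stays in the left component $C$ of $\HH\setminus\gamma_{u_1u_2}$ via Lemma \ref{lem::boundary_levellines_heightvarying_monotoncity}, compute the conditional boundary data of $h|_C$, verify via the local-set machinery (Propositions \ref{prop::gff_localsets_union}, \ref{prop::gff_localsets_interacting}, \ref{prop::gff_localsets_boundarybehavior}, \ref{prop::gff_localsets_bm}) that $\gamma_c$ is a level line of $h|_C$, then conformally map to $\HH$ and read off the weights---which is exactly what the paper does by referencing the argument of Lemma \ref{lem::boundary_levellines_deterministic_intersecting_1}.

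Two details need repair. First, your identification of the jump point on $\partial C$ is wrong when $u_2>u_1$: in that case $\gamma_{u_1u_2}[\tau_1,\infty)$ starts at $\gamma_{u_1u_2}(\tau_1)$ and moves to the \emph{left} of $\gamma_{u_1u_2}[0,\tau_1]$, possibly bouncing off it, so the point where the boundary data switches from $-\lambda-u_1$ to $-\lambda-u_2$ as seen from $C$ is not $\gamma_{u_1u_2}(\tau_1)$ but rather $w$, the last point of $\gamma_{u_1u_2}[\tau_1,\infty)$ contained in $\gamma_{u_1u_2}[0,\tau_1]$; the paper defines $w$ case by case precisely for this reason. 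Only when $u_2<u_1$ does $w=\gamma_{u_1u_2}(\tau_1)$. Second, there is a sign slip at the end: the level line of $h$ with height $c$ is the level line of $h+c$, so you should \emph{add} $c$ to the conditional boundary data, obtaining $-a+c$, $-\lambda-u_1+c$, $-\lambda-u_2+c$. As written you subtract $c$ and get $-a-c$ etc., which if taken literally would give $\overline{\rho}^{1,L}=(a+c)/\lambda-1$ rather than the stated $(a-c)/\lambda-1$; your final weights are the correct ones, but they do not follow from the boundary values you wrote down.
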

\begin{proof}
From Lemma \ref{lem::boundary_levellines_heightvarying_monotoncity}, we know that $\gamma_{u_1u_2}$ is almost surely continuous and that $\gamma_c$ almost surely stays to the left of $\gamma_{u_1u_2}$. Define $w$ to be $\gamma_{u_1u_2}(\tau_1)$ if $u_2<u_1$ and to be the last point in $\gamma_{u_1u_2}[\tau_1,\infty)$ that is contained in $\gamma_{u_1u_2}[0,\tau_1]$ if $u_2>u_1$. Let $\psi$ be the conformal map from the left connected component of $\HH\setminus\gamma_{u_1u_2}$, denoted by $C$, onto $\HH$ such that sends 0 to 0, $w$ to 1, and $\infty$ to $\infty$. By the continuity of $\gamma_{u_1u_2}$, the map $\psi$ can be extended as a homeomorphism from $\overline{C}$ onto $\overline{\HH}$, and $\psi(\gamma_c)$ is almost surely a continuous curve in $\overline{\HH}$ from 0 to $\infty$ with continuous Loewner driving function (by a similar argument as in the proof of Proposition \ref{prop::boundary_levellines_conditionallaw_loewner_continuity}). We can then use a similar argument as in the proof of Lemma \ref{lem::boundary_levellines_deterministic_intersecting_1} to show that the law of $\psi(\gamma_c)$ is $\SLE_4(\rho^{1,L};\rho^{1,R},\rho^{2,R})$ in $\overline{\HH}$ from 0 to $\infty$ with force points $(0^-;0^+,1)$.
\end{proof}
\begin{lemma}\label{lem::boundary_levellines_continuity_tworight_forcepoints_1}
Suppose that $h$ is a $\GFF$ on $\HH$ whose boundary value is $-\lambda$ on $\R_-$, is $b$ on $(0,1)$, and is $c$ on $(1,\infty)$. Assume that
\[b>-\lambda,\quad c>-\lambda,\quad |b-c|<2\lambda.\]
Let $\gamma$ be the level line of $h$ starting from 0 targeted at $\infty$ and $\gamma'$ be the level line of $-h$ starting from $\infty$ targeted at 0. Then we have the following conclusions.
\begin{enumerate}
\item [(1)] The level line $\gamma$ is almost surely continuous and transient; moreover, $\gamma$ does not hit 1.
\item [(2)] The level lines $\gamma'$ and $\gamma$ are equal.
\end{enumerate}
\end{lemma}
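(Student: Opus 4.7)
\emph{Plan of proof.} The strategy is to realize $(h,\gamma,\gamma')$ as the conformal image of a non--boundary--intersecting configuration for an auxiliary GFF, following the pattern used earlier in the subsection. Let $\tilde h$ be a $\GFF$ on $\HH$ with boundary value $-\lambda$ on $\R_-$ and a constant $\tilde b$ on $\R_+$ chosen so large that every level line of $\tilde h$ with height in the range $[-\lambda-(b\vee c),\,0]$ is non--boundary--intersecting. Set
\[u_1=-\lambda-b,\qquad u_2=-\lambda-c,\]
so that $u_1,u_2<0$ and $|u_1-u_2|=|b-c|<2\lambda$. Let $\tilde\gamma_{u_1u_2}$ be the height--varying level line of $\tilde h$ from $0$ to $\infty$ with heights $(u_1,u_2)$ and height--change time $1$; by Lemma \ref{lem::boundary_levellines_heightvarying_monotoncity} it is a.s.\ continuous and transient. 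Let $C$ be the connected component of $\HH\setminus\tilde\gamma_{u_1u_2}$ whose boundary contains $\R_-$, let $w$ be the associated switching point as defined in Lemma \ref{lem::boundary_levellines_heightvarying_conditionallaw}, and let $\psi:C\to\HH$ be the conformal map sending $0\mapsto0$, $w\mapsto1$, $\infty\mapsto\infty$.

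Inspecting the boundary of $C$ shows that $\tilde h|_C\circ\psi^{-1}$ has boundary data $-\lambda$ on $(-\infty,0)$, $-\lambda-u_1=b$ on $(0,1)$, and $-\lambda-u_2=c$ on $(1,\infty)$, which exactly matches $h$. Let $\tilde\gamma$ and $\tilde\gamma'$ be the level lines of $\tilde h$ and $-\tilde h$ starting from $0$ and $\infty$ respectively; by Proposition \ref{prop::boundary_levellines_deterministic_intersecting} together with Remark \ref{rem::boundary_levellines_deterministic_intersecting}, they are a.s.\ continuous, transient, and coincide as sets. By Lemma \ref{lem::boundary_levellines_heightvarying_monotoncity} (applied with $c_2=0$, using $u_1,u_2<0$), both $\tilde\gamma$ and $\tilde\gamma'$ stay in $\overline C$. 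Setting $\gamma:=\psi(\tilde\gamma)$ and $\gamma':=\psi(\tilde\gamma')$, Lemma \ref{lem::boundary_levellines_heightvarying_conditionallaw} (with its parameter $c$ taken to be $0$) identifies the conditional law of $\gamma$ given $\tilde\gamma_{u_1u_2}$ as the desired $\SLE_4$ process with two right force points, and the analogous construction for $-\tilde h$ does the same for $\gamma'$. The equality $\tilde\gamma=\tilde\gamma'$ yields part (2), while the continuity and transience of $\tilde\gamma$ transfer to $\gamma$ through $\psi$.

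The main obstacle is showing that $\gamma$ does not hit the force point $1$---equivalently, that $\tilde\gamma$ does not accumulate at $w$---which is also what allows $\psi$ to extend continuously across $1$ and thereby genuinely gives the continuity claim in (1) at the force point. My plan is to insert an auxiliary level line $\tilde\gamma_v$ of $\tilde h$ with height $v\in(u_1\vee u_2,\,0)$. By Proposition \ref{prop::boundary_levellines_nonintersecting_monotonicity} combined with Lemma \ref{lem::boundary_levellines_heightvarying_monotoncity}, $\tilde\gamma_v$ lies strictly between $\tilde\gamma$ and $\tilde\gamma_{u_1u_2}$, so it suffices to show that $\tilde\gamma_v$ does not accumulate at $w$. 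Conditioning on $\tilde\gamma_{u_1u_2}$, Lemma \ref{lem::boundary_levellines_heightvarying_conditionallaw} realizes $\psi(\tilde\gamma_v)$ as $\SLE_4(\rho^{1,L};\rho^{1,R},\rho^{2,R})$ with force points $(0^-;0^+,1)$, weights $\rho^{1,L}=-v/\lambda>0$, $\rho^{1,R}=(v-u_1)/\lambda-2$ and $\rho^{2,R}=(c-b)/\lambda\in(-2,2)$. Adapting the coupling--with--multiple--level--lines argument of Lemma \ref{lem::boundary_levellines_zerohittingprobability}---inserting further intermediate level lines inside $\HH$ to reduce the effective right--side weight at the force point $1$ into the single--force--point hitting regime $(-2,0)$---yields that $\psi(\tilde\gamma_v)$ almost surely does not hit $1$, which completes the proof.
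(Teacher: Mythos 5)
Your setup matches the paper's exactly: the auxiliary field $\tilde h$ with large $\tilde b$, the height-varying level line $\tilde\gamma_{u_1u_2}$ with heights $u_1=-\lambda-b$, $u_2=-\lambda-c$, the component $C$, the switching point $w$, the conformal map $\psi$, and the identification $\gamma=\psi(\tilde\gamma)$, $\gamma'=\psi(\tilde\gamma')$ via Lemma~\ref{lem::boundary_levellines_heightvarying_conditionallaw}, from which parts~(1) and~(2) flow once $\tilde\gamma$ is shown not to hit~$w$. That final step, however, is handled differently in the paper and is where your argument has a genuine gap.

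You reduce to showing that the intermediate level line $\tilde\gamma_v$ (with $v\in(u_1\vee u_2,0)$) does not accumulate at $w$, by conditioning on $\tilde\gamma_{u_1u_2}$ so that $\psi(\tilde\gamma_v)$ becomes an $\SLE_4(\rho^{1,L};\rho^{1,R},\rho^{2,R})$ with the second right force point at $1$, and then invoking a loosely sketched ``adaptation'' of Lemma~\ref{lem::boundary_levellines_zerohittingprobability}. Two problems. First, the statement you reduce to --- that an $\SLE_4$ with two right force points, each cumulative weight $>-2$ and $|\rho^{2,R}|<2$, almost surely avoids its second force point --- is, up to an innocuous positive left weight, precisely the content of the lemma you are proving, so the reduction is circular. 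Second, the scale-invariance/Fubini mechanism behind Lemma~\ref{lem::boundary_levellines_zerohittingprobability} is designed to rule out hitting a \emph{generic} boundary point for a \emph{single} right-force-point process; it does not straightforwardly transfer to ruling out hitting a \emph{force point} of a two-force-point process, and you do not explain how the insertion of further level lines would fix this.

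The paper's resolution is a dichotomy combined with a careful choice of what to condition on. Either $\tilde\gamma_u$ (your $\tilde\gamma_v$) avoids $w$, in which case $\tilde\gamma$, being trapped to its left, avoids $w$ too; or $\tilde\gamma_u$ hits $w$, in which case one conditions on $\tilde\gamma_u$ itself rather than on $\tilde\gamma_{u_1u_2}$. Given $\tilde\gamma_u$, the path $\tilde\gamma$ is an $\SLE_4(\rho)$ in the left component with a \emph{single} right force point of weight $\rho=-u/\lambda-2>-2$; moreover, on the event that $\tilde\gamma_u$ hits $w$, the point $w$ is measurable with respect to $\tilde\gamma_u$ and the field to its right, which is conditionally independent of $\tilde\gamma$. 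So $w$ is a fixed boundary point of the conditional domain, and the already-established Lemma~\ref{lem::boundary_levellines_zerohittingprobability} applies directly. Notice there is no need to prove that $\tilde\gamma_u$ avoids $w$ --- the argument works whether it does or not --- and conditioning on the single-height $\tilde\gamma_u$ rather than on $\tilde\gamma_{u_1u_2}$ is what collapses the law of $\tilde\gamma$ to a one-force-point process and breaks the circularity. You should replace your last paragraph with this dichotomy.
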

\begin{proof}
Suppose that $\tilde{h}$ is a GFF on $\HH$ whose boundary value is $-\lambda$ on $\R_-$ and is $\tilde{b}$ on $\R_+$. Assume that $\tilde{b}$ is large enough so that the level lines we will use later are non-boundary-intersecting. Set
\[u_1=-\lambda-b<0,\quad u_2=-\lambda-c<0.\]
Let $\tilde{\gamma}$ be the level line of $\tilde{h}$ starting from 0 and $\tilde{\gamma}'$ be the level line of $-\tilde{h}$ starting from $\infty$. Let $\tilde{\gamma}_{u_1u_2}$ be the height-varying level line of $\tilde{h}$ starting from 0 with heights $u_1,u_2$ and height change time 1. Define $w$ to be $\tilde{\gamma}_{u_1u_2}(1)$ if $u_2\le u_1$ and to be the last point in $\tilde{\gamma}_{u_1u_2}[1,\infty)$ that is contained in $\tilde{\gamma}_{u_1u_2}[0,1]$ if $u_2>u_1$. We summarize the relations between $\tilde{\gamma}$, $\tilde{\gamma}'$ and $\tilde{\gamma}_{u_1u_2}$ as follows.
\begin{enumerate}
\item [(a)] By Proposition \ref{prop::boundary_levellines_nonintersecting_all}, the level line $\tilde{\gamma}$ is continuous and transient; moreover, the paths $\tilde{\gamma}'$ and $\tilde{\gamma}$ are equal.
\item [(b)] By Lemma \ref{lem::boundary_levellines_heightvarying_monotoncity}, the level line $\tilde{\gamma}$ stays to the left of $\tilde{\gamma}_{u_1u_2}$. Define $C$ to be the connected component of $\HH\setminus\tilde{\gamma}_{u_1u_2}$ that has $\R_-$ on the boundary. Let $\psi$ be the conformal map from $C$ onto $\HH$ that sends $0$ to $0$, $w$ to $1$, and $\infty$ to $\infty$. Define $h=\tilde{h}|_C\circ\psi^{-1}$ which has the same boundary value as the $\GFF$ in the statement of this lemma.
\item [(c)] By Lemma \ref{lem::boundary_levellines_heightvarying_conditionallaw}, given $\tilde{\gamma}_{u_1u_2}$, the path $\psi(\tilde{\gamma})$ is the level line of $h$ and the path $\psi(\tilde{\gamma}')$ is the level line of $-h$.
\end{enumerate}
Combining these three facts, given $\tilde{\gamma}_{u_1u_2}$, we have that the level line $\psi(\tilde{\gamma})$ is continuous and transient; moreover, the paths $\psi(\tilde{\gamma}')$ and $\psi(\tilde{\gamma})$ are equal. Finally, we only need to show that $\tilde{\gamma}$ does not hit $w$. Take another level line $\tilde{\gamma}_u$ of $\tilde{h}$ with height $u\in (u_1\vee u_2,0)$ starting from 0. Then almost surely $\tilde{\gamma}_u$ stays between $\tilde{\gamma}$ and $\tilde{\gamma}_{u_1u_2}$. If $\tilde{\gamma}_u$ does not hit $w$, neither does $\tilde{\gamma}$, then we are done. If $\tilde{\gamma}_u$ hits $w$, since the conditional law of $\tilde{\gamma}$ given $\tilde{\gamma}_u$ is an $\SLE_4(\rho)$ process with $\rho=-u/\lambda-2>-2$, we can use Lemma \ref{lem::boundary_levellines_zerohittingprobability} to explain that $\tilde{\gamma}$ almost surely does not hit $w$.
\end{proof}
\begin{figure}[ht!]
\begin{center}
\includegraphics[width=0.3\textwidth]{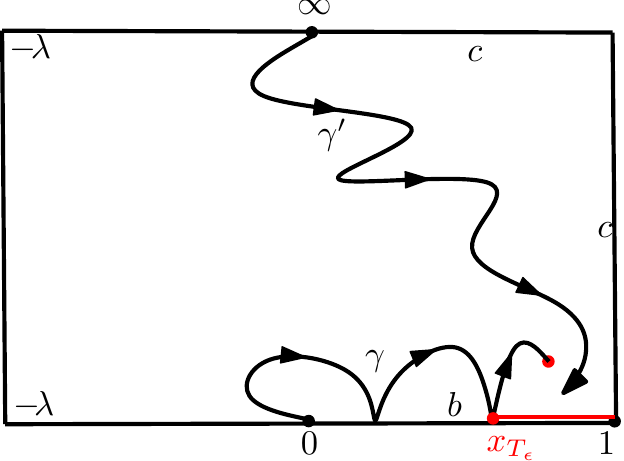}
\end{center}
\caption{\label{fig::boundary_levellines_continuity_tworight_forcepoints} The explanation of the behaviour of paths in the proof of Lemma \ref{lem::boundary_levellines_continuity_tworight_forcepoints_3}. Given $\gamma[0,T_{\eps}]$, the level line $\gamma'$ has to accumulates at $\gamma(T_{\eps})$ or accumulates in the red interval $[x_{T_{\eps}},1]$.}
\end{figure}

\begin{lemma}\label{lem::boundary_levellines_continuity_tworight_forcepoints_2}
Suppose that $h$ is a $\GFF$ on $\HH$ whose boundary value is $-\lambda$ on $\R_-$, is $b$ on $(0,1)$, and is $c$ on $(1,\infty)$. Assume that
\[b\ge \lambda, \quad c>-\lambda.\]
Let $\gamma$ be the level line of $h$ starting from 0 targeted at $\infty$. Then, almost surely, $\gamma$ does not hit 1 and is continuous and transient.
\end{lemma}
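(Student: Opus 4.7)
\textbf{Plan for the proof of Lemma~\ref{lem::boundary_levellines_continuity_tworight_forcepoints_2}.}
The argument will split into the cases $c\ge\lambda$ and $c\in(-\lambda,\lambda)$. In the first case $c\ge\lambda$, $\gamma$ has the law of $\SLE_4(\rho^{1,R},\rho^{2,R})$ with force points $(0^+,1)$ and weights satisfying $\rho^{1,R}=b/\lambda-1\ge 0$ and $\rho^{1,R}+\rho^{2,R}=c/\lambda-1\ge 0$, which both meet the non-boundary-intersecting threshold $\kappa/2-2=0$. Proposition~\ref{prop::chordal_sle_continuity_nonintersecting} then gives at once that $\gamma$ is absolutely continuous with respect to standard $\SLE_4$, hence continuous, transient, and disjoint from $\partial\HH\setminus\{0\}$; in particular it does not hit $1$.

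In the complementary case $c\in(-\lambda,\lambda)$, the hypothesis $b\ge\lambda$ together with Proposition~\ref{prop::sle_chordal_forbidden_intervals} immediately implies that $\gamma$ almost surely never hits the interval $(0,1)$, and away from the force point $1$ the process is absolutely continuous with respect to a standard $\SLE_4(\rho)$ with $\rho>-2$, hence is continuous there (Remark~\ref{rem::boundary_levellines_continuity_exceptforcepoints}). So the only thing to rule out is pathological behavior at the point $1$. For this, I will introduce the level line $\gamma'$ of $-h$ starting from $\infty$ targeted at $0$; after a conformal change of coordinates $z\mapsto -1/z$ (and, if necessary, an appropriate height shift that uses $c\in(-\lambda,\lambda)$), the path $\gamma'$ becomes the level line from $0$ of a GFF with piecewise constant boundary data whose only non-trivial jump has size strictly less than $2\lambda$, so that it falls under the scope of Lemma~\ref{lem::boundary_levellines_continuity_tworight_forcepoints_1}; in particular $\gamma'$ is a.s.\ continuous and transient.

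For each $\eps>0$ let $T_\eps$ be the first time $\gamma$ enters $B(1,\eps)$. On $[0,T_\eps]$, $\gamma$ is an a.s.\ continuous simple curve avoiding $(0,1)$, and if $C$ denotes the connected component of $\HH\setminus\gamma[0,T_\eps]$ whose boundary contains $(1,\infty)$, then the domain Markov property gives the conditional boundary data of $h|_C$: it is $c$ on $(1,\infty)$, $\ge\lambda$ along the right side of $\gamma[0,T_\eps]$, and $\le-\lambda$ along a short arc of $\partial C$ near $\gamma(T_\eps)$. Under this data one can apply Lemma~\ref{lem::boundary_levellines_deterministic_case3} together with Remark~\ref{rem::boundary_levellines_deterministic_case3} to $\gamma'$ viewed as a level line in $C$: $\gamma'$ must accumulate either at the tip $\gamma(T_\eps)$ or in an interval $[x_{T_\eps},1]\subset\partial C\cap\R$ lying entirely to the left of $1$, where $x_{T_\eps}$ is the appropriate right endpoint on $\partial C$ determined by $\gamma[0,T_\eps]$ (see the figure caption). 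Since $\gamma'$ is continuous and transient, taking $\eps\downarrow 0$ forces $\gamma$ to stay at positive distance from $1$ (an accumulation of $\gamma$ at $1$ would leave $\gamma'$ no admissible accumulation site), and then merging/continuity of $\gamma'$ with $\gamma$ on the right side, together with Remark~\ref{rem::boundary_levellines_continuity_exceptforcepoints}, will give the continuity and transience of $\gamma$.

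The main obstacle I anticipate is the bookkeeping of boundary values inside the random component $C$ and a clean reduction of $\gamma'$ to the setting of Lemma~\ref{lem::boundary_levellines_continuity_tworight_forcepoints_1}, especially because the boundary value $-b\le-\lambda$ of $-h$ on $(0,1)$ raises the threat of a continuation threshold for $\gamma'$; controlling this via an auxiliary height shift and Proposition~\ref{prop::gff_absolutecontinuity} will be the delicate technical step. Once that is in place, the remaining arguments mirror those already developed for Lemmas~\ref{lem::boundary_levellines_heightvarying_monotoncity} and~\ref{lem::boundary_levellines_continuity_tworight_forcepoints_1}.
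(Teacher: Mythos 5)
Your first case ($c\ge\lambda$) is fine and matches what the paper does. The real content is the case $c\in(-\lambda,\lambda)$, and there the plan has a circularity problem that I do not think your ``height shift plus absolute continuity'' remark can fix.

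The contradiction argument you outline — run $\gamma$ to within $\eps$ of $1$, then use the continuity and transience of $\gamma'$ (the level line of $-h$ from $\infty$) to force a contradiction — is precisely the paper's proof of the \emph{next} lemma, Lemma~\ref{lem::boundary_levellines_continuity_tworight_forcepoints_3}, whose hypotheses are $b\in(-\lambda,\lambda)$, $c\ge\lambda$. That proof explicitly invokes the present Lemma~\ref{lem::boundary_levellines_continuity_tworight_forcepoints_2} to supply the needed properties of $\gamma'$. And indeed, in your setting the reflected curve $\gamma'$ (after $z\mapsto -1/z$ and $h\mapsto -h$) is the level line from $0$ of a GFF whose boundary data is $-\lambda$ on $\R_-$, then $c$ on $(0,1)$, then $b$ on $(1,\infty)$ with $c\in(-\lambda,\lambda)$ and $b\ge\lambda$ — exactly the configuration of Lemma~\ref{lem::boundary_levellines_continuity_tworight_forcepoints_3}. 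So you need Lemma~\ref{lem::boundary_levellines_continuity_tworight_forcepoints_3} to argue about $\gamma'$, and that lemma in turn needs the one you are proving. Your proposed escape route, appealing to Lemma~\ref{lem::boundary_levellines_continuity_tworight_forcepoints_1}, only applies under the additional constraint $|b-c|<2\lambda$. With $b\ge\lambda$ and $c\in(-\lambda,\lambda)$ the difference $b-c$ is unbounded above, so this only covers a strict sub-case. A height shift cannot help here: shifting the height changes the level line $\gamma'$ itself, and does not change jump sizes in any case; and the ``only non-trivial jump'' claim in your plan is incorrect, since after the coordinate change there are two jumps, at $0$ (of size $\lambda+c\in(0,2\lambda)$) and at $1$ (of size $b-c$, unbounded).

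The paper's proof of the present lemma avoids $\gamma'$ entirely and is much shorter. For $c\in(-\lambda,\lambda)$ one applies a conformal map $\psi$ from $\HH$ to the strip $\T$ sending $0\mapsto 0$, $1\mapsto +\infty$, $\infty\mapsto -\infty$. Then $(1,\infty)$ becomes $\partial_U\T$ with boundary value $c\in(-\lambda,\lambda)$, while $\partial_L\T$ has value $-\lambda$ to the left of $0$ and $b\ge\lambda$ to the right. Lemma~\ref{lem::boundary_levellines_deterministic_case1} together with Remark~\ref{rem::boundary_levellines_deterministic_case1} says $\psi(\gamma)$ accumulates in $\partial_U\T$ strictly before reaching $\pm\infty$; unwinding, $\gamma$ first accumulates in the \emph{open} interval $(1,\infty)$ and therefore never hits $1$. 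Continuity then follows from Remark~\ref{rem::boundary_levellines_continuity_exceptforcepoints}, and transience follows because after the first accumulation time $\tau$ in $(1,\infty)$ the remaining curve is an $\SLE_4(\tilde\rho^R)$ with $\tilde\rho^R=c/\lambda-1>-2$, which is transient by Proposition~\ref{prop::boundary_levellines_deterministic_intersecting}. I would recommend discarding the contradiction via $\gamma'$ for this lemma (keep it in reserve for the $b\in(-\lambda,\lambda)$, $c\ge\lambda$ case) and using the strip map instead.
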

\begin{proof}If $c\ge\lambda$, the curve $\gamma$ never hits the boundary after time 0, and we are done. In the following, we assume $c\in (-\lambda,\lambda)$, then $\gamma$ accumulates in $[1,\infty)$ by Proposition \ref{prop::dubedat_lemma15}.
\smallbreak
\textit{First,} we show that $\gamma$ does not hit 1 which implies the continuity by Remark \ref{rem::boundary_levellines_continuity_exceptforcepoints}. Let $\psi$ be the conformal map from $\HH$ onto $\T$ that sends 0 to 0, $1$ to $+\infty$ and $\infty$ to $-\infty$. Then the boundary value on $\partial_U\T$ is $c\in (-\lambda,\lambda)$. By Lemma \ref{lem::boundary_levellines_deterministic_case1} and Remark \ref{rem::boundary_levellines_deterministic_case1}, we know that $\psi(\gamma)$ accumulates in $\partial_U\T$ before reaches $\pm\infty$. This implies that $\gamma$ first accumulates in the open interval $(1,\infty)$.
\smallbreak
\textit{Next,} we show that $\gamma$ is transient. Define $\tau$ to be the first time that $\gamma$ accumulates in $(1,\infty)$. Then, given $\gamma[0,\tau]$, the conditional law of $(\gamma(t+\tau),t\ge 0)$ is $\SLE_4(\tilde{\rho}^R)$ with $\tilde{\rho}^R=c/\lambda-1$, which is transient by Proposition \ref{prop::boundary_levellines_deterministic_intersecting}. This completes the proof.
\end{proof}
\begin{lemma}\label{lem::boundary_levellines_continuity_tworight_forcepoints_3}
Suppose that $h$ is a $\GFF$ on $\HH$ whose boundary value is $-\lambda$ on $\R_-$, is $b$ on $(0,1)$, and is $c$ on $(1,\infty)$. Assume that
\[b\in (-\lambda,\lambda), \quad c\ge\lambda.\]
Let $\gamma$ be the level line of $h$ starting from 0 targeted at $\infty$. Then, almost surely, $\gamma$ does not hit 1 and is continuous.
\end{lemma}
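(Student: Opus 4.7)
The plan is to reduce Lemma \ref{lem::boundary_levellines_continuity_tworight_forcepoints_3} to Lemma \ref{lem::boundary_levellines_continuity_tworight_forcepoints_2} via the anti-conformal involution $\Psi(z)=1/\bar z$ of $\HH$, which fixes $1$ and swaps $0$ and $\infty$. First, let $\bar h$ be a GFF on $\HH$ with boundary values $-\lambda$ on $\R_-$, $c$ on $(0,1)$, and $b$ on $(1,\infty)$. Since $c\ge\lambda$ and $b>-\lambda$, these data satisfy the hypotheses of Lemma \ref{lem::boundary_levellines_continuity_tworight_forcepoints_2} (with the roles of the lemma's $b$ and $c$ interchanged). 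Hence its level line $\bar\gamma$ from $0$ is almost surely continuous, transient, and does not hit $1$.

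A direct check of boundary values shows that $h\circ\Psi$ has the same law as $\bar h$: for $z\in\R_-$, $\Psi(z)=1/z\in\R_-$ so $h(\Psi(z))=-\lambda$; for $z\in(0,1)$, $\Psi(z)\in(1,\infty)$ so $h(\Psi(z))=c$; for $z\in(1,\infty)$, $\Psi(z)\in(0,1)$ so $h(\Psi(z))=b$. Because $\Psi$ reverses orientation, the sides of $\Psi(\bar\gamma)$ on which the harmonic extension equals $\pm\lambda$ are interchanged relative to those of $\bar\gamma$. Consequently, $\Psi(\bar\gamma)$ has the same law as the level line $\gamma'$ of $-h$ starting from $\infty$ and targeting $0$. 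Since $\Psi$ extends to a homeomorphism of $\ov\HH\cup\{\infty\}$ fixing $1$, the continuity of $\bar\gamma$ and its avoidance of $1$ transfer to $\gamma'$.

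To conclude, it suffices to show $\gamma=\gamma'$ as sets, from which continuity and non-hitting of $1$ transfer to $\gamma$. Following the scheme of Lemma \ref{lem::boundary_levellines_deterministic_nonintersecting_coincide} and Proposition \ref{prop::boundary_levellines_deterministic_nonintersecting_coincide}, the key step is to argue that for any $\gamma'$-stopping time $\tau'$, conditional on $\gamma'[0,\tau']$, the level line $\gamma$ first exits $\HH\setminus\gamma'[0,\tau']$ at $\gamma'(\tau')$. Theorem \ref{thm::boundary_levelline_gff_coupling} describes the conditional law of $h$ on this domain as a GFF with boundary $\pm\lambda$ along $\gamma'[0,\tau']$ and the original values $-\lambda,b,c$ on the exposed portions of $\partial\HH$; the required first-exit statement is then a generalization of Lemma \ref{lem::boundary_levellines_deterministic_case3} allowing a boundary value in $(-\lambda,\lambda)$ on $\partial_L\T$ to the right of $0$. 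Such a generalization is precisely what Lemma \ref{lem::boundary_levellines_continuity_tworight_forcepoints_2} supplies, after mapping to the strip. The principal obstacle is this last step: the boundary value $b\in(-\lambda,\lambda)$ on $(0,1)$ does not satisfy the ``at least $\lambda$'' hypothesis of Remark \ref{rem::boundary_levellines_deterministic_case3}, so one must carefully verify that Lemma \ref{lem::boundary_levellines_continuity_tworight_forcepoints_2} is exactly the input that allows the merging argument to go through. Once that verification is done, a dense-stopping-time argument yields $\gamma=\gamma'$ and finishes the proof.
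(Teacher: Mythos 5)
Your first two steps are sound: using the anti-conformal involution $\Psi(z)=1/\bar z$ to show that $\gamma'$, the level line of $-h$ from $\infty$, is (in law) the $\Psi$-image of a level line to which Lemma \ref{lem::boundary_levellines_continuity_tworight_forcepoints_2} applies, so that $\gamma'$ is continuous and does not hit $1$, is exactly the reduction the paper performs (implicitly) when it cites Lemma \ref{lem::boundary_levellines_continuity_tworight_forcepoints_2} for $\gamma'$. The genuine gap is in the final step. You want $\gamma=\gamma'$ as sets, but the merging/dense-stopping-time argument of Lemma \ref{lem::boundary_levellines_deterministic_nonintersecting_coincide} and Proposition \ref{prop::boundary_levellines_deterministic_nonintersecting_coincide} requires \emph{both} curves to be continuous simple curves, and continuity of $\gamma$ at the force point $1$ is exactly what this lemma asserts. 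The ``first-exit'' statement you would need when a boundary value in $(-\lambda,\lambda)$ sits to the right of $0$ is supplied by Lemma \ref{lem::boundary_levellines_continuity_tworight_forcepoints_generalization_1}, and downstream by Lemmas \ref{lem::boundary_levellines_case2_threshold} and \ref{lem::boundary_levellines_coincide_threshold}, whose proofs cite the present Lemma \ref{lem::boundary_levellines_continuity_tworight_forcepoints_3}; and the coincidence $\gamma=\gamma'$ in this boundary-data regime is precisely the later Lemma \ref{lem::boundary_levellines_continuity_tworight_forcepoints_3_reversibility}, which again uses the present lemma as an input. The ``verification'' you defer is therefore not available at this stage, and the route you sketch is circular.

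The paper instead runs a one-shot contradiction that never needs $\gamma=\gamma'$. Assume $\gamma$ hits $1$ with positive probability. For $\eps>0$ let $T_\eps$ be the first time $\gamma$ gets within distance $\eps$ of $1$; then $\gamma|_{[0,T_\eps]}$ is continuous by Remark \ref{rem::boundary_levellines_continuity_exceptforcepoints}. Let $x_{T_\eps}$ be the rightmost point of $\gamma[0,T_\eps]\cap\R_+$. By the forbidden-intervals Lemma \ref{lem::levelline_forbidden_intervals}, given $\gamma[0,T_\eps]$, the curve $\gamma'$ can only accumulate at $\gamma(T_\eps)$ or in $[x_{T_\eps},1]$, which forces $\gamma'$ within distance $\eps$ of $1$. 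Since $\eps>0$ is arbitrary and $\gamma'$ is continuous, $\gamma'$ hits $1$ with positive probability, contradicting what you have already established for $\gamma'$. Replacing your intended $\gamma=\gamma'$ step with this argument closes the gap.
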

\begin{proof}
We only need to show that $\gamma$ does not hit 1.
We prove by contradiction. Assume that $\gamma$ hits $1$ with positive probability, and, on this event, define $T_{\eps}$ to be the first time that $\gamma$ gets within distance $\eps$ of 1. We have that $\gamma$ is continuous up to $T_{\eps}$. Let $x_{T_{\eps}}$ be the rightmost point of $\gamma[0,T_{\eps}]\cap\R_+$. Note that $x_{T_{\eps}}\in (0,1)$.
Let $\gamma'$ be the level line of $-h$ starting from $\infty$. From Lemma \ref{lem::boundary_levellines_continuity_tworight_forcepoints_2}, we know that $\gamma'$ is almost surely continuous and does not hit 1. By Lemma \ref{lem::levelline_forbidden_intervals}, we know that, given $\gamma[0,T_{\eps}]$, the curve $\gamma'$ has to accumulate at $\gamma(T_{\eps})$ or accumulate in $[x_{T_{\eps}},1]$, see Figure \ref{fig::boundary_levellines_continuity_tworight_forcepoints}. This implies that $\gamma'$ has to get within distance $\eps$ of 1. Since that this holds for any $\eps>0$ and that $\gamma'$ is continuous, the curve $\gamma'$ will hits 1 with positive probability, contradiction.
\end{proof}

\begin{figure}[ht!]
\begin{subfigure}[b]{0.48\textwidth}
\begin{center}
\includegraphics[width=0.6\textwidth]{figures/boundary_levellines_nonintersecting_monotonicity_reverse_1}
\end{center}
\caption{The boundary value for the field in Lemma \ref{lem::boundary_levellines_continuity_tworight_forcepoints_generalization_1}.}
\end{subfigure}
\begin{subfigure}[b]{0.48\textwidth}
\begin{center}
\includegraphics[width=0.6\textwidth]{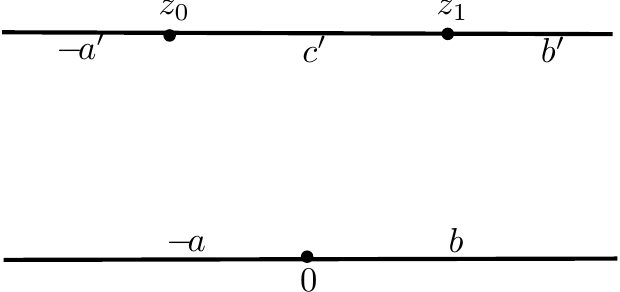}
\end{center}
\caption{The boundary value for the field in Lemma \ref{lem::boundary_levellines_continuity_tworight_forcepoints_generalization_2}.}
\end{subfigure}
\caption{\label{fig::boundary_levellines_continuity_tworight_forcepoints_generalization} The boundary values of the fields in Lemmas \ref{lem::boundary_levellines_continuity_tworight_forcepoints_generalization_1} and \ref{lem::boundary_levellines_continuity_tworight_forcepoints_generalization_2}.}
\end{figure}

The following two lemmas are generalization of Lemma \ref{lem::boundary_levellines_deterministic_case2} and Lemma \ref{lem::boundary_levellines_deterministic_case3}.

\begin{lemma}\label{lem::boundary_levellines_continuity_tworight_forcepoints_generalization_1}
Suppose that $h$ is a $\GFF$ on $\T$ whose boundary value is as depicted in Figure \ref{fig::boundary_levellines_continuity_tworight_forcepoints_generalization}(a). Assume that
\[a, a', b'\ge \lambda,\quad b>-\lambda.\]
Let $\gamma$ be the level line of $h$ starting from 0 targeted at $z_0$. Then, almost surely, $\gamma$ is continuous, $\gamma$ accumulates at $z_0$ before reaches $\pm\infty$, and $\gamma$ does not hit other points in $\partial_U\T$.
\end{lemma}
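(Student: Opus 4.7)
The plan is to conformally map $\T$ to $\HH$ and reduce to the $\SLE_4(\underline{\rho})$ theory already developed. Let $\psi : \T \to \HH$ be the conformal map with $\psi(0) = 0$ and $\psi(z_0) = \infty$, and set $y^{\pm} := \psi(\pm\infty)$ so that $y^{-} < 0 < y^{+}$. Under $\psi$, the image $\psi(\gamma)$ has the law of $\SLE_4(\underline{\rho}^L; \underline{\rho}^R)$ with force points $(y^{-}, 0^-; 0^+, y^{+})$ and cumulative weights
\[
\overline{\rho}^{1,L} = a/\lambda - 1,\quad \overline{\rho}^{2,L} = a'/\lambda - 1,\quad \overline{\rho}^{1,R} = b/\lambda - 1,\quad \overline{\rho}^{2,R} = b'/\lambda - 1.
\]
The hypotheses force $\overline{\rho}^{1,L}, \overline{\rho}^{2,L}, \overline{\rho}^{2,R} \ge 0$ and $\overline{\rho}^{1,R} > -2$, so the continuation threshold is never reached.

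In the easy case $b \ge \lambda$, all four cumulative weights are $\ge 0$, and Proposition \ref{prop::chordal_sle_continuity_nonintersecting} immediately gives continuity of $\psi(\gamma)$ together with the fact that it avoids $\partial \HH \setminus \{0\}$; this pulls back to the desired conclusion for $\gamma$.

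For the harder case $b \in (-\lambda, \lambda)$, the only arc that $\psi(\gamma)$ could touch is $(0, y^{+})$. I would compare with the setting of Lemma \ref{lem::boundary_levellines_continuity_tworight_forcepoints_3} by introducing an auxiliary GFF $\tilde h$ on $\HH$ with boundary value $-\lambda$ on $(-\infty, 0)$, $b$ on $(0, y^{+})$, and $b'$ on $(y^{+}, \infty)$; after rescaling so that $y^{+}$ plays the role of $1$, Lemma \ref{lem::boundary_levellines_continuity_tworight_forcepoints_3} shows that the level line $\tilde\gamma$ of $\tilde h$ from $0$ is continuous and does not hit $y^{+}$. Since the boundary data of $h \circ \psi^{-1}$ and of $\tilde h$ agree on $(0, \infty)$ and differ only on $(-\infty, 0)$, Proposition \ref{prop::gff_absolutecontinuity}(2) yields mutual absolute continuity of the two fields on any open subset $U \subset \HH$ at positive distance from $(-\infty, 0]$. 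Stopping both level lines at the first exit of the truncated domain $U_{R, \eps} := \{z \in \HH : |z| \le R,\ \dist(z, (-\infty, 0]) \ge \eps\}$, and using that the level line up to that stopping time is determined by the field on $U_{R, \eps}$ (Theorem \ref{thm::boundary_levelline_gff_deterministic} combined with the local-set property), this absolute continuity transfers the continuity of $\tilde\gamma$ and the avoidance of $y^{+}$ to $\psi(\gamma)$. Letting $R \uparrow \infty$ and $\eps \downarrow 0$, and using Proposition \ref{prop::sle_chordal_forbidden_intervals} to rule out that the now-continuous $\psi(\gamma)$ touches $(-\infty, 0) \cup (y^{+}, \infty)$ (where cumulative weights are $\ge 0$), I obtain global continuity and the non-hitting of $\partial \HH \setminus [0, y^{+}]$. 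Transience then follows from Proposition \ref{prop::boundary_levellines_deterministic_intersecting} applied to the evolution after any positive stopping time, using that $\overline{\rho}^{2,R} \ge 0$ makes the remaining $\SLE_4$ dynamics transient; pulling back by $\psi^{-1}$ gives continuity of $\gamma$ and accumulation at $z_0$ without otherwise hitting $\partial_U\T$.

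The main obstacle is the absolute-continuity transfer near the starting point $0$, which sits precisely on the interface between the matching arc $(0, \infty)$ and the mismatched arc $(-\infty, 0)$. This is addressed by the truncation $U_{R, \eps}$ together with the a priori consequence of $\overline{\rho}^{1,L}, \overline{\rho}^{2,L} \ge 0$, which forces $\psi(\gamma)$ to stay at positive distance from $(-\infty, 0]$ after time $0$ and allows us to send $\eps \downarrow 0$ freely.
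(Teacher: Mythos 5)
Your proposal is essentially the paper's argument: conformally map $\T$ to $\HH$ sending $0\mapsto 0$ and $z_0\mapsto\infty$, observe that the cumulative left weights are $\ge 0$ (since $a,a'\ge\lambda$) so $\psi(\gamma)$ avoids $(-\infty,0]$ and its law is absolutely continuous with respect to an $\SLE_4(\rho^{1,R},\rho^{2,R})$ with only right force points, and then invoke Lemma~\ref{lem::boundary_levellines_continuity_tworight_forcepoints_3} for continuity and the avoidance of the interior right force point (the image of $+\infty$). The paper compresses this into two sentences, implicitly treating the trivial subcase $b\ge\lambda$ and leaving the absolute-continuity transfer and the transience to the reader; you spell out both the truncation argument behind the absolute continuity and the transience via Proposition~\ref{prop::boundary_levellines_deterministic_intersecting}, which fills in exactly the steps the paper glosses over. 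The one place where you should be a bit more careful is the justification that the transfer works near the starting point $0$, where the two fields' boundary data differ on one side: the clean way to close the gap (consistent with the paper's global strategy) is to note that the level line is determined by $h$ (Theorem~\ref{thm::boundary_levelline_gff_deterministic}) and that, by Proposition~\ref{prop::sle_chordal_forbidden_intervals}, $\psi(\gamma)$ never returns to $(-\infty,0]$ after time $0$, so the law of the curve restricted to $\{t>\delta\}$ is determined by the field away from $\R_-$, where Proposition~\ref{prop::gff_absolutecontinuity}(2) applies; continuity at $t=0$ is then automatic since the curve is a Loewner chain. With that, your proof is correct and matches the paper's in substance, just with the details made explicit.
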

\begin{proof}
Let $\psi$ be the conformal map from $\T$ onto $\HH$ that sends 0 to 0, $+\infty$ to 1, and $z_0$ to $\infty$. Then $\psi(\gamma)$ is absolutely continuous with respect to $\SLE_4(\rho^{1,R},\rho^{2,R})$ process with force points $(0^+,1)$ where
\[\rho^{1,R}=b/\lambda-1>-2,\quad \rho^{1,R}+\rho^{2,R}=b'/\lambda-1\ge 0.\]
Thus, by Lemma \ref{lem::boundary_levellines_continuity_tworight_forcepoints_3}, we know that $\psi(\gamma)$ is continuous and does not hit $[1,\infty)$ which implies the conclusion.
\end{proof}

\begin{lemma}\label{lem::boundary_levellines_continuity_tworight_forcepoints_generalization_2}
Suppose that $h$ is a $\GFF$ on $\T$ whose boundary value is as depicted in Figure \ref{fig::boundary_levellines_continuity_tworight_forcepoints_generalization}(b). Assume that
\[a, a', b'\ge \lambda,\quad b>-\lambda,\quad c'\in (-\lambda,\lambda).\]
Let $\gamma$ be the level line of $h$ starting from 0 targeted at $z_0$. Then, almost surely, $\gamma$ accumulates in $[z_0,z_1]$ before reaches $\pm\infty$, $\gamma$ is continuous up to the first time that it hits $[z_0,z_1]$, and $\gamma$ does not hit points in $\partial_U\T\setminus [z_0,z_1]$.
\end{lemma}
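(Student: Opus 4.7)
The strategy is to deduce the lemma from Lemma \ref{lem::boundary_levellines_deterministic_case3} (via Remark \ref{rem::boundary_levellines_deterministic_case3}) and Lemma \ref{lem::boundary_levellines_continuity_tworight_forcepoints_3} after a conformal map $\T\to\HH$ and an absolute-continuity reduction. I would take $\phi\colon \T\to\HH$ with $\phi(0)=0$, $\phi(+\infty)=1$, $\phi(z_0)=\infty$, and set $y=\phi(z_1)>1$ and $x_L=\phi(-\infty)<0$. Then $\phi(\gamma)$ is coupled with $h\circ\phi^{-1}$ as the level line of an $\SLE_4$ variant from $0$ to $\infty$ with force points $(0^-,x_L;\,0^+,1,y)$ and accumulated weights
\[
\bar\rho^{1,R}=b/\lambda-1>-2,\quad \bar\rho^{2,R}=b'/\lambda-1\ge 0,\quad \bar\rho^{3,R}=c'/\lambda-1\in(-2,0),
\]
together with $\bar\rho^{1,L}=a/\lambda-1\ge 0$ and $\bar\rho^{2,L}=a'/\lambda-1\ge 0$. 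Since every accumulated weight exceeds $-2$, the continuation threshold is never reached.

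If $b\ge\lambda$ then $\bar\rho^{1,R}\ge 0$, so $\phi(\gamma)$ does not touch the lower-boundary interval $(0,1)$ and the conclusion follows directly from Remark \ref{rem::boundary_levellines_deterministic_case3}. The substantive case is $b\in(-\lambda,\lambda)$, where the curve can hit $(0,1)$. I would rule out, piece by piece, that $\phi(\gamma)$ hits $\R_-$, $\{1\}$, or $(1,y)$. The set $\R_-$ is excluded from $\bar\rho^{1,L},\bar\rho^{2,L}\ge 0$ and the left--right symmetric version of Proposition \ref{prop::sle_chordal_forbidden_intervals}. To exclude $\{1\}$, I would introduce the auxiliary field $\bar h$ on $\HH$ with boundary values $-\lambda$ on $\R_-$, $b$ on $(0,1)$, and $b'$ on $(1,\infty)$; by Lemma \ref{lem::boundary_levellines_continuity_tworight_forcepoints_3} (applicable because $b\in(-\lambda,\lambda)$ and $b'\ge\lambda$), its level line $\bar\gamma$ is almost surely continuous and avoids $1$. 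The fields $h\circ\phi^{-1}$ and $\bar h$ have matching boundary values on the arc $(0,y)$, so Proposition \ref{prop::gff_absolutecontinuity}(2) applied on any bounded open $U\subset\HH$ whose closure meets $\partial\HH$ only inside $(0,y)$ (for instance $U=\HH\cap B(1,\eps)$ with $\eps<\min(1,y-1)$) yields mutual absolute continuity of $h\circ\phi^{-1}|_U$ and $\bar h|_U$. A standard stopping-time argument then transfers the almost-sure property ``does not hit $1$'' from $\bar\gamma$ to $\phi(\gamma)$.

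Once these non-hitting statements are in place, $\phi(\gamma)$ is continuous up to any time strictly before reaching $[y,\infty]$: away from the boundary, Remark \ref{rem::boundary_levellines_continuity_exceptforcepoints}(1) gives continuity via local absolute continuity to $\SLE_4$, and when the curve excursions to the open piece $(0,1)$ between the force points $0^+$ and $1$ the continuation threshold is not reached, so Remark \ref{rem::boundary_levellines_continuity_exceptforcepoints}(2) again yields continuity. With continuity at hand, Proposition \ref{prop::sle_chordal_forbidden_intervals} applied to $\bar\rho^{2,R}\ge 0=\kappa/2-2$ rules out $(1,y)$. Since the continuation threshold is never hit and $\phi(\gamma)$ cannot reach $\R_-\cup\{1\}\cup(1,y)$, and the curve targets $\infty$, it must accumulate somewhere in $[y,\infty]$, which under $\phi^{-1}$ is exactly $[z_0,z_1]$.

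I anticipate the main difficulty to be the transfer of ``$\bar\gamma$ does not hit $1$'' into ``$\phi(\gamma)$ does not hit $1$'' through Proposition \ref{prop::gff_absolutecontinuity}(2). Although the two level lines are determined by their respective fields (Theorem \ref{thm::boundary_levelline_gff_deterministic}), one still needs to approximate the hypothetical first hitting time of $1$ by stopping times measurable with respect to $h\circ\phi^{-1}|_U$ for $U=\HH\cap B(1,\eps)$ as $\eps\downarrow 0$, leveraging the local-set framework of Proposition \ref{prop::gff_localset_definition} to ensure that determining whether the curve enters the $\eps$-neighborhood of $1$ depends only on the field restricted to that neighborhood.
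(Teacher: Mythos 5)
Your route differs from the paper's. The paper stays in the strip and compares $h$ to an auxiliary field $\tilde h$ whose boundary value agrees with $h$ everywhere except on $[z_0,z_1]$, where $c'$ is replaced by $b'\ge\lambda$. Lemma \ref{lem::boundary_levellines_continuity_tworight_forcepoints_generalization_1} gives the full conclusion for $\tilde h$, and Proposition \ref{prop::gff_absolutecontinuity}(2) applied on $\T_\eps=\T\setminus B([z_0,z_1],\eps)$ --- a domain that contains the seed $0$ and whose intersection with $\partial\T$ lies entirely where the two boundary data agree --- transfers the conclusion to $\gamma$ up to the first $\eps$-approach to $[z_0,z_1]$; then $\eps\downarrow 0$. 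You instead map to $\HH$ and rule out boundary pieces separately; the case split, the weight bookkeeping, and the use of Proposition \ref{prop::sle_chordal_forbidden_intervals} to exclude $(1,y)$ are all fine in outline.

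The gap is in your exclusion of $\{1\}$. Your auxiliary field $\bar h$ has boundary value $-\lambda$ on all of $\R_-$, whereas $h\circ\phi^{-1}$ has $-a$ on $(x_L,0)$ and $-a'$ on $(-\infty,x_L)$, which need not equal $-\lambda$. Consequently Proposition \ref{prop::gff_absolutecontinuity}(2) gives mutual absolute continuity of $\bar h|_U$ and $(h\circ\phi^{-1})|_U$ only for $U$ whose closure meets $\partial\HH$ inside $(0,y)$, and no such $U$ can have the seed $0$ in its closure. The event ``$\phi(\gamma)$ hits $1$'' is not measurable with respect to $(h\circ\phi^{-1})|_U$: the curve starts at $0\notin\overline{U}$, and the local-set property (Proposition \ref{prop::gff_localset_definition}(1)) is the opposite of what your final paragraph asserts --- whether and where the curve enters $U$, and the boundary data it carries upon arrival, are determined by the field on $U^c$, which is precisely where $\bar h$ and $h\circ\phi^{-1}$ disagree. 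So absolute continuity of the restrictions to a small ball around $1$ cannot by itself transfer non-hitting of $1$ from $\bar\gamma$ to $\phi(\gamma)$. The repair is to take $\bar h$ matching $h\circ\phi^{-1}$ on all of $\R\setminus(y,\infty)$, i.e.\ with boundary $-a$ on $(x_L,0)$, $-a'$ on $(-\infty,x_L)$, $b$ on $(0,1)$, $b'$ on $(1,\infty)$ (its level line still avoids $1$ thanks to Remark \ref{rem::boundary_levellines_tworight}), at which point the absolute continuity holds on a domain containing $0$ --- but that is exactly the paper's proof conjugated by $\phi$. A secondary point: you invoke Proposition \ref{prop::sle_chordal_forbidden_intervals} (which presupposes continuity) to exclude $\R_-$ before continuity is established; this is resolvable by working up to the stopping times $T_\eps$ and only then sending $\eps\downarrow 0$, but the circularity should be addressed explicitly.
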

\begin{proof}
Let $T_{\eps}$ be the first time that $\gamma$ gets within distance $\eps$ of $[z_0,z_1]$. Let $\tilde{h}$ be a $\GFF$ on $\T$ whose boundary value is the same as $h$ except that it is $b'$ on $[z_0,z_1]$. Let $\T_{\eps}$ be the open set obtained by removing from $\T$ all points that are within distance $\eps$ of $[z_0,z_1]$. We have the following two observations.
\begin{enumerate}
\item [(a)] The restriction of $\tilde{h}$ to $\T_{\eps}$ is absolutely continuous with respect to the restriction of $h$ to $\T_{\eps}$ by Proposition \ref{prop::gff_absolutecontinuity}.
\item [(b)] Let $\tilde{\gamma}$ be the level line of $\tilde{h}$ starting from 0. From Lemma \ref{lem::boundary_levellines_continuity_tworight_forcepoints_generalization_1}, we know that $\tilde{\gamma}$ is continuous, it accumulates at $z_0$, and it does not hit other points on $\partial_U\T$.
\end{enumerate}
Combining these two facts, we know that $\gamma$ is continuous up to $T_{\eps}$, $\gamma$ does not reach $\pm\infty$ before $T_{\eps}$, and $\gamma$ does not hit $\partial_U\T$ before $T_{\eps}$. This is true for any $\eps>0$ and we could complete the proof by letting $\eps\to 0$.
\end{proof}

\begin{figure}[ht!]
\begin{subfigure}[b]{0.48\textwidth}
\begin{center}
\includegraphics[width=0.6\textwidth]{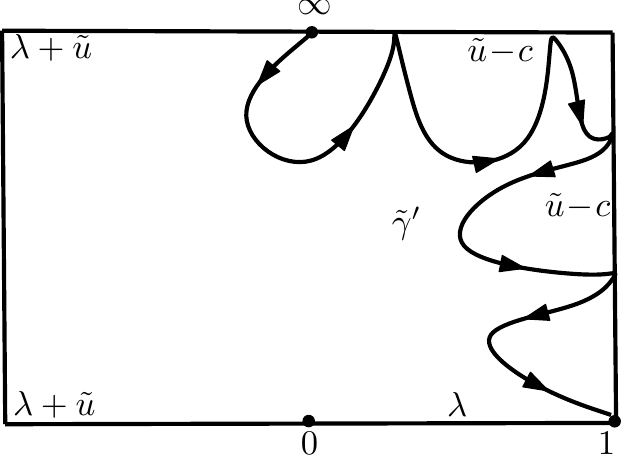}
\end{center}
\caption{Let $\tilde{\gamma}'$ be the level line of $-h$ with height $\tilde{u}$. }
\end{subfigure}
\begin{subfigure}[b]{0.48\textwidth}
\begin{center}
\includegraphics[width=0.6\textwidth]{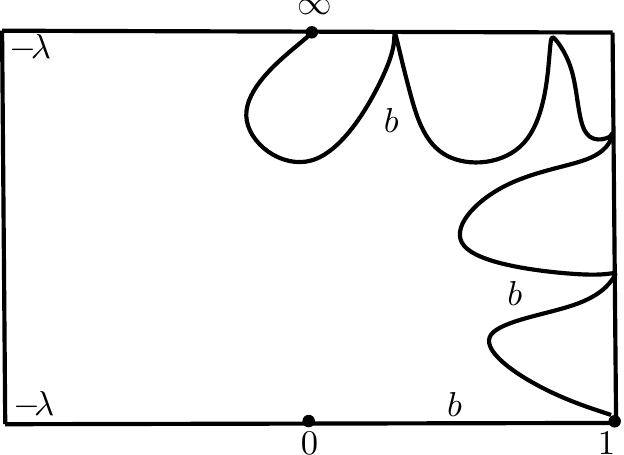}
\end{center}
\caption{Given $\tilde{\gamma}'$, the conditional law of $\gamma$ is $\SLE_4(b/\lambda-1)$.}
\end{subfigure}
\caption{\label{fig::boundary_levellines_continuity_tworight_forcepoints_3_reversibility} The boundary values of the fields in the proof of Lemma \ref{lem::boundary_levellines_continuity_tworight_forcepoints_3_reversibility}.}
\end{figure}

\begin{lemma}\label{lem::boundary_levellines_continuity_tworight_forcepoints_3_reversibility}
Suppose that $h$ is a $\GFF$ on $\HH$ whose boundary value is $-\lambda$ on $\R_-$, is $b$ on $(0,1)$, and is $c$ on $(1,\infty)$. Assume that
\[b\in (-\lambda,\lambda),\quad c\ge \lambda.\]
Let $\gamma$ be the level line of $h$ starting from 0 targeted at $\infty$ and $\gamma'$ be the level line of $-h$ starting from $\infty$ targeted at 0. Then, almost surely, the curves $\gamma'$ and $\gamma$ are equal. In particular, $\gamma$ is transient.
\begin{comment}In particular, the time-reversal of $\SLE_4(\rho^{1,R},\rho^{2,R})$ has the law of $\SLE_4(\tilde{\rho}^{2,L},\tilde{\rho}^{1,L})$ where
\[\rho^{1,R}=b/\lambda-1,\quad \rho^{1,R}+\rho^{2,R}=c/\lambda-1,\quad \tilde{\rho}^{1,L}=\rho^{1,R}+\rho^{2,R},\quad \tilde{\rho}^{1,L}+\tilde{\rho}^{2,L}=\rho^{1,R}.\]
Furthermore, the process $\SLE_4(\rho^{1,R},\rho^{2,R})$ in $\HH$ from 0 to $\infty$ is transient.
\end{comment}
\end{lemma}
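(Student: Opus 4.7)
The plan is to prove the reversibility by reducing it, via an auxiliary level line argument, to the reversibility of a simpler $\SLE_4(\rho)$ process already established in Proposition \ref{prop::boundary_levellines_deterministic_intersecting}. The overall strategy mirrors Lemma \ref{lem::boundary_levellines_continuity_tworight_forcepoints_1}, but the height-varying construction used there requires $|b-c|<2\lambda$, which may fail under the hypotheses $b\in(-\lambda,\lambda)$, $c\ge\lambda$ considered here. To handle the large-$c$ regime I would condition on a carefully chosen auxiliary level line $\tilde{\gamma}'$ of $-h$ with height $\tilde{u}$, as suggested by Figure \ref{fig::boundary_levellines_continuity_tworight_forcepoints_3_reversibility}(a).

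Concretely, I would split into two cases. When $\lambda\le c<b+2\lambda$, the hypotheses $b,c>-\lambda$ and $|b-c|<2\lambda$ of Lemma \ref{lem::boundary_levellines_continuity_tworight_forcepoints_1} are satisfied, so that lemma gives $\gamma=\gamma'$ directly. The main work is in the complementary case $c\ge b+2\lambda$. Here I would take $\tilde{\gamma}'$ to be the level line of $-h$ with a height $\tilde{u}$ calibrated so that the boundary value of $h$ along the side of $\tilde{\gamma}'$ facing the component $C$ of $\HH\setminus\tilde{\gamma}'$ which contains $\R_-$ and $(0,1)$ equals $b$. Continuity and transience of $\tilde{\gamma}'$, together with the fact that it does not hit the continuation threshold, would be verified by appeal to conformal invariance together with Lemmas \ref{lem::boundary_levellines_continuity_tworight_forcepoints_generalization_1}--\ref{lem::boundary_levellines_continuity_tworight_forcepoints_generalization_2}. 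Using the local-set domain Markov property (Propositions \ref{prop::gff_localsets_union}, \ref{prop::gff_localsets_boundarybehavior}, \ref{prop::gff_localsets_interacting}), I would then show that, conditionally on $\tilde{\gamma}'$, the boundary data of $h|_C$ is $-\lambda$ on $\R_-$ and $b$ along the concatenation of $(0,1)$ with the $C$-facing side of $\tilde{\gamma}'$. After conformally mapping $C$ to $\HH$ with $0\mapsto 0$ and the endpoint of $\tilde{\gamma}'$ sent to $\infty$, the restriction of $\gamma$ (resp.\ of $\gamma'$) becomes, as in Figure \ref{fig::boundary_levellines_continuity_tworight_forcepoints_3_reversibility}(b), the forward (resp.\ reverse) $\SLE_4(b/\lambda-1)$ process in $\HH$ from $0$ to $\infty$. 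Proposition \ref{prop::boundary_levellines_deterministic_intersecting} then gives $\gamma=\gamma'$ inside $C$; because $\tilde{\gamma}'$ is itself determined by $h$ (Theorem \ref{thm::boundary_levelline_gff_deterministic}), the identification is unconditional, and the transience of $\gamma$ follows from the continuity of $\gamma'$ at $\infty$.

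The main obstacle I anticipate is the delicate tuning of $\tilde{u}$ so that three conditions hold simultaneously: the level line $\tilde{\gamma}'$ does not hit its continuation threshold, it is continuous and reaches the correct target, and the boundary value of $h$ on its $C$-facing side agrees with $b$ on $(0,1)$ so that no extraneous force point is generated in the conditional law of $\gamma$. A secondary difficulty is to ensure that $\gamma$ and $\gamma'$ indeed remain inside $C$ throughout, which requires a monotonicity argument analogous to Proposition \ref{prop::boundary_levellines_nonintersecting_monotonicity_reverse} but adapted to the fact that $\tilde{\gamma}'$ is a level line of $-h$ (rather than of $h$) with a height in a different range than those previously compared. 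If either of these obstructions turns out to be delicate, a fallback would be to iterate the height-varying trick of Lemma \ref{lem::boundary_levellines_continuity_tworight_forcepoints_1} using a sequence of intermediate heights $b=b_0<b_1<\cdots<b_k=c$ with consecutive differences less than $2\lambda$, each step being handled by the already-established subcase.
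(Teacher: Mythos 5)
Your proposal takes essentially the same approach as the paper. The paper sets $\tilde{u}=\lambda+b$ (exactly the calibration you identify) and lets $\tilde{\gamma}'$ be the level line of $-h$ with height $\tilde{u}$ from $\infty$ to $1$; it then shows $\gamma$ and $\gamma'$ both stay to the left of $\tilde{\gamma}'$, that the conditional law of $\gamma$ inside $C$ is $\SLE_4(b/\lambda-1)$, and applies Proposition \ref{prop::boundary_levellines_deterministic_intersecting} inside $C$ -- precisely your ``main work'' case. The one organizational difference is that you split off the subcase $\lambda\le c<b+2\lambda$ and handle it via Lemma \ref{lem::boundary_levellines_continuity_tworight_forcepoints_1}; this is correct but unnecessary, since with $\tilde{u}=\lambda+b$ the auxiliary-line argument works uniformly for all $c\ge\lambda$ (the boundary values of $-h+\tilde{u}$ along what becomes $\R_+$ after mapping are $2\lambda+b$ and $\lambda$, both $\ge\lambda$, so $\tilde{\gamma}'$ is non-boundary-intersecting on that side regardless of the size of $c$). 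The obstacles you flag -- the tuning of $\tilde{u}$, the monotonicity keeping $\gamma,\gamma'$ in $C$, and the absence of an extraneous force point -- are all resolved exactly as you suggest: the height $\tilde{u}=\lambda+b$ is uniquely determined by the calibration requirement, the monotonicity is a repetition of Proposition \ref{prop::boundary_levellines_nonintersecting_monotonicity_reverse} combined with Lemma \ref{lem::boundary_levellines_continuity_tworight_forcepoints_generalization_2} (for $\gamma$) and Remark \ref{rem::boundary_levellines_deterministic_monotonicity_generalization} (for $\gamma'$), and the calibration ensures the boundary data of $h|_C$ reduces to the one-force-point case.
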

\begin{proof}
Set $\tilde{u}=\lambda+b\in (0,2\lambda)$. Let $\tilde{\gamma}'$ be the level line of $-h$ with height $\tilde{u}$  starting from $\infty$ targeted at 1. We list the properties of $\gamma$ and $\tilde{\gamma}'$ as follows.
\begin{enumerate}
\item [(a)] The boundary value for $\tilde{\gamma}'$ is shown in Figure \ref{fig::boundary_levellines_continuity_tworight_forcepoints_3_reversibility}(a). Thus, from Proposition \ref{prop::boundary_levellines_deterministic_intersecting} and Remark \ref{rem::boundary_levellines_deterministic_intersecting}, we know that $\tilde{\gamma}'$ is continuous up to and including the time when it hits 1.
\item [(b)] From Lemma \ref{lem::boundary_levellines_continuity_tworight_forcepoints_3}, we know that $\gamma$ is continuous and does not hit 1.
\end{enumerate}
Combining these two facts and Lemma \ref{lem::boundary_levellines_continuity_tworight_forcepoints_generalization_2}, we could apply a similar proof as the proof of Proposition \ref{prop::boundary_levellines_nonintersecting_monotonicity_reverse} to show that $\gamma$ almost surely stays to the left of $\tilde{\gamma}'$. Furthermore, a similar proof as the proofs of Proposition \ref{prop::boundary_levellines_conditionallaw_loewner_continuity} and Proposition \ref{prop::boundary_levellines_deterministic_intersecting_conditionallaw} would show that the conditional law of $\gamma$ given $\tilde{\gamma}'$ is $\SLE_4(\rho^{R})$ where $\rho^R=b/\lambda-1$, see Figure \ref{fig::boundary_levellines_continuity_tworight_forcepoints_3_reversibility}(b). Define $C$ to be the left connected component of $\HH\setminus\tilde{\gamma}'$. We list the relations between $\gamma$, $\gamma'$ and $\tilde{\gamma}'$ as follows.
\begin{enumerate}
\item [(a)] The curve $\gamma$ stays to the left of $\tilde{\gamma}'$ and $\gamma$ is the level line of $h|_C$ given $\tilde{\gamma}'$.
\item [(b)] From Remark \ref{rem::boundary_levellines_deterministic_monotonicity_generalization}, we know that $\gamma'$ stays to the left of $\tilde{\gamma}'$ and $\gamma'$ is the level line of $-h|_C$ given $\tilde{\gamma}'$.
\end{enumerate}
Combining these two facts and Proposition \ref{prop::boundary_levellines_deterministic_intersecting}, we know that, given $\tilde{\gamma}'$, the curves $\gamma'$ and $\gamma$ are equal. This implies the conclusion.
\end{proof}

We summarize several consequences of Lemmas \ref{lem::boundary_levellines_continuity_tworight_forcepoints_1} to \ref{lem::boundary_levellines_continuity_tworight_forcepoints_3_reversibility}. Suppose that $h$ is a $\GFF$ on $\HH$ whose boundary data is $-\lambda$ on $\R_-$, is $b$ on $(0,1)$, and is $c$ on $(1,\infty)$. Assume that
\[b>-\lambda, \quad c>-\lambda.\]
Let $\gamma$ be the level line of $h$ starting from 0, then the law of $\gamma$ is $\SLE_4(\rho^{1,R},\rho^{2,R})$ with force points $(0^+,1)$ where
\[\rho^{1,R}=b/\lambda-1,\quad \rho^{1,R}+\rho^{2,R}=c/\lambda-1.\]
If $b\ge \lambda$ and $c>-\lambda$, the curve $\gamma$ is continuous by Lemma \ref{lem::boundary_levellines_continuity_tworight_forcepoints_2}; if $b\in (-\lambda,\lambda)$, and $c\ge \lambda$, the curve $\gamma$ is continuous by Lemma \ref{lem::boundary_levellines_continuity_tworight_forcepoints_3}; if $b\in (-\lambda,\lambda)$, and $c\in (-\lambda,\lambda)$, then $|c-b|<2\lambda$, thus $\gamma$ is continuous by Lemma \ref{lem::boundary_levellines_continuity_tworight_forcepoints_1}. In conclusion, we have the continuity of $\gamma$ for all cases where $b>-\lambda,c>-\lambda$. In fact, we have completed the proof of Theorems \ref{thm::sle_chordal_continuity_transience} and \ref{thm::sle_chordal_reversibility} for the case that there are two right force points with weights $\rho^{1,R}$ and $\rho^{2,R}$ satisfying
\[\rho^{1,R}>-2,\quad \rho^{1,R}+\rho^{2,R}>-2.\] We record these conclusions in the following proposition.
\begin{comment}
\begin{proposition}\label{prop::boundary_levellines_continuity_tworight_oneleft}
Suppose that $\gamma$ is an $\SLE_4(\rho^{L};\rho^{1,R},\rho^{2,R})$ process in $\HH$ from 0 to $\infty$ with force points $(0^-;0^+,1)$ where
\[\rho^{L}>-2,\quad \rho^{1,R}>-2,\quad \rho^{1,R}+\rho^{2,R}>-2.\]
Then, almost surely, $\gamma$ does not hit 1 and is continuous and transient.
\end{proposition}
\begin{proposition}\label{prop::boundary_levellines_reversibility_tworight_oneleft}
Suppose that $h$ is a $\GFF$ on $\HH$ whose boundary value is $-a$ on $\R_-$, is $b$ on $(0,1)$, and is $c$ on $(1,\infty)$. Assume that
\[a>-\lambda,\quad b>-\lambda, \quad c>-\lambda.\]
Let $\gamma$ be the level line of $h$ starting from 0 targeted at $\infty$ and $\gamma'$ be the level line of $-h$ starting from $\infty$ targeted at 0. Then, almost surely, the curves $\gamma'$ and $\gamma$ are equal.
\begin{comment}In particular, the time-reversal of $\SLE_4(\rho^L;\rho^{1,R},\rho^{2,R})$ process is $\SLE_4(\tilde{\rho}^{2,L},\tilde{\rho}^{1,L};\tilde{\rho}^R)$ where
\[\rho^L>-2,\quad \rho^{1,R}>-2,\quad \rho^{1,R}+\rho^{2,R}>-2;\]
\[\tilde{\rho}^R=\rho^L,\quad \tilde{\rho}^{1,L}=\rho^{1,R}+\rho^{2,R},\quad \tilde{\rho}^{1,L}+\tilde{\rho}^{2,L}=\rho^{1,R}.\]

\end{proposition}
\end{comment}
\begin{proposition}\label{prop::boundary_levellines_tworight}
Suppose that $h$ is a $\GFF$ on $\HH$ whose boundary value is $-\lambda$ on $\R_-$, is $b$ on $(0,1)$, and is $c$ on $(1,\infty)$. Assume that
\[b>-\lambda, \quad c>-\lambda.\]
Let $\gamma$ be the level line of $h$ starting from 0 targeted at $\infty$ and $\gamma'$ be the level line of $-h$ starting from $\infty$ targeted at 0. The we have the following conclusions.
\begin{enumerate}
\item [(1)] The level line $\gamma$ is almost surely continuous and transient; moreover, it does not hit $1$.
\item [(2)] The level lines $\gamma'$ and $\gamma$ are equal.
\end{enumerate}
\end{proposition}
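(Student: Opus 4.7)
The plan is to combine the four preceding lemmas according to the values of $b$ and $c$. The parameter region $\{b>-\lambda,c>-\lambda\}$ splits into four sub-regions: (A) $b,c\ge\lambda$; (B) $b\ge\lambda$, $c\in(-\lambda,\lambda)$; (C) $b\in(-\lambda,\lambda)$, $c\ge\lambda$; and (D) $b,c\in(-\lambda,\lambda)$, in which case $|b-c|<2\lambda$ automatically. In region (A) the boundary data on $\R_+$ is uniformly at least $\lambda$, so Proposition \ref{prop::boundary_levellines_nonintersecting_all} applies and delivers both conclusions at once.

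For conclusion (1), the remaining three regions are handled in turn. In region (B), Lemma \ref{lem::boundary_levellines_continuity_tworight_forcepoints_2} gives continuity, non-hitting of $1$, and transience (the transience coming, as in its proof, from conditioning on the first time $\gamma$ accumulates in $(1,\infty)$ and invoking Proposition \ref{prop::boundary_levellines_deterministic_intersecting} on the single-force-point continuation). In region (C), Lemma \ref{lem::boundary_levellines_continuity_tworight_forcepoints_3} and Lemma \ref{lem::boundary_levellines_continuity_tworight_forcepoints_3_reversibility} together give continuity, non-hitting of $1$, and transience (the latter as a consequence of $\gamma'=\gamma$, since $\gamma'$ starts at $\infty$). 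In region (D), Lemma \ref{lem::boundary_levellines_continuity_tworight_forcepoints_1} supplies everything directly.

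For conclusion (2), regions (A), (C), (D) are covered by Proposition \ref{prop::boundary_levellines_nonintersecting_all}, Lemma \ref{lem::boundary_levellines_continuity_tworight_forcepoints_3_reversibility}, and Lemma \ref{lem::boundary_levellines_continuity_tworight_forcepoints_1}, respectively. The remaining case (B) is the mirror of (C) and I would handle it by adapting the argument of Lemma \ref{lem::boundary_levellines_continuity_tworight_forcepoints_3_reversibility}, except with the auxiliary level line running out of $0$ rather than out of $\infty$. Concretely, set $\tilde u=-\lambda-c\in(-2\lambda,0)$ and consider the level line $\tilde\gamma$ of $h+\tilde u$ starting from $0$ targeted at $1$. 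With this choice, $h+\tilde u$ has boundary value $\ge \lambda$ on $(0,1)$ (since $b-\lambda-c\ge -\lambda$ and, after a mild further shift or via the continuity provided by Lemma \ref{lem::boundary_levellines_continuity_tworight_forcepoints_generalization_1}, the analog of "$a,b\ge\lambda$" is satisfied) and value $-\lambda$ on $(1,\infty)$, so $\tilde\gamma$ is continuous up to and including when it reaches $1$. The monotonicity argument of Proposition \ref{prop::boundary_levellines_nonintersecting_monotonicity_reverse} (together with Lemma \ref{lem::levelline_forbidden_intervals}) shows that both $\gamma$ and $\gamma'$ lie on the same side of $\tilde\gamma$; the Loewner-continuity argument of Proposition \ref{prop::boundary_levellines_conditionallaw_loewner_continuity} and the conditional-law computation of Proposition \ref{prop::boundary_levellines_deterministic_intersecting_conditionallaw} identify, in the unbounded complementary component of $\HH\setminus\tilde\gamma$, the conditional laws of $\gamma$ and $\gamma'$ as the level line of a conformally mapped $\GFF$ and of its negation, with both boundary force weights strictly greater than $-2$. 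Proposition \ref{prop::boundary_levellines_deterministic_intersecting} then identifies them, giving $\gamma'=\gamma$.

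The main obstacle is the bookkeeping in case (B): one must check that $\tilde\gamma$ indeed reaches $1$ without hitting its continuation threshold and does so continuously, and that after conformally uniformizing the relevant complementary component the resulting boundary data matches the two-force-point setup to which Proposition \ref{prop::boundary_levellines_deterministic_intersecting} (and the non-hitting statement of Lemma \ref{lem::boundary_levellines_zerohittingprobability}) applies. Once that mirror of the proof of Lemma \ref{lem::boundary_levellines_continuity_tworight_forcepoints_3_reversibility} is in hand, all four sub-regions are covered and both conclusions of the proposition follow.
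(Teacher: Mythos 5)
Your case decomposition matches the paper's: (A) by Proposition~\ref{prop::boundary_levellines_nonintersecting_all}, (D) by Lemma~\ref{lem::boundary_levellines_continuity_tworight_forcepoints_1} since $|b-c|<2\lambda$ automatically, (C) by Lemmas~\ref{lem::boundary_levellines_continuity_tworight_forcepoints_3} and~\ref{lem::boundary_levellines_continuity_tworight_forcepoints_3_reversibility}, and (B) continuity/transience by Lemma~\ref{lem::boundary_levellines_continuity_tworight_forcepoints_2}. The paper leaves reversibility in case~(B) to the reader, so you are right to address it, and your choice of auxiliary height $\tilde u=-\lambda-c$ is the correct one. However, your stated justification contains a genuine error: the boundary value of $h+\tilde u$ on $(0,1)$ is $b-\lambda-c$, which is only $>-\lambda$ (since $b\ge\lambda>c$), not $\ge\lambda$; e.g.\ $b=\lambda$, $c=-\lambda/2$ gives $b-\lambda-c=\lambda/2<\lambda$. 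The parenthetical appeal to a ``mild further shift'' or to Lemma~\ref{lem::boundary_levellines_continuity_tworight_forcepoints_generalization_1} does not repair this, because that lemma requires boundary value $\ge\lambda$ on $\partial_U\T$ to the \emph{right} of $z_0$, which is exactly what fails.

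The fix is simple and is in fact what the paper does in the proof of Lemma~\ref{lem::boundary_levellines_continuity_tworight_forcepoints_3_reversibility}: you do not need $\ge\lambda$ on $(0,1)$ at all. After conformally mapping $\HH\to\HH$ sending $0\mapsto 0$, $1\mapsto\infty$ (so that $\infty\mapsto -1$), the boundary of $(h+\tilde u)\circ\phi^{-1}$ becomes the constant $b-\lambda-c>-\lambda$ on $\R_+$, $-2\lambda-c\le-\lambda$ on $(-1,0)$, and $-\lambda$ on $(-\infty,-1)$. These are precisely the hypotheses of Proposition~\ref{prop::boundary_levellines_deterministic_intersecting} together with Remark~\ref{rem::boundary_levellines_deterministic_intersecting}, which give that $\tilde\gamma$ is continuous up to and including the time it reaches $1$. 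The auxiliary line may well hit $(0,1)$ on its right side (this is allowed), but never hits $\R_-\cup(1,\infty)$; the unbounded component $C$ to its left is unique, and given $\tilde\gamma$ the conditional boundary data of $h|_C$ is $-\lambda$ on $\R_-$ and $c$ on the remaining arc, so Proposition~\ref{prop::boundary_levellines_deterministic_intersecting} identifies $\gamma$ and $\gamma'$ in $C$. This exactly mirrors the proof of Lemma~\ref{lem::boundary_levellines_continuity_tworight_forcepoints_3_reversibility}. A cleaner route, which is likely what the paper has silently in mind, is to observe that the anti-conformal involution $\Phi(z)=1/\bar z$ fixes $1$, swaps $0\leftrightarrow\infty$, and sends $h$ with boundary $(-\lambda,b,c)$ to $h\circ\Phi$ with boundary $(-\lambda,c,b)$, while simultaneously interchanging the roles of $\gamma$ and $\gamma'$; case~(B) is thus literally case~(C) in disguise, and Lemma~\ref{lem::boundary_levellines_continuity_tworight_forcepoints_3_reversibility} applies directly.
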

\begin{remark}\label{rem::boundary_levellines_tworight}
The conclusions in Proposition \ref{prop::boundary_levellines_tworight} hold more generally when the boundary value of $h$ on $\R_-$ is  piecewise constant, and is at most $-\lambda$.
\end{remark}

\begin{figure}[ht!]
\begin{subfigure}[b]{0.3\textwidth}
\begin{center}
\includegraphics[width=\textwidth]{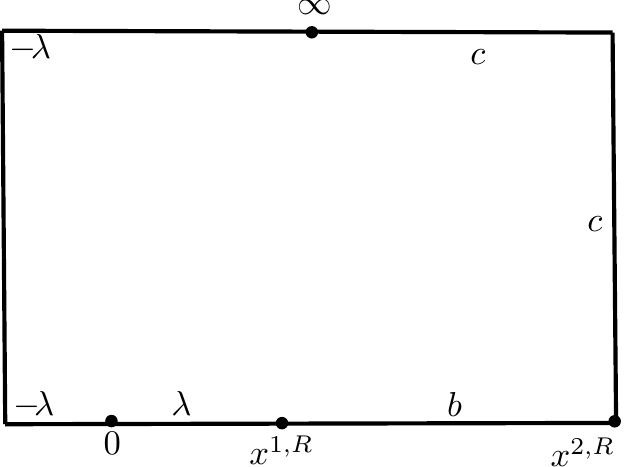}
\end{center}
\caption{The boundary value for the field in the upper-half plane $\HH$.}
\end{subfigure}
$\quad$
\begin{subfigure}[b]{0.3\textwidth}
\begin{center}\includegraphics[width=\textwidth]{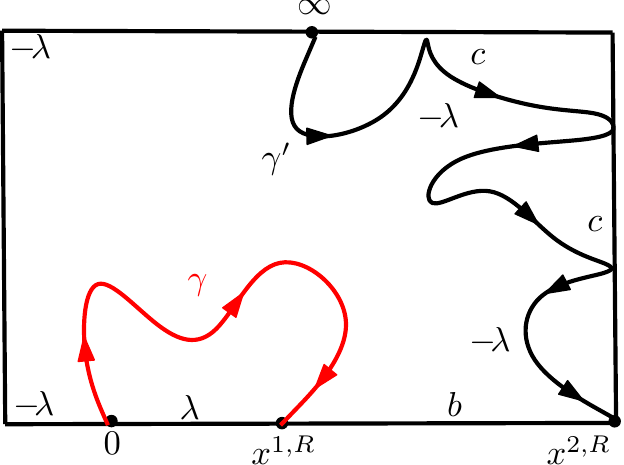}
\end{center}
\caption{Given $\tilde{\gamma}$, the level line $\gamma$ accumulates at $x^{1,R}$.}
\end{subfigure}
$\quad$
\begin{subfigure}[b]{0.3\textwidth}
\begin{center}\includegraphics[width=\textwidth]{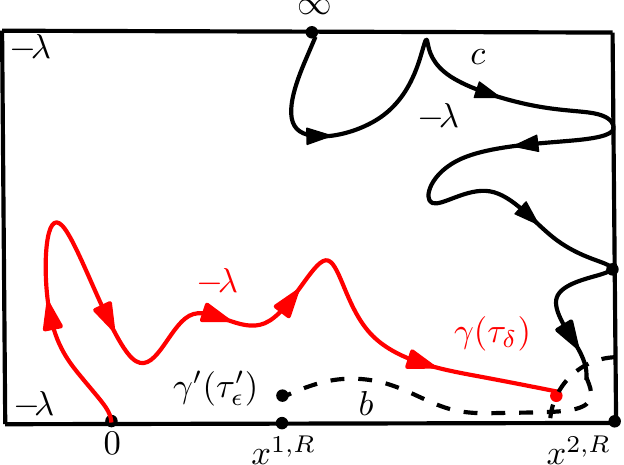}
\end{center}
\caption{Given $\tilde{\gamma}[0,\tilde{\tau}_{\eps}]$, the level line $\gamma$ will gets close to $x^{1,R}$.}
\end{subfigure}
\caption{\label{fig::boundary_levellines_continuity_tworight_forcepoints_4} The explanation of the behaviour of paths in the proof of Lemma \ref{lem::boundary_levellines_continuity_tworight_forcepoints_4}.}
\end{figure}

\begin{lemma}\label{lem::boundary_levellines_continuity_tworight_forcepoints_4}
Suppose that $\rho^{1,R}\le -2$ or $\rho^{1,R}+\rho^{2,R}\le -2$. Let $\gamma$ be an $\SLE_4(\rho^{1,R},\rho^{2,R})$ process in $\HH$ from 0 to $\infty$ with force points $(x^{1,R},x^{2,R})$ where $0<x^{1,R}<x^{2,R}$. Then $\gamma$ is almost surely continuous up to and including the continuation threshold.
\end{lemma}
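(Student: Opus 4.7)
The plan is to combine the $\GFF$ coupling of Theorem \ref{thm::boundary_levelline_gff_coupling} with Proposition \ref{prop::dubedat_lemma15} and the continuity already established in Proposition \ref{prop::boundary_levellines_tworight} together with Remark \ref{rem::boundary_levellines_tworight}. First I couple $\gamma$ with a $\GFF$ $h$ on $\HH$ whose boundary value equals $-\lambda$ on $(-\infty,0)$, $\lambda$ on $(0,x^{1,R})$, $\lambda(1+\rho^{1,R})$ on $(x^{1,R},x^{2,R})$, and $\lambda(1+\rho^{1,R}+\rho^{2,R})$ on $(x^{2,R},\infty)$, so that $\gamma$ is the level line of $h$ starting from $0$, and let $T$ denote the continuation threshold.

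For continuity on $[0,T)$ I use a perturbation argument. Fix $\eps>0$ and let $T_{\eps}$ be the first time $\gamma$ comes within distance $\eps$ of a force point whose accumulated weight is $\le-2$. Modify $h$ on an $\eps$-neighborhood of that force point so that the modified weight lies in $(-2,0)$; the resulting $\GFF$ $\tilde h$ has a level line that is continuous by Proposition \ref{prop::boundary_levellines_tworight} and Remark \ref{rem::boundary_levellines_tworight}. By Proposition \ref{prop::gff_absolutecontinuity}, the law of $\gamma|_{[0,T_{\eps}]}$ is mutually absolutely continuous with the law of the corresponding segment of the level line of $\tilde h$, so $\gamma|_{[0,T_{\eps}]}$ is almost surely continuous; sending $\eps\downarrow 0$ gives continuity on $[0,T)$.

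Continuity at $T$ follows from Proposition \ref{prop::dubedat_lemma15}. If $\rho^{1,R}\le -2$ and $\rho^{1,R}+\rho^{2,R}\le -2$, Case~(1) with $k=1$ shows that $K_t$ accumulates at $x^{1,R}$ as $t\uparrow T=\tau^{1,R}$. If $\rho^{1,R}>-2$ and $\rho^{1,R}+\rho^{2,R}\le -2$, then $T>\tau^{1,R}$; at $\tau^{1,R}$ the process continues smoothly (no threshold is hit), and thereafter evolves as a single-force-point $\SLE_4(\rho^{1,R}+\rho^{2,R})$ process at $x^{2,R}$ with weight $\le -2$, so applying Proposition \ref{prop::dubedat_lemma15}(1) in this one-force-point setting gives accumulation at $x^{2,R}$. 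The remaining subcase $\rho^{1,R}\le -2$ with $\rho^{1,R}+\rho^{2,R}>-2$ is not covered directly by Proposition \ref{prop::dubedat_lemma15}; I handle it by comparing $\gamma$ to an auxiliary level line of $-h$ started at a point in $(x^{2,R},\infty)$, which lies in the non-threshold regime of Proposition \ref{prop::boundary_levellines_tworight}, and using the monotonicity of Proposition \ref{prop::boundary_levellines_deterministic_intersecting_conditionallaw} to squeeze $\gamma$ against this auxiliary curve and force accumulation at $x^{1,R}$.

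The main obstacle is precisely this last subcase, where the direct boundary-hitting analysis of Proposition \ref{prop::dubedat_lemma15} fails because the configuration $\overline{\rho}^{1,R}\le -2$ combined with $\overline{\rho}^{2,R}>-2$ satisfies neither of its hypotheses. The resolution relies on constructing the auxiliary curve above and verifying that monotonicity, together with the non-hitting results such as Lemma \ref{lem::boundary_levellines_zerohittingprobability}, guarantees that $\gamma$ accumulates at the single point $x^{1,R}$ as $t\uparrow T$ rather than oscillating along the boundary.
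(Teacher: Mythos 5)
Your overall strategy---couple $\gamma$ with a $\GFF$, establish continuity on $[0,T)$ away from the dangerous force point by absolute continuity, then argue continuity at $T$ by case analysis---is in the right spirit, but the key technical device that makes the endpoint argument close is missing, and two of your three cases don't actually go through as stated.

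The central gap is the jump from ``$K_t$ accumulates at $x^{k,R}$'' (the conclusion of Proposition~\ref{prop::dubedat_lemma15}) to ``$\gamma$ is continuous up to and including $T$.'' Accumulation of the hull at a boundary point does not by itself give a continuous limit of the curve at $T$; that is precisely the content being proved. The paper resolves this by a M\"obius-transform trick: in each subcase, apply the M\"obius automorphism of $\HH$ sending the expected accumulation point ($x^{1,R}$ or $x^{2,R}$) to $\infty$, which converts the weight configuration into one where continuity and \emph{transience} are already known (via Proposition~\ref{prop::boundary_levellines_nonintersecting_all} when the resulting weights are $\ge 0$, and via Proposition~\ref{prop::boundary_levellines_tworight} and Remark~\ref{rem::boundary_levellines_tworight} when one weight is merely $>-2$); transience of the transformed curve to $\infty$ is exactly continuity of the original curve at the threshold. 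You never invoke this, and without it your case~(a) and case~(b) conclusions are unsupported.

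Your case~(b) analysis is also incorrect as stated. You claim $T>\tau^{1,R}$ and that the process ``continues smoothly'' after $\tau^{1,R}$, but when $\rho^{1,R}\in(-2,0)$ the process can first hit anywhere in $[x^{1,R},x^{2,R}]$; if it reaches $[x^{2,R},\infty)$ first then $T=\tau^{1,R}$, and Proposition~\ref{prop::dubedat_lemma15}(2) gives accumulation somewhere in $[x^{1,R},x^{2,R}]$ rather than a single point. The paper handles this with a M\"obius map sending $x^{2,R}$ to $\infty$, landing in the $\SLE_4(\rho^L;\rho^R)$ regime with $\rho^L\ge 0$, $\rho^R>-2$. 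For case~(c) (the genuinely hard one), your ``squeeze against a reverse level line'' sketch is in the right direction, but the real difficulty is ruling out the possibility $\gamma(\tau)=x^{2,R}$, for which the paper runs a detailed contradiction argument with the reverse level line of $-h$ from $\infty$ targeted at $x^{2,R}$, splitting further into whether $\gamma'$ reaches $x^{2,R}$ or hits its own threshold; your sketch does not engage with this. Finally, a smaller issue: your perturbation argument for $[0,T)$ introduces an extra jump in the boundary data near the modified force point, so the modified level line has three right force points rather than two; the citation to Proposition~\ref{prop::boundary_levellines_tworight} alone is not enough, and the cleaner route (used implicitly by the paper) is Remark~\ref{rem::boundary_levellines_continuity_exceptforcepoints}, which localizes directly.
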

\begin{proof}
Suppose that $h$ is a $\GFF$ on $\HH$ whose boundary value is $-\lambda$ on $\R_-$, is $\lambda$ on $(0,x^{1,R})$, is $b$ on $(x^{1,R},x^{2,R})$, and is $c$ on $(x^{2,R},\infty)$, see Figure \ref{fig::boundary_levellines_continuity_tworight_forcepoints_4}(a). Let $\gamma$ be the level line of $h$ starting from 0, then its law is $\SLE_4(\rho^{1,R},\rho^{2,R})$ where
\[\rho^{1,R}=b/\lambda-1,\quad \rho^{1,R}+\rho^{2,R}=c/\lambda-1.\]
\smallbreak
\textit{First}, assume that $b\le -\lambda,c\le -\lambda$. Let $\psi$ be the M\"{o}bius transformation of $\HH$ that sends 0 to 0, $x^{1,R}$ to $\infty$, and $\infty$ to $-1$. Then $\psi(\gamma)$ has the law of $\SLE_4(\rho^{2,L},\rho^{1,L})$ with force points $(\psi(x^{2,R}), -1)$ where
\[\rho^{1,L}=-c/\lambda-1\ge 0,\quad \rho^{1,L}+\rho^{2,L}=-b/\lambda-1\ge0.\]
Thus $\psi(\gamma)$ is continuous and is transient by Proposition \ref{prop::boundary_levellines_nonintersecting_all}. This implies that $\gamma$ is continuous up to and including the continuation threshold when it accumulates at $x^{1,R}$.
\smallbreak
\textit{Second}, assume that $b>-\lambda, c\le -\lambda$. Let $\psi$ be the M\"{o}bius transformation of $\HH$ that sends 0 to 0, $x^{2,R}$ to $\infty$, and $\infty$ to $-1$. Then $\psi(\gamma)$ has the law of $\SLE_4(\rho^L;\rho^R)$ with force points $(-1;\psi(x^{1,R}))$ where
\[\rho^L=-c/\lambda-1\ge 0,\quad \rho^R=b/\lambda-1>-2.\]
From Proposition \ref{prop::boundary_levellines_tworight} and Remark \ref{rem::boundary_levellines_tworight}, we have that $\psi(\gamma)$ is continuous and transient. This implies that $\gamma$ is continuous up to and including the continuation threshold when it accumulates at $x^{2,R}$.
\smallbreak
\textit{Finally}, assume that $b\le -\lambda, c>-\lambda$. Define $\tau$ to be the first time that $\gamma$ hits $[x^{1,R}, \infty)$. By Remark \ref{rem::boundary_levellines_continuity_exceptforcepoints}, we know that $(\gamma(t), 0\le t<\tau)$ is continuous.
There are three cases for $\gamma(\tau)$: case 1. $\gamma(\tau)\in (x^{2,R},\infty)$; case 2. $\gamma(\tau)\in [x,^{1,R},x^{2,R})$; case 3. $\gamma(\tau)=x^{2,R}$. We analyze case by case.

Case 1. Suppose that $\gamma(\tau)\in (x^{2,R},\infty)$. By Remark \ref{rem::boundary_levellines_continuity_exceptforcepoints}, we know that $\gamma$ is continuous up to and including $\tau$. Moreover, $\gamma$ continues towards $\infty$ after $\tau$. Thus the distance between $\gamma$ and $[x^{1,R},x^{2,R}]$ is positive and therefore the law of $\gamma$ is absolutely continuous with respect to $\SLE_4(\tilde{\rho}^R)$ with force point $x^{2,R}$ where $\tilde{\rho}^R=c/\lambda-1>-2$. By Proposition \ref{prop::boundary_levellines_tworight}, we know that $\gamma$ is continuous and transient.

Case 2. Suppose that $\gamma(\tau)\in [x,^{1,R},x^{2,R})$. In this case, $\gamma$ arrives its continuation threshold at $\tau$ and stops. We can prove that the distance between $\gamma$ and $[x^{2,R},\infty)$ is positive and the law of $\gamma$ is then absolutely continuous with respect to the law of $\SLE_4(\tilde{\rho}^R)$ with force point $x^{1,R}$ where $\tilde{\rho}^R=b/\lambda-1\le -2$. By the first step, we know that $\gamma$ is continuous up to and including the continuation threshold when it accumulates at $x^{1,R}$.

Case 3. Suppose that $\gamma(\tau)=x^{2,R}$ and, in fact, we will show that this is impossible (or this happens with zero probability). Let $\gamma'$ be the level line of $-h$ starting from $\infty$ targeted at $x^{2,R}$. There are two possibilities: either $\gamma'$ reaches $x^{2,R}$ without hitting $[0,x^{1,R}]$ or it hits its continuation threshold when it accumulates in $[0,x^{1,R}]$. In the former case, we can show that the distance between $\gamma'$ and $[0,x^{1,R}]$ is positive, therefore the law of $\gamma'$ is absolutely continuous with respect to SLE$_4(\tilde{\rho}^L)$ where $\tilde{\rho}^L=c/\lambda-1>-2$. Thus $\gamma'$ is continuous and transient. Moreover, the path $\gamma|_{[0,\tau]}$ stays to the left of $\gamma'$ and the conditional law of $\gamma|_{[0,\tau]}$ given $\gamma'$ is SLE$_4(\rho^{1,R},\tilde{\rho}^{2,R})$ with force points $(x^{1,R},x^{2,R})$ where $\rho^{1,R}+\tilde{\rho}^{2,R}=-2$, see Figure \ref{fig::boundary_levellines_continuity_tworight_forcepoints_4}(b). By the first step, we know that, given $\gamma'$, the level line $\gamma$ is continuous up to and including the continuation threshold when it accumulates at $x^{1,R}$. This contradicts with $\gamma(\tau)=x^{2,R}$. In the latter case, for $\delta>0,\eps>0$,
define $\tau_{\delta}$ to be the first time that $\gamma$ gets within distance $\delta$ of $x^{2,R}$ and define $\tau_{\eps}'$ to be the first time that $\gamma'$ gets within distance $\eps$ of $[0,x^{1,R}]$. We have the following observations.
\begin{enumerate}
\item [(a)] The path $\gamma$ is continuous up to $\tau_{\delta}$ for any $\delta>0$; the path $\gamma'$ is continuous up to $\tau'_{\eps}$ for any $\eps>0$.
\item [(b)] Given $\gamma[0,\tau_{\delta}]$, the conditional law of $\gamma'$ is the level line of $h$ restricted to $\HH\setminus\gamma[0,\tau_{\delta}]$; therefore $\gamma'$ can not hit the union of $\R_-$ and the left side of $\gamma[0,\tau_{\delta}]$. See Figure \ref{fig::boundary_levellines_continuity_tworight_forcepoints_4}(c). Thus, given $\gamma[0,\tau_{\delta}]$, the path $\gamma'$  has to get within distance $\delta$ of $x^{2,R}$ in order to get close to the interval $[0,x^{1,R}]$.
\end{enumerate}
Combining these two facts and letting $\delta$ go to zero, we have that $\gamma'$ accumulates at $x^{2,R}$ before hits $[0,x^{1,R}]$, contradiction.
\end{proof}

\begin{lemma}\label{lem::boundary_levellines_continuity_allright_forcepoints}
Let $\gamma$ be an $\SLE_4(\underline{\rho}^R)$ process in $\HH$ from 0 to $\infty$ with force points $(\underline{x}^R)$ where
\[0\le x^{1,R}<x^{2,R}<\cdots<x^{r,R}.\]
Then $\gamma$ is almost surely continuous up to and including the continuation threshold.
\end{lemma}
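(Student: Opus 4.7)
The plan is to proceed by induction on $r$, the number of right force points. The base cases $r=1$ (absolute continuity with $\SLE_4$ together with Proposition \ref{prop::boundary_levellines_deterministic_intersecting}) and $r=2$ (Lemma \ref{lem::boundary_levellines_continuity_tworight_forcepoints_4}) are already in hand. For the inductive step, first dispose of the non-boundary-intersecting regime: if $\overline{\rho}^{j,R}\ge-1$ for every $j$, then Proposition \ref{prop::chordal_sle_continuity_nonintersecting} immediately yields continuity.

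Otherwise, let $\tau^{1,R}$ be the first time the Loewner hull $K_t$ swallows $x^{1,R}$. I would split the argument into continuity strictly before $\tau^{1,R}$, continuity at $\tau^{1,R}$, and continuity after $\tau^{1,R}$. For the first part, I would couple $\gamma$ with a $\GFF$ $h$ whose piecewise constant boundary data realises $\gamma$ as a level line, then produce a comparison field $\tilde h$ obtained from $h$ by replacing the boundary data on $(x^{1,R}-\eps,\infty)$ by the constant $\lambda$. The corresponding level line $\tilde\gamma$ is a non-boundary-intersecting $\SLE_4(\underline{\rho}^L)$-type curve, hence continuous by Proposition \ref{prop::chordal_sle_continuity_nonintersecting}. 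By Proposition \ref{prop::gff_absolutecontinuity} the laws of $h$ and $\tilde h$ restricted to $\{z\in\HH:\dist(z,[x^{1,R},\infty))>\eps\}$ are mutually absolutely continuous, so $\gamma$ is continuous up to the first exit time $T_\eps$ from this set; letting $\eps\downarrow 0$ gives continuity on $[0,\tau^{1,R})$.

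To extend continuity through $\tau^{1,R}$, I would invoke Proposition \ref{prop::dubedat_lemma15}. In its first regime, $K_t$ accumulates at the single point $x^{k,R}$, which together with the above locally absolutely continuous comparison yields a continuous limit $\gamma(\tau^{1,R})=x^{k,R}$. In its second regime, $K_t$ accumulates somewhere in the closed interval $[x^{k,R},x^{k+1,R}]$; here I would mimic Case 3 of Lemma \ref{lem::boundary_levellines_continuity_tworight_forcepoints_4} by conditioning on a reverse level line targeted at a nearby boundary point and using its continuity (already available by absolute continuity with a single-force-point case) to pin down a unique accumulation point, ruling out the pathological possibility that $\gamma$ oscillates across the interval. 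For configurations where the sign pattern of $\overline{\rho}^{j,R}$ does not match either regime of Proposition \ref{prop::dubedat_lemma15} verbatim, I would first apply Proposition \ref{prop::sle_chordal_forbidden_intervals} to eliminate all intervals with $\overline{\rho}^{j,R}\ge -1$ or $\le-2$, reducing to a configuration where the proposition applies after replacing the leftmost force points with an effective single force point.

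After $\tau^{1,R}$, if the continuation threshold has been reached we are done. Otherwise, by the Markov property of $\SLE_4(\underline{\rho}^R)$ and the chordal Loewner equation, the image of the post-$\tau^{1,R}$ curve under $f_{\tau^{1,R}}$ is an $\SLE_4(\underline{\tilde\rho}^R)$ with strictly fewer right force points (the force points between $x^{1,R}$ and the accumulation point collapse into a single effective force point), so the induction hypothesis applies and yields continuity up to and including the continuation threshold. The principal technical obstacle will be ensuring continuity precisely at $\tau^{1,R}$ in the interval-accumulation regime of Proposition \ref{prop::dubedat_lemma15}; all other steps follow standard absolute continuity and Markov arguments established earlier in the paper.
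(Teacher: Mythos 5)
Your overall strategy --- induction on the number of right force points, absolute continuity for the stage before the curve engages the force points, and the Markov property afterward --- follows the same skeleton as the paper's proof. The gap is precisely where you flag the ``principal technical obstacle'': your treatment of the accumulation at the swallow time relies on Proposition \ref{prop::dubedat_lemma15}, which only covers sign patterns in which the partial sums $\overline{\rho}^{j,R}$ are first all $\ge \kappa/2-2=0$, then at most once in $(-2,0)$, then all $\le -2$. For a general configuration the partial sums may cross these thresholds several times (e.g.\ $\overline{\rho}^{1,R}=-1$, $\overline{\rho}^{2,R}=1$, $\overline{\rho}^{3,R}=-3$), and your proposed workaround does not close the gap: Proposition \ref{prop::sle_chordal_forbidden_intervals} tells you that a continuous stopped curve avoids certain open intervals, but this neither alters the Loewner SDE nor the hypotheses of Proposition \ref{prop::dubedat_lemma15}, and there is no ``effective single force point'' reduction available at this stage (one would need the very continuity result one is trying to prove in order to collapse force points safely). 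The paper avoids Proposition \ref{prop::dubedat_lemma15} entirely at this step. Instead it defines the approach times $T^j_k$ to the intervals $[x^{j,R},x^{j+1,R}]$ and dichotomizes on $E=\cup_{j,k}\{\tau\le T^j_k\}$: on $E$, continuity through $\tau$ follows from the one-interval absolute-continuity comparison (which replaces boundary data on a single $[x^{j,R},x^{j+1,R}]$, so the comparison curve has $n$ force points and the induction hypothesis applies --- not bare $\SLE_4$ as in your version); on $E^c$, the accumulation set must meet every $[x^{j,R},x^{j+1,R}]$ simultaneously, which collapses the possibilities to $n=2$ with accumulation exactly at $x^{3,R}$, a case dispatched by a single M\"obius transformation and absolute continuity with a two-force-point process.

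A smaller correction: for $\kappa=4$ the threshold in Proposition \ref{prop::chordal_sle_continuity_nonintersecting} (and in the forbidden-interval criterion of Proposition \ref{prop::sle_chordal_forbidden_intervals}) is $\overline{\rho}^{j,R}\ge \kappa/2-2=0$, not $\ge -1$. That initial non-boundary-intersecting dispatch is dispensable in any case, since the general inductive argument already covers it.
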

\begin{proof}
From Proposition \ref{prop::boundary_levellines_tworight} and Lemma \ref{lem::boundary_levellines_continuity_tworight_forcepoints_4}, we know that the conclusion holds when there are at most two force points. We prove the conclusion by induction. Suppose that the conclusion holds when there are at most $n\ge 2$ force points, and we prove the continuity when there are $n+1$ force points:
\[0\le x^{1,R}<\cdots<x^{n+1,R}< x^{n+2,R}=\infty.\]
Suppose that $h$ is a GFF on $\HH$ whose boundary value is $-\lambda$ on $\R_-$, is $\lambda$ on $(0,x^{1,R})$, and is $c_j$ on $(x^{j,R},x^{j+1,R})$ for $j=1,...,n+1$ where
\[c_j=\lambda(1+\rho^{1,R}+\cdots+\rho^{j,R}).\]
Let $\gamma$ be the level line of $h$ starting from 0 targeted at $\infty$, then the law of $\gamma$ is $\SLE_4(\underline{\rho}^R)$ with $n+1$ force points. Let $\tau$ be the first time that $\gamma$ hits $[x^{2,R},\infty)$; and set $\tau=\infty$ if this never happens. If $\tau=\infty$, then the law of $\gamma$ is absolutely continuous with respect to $\SLE_4(\rho^{1,R},\rho^{2,R})$ process with force points $(x^{1,R},x^{2,R})$ which is continuous up to and including the continuation threshold. Thus $\gamma$ is continuous up to and including the continuation threshold.

In the following we suppose $\tau<\infty$. For $2\le j\le n+1, k\ge 1$, define $T^j_k$ to be the first time that $\gamma$ gets within distance $2^{-k}$ of the interval $[x^{j,R},x^{j+1,R}]$.
\smallbreak
\textit{First}, we explain that $\gamma$ is continuous up to $T^j_k$. Let $H^j_k$ be the open set obtained by removing from $\HH$ all points that are within distance $2^{-k}$ of $[x^{j,R},x^{j+1,R}]$. Let $\tilde{h}$ be the GFF on $\HH$ whose boundary value is consistent with $h$ outside $[x^{j,R},x^{j+1,R}]$ and is $c_{j-1}$ on $[x^{j,R},x^{j+1,R}]$. Let $\tilde{\gamma}$ be the level line of $\tilde{h}$ starting from 0 targeted at $\infty$. Then $\tilde{\gamma}$ has at most $n$ force points. We have the following two observations.
\begin{enumerate}
\item [(a)] From Proposition \ref{prop::gff_absolutecontinuity}, we know that the law of $h$ restricted to $H^j_k$ is absolutely continuous with respect to the law of $\tilde{h}$ restricted to $H^j_k$.
\item [(b)] From induction hypothesis, we know that $\tilde{\gamma}$ is continuous.
\end{enumerate}
Combining these two facts, we know that $\gamma$ is continuous up to $T^j_k$.
\smallbreak
\textit{Second}, we explain that, given $\gamma[0,\tau]$, the conditional law of $(\gamma(t+\tau),t\ge 0)$ is continuous. Note that, given $\gamma[0,\tau]$, the conditional law of $(\gamma(t+\tau),t\ge 0)$ is $\SLE_4$ process with at most $n$ force points. Thus, given $\gamma[0,\tau]$, the curve $(\gamma(t+\tau),t\ge 0)$ is continuous by induction hypothesis.
\smallbreak
\textit{Finally}, we explain the continuity of $\gamma$ up to and including $\tau$.
Define
\[E=\bigcup_{k\ge 1}\bigcup_{j=2}^{n+1}\left\{\tau\le T^j_k\right\}.\]
From the above analysis, we know that $\gamma$ is continuous on the event $E$. We only need to analyze the behavior of $\gamma$ on the event $E^c$. Note that, for $2\le j\le n+1$, the event $\cap_{k\ge 1} \{\tau>T_k^j\}$ implies that $\gamma$ has to accumulate in $[x^{j,R},x^{j+1,R}]$ as $t\uparrow\tau$. This holds for all $j=2,...,n+1$ on $E^c$. The only possibility is that $n=2$ and $\gamma(t)$ accumulates at $x^{3,R}$ as $t\uparrow \tau$. Let $\psi$ be the M\"{o}bius transformation of $\HH$ that sends 0 to 0, $x^{2,R}$ to $x^{2,R}$, and $x^{3,R}$ to $\infty$. Then $\psi(\gamma)$ has the law of SLE$_4(\tilde{\rho}^L;\rho^{1,R},\rho^{2,R})$ process with force points $(\psi(\infty);\psi(x^{1,R}),x^{2,R})$. On the event $E^c$, from the above analysis, we have that $\psi(\gamma)$ does not hit the interval $(-\infty,\psi(\infty))$; therefore it is absolutely continuous with respect to SLE$_4(\rho^{1,R},\rho^{2,R})$ which is continuous and transient. This implies that, on $E^c$, the curve $\gamma$ is continuous up to and including the time $\tau$. This completes the proof.
\end{proof}
\begin{proof}[Proof of Theorem \ref{thm::sle_chordal_continuity_transience}]
We could apply a similar proof as the proof of Lemma \ref{lem::boundary_levellines_continuity_allright_forcepoints} to show the conclusion.
\end{proof}
\begin{remark}\label{rem::boundary_levellines_values_allowedhit}
Suppose that $h$ is a $\GFF$ on $\HH$ with piecewise constant boundary data. 
Let $\gamma_u$ be the level line of $h$ with height $u\in\R$ starting from $x\in\partial\HH$ targeted at $y\in\partial\HH$. 
From the analysis in this section, we have the following observations. 
\begin{enumerate}
\item [(1)] Suppose that $h$ has boundary value $c_R$ to the right of $x$ and $c_L$ to the left of $x$. To have non-trivial $\gamma_u$ (i.e. $\gamma_u$ has strictly positive length), we must have
\[c_L+u<\lambda,\quad \text{and }\quad c_R+u>-\lambda.\]
\item [(2)] Suppose that $h$ has boundary value $c_R$ to the right of $y$ and $c_L$ to the left of $y$. Then the probability of $\gamma_u$ to reach $y$ is zero if one of the following two conditions holds:
\begin{itemize}
\item $c_L+u\ge\lambda$ and $c_R+u\ge\lambda$;
\item $c_L+u\le -\lambda$ and $c_R+u\le -\lambda$.
\end{itemize}
\item [(3)] Suppose that $h$ has boundary value $c$ on some open interval $I$ which does neither contain $x$ nor $y$. If $\gamma_u$ hits $I$ with strictly positive probability, then $c+u\in (-\lambda, \lambda)$.
\end{enumerate}
\end{remark}
Combining Theorems \ref{thm::boundary_levelline_gff_deterministic} and \ref{thm::sle_chordal_continuity_transience}, we can get the following properties of the height-varying level line.
\begin{corollary}
Suppose that $h$ is a $\GFF$ on $\HH$ with piecewise constant boundary data. Fix heights $u_1,...,u_k$ such that,
\[|u_i-u_j|<2\lambda,\quad \text{for } 1\le i<j\le k.\]
Let $\gamma$ be the height-varying level line of $h$ with heights $u_1,...,u_k$. Then, almost surely, $\gamma$ is determined by $h$ and is continuous up to and including the continuation threshold.
\end{corollary}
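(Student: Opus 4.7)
The plan is to argue by induction on the number of heights $k$. For $k=1$ the height-varying level line is an ordinary level line, so Theorems \ref{thm::boundary_levelline_gff_deterministic} and \ref{thm::sle_chordal_continuity_transience} directly give determinism and continuity up to and including the continuation threshold.

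For the inductive step, assume the result for height-varying level lines with at most $k-1$ heights. Write $\gamma$ for the height-varying level line with heights $u_1,\dots,u_k$ and height change stopping times $\tau_1,\dots,\tau_{k-1}$. The piece $(\gamma(t),0\le t\le\tau_{k-1})$ is a height-varying level line with $k-1$ heights, so by induction it is determined by $h$ and continuous up to its continuation threshold. If that threshold is hit strictly before $\tau_{k-1}$ we are done; otherwise $\gamma[0,\tau_{k-1}]$ is an almost surely continuous curve. Conditioning on it and using the domain Markov property of the GFF, the restriction $\tilde h$ of $h$ to the connected component $D$ of $\HH\setminus\gamma[0,\tau_{k-1}]$ containing $\gamma(\tau_{k-1})$ is a GFF whose boundary data is piecewise constant: it agrees with that of $h$ on $\partial D\cap\partial\HH$, and on each side of a segment $\gamma[\tau_{j-1},\tau_j]$ forming part of $\partial D$ it takes the flow-line values $-\lambda-u_j$ on the left and $\lambda-u_j$ on the right. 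By the definition of the height-varying level line, conditionally on $\gamma[0,\tau_{k-1}]$, the tail $(\gamma(\tau_{k-1}+t),t\ge 0)$ is the level line of $\tilde h$ in $D$ with height $u_k$ started from $\gamma(\tau_{k-1})$. Transferring to $\HH$ by a conformal map, Theorems \ref{thm::boundary_levelline_gff_deterministic} and \ref{thm::sle_chordal_continuity_transience} apply to this single level line, yielding determinism and continuity up to its own continuation threshold, and concatenation gives the desired properties for $\gamma$.

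The delicate point is verifying that $\gamma$ is continuous at the junction $\tau_{k-1}$ and at every later time when the $k$-th segment touches an earlier segment, i.e.\ that such touches do not trigger the continuation threshold. Near $\gamma(\tau_{k-1})$ the boundary values of $\tilde h$ on the two sides of $\gamma[\tau_{k-2},\tau_{k-1}]$ are $\pm\lambda-u_{k-1}$; after adding the height shift $u_k$ the effective values seen by the last level line are $\pm\lambda-(u_{k-1}-u_k)$, so the hypothesis $|u_{k-1}-u_k|<2\lambda$ keeps both within $(-\lambda,\lambda)$-strips relative to the relevant thresholds and, by Remark \ref{rem::boundary_levellines_values_allowedhit}, the segment can start non-trivially and can touch that side without hitting the continuation threshold. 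The same computation with $u_{k-1}$ replaced by $u_j$ and the hypothesis $|u_j-u_k|<2\lambda$ handles contacts with every earlier segment $\gamma[\tau_{j-1},\tau_j]$, and standard absolute continuity (Proposition \ref{prop::gff_absolutecontinuity}) shows that interactions with the fixed boundary $\partial\HH$ at non-force-points are not thresholds either. Hence the continuation threshold of $\gamma$ coincides with that of its last segment, and we obtain continuity up to and including it. Determinism of $\gamma$ follows by combining the induction hypothesis (which determines $\gamma[0,\tau_{k-1}]$ from $h$) with Theorem \ref{thm::boundary_levelline_gff_deterministic} applied to the last segment, since $\tilde h$ is a measurable function of $h$ and $\gamma[0,\tau_{k-1}]$.
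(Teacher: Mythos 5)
Your proof is correct and follows exactly the approach the paper indicates (the paper's own ``proof'' is just the one line ``Prove by induction on $k$''); you have supplied the details of the inductive step: condition on $\gamma[0,\tau_{k-1}]$, observe that the last segment is the level line of the restricted field with height $u_k$ in a domain with piecewise constant boundary data, and invoke Theorems \ref{thm::boundary_levelline_gff_deterministic} and \ref{thm::sle_chordal_continuity_transience}. One small imprecision in the phrasing: you say $|u_{k-1}-u_k|<2\lambda$ keeps the two effective boundary values ``both within $(-\lambda,\lambda)$-strips,'' but in fact only one of the two is in $(-\lambda,\lambda)$ in general (the other is on the safe side of its own half-line constraint from Remark \ref{rem::boundary_levellines_values_allowedhit}(1)); the conclusion that the segment starts non-trivially and can touch earlier segments only on the admissible side is nonetheless exactly right.
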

\begin{proof}Prove by induction on $k$.
\end{proof}

\subsection{Proof of Theorem \ref{thm::boundary_levelline_gff_interacting}---general case}\label{subsec::boundary_levellines_interacting_general}
We start by the generalizations of previous conclusions in the non-boundary-intersecting case to the case when the level lines can hit the boundary.

\begin{figure}[ht!]
\begin{subfigure}[b]{0.48\textwidth}
\begin{center}
\includegraphics[width=0.6\textwidth]{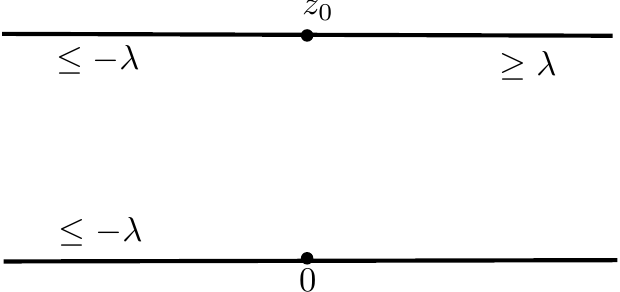}
\end{center}
\caption{The boundary value of the field in Lemma \ref{lem::boundary_levellines_case2_threshold}.}
\end{subfigure}
\begin{subfigure}[b]{0.48\textwidth}
\begin{center}\includegraphics[width=0.6\textwidth]{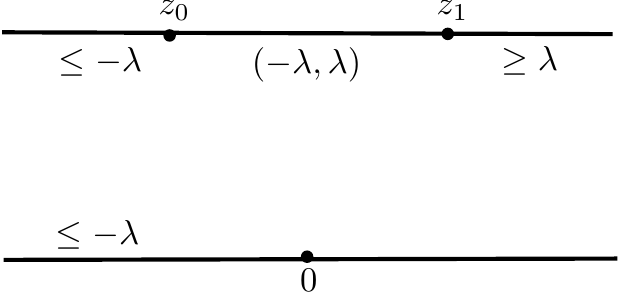}
\end{center}
\caption{The boundary value of the field in Lemma \ref{lem::boundary_levellines_case3_threshold}.}
\end{subfigure}
\caption{\label{fig::boundary_levellines_case23_threshold} The boundary values of the fields in Lemma \ref{lem::boundary_levellines_case2_threshold} and Lemma \ref{lem::boundary_levellines_case3_threshold}.}
\end{figure}

\begin{lemma}\label{lem::boundary_levellines_case2_threshold}[Generalization of Lemma \ref{lem::boundary_levellines_deterministic_case2}]
Suppose that $h$ is a $\GFF$ on $\T$. Assume that the boundary value of $h$ is piecewise constant, and is
\[\begin{array}{ll}
\text{at most } -\lambda \text{ to the left of } z_0 \text{ on } \partial_U\T, & \text{at least } \lambda \text{ to the right of } z_0 \text{ on } \partial_U\T, \\
\text{at most } -\lambda \text{ to the left of } 0 \text{ on } \partial_L\T. &
\end{array}\]
Let $\gamma$ be the level line of $h$ starting from 0 targeted at $z_0$. Then, almost surely, either $\gamma$ hits the continuation threshold before reaches $\partial_U\T$, or $\gamma$ hits $\partial_U\T$ at $z_0$ without otherwise hitting $\partial_U\T$.
\end{lemma}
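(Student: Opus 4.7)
The plan is to reduce to the upper half-plane and apply the continuity and forbidden-interval results already established. I would first choose a conformal map $\psi:\T\to\HH$ sending $0\mapsto 0$, $z_0\mapsto\infty$, and $+\infty\mapsto 1$; write $\psi(-\infty)=-M$ for some $M>0$. By Theorem \ref{thm::boundary_levelline_gff_coupling}, the image $\psi(\gamma)$ has the law of an $\SLE_4(\underline{\rho}^L;\underline{\rho}^R)$ process from $0$ to $\infty$ whose force points are the images of the boundary-value changes of $h$, and whose weights are read off from the standard dictionary $c=\pm\lambda(1+\bar{\rho})$ between piecewise-constant boundary data and cumulative weights.

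The key observation is the weight analysis. On the left side of $\HH$, every force-point interval is the image either of a subinterval of $\partial_L\T\cap\R_-$ (where $h\le -\lambda$) or of a subinterval of $\partial_U\T$ to the left of $z_0$ (where $h\le-\lambda$ as well); in both cases the translation gives $\bar{\rho}^{j,L}\ge 0$ for every $j$. On the right side, intervals inside $(1,+\infty)$ come from $\partial_U\T$ to the right of $z_0$, where $h\ge\lambda$, so the corresponding $\bar{\rho}^{j,R}\ge 0$; intervals inside $(0,1)$ come from $\partial_L\T\cap\R_+$ and carry arbitrary weights.

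Next, by Theorem \ref{thm::sle_chordal_continuity_transience}, $\psi(\gamma)$ is continuous up to and including the continuation threshold, and is transient on the event that the continuation threshold is not reached. Applying Proposition \ref{prop::sle_chordal_forbidden_intervals} (and its left-sided mirror, obtained by reflecting the driving SDE) to $\psi(\gamma)$ along a localizing sequence of stopping times and passing to the limit, one obtains that $\psi(\gamma)$ almost surely does not intersect $(-\infty,0)\cup(1,+\infty)$ throughout its lifetime. Consequently, the continuation-threshold condition that a prefix of weights on one side sum to at most $-2$ can only be triggered at a force point inside $(0,1)$, which pulls back to a point of $\partial_L\T\cap\R_+$ in the strip.

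Combining these, if the continuation threshold is hit, it must be hit on $\partial_L\T\cap\R_+$, and in particular before $\gamma$ reaches $\partial_U\T$; otherwise, transience forces $\psi(\gamma)(t)\to\infty$, which pulls back to $\gamma$ accumulating at $z_0$ while never touching any point of $\partial_U\T\setminus\{z_0\}$, since $\psi(\gamma)$ avoided $(-\infty,0)\cup(1,+\infty)$. The step I expect to require the most care is the one just described: Proposition \ref{prop::sle_chordal_forbidden_intervals} is phrased at a stopping time $T_0$ at which the curve is already known to be continuous, so one must verify via a stopping-time exhaustion together with the continuity half of Theorem \ref{thm::sle_chordal_continuity_transience} that the forbidden-interval conclusion really extends all the way to (and including) the continuation threshold, rather than only up to some proper sub-stopping time.
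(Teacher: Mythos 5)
Your route is genuinely different from the paper's. The paper argues by induction on the number of boundary-value changes on $\partial_L\T\cap\R_+$: it stops $\gamma$ at the first time it hits $[x_2,\infty]$, reduces to a field with one fewer change via absolute continuity (Proposition~\ref{prop::gff_absolutecontinuity}), and applies the base case Lemma~\ref{lem::boundary_levellines_continuity_tworight_forcepoints_generalization_1} after conformally re-opening the strip. You instead map the whole picture to $\HH$ once and invoke Theorem~\ref{thm::sle_chordal_continuity_transience} together with the forbidden-interval Proposition~\ref{prop::sle_chordal_forbidden_intervals}. Your weight bookkeeping --- all cumulative left weights $\ge 0$, cumulative right weights $\ge 0$ on $(1,\infty)$, arbitrary on $(0,1)$ --- is correct, and from it the fact that the continuation threshold, if reached, is reached at a point of $(0,1)$ follows. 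The stopping-time exhaustion you flag as the delicate step is in fact fine, because Theorem~\ref{thm::sle_chordal_continuity_transience} gives continuity up to and including the threshold.

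The real gap is in passing from ``avoids the forbidden intervals'' to ``without otherwise hitting $\partial_U\T$.'' Proposition~\ref{prop::sle_chordal_forbidden_intervals} only asserts that $\psi(\gamma)$ avoids the \emph{open} intervals $(x^{j,R},x^{j+1,R})$; it says nothing about the force points themselves. Your claim that $\psi(\gamma)$ avoids all of $(-\infty,0)\cup(1,\infty)$ is strictly stronger: the isolated boundary-value-change points of $\psi_*h$ lying in $(1,\infty)$ and in $(-\infty,-M)$ (i.e.\ in $\psi(\partial_U\T)$) must also be avoided, and the proposition is silent on them. This is not a formality the paper skips over: its base case rests on Lemma~\ref{lem::boundary_levellines_continuity_tworight_forcepoints_3}, whose whole point is the separate (and rather delicate, using a dual level line and Lemma~\ref{lem::boundary_levellines_continuity_tworight_forcepoints_2}) argument that the curve ``does not hit $1$.'' Your outline needs an analogue of this for every force point in $(1,\infty)\cup(-\infty,-M)$, or else a general lemma showing that the level line almost surely misses an isolated boundary point flanked on both sides by forbidden intervals. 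A secondary, more cosmetic issue: Proposition~\ref{prop::sle_chordal_forbidden_intervals} is stated here only for one-sided $\SLE_{\kappa}(\underline{\rho}^R)$; the ``left-sided mirror'' you invoke is reasonable, but in the two-sided setting one should cite the two-sided statement from the underlying Miller--Sheffield reference rather than a reflection argument.
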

\begin{proof}
We prove the conclusion by induction on the number of changes in $(0,\infty)$ on $\partial_L\T$. Suppose that
\[0=x_1<x_2<\cdots<x_n<x_{n+1}=\infty,\]
and that the boundary value of $h$ on $(x_j,x_{j+1})$ is $b_j$ for $j=1,...,n$ and assume that $b_1>-\lambda$. Lemma \ref{lem::boundary_levellines_continuity_tworight_forcepoints_generalization_1} implies that the conclusion holds for $n=1$. Suppose that the conclusion holds for $n\le m$ for some $m\ge 1$ and we will prove that the conclusion also holds for $n=m+1$. Let $\tau$ be the first time that $\gamma$ hits $[x_2,\infty]$ and set $\tau=\infty$ if $\gamma$ hits $\partial_U\T$ before hits $[x_2,\infty)$.

If $\tau=\infty$, then the law of $\gamma$ is absolutely continuous with respect to the level line of the GFF on $\T$ whose boundary value is consistent with $h$ outside $(0,\infty)$ and is $b_1$ on $(0,\infty)$. Therefore the conclusion holds by Lemma \ref{lem::boundary_levellines_continuity_tworight_forcepoints_generalization_1}. In the following, we assume that $\tau<\infty$. If $\tau$ is the continuation threshold, then we are done. If not, define $C$ to be the connected component of $\T\setminus\gamma[0,\tau]$ that has $z_0$ on the boundary and let $\psi$ be the conformal map from $C$ onto $\T$ that sends $\gamma(\tau)$ to 0, $+\infty$ to $+\infty$, and $z_0$ to $z_0$. Then, given $\gamma[0,\tau]$, the curve $\psi(\gamma|_C)$ is the level line of the GFF on $\T$ whose boundary value changes at most $m$ times on $(0,\infty)$. Thus the conclusion holds by induction hypothesis.

\end{proof}
\begin{lemma}\label{lem::boundary_levellines_case3_threshold}[Generalization of Lemma \ref{lem::boundary_levellines_deterministic_case3}]
Suppose that $h$ is a $\GFF$ on $\T$. Assume that the boundary value of $h$ is piecewise constant, and is in $(-\lambda,\lambda)$ on $[z_0,z_1]$ and is
\[\begin{array}{ll}
\text{at most } -\lambda \text{ to the left of } z_0 \text{ on } \partial_U\T, & \text{at least } \lambda \text{ to the right of } z_1 \text{ on } \partial_U\T, \\
\text{at most } -\lambda \text{ to the left of } 0 \text{ on } \partial_L\T. &
\end{array}\]
Let $\gamma$ be the level line of $h$ starting from 0 targeted at $z_0$. Then, almost surely, either $\gamma$ hits its continuation threshold before reaches $\partial_U\T$, or $\gamma$ first hits $\partial_U\T$ in $[z_0,z_1]$ without otherwise hitting $\partial_U\T$.
\end{lemma}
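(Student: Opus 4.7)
The plan is to imitate the induction argument used to prove Lemma \ref{lem::boundary_levellines_case2_threshold}, inducting on the number of breakpoints of the boundary value of $h$ on $\partial_L\T \cap (0,+\infty)$. Write the breakpoints as $0 = x_1 < x_2 < \cdots < x_n < x_{n+1} = +\infty$, with $h \equiv b_j$ on $(x_j, x_{j+1})$. If $b_1 \le -\lambda$, the continuation threshold is reached at time $0$ and the first alternative holds trivially, so I may assume $b_1 > -\lambda$. The base case $n=1$ (constant value $b_1 > -\lambda$ on $(0,+\infty)$) is exactly Lemma \ref{lem::boundary_levellines_continuity_tworight_forcepoints_generalization_2}, with the general piecewise constant data on $\partial_L\T \cap (-\infty,0)$ at most $-\lambda$ handled by Proposition \ref{prop::gff_absolutecontinuity}: on any compact subdomain of $\T$ staying at positive distance from that part of the boundary, the field is mutually absolutely continuous with the specific field in Lemma \ref{lem::boundary_levellines_continuity_tworight_forcepoints_generalization_2}, and the conclusion is local up to the first hitting of $\partial_U\T$.

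For the induction step, assuming the result for at most $m$ breakpoints, I consider $n = m+1$ and define
\[\tau = \inf\{t \ge 0 : \gamma(t) \in [x_2, +\infty)\},\]
with $\tau = +\infty$ if $\gamma$ reaches $\partial_U\T$ first. I split into three cases. If $\tau = +\infty$, then up to the first time $\gamma$ hits $\partial_U\T$ the path avoids a neighborhood of $(x_2, +\infty) \times \{0\}$, so by Proposition \ref{prop::gff_absolutecontinuity} the law of $\gamma$ is absolutely continuous with respect to the level line of the field $\tilde{h}$ obtained by replacing all boundary values on $(x_2, +\infty)$ with $b_1$, for which the base case applies. If $\tau < +\infty$ and $\tau$ is the continuation threshold of $\gamma$, then $\gamma$ stops without reaching $\partial_U\T$ and the first alternative holds.

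If $\tau < +\infty$ is not the continuation threshold, I let $C$ be the connected component of $\T \setminus \gamma[0,\tau]$ whose boundary contains $[z_0,z_1]$. By Theorem \ref{thm::sle_chordal_continuity_transience}, $\gamma|_{[0,\tau]}$ is continuous, so $C$ is a Jordan domain and admits a conformal map $\psi : C \to \T$ sending $\gamma(\tau) \mapsto 0$, $+\infty \mapsto +\infty$, $z_0 \mapsto z_0$, and mapping $\partial_U\T \cap \partial C$ into $\partial_U\T$. By the domain Markov property from Theorem \ref{thm::boundary_levelline_gff_coupling}, given $\gamma[0,\tau]$ the field $h|_C \circ \psi^{-1}$ is a GFF on $\T$ whose boundary value still satisfies the hypotheses of the lemma: on $\partial_U\T$ the restrictions on $[z_0,z_1]$, its left, and its right are unchanged; on the new $\partial_L\T \cap (-\infty,0)$ the value is at most $-\lambda$, because the left side of $\gamma$ contributes $-\lambda$ (its harmonic boundary value on the left) and the part coming from the original $\partial_L\T \cap (-\infty,0)$ was at most $-\lambda$ by assumption. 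Moreover the number of breakpoints on $\partial_L\T \cap (0,+\infty)$ has strictly decreased, so the induction hypothesis applies to $\psi(\gamma|_{[\tau,\infty)})$ inside the new copy of $\T$, yielding the conclusion.

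The only genuine subtleties are the bookkeeping in Case 3: verifying that after the conformal change the boundary value inherited from the left side of $\gamma[0,\tau]$ is indeed $-\lambda$ (rather than some other constant), and that the interval $[z_0,z_1]$ and the asymptotic intervals left of $z_0$ and right of $z_1$ on $\partial_U\T$ keep their required sign conditions. Both follow directly from Theorem \ref{thm::boundary_levelline_gff_coupling} applied at time $\tau$, since $\gamma$ is a level line at height $0$ so its two flanking boundary data are $\pm\lambda$, and the harmonic extension on $\partial_U\T$ only sees the original values there. Everything else is mechanical once the induction has been set up.
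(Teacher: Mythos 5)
Your argument is correct and is precisely the induction the paper uses (see the proof of Lemma \ref{lem::boundary_levellines_case2_threshold}, which the paper says to transpose with Lemma \ref{lem::boundary_levellines_continuity_tworight_forcepoints_generalization_1} replaced by Lemma \ref{lem::boundary_levellines_continuity_tworight_forcepoints_generalization_2}): induct on the breakpoints of $h|_{\partial_L\T\cap(0,\infty)}$, split on the first hit $\tau$ of $[x_2,\infty)$, and conformally map to strictly decrease the breakpoint count when $\tau<\infty$ is not the continuation threshold. One small imprecision worth flagging: the map $\psi$ fixed by $\gamma(\tau)\mapsto 0$, $+\infty\mapsto+\infty$, $z_0\mapsto z_0$ need not send $\partial_U\T\cap\partial C$ into $\partial_U\T$ (a piece of $\partial_U\T$ to the left of $z_0$ may land on the new $\partial_L\T\cap(-\infty,0)$, and conversely part of the left flank of $\gamma$ or of the original $\partial_L\T\cap(-\infty,0)$ may land on the new $\partial_U\T$ left of $z_0$), but all of these pieces carry boundary data $\le -\lambda$, so the hypotheses of the induction step remain verified and the argument goes through.
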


\begin{proof} The conclusion can be proved by a similar proof as the proof of Lemma \ref{lem::boundary_levellines_case2_threshold} where Lemma \ref{lem::boundary_levellines_continuity_tworight_forcepoints_generalization_1} needs to be replaced by Lemma \ref{lem::boundary_levellines_continuity_tworight_forcepoints_generalization_2}.
\end{proof}

\begin{figure}[ht!]
\begin{subfigure}[b]{0.48\textwidth}
\begin{center}
\includegraphics[width=0.6\textwidth]{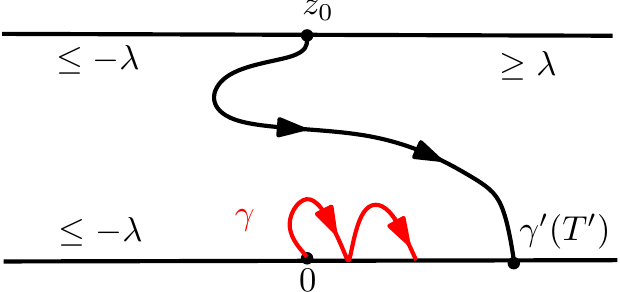}
\end{center}
\caption{}
\end{subfigure}
\begin{subfigure}[b]{0.48\textwidth}
\begin{center}\includegraphics[width=0.6\textwidth]{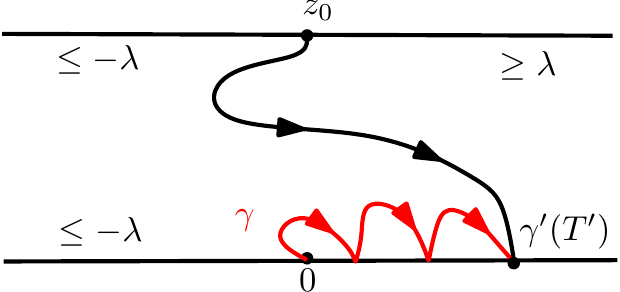}
\end{center}
\caption{}
\end{subfigure}
\caption{\label{fig::boundary_levellines_coincide_threshold} The explanation of the behaviour of the paths in Lemma \ref{lem::boundary_levellines_coincide_threshold}.}
\end{figure}

\begin{lemma}\label{lem::boundary_levellines_coincide_threshold}[Generalization of Lemma \ref{lem::boundary_levellines_deterministic_nonintersecting_coincide}]
Suppose the same the setting as in Lemma \ref{lem::boundary_levellines_case2_threshold}. Let $\gamma'$ be the level line of $-h$ starting from $z_0$ and define $T'$ to be the first time that $\gamma'$ hits $\partial_L\T$. Let $\gamma$ be the level line of $h$ starting from 0 targeted at $z_0$. Then, given $\gamma'[0,T']$, almost surely, either $\gamma$ hits the continuation threshold before hits $\gamma'[0,T']$, or $\gamma$ hits $\gamma'[0,T']$ at $\gamma'(T')$ and merges with $\gamma'$ afterwards. See Figure \ref{fig::boundary_levellines_coincide_threshold}.
\end{lemma}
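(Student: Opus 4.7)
My plan is to generalize the proof of Lemma \ref{lem::boundary_levellines_deterministic_nonintersecting_coincide} by using Lemma \ref{lem::boundary_levellines_case2_threshold}, which accommodates the continuation threshold, in place of Lemma \ref{lem::boundary_levellines_deterministic_case2}. First I would condition on $\gamma'[0,T']$: by Theorem \ref{thm::boundary_levelline_gff_coupling} applied to $-h$, the set $\gamma'[0,T']$ is local for $h$, and the conditional law of $h$ restricted to $\T\setminus\gamma'[0,T']$ is that of a GFF whose boundary value agrees with that of $h$ on $\partial\T$ and equals $\pm\lambda$ on the two sides of $\gamma'$ (in accordance with the level-line convention for $-h$). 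Let $C$ denote the connected component of $\T\setminus\gamma'[0,T']$ whose closure contains $0$; note that both $z_0$ and $\gamma'(T')$ lie in $\partial C$.

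Next, for any $\gamma$-stopping time $\tau$, Proposition \ref{prop::gff_localsets_union} shows that $\gamma[0,\tau]\cup\gamma'[0,T']$ is local for $h$, and combined with Theorem \ref{thm::boundary_levelline_gff_deterministic} this yields that, given $\gamma'[0,T']$, up until its first hitting time of $\gamma'[0,T']$ the curve $\gamma$ is the level line of the conditional field $h|_C$ from $0$ targeted at $z_0$. I would then apply a conformal map $\psi\colon C\to\T$ with $\psi(0)=0\in\partial_L\T$ and $\psi(\gamma'(T'))=z_0^\ast\in\partial_U\T$, and verify segment by segment that the transformed boundary data $h\circ\psi^{-1}$ satisfies the hypothesis of Lemma \ref{lem::boundary_levellines_case2_threshold} with $z_0^\ast$ playing the role of the target boundary point: the boundary value on $\partial_U\T^{\mathrm{new}}$ to the left of $z_0^\ast$ is at most $-\lambda$ (coming partly from the image of original $\partial_U\T$ left of $z_0$ and partly from the appropriate $\pm\lambda$ on a side of $\gamma'$), the boundary value to the right of $z_0^\ast$ is at least $\lambda$ (coming from the other side of $\gamma'$ together with the image of a piece of original $\partial_L\T$), and the boundary value on $\partial_L\T^{\mathrm{new}}$ to the left of $0$ is at most $-\lambda$. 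Applying Lemma \ref{lem::boundary_levellines_case2_threshold} in the image domain then shows that $\psi(\gamma)$ either hits its continuation threshold or first exits at $z_0^\ast$, which pulled back to $\T$ yields the desired dichotomy: $\gamma$ either hits its continuation threshold before meeting $\gamma'[0,T']$, or first meets $\gamma'[0,T']$ at $\gamma'(T')$.

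To obtain the merging statement, I would apply the same argument with $T'$ replaced by an arbitrary $\gamma'$-stopping time $\tau'\leq T'$: conditional on $\gamma'[0,\tau']$ and on the event that $\gamma$ does not hit its continuation threshold first, $\gamma$ first meets $\gamma'[0,\tau']$ at the tip $\gamma'(\tau')$. Letting $\tau'$ range over a countable dense subset of $[0,T']$ and invoking the continuity and simplicity of both paths guaranteed by Theorem \ref{thm::sle_chordal_continuity_transience}, it follows that (conditional on not hitting the continuation threshold first) $\gamma$ visits a dense set of points of $\gamma'[0,T']$ in reverse chronological order, forcing $\gamma$ and the portion of $\gamma'$ from the meeting point up to $z_0$ to coincide as sets.

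The main technical obstacle will be the careful case-analysis needed for the boundary-value bookkeeping in the second step: depending on which side of $\gamma'$ the component $C$ lies (equivalently, whether $\gamma'(T')$ is to the left or right of $0$ on $\partial_L\T$), and on whether $\gamma'$ touches $\partial_U\T$ or $\partial_L\T$ at times prior to $T'$, the assignment of $\pm\lambda$ on the two sides of $\gamma'$ and the resulting image boundary data under $\psi$ differ. In each configuration one must check that the hypothesis of Lemma \ref{lem::boundary_levellines_case2_threshold} is indeed satisfied, which ultimately relies on the level-line convention and the piecewise-constant structure of the original boundary data of $h$.
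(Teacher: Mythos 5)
Your high-level plan mirrors the paper's (which replaces Lemma \ref{lem::boundary_levellines_deterministic_case2} by Lemma \ref{lem::boundary_levellines_case2_threshold} inside the argument of Lemma \ref{lem::boundary_levellines_deterministic_nonintersecting_coincide} and then passes to the limit), but your execution in the first step contains a genuine gap: conditioning \emph{directly} on $\gamma'[0,T']$ does not put you in the setting of Lemma \ref{lem::boundary_levellines_case2_threshold}, and the boundary-data verification you describe is incorrect.

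The point is that at time $T'$ the tip $\gamma'(T')$ lies on $\partial_L\T$, so $\gamma'[0,T']$ is a crosscut of $\T$ and $\T\setminus\gamma'[0,T']$ has \emph{two} connected components; $\partial C$ contains only the one side of $\gamma'$ that faces $C$. Since $h\le -\lambda$ to the left of $0$ on $\partial_L\T$ forces $-h\ge\lambda$ there, the curve $\gamma'$ can only reach $\partial_L\T$ at a point $\gamma'(T')\ge 0$, and $C$ is then the component on the ``$-\lambda$'' side of $\gamma'$. Travelling counterclockwise on $\partial C$ from $0$ to $\gamma'(T')$, you traverse only a segment of $\partial_L\T$, whose data is unconstrained and, in fact, lies in $(-\lambda,\lambda)$ near $\gamma'(T')$ by Remark \ref{rem::boundary_levellines_values_allowedhit}. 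There is no ``other side of $\gamma'$'' on $\partial C$, and no piece of $\partial C$ on that arc carries data $\ge\lambda$. Consequently, after applying $\psi$ with $\psi(\gamma'(T'))=z_0^\ast$, the boundary value on $\partial_U\T^{\mathrm{new}}$ immediately to the right of $z_0^\ast$ is the image of that segment of $\partial_L\T$ and is \emph{not} $\ge\lambda$, so the hypothesis of Lemma \ref{lem::boundary_levellines_case2_threshold} fails and the lemma cannot be invoked. By contrast, for a stopping time $\tau'<T'$ the tip $\gamma'(\tau')$ is interior, $\T\setminus\gamma'[0,\tau']$ is simply connected, \emph{both} sides of $\gamma'$ lie on $\partial C$, and the arc from $0$ counterclockwise to $\gamma'(\tau')$ passes through $\partial_L\T\cap(0,\infty)$ (unconstrained), the end $+\infty$, $\partial_U\T\cap(z_0,\infty)$ (data $\ge\lambda$) and the left side of $\gamma'$ (data $\lambda$), which after $\psi$ (chosen so that $+\infty$ maps to $+\infty$) gives exactly the hypotheses of Lemma \ref{lem::boundary_levellines_case2_threshold}. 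The fix is therefore to establish the dichotomy first for $\tau'<T'$ exactly as in Lemma \ref{lem::boundary_levellines_deterministic_nonintersecting_coincide}, and only then let $\tau'\uparrow T'$ and use the continuity of $\gamma'$ to identify the hitting point as $\gamma'(T')$; your ``second step'' for the merging already contains the right idea, but the first step should not be attempted at $\tau'=T'$ directly.
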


\begin{proof} We can first prove the conclusion for any $\gamma'$-stopping time $\tau'<T'$. This can be proved by a similar proof as the proof of Lemma \ref{lem::boundary_levellines_deterministic_nonintersecting_coincide},
where Lemma \ref{lem::boundary_levellines_deterministic_case2} needs to be replaced by Lemma \ref{lem::boundary_levellines_case2_threshold}. Then use the continuity of $\gamma'$ to extend the conclusion to time $T'$.
\end{proof}

\begin{figure}[ht!]
\begin{subfigure}[b]{0.48\textwidth}
\begin{center}
\includegraphics[width=0.625\textwidth]{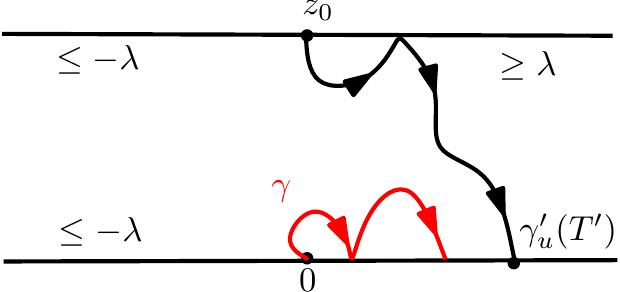}
\end{center}
\caption{Suppose $u>0$. The curve $\gamma$ may hit the continuation threshold before hit $\gamma'_u[0,T']$.}
\end{subfigure}
$\quad$
\begin{subfigure}[b]{0.48\textwidth}
\begin{center}\includegraphics[width=0.625\textwidth]{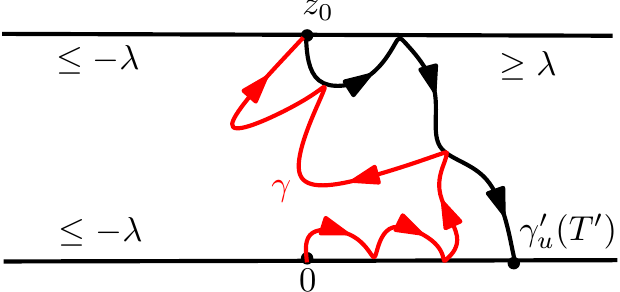}
\end{center}
\caption{Suppose that $u>0$. The curve $\gamma$ stays to the left of the set $\gamma'_u[0,T']$.}
\end{subfigure}
\begin{subfigure}[b]{0.48\textwidth}
\begin{center}
\includegraphics[width=0.625\textwidth]{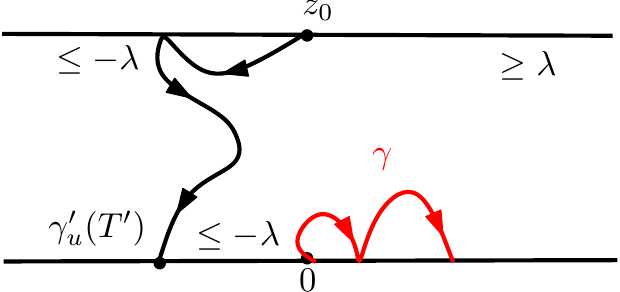}
\end{center}
\caption{Suppose $u<0$. The curve $\gamma$ may hit the continuation threshold before hit $\gamma'_u[0,T']$.}
\end{subfigure}
$\quad$
\begin{subfigure}[b]{0.48\textwidth}
\begin{center}\includegraphics[width=0.625\textwidth]{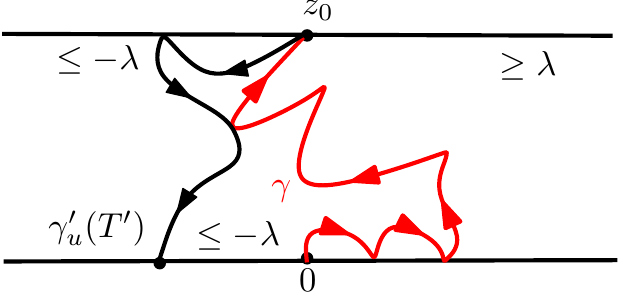}
\end{center}
\caption{Suppose $u<0$. On $E_u$, the curve $\gamma$ stays to the right of the set $\gamma'_u[0,T']$.}
\end{subfigure}

\caption{\label{fig::boundary_levellines_monotonicity_reverse_threshold} The explanation of the behaviour of the paths in Lemma \ref{lem::boundary_levellines_monotonicity_reverse_threshold}.}
\end{figure}

\begin{lemma}\label{lem::boundary_levellines_monotonicity_reverse_threshold}[Generalization of Proposition \ref{prop::boundary_levellines_nonintersecting_monotonicity_reverse}]
Suppose the same the setting as in Lemma \ref{lem::boundary_levellines_case2_threshold}. Fix $u\in\R$ and let $\gamma'_u$ be the level line of $-h$ with height $u$ starting from $z_0$. Assume that $\gamma'_u$ does not hit its continuation threshold before reaches $\partial_L\T$ and define $T'$ to be the first time that $\gamma'_u$ hits $\partial_L\T$. Let $\gamma$ be the level line of $h$ starting from 0 targeted at $z_0$. See Figure \ref{fig::boundary_levellines_monotonicity_reverse_threshold}.
\begin{enumerate}
\item [(1)] If $u>0$, then $\gamma$ almost surely stays to the left of $\gamma'_u[0,T']$.
\item [(2)] If $u<0$, define $E'_u$ to be the event that $\gamma'_u(T')$ is to the left of 0. Then, on $E'_u$, the level line $\gamma$ almost surely stays to the right of $\gamma'_{u}[0,T']$.
\end{enumerate}
\end{lemma}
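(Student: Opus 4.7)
The plan is to adapt the proof of Proposition \ref{prop::boundary_levellines_nonintersecting_monotonicity_reverse}, substituting the boundary-avoiding inputs Lemma \ref{lem::boundary_levellines_deterministic_case3} and Lemma \ref{lem::boundary_levellines_deterministic_case2} with their boundary-intersecting generalizations Lemma \ref{lem::boundary_levellines_case3_threshold} and Lemma \ref{lem::boundary_levellines_case2_threshold}, which explicitly allow the level line to run into its continuation threshold. The continuity of $\gamma'_u$ needed to pass to the limit $\tau'\uparrow T'$ is supplied by Theorem \ref{thm::sle_chordal_continuity_transience}.

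Concretely, for case (1) fix $u>0$ and a $\gamma'_u$-stopping time $\tau'<T'$, and condition on $\gamma'_u[0,\tau']$. By Theorem \ref{thm::boundary_levelline_gff_coupling} applied to the level line of $-h+u$, the restriction of $h$ to $\T\setminus\gamma'_u[0,\tau']$ is a GFF whose boundary data agrees with that of $h$ on the surviving parts of $\partial\T$, equals $u+\lambda$ on the east side of $\gamma'_u[0,\tau']$, and equals $u-\lambda$ on the west side; moreover, up to the first time it hits $\gamma'_u[0,\tau']$, the path $\gamma$ is the level line of this restricted field. Conformally map $\T\setminus\gamma'_u[0,\tau']$ to $\T$ sending $\partial_L\T$ to $\partial_L\T$ and $0$ to $0$, and let $z_0^W,v,z_0^E$ be the images of $z_0$ (west copy), the tip $\gamma'_u(\tau')$, and $z_0$ (east copy). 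The upper boundary of the target $\T$ then carries values $\le-\lambda$ west of $z_0^W$, $u-\lambda$ on $[z_0^W,v]$, $u+\lambda$ on $[v,z_0^E]$, and $\ge\lambda$ east of $z_0^E$. When $u\in(0,2\lambda)$, the arc $[z_0^W,v]$ carries a value in $(-\lambda,\lambda)$ and the arc $[v,z_0^E]\cup[z_0^E,+\infty)$ carries values $\ge\lambda$, so Lemma \ref{lem::boundary_levellines_case3_threshold} applies and shows that $\gamma$ either reaches its continuation threshold or first accumulates on $[z_0^W,v]$; when $u\ge 2\lambda$, the arc $[z_0^W,v]$ also has value $\ge\lambda$ and Lemma \ref{lem::boundary_levellines_case2_threshold} applies with target $z_0^W$ to yield the same conclusion. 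Translating back: $\gamma$ either reaches the continuation threshold or first touches $\gamma'_u[0,\tau']$ on its west side (or accumulates at $z_0$).

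Letting $\tau'$ range over all stopping times less than $T'$, we conclude that $\gamma$ never touches the east side of $\gamma'_u[0,\tau']$ for any such $\tau'$, and then letting $\tau'\uparrow T'$ and invoking the continuity of $\gamma'_u$ on $[0,T']$ we obtain that $\gamma$ never touches the east side of $\gamma'_u[0,T']$. Since $\gamma$ and $\gamma'_u$ are simple curves that cannot cross (they are level lines of $\pm h$ up to heights), this forces $\gamma$ (as a set) to lie in the closure of the west component of $\T\setminus\gamma'_u[0,T']$. Case (2) is handled by a symmetric analysis with the roles of east and west swapped: now the west side of $\gamma'_u[0,\tau']$ carries the forbidden value $u-\lambda<-\lambda$, so Lemma \ref{lem::boundary_levellines_case3_threshold} (for $u\in(-2\lambda,0)$) or Lemma \ref{lem::boundary_levellines_case2_threshold} (for $u\le-2\lambda$) rules out $\gamma$ touching the west side before hitting its continuation threshold, and on the event $E'_u$ the starting point $0$ lies in the east component of $\T\setminus\gamma'_u[0,T']$, making "east" the correct side to stay on.

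The main obstacle is the topological wrap-around possibility: after first touching the allowed (west) side of $\gamma'_u[0,\tau']$, $\gamma$ might continue, loop around the tip $\gamma'_u(\tau')$ and approach the forbidden (east) side. The original proof of Proposition \ref{prop::boundary_levellines_nonintersecting_monotonicity_reverse} ruled this out by defining an auxiliary stopping time $\tau_\delta$ after which $\gamma$ would have to hit the forbidden side, conditioning on $\gamma'_u[0,\tau']\cup\gamma[0,\tau_\delta]$, computing the augmented boundary data, and then invoking Lemma \ref{lem::levelline_forbidden_intervals} to obtain a contradiction. The same device should work here, provided one carefully enumerates how the continuation threshold interacts with the enlarged local set; this is the step that will require the most care.
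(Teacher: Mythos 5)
Your approach is essentially identical to the paper's: the paper's proof is the single sentence ``prove the conclusion for any $\gamma'_u$-stopping time $\tau'<T'$ by the same argument as in Proposition~\ref{prop::boundary_levellines_nonintersecting_monotonicity_reverse}, with Lemma~\ref{lem::boundary_levellines_deterministic_case3} replaced by Lemma~\ref{lem::boundary_levellines_case3_threshold}, then extend to $T'$ by continuity of $\gamma'_u$,'' and your plan is a fleshed-out version of exactly this. You go further than the printed sketch in two useful ways: you work out the boundary data of $h|_{\T\setminus\gamma'_u[0,\tau']}$ explicitly (the values $u\pm\lambda$ on the two sides of the slit), and you observe that when $|u|\ge 2\lambda$ the hitting interval degenerates and one should invoke Lemma~\ref{lem::boundary_levellines_case2_threshold} rather than Lemma~\ref{lem::boundary_levellines_case3_threshold} -- a case distinction the paper elides but that is genuinely needed since the hypotheses of Lemma~\ref{lem::boundary_levellines_case3_threshold} require the middle value to lie in $(-\lambda,\lambda)$. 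You also correctly flag that the remaining obstruction is the wrap-around argument from Proposition~\ref{prop::boundary_levellines_nonintersecting_monotonicity_reverse} (the auxiliary stopping time $\tau_\delta$, conditioning on the enlarged local set, and invoking Lemma~\ref{lem::levelline_forbidden_intervals}); the paper silently delegates this to the earlier proof as well, so you are not missing anything the paper supplies. The one place your write-up is looser than it should be is the parenthetical claim that $\gamma$ and $\gamma'_u$ ``cannot cross'' -- in this framework level lines of $\pm h$ with different heights can and do touch, and the sidedness conclusion is precisely what the wrap-around/forbidden-interval argument establishes rather than a fact available for free; but since you then explicitly defer to that argument, this is a phrasing issue rather than a gap.
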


\begin{proof}
We can first prove the conclusion for any $\gamma'_u$-stopping time $\tau'<T'$. This can be proved by a similar proof as the proof of Proposition \ref{prop::boundary_levellines_nonintersecting_monotonicity_reverse} where Lemma \ref{lem::boundary_levellines_deterministic_case3} needs to be replaced by Lemma \ref{lem::boundary_levellines_case3_threshold}. Then use the continuity of $\gamma'_u$ to extend the conclusion to time $T'$.
\end{proof}

\begin{lemma}\label{lem::boundary_levellines_monotonicity_threshold}[Generalization of Proposition \ref{prop::boundary_levellines_nonintersecting_monotonicity}]
Fix $u_2>u_1$.
Suppose that $h$ is a $\GFF$ on $\T$ whose boundary value is piecewise constant, and is at most $-\lambda-u_2$ to the left of $0$ on $\partial_L\T$. For $i=1,2$, let $\gamma_{i}$ be the level line of $h$ with height $u_i$ starting from 0 and let $\tau_i$ be the first time that $\gamma_{i}$ accumulates in $\partial_U\T$. Define $E_2$ to be the event that $\gamma_2[0,\tau_2]$ does not hit $(0,\infty)$ on $\partial_L\T$. Then, almost surely on $E_2$, $\gamma_{2}[0,\tau_2]$ lies to the left of $\gamma_{1}[0,\tau_1]$. We emphasize that there is no restriction for the boundary data of $h$ on $\partial_U\T$.
\end{lemma}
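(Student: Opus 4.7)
The plan is to adapt the argument of Proposition~\ref{prop::boundary_levellines_nonintersecting_monotonicity} to the boundary-intersecting setting: compare $\gamma_2$ to a reverse-direction level line emanating from $\partial_U \T$ whose range equals that of $\gamma_1$, then invoke reverse monotonicity. The non-boundary-intersecting ingredients there are replaced by their generalizations Lemmas~\ref{lem::boundary_levellines_coincide_threshold} and~\ref{lem::boundary_levellines_monotonicity_reverse_threshold}.

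First I would reduce via absolute continuity to the case where, at a chosen point $z_0 \in \partial_U \T$, the boundary value of $h$ on $\partial_U \T$ is at most $-\lambda - u_2$ to the left of $z_0$ and at least $\lambda - u_1$ to the right of $z_0$. For each $\epsilon > 0$, let $\tau_i^\epsilon$ denote the first time $\gamma_i$ enters the $\epsilon$-neighborhood of $\partial_U \T$. Since $E_2 = \bigcap_{\epsilon > 0}\{\gamma_2[0, \tau_2^\epsilon] \cap (0, \infty) = \emptyset\}$, it suffices to prove the monotonicity for $\gamma_i[0, \tau_i^\epsilon]$ for every $\epsilon > 0$. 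Introducing an auxiliary field $\tilde h$ agreeing with $h$ on $\partial_L \T$ but having the extreme upper boundary data above, Proposition~\ref{prop::gff_absolutecontinuity} gives mutual absolute continuity of the restrictions of $h$ and $\tilde h$ to $\R \times (0, 1-\epsilon)$, allowing us to work with $\tilde h$.

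Under this assumption, define $\gamma'$ to be the level line of $-h$ with height $-u_1$ starting from $z_0$, and let $T'$ be its first hitting time of $\partial_L \T$ (the extreme upper data prevents a continuation-threshold event before that). Applied to the shifted field $h + u_1$, whose boundary values satisfy the hypotheses of Lemma~\ref{lem::boundary_levellines_case2_threshold} by our choice, Lemma~\ref{lem::boundary_levellines_coincide_threshold} yields that $\gamma_1$ and $\gamma'[0, T']$ have the same range up to continuation thresholds for $\gamma_1$. Applied to the shifted field $h + u_2$ with positive parameter $u = u_2 - u_1 > 0$, Lemma~\ref{lem::boundary_levellines_monotonicity_reverse_threshold}(1) shows that $\gamma_2$ stays to the left of the level line of $-(h + u_2)$ at height $u$ from $z_0$, which is precisely the same curve $\gamma'$. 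On $E_2$ the comparison carries through, giving $\gamma_2[0, \tau_2]$ to the left of $\gamma_1[0, \tau_1]$.

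The main subtlety is correctly tracking the event $E_2$. Since $h \le -\lambda - u_2$ to the left of $0$ on $\partial_L \T$, both $h + u_1 \le -\lambda$ and $h + u_2 \le -\lambda$ hold there, so by Remark~\ref{rem::boundary_levellines_values_allowedhit} neither $\gamma_1$ nor $\gamma_2$ hits the left part of $\partial_L \T$; the only remaining interaction with $\partial_L \T$ is on the right, and if $\gamma_2$ does hit the right part it may ``cross'' $\gamma'$ at $\partial_L \T$, in which case the reverse comparison fails. The event $E_2$ is exactly what rules this out, and it is the technical heart of the argument to verify that the strict ordering ``$\gamma_2$ to the left of $\gamma'$'' transfers to ``$\gamma_2$ to the left of $\gamma_1$'' precisely on $E_2$.
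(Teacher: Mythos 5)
Your approach genuinely differs from the paper's, and it has a gap that you flag but do not close, and which I believe is not closable without essentially redoing the argument.

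You take $\gamma'$ to be the reverse of $\gamma_1$ (the level line of $-h$ with height $-u_1$), whereas the paper uses the reverse of $\gamma_2$ (height $-u_2$). Your two applications of Lemma~\ref{lem::boundary_levellines_monotonicity_reverse_threshold}(1) and Lemma~\ref{lem::boundary_levellines_coincide_threshold} are correctly set up, and they yield: (i) $\gamma_2$ stays to the left of $\gamma'[0,T']$ (unconditionally, given that $\gamma'$ reaches $\partial_L\T$), and (ii) given $\gamma'[0,T']$, the curve $\gamma_1$ first hits $\gamma'[0,T']$ at $\gamma'(T')$ and then \emph{merges} with it. The problem is in what you infer from (ii). You assert that ``$\gamma_1$ and $\gamma'[0,T']$ have the same range'', but this is not what the lemma gives: $\gamma_1$'s range is $\gamma'[0,T']$ together with a nontrivial initial arc running from $0$ to $\gamma'(T')$, and this arc lies strictly in the left component of $\T\setminus\gamma'[0,T']$ whenever $\gamma'(T')\neq 0$. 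The boundary data on $(0,\infty)\subset\partial_L\T$ is unrestricted, so $\gamma'(T')>0$ with positive probability, and then $\gamma_1\supsetneq\gamma'$. Consequently, knowing $\gamma_2$ is left of $\gamma'$ does not place $\gamma_2$ left of $\gamma_1$: both $\gamma_2$ and the extra initial arc of $\gamma_1$ sit in the same component of $\T\setminus\gamma'$, and you would still need to show $\gamma_2$ does not slip into the pocket bounded by $\gamma_1$'s initial arc and $[0,\gamma'(T')]$. You acknowledge that the transfer ``is the technical heart of the argument'' but do not carry it out; what remains is essentially the monotonicity statement itself restricted to a subdomain, so the argument is circular as written.

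The paper's choice of using the reverse of $\gamma_2$ (call it $\gamma'_2$) sidesteps this precisely: define $E'_2$ as the event that $\gamma'_2$ avoids $(0,\infty)$ on $\partial_L\T$. On $E_2\cap E'_2$ both $\gamma_2$ and $\gamma'_2$ are non-boundary-intersecting and therefore are \emph{equal as sets} by the coincidence result for the non-intersecting case --- not merely related by merging with an extra arc. Then Lemma~\ref{lem::boundary_levellines_monotonicity_reverse_threshold}(2), applied with $u=u_1-u_2<0$ to the field $h+u_1$, says that on $E'_2$ the path $\gamma_1$ stays to the right of $\gamma'_2[0,T']=\gamma_2[0,\tau_2]$, which is exactly the full range of $\gamma_1$ against the full range of $\gamma_2$. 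No leftover arc, no extra step. If you want to salvage your variant, you would need to show in addition that on $E_2$ the initial arc of $\gamma_1$ lies to the right of $\gamma_2$, which amounts to an inner instance of the same lemma; switching to the reverse of $\gamma_2$ is simpler.
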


\begin{proof}
From the proof of Proposition \ref{prop::boundary_levellines_nonintersecting_monotonicity}, we know that the conclusion holds if we can prove the monotonicity for one properly chosen boundary value of $h$ on $\partial_U\T$. Thus it suffices to prove the monotonicity when the boundary value of $h$ is $-a'$ to the left of $z_0$ and is $b'$ to the right of $z_0$ on $\partial_U\T$ where
\[a'\ge \lambda+u_2,\quad b'\ge \lambda-u_1.\]
Let $\gamma'_2$ be the level line of $-h$ with height $-u_2$ starting from $z_0$ targeted at 0. By the choice of boundary value of $h$, we know that $\gamma_2'$ can neither hit $\partial_U\T$ nor $(-\infty,0)$ on $\partial_L\T$ except at points $z_0$ and $0$. Define $E'_2$ to be the event that $\gamma'_2$ does not hit $(0,\infty)$ on $\partial_L\T$. Then we have the following observations.
\begin{enumerate}
\item [(a)] By Lemma \ref{lem::boundary_levellines_monotonicity_reverse_threshold}, we know that, on $E'_2$, the path $\gamma_2'$ stays to the left of $\gamma_1$.
\item [(b)] On $E_2\cap E'_2$, both $\gamma_2$ and $\gamma'_2$ are non-boundary-intersecting, therefore the paths $\gamma_2$ and $\gamma'_2$ are equal.
\end{enumerate}
Combining these two facts, we obtain the conclusion.
\end{proof}

\begin{lemma}\label{lem::boundary_levellines_merge_threshold_bottom}
Fix two starting points $x_2<x_1$. Let \[\underline{y}=(y_1=x_2<y_2<\cdots<y_n<y_{n+1}=x_1)\] be a sequence of points in $[x_2,x_1]$. Suppose that $h$ is a $\GFF$ on $\HH$ whose boundary value is $-a$ on $(-\infty,x_2)$, is $c$ on $(x_1,\infty)$, and is $b_j$ on $(y_j,y_{j+1})$ for $j=1,...,n$. Assume that
\[a\ge \lambda,\quad c\ge \lambda, \quad b_1>-\lambda,\quad b_n<\lambda.\]
For $i=1,2$, let $\gamma_i$ be the level line of $h$ starting from $x_i$ targeted at $\infty$. Then, almost surely, either $\gamma_1$ or $\gamma_2$ hits the continuation threshold before they hit each other, or $\gamma_1$ and $\gamma_2$ merge upon intersecting.
\end{lemma}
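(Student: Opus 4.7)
The plan is to prove merging by turning both forward level lines $\gamma_1$ and $\gamma_2$ into reverse level lines emanating from $\infty$, and then invoking the target-independent property. The overall strategy leverages two previously established tools: the reversibility of level lines (Theorem \ref{thm::sle_chordal_reversibility}) and target-independence (Theorem \ref{thm::boundary_levellines_targetindependence}). The boundary assumptions $a \ge \lambda$ and $c \ge \lambda$ are exactly what is needed so that a reverse level line of $-h$ can be launched from $\infty$, since $-h$ has boundary value $\le -\lambda$ on $(x_1,\infty)$ and $\ge \lambda$ on $(-\infty,x_2)$.

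More concretely, I would first let $\gamma_1'$ (resp.~$\gamma_2'$) be the level line of $-h$ starting from $\infty$ targeted at $x_1$ (resp.~$x_2$). Since both $\gamma_1'$ and $\gamma_2'$ start from the common point $\infty$ and have distinct targets $x_1$ and $x_2$, Theorem \ref{thm::boundary_levellines_targetindependence} yields that $\gamma_1'$ and $\gamma_2'$ coincide up to (and including) the first disconnecting time $T'$, after which they evolve independently toward $x_1$ and $x_2$ respectively. Next, I would apply Theorem \ref{thm::sle_chordal_reversibility} to each pair $(\gamma_i, \gamma_i')$: on the event that $\gamma_i$ does not hit its continuation threshold before reaching $\infty$ (equivalently, that $\gamma_i'$ does not hit its continuation threshold before reaching $x_i$), the two paths $\gamma_i$ and $\gamma_i'$ agree as sets. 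Combining these two facts, on the event that neither $\gamma_1$ nor $\gamma_2$ hits its continuation threshold, the common segment $\gamma_1'[0,T'] = \gamma_2'[0,T']$ of the reverse level lines corresponds to a common final segment of $\gamma_1$ and $\gamma_2$. Since $\gamma_1$ and $\gamma_2$ are continuous simple curves (Theorem \ref{thm::sle_chordal_continuity_transience}) and $\gamma_2$ lies to the left of $\gamma_1$ until they meet (by the monotonicity Lemma \ref{lem::boundary_levellines_monotonicity_threshold}), this sharing of a terminal segment is exactly the ``merge upon intersecting'' conclusion.

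The main obstacle is handling continuation thresholds cleanly, since Theorem \ref{thm::sle_chordal_reversibility} only gives the set-equality $\gamma_i = \gamma_i'$ on the event that neither path hits its continuation threshold. To deal with the complementary event, I would argue as follows. Let $S$ be the minimum of: the first continuation-threshold time of $\gamma_1$, the first continuation-threshold time of $\gamma_2$, and the first intersection time of the two paths. If $S$ is a continuation threshold of, say, $\gamma_1$, occurring strictly before the first intersection of $\gamma_1$ and $\gamma_2$, then we are in the first alternative of the lemma. Otherwise, one argues using a localization: working in a neighborhood of the first intersection point $z_0 = \gamma_1(s_1) = \gamma_2(s_2)$ that is separated from the continuation-threshold locations, one can appeal to Proposition \ref{prop::gff_absolutecontinuity} to modify the boundary data away from this neighborhood so that $a$ and $c$ remain $\ge \lambda$ and no continuation threshold is reached before the paths meet; the previous reversibility/target-independence argument then applies in this modified setting, and absolute continuity transfers the merging conclusion back. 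The delicate point is ensuring that this localization is carried out consistently with the monotonicity Lemma \ref{lem::boundary_levellines_monotonicity_threshold} so that ``merging'' is understood in the correct geometric sense, namely that the two sets coincide beyond the first meeting point.
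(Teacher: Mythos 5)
Your central idea---that the merged tail of $\gamma_1$ and $\gamma_2$ should be identified with an initial segment of a reverse level line of $-h$ emanating from $\infty$---is precisely what drives the paper's proof, and the stopping time you call the first disconnecting time of $\gamma_1'$ and $\gamma_2'$ is exactly the first time this reverse line hits $[x_2,x_1]$, so $\gamma_1'[0,T']=\gamma_2'[0,T']$ is the same segment the paper works with. But you reach for Theorem~\ref{thm::sle_chordal_reversibility} and Theorem~\ref{thm::boundary_levellines_targetindependence}, both of which are only proved afterwards in Section~\ref{subsec::boundary_levellines_reversibility_general}, and the paper explicitly lists the present lemma as an ingredient in the proof of Theorem~\ref{thm::boundary_levelline_gff_interacting}, which is itself cited throughout Section~\ref{subsec::boundary_levellines_reversibility_general}. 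This is at best out of logical order; at worst it is circular, and breaking the cycle requires checking (as it happens, true, but nontrivially) that the reversibility chain only ever invokes the strict-inequality, $u_2>u_1$, part of Theorem~\ref{thm::boundary_levelline_gff_interacting} and not the merging case being proved here. The paper avoids all of this by applying the already-available Lemma~\ref{lem::boundary_levellines_coincide_threshold} twice to a single reverse level line $\gamma'$ of $-h$ from $\infty$, stopped at the first time $T'$ it hits $[x_2,x_1]$: each $\gamma_i$ either reaches its continuation threshold before touching $\gamma'[0,T']$, or first touches $\gamma'[0,T']$ exactly at the tip $\gamma'(T')$ and merges with $\gamma'$ from there. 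In the second case, $\gamma_1$ and $\gamma_2$ start on opposite sides of the Jordan arc $\gamma'[0,T']$ and cannot meet before both reach $\gamma'(T')$, from which point they coincide with $\gamma'$; that is "merge upon intersecting." No reversibility theorem or target-independence theorem is needed.

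Two further points. First, the localization you flag as "delicate" is a genuine gap: Theorem~\ref{thm::sle_chordal_reversibility} gives $\gamma_i=\gamma_i'$ only on the event that $\gamma_i$ reaches $\infty$, but the intermediate values $b_2,\dots,b_{n-1}$ are unconstrained, so $\gamma_i$ may well stall at some $(y_j,y_{j+1})$ with $b_j\le -\lambda$ or $b_j\ge\lambda$ \emph{after} meeting the other path. Modifying the boundary data away from the first intersection point and invoking Proposition~\ref{prop::gff_absolutecontinuity} only controls local behavior near that point; "merge and never separate" is global, and you would still need a separate argument that local coincidence of two equal-height level lines propagates forever. Lemma~\ref{lem::boundary_levellines_coincide_threshold} is built to handle exactly this (it is proved by running $\gamma'$ to intermediate stopping times $\tau'<T'$ and taking a limit), which is why the paper's two-sentence proof closes cleanly. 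Second, you cite Lemma~\ref{lem::boundary_levellines_monotonicity_threshold} to keep $\gamma_2$ to the left of $\gamma_1$ until they meet, but that lemma assumes $u_2>u_1$ strictly and does not apply to equal heights; what you actually need is the planarity observation that $\gamma'[0,T']$ separates $x_1$ from $x_2$, forcing $\gamma_1$ and $\gamma_2$ to live on opposite sides of $\gamma'[0,T']$ until they both reach $\gamma'(T')$.
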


\begin{proof}
Let $\gamma'$ be the level line of $-h$ starting from $\infty$ and define $T'$ to be the first time that $\gamma'$ hits $[x_2,x_1]$. From Lemma \ref{lem::boundary_levellines_coincide_threshold}, we have the following two observations.
\begin{enumerate}
\item [(a)] Given $\gamma'[0,T']$, either $\gamma_1$ hits its continuation threshold before hits $\gamma'[0,T']$, or $\gamma_1$ hits $\gamma'[0,T']$ at $\gamma'(T')$ and merges with $\gamma'$ afterwards.
\item [(b)] Given $\gamma'[0,T']$, either $\gamma_2$ hits its continuation threshold before hits $\gamma'[0,T']$, or $\gamma_2$ hits $\gamma'[0,T']$ at $\gamma'(T')$ and merges with $\gamma'$ afterwards.
\end{enumerate}
Combining these two facts, we obtain the conclusion.\end{proof}

\begin{lemma}\label{lem::boundary_levellines_monotonicity_threshold_bottom}
Fix two starting points $x_2<x_1$ and two heights $u_2>u_1$. Let \[\underline{y}=(y_1=x_2<y_2<\cdots<y_n<y_{n+1}=x_1)\] be a sequence of points in $[x_2,x_1]$. Suppose that $h$ is a $\GFF$ on $\HH$ whose boundary value is $-a$ on $(-\infty,x_2)$, is $c$ on $(x_1,\infty)$, and is $b_j$ on $(y_j,y_{j+1})$ for $j=1,...,n$. Assume that
\[a\ge \lambda+u_2,\quad c\ge \lambda-u_1, \quad b_1>-\lambda-u_2,\quad b_n<\lambda-u_1.\]
For $i=1,2$, let $\gamma_i$ be the level line of $h$ with height $u_i$ starting from $x_i$ targeted at $\infty$. Then $\gamma_2$ almost surely stays to the left of $\gamma_1$.
\end{lemma}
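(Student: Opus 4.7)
The plan is to parallel the proof of Lemma~\ref{lem::boundary_levellines_merge_threshold_bottom}, replacing the coincidence statement of Lemma~\ref{lem::boundary_levellines_coincide_threshold} with the reverse-level-line monotonicity of Lemma~\ref{lem::boundary_levellines_monotonicity_reverse_threshold}. For a carefully chosen $u \in (u_1, u_2)$, let $\gamma'$ be the level line of $-h$ with height $-u$ starting from $\infty$, and let $T'$ be the first time $\gamma'$ hits $\R$. The hypotheses $a \ge \lambda + u_2$ and $c \ge \lambda - u_1$ together with $u\in(u_1,u_2)$ guarantee that $\gamma'$ is non-degenerate near $\infty$ and, by Theorem~\ref{thm::sle_chordal_continuity_transience}, is continuous up to $T'$.

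Via the substitution $h \mapsto h + u_2$, the curve $\gamma_2$ becomes a height-$0$ forward level line from $x_2$ and $\gamma'$ becomes a reverse level line of the shifted field with effective height $u_2 - u > 0$. Applying Lemma~\ref{lem::boundary_levellines_monotonicity_reverse_threshold}(1) after conformally mapping $\HH \to \T$ with $x_2 \mapsto 0$ and $\infty \mapsto z_0 \in \partial_U\T$ yields that $\gamma_2$ stays to the left of $\gamma'[0,T']$. Symmetrically, the substitution $h \mapsto h + u_1$ turns $\gamma_1$ into a height-$0$ forward level line from $x_1$ and $\gamma'$ into a reverse level line with effective height $u_1 - u < 0$; Lemma~\ref{lem::boundary_levellines_monotonicity_reverse_threshold}(2) yields, on the event $E' := \{\gamma'(T') \text{ lies to the left of } x_1\}$, that $\gamma_1$ stays to the right of $\gamma'[0,T']$. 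Chaining the two inequalities on $E'$ gives the desired conclusion.

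The main obstacle is to establish that $E'$ holds almost surely. When $b_n - b_1 < 2\lambda$, the interval $(\max(u_1, -\lambda - b_1), \min(u_2, \lambda - b_n))$ is non-empty by the hypotheses $b_1 > -\lambda - u_2$ and $b_n < \lambda - u_1$; for any $u$ in this interval, the boundary of $-h - u$ lies in $(-\lambda, \lambda)$ on both $(x_2, y_2)$ and $(y_n, x_1)$, and an analogue of Lemma~\ref{lem::boundary_levellines_case3_threshold} forces $\gamma'$ to first accumulate on $\R$ inside $[x_2, x_1]$, ensuring $E'$. In the residual case $b_n - b_1 \ge 2\lambda$, Remark~\ref{rem::boundary_levellines_values_allowedhit} and Proposition~\ref{prop::sle_chordal_forbidden_intervals} together force $\gamma_2$ to halt or avoid the interval near $y_n$ and $\gamma_1$ to halt or avoid the interval near $y_2$, so the two curves are confined to spatially separated regions and the monotonicity follows from the starting configuration $x_2 < x_1$. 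The routine bookkeeping is to transfer Lemma~\ref{lem::boundary_levellines_monotonicity_reverse_threshold} cleanly from $\T$ to $\HH$ under both shifts $h \mapsto h + u_i$, which requires selecting the conformal map so that an auxiliary point in $(-\infty, x_2)$ is sent to $-\infty$ of the strip.
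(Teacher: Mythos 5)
Your overall plan (build $\gamma'$ as the reverse level line of $-h$ with a height in the gap $(u_1,u_2)$, run it until it reaches $[x_2,x_1]$, and chain two applications of Lemma~\ref{lem::boundary_levellines_monotonicity_reverse_threshold}) is exactly the paper's approach, and the application to $\gamma_2$ is correct. The trouble is on the $\gamma_1$ side. Lemma~\ref{lem::boundary_levellines_monotonicity_reverse_threshold} inherits the hypotheses of Lemma~\ref{lem::boundary_levellines_case2_threshold}, which require the boundary data of the forward field to be at most $-\lambda$ on the \emph{entire} arc of $\partial_L\T$ to the left of the forward starting point. After the shift $h\mapsto h+u_1$ and the conformal map sending $x_1\mapsto 0$, $\infty\mapsto z_0$, that arc contains $[x_2,x_1]$, where the boundary value is $b_j+u_1$; the hypotheses of the lemma you want to cite therefore fail (e.g.\ $b_n+u_1$ is only required to be $<\lambda$, not $\le-\lambda$), so Part (2) of Lemma~\ref{lem::boundary_levellines_monotonicity_reverse_threshold} is not applicable as stated. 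The applicable statement is the left-right reflection (equivalently, the argument for $-h$ with the roles of $z_0$-left and $z_0$-right swapped), which instead uses the available bound $c+u_1\ge\lambda$ on $(x_1,\infty)$; after that reflection, what you want is really the unconditional Part (1), not Part (2), and no event of the type $E'$ arises.

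Because you invoked the wrong half of the lemma, the ``main obstacle'' about $E'$ is a self-inflicted difficulty, and your workaround for it is not sound. In the residual case $b_n-b_1\ge 2\lambda$, it is not true that the two curves are ``confined to spatially separated regions'': Remark~\ref{rem::boundary_levellines_values_allowedhit} and Proposition~\ref{prop::sle_chordal_forbidden_intervals} only forbid the curve from \emph{touching} certain boundary intervals; they say nothing about where the curve can travel in $\HH$. A level line that cannot hit an interval can still cross far above it and return, so $\gamma_1$ and $\gamma_2$ may very well interact, and the ordering cannot be deduced merely from $x_2<x_1$. Even granting the (correct) observation that $\gamma'$ cannot hit $(-\infty,x_2)\cup(x_1,\infty)$ and hits $x_1$ itself with probability zero -- which would show $E'$ holds a.s.\ -- this does not repair the proof, because the cited Part (2) still lacks the required boundary hypothesis on the $\gamma_1$ side. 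The fix is to replace the Part (2) citation by the reflected Part (1), after which the argument is one line and the case analysis disappears.
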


\begin{proof}
Pick $u\in (u_1,u_2)$ and let $\gamma'$ be the level line of $-h$ with height $-u$ starting from $\infty$. Define $T'$ to be the first time that $\gamma'$ hits $[x_2,x_1]$. From Lemma \ref{lem::boundary_levellines_monotonicity_reverse_threshold}, we have that $\gamma_2$ stays to the left of $\gamma'[0,T']$ and that $\gamma_1$ stays to the right of $\gamma'[0,T']$. This implies the conclusion.
\end{proof}

\begin{figure}[ht!]
\begin{center}
\includegraphics[width=0.355\textwidth]{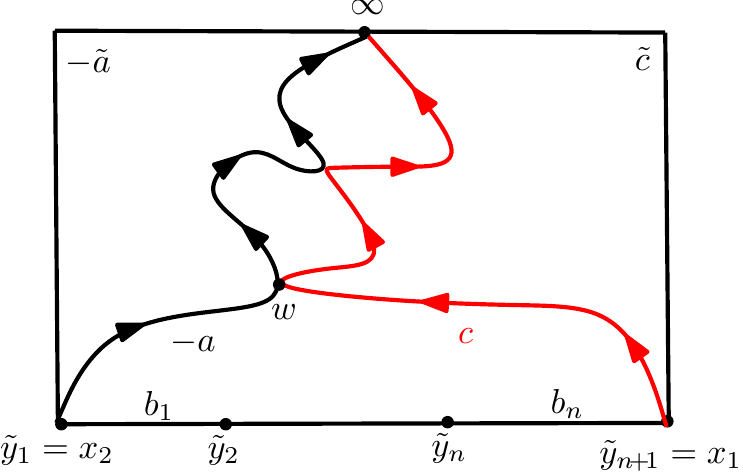}
\end{center}
\caption{\label{fig::boundary_levellines_monotonicity_threshold_bottom_general} The explanation of the behaviour of paths in the proof of Lemma \ref{lem::boundary_levellines_monotonicity_threshold_bottom_general}.}
\end{figure}

\begin{lemma}\label{lem::boundary_levellines_monotonicity_threshold_bottom_general}
The conclusion in Lemma \ref{lem::boundary_levellines_monotonicity_threshold_bottom} also holds when we replace the restriction on $a,c,b_1,b_n$ by the following:
\[a>-\lambda+u_2,\quad c>-\lambda-u_1,\quad b_1>-\lambda-u_2,\quad b_n<\lambda-u_1, \quad b_1\ge -a,\quad b_n\le c.\]
\end{lemma}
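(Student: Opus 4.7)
The plan is to follow the strategy used in the proof of Lemma \ref{lem::boundary_levellines_monotonicity_threshold_bottom}: pick an intermediate height $u \in (u_1, u_2)$ and introduce the level line $\gamma'$ of $-h$ with height $-u$ starting from $\infty$, and let $T'$ denote the first time $\gamma'$ hits $[x_2, x_1]$. Once $\gamma'$ and $T'$ are shown to be well-defined, the $\HH$-analog of Lemma \ref{lem::boundary_levellines_monotonicity_reverse_threshold} (obtained by a conformal map from $\T$ to $\HH$) applied separately to $\gamma_1$ and $\gamma_2$ implies that $\gamma_2$ stays to the left of $\gamma'[0, T']$ and $\gamma_1$ stays to the right of $\gamma'[0, T']$, which immediately yields the desired monotonicity.

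The new difficulty, compared to Lemma \ref{lem::boundary_levellines_monotonicity_threshold_bottom}, is to show that $\gamma'$ is well-defined until it hits $[x_2, x_1]$. The relaxed hypotheses $a > -\lambda + u_2$ and $c > -\lambda - u_1$ still yield $a - u > -\lambda$ and $-c - u < \lambda$ for any $u\in(u_1,u_2)$, which by Remark \ref{rem::boundary_levellines_values_allowedhit} suffices for $\gamma'$ to have a non-trivial initial segment. However, these conditions no longer force the boundary value of $-h - u$ on $(-\infty, x_2)$ to be $\ge \lambda$ nor that on $(x_1, \infty)$ to be $\le -\lambda$, so $\gamma'$ may a priori accumulate on these boundary intervals and one must rule out that it hits its continuation threshold there before reaching $[x_2, x_1]$.

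The role of the extra hypotheses $b_1 \ge -a$ and $b_n \le c$ is precisely to prevent this pathology at the two discontinuity points $x_2$ and $x_1$. Rewriting them as $-b_1 - u \le a - u$ and $-b_n - u \ge -c - u$, they say that the boundary data of $-h - u$ is monotone (in the appropriate sense) across each of $x_2$ and $x_1$, so that the force-point weights accumulated by $\gamma'$ as it approaches either endpoint remain above $-2$. Combining this with the force-point analysis of Proposition \ref{prop::dubedat_lemma15} applied to the driving process of $\gamma'$ (after a M\"obius transformation sending $\infty$ to $0$) will show that $\gamma'$ must eventually accumulate somewhere in $[x_2, x_1]$ strictly before its continuation threshold, defining $T'$.

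The main obstacle I anticipate is precisely this continuation-threshold analysis at $x_2$ and $x_1$: one must check, case by case according to which intervals in $(-\infty, x_2) \cup (x_1, \infty)$ the curve $\gamma'$ can or cannot hit by Remark \ref{rem::boundary_levellines_values_allowedhit}, that in every case the accumulated left or right force-point sum at the moment $\gamma'$ swallows one of $x_2$, $x_1$ remains strictly greater than $-2$, with $b_1 \ge -a$ (resp.\ $b_n \le c$) being the inequality that makes this sum borderline-tight. Once this verification is complete, the remainder of the argument is a direct transcription of the proof of Lemma \ref{lem::boundary_levellines_monotonicity_threshold_bottom}.
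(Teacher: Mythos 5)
Your plan is to generalize the proof of Lemma~\ref{lem::boundary_levellines_monotonicity_threshold_bottom} directly, keeping the auxiliary level line $\gamma'$ of $-h$ with height $-u$ from $\infty$ and re-applying Lemma~\ref{lem::boundary_levellines_monotonicity_reverse_threshold}. The paper does something genuinely different: it introduces an auxiliary $\GFF$ $\tilde h$ with the same bulk boundary values $b_1,\dots,b_n$ but stronger outer values $\tilde a\ge 2\lambda+a$, $\tilde c\ge 2\lambda+c$, sets $\tilde u_2=\lambda+a$, $\tilde u_1=-\lambda-c$, considers level lines $\tilde\gamma_1,\tilde\gamma_2$ of $\tilde h$ at those heights, conditions on the (positive-probability) event $\tilde E_1\cap\tilde E_2$ that they avoid $(x_2,x_1)$, and maps the intermediate component to $\HH$: there the conditional field has outer boundary data exactly $-a$ and $c$, and Lemma~\ref{lem::boundary_levellines_monotonicity_threshold_bottom} applies inside. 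The bulk intervals come out slightly perturbed, which is why Remark~\ref{rem::boundary_levellines_monotonicity_threshold_bottom_general} (an absolute-continuity approximation) is needed to close the argument.

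There is a concrete gap in your route. Lemma~\ref{lem::boundary_levellines_monotonicity_reverse_threshold} inherits from Lemma~\ref{lem::boundary_levellines_case2_threshold} the hypotheses that, after transporting to $\T$, the boundary data of the (shifted) field be at most $-\lambda$ to the left of the starting point on $\partial_L\T$ and at most $-\lambda$ / at least $\lambda$ on the two sides of $z_0$ on $\partial_U\T$. When you apply it with $\gamma=\gamma_2$ (shifted field $h+u_2$), the boundary value on $(-\infty,x_2)$ is $-a+u_2$, and under the relaxed condition $a>-\lambda+u_2$ this only gives $-a+u_2<\lambda$, not $-a+u_2\le-\lambda$; the requirement $\ge\lambda$ on the far right likewise fails once $c$ is allowed to drop below $\lambda-u_2$. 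So the lemma simply does not apply, and without a strengthened version of Lemma~\ref{lem::boundary_levellines_monotonicity_reverse_threshold} (which you would then be proving circularly, since the full Theorem~\ref{thm::boundary_levelline_gff_interacting} is what lets one bypass these sign restrictions) the argument does not go through. Separately, you have misidentified the role of $b_1\ge-a$ and $b_n\le c$: your claim is that they keep the accumulated force-point weights of $\gamma'$ above $-2$, but since $a-u\in(-\lambda,\lambda)$ and $-c-u\in(-\lambda,\lambda)$ on the intervals $\gamma'$ can hit, $\gamma'$ never dies strictly inside $(-\infty,x_2)\cup(x_1,\infty)$ in any case, and a continuation-threshold event at $x_2$ or $x_1$ would already place $\gamma'$ on $[x_2,x_1]$. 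In the paper's proof, $b_1\ge-a$ and $b_n\le c$ are used only to ensure $\PP[\tilde E_2]>0$ and $\PP[\tilde E_1]>0$, i.e.\ that the auxiliary level lines $\tilde\gamma_2$ and $\tilde\gamma_1$ (at heights $\lambda+a$ and $-\lambda-c$) can avoid $(x_2,x_1)$ with positive probability so that the conditioning makes sense; under $b_1<-a$, for instance, $\tilde\gamma_2$ would necessarily immediately enter $(x_2,y_2)$.
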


\begin{proof} Suppose that
\[\underline{\tilde{y}}=(\tilde{y}_1=x_2<\tilde{y}_2<\cdots<\tilde{y}_n<\tilde{y}_{n+1}=x_1)\]
is a sequence of points on the interval $[x_2,x_1]$.
Suppose that $\tilde{h}$ is a GFF on $\HH$ whose boundary value is $-\tilde{a}$ on $(-\infty,x_2)$, is $\tilde{c}$ on $(x_1,\infty)$, and is $b_j$ on $(\tilde{y}_j,\tilde{y}_{j+1})$ for $j=1,...,n$. Assume that
\[\tilde{a}\ge 2\lambda+a,\quad \tilde{c}\ge 2\lambda+c.\]
Set
\[\tilde{u}_2=\lambda+a>u_2,\quad \tilde{u}_1=-\lambda-c<u_1.\]
For $i=1,2$, let $\gamma_i$ be the level line of $\tilde{h}$ with height $u_i$ starting from $x_i$ targeted at $\infty$. For $i=1,2$, let $\tilde{\gamma}_i$ be the level line of $\tilde{h}$ with height $\tilde{u}_i$ starting from $x_i$ targeted at $\infty$. For $i=1,2$, define $\tilde{E}_i$ to be the event that $\tilde{\gamma}_i$ does not hit the open interval $(x_2,x_1)$. The restriction $b_1\ge -a$ guarantees that $\tilde{E}_2$ has positive probability, and the restriction $b_n\le c$ guarantees that $\tilde{E}_1$ has positive probability.
We summarize the relations between $\gamma_1,\gamma_2,\tilde{\gamma}_1$, and $\tilde{\gamma}_2$ as follows.
\begin{enumerate}
\item [(a)] By Lemma \ref{lem::boundary_levellines_monotonicity_threshold}, we know that, on $\tilde{E}_2$, the path $\tilde{\gamma}_2$ stays to the left of $\gamma_2$; and that, on $\tilde{E}_1$, the path $\tilde{\gamma}_1$ stays to the right of $\gamma_1$.
\item [(b)] By Lemma \ref{lem::boundary_levellines_monotonicity_threshold_bottom}, we know that $\tilde{\gamma}_2$ stays to the left of $\tilde{\gamma}_1$. On the event $\tilde{E}_1\cap\tilde{E}_2$, define $C$ to be the connected component of $\HH\setminus(\tilde{\gamma}_1\cup\tilde{\gamma}_2)$ that has the open interval $(x_2,x_1)$ on the boundary, and define $w$ to be the only point on $\partial C$ that is contained in $\tilde{\gamma}_1\cap\tilde{\gamma}_2$. See Figure \ref{fig::boundary_levellines_monotonicity_threshold_bottom_general}. Let $\psi$ be the conformal map from $C$ onto $\HH$ that sends $x_2$ to $x_2$, $x_1$ to $x_1$, and $w$ to $\infty$.
\item [(c)] By Lemma \ref{lem::boundary_levellines_monotonicity_threshold_bottom}, we know that $\gamma_2$ stays to the left of $\gamma_1$.
\end{enumerate}
Combining these three facts, given $(\tilde{\gamma}_1,\tilde{\gamma}_2)$ and on $\tilde{E}_1\cap\tilde{E}_2$, we have that $\psi(\gamma_2|_C)$ stays to the left of $\psi(\gamma_1|_C)$. For $\delta>0$, define
\[\tilde{F}_{\delta}=\bigcap_{j=2}^n\{\psi(\tilde{y}_j)\in (y_j-\delta,y_j+\delta)\}.\]
Clearly, the event $\tilde{F}_{\delta}$ depends on the pair $(\tilde{\gamma}_1,\tilde{\gamma}_2)$, and we can properly choose the sequence $\underline{\tilde{y}}$ so that the event $\tilde{F}_{\delta}\cap\tilde{E}_1\cap\tilde{E}_2$ has positive probability. From the above analysis, we know that, given $(\tilde{\gamma}_1,\tilde{\gamma}_2)$ and on $\tilde{F}_{\delta}\cap\tilde{E}_1\cap\tilde{E}_2$, the level lines of $\tilde{h}|_C\circ\psi^{-1}$ satisfy the desired monotonicity and note that the boundary value of $\tilde{h}|_C\circ\psi^{-1}$ is $-a$ on $(-\infty,x_2)$, is $c$ on $(x_1,\infty)$, and is $b_j$ on $(\psi(\tilde{y}_j),\psi(\tilde{y}_{j+1}))$ for $j=1,...,n$ where $\psi(\tilde{y}_j)\in (y_j-\delta,y_j+\delta)$. This implies the conclusion, see Remark \ref{rem::boundary_levellines_monotonicity_threshold_bottom_general}.
\end{proof}

\begin{remark}\label{rem::boundary_levellines_monotonicity_threshold_bottom_general}
Fix two starting points $x_2<x_1$ and two heights $u_2>u_1$. Let \[\underline{y}=(y_1=x_2<y_2<\cdots<y_n<y_{n+1}=x_1)\] be a sequence of points in $[x_2,x_1]$.
Suppose that $h$ is a $\GFF$ on $\HH$ whose boundary value is $-a$ on $(-\infty,x_2)$, is $c$ on $(x_1,\infty)$, and is $b_j$ on $(y_j,y_{j+1})$ for $j=1,...,n$.
For $i=1,2$, let $\gamma_i$ be the level line of $h$ with height $u_i$ starting from $x_i$ targeted at $\infty$.

Assume that, for any $\delta>0$, there exists a sequence
\[\underline{y}^{\delta}=(y_1^{\delta}=x_2<y_2^{\delta}<\cdots<y_n^{\delta}<y_{n+1}^{\delta}=x_1) \quad \text{where} \quad y_j^{\delta}\in (y_j-\delta,y_j+\delta)\quad\text{for }j=2,...,n,\]
such that the following is true. Let $h^{\delta}$ be a $\GFF$ on $\HH$ whose boundary value is $-a$ on $(-\infty,x_2)$, is $c$ on $(x_1,\infty)$, and is $b_j$ on $(y_j^{\delta},y_{j+1}^{\delta})$ for $j=1,...,n$.
For $i=1,2$, let $\gamma_i^{\delta}$ be the level line of $h^{\delta}$ with height $u_i$ starting from $x_i$ targeted at $\infty$; then $\gamma^{\delta}_2$ almost surely stays to the left of $\gamma_1^{\delta}$.

Then we have that $\gamma_2$ almost surely stays to the left of $\gamma_1$.
\end{remark}
\begin{proof}
For $i=1,2$, define $\tau^{\delta}_i$ to be the first time of $\gamma_i$ to get within distance $\delta$ of $\{y_2,...,y_n\}$. We have the following observations.
\begin{enumerate}
\item [(a)]The restriction of $h$ to $\HH\setminus (\cup_{j=2}^n B(y_j,\delta))$ is absolutely continuous with respect to the restriction of $h^{\delta}$ to the same domain.
\item [(b)]The path $\gamma^{\delta}_2$ stays to the left of $\gamma^{\delta}_1$ by the assumption.
\end{enumerate}
Combining these two facts, we know that $\gamma_2[0,\tau_2^{\delta}]$ almost surely stays to the left of $\gamma_1[0,\tau_1^{\delta}]$. This holds for any $\delta>0$, therefore $\gamma_2$ almost surely stays to the left of $\gamma_1$.
\end{proof}

\begin{figure}[ht!]
\begin{subfigure}[b]{0.48\textwidth}
\begin{center}
\includegraphics[width=0.74\textwidth]{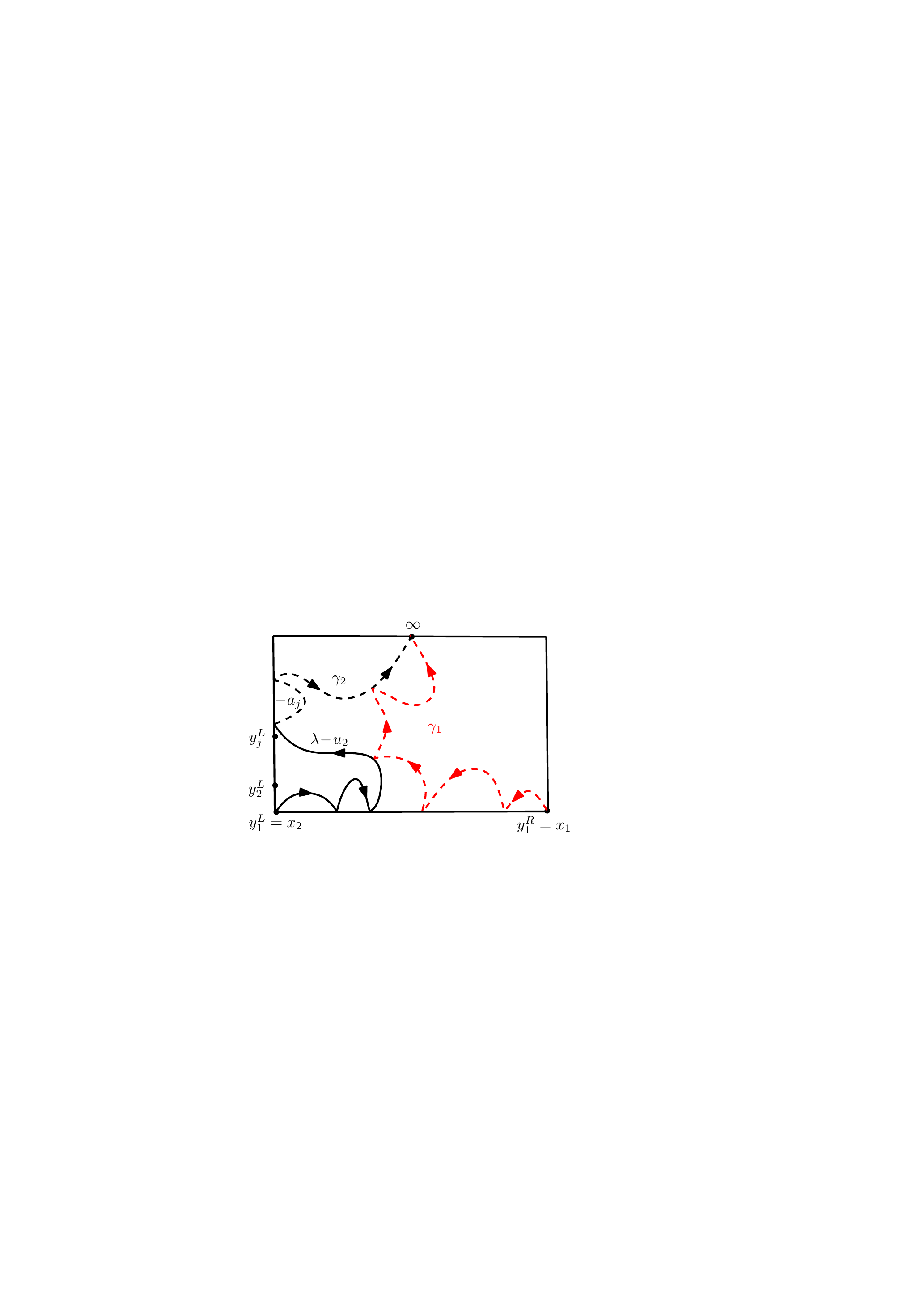}
\end{center}
\caption{Given $\gamma_2[0,T_2]$, the path $\gamma_2$ stays to the left of $\gamma_1$ by induction hypothesis.}
\end{subfigure}
$\quad$
\begin{subfigure}[b]{0.48\textwidth}
\begin{center}\includegraphics[width=0.75\textwidth]{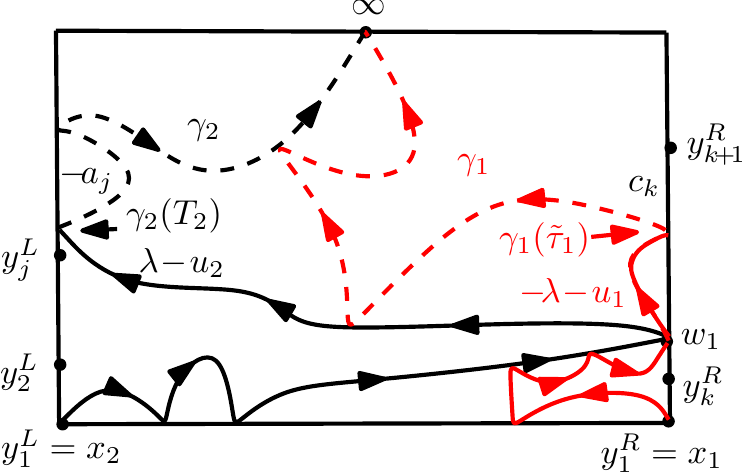}
\end{center}
\caption{Given $\gamma_2[0,T_2]\cup\gamma_1[0,\tilde{\tau}_1]$, the path $\gamma_2$ stays to the left of $\gamma_1$ by induction hypothesis.}
\end{subfigure}
\caption{\label{fig::boundary_levellines_monotonicity_threshold_bottomleftright} The explanation of the behaviour of the paths in Lemma \ref{lem::boundary_levellines_monotonicity_threshold_bottomleftright}.}
\end{figure}

\begin{lemma}\label{lem::boundary_levellines_monotonicity_threshold_bottomleftright}
Fix two starting points $x_2<x_1$ and two heights $u_2>u_1$. Let
\[\begin{array}{ll}
\underline{y} &=(y_1=x_2<y_2<\cdots<y_n<y_{n+1}=x_1)\\
\underline{y}^L &=(y_1^L=x_2>y_2^L>\cdots>y_l^L>y_{l+1}^L=-\infty)\\
\underline{y}^R &=(y_1^R=x_1<y_2^R<\cdots<y_r^R<y_{r+1}^R=\infty)
\end{array}\]
be three sequences of points on the boundary of $\HH$. Suppose that $h$ is a $\GFF$ on $\HH$ whose boundary value is
\[\begin{array}{rll}
-a_j &\text{on}\,  (y_{j+1}^L,y_j^L), &\text{for}\, j=1,...,l;\\
b_j &\text{on}\,  (y_j,y_{j+1}), &\text{for}\, j=1,...,n;\\
c_j &\text{on}\,  (y_{j}^R,y_{j+1}^R), &\text{for}\, j=1,...,r.\end{array}\]
Assume that
\[a_1>-\lambda+u_2,\quad c_1>-\lambda-u_1,\quad b_1>-\lambda-u_2,\quad b_n<\lambda-u_1\quad b_1\ge -a_1,\quad b_n\le c_1.\]
For $i=1,2$, let $\gamma_i$ be the level line of $h$ with height $u_i$ starting from $x_i$ targeted at $\infty$. Then $\gamma_2$ almost surely stays to the left of $\gamma_1$.
\end{lemma}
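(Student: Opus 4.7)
The plan is to argue by induction on $l+r$, the total number of boundary pieces outside the interval $(x_2,x_1)$. The base case $l=r=1$ is precisely Lemma \ref{lem::boundary_levellines_monotonicity_threshold_bottom_general} (with $a=a_1$ and $c=c_1$). For the inductive step, by the left-right symmetry of the statement, it suffices to treat the case $l\ge 2$; the case $r\ge 2$ is handled analogously after swapping the roles of $\gamma_1$ and $\gamma_2$ (and using the level line of $-h$ together with Theorem \ref{thm::sle_chordal_reversibility}).

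The first step controls $\gamma_2$ up to the time $T_2$ at which it first enters $(-\infty,y_2^L]$, with the convention that $T_2=\infty$ if $\gamma_2$ reaches its continuation threshold (or $\infty$) first. For every $\delta>0$, on the region $H_\delta=\HH\setminus\{z:\dist(z,(-\infty,y_2^L])<\delta\}$, Proposition \ref{prop::gff_absolutecontinuity} gives that $h|_{H_\delta}$ is mutually absolutely continuous with $\tilde h|_{H_\delta}$, where $\tilde h$ agrees with $h$ except that its boundary value equals $-a_1$ on all of $(-\infty,x_2]$. The field $\tilde h$ has only $\tilde l=1$ left piece, so the induction hypothesis (applied with $\tilde l=1$, $\tilde r=r$, hence strictly smaller total) yields that $\tilde\gamma_2$ stays to the left of $\tilde\gamma_1$. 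Using the continuity of $\gamma_2$ provided by Theorem \ref{thm::sle_chordal_continuity_transience} and letting $\delta\downarrow 0$, we conclude that $\gamma_2[0,T_2]$ stays to the left of $\gamma_1$; this is the picture in Figure \ref{fig::boundary_levellines_monotonicity_threshold_bottomleftright}(a).

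The second step handles $\gamma_2$ after $T_2$ on the event $\{T_2<\infty\}$ when $T_2$ is not the continuation threshold. I would condition on the pair $\gamma_2[0,T_2]\cup\gamma_1[0,\tilde\tau_1]$ for a suitable stopping time $\tilde\tau_1$ of $\gamma_1$ chosen so that $\gamma_1[0,\tilde\tau_1]$ avoids the connected component of $\HH\setminus\gamma_2[0,T_2]$ containing the tip $\gamma_2(T_2)$. By Propositions \ref{prop::gff_localsets_union} and \ref{prop::gff_localsets_boundarybehavior}, this union is a local set and the restriction of $h$ to the relevant unbounded component is a GFF with explicit boundary data. Conformally mapping this component onto $\HH$, we obtain a new level-line monotonicity problem with strictly fewer left boundary pieces (the piece $-a_1$ on $(y_2^L,x_2]$ has been absorbed into the image of $\gamma_2[0,T_2]$), to which the induction hypothesis applies; see Figure \ref{fig::boundary_levellines_monotonicity_threshold_bottomleftright}(b). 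Combined with the first step, this gives the desired monotonicity on the whole range of $\gamma_2$.

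The main obstacle is checking that the quantitative boundary-value hypotheses ($a_1>-\lambda+u_2$, $c_1>-\lambda-u_1$, $b_1>-\lambda-u_2$, $b_n<\lambda-u_1$, $b_1\ge -a_1$, $b_n\le c_1$) transfer correctly to the reduced problem after the conformal map. The strict inequality $a_1>-\lambda+u_2$ is exactly what is needed for the continuation past $T_2$ not to immediately hit the continuation threshold, while the inequalities $b_1\ge -a_1$ and $b_n\le c_1$ serve the same role as in Lemma \ref{lem::boundary_levellines_monotonicity_threshold_bottom_general}, ensuring that the modified field $\tilde h$ used in the absolute-continuity step has admissible boundary data. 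These carry over to the reduced problem exactly because $T_2$ is not the continuation threshold, so that the induction can be closed.
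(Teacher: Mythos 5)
Your overall strategy — induction on $l+r$ with base case Lemma \ref{lem::boundary_levellines_monotonicity_threshold_bottom_general}, a first step up to the time $T_2$ that $\gamma_2$ first hits $(-\infty,y_2^L]$ via absolute continuity, and a second step by conditioning and applying the induction hypothesis — is exactly the paper's. The first step is correct, with one point worth making explicit: control of $\gamma_1[0,T_1^\delta]$ for all $\delta>0$ extends to all of $\gamma_1$ because, after $\gamma_1$ hits $(-\infty,y_2^L]$, being targeted at $\infty$ it continues in the unbounded component of $\HH\setminus\gamma_1[0,T_1]$, which lies to the right of $\gamma_2[0,T_2]$. Also, the left-right symmetry used to reduce to $l\ge 2$ is simply $z\mapsto-\overline{z}$ combined with $h\mapsto -h$; Theorem \ref{thm::sle_chordal_reversibility} plays no role here.

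There is a genuine gap in your choice of $\tilde\tau_1$. You stop $\gamma_1$ so that $\gamma_1[0,\tilde\tau_1]$ \emph{avoids} the unbounded component $C$ of $\HH\setminus\gamma_2[0,T_2]$, i.e.\ $\tilde\tau_1$ is at most the first entry time $\tau_1$ into $C$. But when $\gamma_2[0,T_2]$ hits $(x_1,\infty)$, the exit point $\gamma_1(\tau_1)$ is the pinch point $w_1$, the last point of $\gamma_2[0,T_2]$ in $(x_1,\infty)$, lying in some interval $(y_k^R,y_{k+1}^R)$. In the reduced problem in $\tilde C$ the curve $\gamma_1$ then starts at $w_1$ with data $\lambda-u_2$ (right side of $\gamma_2$) immediately to its left and $c_k$ immediately to its right, so $b_n'=\lambda-u_2$ and $c_1'=c_k$. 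Since $\gamma_2$ hits $(y_k^R,y_{k+1}^R)$, Remark \ref{rem::boundary_levellines_values_allowedhit} forces $c_k<\lambda-u_2$, hence $b_n'>c_1'$, which violates the hypothesis $b_n\le c_1$ of the induction hypothesis; and Remark \ref{rem::boundary_levellines_monotonicity_threshold_bottomleftright}, which drops that hypothesis, is proved \emph{using} the present lemma (with larger $l+r$) and so is not available inductively. The paper's fix is to run $\gamma_1$ strictly past $\tau_1$, to a time $\tilde\tau_1\in(\tau_1,T_1)$ with $\gamma_1(\tilde\tau_1)$ in the \emph{interior} of $(y_k^R,y_{k+1}^R)$ — possible precisely because $c_k\in(-\lambda-u_1,\lambda-u_2)$, as both $\gamma_1$ and $\gamma_2$ must hit that interval — so that the data on both sides of $\gamma_1(\tilde\tau_1)$ equal $c_k$ and the hypotheses transfer. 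Relatedly: the condition making the continuation past $T_2$ admissible is $a_j>-\lambda+u_2$ for the $j\ge 2$ with $\gamma_2(T_2)\in(y_{j+1}^L,y_j^L]$, which follows from $T_2$ not being the continuation threshold, not from the stated $a_1>-\lambda+u_2$; and the inequalities $b_1\ge-a_1$, $b_n\le c_1$ are used to verify the reduced problem's hypotheses, not in the absolute-continuity step.
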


\begin{proof}
We will prove the conclusion by induction on $l+r$. Lemma \ref{lem::boundary_levellines_monotonicity_threshold_bottom_general} implies that the conclusion holds for $r=1, l=1$. We assume that the conclusion holds for $l+r\le m$ for some $m\ge 2$ and we will prove that the conclusion holds for $l+r=m+1$ and $l\ge 2$. Define $T_2$ to be the first time that $\gamma_2$ hits $(-\infty,y_2^L]$.
\smallbreak
\textit{First}, we argue that $\gamma_1$ almost surely stays to the right of $\gamma_2[0,T_2]$. For $i=1,2$, let $T_i^{\eps}$ be the first time that $\gamma_i$ gets within distance $\eps$ of $(-\infty,y_2^L]$. By induction hpothesis, we know that $\gamma_1[0,T_1^{\eps}]$ stays to the right of $\gamma_2[0,T_2^{\eps}]$. This holds for any $\eps>0$ and we know that $\gamma_1$ and $\gamma_2$ are continuous, thus $\gamma_1$ stays to the right of $\gamma_2[0,T_2]$ up to the first time $T_1$ that $\gamma_1$ hits $(-\infty,y_2^L]$. Since $\gamma_1$ is targeted at $\infty$, it continues towards $\infty$ in the connected component of $\HH\setminus\gamma_1[0,T_1]$ that has $\infty$ on the boundary. Therefore $\gamma_1$ stays to the right of $\gamma_2[0,T_2]$.
\smallbreak
\textit{Next}, we show that $\gamma_1$ almost surely stays to the right of $\gamma_2$ given $\gamma_2[0,T_2]$. If $T_2$ is $\infty$ or the continuation threshold of $\gamma_2$, we are done. In the following of the proof, we assume that $T_2$ is finite and it is not the continuation threshold of $\gamma_2$. Suppose that $\gamma_2(T_2)\in (y_{j+1}^L,y_j^L]$. Since $T_2$ is not the continuation threshold, we have that $a_j>-\lambda+u_2$. Define $C$ to be the connected component of $\HH\setminus\gamma_2[0,T_2]$ that has $\infty$ on the boundary. We only need to show that $\gamma_1|_C$ stays to the right of $\gamma_2|_C$ given $\gamma_2[0,T_2]$. There are two cases: Case 1. $\gamma_2[0,T_2]$ does not hit $(x_1,\infty)$, see Figure \ref{fig::boundary_levellines_monotonicity_threshold_bottomleftright}(a); Case 2. $\gamma_2[0,T_2]$ hits $(x_1,\infty)$, see Figure \ref{fig::boundary_levellines_monotonicity_threshold_bottomleftright}(b). We treat the two cases separately.
\smallbreak
Case 1. In this case, given $\gamma_2[0,T_2]$, the path $\gamma_2|_C$ is the level line with height $u_2$ starting from $\gamma_2(T_2)$ and the path $\gamma_1|_C$ is the level line with height $u_1$ starting from $x_1$. By induction hypothesis (note that the conditions $a_j>-\lambda+u_2$ and $b_n\le c_1$ guarantee that the induction hypothesis is applicable), we have that $\gamma_1|_C$ stays to the right of $\gamma_2|_C$.
\smallbreak
Case 2. In this case, define $w_1$ to be the last point of $\gamma_2|_{[0,T_2]}$ that is contained in $(x_1,\infty)$.
If $\gamma_1$ hits its continuation threshold before enters $C$, then we are done. In the following of the proof, we assume that $\gamma_1$ enters $C$, say at time $\tau_1$. Suppose that $w_1\in (y_k^R,y_{k+1}^R)$, then $c_k\in (-\lambda-u_1,\lambda-u_2)$. We run $\gamma_1$ up to a stopping time $\tilde{\tau}_1$ so that $\tau_1<\tilde{\tau}_1<T_1$ and $\gamma_1(\tilde{\tau}_1)\in (y_k^R,y_{k+1}^{R})$ (the condition $c_k\in (-\lambda-u_1,\lambda-u_2)$ guarantees that we are able to find such a time $\tilde{\tau}_1$). Define $\tilde{C}$ to be the connected component of $\HH\setminus(\gamma_2[0,T_2]\cup\gamma_1[0,\tilde{\tau}_1])$ that has $\infty$ on the boundary.
Given $\gamma_2[0,T_2]$ and $\gamma_1[0,\tilde{\tau}_1]$, the curve $\gamma_1|_{\tilde{C}}$ is the level line of $h|_{\tilde{C}}$ with height $u_1$ starting from $\gamma_1(\tilde{\tau}_1)$ and the curve $\gamma_2|_{\tilde{C}}$ is the level line of $h|_{\tilde{C}}$ with height $u_2$ starting from $\gamma_2(T_2)$. By induction hypothesis (note that the conditions $a_j>-\lambda+u_2$ and $c_k>-\lambda-u_1$ guarantee that the induction hypothesis is applicable), we know that $\gamma_1|_{\tilde{C}}$ stays to the right of $\gamma_2|_{\tilde{C}}$ given $\gamma_2[0,T_2]$ and $\gamma_1[0,\tilde{\tau}_1]$. This completes the proof.
\end{proof}

\begin{remark}\label{rem::boundary_levellines_monotonicity_threshold_bottomleftright}
The conclusion in Lemma \ref{lem::boundary_levellines_monotonicity_threshold_bottomleftright} also holds when we replace the restriction on $a_1,c_1,b_1,b_n$ by the following:
\[a_1>-\lambda+u_2,\quad c_1>-\lambda-u_1,\quad b_1>-\lambda-u_2,\quad b_n<\lambda-u_1.\]
\end{remark}

\begin{proof}
The conditions on $a_1,c_1,b_1,b_n$ guarantee that $\gamma_1$ and $\gamma_2$ exist for a positive time. Pick $\eps>0$ so that $\gamma_1,\gamma_2$ exist up to time $\eps$  and they do not hit each other up to $\eps$. We run both curves up to time $\eps$. To show the conclusion, we only need to show that, given $(\gamma_1[0,\eps],\gamma_2[0,\eps])$, the level line of $h$ (restricted to $\HH\setminus(\gamma_1[0,\eps]\cup\gamma_2[0,\eps])$) with height $u_1$ starting from $\gamma_1(\eps)$ targeted at $\infty$ stays to the right of the level line of $h$ (restricted to $\HH\setminus(\gamma_1[0,\eps]\cup\gamma_2[0,\eps])$) with height $u_2$ starting from $\gamma_2(\eps)$ targeted at $\infty$. Note that, given $(\gamma_1[0,\eps],\gamma_2[0,\eps])$, the field has boundary value $\underline{\tilde{a}}, \underline{\tilde{b}},\underline{\tilde{c}}$ where
\[\tilde{a}_1=\lambda+u_2,\quad \tilde{b}_1=\lambda-u_2,\quad \tilde{b}_n=-\lambda-u_1,\quad \tilde{c}_1=\lambda-u_1.\] This implies that we can apply Lemma \ref{lem::boundary_levellines_monotonicity_threshold_bottomleftright} to show the conclusion given $(\gamma_1[0,\eps],\gamma_2[0,\eps])$.
\end{proof}

\begin{proof}[Proof of Theorem \ref{thm::boundary_levelline_gff_interacting}]
Lemma \ref{lem::boundary_levellines_monotonicity_threshold_bottomleftright} implies the conclusion for $x_2<x_1$ and $u_2>u_1$. The conclusion for $x_2<x_1$ and $u_2=u_1$ can be proved by starting from Lemma \ref{lem::boundary_levellines_merge_threshold_bottom} and then using similar proofs of Lemmas \ref{lem::boundary_levellines_monotonicity_threshold_bottom_general} and  \ref{lem::boundary_levellines_monotonicity_threshold_bottomleftright}.

Finally, we explain the conclusion for $x_2=x_1=x$ and $u_2>u_1$.
Suppose that, in a small neighborhood of $x$, the boundary value of the field is $-a_1$ to the left of $x$ and is $c_1$ to the right of $x$. In order to get the two curves $\gamma_{u_1}^x$ and $\gamma_{u_2}^x$ started, we must have
\[a_1>-\lambda+u_2,\quad c_1>-\lambda-u_1.\]
Take $\eps>0$ small so that $\gamma_{u_1}^x[0,\eps]$ and $\gamma_{u_2}^x[0,\eps]$ are contained in this neighborhood of $x$. By Remark \ref{rem::boundary_levellines_deterministic_monotonicity_generalization}, we know that $\gamma_{u_2}^x[0,\eps]$ stays to the left of $\gamma_{u_1}^x[0,\eps]$. Given $(\gamma_{u_1}^x[0,\eps], \gamma_{u_2}^x[0,\eps])$, the remaining of $\gamma_{u_2}^x$ is the level line of the field with height $u_2$ starting from $\gamma_{u_2}^x(\eps)$ and the remaining of $\gamma_{u_1}^x$ is the level line of the field with height $u_1$ starting from $\gamma_{u_1}^x(\eps)$. Therefore we could use the conclusion for $x_2<x_1$ to show the monotonicity. This completes the proof.
\end{proof}

\begin{corollary}\label{cor::boundary_levellines_monotonicity_heightvarying}
Suppose that $h$ is a $\GFF$ on $\HH$ whose boundary value is piecewise constant. Fix $x_2\le x_1$ and two sequences of heights
\[\underline{u}_1=(u_{1,1},...,u_{1,l});\quad \underline{u}_2=(u_{2,1},...,u_{2,r})\]
satisfying
\[|u_{1,i}-u_{1,j}|<2\lambda,\quad\text{for }1\le i,j\le l;\quad |u_{2,i}-u_{2,j}|<2\lambda,\quad\text{for }1\le i,j\le r.\]
For $i=1,2$, let $\gamma_{i}$ be the height-varying level line of $h$ starting from $x_i$ targeted at $\infty$ with heights $\underline{u}_i$. Assume that
\[u_{1,j}<0,\quad\text{for }j=1,...,l;\quad u_{2,j}>0,\quad\text{for }j=1,...,r.\]
Then $\gamma_2$ stays to the left of $\gamma_1$ almost surely.
\end{corollary}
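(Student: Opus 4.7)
The plan is to proceed by induction on the total number of height segments $l + r$. The base case $l = r = 1$ is exactly Theorem \ref{thm::boundary_levelline_gff_interacting}(1): since $x_2 \le x_1$ and $u_{2,1} > 0 > u_{1,1}$, the single-height level line at $u_{2,1}$ from $x_2$ almost surely stays to the left of the one at $u_{1,1}$ from $x_1$.

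For the inductive step with $l + r \ge 3$, by the symmetry of the problem under horizontal reflection of $\HH$ (which swaps the roles of $\gamma_1$ and $\gamma_2$ and negates all heights), I may assume $r \ge 2$. Let $\sigma$ denote the first height-change time of $\gamma_2$, so that $\gamma_2[0, \sigma]$ coincides with the single-height level line of $h$ at height $u_{2,1}$ starting from $x_2$. First I would apply the inductive hypothesis to the pair $(\gamma_2|_{[0,\sigma]},\gamma_1)$, which has total height count $l + 1 < l + r$ and satisfies all the required sign and starting-point hypotheses; this shows $\gamma_2[0, \sigma]$ stays to the left of $\gamma_1$.

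Next I would condition on $\gamma_2[0, \sigma]$, which by Theorems \ref{thm::boundary_levelline_gff_deterministic} and \ref{thm::sle_chordal_continuity_transience} (together with the definition of the height-varying level line) is almost surely a continuous local set determined by $h$. Let $D$ be the unbounded connected component of $\HH \setminus \gamma_2[0, \sigma]$. The phase-one monotonicity places $\gamma_1$ entirely in $\overline{D}$, while by construction the continuation $(\gamma_2(t))_{t \ge \sigma}$ is the height-varying level line of $h|_D$ from $\gamma_2(\sigma) \in \partial D$ with heights $(u_{2,2}, \ldots, u_{2,r})$. I would then pick a conformal map $\psi : D \to \HH$ with $\psi(\infty) = \infty$; since $\psi$ preserves the boundary orientation and since $\gamma_2[0, \sigma]$ lies to the left of $x_1$ in $\partial D$, one has $\psi(\gamma_2(\sigma)) \le \psi(x_1)$. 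The push-forward field $h|_D \circ \psi^{-1}$ has piecewise constant boundary data, and its height-varying level lines $\psi(\gamma_2|_{[\sigma,\infty)})$ and $\psi(\gamma_1|_D)$ have respective height sequences $(u_{2,2},\ldots,u_{2,r})$ and $(u_{1,1},\ldots,u_{1,l})$, with the sign conditions $u_{1,i} < 0 < u_{2,j}$ and the oscillation bound $|u-u'|<2\lambda$ preserved. Since the new total height count is $l + r - 1 < l + r$, the inductive hypothesis applies in $\HH$ and yields the monotonicity in $D$. Combining the two phases gives $\gamma_2$ to the left of $\gamma_1$ everywhere.

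The main obstacle will be the bookkeeping in the conditioning step: one must verify carefully that the restricted continuation $(\gamma_2(t), t\ge\sigma)$ really is a height-varying level line of the conditional field $h|_D$ (using domain Markov for the local set $\gamma_2[0,\sigma]$ as in Proposition \ref{prop::gff_localsets_union}), that $\gamma_1$ restricted to $D$ likewise remains a height-varying level line with respect to this conditional field, and that the starting point of $\gamma_2|_{[\sigma,\infty)}$ maps under $\psi$ to a point weakly left of $\psi(x_1)$ even when $\gamma_2[0,\sigma]$ intersects $\partial\HH$ further or creates pinch points. Once this identification is made, the inductive step proceeds transparently.
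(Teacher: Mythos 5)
The paper states this corollary without proof, so there is no ``official'' argument to compare against verbatim. Your inductive strategy on the total number of height segments $l+r$ is a legitimate and clean route, and it plausibly reflects what the authors had in mind; it is consistent in spirit with the way the paper proves its other monotonicity statements for height-varying level lines (compare the structure of the proof of Lemma \ref{lem::boundary_levellines_heightvarying_monotoncity}). A few remarks are worth making precise, though none is a fatal flaw.

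First, in your ``phase one'' you cannot literally invoke the inductive hypothesis on the pair $(\gamma_2|_{[0,\sigma]},\gamma_1)$, because $\gamma_2|_{[0,\sigma]}$ is only an initial segment and not a full height-varying level line targeted at $\infty$. The fix is standard: let $\tilde\gamma_2$ denote the full constant-height level line of $h$ at height $u_{2,1}$ from $x_2$ to $\infty$; $\gamma_2[0,\sigma]$ is an initial segment of $\tilde\gamma_2$, and the inductive hypothesis applied to $(\tilde\gamma_2,\gamma_1)$ with total count $1+l<l+r$ gives the claim. Second, in ``phase two'', the step you flag as bookkeeping is the genuine content and must be addressed exactly as you say: one has to verify that, after conditioning on the local set $\gamma_2[0,\sigma]$ (which is local and $h$-determined by Theorems \ref{thm::boundary_levelline_gff_deterministic} and \ref{thm::sle_chordal_continuity_transience}), both $\gamma_2|_{[\sigma,\infty)}$ and $\gamma_1$ remain (height-varying) level lines of the restricted field $h|_D$. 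For the former this holds by the very definition of a height-varying level line; for the latter it requires an argument along the lines of Lemma \ref{lem::boundary_levellines_deterministic_intersecting_1}/\ref{lem::boundary_levellines_deterministic_intersecting_2} and Propositions \ref{prop::gff_localsets_union}, \ref{prop::gff_localsets_interacting}, because $\gamma_1$ may repeatedly touch the left barrier $\gamma_2[0,\sigma]$. This is the place where a careless treatment could fail, but the paper's local-set machinery is designed precisely to carry it through. Third, your ordering claim $\psi(\gamma_2(\sigma))\le\psi(x_1)$ is correct and follows from phase one plus the fact that $\psi$ extends to a boundary homeomorphism respecting the boundary orientation of the Jordan domain $D$ (the continuity input comes from Theorem \ref{thm::sle_chordal_continuity_transience}). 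Finally, you should also handle the trivial subcase where $\gamma_2$ hits its continuation threshold before its first height change (so there is no time $\sigma$); in that case $\gamma_2$ is an initial segment of $\tilde\gamma_2$ and phase one already finishes the proof.

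An equally natural alternative, closer in flavor to the barrier arguments the paper uses repeatedly (e.g.\ the second step of the proof of Lemma \ref{lem::boundary_levellines_heightvarying_monotoncity} or the proof of Lemma \ref{lem::boundary_levellines_monotonicity_threshold_bottom}), would be to interpose the constant-height-$0$ level lines $\gamma_0^{x_2}$ and $\gamma_0^{x_1}$: show $\gamma_2$ stays left of $\gamma_0^{x_2}$, show $\gamma_1$ stays right of $\gamma_0^{x_1}$, and use Theorem \ref{thm::boundary_levelline_gff_interacting}(2) to merge the two barriers. But each barrier comparison is itself a height-varying-vs-constant monotonicity statement requiring an induction analogous to yours, so this variant is not obviously shorter. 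Your approach is fine.
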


\subsection{Proof of Theorems \ref{thm::sle_chordal_reversibility} and \ref{thm::boundary_levellines_targetindependence}---general case}\label{subsec::boundary_levellines_reversibility_general}
\begin{figure}[ht!]
\begin{subfigure}[b]{0.48\textwidth}
\begin{center}
\includegraphics[width=0.6\textwidth]{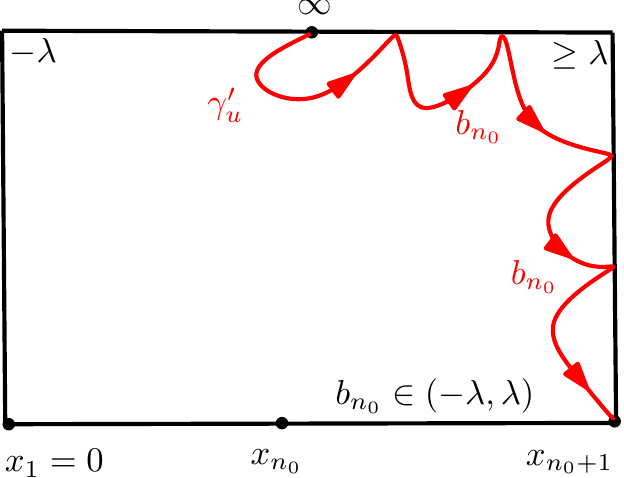}
\end{center}
\caption{Both $\gamma$ and $\gamma'$ stay to the left of $\gamma_u'$.}
\end{subfigure}
\begin{subfigure}[b]{0.48\textwidth}
\begin{center}\includegraphics[width=0.6\textwidth]{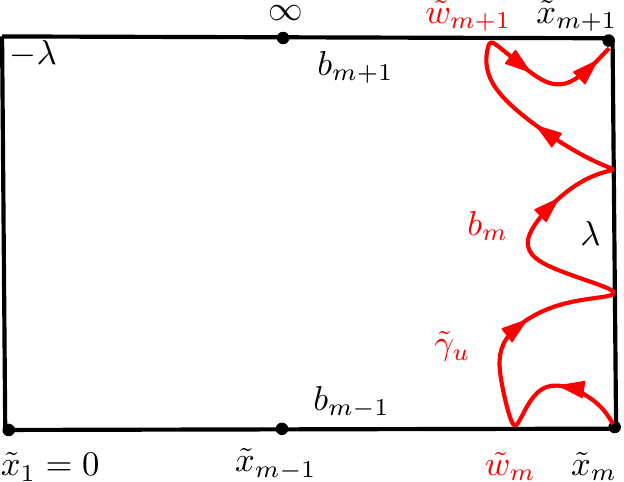}
\end{center}
\caption{$\tilde{\gamma}$ stays to the left of $\tilde{\gamma}_u$.}
\end{subfigure}
\caption{\label{fig::boundary_levellines_reversibility_right} The explanation of the behaviour of the paths in the proof of Lemma \ref{lem::boundary_levellines_reversibility_right}.}
\end{figure}

\begin{lemma}\label{lem::boundary_levellines_reversibility_right}
Suppose that
\[\underline{x}=(0=x_1<x_2<\cdots<x_n<x_{n+1}=\infty)\]
is a sequence of points along $\partial\HH$.
Suppose that $h$ is a $\GFF$ on $\HH$ whose boundary value is $-a$ on $\R_-$ and is $b_j$ on $(x_j,x_{j+1})$ for $j=1,...,n$. Assume that
\[a\ge \lambda,\quad b_j>-\lambda, \quad \text{for } j=1,...,n.\]
Let $\gamma$ be the level line of $h$ starting from 0 targeted at $\infty$ and let $\gamma'$ be the level line of $-h$ starting from $\infty$ targeted at 0. Then $\gamma'$ and $\gamma$ are equal almost surely.
\end{lemma}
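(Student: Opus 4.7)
The plan is to prove the lemma by induction on $n$, the number of distinct boundary pieces on $\R_+$. The base cases $n=1$ and $n=2$ are already in hand: when $n=1$, the boundary is $-a$ on $\R_-$ and $b_1$ on $\R_+$ with $a \geq \lambda$ and $b_1 > -\lambda$, so Proposition \ref{prop::boundary_levellines_deterministic_intersecting} together with Remark \ref{rem::boundary_levellines_deterministic_intersecting} yields $\gamma = \gamma'$; when $n=2$, Proposition \ref{prop::boundary_levellines_tworight} and Remark \ref{rem::boundary_levellines_tworight} handle it.

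For the inductive step, suppose the conclusion has been established for every configuration with at most $n-1$ right pieces, and consider a configuration with $n$ right pieces. I would introduce an auxiliary level line $\gamma'_u$ of $-h$ starting from $\infty$ with an appropriately chosen height $u$, as depicted in Figure \ref{fig::boundary_levellines_reversibility_right}(a). The height $u$ is to be chosen so that (i) the effective boundary of $-h+u$ satisfies $-b_j + u > -\lambda$ on every segment $(x_j, x_{j+1})$ that should be ``visible'' to $\gamma'_u$, and $+a+u \geq \lambda$ on $\R_-$ so $\gamma'_u$ cannot touch $\R_-$; (ii) $\gamma'_u$ terminates in $\R_+$ strictly to the right of $x_2$, thereby bracketing away at least one of the original force points; and (iii) the forward and reverse heights of $\gamma$ and $\gamma'$ both sit on the same side of $u$, so that the monotonicity results of Section \ref{subsec::boundary_levellines_interacting_general}---in particular Lemma \ref{lem::boundary_levellines_monotonicity_reverse_threshold} and the full statement of Theorem \ref{thm::boundary_levelline_gff_interacting}---can be applied to both curves simultaneously.

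With such a $\gamma'_u$ in place, both $\gamma$ and $\gamma'$ almost surely remain in the same connected component $C$ of $\HH \setminus \gamma'_u$, namely the one whose boundary contains $\R_-$. Conditioning on $\gamma'_u$ and letting $\psi\colon C \to \HH$ be a conformal map fixing $0$ and $\infty$, the pair $(\psi(\gamma), \psi(\gamma'))$ is, by the domain Markov property (Theorem \ref{thm::boundary_levelline_gff_coupling}), the forward level line from $0$ to $\infty$ and the reverse level line from $\infty$ to $0$ of the field $h\circ\psi^{-1}$ on $\HH$. By the choice of $u$, this field falls into the hypothesis of the lemma with strictly fewer right-boundary pieces, so the inductive hypothesis gives $\psi(\gamma) = \psi(\gamma')$ almost surely, and hence $\gamma = \gamma'$. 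The same argument (mirrored) handles any residual component that appears if $\gamma'_u$ swallows some right force points.

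The main obstacle is the careful selection of the auxiliary height $u$ together with the verification that the restricted boundary data on $C$ continues to satisfy the standing hypothesis $a \geq \lambda$ and $b_j > -\lambda$ after the conformal map $\psi$. In particular, one must rule out the possibility that $\gamma'_u$ accumulates on the boundary before separating the desired force points, and also confirm that the merging/monotonicity dichotomy of Theorem \ref{thm::boundary_levelline_gff_interacting} forces $\gamma$ and $\gamma'$ onto the correct side of $\gamma'_u$ even when those paths themselves can hit $\R_+$. The prototype for this careful bookkeeping is Lemma \ref{lem::boundary_levellines_continuity_tworight_forcepoints_3_reversibility}, whose argument extends essentially verbatim once the inductive configuration is set up correctly.
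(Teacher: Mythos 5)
Your induction outline matches the paper's opening move, but the inductive step in the paper is a multi-case argument, and the single mechanism you propose---a level line $\gamma'_u$ of $-h$ from $\infty$ whose height screens off part of $\R_+$---only covers the cases where the outermost boundary value $b_{m+1}$ (or, by symmetry, $b_1$) is at least $\lambda$. There the paper takes $u = \lambda + b_{n_0}$, with $n_0$ the largest index for which $b_{n_0} \in (-\lambda, \lambda)$, and targets $\gamma'_u$ at $x_{n_0+1}$; the boundary value of $-h+u$ on $(x_{n_0}, x_{n_0+1})$ is then exactly $\lambda$, which is what lands $\gamma'_u$ at $x_{n_0+1}$ without touching that interval, while $b_j \ge \lambda$ for $j > n_0$ keeps the remaining force points from making trouble. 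This is precisely the mechanism of Lemma \ref{lem::boundary_levellines_continuity_tworight_forcepoints_3_reversibility}, as you observe; $\gamma'$ and $\gamma$ are then kept on the correct side of $\gamma'_u$ by Theorem \ref{thm::boundary_levelline_gff_interacting} and Lemma \ref{lem::boundary_levellines_monotonicity_reverse_threshold} respectively, both of which need $u>0$.

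The genuine gap is the case in which \emph{every} $b_j$ lies in $(-\lambda, \lambda)$. Then there is no height $u$ for which the level line of $-h+u$ from $\infty$ both starts (which forces $u < \lambda + b_{m+1}$) and terminates cleanly at a boundary point right of $x_2$ while leaving the conditional boundary data in the scope of the inductive hypothesis; the exact-threshold arrangement $-b_{n_0}+u=\lambda$ on the target interval cannot be paired with ``$b_j \ge \lambda$ on the far side'' because no such $b_j$ exists, and termination schemes based on $u \le -\lambda + b_j$ would need $u<0$, conflicting with the two monotonicity ingredients above. The paper handles this case with a different device: it first proves the statement for an auxiliary field $\tilde h$ that agrees with $h$ except that $b_m$ is replaced by $\lambda$ (so the previously handled cases apply to $\tilde h$), then conditions on a level line $\tilde\gamma_u$ of $\tilde h$ started at the \emph{finite} boundary point $\tilde x_m$ and targeted at $\tilde x_{m+1}$, rather than at $\infty$; on a positive-probability event the conditional field on the component containing $\R_-$ reproduces the boundary data of the original $h$ but with the change points perturbed by an arbitrarily small $\delta$, and a separate approximation argument (Remark \ref{rem::boundary_levellines_reversibility_right_conditioningargument}) removes the $\delta$. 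There is also an intermediate absolute-continuity step covering the case where some interior $b_j \ge \lambda$. None of this is an ``essentially verbatim'' extension of Lemma \ref{lem::boundary_levellines_continuity_tworight_forcepoints_3_reversibility}; the shift-by-$\delta$ trick and the level line emanating from a finite boundary point are ingredients your sketch does not supply.
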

\begin{proof}
We prove by induction on $n$. Proposition \ref{prop::boundary_levellines_tworight} implies that the conclusion holds for $n\le 2$. Assume that the conclusion holds for $n\le m$ for some $m\ge 2$, we will show that the conclusion holds for $n=m+1$. If all $b_j\ge \lambda$, then Proposition \ref{prop::boundary_levellines_deterministic_nonintersecting_coincide} and Remark \ref{rem::boundary_levellines_deterministic_nonintersecting_coincide} prove the conclusion. In the following, we assume that there exists some $b_j\in (-\lambda,\lambda)$.
\smallbreak
\textit{First}, we assume that $b_{m+1}\ge\lambda$. Define $n_0\le m$ to be the number so that $b_j\ge \lambda$ for $j>n_0$ and $b_{n_0}\in (-\lambda,\lambda)$. Set $u=\lambda+b_{n_0}\in (0,2\lambda)$. Let $\gamma'_u$ be the level line of $-h$ with height $u$ starting from $\infty$ targeted at $x_{n_0+1}$. See Figure \ref{fig::boundary_levellines_reversibility_right}(a). We summarize the relations between $\gamma,\gamma'$ and $\gamma_u'$ as follows.
\begin{enumerate}
\item [(a)] By Theorem \ref{thm::boundary_levelline_gff_interacting}, the path $\gamma'$ stays to the left of $\gamma'_u$.
\item [(b)] By Lemma \ref{lem::boundary_levellines_monotonicity_reverse_threshold}, the path $\gamma$ stays to the left of $\gamma'_u$.
\item [(c)] Define $C$ to be the connected component of $\HH\setminus\gamma'_u$ that has $\R_-$ on the boundary, and let $\psi$ be any conformal map from $C$ onto $\HH$ that sends $0$ to $0$ and $\infty$ to $\infty$. Given $\gamma_u'$, the law of $h|_C\circ\psi^{-1}$ is the same as a GFF on $\HH$ whose boundary value is $-\lambda$ on $\R_-$, is greater than $-\lambda$ on $\R_+$, and changes at most $n_0$ times on $\R_+$.
\end{enumerate}
Combining these three facts with the induction hypothesis, we know that, given $\gamma'_u$, the paths $\psi(\gamma)$ and $\psi(\gamma')$ are equal which implies that $\gamma'$ and $\gamma$ are equal.
\smallbreak
From the first step and symmetry, we know that the conclusion also holds for $b_1\ge\lambda$. In the following we assume that $b_1\in(-\lambda,\lambda)$ and $b_{m+1}\in (-\lambda,\lambda)$.
\smallbreak
\textit{Second}, we assume that there exists some $j\in\{2,...,m\}$ such that $b_j\ge \lambda$. For any $\eps>0$ define $H_{\eps}$ to be the open set obtained by removing from $\HH$ all points that are within distance $\eps$ of $[x_j,x_{j+1}]$. Define $E_{\eps}$ to be the event that the distance between $\gamma\cup\gamma'$ and $[x_j,x_{j+1}]$ is at least $\eps$. Let $\tilde{h}$ be a GFF on $\HH$  whose boundary value is consistent with $h$ except on $(x_j,x_{j+1})$ and is $b_{j-1}$ on $(x_j,x_{j+1})$. Let $\tilde{\gamma}$ be the level line of $\tilde{h}$ starting from 0 targeted at $\infty$ and let $\tilde{\gamma}'$ be the level line of $-\tilde{h}$ starting from $\infty$ targeted at 0. Then we have the following facts.
\begin{enumerate}
\item [(a)] By the induction hypothesis, we have that $\tilde{\gamma}'$ and $\tilde{\gamma}$ are equal.
\item [(b)] The law of $h$ restricted to $H_{\eps}$ is absolutely continuous with respect to the law of $\tilde{h}$ restricted $H_{\eps}$.
\end{enumerate}
Combining these two facts, we know that, on $E_{\eps}$, the curves $\gamma'$ and $\gamma$ are equal. Since that the boundary value on $(x_j,x_{j+1})$ is at least $\lambda$, the union $\gamma\cup\gamma'$ has positive distance from $(x_j,x_{j+1})$ almost surely. Therefore $\PP[E_{\eps}]\to 1$ as $\eps\to 0$, this implies the conclusion.
\smallbreak
\textit{Finally}, we assume that $b_j\in (-\lambda,\lambda)$ for all $j=1,...,m+1$. Suppose that
\[\underline{\tilde{x}}=(0=\tilde{x}_1<\cdots<\tilde{x}_{m+1}<\tilde{x}_{m+2}=\infty)\]
is a sequence of points on $\R_+$. Suppose that $\tilde{h}$ is a GFF on $\HH$ whose boundary value is $-\lambda$ on $\R_-$, is $b_j$ on $(\tilde{x}_{j},\tilde{x}_{j+1})$ for $j=1,...,m+1$ except for $j=m$, and is $\lambda$ on $(\tilde{x}_m,\tilde{x}_{m+1})$. Let $\tilde{\gamma}$ be the level line of $\tilde{h}$ starting from 0 targeted at $\infty$ and $\tilde{\gamma}'$ be the level line of $-\tilde{h}$ starting from $\infty$ targeted at $0$. Set $u=-\lambda-b_m\in (-2\lambda,0)$, and let $\tilde{\gamma}_u$ be the level line of $\tilde{h}$ with height $u$ starting from $\tilde{x}_m$ targeted at $\tilde{x}_{m+1}$. See Figure \ref{fig::boundary_levellines_reversibility_right}(b). We summarize the relations between $\tilde{\gamma}, \tilde{\gamma}'$ and $\tilde{\gamma}_u$ in the following.
\begin{enumerate}
\item [(a)] By the second step, we have that $\tilde{\gamma}$ and $\tilde{\gamma}'$ are equal.
\item [(b)] By Theorem \ref{thm::boundary_levelline_gff_interacting}, we have that $\tilde{\gamma}$ stays to the left of $\tilde{\gamma}_u$.
\item [(c)] Define $\tilde{E}_u$ to be the event that $\tilde{\gamma}_u$ reaches $[\tilde{x}_{m+1},\infty)$ before hits the continuation threshold and does not hit the union $(-\infty,(\tilde{x}_{m-1}+\tilde{x}_{m})/2)\cup(2\tilde{x}_{m+1},\infty)$. The conditions
     \[\lambda+u\in (-\lambda,\lambda),\quad b_{m-1}+u<\lambda, \quad b_{m+1}+u<\lambda\] guarantee that $\tilde{E}_u$ has positive probability. On $\tilde{E}_u$, let $C$ be the connected component of $\HH\setminus\tilde{\gamma}_u$ that has $\R_-$ on the boundary and let $\psi$ by any conformal map from $C$ onto $\HH$ that fixes $0$ and $\infty$. On $\tilde{E}_u$, define $\tilde{w}_m$ to be the last point of $\tilde{\gamma}_u$ that is contained in $(\tilde{x}_{m-1},\tilde{x}_m)$ and $\tilde{w}_{m+1}$ to be the first point of $\tilde{\gamma}_u$ that is contained in $(\tilde{x}_{m+1},\infty)$.
\end{enumerate}
Combining these three facts, we have that, given $\tilde{\gamma}_u$ and on $\tilde{E}_u$, the paths $\psi(\tilde{\gamma}')$ and $\psi(\tilde{\gamma})$ are equal. For $\delta>0$, define
\[\tilde{F}_{\delta}=\bigcap_{j=2}^{m-1}\{\psi(\tilde{x}_j)\in (x_j-\delta,x_j+\delta)\}\bigcap\{\psi(\tilde{w}_m)\in (x_m-\delta,x_m+\delta)\}\bigcap\{\psi(\tilde{w}_{m+1})\in (x_{m+1}-\delta,x_{m+1}+\delta)\}.\]
Clearly, the event $\tilde{F}_{\delta}$ depends on $\tilde{\gamma}_u$, and we can properly choose the sequence $\underline{\tilde{x}}$ so that the event $\tilde{E}_u\cap\tilde{F}_{\delta}$ has positive probability. From the above analysis, we know that, given $\tilde{\gamma}_u$ and on $\tilde{E}_u\cap\tilde{F}_{\delta}$, the level line of $\tilde{h}|_C\circ\psi^{-1}$ starting from 0 targeted at $\infty$ coincides with the level line of $-\tilde{h}|_C\circ\psi^{-1}$ starting from $\infty$ targeted at 0. Note that, on $\tilde{E}_u\cap\tilde{F}_{\delta}$, the boundary value of $\tilde{h}|_C\circ\psi^{-1}$ is $-a$ on $\R_-$, and is $b_j$ on $(\psi(\tilde{x}_j),\psi(\tilde{x}_{j+1}))$ for $j=1,...,m-2$, is $b_{m-1}$ on $(\psi(\tilde{x}_{m-1}),\psi(\tilde{w}_m))$, is $b_m$ on $(\psi(\tilde{w}_{m}),\psi(\tilde{w}_{m+1}))$, and is $b_{m+1}$ on $(\psi(\tilde{w}_{m+1}),\infty)$. This implies the conclusion, see Remark \ref{rem::boundary_levellines_reversibility_right_conditioningargument}.
\end{proof}
\begin{remark}\label{rem::boundary_levellines_reversibility_right_conditioningargument}
Suppose that
\[\underline{x}=(0=x_1<x_2<\cdots<x_n<x_{n+1}=\infty)\]
is a sequence of points along $\partial\HH$.
Suppose that $h$ is a $\GFF$ on $\HH$ whose boundary value is $-a$ on $\R_-$ and is $b_j$ on $(x_j,x_{j+1})$ for $j=1,...,n$ where $a>-\lambda, b_j>-\lambda$ for $j=1,...,n$.
Let $\gamma$ be the level line of $h$ starting from 0 targeted at $\infty$ and let $\gamma'$ be the level line of $-h$ starting from $\infty$ targeted at 0.

Assume that, for any $\delta>0$, there exists a sequence
\[\underline{x}^{\delta}=(0=x_1^{\delta}<x_2^{\delta}<\cdots<x_n^{\delta}<x_{n+1}^{\delta}=\infty)\quad \text{where}\quad x^{\delta}_j\in (x_j-\delta,x_j+\delta)\quad\text{for }j=2,...n,\]
such that the following is true. Let $h^{\delta}$ be a $\GFF$ on $\HH$ whose boundary value is $-a$ on $\R_-$ and is $b_j$ on $(x_j^{\delta},x_{j+1}^{\delta})$ for $j=1,...,n$.
Let $\gamma_{\delta}$ be the level line of $h^{\delta}$ starting from 0 targeted at $\infty$ and let $\gamma'_{\delta}$ be the level line of $-h^{\delta}$ starting from $\infty$ targeted at 0, then $\gamma'_{\delta}$ and $\gamma_{\delta}$ are equal almost surely.

Then $\gamma'$ and $\gamma$ are equal almost surely.
\end{remark}
\begin{proof}
Define $E_{\delta}$ (resp. $E'_{\delta}$) to be the event that the distance between $\gamma$ (resp. $\gamma'$) and $\{x_2,...,x_n\}$ is at least $\delta$. We have the following observations.
\begin{enumerate}
\item [(a)] The restriction of $h$ to $\HH\setminus (\cup_{j=2}^n B(x_j,\delta))$ is absolutely continuous with respect to the restriction of $h^{\delta}$ to the same domain.
\item [(b)] The paths $\gamma_{\delta}$ and $\gamma'_{\delta}$ are equal by the assumption.
\end{enumerate}
Combining these two facts, we know that, on the event $E_{\delta}\cap E'_{\delta}$, the paths $\gamma$ and $\gamma'$ are equal. This holds for any $\delta>0$. Note that, since $a>-\lambda, b_j>-\lambda$, the points $x_2,...,x_n$ are not continuation thresholds for $\gamma$ and $\gamma'$. Therefore, the distance between $\gamma\cup\gamma'$ and $\{x_2,...,x_n\}$ is almost surely positive. This implies the conclusion.
\end{proof}
\begin{remark}\label{rem::boundary_levellines_reversibility_right}
The conclusion in Lemma \ref{lem::boundary_levellines_reversibility_right} holds more generally when the boundary value of $h$ is piecewise constant, and is at most $-\lambda$ on $\R_-$.
\end{remark}

\begin{figure}[ht!]
\begin{center}
\includegraphics[width=0.3\textwidth]{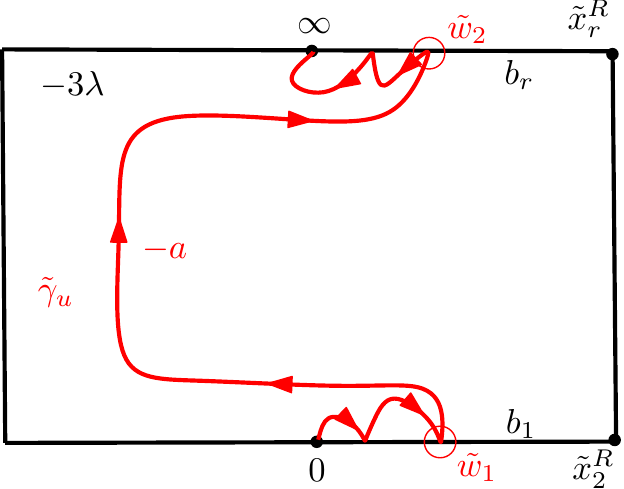}
\end{center}
\caption{\label{fig::boundary_levellines_reversibility_right_oneleft} The explanation of the behaviour of paths in the proof of Lemma \ref{lem::boundary_levellines_reversibility_right_oneleft}.}
\end{figure}

\begin{lemma}\label{lem::boundary_levellines_reversibility_right_oneleft}
The conclusion in Lemma \ref{lem::boundary_levellines_reversibility_right} also holds when we replace the restriction on $a$ by the following: $a>-\lambda$.
\end{lemma}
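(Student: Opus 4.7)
The plan is to reduce the case $a\in(-\lambda,\lambda)$ to the case $a\ge\lambda$ already covered by Lemma \ref{lem::boundary_levellines_reversibility_right} (and Remark \ref{rem::boundary_levellines_reversibility_right}), by introducing an auxiliary GFF whose boundary value on $\R_-$ is pushed below $-\lambda$, and cutting off the relevant region with a single separating level line. This is modeled on the conformal-restriction argument in Proposition \ref{prop::boundary_levellines_deterministic_intersecting} and on the ``Finally'' step of Lemma \ref{lem::boundary_levellines_reversibility_right}.

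Concretely, let $\tilde{h}$ be a GFF on $\HH$ whose boundary value is $-\tilde{a}:=-(2\lambda+a)$ on $\R_-$ and $b_j$ on $(\tilde{x}_j,\tilde{x}_{j+1})$ for $j=1,\dots,n$, where $\underline{\tilde{x}}=(0=\tilde{x}_1<\tilde{x}_2<\cdots<\tilde{x}_n<\tilde{x}_{n+1}=\infty)$ is a sequence to be chosen. Set $u:=\lambda+a\in(0,2\lambda)$ and let $\tilde{\gamma}_u$ be the level line of $\tilde{h}$ with height $u$ starting from $0$ targeted at $\infty$. I would first observe that $\tilde{h}+u$ has boundary value $-\lambda$ on $\R_-$ and $b_j+u>-\lambda$ on the right, so $\tilde{\gamma}_u$ is an $\SLE_4(\underline{\rho})$ process with all cumulative weights $\ge 0$, hence continuous, transient, and non-boundary-intersecting (Proposition \ref{prop::chordal_sle_continuity_nonintersecting}). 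Along $\tilde{\gamma}_u$, the left side sees $-\lambda-u=-\tilde{a}$ and the right side sees $\lambda-u=-a$.

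Let $\tilde{\gamma}$ (resp.\ $\tilde{\gamma}'$) be the level line of $\tilde{h}$ (resp.\ $-\tilde{h}$) from $0$ to $\infty$ (resp.\ from $\infty$ to $0$). Since $\tilde{a}\ge\lambda$, Remark \ref{rem::boundary_levellines_reversibility_right} gives $\tilde{\gamma}=\tilde{\gamma}'$ almost surely. By the monotonicity Theorem \ref{thm::boundary_levelline_gff_interacting} (with heights $0<u$), the curve $\tilde{\gamma}$ stays to the right of $\tilde{\gamma}_u$, and hence $\tilde{\gamma}'=\tilde{\gamma}$ does too. Let $C$ be the right connected component of $\HH\setminus\tilde{\gamma}_u$, and let $\psi:C\to\HH$ be a conformal map fixing $0$ and $\infty$. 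Combining the boundary values just computed, $\tilde{h}|_C\circ\psi^{-1}$ is a GFF on $\HH$ with boundary value $-a$ on $\R_-$ and $b_j$ on $(\psi(\tilde{x}_j),\psi(\tilde{x}_{j+1}))$. Moreover, by the same conformal-restriction argument used in Lemma \ref{lem::boundary_levellines_deterministic_intersecting_1} / Lemma \ref{lem::boundary_levellines_heightvarying_conditionallaw}, the images $\psi(\tilde{\gamma}|_C)$ and $\psi(\tilde{\gamma}'|_C)$ are respectively the level lines of $\tilde{h}|_C\circ\psi^{-1}$ and of $-\tilde{h}|_C\circ\psi^{-1}$ between $0$ and $\infty$; they are equal because $\tilde{\gamma}=\tilde{\gamma}'$.

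The main obstacle is that the change points $\psi(\tilde{x}_j)$ are random and do not coincide with the prescribed $x_j$. To overcome this, I follow the device from the ``Finally'' step of Lemma \ref{lem::boundary_levellines_reversibility_right}: for any $\delta>0$, choose the positions $\underline{\tilde{x}}$ so that the event
\[
\tilde{F}_\delta:=\bigcap_{j=2}^{n}\bigl\{\psi(\tilde{x}_j)\in(x_j-\delta,x_j+\delta)\bigr\}
\]
has positive probability (this is possible because $\tilde{\gamma}_u$ is a non-degenerate continuous curve whose conformal image places the finitely many $\tilde{x}_j$ at essentially arbitrary locations with positive probability, by adjusting $\underline{\tilde{x}}$). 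On $\tilde{F}_\delta$, the previous step produces a GFF in $\HH$ with boundary values $-a,b_1,\dots,b_n$ whose change points lie in the $\delta$-neighbourhoods of $x_2,\dots,x_n$, and for which the two level lines agree. Since $a>-\lambda$ and $b_j>-\lambda$, the change points $x_j$ are not continuation thresholds, so Remark \ref{rem::boundary_levellines_reversibility_right_conditioningargument} (absolute continuity away from the change points) applies and yields the desired equality of $\gamma$ and $\gamma'$ for our original $h$ upon sending $\delta\to 0$.
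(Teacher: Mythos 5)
There is a genuine gap in your argument at the step where you assert that $\tilde{\gamma}_u$ is non-boundary-intersecting.

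You observe that $\tilde h + u$ has boundary value $-\lambda$ on $\R_-$ and $b_j+u>-\lambda$ on $(\tilde x_j,\tilde x_{j+1})$, and then conclude that ``all cumulative weights are $\ge 0$.'' But $b_j+u>-\lambda$ only gives the cumulative weight $\overline{\rho}^{j,R}=(b_j+u)/\lambda-1>-2$, not $\ge 0$. To invoke Proposition \ref{prop::chordal_sle_continuity_nonintersecting} you would need $b_j+u\ge\lambda$ for every $j$, i.e.\ $b_j\ge -a$, which is \emph{not} part of the hypotheses (the $b_j$ are only required to be $>-\lambda$, and $a$ may be close to $-\lambda$). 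When some $b_j<-a$, the level line $\tilde{\gamma}_u$ almost surely does hit $\R_+$. In that case $0$ is not a prime end of the unbounded ``right'' component $C$ (the initial piece of $\tilde{\gamma}_u$ together with a segment of $\R_+$ cuts off a bubble whose boundary contains $0$, and $C$ lies outside that bubble), so the prescription ``$\psi\colon C\to\HH$ fixing $0$ and $\infty$'' is ill-posed, and in any case the boundary data of $\tilde h|_C\circ\psi^{-1}$ near the image of the origin would be a mixture of $-a$ and $b_1$, not the clean $(-a;b_1,\dots,b_n)$ you need.

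The paper's proof is structured exactly as yours (auxiliary field $\tilde h$ with lowered left boundary value, a separating level line $\tilde{\gamma}_u$ of height $u=\lambda+a$, monotonicity, the positive-probability event $\tilde F_\delta$ pinning the change points, then $\delta\to 0$ via the absolute-continuity argument of Remark \ref{rem::boundary_levellines_reversibility_right_conditioningargument}), but it confronts the boundary-touching directly: it restricts to the event $\tilde E_u$ that $\tilde{\gamma}_u$ avoids the middle interval $(\tilde x_2/2,\,2\tilde x_r)$ (which the conditions $b_1+u>-\lambda$, $b_r+u>-\lambda$, $u<2\lambda$ guarantee has positive probability), and defines $\psi$ so that the \emph{last} hitting point $\tilde w_1$ of $\tilde{\gamma}_u$ in $[0,\tilde x_2)$ (rather than $0$) goes to $0$ and the \emph{first} hitting point $\tilde w_2$ in $(\tilde x_r,\infty]$ goes to $\infty$. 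These are genuine prime ends of $C$, and with this normalization the pulled-back boundary data is $-a$ on $\R_-$ and $b_j$ on the $\psi$-images of the intervals, as required. You need to incorporate this mechanism; without it, the proof fails precisely for the boundary configurations the lemma is designed to cover.
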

\begin{proof}
We may assume that $a\in (-\lambda,\lambda)$.
Suppose that
\[\underline{\tilde{x}}^R=(0=\tilde{x}^R_1<\tilde{x}^R_2<\cdots<\tilde{x}^R_r<\tilde{x}^R_{r+1}=\infty)\]
is a sequence of points on $\R_+$. Suppose that $\tilde{h}$ is a GFF on $\HH$ whose boundary value is $-3\lambda$ on $\R_-$, is $b_j$ on $(\tilde{x}_{j},\tilde{x}_{j+1})$ for $j=1,...,r$. Let $\tilde{\gamma}$ be the level line of $\tilde{h}$ starting from 0 targeted at $\infty$ and $\tilde{\gamma}'$ be the level line of $-\tilde{h}$ starting from $\infty$ targeted at $0$. Set $u=\lambda+a\in (0, 2\lambda)$, and let $\tilde{\gamma}_u$ be the level line of $\tilde{h}$ with height $u$ starting from $0$ targeted at $\infty$. See Figure \ref{fig::boundary_levellines_reversibility_right_oneleft}. We summarize the relations between $\tilde{\gamma}, \tilde{\gamma}'$ and $\tilde{\gamma}_u$ in the following.
\begin{enumerate}
\item [(a)] By Lemma \ref{lem::boundary_levellines_reversibility_right}, we have that $\tilde{\gamma}$ and $\tilde{\gamma}'$ are equal.
\item [(b)] By Theorem \ref{thm::boundary_levelline_gff_interacting}, we have that $\tilde{\gamma}_u$ stays to the left of $\tilde{\gamma}$.
\item [(c)] Define $\tilde{E}_u$ to be the event that $\tilde{\gamma}_u$ does not hit the interval $(\tilde{x}^R_2/2,2\tilde{x}^R_r)$. The conditions
\[-\lambda+u<\lambda, \quad b_1+u>-\lambda, \quad b_r+u>-\lambda\] guarantee that $\tilde{E}_u$ has positive probability. On $\tilde{E}_u$, define $C$ to be the connected component of $\HH\setminus\tilde{\gamma}_u$ that has $(\tilde{x}^R_2/2,2\tilde{x}^R_r)$ on the boundary, define $\tilde{w}_1$ to be the last point of $\tilde{\gamma}_u$ that is contained in $[0,\tilde{x}^R_2)$ and $\tilde{w}_2$ to be the first point of $\tilde{\gamma}_u$ that is contained in $(\tilde{x}^R_r,\infty]$, and let $\psi$ be any conformal map from $C$ onto $\HH$ that sends $\tilde{w}_1$ to $0$ and $\tilde{w}_2$ to $\infty$.
\end{enumerate}
Combining these three facts, we have that, given $\tilde{\gamma}_u$ and on $\tilde{E}_u$, the paths $\psi(\tilde{\gamma}|_C)$ and $\psi(\tilde{\gamma}'|_C)$ are equal. For $\delta>0$, define
\[\tilde{F}_{\delta}=\bigcap_{j=2}^{r}\{\psi(\tilde{x}^R_j)\in (x^R_j-\delta,x^R_j+\delta)\}.\]
Clearly, the event $\tilde{F}_{\delta}$ depends on $\tilde{\gamma}_u$, and we can properly choose the sequence $\underline{\tilde{x}}^R$ so that the event $\tilde{E}_u\cap\tilde{F}_{\delta}$ has positive probability. From the above analysis, we know that, given $\tilde{\gamma}_u$ and on $\tilde{E}_u\cap\tilde{F}_{\delta}$, the level line of $\tilde{h}|_C\circ\psi^{-1}$ starting from 0 targeted at $\infty$ coincides with the level line of $-\tilde{h}|_C\circ\psi^{-1}$ starting from $\infty$ targeted at 0. Note that, on $\tilde{E}_u\cap\tilde{F}_{\delta}$, the boundary value of $\tilde{h}|_C\circ\psi^{-1}$ is $-a$ on $\R_-$, and is $b_j$ on $(\psi(\tilde{x}^R_j),\psi(\tilde{x}^R_{j+1}))$ for $j=1,...,r$ where $\psi(\tilde{x}^R_j)\in (x^R_j-\delta,x^R_j+\delta)$. By a similar analysis as in Remark \ref{rem::boundary_levellines_reversibility_right_conditioningargument}, we obtain the conclusion.\end{proof}

\begin{proposition}\label{prop::boundary_levellines_reversibility_leftright}
Suppose that
\[
\underline{x}^L=(x^L_1=0>x^L_2>\cdots>x^L_l>x^L_{l+1}=-\infty),\quad
\underline{x}^R=(x^R_1=0<x^R_2<\cdots<x^R_r<x^R_{r+1}=\infty)\]
are two sequences of points along $\partial\HH$. Suppose that $h$ is a $\GFF$ on $\HH$ whose boundary value is
\[-a_j \quad\text{on }(x^L_{j+1},x^L_j),\quad \text{for }j=1,...,l;\quad b_j \quad \text{on }(x^R_{j},x^R_{j+1}),\quad \text{for }j=1,...,r.\]
Assume that
\[a_j >-\lambda,\quad \text{for }j=1,...,l;\quad b_j >-\lambda,\quad \text{for }j=1,...,r.\]
Let $\gamma$ be the level line of $h$ starting from 0 targeted at $\infty$ and $\gamma'$ be the level line of $-h$ starting from $\infty$ targeted at $0$. Then $\gamma'$ and $\gamma$ are equal almost surely.
\end{proposition}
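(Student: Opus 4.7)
The plan is to extend Lemma \ref{lem::boundary_levellines_reversibility_right_oneleft} by induction on $l + r$, closely mirroring the three-step structure of the proof of Lemma \ref{lem::boundary_levellines_reversibility_right} but carried out symmetrically on $\R_-$ and $\R_+$. By reflecting $\HH$ across the imaginary axis, Lemma \ref{lem::boundary_levellines_reversibility_right_oneleft} already gives both the cases $l=1$ (arbitrary $r$) and $r=1$ (arbitrary $l$), so the induction starts at $l+r \le 3$. For the inductive step, I assume reversibility for all configurations with fewer pieces (still under the standing hypothesis that all weights exceed $-\lambda$) and split into three cases.

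First, if $-a_l \le -\lambda$ or $b_r \ge \lambda$, then I introduce an auxiliary level line of $-h$ at a well-chosen height to peel off the outermost piece. Say $b_r \ge \lambda$ and set $u \in (0,2\lambda)$ so that $b_{r-1}+u \in (-\lambda,\lambda)$; let $\gamma'_u$ be the level line of $-h$ with height $u$ from $\infty$ targeted at $x_r^R$. Theorem \ref{thm::boundary_levelline_gff_interacting} forces both $\gamma$ and $\gamma'$ to stay on the left of $\gamma'_u$, and conditionally on $\gamma'_u$ the restrictions $\gamma|_C,\gamma'|_C$ to the left component $C$ of $\HH\setminus\gamma'_u$ are the level lines of $\pm h|_C$. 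Mapping $C$ conformally to $\HH$ with a map fixing $0$ and $\infty$ yields a boundary configuration with at most $r-1$ right pieces, to which the inductive hypothesis applies. The case $-a_l \le -\lambda$ is symmetric.

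Second, if some \emph{interior} right interval $(x_j^R, x_{j+1}^R)$ has $b_j \ge \lambda$ (or symmetrically some interior left interval has value $\le -\lambda$), I use the absolute continuity trick of Step 2 in the proof of Lemma \ref{lem::boundary_levellines_reversibility_right}. Let $\tilde h$ be obtained from $h$ by replacing $b_j$ with $b_{j-1}$, reducing the piece count by one. Proposition \ref{prop::gff_absolutecontinuity} gives mutual absolute continuity of $h$ and $\tilde h$ on the complement $H_\eps$ of an $\eps$-neighborhood of $[x_j^R, x_{j+1}^R]$, and Lemma \ref{lem::levelline_forbidden_intervals} ensures that $\gamma\cup\gamma'$ stays at positive distance from this interval almost surely, so taking $\eps\to 0$ transfers the inductive conclusion.

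Finally, if none of the previous cases applies, every outermost and interior piece has value in $(-\lambda,\lambda)$. I then mimic Step 3 of Lemma \ref{lem::boundary_levellines_reversibility_right}: pick an interior right interval and consider a perturbed configuration $\tilde h$ on points $\underline{\tilde x}$ whose value on $(\tilde x_j^R, \tilde x_{j+1}^R)$ is replaced by $\lambda$, so that $\tilde h$ falls under the previous case and satisfies reversibility (let $\tilde\gamma, \tilde\gamma'$ be its forward/reverse level lines). Run the auxiliary level line $\tilde\gamma_u$ of $\tilde h$ with height $u = -\lambda - b_j \in (-2\lambda,0)$ from $\tilde x_j^R$ to $\tilde x_{j+1}^R$; by Theorem \ref{thm::boundary_levelline_gff_interacting} it stays on one side of both $\tilde\gamma$ and $\tilde\gamma'$, and on the positive-probability event $\tilde E_u$ that $\tilde\gamma_u$ is localized away from the other force points, conformally mapping the left component $C$ of $\HH\setminus\tilde\gamma_u$ back to $\HH$ by $\psi$ produces a boundary configuration whose change points $\psi(\tilde x_i^{L,R})$ match the original $x_i^{L,R}$ within $\delta$ on the event $\tilde E_u \cap \tilde F_\delta$ (with $\underline{\tilde x}$ and the target of $\tilde\gamma_u$ chosen to arrange this). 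Remark \ref{rem::boundary_levellines_reversibility_right_conditioningargument} (whose hypothesis $a_j, b_j > -\lambda$ is exactly ours, so that $\gamma\cup\gamma'$ stays a.s. at positive distance from the change points) converts the $\delta$-perturbed reversibility into reversibility for $h$ itself.

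The main obstacle will be the bookkeeping in the third step: verifying that $\tilde\gamma_u$ at the chosen height genuinely avoids the other force points with positive probability (so $\tilde E_u \cap \tilde F_\delta$ has positive probability for every $\delta$), and that the resulting $\psi$-image indeed realizes the prescribed boundary data $(\underline a; \underline b)$ on $\R_- \cup \R_+$ at the correct approximate positions. Once this is carefully arranged, as in Lemma \ref{lem::boundary_levellines_reversibility_right_oneleft}, the conditioning-plus-Remark \ref{rem::boundary_levellines_reversibility_right_conditioningargument} mechanism closes the induction.
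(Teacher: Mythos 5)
Your proposal is genuinely different from the paper's: you induct on $l+r$ with a three-case step borrowed from the proof of Lemma~\ref{lem::boundary_levellines_reversibility_right}, whereas the paper keeps $r$ arbitrary throughout, separately establishes the base cases $l\le 3$ (Lemmas~\ref{lem::boundary_levellines_reversibility_right_oneleft}, \ref{lem::boundary_levellines_reversibility_right_twoleft}, \ref{lem::boundary_levellines_reversibility_right_threeleft}, each with its own delicate construction), and then closes an induction on $l$ alone using only the absolute-continuity trick and Case~2 of Lemma~\ref{lem::boundary_levellines_reversibility_right_threeleft}. Your Cases 2 and 3 are sound and match the paper's mechanisms. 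The genuine gap is Case 1. In Step~1 of Lemma~\ref{lem::boundary_levellines_reversibility_right}, running the auxiliary $\gamma'_u$ at height $u\in(0,2\lambda)$ from $\infty$ is safe precisely because the hypothesis $a\ge\lambda$ forces the boundary value $a+u$ of $-h+u$ on $\R_-$ above $\lambda$, so Lemma~\ref{lem::levelline_forbidden_intervals} forbids $\gamma'_u$ from touching $\R_-$. Under the proposition's hypothesis you only know $a_j>-\lambda$; for any admissible $u$ some $a_j+u$ can lie in $(-\lambda,\lambda)$, and then $\gamma'_u$ hits $\R_-$ with positive probability. On that event the first hit of $\gamma'_u$ on $\partial\HH$ may fall on $\R_-$, in which case $0$ and $\infty$ land on boundaries of \emph{different} connected components of $\HH\setminus\gamma'_u$ and there is no conformal map $\psi$ of ``the left component'' fixing both; even when they stay together, arcs of $\gamma'_u$ interleave with arcs of $\R_-$ along $\partial C$ and the number of boundary-value pieces on the image of $\R_-$ can grow, so the claim ``at most $r-1$ right pieces'' is not enough to close the induction on $l+r$. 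This is exactly why the paper never peels from $\infty$ in the proposition's proof: all of its auxiliary paths run either from $0$ (Lemma~\ref{lem::boundary_levellines_reversibility_right_oneleft}), or between two interior force points on the \emph{same} half-line and localized by the event $\tilde E_u$ (Case~2 of Lemma~\ref{lem::boundary_levellines_reversibility_right_threeleft}), avoiding any interaction with the other half-line.

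A secondary issue: your Case~3 requires an interior interval on one side (neither adjacent to $0$ nor to $\infty$). When $l=r=2$ no such interval exists on either side, so neither Case~2 nor Case~3 applies; the remaining sub-cases of $l=2$ are the reason the paper needs the separate Lemma~\ref{lem::boundary_levellines_reversibility_right_twoleft} (with its height-varying level-line argument) rather than folding $l=2$ into the generic inductive step. To fix the proposal along its current lines you would have to (i) replace Case~1 by an argument that never sends the auxiliary from $\infty$, e.g.\ condition on a suitable localized level line of $\pm h$ from $0$ or between boundary points on one half-line, as in Lemmas~\ref{lem::boundary_levellines_reversibility_right_oneleft} and \ref{lem::boundary_levellines_reversibility_right_twoleft}; and (ii) supply separate base cases for small $l$ and $r$ where the generic step has no interior interval to work with.
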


By Lemma \ref{lem::boundary_levellines_reversibility_right_oneleft}, we know that the conclusion in Proposition \ref{prop::boundary_levellines_reversibility_leftright} holds for $l=1$. Before we prove Proposition \ref{prop::boundary_levellines_reversibility_leftright}, we first prove that the conclusion holds for $l=2$ and $l=3$.

\begin{figure}[ht!]
\begin{subfigure}[b]{0.48\textwidth}
\begin{center}
\includegraphics[width=0.6\textwidth]{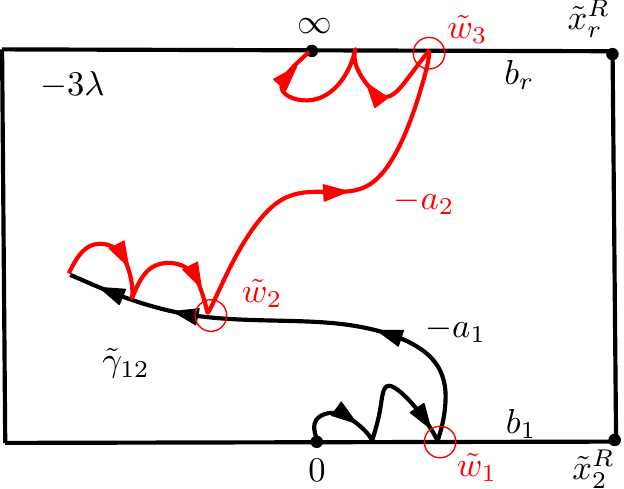}
\end{center}
\caption{$\tilde{\gamma}_{12}$ stays to the left of $\tilde{\gamma}$.}
\end{subfigure}
\begin{subfigure}[b]{0.48\textwidth}
\begin{center}\includegraphics[width=0.65\textwidth]{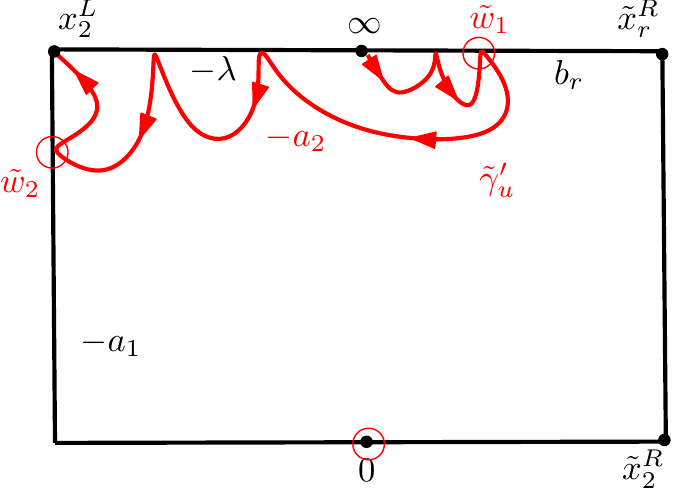}
\end{center}
\caption{$\tilde{\gamma}'_u$ stays to the left of $\tilde{\gamma}'$.}
\end{subfigure}
\caption{\label{fig::boundary_levellines_reversibility_right_twoleft} The explanation of the behaviour of the paths in the proof of Lemma \ref{lem::boundary_levellines_reversibility_right_twoleft}.}
\end{figure}

\begin{lemma}\label{lem::boundary_levellines_reversibility_right_twoleft}
The conclusion in Proposition \ref{prop::boundary_levellines_reversibility_leftright} holds when $l=2$.
\end{lemma}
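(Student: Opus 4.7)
The plan parallels that of Lemma \ref{lem::boundary_levellines_reversibility_right_oneleft}, but replaces the single-height auxiliary level line $\tilde\gamma_u$ by a height-varying one so that two distinct values $-a_1, -a_2$ appear on the left boundary of the conformally mapped domain. Fix $\tilde a\ge 2\lambda+a_1\vee a_2$ and a sequence $\underline{\tilde x}^R=(0=\tilde x_1^R<\cdots<\tilde x_{r+1}^R=\infty)$, and let $\tilde h$ be a $\GFF$ on $\HH$ with boundary value $-\tilde a$ on $\R_-$ and $b_j$ on $(\tilde x_j^R,\tilde x_{j+1}^R)$. Let $\tilde\gamma$ be the level line of $\tilde h$ from $0$ and $\tilde\gamma'$ the level line of $-\tilde h$ from $\infty$; since $\tilde h$ has a single sufficiently-negative left force point, Lemma \ref{lem::boundary_levellines_reversibility_right_oneleft} together with Remark \ref{rem::boundary_levellines_reversibility_right} give $\tilde\gamma'=\tilde\gamma$ almost surely. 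Set $u_1=\lambda+a_1$, $u_2=\lambda+a_2$, and let $\tilde\gamma_{12}$ be the height-varying level line of $\tilde h$ from $0$ with heights $u_1,u_2$ and a height-change time $T$ to be chosen. The choice of $\tilde a$ ensures $\tilde\gamma_{12}$ is non-boundary-intersecting on $\R_-$, and iterating Theorem \ref{thm::boundary_levelline_gff_interacting} across the height change (via the conditional-law description in Lemma \ref{lem::boundary_levellines_heightvarying_conditionallaw}) shows that $\tilde\gamma_{12}$ stays to the left of $\tilde\gamma=\tilde\gamma'$.

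Let $\tilde E_u$ be the event that $\tilde\gamma_{12}$ reaches $\infty$ without hitting its continuation threshold, avoids the interval $(\tilde x_2^R/2,2\tilde x_r^R)\subset\R_+$, and has its height-change point $\tilde w_{12}:=\tilde\gamma_{12}(T)$ lying strictly inside $\HH$ on the portion of $\tilde\gamma_{12}$ between its last touch $\tilde w_1$ with $[0,\tilde x_2^R)$ and its first touch $\tilde w_2$ with $(\tilde x_r^R,\infty]$. Under the standing positivity conditions $a_1,a_2>-\lambda$ and $b_j>-\lambda$, combined with enlarging $\tilde a$ if necessary, $\tilde E_u$ has positive probability by the same continuity argument used in the proof of Lemma \ref{lem::boundary_levellines_reversibility_right_oneleft}. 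On $\tilde E_u$, let $C$ be the component of $\HH\setminus\tilde\gamma_{12}$ containing the arc $(\tilde w_1,\tilde w_2)\subset\R_+$ on its boundary, and let $\psi:C\to\HH$ be the conformal map with $\psi(\tilde w_1)=0$ and $\psi(\tilde w_2)=\infty$. Tracing the orientation of $\partial C$, the arc of $\tilde\gamma_{12}$ from $\tilde w_1$ to $\tilde w_2$ maps to $\R_-$, so $\tilde h|_C\circ\psi^{-1}$ is a $\GFF$ on $\HH$ with boundary value $\lambda-u_1=-a_1$ on $(\psi(\tilde w_{12}),0)$, $\lambda-u_2=-a_2$ on $(-\infty,\psi(\tilde w_{12}))$, and $b_j$ on $(\psi(\tilde x_j^R),\psi(\tilde x_{j+1}^R))$. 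Lemma \ref{lem::boundary_levellines_heightvarying_conditionallaw} applied phase-by-phase then identifies $\psi(\tilde\gamma|_C)$ as its level line from $0$ and $\psi(\tilde\gamma'|_C)$ as the level line of its negation from $\infty$, so the identity $\tilde\gamma=\tilde\gamma'$ pushes through $\psi$.

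To finish, for each $\delta>0$ one can choose $\underline{\tilde x}^R$ and the height-change time $T$ so that the event $\tilde E_u\cap \tilde F_\delta$ has positive probability, where $\tilde F_\delta$ requires $\psi(\tilde x_j^R)\in(x_j^R-\delta,x_j^R+\delta)$ for each $j=2,\dots,r$ and $\psi(\tilde w_{12})\in(x_2^L-\delta,x_2^L+\delta)$; the extra degree of freedom afforded by $T$ is precisely what permits the image of the height-change point to be placed near $x_2^L$. A straightforward extension of the approximation argument of Remark \ref{rem::boundary_levellines_reversibility_right_conditioningargument}, accommodating the additional left-side change point, then yields $\gamma=\gamma'$ almost surely. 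The main technical difficulty is controlling $\psi(\tilde w_{12})$: one must verify that as $T$ varies continuously the image sweeps across $\R_-$, which follows from the joint continuity of height-varying level lines established in Section \ref{subsec::boundary_levellines_continuity_general} together with the continuity of $\psi$ with respect to $C$.
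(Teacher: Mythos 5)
Your construction only covers part of the case $l=2$. You build a height-varying level line $\tilde\gamma_{12}$ of $\tilde h$ with heights $u_1=\lambda+a_1$ and $u_2=\lambda+a_2$, but such a path is only well-defined (i.e., does not immediately hit its continuation threshold at the height-change time) when $|u_1-u_2|=|a_1-a_2|<2\lambda$; this is precisely the restriction (\ref{eqn::boundary_levellines_heigthvarying_differencerestriction}) imposed in Section \ref{subsec::boundary_levellines_continuity_general}, and it is also required by Corollary \ref{cor::boundary_levellines_monotonicity_heightvarying}, which you invoke for the monotonicity $\tilde\gamma_{12}$ left of $\tilde\gamma$. In Proposition \ref{prop::boundary_levellines_reversibility_leftright} with $l=2$ the only hypothesis is $a_1,a_2>-\lambda$, so $|a_1-a_2|$ can exceed $2\lambda$ (e.g.\ $a_1=3\lambda$, $a_2=0$); in that regime the event $\tilde E_u$ you define has probability zero, and enlarging $\tilde a$ does not fix this because the obstruction occurs where $\tilde\gamma_{12}$ meets its own trail, not the boundary.

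The paper deals with this by splitting into sub-cases. When both $a_i\in(-\lambda,\lambda)$ (so $|a_1-a_2|<2\lambda$ automatically), it uses essentially your height-varying construction. But when $a_1\ge\lambda$ and $a_2\in(-\lambda,\lambda)$, it switches to a different auxiliary object: the level line of $-\tilde h$ with height $-\lambda-a_2\in(-2\lambda,0)$ started from $\infty$ and targeted at $x^L_2$, in a field with boundary value $-\lambda$ on $(-\infty,x^L_2)$ and $-a_1$ on $(x^L_2,0)$, then appeals to Remark \ref{rem::boundary_levellines_reversibility_right} inside the conformally mapped sub-domain. Your proposal has no mechanism for the regime $|a_1-a_2|\ge 2\lambda$, so as written the proof is incomplete; you would need to add an argument along the paper's lines, or reduce that regime to the already-established cases (for instance, by noting that when $a_1\ge\lambda$ or $a_2\ge\lambda$ one of the paths $\gamma,\gamma'$ provably avoids a boundary interval, rendering one left force point inert and allowing an absolute-continuity reduction toward $l=1$).
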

\begin{proof}
If $a_1\ge\lambda$ and $a_2\ge\lambda$, the conclusion holds by Remark \ref{rem::boundary_levellines_reversibility_right}. In the following, we assume that $a_2\in (-\lambda,\lambda)$.
\smallbreak
\textit{First}, we assume that $a_1\in(-\lambda,\lambda)$ and $a_2\in (-\lambda,\lambda)$.
Suppose that \[\underline{\tilde{x}}^R=(0=\tilde{x}^R_1<\tilde{x}^R_2<\cdots<\tilde{x}^R_r<\tilde{x}^R_{r+1}=\infty)\] is a sequence of points along $\partial\HH$. Suppose that $\tilde{h}$ is a GFF on $\HH$ whose boundary value is $-3\lambda$ on $\R_-$, is $b_j$ on $(\tilde{x}^R_j,\tilde{x}^R_{j+1})$ for $j=1,...,r$. Let $\tilde{\gamma}$ be the level line of $\tilde{h}$ starting from 0 targeted at $\infty$ and $\tilde{\gamma}'$ be the level line of $-\tilde{h}$ starting from $\infty$ targeted at 0. Set
\[u_1=\lambda+a_1\in (0,2\lambda),\quad u_2=\lambda+a_2\in (0,2\lambda).\]
Let $\tilde{\gamma}_{12}$ be the height-varying level line of $\tilde{h}$ starting from 0 targeted at $\infty$ with heights $u_1,u_2$ and height change time $1$. We summarize the relations between $\tilde{\gamma}$, $\tilde{\gamma}'$, and $\tilde{\gamma}_{12}$ in the following. See Figure \ref{fig::boundary_levellines_reversibility_right_twoleft}(a).
\begin{enumerate}
\item [(a)] By Lemma \ref{lem::boundary_levellines_reversibility_right}, we know that $\tilde{\gamma}'$ and $\tilde{\gamma}$ are equal.
\item [(b)] By Corollary \ref{cor::boundary_levellines_monotonicity_heightvarying}, we know that $\tilde{\gamma}_{12}$ stays to the left of $\tilde{\gamma}$.
\item [(c)] Define $\tilde{E}_{12}$ to be the event that $\tilde{\gamma}_{12}$ does not hit the interval $(\tilde{x}^R_2/2,2\tilde{x}^R_r)$ and that $\tilde{\gamma}_{12}[0,1]$ does not hit the interval $(\tilde{x}^R_r,\infty)$. The conditions
    \[b_1+u_1>-\lambda,\quad b_r+u_2>-\lambda\] guarantee that $\tilde{E}_{12}$ has positive probability. On $\tilde{E}_{12}$, define $\tilde{w}_1$ to be the last point of $\tilde{\gamma}_{12}$ that is contained in $[0,\tilde{x}^R_2)$; define $\tilde{w}_2$ to be the last point of $\tilde{\gamma}_{12}|_{(1,\infty)}$ that is contained in $\tilde{\gamma}_{12}[0,1]$ if $u_2<u_1$ and $\tilde{w}_2$ to be $\tilde{\gamma}_{12}(1)$ if $u_2>u_1$; define $\tilde{w}_3$ to be the first point of $\tilde{\gamma}_{12}$ that is contained in $(\tilde{x}^R_r,\infty]$. On $\tilde{E}_{12}$, define $C$ to be the connected component of $\HH\setminus\tilde{\gamma}_{12}$ that has $(\tilde{x}^R_2/2,2\tilde{x}^R_r)$ on the boundary, and define $\psi$ to be the conformal map from $C$ onto $\HH$ that sends $\tilde{w}_1$ to 0, $\tilde{w}_2$ to $x^L_2$, and $\tilde{w}_3$ to $\infty$.
\end{enumerate}
Combining these three facts, we know that, given $\tilde{\gamma}_{12}$ and on $\tilde{E}_{12}$, the paths $\psi(\tilde{\gamma}|_C)$ and $\psi(\tilde{\gamma}'|_C)$ are equal. For $\delta>0$, define
\[
\tilde{F}_{\delta}=\bigcap_{j=2}^r\{\psi(\tilde{x}^R_j)\in (x^R_j-\delta,x^R_j+\delta)\}.
\]
We can properly choose the sequence $\underline{\tilde{x}}^R$ so that $\tilde{E}_{12}\cap\tilde{F}_{\delta}$ has positive probability. From the above analysis, we know that, given $\tilde{\gamma}_{12}$ and on $\tilde{E}_{12}\cap\tilde{F}_{\delta}$, the level line of $\tilde{h}|_C\circ\psi^{-1}$ starting from 0 targeted at $\infty$ coincides with the level line of $-\tilde{h}|_C\circ\psi^{-1}$ starting from $\infty$ targeted at 0.  Note that, on $\tilde{E}_{12}\cap\tilde{F}_{\delta}$, the boundary value of $\tilde{h}|_C\circ\psi^{-1}$ is $-a_2$ on $(-\infty,x^L_2)$, is $-a_1$ on $(x^L_2,0)$, is $b_j$ on $(\psi(\tilde{x}^R_j),\psi(\tilde{x}^R_{j+1}))$ for $j=1,...,r$ where $\psi(\tilde{x}^R_j)\in (x^R_j-\delta,x^R_j+\delta)$. By a similar analysis as in Remark \ref{rem::boundary_levellines_reversibility_right_conditioningargument}, we obtain the conclusion.
\smallbreak
\textit{Next}, we assume that $a_1\ge\lambda$ and $a_2\in(-\lambda,\lambda)$.
Suppose that
\[\underline{\tilde{x}}^R=(0=\tilde{x}^R_1<\tilde{x}^R_2<\cdots<\tilde{x}^R_r<\tilde{x}^R_{r+1}=\infty)\]
is a sequence of points along $\partial\HH$. Suppose that $\tilde{h}$ is a GFF on $\HH$ whose boundary value is $-\lambda$ on $(-\infty,x^L_2)$, is $-a_1$ on $(x^L_2,0)$, and is $b_j$ on $(\tilde{x}^R_j,\tilde{x}^R_{j+1})$ for $j=1,...,r$. Let $\tilde{\gamma}$ be the level line of $\tilde{h}$ starting from 0 targeted at $\infty$ and $\tilde{\gamma}'$ be the level line of $-\tilde{h}$ starting from $\infty$ targeted at 0. Set
\[u=-\lambda-a_2\in (-2\lambda,0).\]
Let $\tilde{\gamma}'_u$ be the level line of $-\tilde{h}$ with height $u$ starting from $\infty$ targeted at $x^L_2$. We summarize the relations between $\tilde{\gamma}$, $\tilde{\gamma}'$, and $\tilde{\gamma}'_u$ in the following. See Figure \ref{fig::boundary_levellines_reversibility_right_twoleft}(b).
\begin{enumerate}
\item [(a)] By Remark \ref{rem::boundary_levellines_reversibility_right}, we know that $\tilde{\gamma}'$ and $\tilde{\gamma}$ are equal.
\item [(b)] By Theorem \ref{thm::boundary_levelline_gff_interacting}, we know that $\tilde{\gamma}'_u$ stays to the left of $\tilde{\gamma}'$.
\item [(c)] Define $\tilde{E}'_u$ to be the event that $\tilde{\gamma}'_u$ reaches $x^L_2$ before hits its continuation threshold and that $\tilde{\gamma}'_u$ does not hit the interval $(x^L_2/2,2\tilde{x}^R_r)$. The conditions
    \[\lambda+u>-\lambda,\quad -b_r+u<\lambda\] guarantee that $\tilde{E}'_u$ has positive probability. On $\tilde{E}'_u$, define $C$ to be the connected component of $\HH\setminus\tilde{\gamma}_u'$ that has $(x^L_2/2,2\tilde{x}^R_r)$ on the boundary, define $\tilde{w}_1$ to be the last point of $\tilde{\gamma}_u'$ that is contained in $(\tilde{x}^R_r,\infty]$ and $\tilde{w}_2$ to be the first point of $\tilde{\gamma}_u'$ that is contained in $(0,x^L_2]$, and let $\psi$ be the conformal map from $C$ onto $\HH$ that sends $0$ to $0$, $\tilde{w}_1$ to $\infty$, and $\tilde{w}_2$ to $x^L_2$.
\end{enumerate}
Combining these three facts, we know that, given $\tilde{\gamma}'_u$ and on $\tilde{E}'_u$, the paths $\psi(\tilde{\gamma}'|_C)$ and $\psi(\tilde{\gamma}|_C)$ are equal.
For $\delta>0$, define
\[
\tilde{F}_{\delta}=\bigcap_{j=2}^r\{\psi(\tilde{x}^R_j)\in (x^R_j-\delta,x^R_j+\delta)\}.
\]
We can properly choose the sequence $\underline{\tilde{x}}^R$ so that $\tilde{E}_u'\cap\tilde{F}_{\delta}$ has positive probability. From the above analysis, we know that, given $\tilde{\gamma}'_u$ and on $\tilde{E}'_u\cap\tilde{F}_{\delta}$, the level line of $\tilde{h}|_C\circ\psi^{-1}$ starting from 0 targeted at $\infty$ coincides with the level line of $-\tilde{h}|_C\circ\psi^{-1}$ starting from $\infty$ targeted at 0.  Note that, on $\tilde{E}'_u\cap\tilde{F}_{\delta}$, the boundary value of $\tilde{h}|_C\circ\psi^{-1}$ is $-a_2$ on $(-\infty,x^L_2)$, is $-a_1$ on $(x^L_2,0)$, is $b_j$ on $(\psi(\tilde{x}^R_j),\psi(\tilde{x}^R_{j+1}))$ for $j=1,...,r$ where $\psi(\tilde{x}^R_j)\in (x^R_j-\delta,x^R_j+\delta)$. By a similar analysis as in Remark \ref{rem::boundary_levellines_reversibility_right_conditioningargument}, we obtain the conclusion.
\end{proof}

\begin{figure}[ht!]
\begin{center}
\includegraphics[width=0.325\textwidth]{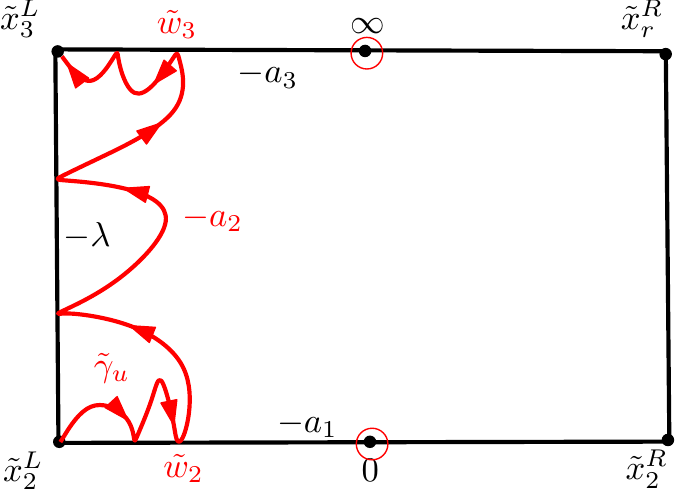}
\end{center}
\caption{\label{fig::boundary_levellines_reversibility_leftright} The explanation of the behaviour of the paths in the proof of Lemma \ref{lem::boundary_levellines_reversibility_right_threeleft}.}
\end{figure}

\begin{figure}[ht!]
\begin{subfigure}[b]{0.3\textwidth}
\begin{center}
\includegraphics[width=\textwidth]{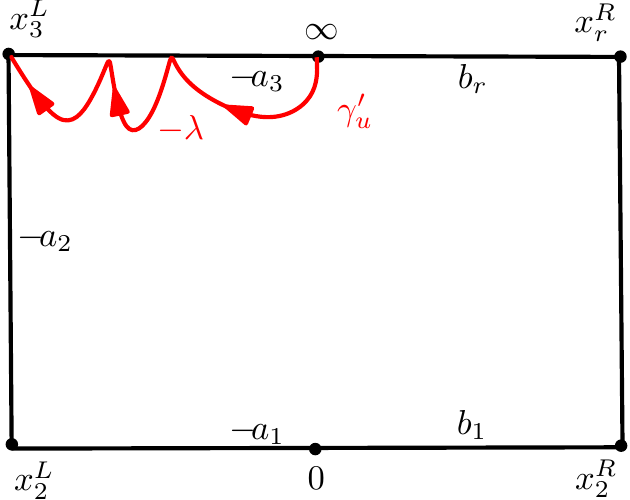}
\end{center}
\caption{$\gamma_u'$ may not hit $[x^L_2,0]$.}
\end{subfigure}
$\quad$
\begin{subfigure}[b]{0.3\textwidth}
\begin{center}\includegraphics[width=\textwidth]{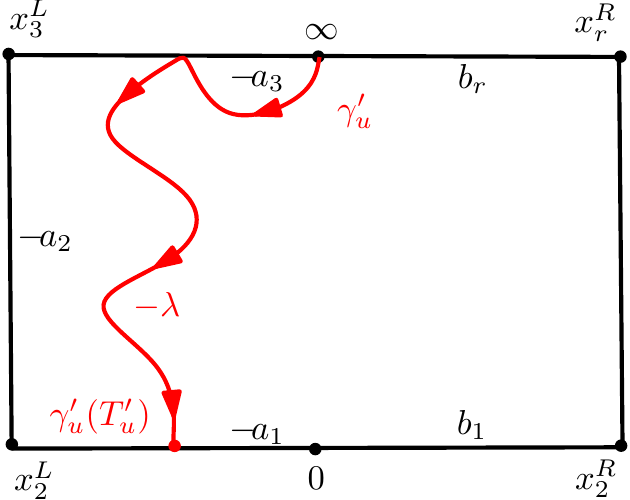}
\end{center}
\caption{$\gamma_u'$ may hit $[x^L_2,0]$.}
\end{subfigure}
$\quad$
\begin{subfigure}[b]{0.3\textwidth}
\begin{center}\includegraphics[width=\textwidth]{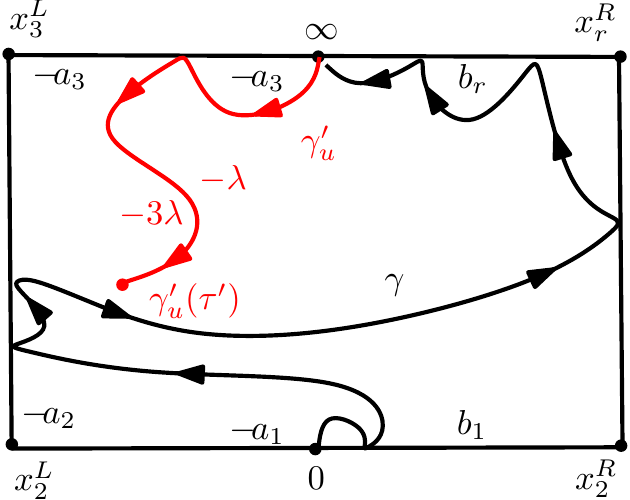}
\end{center}
\caption{$\gamma_u'$ stays to the left of $\gamma$.}
\end{subfigure}
\caption{\label{fig::boundary_levellines_reversibility_right_threeleft} The explanation of the behaviour of paths in the proof of Lemma \ref{lem::boundary_levellines_reversibility_right_threeleft}.}
\end{figure}

\begin{lemma}\label{lem::boundary_levellines_reversibility_right_threeleft}
The conclusion in Proposition \ref{prop::boundary_levellines_reversibility_leftright} holds when $l=3$.
\end{lemma}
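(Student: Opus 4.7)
The plan is to reduce the $l=3$ case to the $l=2$ case of Lemma \ref{lem::boundary_levellines_reversibility_right_twoleft} by the same conditioning strategy used there to pass from $l=1$ to $l=2$: introduce a single auxiliary level line that, once conditioned on, produces the extra left force point on the remaining (conformal image of the) domain.

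First I would dispose of several sub-configurations by the absolute-continuity trick used in the second step of Lemma \ref{lem::boundary_levellines_reversibility_right}. If some interior $a_j \ge \lambda$, then $\gamma$ and $\gamma'$ stay a positive distance from the corresponding boundary interval almost surely, so I can replace the boundary value $-a_j$ by $-a_{j\pm 1}$ on that interval, reducing the number of boundary jumps; this leaves me to treat configurations where at least one $a_j \in (-\lambda,\lambda)$ and the extreme value $a_3$ is either itself in $(-\lambda,\lambda)$ or $\ge \lambda$.

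For the main reduction, assume for concreteness that $a_3 \in (-\lambda,\lambda)$. I would take an auxiliary $\GFF$ $\tilde h$ on $\HH$ whose right-boundary data is a discretisation $\underline{\tilde x}^R$ of that of $h$ and whose left-boundary data has only two left force points: $-\lambda$ on $(-\infty, \tilde x^L_2)$ and the original $-a_1, -a_2$ on $(\tilde x^L_2, 0)$. Let $\tilde\gamma, \tilde\gamma'$ be the level lines of $\pm\tilde h$; they coincide by Lemma \ref{lem::boundary_levellines_reversibility_right_twoleft}. Introduce the auxiliary level line $\tilde\gamma_u'$ of $-\tilde h$ of height $u = -\lambda - a_3 \in (-2\lambda, 0)$ starting at $\infty$ and targeted at $\tilde x^L_2$. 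By Theorem \ref{thm::boundary_levelline_gff_interacting}, $\tilde\gamma_u'$ stays to the left of $\tilde\gamma'$. Let $\tilde E'_u$ be the event that $\tilde\gamma_u'$ reaches $\tilde x^L_2$ before hitting its continuation threshold and avoids a neighbourhood of $\R_+$; the conditions $-\lambda+u \in (-\lambda,\lambda)$ and $-b_j+u < \lambda$ on the right ensure $\PP[\tilde E'_u] > 0$. On $\tilde E'_u$, let $C$ be the component of $\HH \setminus \tilde\gamma_u'$ containing $\R_+$ and $\psi\colon C \to \HH$ a conformal map fixing $0$ and $\infty$ and sending the first/last hits of $\tilde\gamma_u'$ on $(0,\tilde x^L_2]$ and $(\tilde x^L_2,\infty]$ to $x^L_2$ and $x^L_3$. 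Given $\tilde\gamma_u'$ and on $\tilde E'_u$, the field $\tilde h|_C \circ \psi^{-1}$ is a $\GFF$ with exactly the boundary data prescribed in the lemma (three effective left force points at $0, x^L_2, x^L_3$), and the conformal images $\psi(\tilde\gamma|_C)$ and $\psi(\tilde\gamma'|_C)$ are equal. Choosing $\underline{\tilde x}^R$ so that the subevent on which $\psi(\tilde x^R_j)\in (x^R_j-\delta, x^R_j+\delta)$ for every $j$ has positive probability, and applying the density argument from Remark \ref{rem::boundary_levellines_reversibility_right_conditioningargument}, yields the result. The symmetric cases (e.g.\ $a_3 \ge \lambda$ but $a_1 \in (-\lambda,\lambda)$, or only $a_2 \in (-\lambda,\lambda)$) are handled by the same template with the auxiliary line targeted at $\tilde x^L_3$ instead, or by combining the absolute-continuity reduction with Lemma \ref{lem::boundary_levellines_reversibility_right_twoleft}.

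The main obstacle will be the case analysis: in each sub-configuration the auxiliary line $\tilde\gamma_u'$ must be set up so that (i) its continuation threshold is not hit before it reaches its target, (ii) it avoids the finite collection of right force points that play a role in the conformal-map argument, and (iii) the resulting boundary values on the two sides of $\tilde\gamma_u'$ are compatible with the height $u$ and with the prescribed $-a_1,-a_2,-a_3$ after pushing forward by $\psi$. Tracking these constraints via Proposition \ref{prop::dubedat_lemma15} and Lemma \ref{lem::boundary_levellines_monotonicity_reverse_threshold} in each case is the bookkeeping core of the argument, but the geometric idea in every case is the single one above, precisely as in Lemma \ref{lem::boundary_levellines_reversibility_right_twoleft}.
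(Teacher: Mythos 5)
Your opening reduction (dispose of $a_2\ge\lambda$ by the absolute-continuity argument of the second step of Lemma~\ref{lem::boundary_levellines_reversibility_right}) is exactly the paper's Case~1, and your ``main reduction'' is in the same spirit as the paper's Case~2, so the overall strategy is on the right track. But the proposal has a genuine gap: your single template does not cover what the paper calls Case~3, namely $a_2\in(-\lambda,\lambda)$ with $a_1,a_3\ge 2\lambda+a_2$, and the hand-wave ``symmetric cases are handled by the same template\dots or by combining the absolute-continuity reduction'' does not close it.

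Concretely, the height you assign to the auxiliary curve is $u=-\lambda-a_3$, and for a level line of $-\tilde h$ of height $u$ to emanate from $\infty$ (and stay in the domain long enough to create a new left interval) you need $u>-2\lambda$, i.e.\ $a_3<\lambda$. In the paper's Case~3 one has $a_3\ge 2\lambda+a_2>\lambda$, so $u\le -2\lambda$ and your auxiliary curve simply does not exist. Nor can you pass this case to the absolute-continuity reduction: that only removes an interior interval whose boundary value is $\le-\lambda$, i.e.\ it needs $a_2\ge\lambda$, which fails here. Nor does ``targeting $\tilde x^L_3$ instead'' help: the interval that the auxiliary curve creates after uniformisation must carry the value $-a_3$, and that forces the same height $u=-\lambda-a_3$, which is still $\le-2\lambda$. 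The paper's Case~3 requires a fundamentally different device: it works with the \emph{original} field $h$ (not an auxiliary $\tilde h$), takes the level line $\gamma'_u$ of $-h$ of the threshold height $u=-2\lambda$ from $\infty$ targeted at $x^L_3$, \emph{allows} this curve to hit $[x^L_2,0]$, proves directly via the forbidden-interval lemma that $\gamma'_u[0,T'_u]$ stays to the left of $\gamma$, and then splits on whether $\gamma'_u$ reaches $x^L_3$ cleanly or hits $[x^L_2,0]$ first, reducing to Case~2 in one branch and to Lemma~\ref{lem::boundary_levellines_reversibility_right_twoleft} in the other. None of that is in your proposal.

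A smaller issue worth flagging: your description of $\tilde h$ (``only two left force points: $-\lambda$ on $(-\infty,\tilde x^L_2)$ and the original $-a_1,-a_2$ on $(\tilde x^L_2,0)$'') is internally inconsistent --- with two distinct values $-a_1,-a_2$ on $(\tilde x^L_2,0)$ the field has three left intervals, so $l=3$, and Lemma~\ref{lem::boundary_levellines_reversibility_right_twoleft} does not apply directly to $\tilde h$; you must first invoke the absolute-continuity step to collapse the $-\lambda$ interval. Finally, even where your template applies it covers $a_3<\lambda$, while the paper's Case~2 covers $a_1<a_2+2\lambda$; these are incomparable subsets, and writing the proof cleanly requires recognising that the real dichotomy is between ``some adjacent pair of $a_j$'s differs by less than $2\lambda$'' (Case~2-type argument with the auxiliary curve from $\tilde x^L_2$ to $\tilde x^L_3$) and ``both adjacent differences are $\ge 2\lambda$'' (Case~3), which is precisely what the paper isolates.
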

\begin{proof} We divide the proof into three cases according to the boundary value: Case 1. $a_2\ge\lambda$; Case 2. $a_2\in (-\lambda,\lambda), a_1<a_2+2\lambda$; Case 3. $a_2\in (-\lambda,\lambda)$ and $a_1,a_3\ge 2\lambda+a_2$. We treat these three cases separately.
\smallbreak
\textit{Case 1.} We assume that $a_2\ge\lambda$. By a similar proof as the second step in the proof of Lemma \ref{lem::boundary_levellines_reversibility_right}, we have that the conclusion holds by Lemma \ref{lem::boundary_levellines_reversibility_right_twoleft}.
\smallbreak
\textit{Case 2.} We assume that $a_2\in (-\lambda,\lambda)$ and $a_1<a_2+2\lambda$. Suppose that
\[
\underline{\tilde{x}}^L=(\tilde{x}^L_1=0>\tilde{x}^L_2>\tilde{x}^L_3>\tilde{x}^L_{4}=-\infty),\quad
\underline{\tilde{x}}^R=(\tilde{x}^R_1=0<\tilde{x}^R_2<\cdots<\tilde{x}^R_r<\tilde{x}^R_{r+1}=\infty)\]
are two sequences of points along $\partial\HH$. Suppose that $\tilde{h}$ is a GFF on $\HH$ whose boundary value is
\[-a_3\quad \text{on }(-\infty,\tilde{x}^L_3),\quad -\lambda \quad \text{on }(\tilde{x}^L_3,\tilde{x}^L_2),\quad -a_1\quad \text{on }(\tilde{x}^L_2,0),\]
and is
\[ b_j \quad \text{on }(\tilde{x}^R_{j},\tilde{x}^R_{j+1}),\quad \text{for }j=1,...,r.\]
Let $\tilde{\gamma}$ be the level line of $\tilde{h}$ starting from 0 targeted at $\infty$ and $\tilde{\gamma}'$ be the level line of $-\tilde{h}$ starting from $\infty$ targeted at 0. Set
\[u=\lambda+a_2\in (0, 2\lambda).\]
Let $\tilde{\gamma}_u$ be the level line of $\tilde{h}$ with height $u$ starting from $\tilde{x}^L_2$ targeted at $\tilde{x}^L_3$. See Figure \ref{fig::boundary_levellines_reversibility_leftright}. We summarize the relations between $\tilde{\gamma}$, $\tilde{\gamma}'$, and $\tilde{\gamma}'_u$ in the following.
\begin{enumerate}
\item [(a)] By Case 1, we have that $\tilde{\gamma}$ and $\tilde{\gamma}'$ are equal.
\item [(b)] By Theorem \ref{thm::boundary_levelline_gff_interacting}, we have that $\tilde{\gamma}$ stays to the right of $\tilde{\gamma}_u$.
\item [(c)] Define $\tilde{E}_u$ to be the event that $\tilde{\gamma}_u$ reaches $\tilde{x}^L_3$ before hits the continuation threshold and that $\tilde{\gamma}_u$ does not hit the union of the intervals $(-\infty,\tilde{x}^L_3-1)\cup(\tilde{x}^L_2+1,0)\cup [0,\infty]$. The conditions
    \[-\lambda+u<\lambda,\quad -a_1+u>-\lambda\] guarantee that $\tilde{E}_u$ has positive probability. On $\tilde{E}_u$, define $C$ to be the connected component of $\HH\setminus\tilde{\gamma}_u$ that has $\R_+$ on the boundary, define $\tilde{w}_2$ to be the last point of $\tilde{\gamma}_u$ that is contained in $[\tilde{x}^L_2,0)$ and $\tilde{w}_3$ to be the first point of $\tilde{\gamma}_u$ that is contained in $(-\infty,\tilde{x}^L_3]$, and let $\psi$ be any conformal map from $C$ onto $\HH$ that sends $0$ to $0$, $\infty$ to $\infty$.
\end{enumerate}
Combining these three facts, we know that, given $\tilde{\gamma}_u$ and on $\tilde{E}_u$, the paths $\psi(\tilde{\gamma}'|_C)$ and $\psi(\tilde{\gamma}|_C)$ are equal.
For $\delta>0$, define
\[
\tilde{F}_{\delta}=\{\psi(\tilde{w}_2)\in (x^L_2-\delta,x^L_2+\delta)\}
\bigcap\{\psi(\tilde{w}_3)\in (x^L_3-\delta,x^L_3+\delta)\}
\bigcap_{j=2}^r\{\psi(\tilde{x}^R_j)\in (x^R_j-\delta,x^R_j+\delta)\}.\]
We can properly choose the sequences $\underline{\tilde{x}}^L, \underline{\tilde{x}}^R$ so that $\tilde{E}_u\cap\tilde{F}_{\delta}$ has positive probability. From the above analysis, we know that, given $\tilde{\gamma}_u$ and on $\tilde{E}_u\cap\tilde{F}_{\delta}$, the level line of $\tilde{h}|_C\circ\psi^{-1}$ starting from 0 targeted at $\infty$ coincides with the level line of $-\tilde{h}|_C\circ\psi^{-1}$ starting from $\infty$ targeted at 0.  Note that, on $\tilde{E}_u\cap\tilde{F}_{\delta}$, the boundary value of $\tilde{h}|_C\circ\psi^{-1}$ is $-a_1$ on $(\psi(\tilde{w}_2),0)$, is $-a_2$ on $(\psi(\tilde{w}_3),\psi(\tilde{w}_2))$, is $-a_3$ on $(-\infty,\psi(\tilde{w}_3))$, and is $b_j$ on $(\psi(\tilde{x}^R_j),\psi(\tilde{x}^R_{j+1}))$ for $j=1,...,r$ where $\psi(\tilde{w}_2)\in (x^L_2-\delta,x^L_2+\delta)$, $\psi(\tilde{w}_3)\in (x^L_3-\delta,x^L_3+\delta)$ and $\psi(\tilde{x}^R_j)\in (x^R_j-\delta,x^R_j+\delta)$. By a similar analysis as in Remark \ref{rem::boundary_levellines_reversibility_right_conditioningargument}, we obtain the conclusion.
\smallbreak
\textit{Case 3.} We assume that  $a_2\in (-\lambda,\lambda), a_1\ge2\lambda+a_2, a_3\ge2\lambda+a_2$.
Set $u=-2\lambda$ and let $\gamma_u'$ be the level line of $-h$ with height $u$ starting from $\infty$ targeted at $x^L_3$.
\smallbreak
\textit{First}, we analyze the behavior of $\gamma_u'$. The conditions
\[a_3+u>-\lambda,\quad -b_r+u<-\lambda,\]
guarantee the existence of $\gamma'_u$. The conditions
\[a_2+u<-\lambda,\quad -b_j+u<-\lambda,\quad \text{for }j=1,...,r,\]
guarantee that $\gamma'_u$ can not hit the union $(x^L_3,x^L_2)\cup(0,\infty)$. There are two possibilities: either $\gamma'_u$ reaches $x^L_3$ without hitting $[x^L_2,0]$, see Figure \ref{fig::boundary_levellines_reversibility_right_threeleft}(a); or $\gamma'_u$ hits $[x^L_2,0]$, see Figure \ref{fig::boundary_levellines_reversibility_right_threeleft}(b). In the former case, define $T'_u$ to be $\infty$; and in the latter case, define $T_u'$ to be the first time that $\gamma_u'$ hits $[x^L_2,0]$.
\smallbreak
\textit{Second}, we argue that $\gamma'_u[0,T'_u]$ stays to the left of $\gamma$. Suppose that $\tau'$ is any $\gamma_u'$-stopping time such that $\tau'<T_u'$. Given $\gamma_u'[0,\tau']$, the conditional law of $\gamma$ is the same as the level line of $h$ restricted to $\HH\setminus\gamma_u'[0,\tau']$ whose boundary value is as depicted in Figure \ref{fig::boundary_levellines_reversibility_right_threeleft}(c). Therefore, given $\gamma_u'[0,\tau']$, the path $\gamma$ can not hit the union $(-\infty,x^L_3)\cup\gamma_u'[0,\tau']$. This implies that $\gamma_u'[0,\tau']$ stays to the left of $\gamma$. This holds for any $\tau'<T_u'$. By the continuity of $\gamma_u'$ and $\gamma$, we know that $\gamma_u'[0,T_u']$ stays to the left of $\gamma$.
\smallbreak
\textit{Finally}, we show that $\gamma'$ and $\gamma$ are equal.
We summarize the relation between $\gamma,\gamma'$ and $\gamma'_u$ as follows.
\begin{enumerate}
\item [(a)] By Theorem \ref{thm::boundary_levelline_gff_interacting}, we have that $\gamma'$ stays to the right of $\gamma'_u$.
\item [(b)] By the above analysis, we have that $\gamma$ stays to the right of $\gamma'_u$.
\item [(c)] Define $C$ to be the connected component of $\HH\setminus\gamma'_u$ that has $\R_+$ on the boundary. Given $\gamma'_u$, the boundary value of $h|_C$ is as depicted in Figure \ref{fig::boundary_levellines_reversibility_right_threeleft}(a) and (b).
\end{enumerate}
Combining these three facts, given $\gamma'_u$, in the case that $\gamma'_u$ does not hit $[x^L_2,0]$ (Figure \ref{fig::boundary_levellines_reversibility_right_threeleft}(a)), the paths $\gamma'$ and $\gamma$ are equal by Case 2.; in the case that $\gamma'_u$ hits $[x^L_2,0]$ (Figure \ref{fig::boundary_levellines_reversibility_right_threeleft}(b)), the paths $\gamma'$ and $\gamma$ are equal by Lemma \ref{lem::boundary_levellines_reversibility_right_twoleft}. This completes the proof.
\end{proof}

\begin{proof}[Proof of Proposition \ref{prop::boundary_levellines_reversibility_leftright}]
We will prove by induction on $l$. Lemmas \ref{lem::boundary_levellines_reversibility_right_oneleft}, \ref{lem::boundary_levellines_reversibility_right_twoleft} and \ref{lem::boundary_levellines_reversibility_right_threeleft} imply that the conclusion holds for $l\le 3$. Assume that the conclusion holds for $l\le m$ for some $m\ge 3$, we will show that the conclusion holds for $l=m+1$.
\smallbreak
\textit{First}, we assume that there exists some $j\in\{2,...,m\}$ such that $a_j\ge\lambda$. By a similar proof as the second step in the proof of Lemma \ref{lem::boundary_levellines_reversibility_right}, we have that the conclusion holds by induction hypothesis.
\smallbreak
\textit{Second}, we assume that there exists $j_0\in\{1,...,m\}$ such that
\[|a_{j_0}-a_{j_0+1}|<2\lambda.\]
By a similar proof of Case 2 in the proof of Lemma \ref{lem::boundary_levellines_reversibility_right_threeleft}, we have that the conclusion holds.
\smallbreak
\textit{Finally}, we point out that, if we are not in the case of the first step, then we have that $a_2, a_3\in (-\lambda,\lambda)$ which implies that $|a_2-a_3|<2\lambda$, therefore we are in the case of the second step. Thus, the above two steps address all cases and complete the proof.
\end{proof}

\begin{proof}[Proof of Theorem \ref{thm::sle_chordal_reversibility}]
Theorem \ref{thm::sle_chordal_reversibility} is a direct consequence of Proposition \ref{prop::boundary_levellines_reversibility_leftright}.
\end{proof}

\begin{proof}[Proof of Theorem \ref{thm::boundary_levellines_targetindependence}]
\textit{First}, we assume that the boundary value of $h$ is at most $-\lambda$ on $(y_2,x)$, is at least $\lambda$ on $(x,y_1)$, and is in $(-\lambda,\lambda)$ on $(y_1,\infty]\cup[\infty,y_2)$. For $i=1,2$, let $\gamma'_i$ be the level line of $-h$ starting from $y_i$ targeted at $x$. We summarize the relations between $\gamma_1,\gamma_2,\gamma_1'$ and $\gamma_2'$ in the following.
\begin{enumerate}
\item [(a)] By Lemma \ref{lem::boundary_levellines_reversibility_right}, we have that $\gamma_1'$ and $\gamma_1$ are equal and that $\gamma_2'$ and $\gamma_2$ are equal.
\item [(b)] By Lemma \ref{lem::boundary_levellines_coincide_threshold}, we have that, given $\gamma_1$, the path $\gamma_2'$ first hits $\gamma_1$ at $\gamma_1(T_1)$ and merges with $\gamma_1$ afterwards.
\end{enumerate}
Combining these two facts, we obtain the conclusion.
\smallbreak
\textit{Next}, for the general boundary value, the conclusion can be proved by similar proof as the proofs of Lemmas \ref{lem::boundary_levellines_monotonicity_threshold_bottom_general} and \ref{lem::boundary_levellines_monotonicity_threshold_bottomleftright}.
\end{proof}

\section{The coupling between $\GFF$ and $\CLE_4$ with time parameter}\label{sec::interior_levellines}
\subsection{Radial $\SLE$}\label{subsec::radial_sle}
%\subsubsection{Radial $\SLE_{\kappa}$, $\SLE_{\kappa}(\rho)$, $\SLE_{\kappa}(\rho^L;\rho^R)$}
The radial Loewner equation describes the evolution of hulls growing from the boundary of the unit disc $\U$ towards the origin. Suppose that $(W_{t}, t\ge 0)$ is a continuous function from $[0,\infty)$ to $\partial\U$. For each $z\in\overline{\U}$, define the function $g_{t}(z)$ to be the solution to \textit{Radial Loewner Equation}
\[\partial_{t}g_{t}(z)=g_{t}(z)\frac{W_{t}+g_{t}(z)}{W_{t}-g_{t}(z)}, \quad g_{0}(z)=z.\]
This is well-defined as long as $W_{t}-g_{t}(z)$ does not hit $0$. Define
\[T(z)=\sup\{t>0: \min_{s\in[0,t]}|W_{s}-g_{s}(z)|>0\}.\]
This is the largest time up to which $g_{t}(z)$ is well-defined.
Set
\[K_{t}=\{z\in\overline{\U}: T(z)\le t\},\quad U_{t}=\U\setminus K_{t}. \]
Then $g_{t}$ is the unique conformal map from $U_{t}$ onto $\mathbb{U}$ such that $g_{t}(0)=0$ and $g'_{t}(0)>0$. In fact, $g'_t(0)=e^t$.
In other words, the time is parameterized by minus the log conformal radius of $U_{t}$ seen from the origin.

The family $(K_{t}, t\ge 0)$ is called the \textbf{radial Loewner chain} driven by $(W_{t}, t\ge 0)$.
\textbf{Radial $\SLE_{\kappa}$} for $\kappa\ge 0$ is the radial Loewner chain driven by $W_{t}=\exp(i\sqrt{\kappa}B_{t})$ where $B$ is one-dimensional Brownian motion.

\begin{proposition}
For all $\kappa\in[0,4]$, radial $\SLE_{\kappa}$ is almost surely generated by a simple continuous curve $\gamma$, i.e. there exists a simple continuous curve $\gamma$ such that $K_{t}=\gamma[0,t]$ for all $t\ge 0$.
\end{proposition}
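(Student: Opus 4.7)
The plan is to reduce the statement to the chordal analogue (recalled earlier in the paper) via a local absolute continuity argument between radial and chordal $\SLE_\kappa$. Fix $\eps\in(0,1)$ and, noting that $g_t'(0)=e^t$, let $\tau_\eps=-\log\eps$ so that at time $\tau_\eps$ the conformal radius of $U_t$ seen from $0$ is exactly $\eps$. My first step is to show that the law of the radial driving function $(W_t,0\le t\le\tau_\eps)$ is mutually absolutely continuous with the law of a suitably transformed chordal $\SLE_\kappa$ driving function. Concretely, pick a Möbius transformation $\psi\colon\HH\to\U$ with $\psi(0)=1$ and $\psi(\infty)=0$, start a chordal $\SLE_\kappa$ curve $\gamma^\HH$ in $\HH$ from $0$ to $\infty$, and consider its image $\psi(\gamma^\HH)$ as a curve in $\U$ growing from $1$ and targeted at $0$. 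Reparameterising this image by minus the log of the conformal radius of its complement seen from $0$, a computation via It\^o's formula shows that the resulting radial driving function $\widetilde W$ satisfies an SDE of the form
\[
d\widetilde W_s = i\sqrt{\kappa}\,\widetilde W_s\,dB_s + F(s,\widetilde W_s)\,ds,
\]
where $F$ is smooth and bounded uniformly for $s\le\tau_\eps$ (the bound degenerates only as the hull approaches $0$). Applying Girsanov's theorem then yields mutual absolute continuity with the law of the standard radial driving function up to $\tau_\eps$.

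Second, by the chordal result recalled earlier in the paper, $\gamma^\HH$ is almost surely a simple continuous curve; since $\psi$ extends to a homeomorphism from $\overline{\HH}\setminus\{\infty\}$ onto $\overline{\U}\setminus\{0\}$, so is $\psi(\gamma^\HH)$, and this property is preserved under the capacity reparameterisation. Being generated by a simple continuous curve is an event measurable with respect to the driving function, so it has full probability also under the law of the standard radial driving function on $[0,\tau_\eps]$. Therefore the radial $\SLE_\kappa$ hulls $(K_t,0\le t\le\tau_\eps)$ are almost surely generated by a simple continuous curve $\gamma^\eps$.

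Finally, sending $\eps\to 0$ along $\eps_n=1/n$ gives $\tau_{\eps_n}\uparrow\infty$, and the curves $\gamma^{\eps_n}$ are consistent (any two agree on the intersection of their intervals of definition, by uniqueness of the Loewner driving function), so they concatenate into a single simple continuous curve $\gamma$ on $[0,\infty)$ satisfying $K_t=\gamma[0,t]$ for all $t\ge 0$. The hard step will be the It\^o calculation in the first paragraph: one needs to track how the chordal centred map $g_t^\HH-W_t^\HH$ transforms under post-composition with $\psi$, together with a time-dependent rotation chosen to normalise $g_t(0)=0$ and $g_t'(0)>0$, in order to verify both the form of the SDE for $\widetilde W$ and the uniform boundedness of the drift $F$ up to $\tau_\eps$; once these are established, the rest of the argument is routine.
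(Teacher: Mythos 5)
Your overall strategy---reduce to chordal $\SLE_\kappa$ via local absolute continuity and cite Rohde--Schramm for the chordal case---is exactly the standard one, and indeed the route the cited reference \cite{LawlerSchrammWernerExponent2} takes (the paper itself only gives a citation here, no proof). However, there are two concrete problems in how you set it up.

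First, the M\"obius map $\psi\colon\HH\to\U$ you want to use does not exist: a conformal automorphism of $\HH$ onto $\U$ must carry $\partial\HH$ to $\partial\U$, so $\psi(\infty)$ is forced to be a boundary point of $\U$, not the interior point $0$. The image $\psi(\gamma^\HH)$ of a chordal $\SLE_\kappa$ from $0$ to $\infty$ is therefore a chordal-type curve in $\U$ from $1$ to $\psi(\infty)\in\partial\U$, and it is \emph{not} a radial curve targeting $0$. What you actually get after reparameterising by minus-log conformal radius seen from $0$ is a radial $\SLE_\kappa(\kappa-6)$ process with force point at $\psi(\infty)$ (this is the Schramm--Wilson coordinate change, Lemma \ref{lem::sle_chordal_radial_equivalence} in the text), not a process with the SDE form you wrote. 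So the comparison you need is between radial $\SLE_\kappa$ (no force point) and radial $\SLE_\kappa(\kappa-6)$ (force point at $\psi(\infty)$), not directly between radial and chordal.

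Second, and more seriously, the claim that the drift $F$ is ``bounded uniformly for $s\le\tau_\eps$, the bound degenerating only as the hull approaches $0$'' is false. The drift of a radial $\SLE_\kappa(\rho)$ contains the term $\frac{\rho}{2}\widetilde\Psi(V_t,W_t)$, which has a pole as $W_t\to V_t$, and this has nothing to do with the hull approaching $0$. The Radon--Nikodym derivative relating $\rho=0$ to $\rho=\kappa-6$ is the explicit local martingale $M_t$ involving $|g_t(V_0)-W_t|$ recalled in Section \ref{subsec::radial_sle}; it is only bounded, hence a true martingale and usable for Girsanov, when one stops at a time $\sigma_\delta$ before $|W_t-V_t|$ drops below a fixed $\delta>0$. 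So the correct localisation is not at $\tau_\eps$ but at $\sigma_\delta\wedge\tau_\eps$. One then needs a separate argument that $\sigma_\delta\wedge\tau_\eps=\tau_\eps$ with probability tending to $1$ as $\delta\to 0$: for $\kappa\le 4$ the angle process $\theta_t=\arg W_t-\arg V_t$ satisfies $d\theta_t=\cot(\theta_t/2)\,dt+\sqrt\kappa\,dB_t$, which near $\theta=0$ behaves like a Bessel process of dimension $1+4/\kappa\ge 2$ and hence a.s. never reaches $0$ (and symmetrically at $2\pi$); this is what justifies sending $\delta\to 0$. Without this step, and with the wrong claim of a uniform drift bound, the Girsanov application as written does not go through. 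Fixing these two points recovers the intended argument, which is then essentially the proof in the cited reference.
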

\begin{proof}
\cite[Proposition 4.2]{LawlerSchrammWernerExponent2}.
\end{proof}

Define
\begin{equation}\label{eqn::radial_sle_auxiliary_definition_1}
\Psi(w,z)=-z\frac{z+w}{z-w},\quad \widetilde{\Psi}(z,w)=\frac{1}{2}(\Psi(z,w)+\Psi(1/\bar{z},w)).\end{equation}
Suppose $\kappa>0$, $\rho\in\R$, and $V_{0}\in\partial\mathbb{U}$. Radial $\SLE_{\kappa}(\rho)$ with force point $V_{0}$ is the Loewner chain driven by $W$ which is the solution to the following SDE:
\begin{equation}\label{eqn::sde_radial_sle_kapparho}
dW_{t}=\mathcal{G}(W_{t}, dB_{t}, dt)+\frac{\rho}{2}\widetilde{\Psi}(V_{t},W_{t})dt, \quad dV_{t}=\Psi(W_{t}, V_{t})dt, \end{equation}
where
\begin{equation}\label{eqn::radial_sle_auxiliary_definition_2}
\mathcal{G}(W_{t}, dB_{t}, dt)=-\frac{\kappa}{2}W_{t}dt+i\sqrt{\kappa}W_{t}dB_{t}.\end{equation}
Note that the processes $W$ and $V$ take values in $\partial\U$.
We say that the process $(W_t,V_t)$ describes the \textbf{radial $\SLE_{\kappa}(\rho)$} process with force point $V_{0}$ and weight $\rho$.

We first explain the existence of the solution $(W_t,V_t)$ to Equation (\ref{eqn::sde_radial_sle_kapparho}) for $\rho>-2$. Set $\theta_{t}=\arg W_{t}- \arg V_{t}$, and assume that $\theta_{0}\in(0,2\pi)$, then $\theta_{t}$ satisfies the SDE:
\begin{equation}\label{eqn::sde_radial_sle_argdifference}
d\theta_{t}=\frac{\rho+2}{2}\cot(\theta_{t}/2)dt+\sqrt{\kappa}dB_{t}.
\end{equation}
The process is well defined up to $T=\inf\{t\ge 0: \theta_{t}\in\{0, 2\pi\}\}$. Note that, as $\theta\rightarrow 0$, we have $\cot(\theta/2)=(2/\theta)+O(\theta)$. Hence, when $\theta_{t}$ is close to 0, the evolution of $\theta_{t}$ is absolutely continuous with respect to $\sqrt{\kappa}$ times a Bessel process of dimension $d=1+2(\rho+2)/\kappa>1$. Similarly, when $\theta_{t}$ is close to $2\pi$, the process $2\pi-\theta_{t}$ is absolutely continuous with respect to $\sqrt{\kappa}$ times a Bessel process of the same dimension.

Note that
\[dV_{t}=-V_{t}\cot(\frac{\theta_{t}}{2})dt,\quad W_{t}=e^{i\theta_{t}}V_{t}.\]
Thus the existence and uniqueness of the solution to Equation (\ref{eqn::sde_radial_sle_argdifference}) guarantees the existence and uniqueness of the solution $(W_{t}, V_{t})$ to Equation (\ref{eqn::sde_radial_sle_kapparho}).

\begin{comment}
The radial $\SLE_{\kappa}(\rho)$ process has the domain Markov property.
\begin{proposition}
Let $K_{t}$ be a radial $\SLE_{\kappa}(\rho)$ process, $(g_{t})$ be the corresponding family of conformal maps, and $(W_t,V_t)$ be the driving process.
Suppose that $\tau$ is any almost surely finite stopping time for $K_{t}$. Then $g_{\tau}(K_{t}\setminus K_{\tau})$ is a radial $\SLE_{\kappa}(\rho)$ process whose driving function $(\tilde{W}, \tilde{V})$ has initial condition $(\tilde{W}_{0}, \tilde{V}_{0})=(W_{\tau}, V_{\tau})$.
\end{proposition}
\begin{proof}
\cite[Proposition 2.2]{MillerSheffieldIG4}.
\end{proof}
\end{comment}

Next we explain the geometric meaning of the process $(W_{t}, V_{t})$: $W_{t}$ is the image of the tip of $K_{t}$ under $g_{t}$. For $V_{t}$, there are two different cases: if $V_{0}\not\in K_{t}$, then $V_{t}$ is the image of $V_{0}$ under $g_{t}$; if $V_{0}\in K_{t}$, then $V_{t}$ is the image of the last point on the boundary that $K$ hits by time $t$ under $g_{t}$. See Figure \ref{fig::radial_sle_kapparho_explanation}.

\begin{figure}[ht!]
\begin{subfigure}[b]{0.47\textwidth}
\begin{center}
\includegraphics[width=\textwidth]{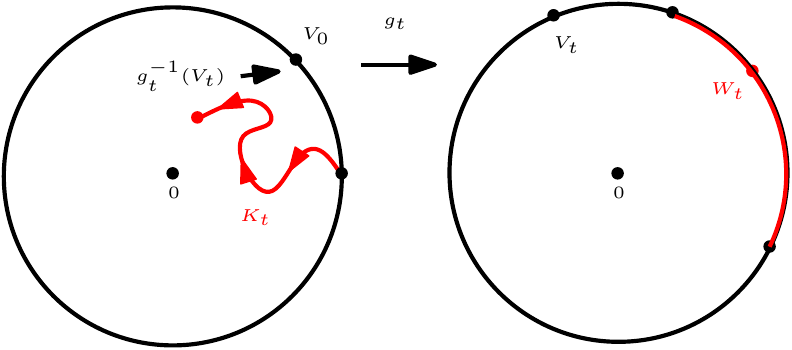}
\end{center}
\caption{If $V_0\not\in K_t$, then $V_t$ is the image of $V_0$ under $g_t$. If $\rho\ge \kappa/2-2$, this holds for all time.}
\end{subfigure}
$\quad$
\begin{subfigure}[b]{0.47\textwidth}
\begin{center}\includegraphics[width=\textwidth]{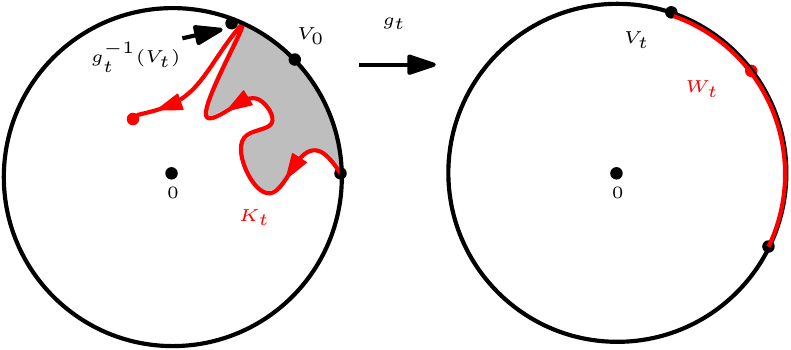}
\end{center}
\caption{If $V_0\in K_t$, then $V_t$ is the image of the last point in $\partial\U\cap K_t$ under $g_t$.}
\end{subfigure}
\caption{\label{fig::radial_sle_kapparho_explanation} The geometric meaning of $(W_t, V_t)$ in radial $\SLE_{\kappa}(\rho)$ process.}
\end{figure}

The relation between radial $\SLE_{\kappa}(\rho)$ for different $\rho$'s is as follows: Suppose that $\kappa> 0$, $\rho>-2$, $\tilde{\rho}>-2$, $V_{0}\in\partial\mathbb{U}$. Let $(K_{t}, t\ge 0)$ be the radial Loewner chain corresponding to the radial $\SLE_{\kappa}(\rho)$ process with force point $V_{0}$, and $(g_{t}, t\ge 0)$ be the corresponding family of conformal maps. Define
\[M_{t}=g_{t}'(0)^{(\tilde{\rho}-\rho)(\tilde{\rho}+\rho+4)/(8\kappa)}\times|g_{t}(V_{0})-W_{t}|^{(\tilde{\rho}-\rho)/\kappa}\times|g_{t}'(V_{0})|^{(\tilde{\rho}-\rho)(\tilde{\rho}+\rho+4-\kappa)/(4\kappa)}.\]
Then $M_{t}$ is well-defined up to the first time $W_{t}$ collides with $V_{t}$, and is a local martingale. The law of $K$ weighted by $M$ is the same as that of a radial $\SLE_{\kappa}(\widetilde{\rho})$ process with force point $V_{0}$ as long as one stops at a bounded stopping time that occurs before $W_{t}$ gets within some fixed distance of $V_{t}$.

One can also consider the radial $\SLE_{\kappa}(\rho^{L};\rho^{R})$ with two boundary force points $V_{0}^{L}, V_{0}^{R}\in\partial\mathbb{U}$.
\begin{definition}\label{def::radial_sle_twoforcepoints}
Fix $\kappa> 0$, $\rho^{L},\rho^{R}\in\mathbb{R}$, and three boundary points $V^{L}_{0}, W_{0}, V^{R}_{0}$ which are located on $\partial\mathbb{U}$ in counterclockwise order. Let $B_{t}$ be a standard Brownian motion. We will say that the process $(W_{t}, V^{L}_{t}, V^{R}_{t})$ describes a \textbf{radial $\SLE_{\kappa}(\rho^{L};\rho^{R})$ process with force points $(V_{0}^{L}; V_{0}^{R})$} if they are adapted to the filtration of $B$ and the following hold:
\begin{enumerate}
\item [(1)] The processes $W_{t}, V^{L}_{t}, V^{R}_{t}$ and $B_{t}$ satisfy the following SDE system on the time intervals on which $W_{t}$ does not collide with any of $V^{L}_{t}$, $V^{R}_{t}$.
\[dW_{t}=\mathcal{G}(W_{t}, dB_{t}, dt)+\frac{\rho^{L}}{2}\widetilde{\Psi}(V^{L}_{t},W_{t})dt+\frac{\rho^{R}}{2}\widetilde{\Psi}(V^{R}_{t},W_{t})dt\]
\[dV^{L}_{t}=\Psi(W_{t}, V^{L}_{t})dt,\quad dV^{R}_{t}=\Psi(W_{t}, V^{R}_{t})dt\]
where the functions $\Psi,\widetilde{\Psi},\mathcal{G}$ are defined as in Equations (\ref{eqn::radial_sle_auxiliary_definition_1}) and (\ref{eqn::radial_sle_auxiliary_definition_2}).
\item [(2)] We have instantaneous reflection of $W_{t}$ off of the $V^{L}_{t}$ and $V^{R}_{t}$.
\item [(3)] We also have almost surely that
\[V^{L}_{t}=V_{0}^{L}+\int_{0}^{t}\Psi(W_{s}, V^{L}_{s})ds,\quad V^{R}_{t}=V_{0}^{R}+\int_{0}^{t}\Psi(W_{s}, V^{R}_{s})ds.\]
\end{enumerate}
\end{definition}

Define the \textbf{continuation threshold} for the process $(W_{t}, V^{L}_{t}, V^{R}_{t})$:
\begin{itemize}
\item if $\rho^{L}\le -2<\rho^{R}$, the continuation threshold is the infimum of $t$ for which $W_{t}=V^{L}_{t}$;
\item if $\rho^{R}\le -2<\rho^{L}$, the continuation threshold is the infimum of $t$ for which $W_{t}=V^{R}_{t}$;
\item if $\rho^{L}\le -2, \rho^{R}\le -2$, the continuation threshold is the infimum of $t$ for which either $W_{t}=V^{L}_{t}$ or $W_{t}=V^{R}_{t}$;
\item if $\rho^{L}>-2,\rho^{R}>-2$, $\rho^{L}+\rho^{R}\le -2$, the continuation threshold is the infimum of $t$ for which $V^L_t=W_t=V^R_t$;
\item if $\rho^{L}>-2,\rho^{R}>-2,\rho^{L}+\rho^{R}>-2$, the continuation threshold is never reached.
\end{itemize}

\begin{lemma}\label{lem::radial_sle_twoforcepoints_existence}
In the setting of Definition \ref{def::radial_sle_twoforcepoints}, the joint law of $(B_{t}, W_{t}, V^{L}_{t}, V^{R}_{t})$ is uniquely determined up to the continuation threshold. Under this law, $(B_{t}, W_{t}, V^{L}_{t}, V^{R}_{t})$ is a continuous multidimensional Markovian process indexed by $t$.
\end{lemma}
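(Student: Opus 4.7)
The plan is to mirror the proof strategy used for the chordal analog (Proposition \ref{prop::chordal_sle_existence_uniqueness}), adapted to the radial setting via the angular variables $\theta^L_t = \arg W_t - \arg V^L_t$ and $\theta^R_t = \arg V^R_t - \arg W_t$, which take values in $[0,2\pi]$ with $\theta^L_t + \theta^R_t \le 2\pi$. The continuation threshold is precisely the first time that the allowed combinations of these angles hit the absorbing configurations listed above. I would proceed by splitting into three cases, according to whether the two force points are distinct from $W_0$ and from each other, whether exactly one of them equals $W_0$, or whether both are immediately adjacent to $W_0$.

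First, when $V^L_0$ and $V^R_0$ are both distinct from $W_0$, the SDE system in Definition \ref{def::radial_sle_twoforcepoints} has locally Lipschitz coefficients on the open set where $\theta^L_t, \theta^R_t >0$, so standard SDE theory gives pathwise existence and uniqueness up to the first collision time $\tau_1 := \inf\{t : \theta^L_t \wedge \theta^R_t = 0\}$. If $\tau_1$ is the continuation threshold we are done; otherwise exactly one of $W_{\tau_1} = V^L_{\tau_1}$ or $W_{\tau_1} = V^R_{\tau_1}$ occurs, and by the strong Markov property we may restart. This reduces us to the case of a single remaining active force point, where existence and uniqueness (with instantaneous reflection) follow from the Bessel SDE (\ref{eqn::sde_radial_sle_argdifference}) of dimension $d = 1 + 2(\rho+2)/\kappa > 1$. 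Alternatively one can obtain existence in the two-force-point regime up until $W$ first hits $V^R$ by a Girsanov change of measure from the radial $\SLE_\kappa(\rho^L)$ process via the local martingale $M_t$ described in Section \ref{subsec::radial_sle}; uniqueness of the joint law then follows by reversing this change of measure, exactly as in Case 2 of the chordal argument. Iterating through the finitely many alternations between force-point regimes defines the law uniquely up to the continuation threshold.

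The delicate case is when $V^L_0 = W_0$ or $V^R_0 = W_0$ (and in particular the degenerate configuration $V^L_0 = W_0 = V^R_0$), which requires an approximation argument. Here I would construct, for each $k \in \Z_{<0}$, the process $(W^{(k)}, V^{L,(k)}, V^{R,(k)})$ started from $(V^{L,(k)}_0, W_0, V^{R,(k)}_0) = (W_0 e^{-ie^k}, W_0, W_0 e^{ie^k})$ using the previous step, and identify a limit as $k \to -\infty$. Define stopping times
\[
\tau^{(k)}_n = \inf\{t \ge 0 : \theta^{L,(k)}_t = \theta^{R,(k)}_t \ge e^n\},
\]
and record the state $(W^{(k)}_{\tau^{(k)}_n}, \theta^{R,(k)}_{\tau^{(k)}_n})$. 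By the rotational invariance of the driving SDE and the domain Markov property, after time $\tau^{(k)}_n$ the rescaled (rotated) process has the same law as the process started from $(e^{-ie^n}, 1, e^{ie^n})$, independently of $k$. Exactly as in the chordal construction, this produces a universal constant $p \in (0,1)$ such that for $k<m<n$, given states at level $n$, one can couple the level-$(n{+}1)$ states to coincide with probability at least $p$. Iterating yields a coupling of the processes indexed by $k$ and $m$ such that the paths after the respective times $\tau^{(k)}_n, \tau^{(m)}_n$ agree with probability at least $1 - C_1 e^{-C_2(n-m)}$, giving a Cauchy-in-distribution sequence and hence a limit process. Its law is the unique law satisfying Definition \ref{def::radial_sle_twoforcepoints} with the degenerate initial data.

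The main obstacle will be the Cauchy coupling estimate in the last step: establishing that the universal lower bound $p$ on the coupling probability at a single scale genuinely depends only on $(\kappa, \rho^L, \rho^R)$, and then transferring it across scales via rotational invariance. This in turn requires that the angular gap processes $\theta^L_t, \theta^R_t$ be recurrent at every scale --- a fact that is delicate precisely when $\rho^L$ or $\rho^R$ is close to $-2$, because the associated Bessel dimension is then close to $1$ and the gap spends little time away from zero. The hypothesis $\rho^L, \rho^R > -2$ (so that both Bessel processes are instantaneously reflecting rather than absorbed) is exactly what makes the coupling construction work, and the final assertion of continuity and the Markov property then follows automatically from the construction.
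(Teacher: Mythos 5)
Your proposal follows the same route the paper has in mind: the paper's own proof of this lemma is a one-line pointer back to the chordal analogue (Proposition \ref{prop::chordal_sle_existence_uniqueness}, itself deferred to Miller--Sheffield), and your three-case decomposition — both force points away from $W_0$, one adjacent, both adjacent with the dyadic approximation and Cauchy-in-distribution coupling — is precisely the structure of that chordal argument transplanted to the radial setting. Two points of imprecision are worth flagging, though neither derails the argument.

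First, the ``local martingale $M_t$ described in Section \ref{subsec::radial_sle}'' that you invoke for Case 2 is the one that reweights a single-force-point radial $\SLE_\kappa(\rho)$ to $\SLE_\kappa(\tilde\rho)$ with the \emph{same} force point; it is not the object needed to \emph{introduce} a second force point at $V^R_0$ given a process with its force point stuck at $W_0$. The correct martingale for Case 2 involves $g_t'(0)$, $g_t'(V_0^R)$, $|g_t(V_0^R)-W_t|$, and $|g_t(V_0^R)-V^L_t|$ with exponents matching the corresponding chordal object, but with an extra $g_t'(0)$ factor because radial Loewner time is normalized at the origin. The Girsanov strategy is right; the specific martingale you point to is not the one that does the job.

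Second, the Cauchy coupling step in Case 3 leans on rotational invariance playing the role of the chordal scale invariance, but the angular SDE $d\theta_t = \tfrac{\rho+2}{2}\cot(\theta_t/2)\,dt + \sqrt{\kappa}\,dB_t$ is \emph{not} scale-invariant — it is only asymptotically so as $\theta\to 0$ (where $\cot(\theta/2)\sim 2/\theta$ matches the Bessel drift). Rotational invariance lets you normalize the base point so that the post-$\tau^{(k)}_n$ law depends only on the common gap value $Y^{(k)}_n$, but the single-scale coupling constant is genuinely a level-dependent $p_n$ rather than a fixed $p$. You need to argue that $p_n$ converges to the chordal constant as $n\to-\infty$ (via the absolute-continuity-with-Bessel comparison the paper sets up around Equation (\ref{eqn::sde_radial_sle_argdifference})), and hence is bounded below for $n$ small enough, to get the uniform geometric decay. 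This is a real, if minor, gap in the argument as stated, and it is also the point you yourself identify as the ``main obstacle'' — the resolution is absolute continuity to Bessel near zero, not exact scale invariance.
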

\begin{proof}
This can be proved by a similar proof as the proof of Proposition  \ref{prop::chordal_sle_existence_uniqueness}.
\begin{comment} 
We only need to point out the martingale in Case 2. Suppose that $V^L_0=W_0\neq V^R_0$ and $\rho^L>-2$. Suppose that $(W_t,V^L_t)$ describes radial $\SLE_{\kappa}(\rho^L)$ process with force point $V^L_0$. Let $(g_t,t\ge 0)$ be the corresponding sequence of the conformal maps of the radial Loewner chain. Define
\[M_t=g_t'(0)^{\rho^R(2\rho^L+\rho^R+4)/(8\kappa)}\times g_t'(V_0^R)^{\rho^R(\rho^R+4-\kappa)/(4\kappa)}\times |g_t(V_0^R)-W_t|^{\rho^R/\kappa}\times |g_t(V_0^R)-V^L_t|^{\rho^L\rho^R/(2\kappa)}.\]
The process $M$ is well-defined up to the first time $t$ that $W_t$ collides with $g_t(V^R_0)$, and $M$ is a local martingale. The law of $(W_t,V_t^L,V^R_t:=g_t(V^R_0))$ weighted by $M$ describes radial $\SLE_{\kappa}(\rho^L;\rho^R)$ process with force points $(V^L_0;V^R_0)$.
\end{comment}
\end{proof}
\begin{comment}
Next, we explain the geometric meaning of the process $(V^L_t,W_t,V^R_t)$: $W_t$ is the image of the tip of $K_t$ under $g_t$. For $V^L_t,V^R_t$, there are several different cases. We explain one possible scenario in Figure \ref{fig::radial_sle_twoforcepoints_explanation}, and leave the other cases to interested readers.

\begin{figure}[ht!]
\begin{center}
\includegraphics[width=0.47\textwidth]{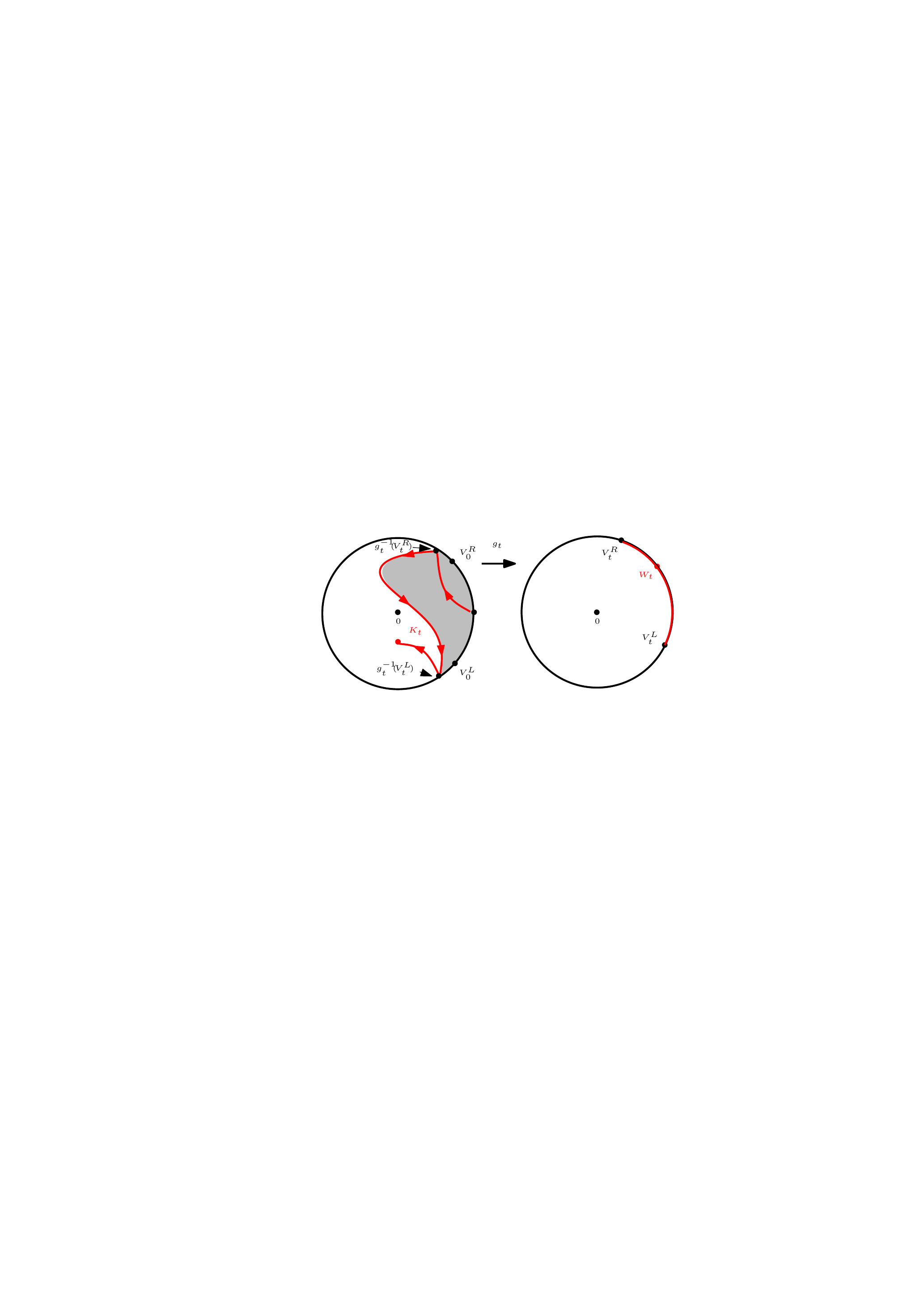}
\end{center}
\caption{\label{fig::radial_sle_twoforcepoints_explanation} The geometric meaning of $(W_t, V_t^L, V^R_t)$ in radial $\SLE_{\kappa}(\rho^L;\rho^R)$ process.}
\end{figure}
\end{comment}
%\subsubsection{Relation between radial $\SLE$ and chordal $\SLE$}

There is a close relation between radial $\SLE$ and chordal $\SLE$. To describe this relation, we need to introduce chordal $\SLE$ with interior force point. We will not address the most general case, we only
introduce the process that we will use in the current paper: chordal $\SLE$ with two boundary force points and one interior force point.

\begin{definition}
Fix $\kappa\ge 0$, $\rho^L,\rho^R,\rho^I\in\R$, and $x^L\le 0\le x^R$, $z\in\HH$. Chordal $\SLE_{\kappa}(\rho^L;\rho^R;\rho^I)$ with force points $(x^L;x^R;z)$ is the chordal Loewner chain driven by $W$ which is the solution to the following SDE:
\[dW_t=\sqrt{\kappa}dB_t+\frac{\rho^Ldt}{W_t-V^L_t}+\frac{\rho^Rdt}{W_t-V^R_t}+\Re\frac{\rho^I}{W_t-V^I_t}dt,\]
\[dV^L_t=\frac{2dt}{V^L_t-W_t},\quad dV^R_t=\frac{2dt}{V^R_t-W_t},\quad dV^I_t=\frac{2dt}{V^I_t-W_t},\quad W_0=0, V^L_0=x^L, V^R_0=x^R, V^I_0=z.\]
\end{definition}

We can define chordal $\SLE$ in the unit disc via conformal transformations. Fix four boundary points $V^L_0,W_0,V^R_0,V^{\infty}$ along $\partial\U$ which are located in counterclockwise order. Define $\phi$ to be the conformal map from $\HH$ onto $\U$ such that $\phi(0)=W_0,\phi(\infty)=V^{\infty}$.
Define chordal $\SLE_{\kappa}(\rho^L;\rho^R;\rho^I)$ in $\U$ starting from $W_0$ targeted at $V^{\infty}$ with force points $(V^L_0;V^R_0;0)$ to be the image of chordal $\SLE_{\kappa}(\rho^L;\rho^R;\rho^I)$ in $\HH$ with force points $(\phi^{-1}(V^L_0);\phi^{-1}(V^R_0); \phi^{-1}(0))$ under the conformal map $\phi$.
Chordal $\SLE$ with interior force point and radial SLE are closely related.

\begin{lemma}\label{lem::sle_chordal_radial_equivalence}
Fix $\kappa\ge 0$, $\rho^L,\rho^R,\rho^I\in\R$. Fix four boundary points $V^L_0,W_0,V^R_0,V^{\infty}$ along $\partial\U$ which are located in counterclockwise order. Let $\gamma_1$ be the chordal $\SLE_{\kappa}(\rho^L;\rho^R;\rho^I)$ process in $\U$ starting from $W_0$ targeted at $V^{\infty}$ with force points $(V^L_0;V^R_0;0)$. Define $\tau_1$ to be the first time that $V^{\infty}$ and the origin are disconnected by $\gamma_1$. Let $\gamma_2$ be the radial $\SLE_{\kappa}(\rho^L;\rho^R)$ starting from $W_0$ with force points $(V^L_0;V^R_0)$. Define $\tau_2$ be the first time that $V^{\infty}$ and the origin are disconnected by $\gamma_2$. Assume that
\[\rho^L+\rho^R+\rho^I=\kappa-6.\]
Then the path $\gamma_1$ stopped at $\tau_1$ has the same law as the path $\gamma_2$ stopped at $\tau_2$ (up to time change).
\end{lemma}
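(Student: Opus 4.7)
The plan is to compare the two laws by expressing both as Girsanov-type weightings of standard chordal $\SLE_\kappa$ in $\HH$, and then showing that the two weighting martingales coincide (up to a multiplicative constant) precisely when $\rho^L+\rho^R+\rho^I=\kappa-6$. First I would reduce everything to $\HH$ by applying $\phi^{-1}$: write $z_0=\phi^{-1}(0)\in\HH$ for the interior target and $x^q=\phi^{-1}(V^q_0)$ for $q\in\{L,R\}$ for the boundary force points. By the conformal invariance of the chordal SLE definition and of the radial SLE definition (up to the disconnection time, radial SLE is intrinsically defined by its chordal Loewner driving function in any uniformizing $\HH$-picture, via the half-plane capacity reparametrization), it suffices to compare both processes as chordal Loewner chains in $\HH$ driven from $0$ and stopped at the first time $z_0$ is separated from $\infty$.

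Next I would recall the standard chordal weighting: if $W^{(0)}_t=\sqrt{\kappa}B_t$ drives chordal $\SLE_\kappa$ in $\HH$ and $g_t$ are the chordal Loewner maps, then for any force weights $\rho^L,\rho^R,\rho^I\in\R$ the process
\[
M^{\mathrm{ch}}_t=\prod_{q\in\{L,R\}}|g_t'(x^q)|^{\rho^q(\rho^q+4-\kappa)/(4\kappa)}|g_t(x^q)-W_t|^{\rho^q/\kappa}\cdot |g_t'(z_0)|^{\alpha^I}\bigl|g_t(z_0)-W_t\bigr|^{\beta^I}(\Im g_t(z_0))^{\gamma^I}
\]
(times an explicit boundary cross-term) is a local martingale, and weighting chordal $\SLE_\kappa$ by $M^{\mathrm{ch}}_t$ yields chordal $\SLE_\kappa(\rho^L;\rho^R;\rho^I)$ up to the disconnection time; the exponents $\alpha^I,\beta^I,\gamma^I$ are determined by imposing the drift vanishing in It\^o's formula. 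In parallel, there is a classical radial-to-chordal comparison (Schramm--Wilson): when radial $\SLE_\kappa(\rho^L;\rho^R)$ targeted at the interior point (the origin of $\U$, i.e. $z_0$ in $\HH$) is reparametrized by half-plane capacity in the $\HH$-picture, the law of its chordal driving function is absolutely continuous with respect to chordal $\SLE_\kappa$, and the Radon--Nikodym derivative $M^{\mathrm{rad}}_t$ is an explicit product of the same boundary factors $|g_t(x^q)-W_t|^{\rho^q/\kappa}$ and a radial factor built from the conformal radius $\CR(\HH\setminus K_t;z_0)$, $|g_t(z_0)-W_t|$, and $\Im g_t(z_0)$.

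The main step, and the main obstacle, is the It\^o-calculus verification that $M^{\mathrm{rad}}_t$ and $M^{\mathrm{ch}}_t$ agree up to a deterministic constant exactly when $\rho^L+\rho^R+\rho^I=\kappa-6$. The boundary factors are identical by inspection, so the identity reduces to matching the interior factors, which amounts to matching the drift that radial Loewner evolution induces on the chordal driving function against the drift imposed by the interior force point. A direct computation shows that switching from chordal parametrization at $\infty$ to radial parametrization at $z_0$ introduces an extra drift $(\kappa-6)\Re\bigl(1/(W_t-g_t(z_0))\bigr)\,dt$ in the driving function, so the total interior drift seen in the chordal picture is $\rho^I+\rho^L+\rho^R-(\kappa-6)$ times $\Re\bigl(1/(W_t-g_t(z_0))\bigr)$ (with $\rho^L,\rho^R$ carried in via the boundary interaction with the interior factor in the radial martingale). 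Requiring this total to vanish is exactly the hypothesis $\rho^L+\rho^R+\rho^I=\kappa-6$.

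Once $M^{\mathrm{ch}}_t=C\cdot M^{\mathrm{rad}}_t$ as local martingales on every compact time interval $[0,\sigma]$ with $\sigma<\tau_1\wedge\tau_2$, both $\gamma_1$ and $\gamma_2$ have the same law as chordal Loewner chains when stopped at any such $\sigma$, hence (the Loewner chain being a deterministic function of its driving process) the same law as curves up to time $\sigma$. Letting $\sigma$ exhaust the disconnection time and using continuity of the Loewner chain up to the disconnection time under both laws (which follows from the continuity of $\gamma_1$ established in Theorem \ref{thm::sle_chordal_continuity_transience} and the continuity of radial $\SLE_\kappa(\rho^L;\rho^R)$, which is absolutely continuous with respect to radial $\SLE_\kappa$ away from the force points), we conclude that $\gamma_1|_{[0,\tau_1]}$ and $\gamma_2|_{[0,\tau_2]}$ have the same law up to a time change, as claimed.
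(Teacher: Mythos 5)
Your route is the same one the paper takes, but the paper simply cites it: the published proof is a one-line appeal to \cite[Theorem 3]{SchrammWilsonSLECoordinatechanges} together with Proposition \ref{prop::chordal_sle_existence_uniqueness} and Lemma \ref{lem::radial_sle_twoforcepoints_existence} to identify the two driving processes. You instead sketch a re-derivation of that coordinate-change theorem by Girsanov weighting, which is legitimate but leaves the crucial It\^o verification as ``a direct computation shows''. That computation is precisely where the content lies, and your description of it is slightly off: changing from chordal parametrization at $\infty$ to radial (conformal-radius) parametrization at $z_0$ for force-point-free $\SLE_\kappa$ does produce a drift $(\kappa-6)\Re\bigl(1/(W_t-g_t(z_0))\bigr)\,dt$, but if the boundary drifts transferred over unchanged as $\rho^q/(W_t-V^q_t)$ this would force $\rho^I=\kappa-6$, not $\rho^L+\rho^R+\rho^I=\kappa-6$. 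What actually produces the extra $\rho^L+\rho^R$ is that the radial $\widetilde\Psi$-drift from each boundary force point, when pushed to the chordal $\HH$-picture and reparametrized by half-plane capacity, splits into the usual boundary term $\rho^q/(W_t-V^q_t)$ \emph{plus} a piece proportional to $\Re\bigl(1/(W_t-g_t(z_0))\bigr)$; your parenthetical ``with $\rho^L,\rho^R$ carried in via the boundary interaction with the interior factor'' is a nod toward this, but it is the only place the matching condition can come from, so it deserves to be the centerpiece of the argument rather than an aside. In short: correct conclusion, same underlying mathematics as the cited Schramm--Wilson theorem, but the step you defer to ``a direct computation'' is the whole proof, and the decomposition of the boundary drifts is stated in a way that, read literally, would give the wrong matching condition.
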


\begin{proof}
Combining Proposition \ref{prop::chordal_sle_existence_uniqueness}, Lemma \ref{lem::radial_sle_twoforcepoints_existence} and \cite[Theorem 3]{SchrammWilsonSLECoordinatechanges}.
\end{proof}

\subsection{Conformal Loop Ensemble}\label{subsec::cle}
\subsubsection{Carath\'eodory topology}
We say that a sequence of functions $f_n$ on a domain $D$ converges to $f$ \textit{uniformly on compact sets} if, for any compact $K\subset D$, the functions $f_n$ converge to $f$ uniformly on $K$.

Suppose that $(D_n,n\ge 1)$ is a sequence of simply connected domains other than $\C$ containing the origin, and let $f_n$ be the conformal map from $\U$ onto $D_n$ such that $f_n(0)=0,f_n'(0)>0$. We define \textit{convergence in Carath\'eodory topology} as follows:
\begin{enumerate}
\item [(1)] $D_n$ converges to $\C$ if $f_n'(0)\to\infty$.
\item [(2)] $D_n$ converges to $\{0\}$ if $f_n'(0)\to 0$.
\item [(3)] $D_n$ converges to $D$ (which is simply connected other than $\C$) if $f_n$ converges to $f$ uniformly on compact sets where $f$ is the conformal transformation from $\U$ onto $D$ such that $f(0)=0,f'(0)>0$.
\end{enumerate}
If $(D_n,n\ge 1)$ is a sequence of simply connected domains other than $\C$ containing some fixed $z$. We say that $D_n$ converges to $D$ in Carath\'eodory topology seen from $z$ if $D_n-z$ converges to $D-z$ in the above sense.

\begin{comment}
\begin{proposition}\label{prop::caratheodory_kernel}
Suppose that $D_n$ and $D$ are simply connected domains other than $\C$ containing the origin. Then $D_n$ converges to $D$ in Carath\'eodory topology if and only if the following holds. For each subsequence $\{n_j\}$, call the kernel of the subsequence the largest domain $\tilde{D}$ containing the origin such that, for all compact $K\subset\tilde{D}$, we have $K\subset D_{n_j}$ for all but finitely many $j$. Then the kernel of every subsequence is $D$.
\end{proposition}
\begin{proof}
\cite[Proposition 3.63]{LawlerConformallyInvariantProcesses}.
\end{proof}
\end{comment}

\begin{lemma}\label{lem::caratheodory_decreasing}
Suppose that $D_n$ and $D$ are simply connected domains other than $\C$ containing the origin. Assume that $(D_n,n\ge 1)$ is decreasing:
\[D_{n+1}\subseteq D_n,\quad \text{for all } n\ge 1;\]
and that
\[D\subseteq D_n,\quad \text{for all }n\ge 1.\]
Then $(D_n,n\ge 1)$ converges to $D$ in Carath\'eodory topology if and only if the sequence of conformal radius $(\CR(D_n),n\ge 1)$ converges to $\CR(D)$.
\end{lemma}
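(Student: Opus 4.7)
The plan is to exploit that, under the normalization $f_n(0)=0$, $f_n'(0)>0$, the derivative $f_n'(0)$ coincides with $\CR(D_n)$. Carath\'eodory convergence is, by definition, locally uniform convergence of $f_n$ to $f$ (the normalized conformal map onto $D$), which immediately yields $f_n'(0) \to f'(0)$, i.e. $\CR(D_n) \to \CR(D)$. This settles the forward direction in one line.

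For the reverse direction, let $\psi: \U \to D$ denote the normalized conformal map with $\psi'(0)=\CR(D)>0$. The inclusion $D \subseteq D_n$ makes the composition $g_n := f_n^{-1} \circ \psi : \U \to \U$ well-defined and holomorphic, with $g_n(0)=0$ and $g_n'(0) = \CR(D)/\CR(D_n)$. By hypothesis, $g_n'(0) \to 1$. The crux is then a limiting form of the equality case of the Schwarz lemma: setting $h_n(z) := g_n(z)/z$ (holomorphically extended to $0$ with value $g_n'(0)$), Schwarz's lemma gives $|h_n|\le 1$ on $\U$, while $h_n(0)\to 1$. By normality (uniform boundedness) and the maximum principle, any subsequential limit of $(h_n)$ is identically $1$, so $g_n \to \mathrm{id}$ locally uniformly on $\U$.

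To conclude, observe that $\{f_n\}$ is itself a normal family by Koebe's distortion theorem, since $f_n'(0)=\CR(D_n)\le\CR(D_1)$ (the upper bound coming from $D_n\subseteq D_1$ and Schwarz applied to $f_1^{-1}\circ f_n$). Given any subsequence, extract a further subsequence $f_{n_k}\to f$ locally uniformly; then $f(0)=0$ and $f'(0)=\CR(D)>0$, so by Hurwitz $f$ is univalent. For every $z\in\U$, the identity $\psi(z)=f_{n_k}(g_{n_k}(z))$, combined with $g_{n_k}(z)\to z$ and $f_{n_k}\to f$ locally uniformly, forces $\psi(z)=f(z)$. Hence the unique subsequential limit of $(f_n)$ is $\psi$, so $f_n\to\psi$ locally uniformly on $\U$, which is precisely Carath\'eodory convergence of $D_n$ to $D$. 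The main obstacle is this reverse direction and, specifically, the step $g_n\to\mathrm{id}$: a single application of Schwarz to $g_n$ only gives the bound $g_n'(0)\le 1$, and one really needs the maximum-principle argument applied to the auxiliary function $g_n(z)/z$ in order to upgrade convergence of derivatives at $0$ to locally uniform convergence of the maps.
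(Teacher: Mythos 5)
Your proof is correct and reaches the same conclusion, but the route in the reverse direction is organized differently from the paper's. The paper's argument is shorter: it applies normality to extract a locally uniform subsequential limit $\tilde f$ of the $f_n$, notes that $D\subseteq D_n$ for all $n$ forces the Carath\'eodory kernel $\tilde D=\tilde f(\U)$ to contain $D$, and then concludes $\tilde D=D$ from $\CR(\tilde D)=\lim\CR(D_{n_j})=\CR(D)$ together with the strict monotonicity of conformal radius under proper inclusion (itself a Schwarz-lemma fact). Your argument instead establishes directly that $g_n:=f_n^{-1}\circ\psi\to\mathrm{id}$ locally uniformly via the $g_n(z)/z$ maximum-principle device, and then uses this to pin down every subsequential limit of $f_n$ as $\psi$ through the identity $\psi=f_n\circ g_n$. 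The two proofs rely on the same two ingredients (normal families and the Schwarz lemma), but yours is somewhat more self-contained: it avoids invoking the kernel characterization of Carath\'eodory convergence (i.e., that $D\subseteq D_n$ for all $n$ passes to the limit domain), whereas the paper leans on that fact implicitly when it asserts $D\subseteq\tilde D$. What the paper's version buys is brevity and a one-line finish; what yours buys is an explicit locally-uniform convergence statement $g_n\to\mathrm{id}$, which is sometimes useful in its own right. You also include the forward direction, which the paper skips as immediate.
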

\begin{proof}
We only need to show that the convergence in conformal radii implies the convergence in Carath\'eodory topology.
Let $f_n$ be the conformal map from $\U$ onto $D_n$ such that $f_n(0)=0,f_n'(0)>0$, and $f$ be the conformal map from $\U$ onto $D$ such that $f(0)=0, f'(0)>0$.

From \cite[Proposition 3.61]{LawlerConformallyInvariantProcesses}, we know that there exist a conformal map $\tilde{f}$ from $\U$ onto some $\tilde{D}=\tilde{f}(\U)$ and a subsequence $f_{n_j}$ such that $f_{n_j}$ converges to $\tilde{f}$ uniformly on compact sets. Therefore, the sequence $D_{n_j}$ converges to $\tilde{D}$ in Carath\'eodory topology.
Furthermore $f_{n_j}'(0)\to \tilde{f}'(0)$. We have the following observations.
\begin{enumerate}
\item [(a)] Since $D\subseteq D_n$ for all $n$, we have that $D\subseteq\tilde{D}$.
\item [(b)] Since $\CR(D_n)\to \CR(D)$ as $n\to\infty$, we have that $f'(0)=\tilde{f}'(0)$.
\end{enumerate}
Combining these two facts, we have that $D=\tilde{D}$. Therefore, $D_{n_j}$ converges to $D$ in Carath\'eodory topology. Since the sequence $(D_n,n\ge 1)$ is decreasing, we have that $D_n$ converges to $D$ in Carath\'eodory topology.
\end{proof}

\subsubsection{$\CLE$ and the exploration process}\label{subsubsec::cle_exploration}
In this section, we recall some features of CLE and we refer to \cite{SheffieldWernerCLE} for details and proofs of the statements. CLE in $\U$ is a collection $\Gamma$ of non-nested disjoint simple loops $(\gamma_j,j\in J)$ in $\U$ that possesses the following two properties.
\begin{enumerate}
\item [(1)][Conformal Invariance] For any M\"obius transformation $\Phi$ of $\U$ onto itself, the laws of $\Gamma$ and  $\Phi(\Gamma)$ are the same. This makes it possible to define, for any non-trivial simply connected domain $D$ (that can therefore be viewed as the conformal image of $\U$ via some map $\tilde \Phi$), the law of CLE in $D$ as the distribution of $\tilde \Phi (\Gamma)$ (because this distribution does not depend on the actual choice of conformal map $\tilde \Phi$ from $\U$ onto $D$).
\item [(2)][Domain Markov Property] For any non-trivial simply connected domain $D \subset \U$, define the set $D^* = D^* (D, \Gamma)$ obtained by removing from $D$ all the loops (and their interiors) of $\Gamma$ that do not entirely lie in $D$. Then, conditionally on $D^*$, and for each connected component $U$ of $D^*$, the law of those loops of $\Gamma$ that do stay in $U$ is exactly that of a CLE in $U$.
\end{enumerate}
The loops in CLE are $\SLE_{\kappa}$-type loops for some $\kappa\in (8/3,4]$. In fact, for each such value of $\kappa$, there exists exactly one CLE distribution that has $\SLE_{\kappa}$-type loops. We denote the corresponding CLE by $\CLE_{\kappa}$ for $\kappa\in (8/3,4]$.

We call $l$ a bubble in $\U$ if $l\subset \overline{\U}$ is homeomorphic to the unit circle and $l\cap\partial\U$ contains exactly one point; we call the point in $l\cap\partial\U$ the root of $l$, denoted by $R(l)$.

In \cite{SheffieldWernerCLE}, the authors introduce a discrete exploration process of CLE loop configuration. The conformal invariance and the domain Markov property make the discrete exploration easy to control. Consider a CLE in $\U$, draw a small disc $B(x,\eps)$ with center $x\in\partial\U$, let $l^{\eps}$ be the loop that intersects $B(x,\eps)$ with largest radius. Define the quantity
\[u(\eps)=\PP[l^{\eps}\text{ surrounds the origin}].\]
In fact, $u(\eps)=\eps^{\beta+o(1)}$ as $\eps$ goes to zero where $\beta=8/\kappa-1$.
\begin{lemma}\label{lem::cle_discrete_bubble}
The law of $l^{\eps}$ normalized by $1/u(\eps)$ converges towards a limit measure, denoted by $M(x)$.
\end{lemma}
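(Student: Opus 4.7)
The plan is to identify $M(x)$ as a $\sigma$-finite measure on bubbles rooted at $x$ obtained as a consistent family of finite measures, each gotten by restricting attention to sufficiently macroscopic bubbles. The three ingredients are conformal invariance of $\CLE$, the domain Markov property, and the scaling bound $u(\eps)=\eps^{\beta+o(1)}$; I will work in a topology on bubbles like the Hausdorff topology on closed subsets of $\overline{\U}$.

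For $\delta>0$ and $\eps<\delta$, let $E_\delta^\eps$ denote the event that $l^\eps$ has diameter at least $\delta$, and set
\[
M_\delta^\eps(\cdot):=u(\eps)^{-1}\PP[l^\eps\in\cdot;\,E_\delta^\eps],
\]
which is a finite (sub)measure on bubbles. First I would observe that on $E_\delta^{\eps'}$, for $\eps<\eps'$ both sufficiently small compared to $\delta$, one has $l^\eps=l^{\eps'}$ almost surely: the macroscopic maximizer at scale $\eps'$ already intersects $B(x,\eps)\subset B(x,\eps')$, and being of diameter $\geq \delta\gg \eps'$ it continues to dominate every loop meeting $B(x,\eps)$. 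Consequently
\[
M_\delta^\eps\big|_{E_\delta^{\eps'}}=M_\delta^{\eps'}\big|_{E_\delta^{\eps'}}.
\]

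Next, to promote this identity into convergence, I would control the defect $M_\delta^\eps(E_\delta^\eps\setminus E_\delta^{\eps'})$, which measures the rescaled probability that a macroscopic winner exists at scale $\eps$ but not at scale $\eps'$ (equivalently, no loop of diameter $\geq\delta$ intersects $B(x,\eps')$ while some loop of diameter $\geq\delta$ does intersect $B(x,\eps)$). I would use conformal invariance to pass to the half-plane picture, where $\CLE$ is scale-invariant, and then decompose the event by the ``almost winning'' intermediate loop using the domain Markov property. The scaling $u(\eps)=\eps^{\beta+o(1)}$ then shows the defect tends to zero as $\eps,\eps'\to 0$. Hence the sequence $(M_\delta^\eps)_\eps$ is Cauchy in total variation on $E_\delta^{\eps_0}$ for every fixed $\eps_0$ and converges to a finite measure $M_\delta$. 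Compatibility $M_{\delta}=M_{\delta'}\big|_{\{\mathrm{diam}\geq\delta\}}$ for $\delta'<\delta$ is immediate at the prelimit level and passes to the limit; this allows one to define $M(x)$ by $M(x)\big|_{\{\mathrm{diam}\geq\delta\}}:=M_\delta$ for every $\delta>0$, producing a $\sigma$-finite measure. Convergence in the vague sense (against continuous bounded test functions supported away from the point bubble at $x$) follows.

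The main obstacle is the defect estimate: one must show that, at the rescaled scale $u(\eps)$, the probability that the macroscopic winner changes between scales $\eps$ and $\eps'$ is negligible. This requires a quantitative version of the statement that the diameter of the winning loop has no atom at $\delta$ and that its density decays polynomially near $0$, which is extracted from a one-step renewal decomposition of the exploration under $\CLE$ together with scale-invariance in the half-plane. A subsidiary technical point is continuity of the restriction map to macroscopic events, which is where the choice of Hausdorff topology (rather than Carath\'eodory, which would not detect rooted bubbles that degenerate to the point $x$) becomes important.
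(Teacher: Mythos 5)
The central step in your argument is the claim that on $E_\delta^{\eps'}$, for $\eps < \eps'$ small, one has $l^\eps = l^{\eps'}$ almost surely, on the grounds that ``the macroscopic maximizer at scale $\eps'$ already intersects $B(x,\eps)$.'' This is false, and in fact it fails overwhelmingly: a loop of diameter at least $\delta$ that intersects $B(x,\eps')$ typically does \emph{not} penetrate all the way into $B(x,\eps)$ when $\eps\ll\eps'$. Quantitatively, conditional on $E_\delta^{\eps'}$, the probability that $l^{\eps'}$ also intersects $B(x,\eps)$ is of order $u(\eps)/u(\eps')\approx(\eps/\eps')^{\beta}$, which tends to zero as $\eps\to 0$ for fixed $\eps'$. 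So the purported identity $M_\delta^\eps\big|_{E_\delta^{\eps'}}=M_\delta^{\eps'}\big|_{E_\delta^{\eps'}}$ does not hold, and the ``defect'' you set out to bound is not the dominant error. Indeed, if one chose the size functional so that $E_\delta^\eps\subset E_\delta^{\eps'}$, your defect $E_\delta^\eps\setminus E_\delta^{\eps'}$ would vanish, yet the measures still fail to agree because the underlying bubbles differ as random variables on most of $E_\delta^{\eps'}$.

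This is precisely the difficulty the normalization by $u(\eps)$ is meant to address: it compensates for the shrinking penetration probability, but it does not produce an almost-sure coupling of $l^\eps$ with $l^{\eps'}$. The argument in \cite{SheffieldWernerCLE}, which the paper cites in lieu of a proof, proceeds by a genuinely different mechanism: a one-step renewal decomposition of the discrete exploration together with a coupling of the conditional laws across scales, showing that after conditioning on a macroscopic event the \emph{law} of the discovered loop stabilizes — not that the bubble itself is the same realization at the two scales. Your outer scaffolding (restricting to $\{\mathrm{diam}\geq\delta\}$, building a $\sigma$-finite limit from a consistent family of finite restrictions, and worrying about the right topology near the degenerate bubble $\{x\}$) is sound and is in the spirit of what is actually done, but the coupling at its heart has to be replaced by a convergence-in-distribution argument for the conditional law of the exploration, rather than an a.s.\ identity of the maximal loops.
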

\begin{proof}
\cite[Section 4]{SheffieldWernerCLE}.
\end{proof}

\begin{figure}[ht!]
\begin{subfigure}[b]{0.64\textwidth}
\includegraphics[width=\textwidth]{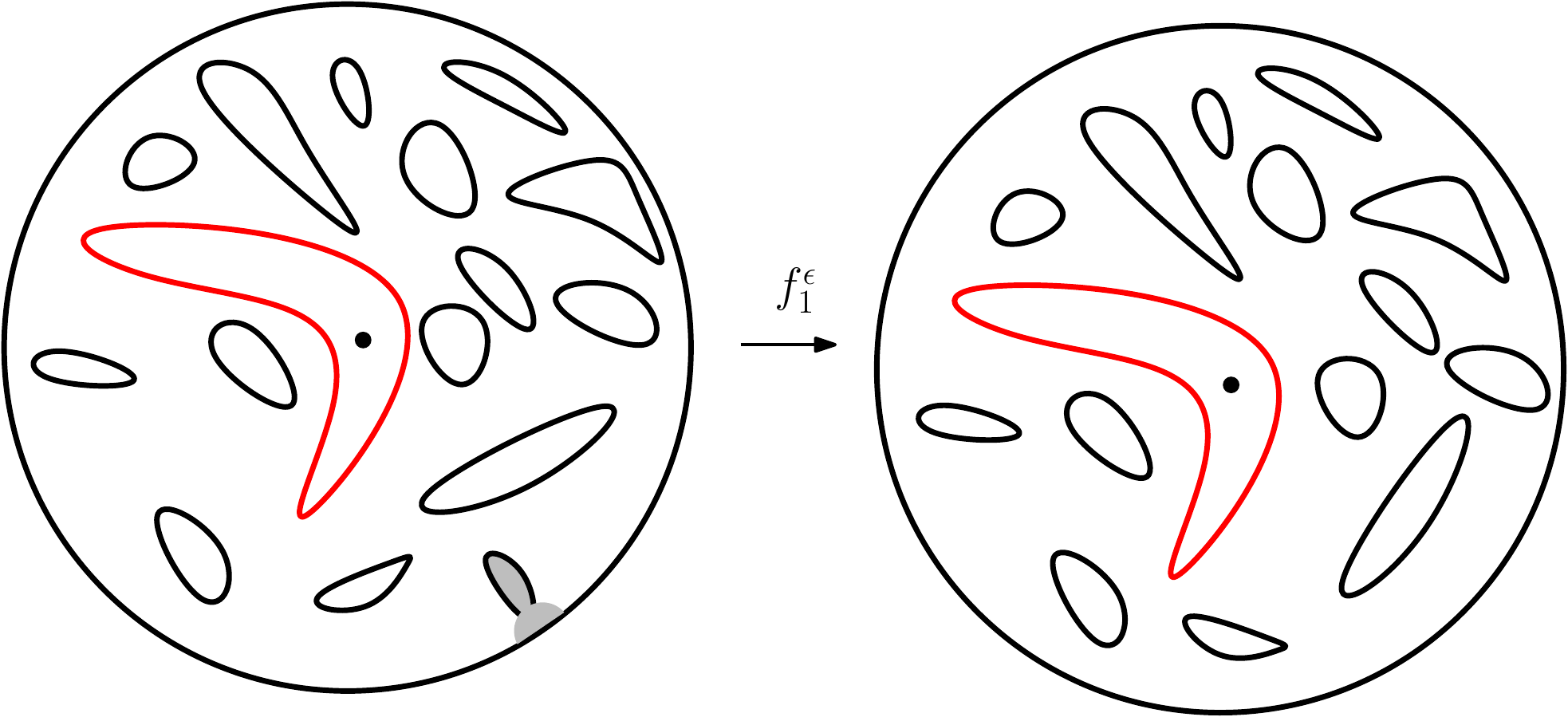}
\caption{If we do not discover the loop surrounding the origin, define $f_1^{\eps}$ to be the conformal map from the to-be-explored domain onto the unit disc such that $f_1^{\eps}(0)=0$, $(f_1^{\eps})'(0)>0$.}
\end{subfigure}
$\quad$
\begin{subfigure}[b]{0.32\textwidth}
\includegraphics[width=0.9\textwidth]{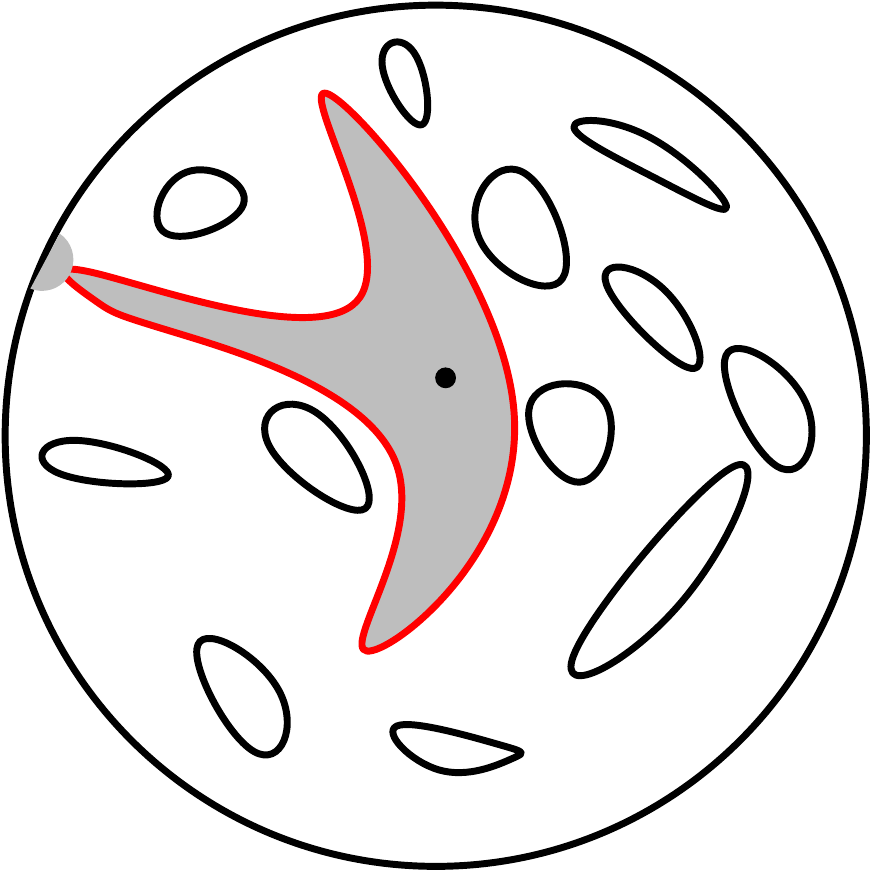}
\caption{After a finite number of steps, we will discover the loop surrounding the origin.}
\end{subfigure}
\caption{\label{fig::discrete_exploration_cle} Explanation of the discrete exploration process of CLE.}
\end{figure}

Because of the conformal invariance and the domain Markov property, we can repeat the ``small semi-disc exploration" until we discover the loop containing the origin: Suppose we have a CLE loop configuration in the unit disc $\U$. We draw a small semi-disc of radius $\eps$ whose center is uniformly chosen on the unit circle. The loops that intersect this small semi-disc are the loops discovered. If we do not discover the loop surrounding the origin, we refer to the connected component of the remaining domain that contains the origin as the \textit{to-be-explored domain}. Let $f_1^{\eps}$ be the conformal map from the to-be-explored domain onto $\U$ such that $f^{\eps}_1(0)=0, (f^{\eps}_1)'(0)>0$.
We also define $l_1^{\eps}$ to be the loop discovered with largest radius. Because of the conformal invariance and the domain Markov property of CLE, the image of the loops in the to-be-explored domain under the conformal map $f_1^{\eps}$ has the same law as simple CLE in the unit disc. Thus we can repeat the same procedure for the image of the loops under $f_1^{\eps}$. We draw a small semi-disc of radius $\eps$ whose center is uniformly chosen on the unit circle. The loops that intersect the small semi-disc are the loops discovered at the second step. If we do not discover the loop surrounding the origin, define the conformal map $f_2^{\eps}$ from the to-be-explored domain onto $\U$ such that $f^{\eps}_2(0)=0, (f^{\eps}_2)'(0)>0$. The image of the loops in the to-be-explored domain under $f_2^{\eps}$ has the same law as CLE in the unit disc, etc. At some finite step $N$, we discover the loop surrounding the origin, we define $l^{\eps}_{N}$ to be the loop surrounding the origin discovered at this step and stop the exploration. We summarize the properties and notations in this discrete exploration below. See Figure \ref{fig::discrete_exploration_cle}.
\begin{enumerate}
\item [(a)] Before $N$, all steps of discrete exploration are i.i.d.
\item [(b)] The number of the step $N$, when we discover the loop surrounding the origin, has the geometric distribution:
\[\PP[N>n]=\PP[l^{\eps} \text{does not surround the origin}]^n=(1-u(\eps))^n.\]
\item [(c)] Define the conformal map
\[\Phi^{\eps}=f^{\eps}_{N-1}\circ \cdots\circ f_2^{\eps}\circ f_1^{\eps}.\]
\end{enumerate}
As $\eps$ goes to zero, the discrete exploration will converge to a Poisson point process of bubbles with intensity measure given by
\[M=\int_{\partial\U}dx M(x)\]
where $dx$ is Lebesgue length measure on $\partial\U$.

Now we can reconstruct CLE loops from the Poisson point process of SLE bubbles. Let $(l_t,t\ge 0)$ be a Poisson point process with intensity $M$.
Namely, let $((l_j,t_j),j\in J)$ be a Poisson point process with intensity $M\times[0,\infty)$, and then arrange the bubble according to the time $t_j$, i.e. denote $l_t$ as the bubble $l_j$ if $t=t_j$, and $l_t$ is empty set if there is no $t_j$ that equals $t$. Clearly, there are only countably many bubbles in $(l_t,t\ge 0)$ that are not empty set.
Define
\[\tau=\inf\{t: l_t\text{ surrounds the origin}\}.\]
For each $t<\tau$, the bubble $l_t$ does not surround the origin. Define $f_t$ to be the conformal map from the connected component of $\U\setminus l_t$ containing the origin onto the unit disc such that $f_t(0)=0,f_t'(0)>0$. For this Poisson point process, we have the following properties:
\begin{enumerate}
\item [(a)] $\tau$ has the exponential law: $\PP[\tau>t]=e^{-t}$.
\item [(b)] For $\delta>0$ small, let $t_1(\delta),t_2(\delta), ...,t_j(\delta)$ be the times $t$ before $\tau$ at which the bubble $l_t$ has radius greater than $\delta$. Define $\Psi^{\delta}=f_{t_j(\delta)}\circ\cdots\circ f_{t_1(\delta)}$. Then $\Psi^{\delta}$ almost surely converges towards some conformal map $\Psi$ in Carath\'eodory topology seen from the origin  as $\delta$ goes to zero. Therefore $\Psi$ can be interpreted as $\Psi=\circ_{t<\tau}f_t$.
\item [(c)] Generally, for each $t\le\tau$, we can define $\Psi_t=\circ_{s<t}f_s$. Then
\[(L_t:=\Psi^{-1}_t(l_t),0\le t\le\tau)\] is a collection of loops in the unit disc and $L_{\tau}$ is a loop surrounding the origin.
\end{enumerate}

The relation between this Poisson point process of bubbles and the discrete exploration process we described above is given via the following result.
\begin{proposition}\label{prop::caratheodory_convergence}
$\Phi^{\eps}$ converges in distribution to $\Psi$ in Carath\'eodory topology seen from the origin.
Moreover, the loop $L_{\tau}$ has the same law as the loop of $\CLE$ in $\U$ that surrounds the origin.
\end{proposition}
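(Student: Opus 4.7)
The plan is to pass to the limit in two stages: first, upgrade Lemma \ref{lem::cle_discrete_bubble} from convergence of a single bubble to convergence of the whole marked point process of discovered bubbles; second, use monotonicity of the conformal radius (Lemma \ref{lem::caratheodory_decreasing}) to deduce Carath\'eodory convergence of the composition $\Phi^{\eps}\to\Psi$. The identification of the law of $L_\tau$ then follows by transporting $l_N^{\eps}$ under the limit of $(\Phi^{\eps})^{-1}$.

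Stage one: by Lemma \ref{lem::cle_discrete_bubble}, $u(\eps)^{-1}\,\mathrm{Law}(l^{\eps})$ converges to $M(x)$ when the exploration disc is centered at $x\in\partial\U$; averaging over the uniform center, the law of the single-step bubble converges after rescaling by $u(\eps)^{-1}$ to a multiple of $M$. Since the steps of the discrete exploration are i.i.d.\ and $N$ is geometric with parameter $u(\eps)$, a standard Poissonization argument gives, for every fixed $\delta>0$, convergence in distribution of the marked point process
\[\Pi_{\delta}^{\eps}:=\sum_{j=1}^{N}\mathbf{1}_{\{\mathrm{rad}(l_j^{\eps})\ge\delta\}}\,\delta_{(j u(\eps),\,l_j^{\eps})}\]
to the restriction to bubbles of radius $\ge\delta$ of the target Poisson point process $\Pi$ with intensity $dt\otimes M$, jointly with the convergence $N u(\eps)\to\tau$ (both are first hitting times of the event that the discovered bubble surrounds $0$).

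Stage two: let $D_j^{\eps}$ be the to-be-explored domain after $j$ steps, a decreasing sequence of simply connected subdomains of $\U$ containing $0$, and set $X_j^{\eps}:=\log (f_j^{\eps})'(0)\ge 0$, so that $-\log\CR(D_{N-1}^{\eps})=\sum_{j<N}X_j^{\eps}$. The continuum analogue is $-\log\CR(D_{\tau-})=\sum_{t<\tau}\bigl(-\log f_t'(0)^{-1}\bigr)$, a sum of non-negative jumps driven by $\Pi$. The convergence of $\Pi_{\delta}^{\eps}$ gives convergence of the partial sums restricted to bubbles of radius $\ge\delta$. The contribution of bubbles of radius $<\delta$ is controlled by a uniform first-moment estimate extracted from Lemma \ref{lem::cle_discrete_bubble}: $\E\bigl[\sum_{j<N}X_j^{\eps}\mathbf{1}_{\mathrm{rad}(l_j^{\eps})<\delta}\bigr]$ converges as $\eps\to 0$ to a finite integral against $M$, which tends to $0$ as $\delta\to 0$. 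Letting first $\eps\to 0$ and then $\delta\to 0$ gives $\CR(D_{N-1}^{\eps})\to\CR(D_{\tau-})$ in distribution; by Lemma \ref{lem::caratheodory_decreasing} this is equivalent to the Carath\'eodory convergence $D_{N-1}^{\eps}\to D_{\tau-}$, hence to $\Phi^{\eps}\to\Psi$ in Carath\'eodory topology seen from the origin. For the second assertion, $L_N^{\eps}:=(\Phi^{\eps})^{-1}(l_N^{\eps})$ is by construction the $\CLE$ loop surrounding the origin, and the joint convergence $(\Phi^{\eps},l_N^{\eps})\to(\Psi,l_\tau)$ yields $L_N^{\eps}\to L_\tau=\Psi^{-1}(l_\tau)$ in law, which identifies the law of $L_\tau$.

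The main obstacle is stage two: naively passing to the limit in an infinite composition of conformal maps is hopeless, but the monotonicity of the conformal radius collapses the problem to the convergence of a single scalar quantity, $\CR(D_{N-1}^{\eps})\to\CR(D_{\tau-})$, via Lemma \ref{lem::caratheodory_decreasing}. What remains is the standard difficulty of passing to the limit in a sum driven by a point process, handled by truncating to jumps of size $\ge\delta$ (where convergence is immediate from stage one) and bounding the tail uniformly in $\eps$ through the first-moment estimate derived from Lemma \ref{lem::cle_discrete_bubble}.
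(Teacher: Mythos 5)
The paper offers no in-paper argument for this proposition; it is proved by citation to \cite[Section 7]{SheffieldWernerCLE}. Evaluated on its own terms, your proposal has a genuine gap in stage two.

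Stage one (Poissonization of the discovered-bubble process, convergence of the truncated marked point processes $\Pi^{\eps}_{\delta}$ together with $Nu(\eps)\to\tau$) is a reasonable sketch of the right starting point and is broadly in the spirit of \cite[Section 7]{SheffieldWernerCLE}. Stage two, however, does not work as written. Lemma \ref{lem::caratheodory_decreasing} is a deterministic statement about a \emph{decreasing} sequence of domains $(D_n)$ all of which \emph{contain} the limit $D$; under those two hypotheses convergence of the conformal radii becomes equivalent to Carath\'eodory convergence. You invoke it for the family $D^{\eps}_{N-1}$ indexed by $\eps\to 0$. This family is not decreasing in $\eps$ (the discrete explorations for different $\eps$ are not nested, and are not even naturally coupled on a common probability space with the continuum exploration), and $D_{\tau-}$ is not contained in $D^{\eps}_{N-1}$. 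So the lemma simply does not apply. Worse, without such hypotheses the implication is false: convergence of the scalar $\CR(\cdot;0)$ carries no information about the shape of the domain, and one can easily construct sequences of simply connected domains with identical conformal radius at the origin that fail to converge in Carath\'eodory topology (alternate slit discs with slits on opposite sides of $\U$, for instance). In the random setting the analogous failure is that two sequences of domain-valued random variables can have conformal radii converging to the same law while the domain laws do not converge at all.

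What is actually needed, and what the cited argument in \cite[Section 7]{SheffieldWernerCLE} supplies, is control of the full normalized maps $f^{\eps}_j$ on compact subsets of $\U$ --- not just of $(f^{\eps}_j)'(0)$ --- together with a bound showing that the cumulative \emph{geometric} (angular) distortion contributed by the many small-bubble steps tends to zero. Monotonicity of conformal radius collapses the scalar bookkeeping nicely, but it cannot by itself propagate to Carath\'eodory convergence of the composition $\Phi^{\eps}\to\Psi$. The identification of the law of $L_{\tau}$ in your last paragraph is fine once the joint convergence $(\Phi^{\eps},l^{\eps}_N)\to(\Psi,l_{\tau})$ is in hand, since $(\Phi^{\eps})^{-1}(l^{\eps}_N)$ has exactly the law of the $\CLE$ loop surrounding $0$ for every $\eps$; but that joint convergence is precisely the content that stage two leaves unestablished.
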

\begin{proof}
\cite[Section 7]{SheffieldWernerCLE}.
\end{proof}

The results in this section hold for all $\kappa \in (8/3,4]$. In the next section, we will point out a particular property that only holds for $\kappa=4$.

\subsubsection{$\CLE_4$ with time parameter}\label{subsubsec::cle4_timeparameter}
In this section, we will introduce $\CLE_4$ with time parameter. We refer to \cite{WernerWuCLEExploration} for the details and proofs of the statements.
Throughout this section, we fix $\kappa=4$.
Recall that $M(x)$ is the SLE bubble measure rooted at $x\in\partial\U$ defined in Lemma \ref{lem::cle_discrete_bubble}, and $M$ is defined by
\[M=\int_{x\in\partial\U}dx M(x).\]
The following property only holds for $\kappa=4$ and it is the most important ingredient in the construction of $\CLE_4$ with time parameter.

\begin{lemma}\label{lem::sle4_bubble_conformalinvariance}
When $\kappa=4$, the bubble measure $M$ is conformally invariant: for any M\"obius transformation $\phi$ of $\U$, we have
\[\phi\circ M=M.\]
\end{lemma}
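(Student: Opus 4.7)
The plan is to derive a local scaling relation for the bubble measure $M(x)$ under any M\"obius transformation of $\U$, and then observe that the Jacobian factor that appears exactly cancels the boundary Jacobian arising in the change of variables on $\partial\U$ precisely when $\beta := 8/\kappa - 1$ equals $1$, that is, when $\kappa = 4$.

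First I would establish the following transformation rule: for every M\"obius transformation $\phi$ of $\U$ and every $x \in \partial \U$,
$$\phi \circ M(x) = |\phi'(x)|^{\beta}\, M(\phi(x)).$$
Recall from Lemma \ref{lem::cle_discrete_bubble} that $M(x)$ is the weak limit of $u(\eps)^{-1}\cdot\mathrm{Law}(l^\eps)$, where $l^\eps$ is the CLE loop of largest radius intersecting $B(x,\eps)$. By conformal invariance of CLE, $\phi$ pushes a $\CLE_4$ in $\U$ to a $\CLE_4$ in $\U$. Moreover $\phi(B(x,\eps)\cap\U)$ is, to leading order in $\eps$, the set $B(\phi(x),|\phi'(x)|\eps)\cap\U$; hence $\phi(l^\eps(x))$ has the same asymptotic law as $l^{|\phi'(x)|\eps}(\phi(x))$. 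Combining this with the scaling $u(r\eps)/u(\eps)\to r^\beta$ as $\eps\to 0$ and passing to the limit in $u(\eps)^{-1}\phi_*\mathrm{Law}(l^\eps(x)) = \bigl(u(|\phi'(x)|\eps)/u(\eps)\bigr)\cdot u(|\phi'(x)|\eps)^{-1}\mathrm{Law}(l^{|\phi'(x)|\eps}(\phi(x)))$ gives the claimed identity.

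Given this transformation rule, I would write
$$\phi\circ M = \int_{\partial \U} dx\, \phi\circ M(x) = \int_{\partial \U} dx\, |\phi'(x)|^{\beta}\, M(\phi(x)),$$
and change variables $y=\phi(x)$ on $\partial\U$ using the standard boundary Jacobian $dy = |\phi'(x)|\,dx$, which yields
$$\phi\circ M = \int_{\partial \U} dy\, |\phi'(\phi^{-1}(y))|^{\beta-1}\, M(y).$$
For $\kappa=4$ we have $\beta = 8/4-1 = 1$, so the exponent $\beta-1$ vanishes, the integrand simplifies to $M(y)$, and the right-hand side equals $M$. This gives $\phi\circ M = M$ as required. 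Note that the computation also shows that this conformal invariance fails for any other $\kappa\in(8/3,4)$: the residual factor $|\phi'|^{\beta-1}$ is nontrivial.

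The main obstacle is making the transformation rule in the first step rigorous, since $M(x)$ is an infinite measure and the control on $u(\eps)$ available from the general CLE theory is only $u(\eps)=\eps^{\beta+o(1)}$. To extract the exact scaling $u(r\eps)/u(\eps)\to r^\beta$, I would test both normalizations against a conformally natural bubble event $E$ (for instance, the event that the bubble disconnects some fixed interior reference point from $\partial\U\setminus\{x\}$, or equivalently an event depending only on the conformal modulus of the bubble), on which Lemma \ref{lem::cle_discrete_bubble} applied at $x$ and at $\phi(x)$ produces two finite nonzero limits that, by conformal invariance of CLE, must agree up to the factor $|\phi'(x)|^\beta$. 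Taking the ratio then fixes $u(r\eps)/u(\eps)\to r^\beta$ with the exact exponent. Once this scaling is established, the remainder of the argument is the formal change-of-variables computation above.
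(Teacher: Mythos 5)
Your argument is correct and is the standard proof of this fact, namely the one behind the reference \cite[Lemma 6]{WernerWuCLEExploration} that the paper cites in lieu of a proof: the rooted bubble measure $M(x)$ is conformally covariant with exponent $\beta = 8/\kappa - 1$, and since arc length on $\partial\U$ carries boundary Jacobian $|\phi'|$, the integrated measure $M = \int_{\partial\U}dx\,M(x)$ is invariant precisely when $\beta = 1$, i.e.\ when $\kappa = 4$. The technical work you flag is the genuine content — one needs the sharp scaling $u(r\eps)/u(\eps)\to r^\beta$ (not merely $u(\eps)=\eps^{\beta+o(1)}$) and a justification that replacing $\phi(B(x,\eps)\cap\U)$ by $B(\phi(x),|\phi'(x)|\eps)\cap\U$ produces an error that vanishes after normalization by $u(\eps)$ — but your route through a conformally natural test event, combined with multiplicativity of $\lim_\eps u(r\eps)/u(\eps)$ in $r$, is a sound way to supply both.
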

\begin{proof}
\cite[Lemma 6]{WernerWuCLEExploration}.
\end{proof}
We call $M$ the \textbf{$\SLE_4$ bubble measure} uniformly rooted over the boundary.
Let $(l_t,t\ge 0)$ be a Poisson point process with intensity $M$.
The sequence of loops $(L_t,0\le t\le \tau)$ is obtained from $(l_t,t\ge 0)$ by targeting at the origin described in Section \ref{subsubsec::cle_exploration}.
In fact, we can construct a sequence of loops $(L^z_t, 0\le t\le \tau^z)$ from $(l_t,t\ge 0)$ by targeting at any interior point $z\in \U$. Given $(l_t,t\ge 0)$, define
\[\tau^z=\inf\{t: l_t\text{ surrounds }z\}.\]
For each $t<\tau^z$, let $f^z_t$ be the conformal map from the connected component of $\U\setminus l_t$ that contains $z$ onto $\U$ such that $f^z_t(z)=z, (f^z_t)'(z)>0$.
For each $t\le \tau^z$, define $\Psi^z_t=\circ_{s<t}f^z_s$. Then
\[\left(L^z_t=(\Psi^z_t)^{-1}(l_t), 0\le t\le \tau^z\right)\]
is a collection of loops in the unit disc and $L^z_{\tau^z}$ has the same law as the loop in CLE that surrounds $z$.

The conformal invariance in $M$ leads to the following ``target-independent property". Suppose that we have two distinct target points $z,w\in\U$, the process $(L^z_t, 0\le t\le \tau^z)$ and the process $(L^w_t, 0\le t\le \tau^w)$, up to the first time that $z$ and $w$ are disconnected, have the same law \cite[Lemma 8]{WernerWuCLEExploration}. Therefore, we can couple the two processes in the following way: up to the first time that $z$ and $w$ are disconnected, the two processes of loops coincide; and after the disconnecting time, the two processes evolve independently towards their target points respectively.
Consequently, it is possible to couple $(L^z_t, 0\le t\le \tau^z)$ for all $z\in\U$ simultaneously in the way that, for any two points $z,w\in\U$, the previous statement holds.
This is the \textit{conformal invariant growing mechanism} in $\CLE_4$ constructed in \cite{WernerWuCLEExploration}. From this growing process, we obtain a collection of $\CLE_4$ loops $\Gamma$ and, moreover, each loop has a time parameter: $((L,t_L), L\in\Gamma)$. We call $((L,t_L), L\in\Gamma)$ \textbf{$\CLE_4$ with time parameter}. It satisfies conformal invariance in the following sense.
\begin{proposition}
For $t\ge 0$, let $U_t$ be the domain obtained by removing from $\U$ all the loops $L\in\Gamma$ with $t_L\le t$. Then, for any M\"obius transformation $\phi$ of $\U$, the process $(\phi(U_t),t\ge 0)$ has the same law as the process $(U_t,t\ge 0)$.
\end{proposition}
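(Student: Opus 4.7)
The plan is to reduce the proposition to the conformal invariance of the $\SLE_4$ bubble measure $M$ (Lemma \ref{lem::sle4_bubble_conformalinvariance}). That lemma implies that, for any M\"obius transformation $\phi$ of $\U$, the pushforward $(\phi(l_t), t\ge 0)$ of the driving Poisson point process is again a PPP with intensity $M$, so it has the same law as $(l_t, t\ge 0)$. It therefore suffices to show that the construction of $((L,t_L), L\in\Gamma)$ from $(l_t)$ is equivariant under $\phi$ in the sense that, if one runs the same recipe on $(\phi(l_t))$ and denotes the resulting collection by $(\tilde\Gamma, \{\tilde t_L\})$, then $\tilde\Gamma = \phi(\Gamma)$ with $\tilde t_{\phi(L)} = t_L$. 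Given this, the unexplored domain $\tilde U_t$ built from $(\phi(l_t))$ equals $\phi(U_t)$, and combining with the law-equality of the two PPPs yields the proposition.

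The heart of the argument is a compatibility check at the level of the target-based exploration. Fix $z\in\U$ and let $\tilde\Psi^z_t$ denote the cumulative conformal map obtained by running the target-$z$ exploration on $(\phi(l_t))$. I claim that
\[
\tilde\Psi^z_t = \phi\circ \Psi^{\phi^{-1}(z)}_t \circ \phi^{-1}.
\]
To verify this inductively, consider the elementary map $\tilde f^z_s$: by definition it is the conformal map from the connected component of $\U\setminus\phi(l_s)$ containing $z$ onto $\U$ that fixes $z$ with positive derivative. The candidate $\phi\circ f^{\phi^{-1}(z)}_s\circ\phi^{-1}$ maps the correct domain onto $\U$, fixes $z$, and its derivative at $z$ equals $(f^{\phi^{-1}(z)}_s)'(\phi^{-1}(z))>0$ because $\phi'(\phi^{-1}(z))\cdot(\phi^{-1})'(z)=1$. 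Uniqueness then forces equality, and iterating this identity along the countably many Poisson arrivals, followed by the Carath\'eodory limit (cf.\ Lemma \ref{lem::caratheodory_decreasing} and Proposition \ref{prop::caratheodory_convergence}), gives the displayed identity for the cumulative map.

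Consequently $\tilde L^z_t = \phi(L^{\phi^{-1}(z)}_t)$ and $\tilde\tau^z = \tau^{\phi^{-1}(z)}$, so the target-$z$ exploration of $(\phi(l_t))$ is the image under $\phi$ of the target-$\phi^{-1}(z)$ exploration of $(l_t)$. Because the full object $((L,t_L), L\in\Gamma)$ is obtained by coupling all the target-$z$ explorations via the common PPP $(l_t)$ (so no additional randomness is used), this equivariance extends from each individual target to the joint construction, giving $\tilde U_t = \phi(U_t)$ as functionals of the respective PPPs. The main obstacle I anticipate is keeping the passage from the discrete to the continuous exploration honest: one must confirm that the Carath\'eodory convergence used to define $\Psi^z_t$ commutes with conjugation by $\phi$, and that the target-independent coupling of the explorations across all $z$ is itself a deterministic functional of $(l_t)$, so that pushing forward by $\phi$ really produces the coupling associated with $(\phi(l_t))$. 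Both points are straightforward but slightly delicate uniformity checks.
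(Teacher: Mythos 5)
Your proof is correct and follows what is almost certainly the argument in the cited reference \cite[Proposition 9]{WernerWuCLEExploration} (the paper itself defers to that citation rather than giving a proof). The two ingredients you identify are exactly right: the conformal invariance of the $\SLE_4$ bubble measure $M$ (Lemma \ref{lem::sle4_bubble_conformalinvariance}) gives $(\phi(l_t),t\ge 0)\overset{d}{=}(l_t,t\ge 0)$, and the pointwise identity $\tilde f^z_s=\phi\circ f^{\phi^{-1}(z)}_s\circ\phi^{-1}$, forced by the Riemann-map normalization and the chain-rule cancellation $\phi'(\phi^{-1}(z))(\phi^{-1})'(z)=1$, propagates through the countable composition and the Carath\'eodory limit to give $\tilde\Psi^z_t=\phi\circ\Psi^{\phi^{-1}(z)}_t\circ\phi^{-1}$. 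Combined with $\tilde\tau^z=\tau^{\phi^{-1}(z)}$, the change of variables $w=\phi^{-1}(z)$ gives $\tilde U_t=\phi(U_t)$ as a functional identity, from which the distributional statement follows. The uniformity checks you flag (Carath\'eodory convergence commuting with M\"obius conjugation, and the target-independent coupling being a deterministic functional of $(l_t)$) are indeed where one should be careful, but both are routine: M\"obius maps are biholomorphic so they preserve Carath\'eodory convergence seen from the appropriate marked point, and the coupling introduces no extra randomness beyond the Poisson arrivals.
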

\begin{proof}
\cite[Proposition 9]{WernerWuCLEExploration}.
\end{proof}

\subsection{Level lines targeted at interior points}\label{subsec::interiror_levellines_interior}
In Section \ref{sec::boundary_levellines}, we studied the level line of GFF starting from a boundary point and targeted at a distinct boundary point. In this section, we will study the level line of GFF starting from a boundary point and targeted at an interior point.

\begin{figure}[ht!]
\begin{subfigure}[b]{0.31\textwidth}
\begin{center}
\includegraphics[width=\textwidth]{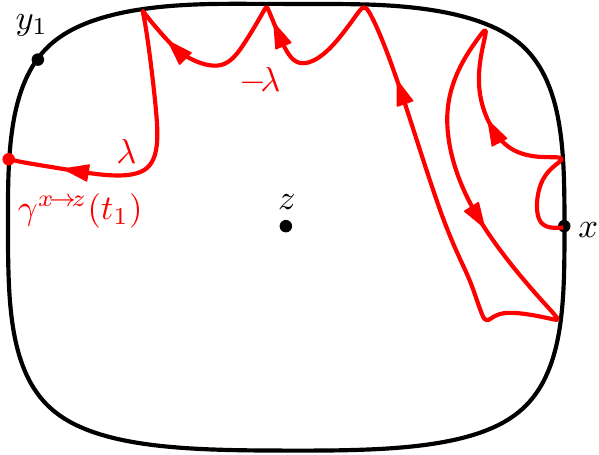}
\end{center}
\caption{Start the curve by the level line starting from $x$ targeted at $y_1$ stopped at the first disconnecting time $t_1$.}
\end{subfigure}
$\quad$
\begin{subfigure}[b]{0.31\textwidth}
\begin{center}\includegraphics[width=\textwidth]{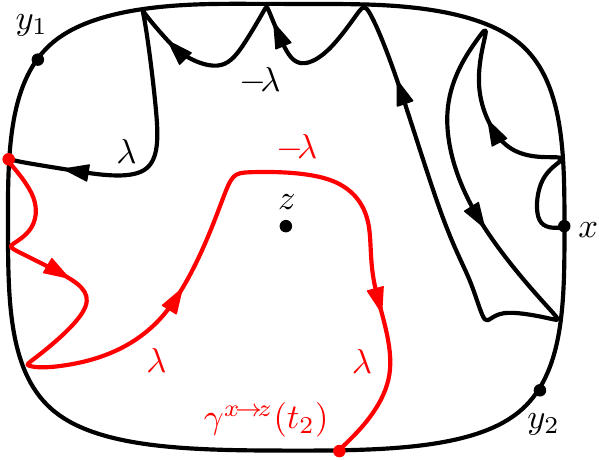}
\end{center}
\caption{Continue by the level line starting from $\gamma^{x\!\to\! z}(\!t_1\!)$ targeted at $y_2$ stopped at the first disconnecting time $t_2$.}
\end{subfigure}
$\quad$
\begin{subfigure}[b]{0.31\textwidth}
\begin{center}\includegraphics[width=\textwidth]{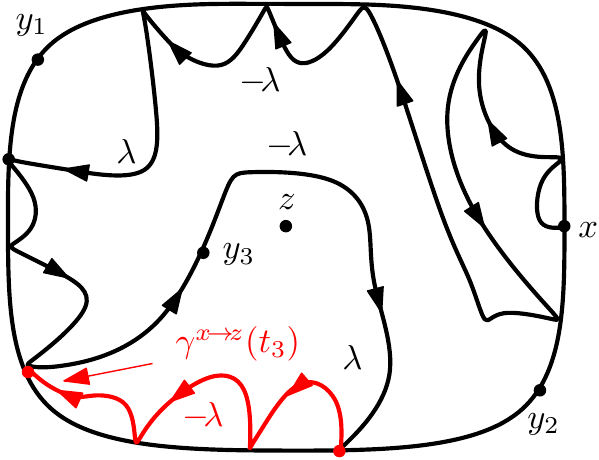}
\end{center}
\caption{Continue by the level line starting from $\gamma^{x\!\to\! z}(\!t_2\!)$ targeted at $y_3$ stopped at the first disconnecting time $t_3$.}
\end{subfigure}
\caption{\label{fig::levellines_targeted_interior} The explanation of generating level lines targeted at interior point. In this figure, the continuation threshold is hit at $t_3$.}
\end{figure}

Suppose that $h$ is a GFF on $\U$ whose boundary value is piecewise constant. Fix a boundary point $x\in\partial\U$ and an interior point $z\in\U$. We define \textbf{the level line of $h$ starting from $x$ targeted at $z$}, denoted by $\gamma^{x\to z}$, in the following way. See Figure \ref{fig::levellines_targeted_interior}.

Pick a point $y_1\in\partial\U$ different from $x$. We start $\gamma^{x\to z}$ by the level line of $h$ starting from $x$ targeted at $y_1$. We parameterize the curve by minus the log of the conformal radius of $\U\setminus\gamma^{x\to z}[0,t]$ seen from $z$. Namely, the curve $\gamma^{x\to z}$ is parameterized so that
\[\CR(\U\setminus\gamma^{x\to z}[0,t];z)=e^{-t}.\]
Define $t_1$ to be the first disconnecting time: the first time $t$ that $y_1$ is not on the boundary of the connected component of $\U\setminus\gamma^{x\to z}[0,t]$ that contains $z$. Denote by $U_1$ the connected component of $\U\setminus\gamma^{x\to z}[0,t_1]$ that contains $z$. Generally, when $\gamma^{x\to z}[0,t_k]$ and $U_k$ are defined for some $k\ge 1$, we pick $y_{k+1}$ on the boundary of $U_k$ different from $\gamma^{x\to z}(t_k)$. Given $\gamma^{x\to z}[0,t_k]$, we continue the curve by the level line of $h$ restricted to $U_k$ starting from $\gamma^{x\to z}(t_k)$ targeted at $y_{k+1}$ and parameterize the curve by minus the log of the conformal radius seen from $z$. Define $t_{k+1}$ to be the first disconnecting time: the first time $t$ that $y_{k+1}$ is not on the boundary of the connected component of $U_{k}\setminus\gamma^{x\to z}[t_k,t]$ that contains $z$. Denote by $U_{k+1}$ the connected component of $U_k\setminus\gamma^{x\to z}[t_k,t_{k+1}]$ that contains $z$. We continue this procedure until the curve hits the continuation threshold $\tau$. We summarize some basic properties of $\gamma^{x\to z}$ in the following.
\begin{enumerate}
\item [(a)] The curve $\gamma^{x\to z}$ is parameterized by minus the log of the conformal radius:
\[\CR(\U\setminus\gamma^{x\to z}[0,t];z)=e^{-t}.\]
\item [(b)] By the target-independent property of the level lines in Theorem \ref{thm::boundary_levellines_targetindependence}, we have that the curve $(\gamma^{x\to z}(t), 0\le t\le \tau)$ is independent of the choice of the sequence $(y_k,k\ge 1)$.
\item [(c)] The curve $(\gamma^{x\to z}(t), 0\le t\le \tau)$ is almost surely determined by the field $h$ and is almost surely continuous up to and including the continuation threshold.
\end{enumerate}

Generally, for $u\in\R$, \textbf{the level line of $h$ with height $u$ starting from $x$ targeted at $z$}, denoted by $\gamma^{x\to z}_u$, is the level line of $h+u$ starting from $x$ targeted at $z$.

\begin{figure}[ht!]
\begin{subfigure}[b]{0.48\textwidth}
\begin{center}
\includegraphics[width=0.65\textwidth]{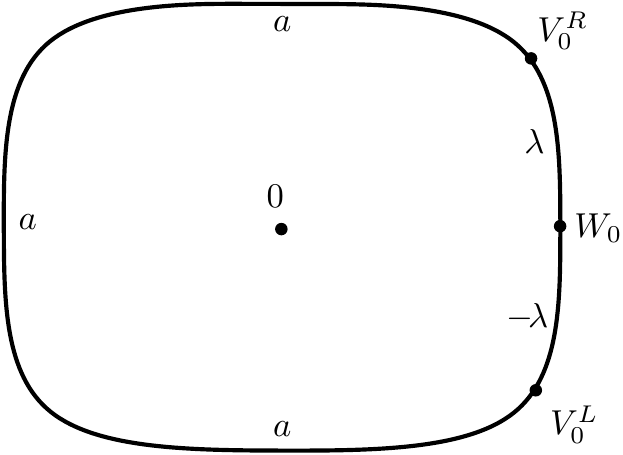}
\end{center}
\caption{The boundary value of the initial field.}
\end{subfigure}
$\quad$
\begin{subfigure}[b]{0.48\textwidth}
\begin{center}\includegraphics[width=0.65\textwidth]{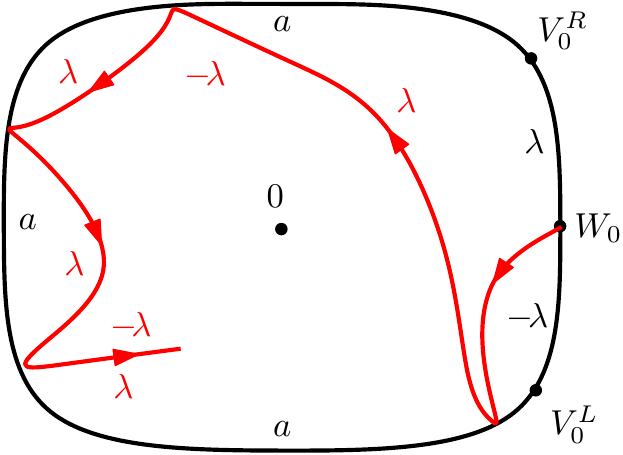}
\end{center}
\caption{The boundary value of the field given the level line.}
\end{subfigure}
\caption{\label{fig::levellines_targeted_interior_radialsle4} The boundary value of the field in Proposition \ref{prop::levellines_targeted_interior_radialsle4}.}
\end{figure}

By Lemma \ref{lem::sle_chordal_radial_equivalence}, the level line targeted at interior point is radial $\SLE_{4}(\underline{\rho})$ process and we record this fact in the following proposition.
\begin{proposition}\label{prop::levellines_targeted_interior_radialsle4}
Fix $a\in (-\lambda,\lambda)$ and three boundary points $V_0^L,W_0,V^R_0$ that are located on $\partial\U$ in counterclockwise order. Suppose that $h$ is a $\GFF$ on $\U$ whose boundary value is $-\lambda$ on $(V^L_0,W_0)$, is $\lambda$ on $(W_0,V^R_0)$, and is $a$ on $(V^R_0,V^L_0)$ (the intervals are counterclockwise). See Figure \ref{fig::levellines_targeted_interior_radialsle4}(a).

Suppose that $\gamma$ is a radial $\SLE_4(\rho^L;\rho^R)$ process starting from $W_0$ with two force points $(V^L_0;V^R_0)$ and the corresponding weights
\[\rho^L=-a/\lambda-1,\quad \rho^R=a/\lambda-1.\] Note that the continuation threshold of $\gamma$ is hit at the following time
\[\tau=\inf\{t>0: V^L_t=W_t=V^R_t\},\]
which is almost surely finite.

There exists a coupling between $h$ and $\gamma$ so that the following is true. Suppose that $\tilde{\tau}$ is any $\gamma$-stopping time less than the continuation threshold for $\gamma$. Then, given $\gamma[0,\tilde{\tau}]$, the conditional law of $h$ restricted to $\U\setminus\gamma[0,\tilde{\tau}]$ is $\GFF$ in each connected component whose boundary value is consistent with $h$ on $\partial\U$ and is $\lambda$ to the right of $\gamma$ and is $-\lambda$ to the left of $\gamma$. See Figure \ref{fig::levellines_targeted_interior_radialsle4}(b).

Furthermore, in this coupling, the path $\gamma$ is almost surely determined by $h$ and is continuous up to and including the continuation threshold.
\end{proposition}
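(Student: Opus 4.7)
The plan is to construct $\gamma$ piecewise via the boundary-target construction of Section \ref{subsec::interiror_levellines_interior}, then identify its global law as radial $\SLE_4(\rho^L;\rho^R)$. I pick auxiliary boundary targets $y_1,y_2,\ldots\in\partial\U$ and concatenate chordal level lines of $h$: first the level line from $W_0$ to $y_1$, run until $z$ is disconnected from $y_1$, then from the new tip to $y_2$, and so on, reparameterising globally by $t=-\log\CR(\U\setminus\gamma[0,t];z)$. By Theorem \ref{thm::boundary_levellines_targetindependence} this construction is independent of the choice of $(y_k)$, so $\gamma$ is intrinsic to $h$.

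To identify the law, at each stage, after conformally mapping the component of $\U\setminus\gamma[0,t_k]$ containing $z$ to a reference domain with the tip at $0$, the boundary data is $-\lambda$ immediately on one side of the tip, $\lambda$ immediately on the other, and $a$ on the remaining arc. Theorem \ref{thm::boundary_levelline_gff_coupling} identifies the next chordal piece as an $\SLE_4(\rho^L;\rho^R)$ with the claimed weights $\rho^L=-a/\lambda-1$ and $\rho^R=a/\lambda-1$, and with the two force points tracking the images of $V^L_0$ and $V^R_0$. Since $\rho^L+\rho^R=-2=\kappa-6$, the interior weight required in Lemma \ref{lem::sle_chordal_radial_equivalence} is $\rho^I=0$, and so each chordal piece, run until $z$ is disconnected from the current boundary target, agrees in law with the corresponding portion of a radial $\SLE_4(\rho^L;\rho^R)$ driven by $(W_t,V^L_t,V^R_t)$. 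Concatenating across stages identifies $\gamma$ globally with radial $\SLE_4(\rho^L;\rho^R)$ and matches its continuation threshold with $\tau$.

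The coupling Markov property and $h$-measurability of $\gamma$ transfer piecewise. For any $\gamma$-stopping time $\tilde\tau<\tau$ lying in the $k$-th chordal segment, I apply Theorems \ref{thm::boundary_levelline_gff_coupling} and \ref{thm::boundary_levelline_gff_deterministic} inside the conformally transformed subdomain that contains $z$ at time $t_{k-1}$ to obtain the stated conditional law of $h|_{\U\setminus\gamma[0,\tilde\tau]}$ and the determinism of $\gamma[0,\tilde\tau]$ from $h$; Theorem \ref{thm::sle_chordal_continuity_transience} gives continuity on each $[0,t_k]$.

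The main obstacle is continuity up to and including $\tau$ together with the almost-sure finiteness of $\tau$. Both would follow from Proposition \ref{prop::gff_localsets_bm}: the harmonic extension $\LC_{\gamma[0,t]}(z)$ is a Brownian motion in $t$ whose terminal value is constrained by the bounded boundary data in $\{-\lambda,\lambda,a\}$, so it exits the finite interval of allowed values in finite time, forcing $V^L_t,W_t,V^R_t$ to coalesce and hence $\tau<\infty$ a.s. For the closing-up of $\gamma$ into a loop at time $\tau$, the component $U_t$ containing $z$ satisfies $\CR(U_t;z)=e^{-t}$, so by Lemma \ref{lem::caratheodory_decreasing} the domains $U_t$ converge in Carath\'eodory topology seen from $z$ to a nontrivial simply connected limit $U_\tau$. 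Combined with the fact that the driving triple $(V^L_t,W_t,V^R_t)$ converges to a single point on $\partial\U$ as $t\uparrow\tau$, this forces $\gamma(t)$ to have a continuous limit and completes the proof.
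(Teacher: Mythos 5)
Your proposal follows the paper's own route: construct $\gamma$ stage-by-stage via boundary-targeted chordal level lines exactly as in Section~\ref{subsec::interiror_levellines_interior}, use Theorem~\ref{thm::boundary_levellines_targetindependence} for well-definedness, and identify the global law via the chordal-to-radial change of coordinates Lemma~\ref{lem::sle_chordal_radial_equivalence}, noting that $\rho^L+\rho^R=-2=\kappa-6$ so the interior weight vanishes; the Markov coupling and $h$-measurability transfer piecewise from Theorems~\ref{thm::boundary_levelline_gff_coupling} and~\ref{thm::boundary_levelline_gff_deterministic}, and $\tau<\infty$ a.s.\ follows from the Brownian motion of Proposition~\ref{prop::gff_localsets_bm} being confined to $[-\lambda,\lambda]$. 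This is essentially the argument the paper sketches. One minor point worth tightening: the final sentence asserts that Carath\'eodory convergence of $U_t$ together with coalescence of $(V^L_t,W_t,V^R_t)$ forces $\gamma(t)$ to have a continuous limit at $\tau$. Neither Carath\'eodory convergence of the complementary domains nor convergence of the driving data implies, by itself, that the generating curve has a limit at the terminal time, so this step as written is a non sequitur. The cleaner argument is to observe that the continuation threshold falls inside a single chordal segment of the piecewise construction, and then invoke Theorem~\ref{thm::sle_chordal_continuity_transience}, which gives continuity of that chordal level line up to and including its continuation threshold.
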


\begin{figure}[ht!]
\begin{subfigure}[b]{0.48\textwidth}
\begin{center}
\includegraphics[width=0.65\textwidth]{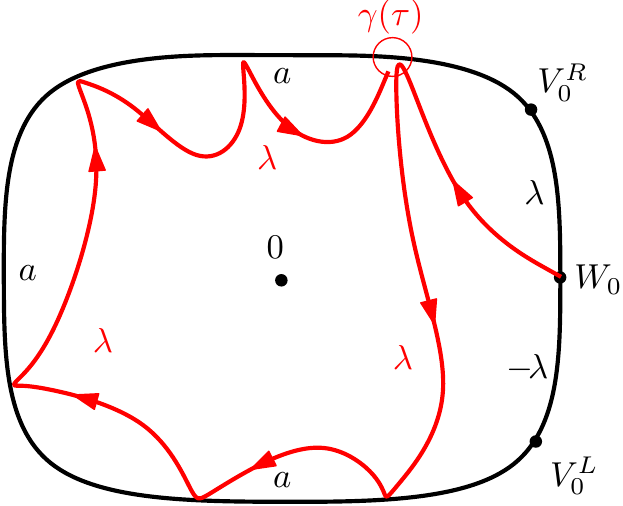}
\end{center}
\caption{The conditional mean is $\lambda$. As $t\uparrow \tau$, we have $\arg(W_t)-\arg(V^L_t)\to 0$, $\arg(V^R_t)-\arg(W_t)\to 2\pi$.}
\end{subfigure}
$\quad$
\begin{subfigure}[b]{0.48\textwidth}
\begin{center}\includegraphics[width=0.65\textwidth]{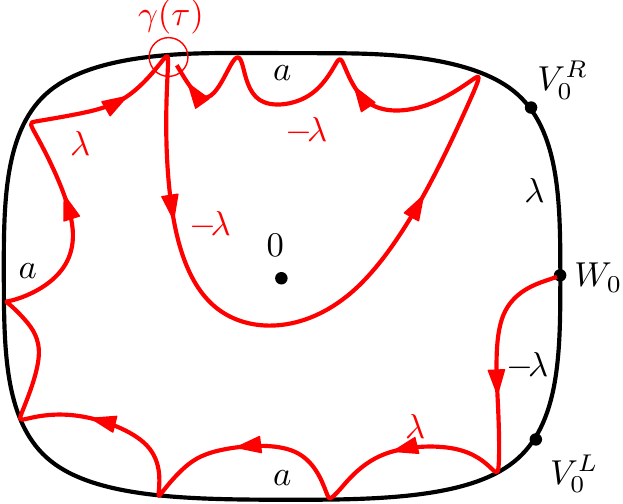}
\end{center}
\caption{The conditional mean is $-\lambda$. As $t\uparrow \tau$, we have $\arg(W_t)-\arg(V^L_t)\to 2\pi$, $\arg(V^R_t)-\arg(W_t)\to 0$.}
\end{subfigure}
\caption{\label{fig::levellines_interior_conditionalmean}The explanation of the conditional mean of the field given $\gamma[0,\tau]$ in Corollary \ref{cor::levellines_interior_conditionalmean}.}
\end{figure}

\begin{corollary}\label{cor::levellines_interior_conditionalmean}
Assume the same setting as in Proposition \ref{prop::levellines_targeted_interior_radialsle4}. Denote by $\LC$ the mean of $h$. Given $\gamma[0,\tau]$, denote by $U_{\tau}$ the connected component of $\U\setminus\gamma[0,\tau]$ that contains the origin and denote by $\LC_{\gamma[0,\tau]}$ the conditional mean of $h$ restricted to $U_{\tau}$. Then there are two possibilities (see Figure \ref{fig::levellines_interior_conditionalmean}):
\[\text{either}\quad \LC_{\gamma[0,\tau]}(0)=\lambda,\quad\text{or}\quad \LC_{\gamma[0,\tau]}(0)=-\lambda.\]
Furthermore,
\[\PP\left[\LC_{\gamma[0,\tau]}(0)=\lambda\right]=\frac{\lambda+\LC(0)}{2\lambda},\quad \PP\left[\LC_{\gamma[0,\tau]}(0)=-\lambda\right]=\frac{\lambda-\LC(0)}{2\lambda}.\]
\end{corollary}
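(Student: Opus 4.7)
The plan is to identify $\LC_{\gamma[0,\tau]}(0)$ with the terminal value of a bounded continuous martingale and then apply optional stopping. First I would analyze the geometry of $U_\tau$, the connected component of $\U\setminus\gamma[0,\tau]$ containing the origin. Because the continuation threshold is $\tau=\inf\{t: V^L_t=W_t=V^R_t\}$, the curve $\gamma$ (which is simple, being coupled to an $\SLE_4$-type process) has closed a loop around $0$ by time $\tau$, so $\partial U_\tau$ consists purely of a portion of $\gamma$ and does not touch $\partial\U$. Depending on whether the loop is closed by $V^L$ merging with $W$ from the left (so $\arg(V^R)-\arg(W)\to 2\pi$) or $V^R$ merging from the right (so $\arg(W)-\arg(V^L)\to 2\pi$), the origin lies on the right or the left side of $\gamma$, and by Proposition \ref{prop::levellines_targeted_interior_radialsle4} the conditional boundary value on that side of $\gamma$ is the constant $\lambda$ or $-\lambda$, respectively. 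Hence $\LC_{\gamma[0,\tau]}$ is the harmonic extension of a constant boundary value, so $\LC_{\gamma[0,\tau]}(0)\in\{\lambda,-\lambda\}$ almost surely.

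Next I would argue that $t\mapsto\LC_{\gamma[0,t]}(0)$ is a bounded continuous martingale on $[0,\tau)$. Since $\gamma[0,t]$ is a local set for $h$ (this is built into the coupling of Proposition \ref{prop::levellines_targeted_interior_radialsle4}), and since $\gamma$ does not accumulate at $0$ before $\tau$, Proposition \ref{prop::gff_localsets_bm} applies and gives that $\LC_{\gamma[0,t]}(0)$ is, after a time change by $-\log\CR(\U\setminus\gamma[0,t];0)$, a Brownian motion on $[0,\tau)$. Boundedness follows by the maximum principle: the harmonic function $\LC_{\gamma[0,t]}$ on $U_t$ has boundary values in $\{-\lambda,\lambda,a\}$, all of which lie in $[-\lambda,\lambda]$ since $a\in(-\lambda,\lambda)$.

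The remaining step is to show that $\LC_{\gamma[0,t]}(0)\to\LC_{\gamma[0,\tau]}(0)$ as $t\uparrow\tau$, so that optional stopping can be applied. The bounded martingale convergence theorem guarantees the existence of an almost-sure limit $Z_\tau$; what needs to be checked is that this limit coincides with the value $\pm\lambda$ identified in the first step. This is the main technical point, but it follows from harmonic measure considerations: as $\CR(U_t;0)\to 0$, the harmonic measure in $U_t$ seen from $0$ of the portion of $\partial U_t$ lying on $\partial\U$ tends to $0$, while the harmonic measure concentrates on the side of $\gamma$ facing the origin, on which the boundary value is the constant $\pm\lambda$ from the first step. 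Hence $Z_\tau=\LC_{\gamma[0,\tau]}(0)$ almost surely.

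Finally, optional stopping applied to the bounded continuous martingale $\LC_{\gamma[0,t]}(0)$ at the almost surely finite stopping time $\tau$ gives
\[
\LC(0)=\E[\LC_{\gamma[0,\tau]}(0)]=\lambda\,\PP[\LC_{\gamma[0,\tau]}(0)=\lambda]-\lambda\,\PP[\LC_{\gamma[0,\tau]}(0)=-\lambda].
\]
Combined with $\PP[\LC_{\gamma[0,\tau]}(0)=\lambda]+\PP[\LC_{\gamma[0,\tau]}(0)=-\lambda]=1$, this linear system solves to the stated probabilities $(\lambda\pm\LC(0))/(2\lambda)$. The main obstacle is the continuity of the conditional mean at $t=\tau$, i.e.\ the harmonic-measure argument identifying the martingale limit with $\pm\lambda$; the rest is routine given the machinery of Section \ref{subsec::boundary_levellines_gff} and Proposition \ref{prop::levellines_targeted_interior_radialsle4}.
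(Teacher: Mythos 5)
Your proof is correct, and it arrives at the paper's key identity $\E[\LC_{\gamma[0,\tau]}(0)]=\LC(0)$ and the same resulting linear system, but it gets there by a different mechanism. The paper works with the circle average $h_\eps(0)$: unconditionally it is Gaussian with mean $\LC(0)$, while conditionally on $\gamma[0,\tau]$ (and on $B(0,\eps)\subset U_\tau$) it is Gaussian with mean $\LC_{\gamma[0,\tau]}(0)$; comparing means and letting $\eps\to 0$ gives the identity directly, without ever needing to argue that the conditional-mean process is continuous up to and at $\tau$. You instead apply optional stopping to the bounded continuous martingale $t\mapsto\LC_{\gamma[0,t]}(0)$ on $[0,\tau)$, which forces you to identify the a.s.\ martingale limit at $\tau$ with $\LC_{\gamma[0,\tau]}(0)\in\{\pm\lambda\}$; this is exactly the harmonic-measure step you flag as the main technical point, and it is a genuine additional argument (you need that as the pinch closes, the harmonic measure from $0$ of the arc on $\partial\U$ and of the far side of $\gamma$ both vanish). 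Your version is also more explicit in the first part, where you justify why $\LC_{\gamma[0,\tau]}(0)$ is $\pm\lambda$ from the geometry at the continuation threshold -- the paper takes this for granted. In short: both approaches are valid and lead to the same linear system; the paper's circle-average argument is slightly slicker because it trades your continuity-at-$\tau$ step for a comparison of Gaussian means, while your route makes the underlying bounded-martingale structure and the geometry at $\tau$ explicit.
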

\begin{proof}
For $\eps>0$ small, recall that the average of $h$ in $B(z,\eps)$, denoted by $h_{\eps}(z)$, is defined through Equation (\ref{eqn::gff_localmean}). We have the following.
\begin{enumerate}
\item [(a)] The variable $h_{\eps}(0)$ is a Gaussian with mean $\LC(0)$ and variance $-\log\eps$.
\item [(b)] Given $\gamma[0,\tau]$ and on the event $\left[B(0,\eps)\subset U_{\tau}\right]$, the variable $h_{\eps}(0)$ is a Gaussian with mean $\LC_{\gamma[0,\tau]}(0)$ and variance $-\log\eps-\tau$ (recall that $\tau$ is minus the log of the conformal radius of $U_{\tau}$).
\end{enumerate}
Combining these two facts and letting $\eps$ go to zero, we have that
\[\E\left[\LC_{\gamma[0,\tau]}(0)\right]=\LC(0).\]
This implies the conclusion.
\end{proof}

\begin{figure}[ht!]
\begin{subfigure}[b]{0.48\textwidth}
\begin{center}
\includegraphics[width=0.63\textwidth]{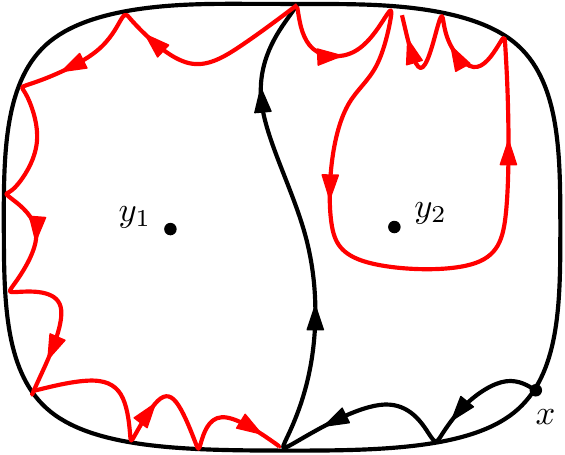}
\end{center}
\caption{The two paths coincide up to and including the first disconnecting time.}
\end{subfigure}
$\quad$
\begin{subfigure}[b]{0.48\textwidth}
\begin{center}\includegraphics[width=0.63\textwidth]{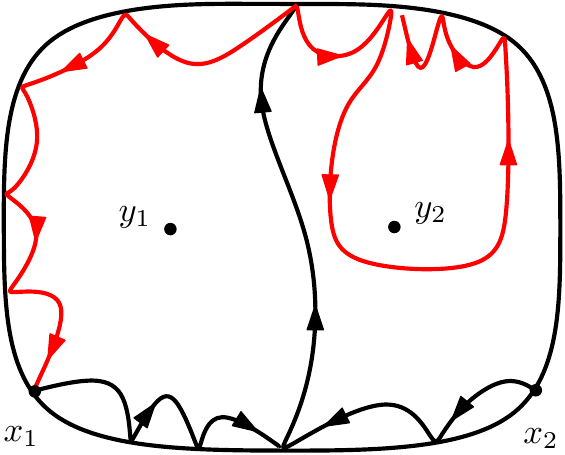}
\end{center}
\caption{The two paths merge upon intersecting, and continue together until the first disconnecting time.}
\end{subfigure}
\caption{\label{fig::levellines_interior_targetindependence} The explanation of the behavior of the paths in Proposition \ref{prop::levellines_interior_targetindependence} and in Remark \ref{rem::levellines_interior_targetindependence}.}
\end{figure}

By the construction of the level lines targeted at interior points, we have that these level lines also satisfy target-independent property.
\begin{proposition}\label{prop::levellines_interior_targetindependence}[Generalization of Theorem \ref{thm::boundary_levellines_targetindependence}]
Suppose that $h$ is a $\GFF$ on $\U$ whose boundary value is piecewise constant. Fix three points $x\in\partial\U$ and $y_1,y_2\in\U$. For $i=1,2$, let $\gamma_i$ be the level line of $h$ starting from $x$ targeted at $y_i$; define $T_i$ to be the first disconnecting time: $T_i$ is the inf of time $t$ such that $y_1,y_2$ are not in the same connected component of $\U\setminus\gamma_i[0,t]$. See Figure \ref{fig::levellines_interior_targetindependence}(a). Then, almost surely, the paths $\gamma_1$ and $\gamma_2$ coincide up to and including the first disconnecting time (up to time change); given $(\gamma_1[0,T_1],\gamma_2[0,T_2])$, the two paths continue towards their target points independently.

Furthermore, the same conclusion holds when one or two of the target points are on the boundary.
\end{proposition}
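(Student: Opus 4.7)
The plan is to reduce the claim to the boundary-target version (Theorem~\ref{thm::boundary_levellines_targetindependence}) via the inductive construction of $\gamma^{x\to z}$ described at the start of Section~\ref{subsec::interiror_levellines_interior}, and then obtain the post-disconnection independence from the locality of the traced set. First I would build a joint coupling of $\gamma_1$ and $\gamma_2$ step by step. Set $U_0=\U$ and $x_0=x$. At step $k\ge 0$, assuming $y_1,y_2$ both lie in a common simply connected domain $U_k$ with marked boundary starting point $x_k\in\partial U_k$, choose a \emph{single} auxiliary boundary target $y_k^\ast\in\partial U_k\setminus\{x_k\}$ to be used simultaneously in both constructions, and let $\alpha_k$ be the level line of $h|_{U_k}$ from $x_k$ to $y_k^\ast$. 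By Theorem~\ref{thm::boundary_levelline_gff_deterministic}, $\alpha_k$ is almost surely determined by $h|_{U_k}$, hence is literally the same random curve in both constructions. The $y_i$-construction truncates $\alpha_k$ at the first time $y_k^\ast$ is no longer on the boundary of the $y_i$-component of $U_k\setminus\alpha_k[0,\cdot]$; because $y_1,y_2$ share that component on any time interval during which they have not yet been separated, these two stopping times agree on that interval.

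Two alternatives can then occur at step $k$. If $\alpha_k$ first separates $y_1$ from $y_2$ at some time $T_k^\ast$ before either is disconnected from $y_k^\ast$, then both constructions are still tracing $\alpha_k$ at $T_k^\ast$, so $\gamma_1$ and $\gamma_2$ have traced the same subarc of $\alpha_k$ up to $T_k^\ast=T_1=T_2$, modulo the time change between the parametrisations by $-\log\CR$ seen from $y_1$ and from $y_2$; this gives the desired coincidence. Otherwise $\alpha_k$ disconnects $y_k^\ast$ from $\{y_1,y_2\}$ at a common time $\sigma_k$ with both constructions having traced the same initial segment of $\alpha_k$, and one iterates with $(U_{k+1},x_{k+1})$ equal to the $y_1$-component of $U_k\setminus\alpha_k[0,\sigma_k]$ based at $\alpha_k(\sigma_k)$, a domain that still contains $y_2$. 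This recursion terminates almost surely in finitely many steps: $\gamma_1$ is parametrised by $-\log\CR(\U\setminus\gamma_1[0,\cdot];y_1)$ and accumulates at $y_1$, so $\CR(U_k;y_1)\to 0$, and since $y_2\ne y_1$ is a fixed interior point it cannot lie in $U_k$ for all $k$, forcing the first alternative at some finite step.

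Given that $\gamma_1$ and $\gamma_2$ trace the same set up to $T_1=T_2$, the conditional independence of their continuations is a consequence of the locality of the traced set: $\gamma_1[0,T_1]$ is a finite union of boundary-targeted level-line segments, each of which is local for $h$, so by Proposition~\ref{prop::gff_localsets_union} the whole set is local. Proposition~\ref{prop::gff_localset_definition} then gives that, conditionally on $(\gamma_1[0,T_1],h|_{\gamma_1[0,T_1]})$, the field $h$ restricted to $\U\setminus\gamma_1[0,T_1]$ is $\LC_{\gamma_1[0,T_1]}$ plus a sum of independent zero-boundary GFFs, one in each connected component. The components containing $y_1$ and $y_2$ are distinct by definition of $T_1$, and by the same iterated construction each $\gamma_i|_{[T_i,\infty)}$ is an almost sure measurable function of $h$ restricted to the $y_i$-component; hence the two continuations are conditionally independent. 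The boundary-target case needs no new idea: if $y_i\in\partial\U$, one simply takes $y_0^\ast=y_i$ so that the $y_i$-construction is a single boundary-targeted level line and the argument reduces immediately to Theorem~\ref{thm::boundary_levellines_targetindependence}. The main technical hurdle is the step-matching claim in the recursion---that whenever $y_1,y_2$ share a component the two truncation times coincide---which rests on the symmetric observation that $y_k^\ast$ is in the same component as $y_1$ if and only if it is in the same component as $y_2$, together with the geometric fact that $\CR(U_k;y_1)\to 0$.
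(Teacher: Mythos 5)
Your approach is the one the paper has in mind (the paper's own ``proof'' is essentially the one-line remark ``By the construction of the level lines targeted at interior points''), and most of it is sound: coupling the two constructions by choosing the \emph{same} auxiliary boundary targets $y_k^\ast$ at each step, observing that the curve traced and the truncation time agree as long as $y_1$ and $y_2$ share a component, and then obtaining the post-disconnection independence from locality (Propositions~\ref{prop::gff_localset_definition} and \ref{prop::gff_localsets_union}) together with the fact that each continuation is a measurable function of the field in its own component.

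However, the termination argument is incorrect as stated. You assert that $\gamma_1$ ``accumulates at $y_1$, so $\CR(U_k;y_1)\to 0$''. This is false: by Proposition~\ref{prop::levellines_targeted_interior_radialsle4}, the level line targeted at an interior point stops at the continuation threshold $\tau = \inf\{t>0 : V^L_t = W_t = V^R_t\}$, which is almost surely finite, and at that time the component of $\U\setminus\gamma_1[0,\tau]$ containing $y_1$ is the interior of a level loop around $y_1$ with conformal radius $e^{-\tau}>0$. In particular $\CR(U_k;y_1)$ does \emph{not} tend to zero, and $y_2$ may perfectly well lie inside the component that still contains $y_1$ when the continuation threshold is hit. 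So your ``first alternative'' is not forced to happen at any finite step, and the recursion need not terminate in case A.

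This gap is repairable and does not change the conclusion, but the reasoning has to be replaced: if case A never occurs, then $y_1$ and $y_2$ remain in the same component when $\gamma_1$ reaches its continuation threshold, so the level loop around $y_1$ is the same as the level loop around $y_2$ (Lemma~\ref{lem::levellines_interior_levelloop_characterization} and Remark~\ref{rem::levellines_interior_targetindependence}); both constructions then trace the same set all the way to the common continuation threshold, so $T_1$ and $T_2$ are never reached and the claimed coincidence holds trivially. You should state this case explicitly instead of invoking a false termination claim. The handling of the boundary-target special case is fine.
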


\begin{remark}\label{rem::levellines_interior_targetindependence}
A similar conclusion as in Proposition \ref{prop::levellines_interior_targetindependence} also holds when the two starting points are distinct. Suppose that $h$ is a $\GFF$ on $\U$ whose boundary value is piecewise constant. Fix two distinct starting points $x_1,x_2\in\partial\U$ and two distinct target points $y_1,y_2\in\U$. For $i=1,2$, let $\gamma_i$ be the level line of $h$ starting from $x_i$ targeted at $y_i$. On the event that the two paths $\gamma_1,\gamma_2$ hit each other, the two paths will merge upon intersecting, continue together until the first disconnecting time after which the two paths continue towards their own target points independently. See Figure \ref{fig::levellines_interior_targetindependence}(b).
\end{remark}

\begin{figure}[ht!]
\begin{subfigure}[b]{0.48\textwidth}
\begin{center}
\includegraphics[width=0.63\textwidth]{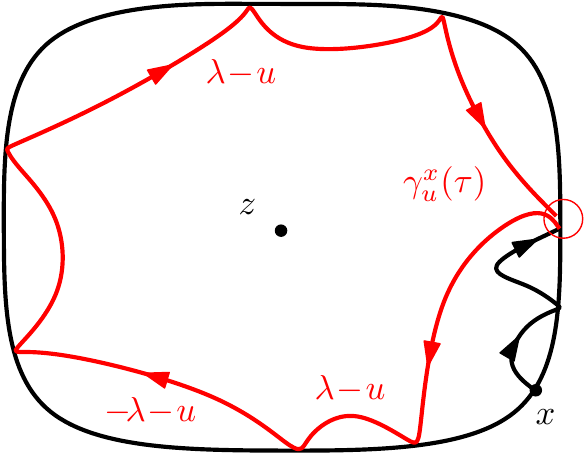}
\end{center}
\caption{The level line $\gamma^x_u$ with height $u$ starting from a boundary point $x$ targeted at $z$.}
\end{subfigure}
$\quad$
\begin{subfigure}[b]{0.48\textwidth}
\begin{center}\includegraphics[width=0.63\textwidth]{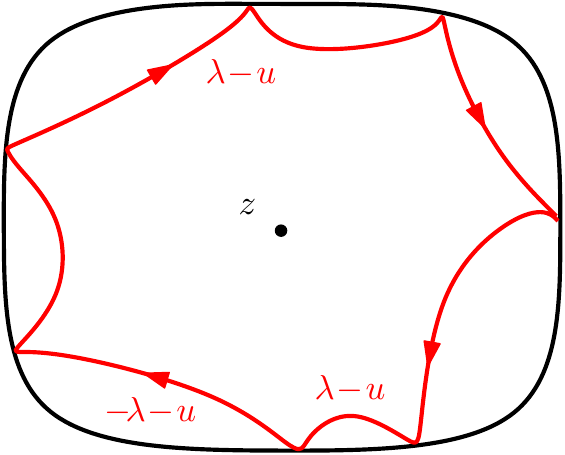}
\end{center}
\caption{The level loop with height $u$ starting from the boundary targeted at $z$.}
\end{subfigure}
\caption{\label{fig::levellines_interior_levelloop} The boundary value of the field in Lemma \ref{lem::levellines_interior_levelloop}.}
\end{figure}

Suppose that $h$ is a zero-boundary GFF on $\U$. Fix a height $u\in (-\lambda,\lambda)$ and a target point $z\in\U$. Let $\gamma^x_u$ be the level line of $h$ with height $u$ starting from $x\in\partial\U$ targeted at $z$, and define $\tau$ to be its continuation threshold. Let $U_{\tau}$ be the connected component of $\U\setminus\gamma^x_u[0,\tau]$ that contains $z$, and let $L_u$ be the oriented boundary of $U_{\tau}$.
By Remark \ref{rem::levellines_interior_targetindependence}, we know that $L_u$ is independent of the starting point $x$. In other words, the level line $\gamma^x_u$ depends on $x$ but $L_u$ does not. We call $L_u$ the \textbf{level loop} of $h$ starting from the boundary targeted at $z$. We denote by $\inte(L_u)$ the connected component of $\U\setminus L_u$ that contains $z$. We record some basic properties of the level loop in the following lemma.
\begin{lemma}\label{lem::levellines_interior_levelloop}
Suppose that $h$ is a zero-boundary $\GFF$ on $\U$. Fix a height $u\in (-\lambda,\lambda)$ and a target point $z\in\U$. Let $L_u$ be the level loop of $h$ starting from the boundary $\partial\U$ targeted at $z$. Then we have the following.
\begin{enumerate}
\item [(1)] $L_u$ is oriented either clockwise or counterclockwise and is homeomorphic to the unit disc.
\item [(2)] $L_u\cap\partial\U\neq\emptyset$.
\item [(3)] Given $L_u$, the conditional law of $h$ restricted to each connected component of $\U\setminus L_u$ is the same as $\GFF$'s whose boundary value is zero on $\partial\U$, is $\lambda-u$ to the right of $L_u$, and is $-\lambda-u$ to the left of $L_u$. See Figure \ref{fig::levellines_interior_levelloop}. In particular, the conditional law of $h$ restricted to $\inte(L_u)$ is the same as a $\GFF$ whose boundary value is
\[\left\{
    \begin{array}{ll}
      \lambda-u, & \text{if }L_u \text{ is clockwise}, \\
      -\lambda-u, & \text{if }L_u \text{ is counterclockwise}.
    \end{array}
  \right.\]
\end{enumerate}
Moreover, the loop $L_u$ is almost surely determined by $h$ and
\[\PP\left[L_u\text{ is clockwise}\right]=\frac{\lambda+u}{2\lambda},\quad \PP\left[L_u\text{ is counterclockwise}\right]=\frac{\lambda-u}{2\lambda}.\]
\end{lemma}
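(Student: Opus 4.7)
The approach is to realize $\gamma_u^x$ as a radial $\SLE_4(\rho)$ process and pass to the limit at the continuation threshold. Apply Proposition \ref{prop::levellines_targeted_interior_radialsle4} to the field $h + u$, whose boundary value is the constant $u \in (-\lambda, \lambda)$: degenerating the force points $V_0^L, V_0^R$ to the starting point $W_0 = x$, the level line $\gamma_u^x$ is coupled with $h + u$ as a radial $\SLE_4(\rho^L; \rho^R)$ with $\rho^L = -u/\lambda - 1$ and $\rho^R = u/\lambda - 1$, both lying in $(-2, 0)$ and satisfying $\rho^L + \rho^R = -2$. The continuation threshold $\tau$ is thus the almost surely finite first time at which $V_t^L = W_t = V_t^R$. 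Because the curve is simple and continuous up to and including $\tau$, the domain $U_\tau$ is simply connected and its boundary $L_u$ is a Jordan curve, with orientation inherited from the traversal direction of $\gamma_u^x$; this gives item (1). For item (2), the arc of $\partial \U$ lying in $\partial U_t$ is non-empty for every $t < \tau$, and passing to the limit the prime end corresponding to $V_\tau^L = V_\tau^R$ produces a point of $L_u \cap \partial \U$.

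For item (3), choose a sequence of stopping times $\tilde\tau_n \uparrow \tau$ with $\tilde\tau_n < \tau$, and let $U_n$ denote the connected component of $\U \setminus \gamma_u^x[0, \tilde\tau_n]$ containing $z$. Proposition \ref{prop::levellines_targeted_interior_radialsle4} gives that, conditionally on $\gamma_u^x[0, \tilde\tau_n]$, the restriction $(h + u)|_{U_n}$ is a GFF whose boundary value is $u$ on $\partial \U \cap \partial U_n$, $\lambda$ along the right side of $\gamma_u^x[0, \tilde\tau_n]$, and $-\lambda$ along the left side. Since the conformal radii $\CR(U_n; z)$ converge to $\CR(\inte(L_u); z) = e^{-\tau}$ by continuity of $\gamma_u^x$, Lemma \ref{lem::caratheodory_decreasing} yields Carath\'eodory convergence $U_n \to \inte(L_u)$ seen from $z$. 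The harmonic measure from $z$ of the $\partial \U$-part of $\partial U_n$ tends to zero, so the limiting boundary value on $L_u$ is constant: equal to $\lambda$ when $\inte(L_u)$ lies to the right of $\gamma_u^x$ (in which case $L_u$ is clockwise), and equal to $-\lambda$ when it lies to the left (counterclockwise). Subtracting $u$ yields the claimed conditional law of $h|_{\inte(L_u)}$, while the statement for the other connected components, together with conditional independence across them, follows by applying Proposition \ref{prop::levellines_targeted_interior_radialsle4} at the (earlier) stopping times at which those components were first cut off, combined with the standard domain Markov property of the GFF given a local set.

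The probabilities follow immediately from Corollary \ref{cor::levellines_interior_conditionalmean} applied to $h + u$, whose mean at $z$ equals $u$ since $h$ has zero boundary: the event $\{\LC_{\gamma_u^x[0,\tau]}(z) = \lambda\}$ is exactly the event that $L_u$ is oriented clockwise, which has probability $(\lambda + u)/(2\lambda)$, and similarly $\{\LC_{\gamma_u^x[0,\tau]}(z) = -\lambda\}$ is the counterclockwise event with probability $(\lambda - u)/(2\lambda)$. Finally, $L_u$ is determined by $h$ because $\gamma_u^x$ is a measurable function of $h$ by the iterative construction of interior-targeted level lines recalled in Section \ref{subsec::interiror_levellines_interior} together with Theorem \ref{thm::boundary_levelline_gff_deterministic}, and target-independence (Proposition \ref{prop::levellines_interior_targetindependence}) guarantees that $L_u$ itself does not depend on the choice of the starting point $x$. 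The main technical subtlety I foresee is the passage to the limit in the conditional law, specifically verifying that harmonic extensions of the boundary data are stable under Carath\'eodory convergence despite the collapse of the boundary arc $\partial \U \cap \partial U_n$ to a single prime end; this is handled by the vanishing of the harmonic measure from $z$ of that arc.
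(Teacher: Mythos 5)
The approach — passing to the radial $\SLE_4(\rho^L;\rho^R)$ description via Proposition \ref{prop::levellines_targeted_interior_radialsle4} with degenerate force points, taking a limit at the continuation threshold, and reading off the probabilities from Corollary \ref{cor::levellines_interior_conditionalmean} — is the intended one, and most of the scaffolding is sound. However, two steps in item~(3) are not actually established.

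First, you write that ``the harmonic measure from $z$ of the $\partial\U$-part of $\partial U_n$ tends to zero, \emph{so} the limiting boundary value on $L_u$ is constant.'' That implication does not hold: $\partial U_n$ has three pieces (an arc of $\partial\U$ with value $u$, the right side of $\gamma$ with value $\lambda$, and the left side with value $-\lambda$), and killing the $\partial\U$-arc alone still leaves open the possibility of a nontrivial mixture of $\pm\lambda$ in the limit. What closes the gap is the fact that, as $t\uparrow\tau$, exactly one of $\arg(W_t)-\arg(V_t^L)$ and $\arg(V_t^R)-\arg(W_t)$ tends to $0$ while the other tends to $2\pi$ (this is the content of Figure \ref{fig::levellines_interior_conditionalmean}), so that one \emph{side} of $\gamma$ also has vanishing harmonic measure from $z$; equivalently, one can argue via the maximum principle from the fact (Corollary \ref{cor::levellines_interior_conditionalmean}) that $\LC_{\gamma[0,\tau]}(z)$ takes the extreme value $\pm\lambda$. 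You cite the Corollary only for the probabilities; you need it (or the winding statement) for the constancy as well.

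Second, your treatment of ``each connected component of $\U\setminus L_u$'' quietly replaces $\U\setminus L_u$ by $\U\setminus\gamma^x_u[0,\tau]$. The components ``first cut off at earlier stopping times'' are pockets of $\U\setminus\gamma^x_u[0,\tau]$, and conditioning on $\gamma^x_u[0,\tau]$ there is more information than conditioning on $L_u$. The outside component of $\U\setminus L_u$ is a single domain that contains $\gamma^x_u[0,\tau]\setminus L_u$ as a random subset; to prove the claim as stated one must argue that $L_u$ is itself a local set whose conditional mean on the outside equals the stated harmonic function, which requires integrating out $\gamma^x_u\setminus L_u$ rather than conditioning on it. (Relatedly, the assertion in item~(1) that $\partial U_\tau$ is a Jordan curve does not follow merely from $U_\tau$ being simply connected --- compare a slit disc --- and needs the specific loop-closing structure at $\tau$; and the reference for starting-point independence should be Remark \ref{rem::levellines_interior_targetindependence}, not Proposition \ref{prop::levellines_interior_targetindependence}, though that claim is not actually required for ``determined by $h$.'')
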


In fact, the three properties in Lemma \ref{lem::levellines_interior_levelloop} characterize the level loop of $\GFF$.
\begin{lemma}\label{lem::levellines_interior_levelloop_characterization}
Suppose that $h$ is a zero-boundary $\GFF$ on $\U$. Fix a height $u\in (-\lambda,\lambda)$ and a target point $z\in\U$. Let $L_u$ be the level loop of $h$ starting from the boundary $\partial\U$ targeted at $z$. Assume that $\tilde{L}$ is an oriented loop in $\U$ satisfying the three properties in Lemma \ref{lem::levellines_interior_levelloop}. Then almost surely $\tilde{L}$ and $L_u$ are equal.
\end{lemma}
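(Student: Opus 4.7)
The plan is to show that $\tilde L$ is traced out by the level line used in the definition of $L_u$, so that $\tilde L = L_u$. Assume without loss of generality that $\tilde L$ is oriented clockwise, so that $\inte(\tilde L)$ lies to its right. Pick $x\in \partial\U\setminus\tilde L$ (the case $\tilde L\cap\partial\U = \partial\U$ can be excluded by property~(2) and an approximation: a loop meeting $\partial\U$ along an arc may be replaced by one meeting it on a positive-measure subset via the conformal image under a small rotation, and we take limits at the end). Let $U_{\mathrm{out}}$ be the component of $\U\setminus\tilde L$ whose closure contains $x$. Conditioning on $\tilde L$, property~(3) asserts that $h|_{U_{\mathrm{out}}}$ is a GFF with boundary value $0$ on $\partial\U\cap\partial U_{\mathrm{out}}$ and $-\lambda-u$ on $\tilde L\cap\partial U_{\mathrm{out}}$; equivalently, $(h+u)|_{U_{\mathrm{out}}}$ has boundary values $u\in(-\lambda,\lambda)$ on $\partial\U$ and $-\lambda$ on $\tilde L$, while on $\inte(\tilde L)$ the field $h+u$ has constant boundary value $\lambda$.

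\paragraph{Running the level line.} Let $\gamma^{x\to z}_u$ be the level line of $h$ with height $u$ starting from $x$ and targeted at $z$, and let $\tau$ be its continuation threshold; by construction $L_u$ is the boundary of the connected component of $\U\setminus\gamma^{x\to z}_u[0,\tau]$ containing $z$. Conditional on $\tilde L$, the chordal level lines that compose $\gamma^{x\to z}_u$ (see Section~\ref{subsec::interiror_levellines_interior}) initially evolve inside $U_{\mathrm{out}}$ as level lines of $h+u$ in that domain; by Theorem~\ref{thm::sle_chordal_continuity_transience} they are continuous up to and including the continuation threshold. The loop $\tilde L$ itself, viewed from $U_{\mathrm{out}}$, carries the same boundary value ($-\lambda$ for $h+u$) on the ``outside'' as a level line of $h$ with height $u$ would, so by Proposition~\ref{prop::gff_localsets_union} and the merging statement of Theorem~\ref{thm::boundary_levelline_gff_interacting}(2) (applied in the conditional field, using that $\tilde L$ and the level line have matching boundary values on the left), the level line $\gamma^{x\to z}_u$ merges with $\tilde L$ as soon as it hits it, and subsequently traces $\tilde L$. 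The tracing continues until $z$ is separated from $\partial\U$, i.e.\ until $\tilde L$ has been closed; at that moment the connected component of $z$ has constant boundary $\lambda$ for $h+u$, so the continuation threshold is hit (Remark~\ref{rem::boundary_levellines_values_allowedhit}). Therefore the boundary of the connected component of $\U\setminus\gamma^{x\to z}_u[0,\tau]$ containing $z$ is exactly $\tilde L$, which shows $L_u=\tilde L$. The case $\tilde L$ counterclockwise is symmetric.

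\paragraph{Main obstacle.} The delicate step is to justify that the chordal pieces of $\gamma^{x\to z}_u$ genuinely merge with $\tilde L$ upon hitting it. Theorem~\ref{thm::boundary_levelline_gff_interacting}(2) is stated for two level lines of the same height starting from boundary points, whereas here $\tilde L$ is a loop specified only by the local-set property~(3). To bridge this gap, one argues that conditional on $\tilde L$, the loop satisfies the same coupling identity as in Theorem~\ref{thm::boundary_levelline_gff_coupling} for a level line of height $u$ traced counterclockwise along $\tilde L$; combined with Theorem~\ref{thm::boundary_levelline_gff_deterministic} (determinism of level lines from the field) and Proposition~\ref{prop::gff_localsets_interacting}, this lets one import the merging result from Section~\ref{subsec::boundary_levellines_interacting_general}. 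Finally, one removes the requirement $x\notin \tilde L$ by target-independence of level lines (Proposition~\ref{prop::levellines_interior_targetindependence}), which ensures that $L_u$ does not depend on the choice of starting boundary point.
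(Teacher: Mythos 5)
Your proposal follows essentially the same approach as the paper: run the level line $\gamma^x_u$ used to construct $L_u$, argue that once it meets $\tilde L$ it must merge with it because $\tilde L$ carries exactly the boundary data of a level line of height $u$ (property~(3)), and use property~(2) ($\tilde L \cap \partial\U\neq\emptyset$) to guarantee that the loops actually meet. The paper's proof is just a terse version of this: if $\gamma^x_u$ hits $\tilde L$ then the two merge, hence $L_u=\tilde L$; and since both $L_u$ and $\tilde L$ are simple loops surrounding $z$ that touch $\partial\U$, they cannot be disjointly nested, so they must meet. Your longer write-up correctly identifies all the right ingredients (merging via matching boundary values, target-independence so that $L_u$ does not depend on the choice of $x$), though the opening choice of $x\in\partial\U\setminus\tilde L$ makes $x$ depend on the random loop and is unnecessary — a cleaner route, closer to the paper's, is to use the topological fact that two non-intersecting simple loops both surrounding $z$ and both touching $\partial\U$ cannot exist, rather than tracking the path in $U_{\mathrm{out}}$ explicitly.
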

\begin{proof}
For $x\in\partial\U$, let $\gamma^x_u$ be the level line of $h$  with height $u$ starting from $x$ targeted at $z$. We have the following observations.
\begin{enumerate}
\item [(a)] On the event that $\gamma^x_u$ hits $\tilde{L}$, the level line $\gamma^x_u$ will merge with $\tilde{L}$ (since $\tilde{L}$ satisfies the third property in Lemma \ref{lem::levellines_interior_levelloop}) upon intersecting; therefore, $\tilde{L}$ and $L_u$ are equal.
\item [(b)] $\tilde{L}\cap\partial\U\neq\emptyset$.
\end{enumerate}
Combining these two facts, we obtain the conclusion.
\end{proof}
\begin{figure}[ht!]
\begin{subfigure}[b]{0.48\textwidth}
\begin{center}
\includegraphics[width=0.75\textwidth]{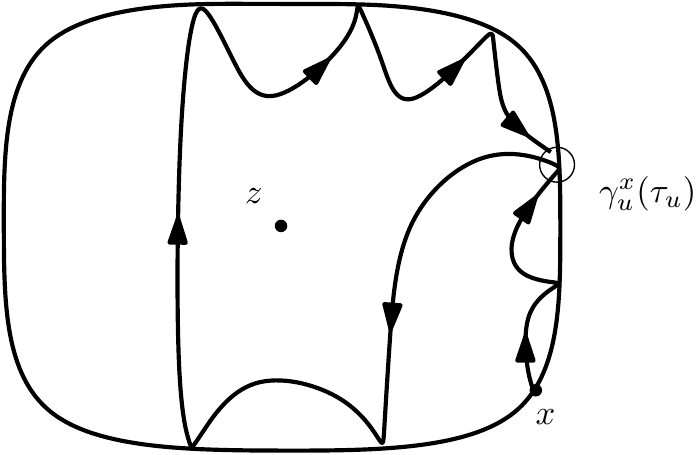}
\end{center}
\caption{$L_u$ is the boundary of the connected component of $\U\setminus\gamma^x_u[0,\tau_u]$ that contains $z$.}
\end{subfigure}
$\quad$
\begin{subfigure}[b]{0.48\textwidth}
\begin{center}\includegraphics[width=0.75\textwidth]{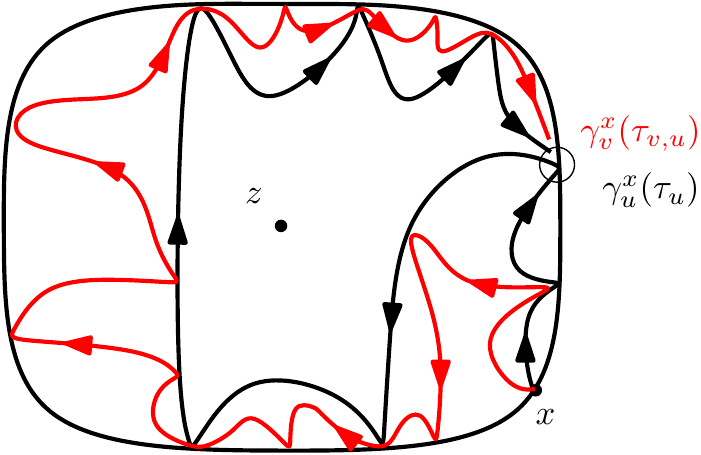}
\end{center}
\caption{Since $\gamma^x_v$ stays to the left of $\gamma^x_u$, the path $\gamma^x_v[0,\tau_{v,u}]$ is outside of $L_u$.}
\end{subfigure}
\caption{\label{fig::levellines_interior_loop_interaction} The explanation of the behavior of the paths in Proposition \ref{prop::levellines_interior_loop_interaction}.}
\end{figure}

The following proposition addresses the interaction between two level loops.
\begin{proposition}\label{prop::levellines_interior_loop_interaction}
Suppose that $h$ is a zero-boundary $\GFF$ on $\U$. Fix a target point $z\in\U$. For $a\in (-\lambda,\lambda)$, denote by $L_a$ the level loop of $h$ with height $a$ starting from the boundary $\partial\U$ targeted at $z$. Fix $u\in(-\lambda,\lambda)$.

If $L_u$ is clockwise, then, for any $v\in (u,\lambda)$, the loop $L_v$ is outside of $L_u$ and is also clockwise. Moreover, given $L_v$, the conditional law of $L_u$ is the same as the level loop of a zero-boundary $\GFF$ on $\inte(L_v)$ with height $\lambda-v+u$ starting from the boundary $L_v$ targeted at $z$, conditioned on the event that the loop is clockwise.

If $L_u$ is counterclockwise, then, for any $v\in (-\lambda, u)$, the loop $L_v$ is outside of $L_u$ and is also counterclockwise. Moreover, given $L_v$, the conditional law of $L_u$ is the same as the level loop of a zero-boundary $\GFF$ on $\inte(L_v)$ with height $-\lambda-v+u$ starting from the boundary $L_v$ targeted at $z$, conditioned on the event that the loop is counterclockwise.
\end{proposition}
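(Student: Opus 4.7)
The plan is to proceed in two main steps: a geometric/monotonicity step and a conditional-law step, followed by a symmetry argument for the counterclockwise case. For the first step, I would extend the monotonicity of level lines from Theorem \ref{thm::boundary_levelline_gff_interacting} to level lines targeted at interior points via the iterative target-independent construction of Section \ref{subsec::interiror_levellines_interior}. At each stage of the simultaneous construction of $\gamma^{x\to z}_v$ and $\gamma^{x\to z}_u$ with a common auxiliary boundary target, Theorem \ref{thm::boundary_levelline_gff_interacting} gives that the higher-height level line stays to the left of the lower-height one. Propagating this through all stages, $\gamma^{x\to z}_v$ stays to the left of $\gamma^{x\to z}_u$. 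Since $L_u$ is clockwise around $z$, the left side of $\gamma^{x\to z}_u$ near $L_u$ faces away from $z$, so $\gamma^{x\to z}_v$ (and hence $L_v$) stays outside $L_u$; to enclose $z$ from outside, $\gamma^{x\to z}_v$ must itself traverse clockwise, so $L_v$ is clockwise and strictly encloses $L_u$.

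For the second step, Lemma \ref{lem::levellines_interior_levelloop} implies that given $L_v$ clockwise, the field $h|_{\inte(L_v)}$ is a $\GFF$ with constant boundary value $\lambda - v$ on $L_v$. Setting $\tilde h = h|_{\inte(L_v)} - (\lambda - v)$, a zero-boundary $\GFF$ on $\inte(L_v)$, I claim $L_u$ coincides with the clockwise level loop of $\tilde h$ at height $u' := \lambda - v + u$ starting from $L_v$ targeted at $z$ in the domain $\inte(L_v)$; note $u' \in (-\lambda, \lambda)$ since $v \in (u, \lambda)$. To verify this, I would invoke the characterization in Lemma \ref{lem::levellines_interior_levelloop_characterization}: (i) $L_u$ is clockwise and a simple closed curve by hypothesis; (ii) $L_u \cap L_v \neq \emptyset$, because both loops contain a common arc of $\partial\U$ in a neighborhood of the common starting point $x$ of the level lines; and (iii) the conditional law of $\tilde h$ on $\inte(L_v) \setminus L_u$ matches the required form, since the inside boundary value $(\lambda - u) - (\lambda - v) = \lambda - u'$ and the outside boundary value $(-\lambda - u) - (\lambda - v) = -\lambda - u'$ on $L_u$, together with $0$ on $L_v$, are precisely those of a clockwise level loop of $\tilde h$ at height $u'$. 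Conditioning on clockwise then yields the stated conditional law. The counterclockwise case (with $v \in (-\lambda, u)$) follows by applying the same argument to $-h$ with signs reversed, in which case the analogous computation gives $u' = -\lambda - v + u$.

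The main obstacle I anticipate is the geometric step: rigorously propagating the monotonicity through each iteration of the target-independent construction for interior targets, and in particular ensuring strict containment $L_u \subsetneq \inte(L_v)$ rather than merely weak containment. This reduces to ruling out coalescence between two level lines of distinct heights, which is the non-merging part of Theorem \ref{thm::boundary_levelline_gff_interacting}. A secondary technical point is to verify (ii) above--i.e.\ that $L_u$ and $L_v$ actually share boundary contact--by tracing the common initial portion of the two level lines near $x$.
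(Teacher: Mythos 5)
Your proposal takes essentially the same route as the paper: the geometric step via Theorem~\ref{thm::boundary_levelline_gff_interacting} propagated through the iterative, target-independent construction of level lines aimed at $z$, and then (not written out by the paper but asserted in the statement) the conditional-law identification via the shift $\tilde h = h|_{\inte(L_v)} - (\lambda-v)$ and Lemma~\ref{lem::levellines_interior_levelloop_characterization}. Your boundary-value bookkeeping for $u' = \lambda - v + u$ is correct.

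Two small remarks. First, for the loop-level containment the paper's proof explicitly uses that the level \emph{loop} $L_v$ is independent of the starting point $x$: for each $x$ the level line $\gamma^{x\to z}_v$ is only shown to lie outside $L_u$ up to the time $\tau_{v,u}$ at which it reaches $\gamma^{x\to z}_u(\tau_u)$, so one lets $x$ range over $\partial\U$ and appeals to the target-point-invariance of $L_v$ to cover the whole loop; you should invoke that step explicitly rather than concluding directly that the single path $\gamma^{x\to z}_v$ never enters $\inte(L_u)$. Second, your geometric justification that $L_v$ is clockwise (``to enclose $z$ from outside, $\gamma^{x\to z}_v$ must traverse clockwise'') is a bit informal; a cleaner route is the boundary-value argument you already set up: if $L_v$ were counterclockwise, then $\tilde h = h|_{\inte(L_v)} + (\lambda + v)$ is a zero-boundary $\GFF$ on $\inte(L_v)$, and for $L_u$ (with inside boundary value $\lambda-u$ for $h$) to be a level loop of $\tilde h$ one would need $u' = -\lambda + u - v \in (-\lambda,\lambda)$, forcing $u > v$, contradicting $v > u$.
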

\begin{proof}
We only need to prove the conclusion when $L_u$ is clockwise. Let $\gamma^x_u$ (resp. $\gamma^x_v$) be the level line of $h$ with height $u$ (resp. height $v$) starting from $x\in\partial\U$ targeted at $z$ and $\tau_u$ (resp. $\tau_v$) be its continuation threshold. By the construction of level lines targeted at interior point and Theorem \ref{thm::boundary_levelline_gff_interacting}, we know that $\gamma_v^x$ stays to the left of $\gamma^x_u$ until the time that $\gamma_v^x$ reaches $\gamma^x_u(\tau_u)$, say at time $\tau_{v,u}<\tau_v$. See Figure \ref{fig::levellines_interior_loop_interaction}. This implies that $\gamma^x_v[0,\tau_{v,u}]$ is outside of $L_u$. This holds for any $x$ and we know that $L_v$ is independent of $x$, thus $L_v$ is outside of $L_u$.
\end{proof}

Suppose that $h$ is a zero-boundary GFF on $\U$ and $u\in (-\lambda,\lambda)$. Fix a target point $z\in\U$.
Let $L_u$ be the level loop of $h$ with height $u$ starting from the boundary targeted at $z$.
The rest of this section is devoted to the study of the asymptotic behavior of the level loop $L_u$ as $u\to -\lambda$.

\begin{figure}[ht!]
\begin{subfigure}[b]{0.48\textwidth}
\begin{center}
\includegraphics[width=0.61\textwidth]{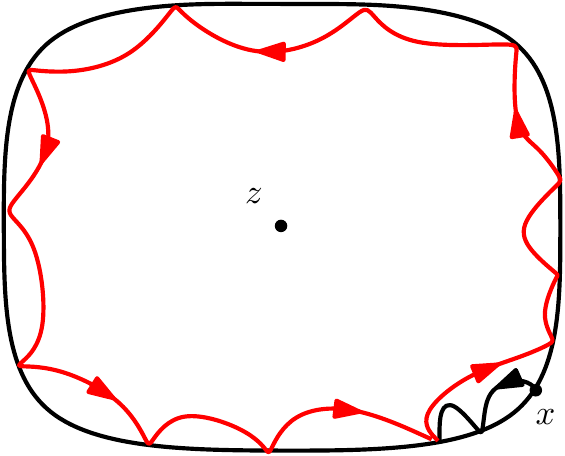}
\end{center}
\caption{When $L_u$ is counterclockwise, the loop converges to $\partial\U$ as $u\to -\lambda$.}
\end{subfigure}
$\quad$
\begin{subfigure}[b]{0.48\textwidth}
\begin{center}\includegraphics[width=0.61\textwidth]{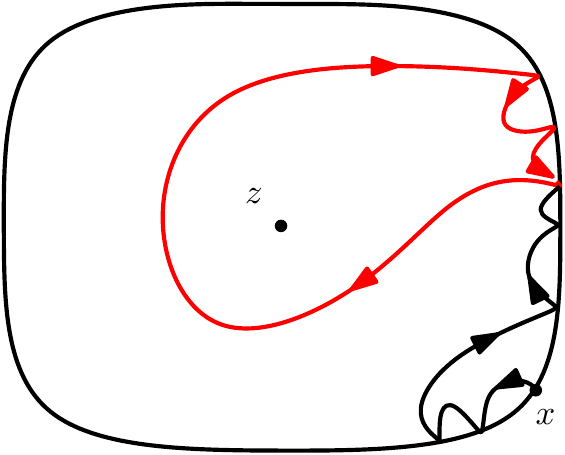}
\end{center}
\caption{When $L_u$ is clockwise, the loop converges to some bubble as $u\to-\lambda$.}
\end{subfigure}
\caption{\label{fig::levellines_interiror_limit} The explanation of the behavior of the paths in Lemmas \ref{lem::levellines_interior_limit_counterclockwise} and \ref{lem::levellines_interior_limit_clockwise}.}
\end{figure}

There are two possibilities for the orientation of $L_u$: counterclockwise (with probability $(\lambda-u)/2\lambda$) or clockwise (with probability $(\lambda+u)/2\lambda$). Lemma \ref{lem::levellines_interior_limit_counterclockwise} addresses the first case: when $L_u$ is counterclockwise, the loop will converge to $\partial\U$ as $u\to-\lambda$. See Figure \ref{fig::levellines_interiror_limit}(a). Lemma \ref{lem::levellines_interior_limit_clockwise} addresses the second case: when (conditioned on) $L_u$ is clockwise, the loop will converge to some bubble as $u\to-\lambda$. See Figure \ref{fig::levellines_interiror_limit}(b).

\begin{lemma}\label{lem::levellines_interior_limit_counterclockwise}
Suppose that $h$ is a zero-boundary $\GFF$ on $\U$. Fix a target point $z\in\U$. Let $L_u$ be the level loop of $h$ with height $u\in (-\lambda,\lambda)$ starting from the boundary targeted at $z$. Then, almost surely,
\[\CR(\inte(L_u);z)\to 1,\quad\text{as }u\to-\lambda.\]
Moreover, there exists a universal constant $c$ such that
\begin{equation}\label{eqn::log_cr_additive}
\E[-\log\CR(\inte(L_u);z)\cond L_u \text{ is counterclockwise}]=c(\lambda+u).
\end{equation}
\end{lemma}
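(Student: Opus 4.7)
The plan is to reduce both assertions to an elementary Brownian-motion computation coming from the martingale description of the conditional mean of $h$ at the target point, and then to combine this with the loop monotonicity of Proposition \ref{prop::levellines_interior_loop_interaction} and a Borel--Cantelli argument to upgrade an $L^1$ statement to almost sure convergence. Throughout I assume, by conformal invariance of the zero-boundary GFF, that $z=0$, so that $\CR(\U;0)=1$.

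The key setup is the following. Pick any boundary starting point $x\in\partial\U$ and let $\gamma^x_u$ be the level line of $h$ with height $u$ starting from $x$ targeted at $0$, run up to its continuation threshold $\tau$; the interior-target parametrization gives $\tau=-\log\CR(\inte(L_u);0)$. Writing $\tilde h=h+u$ and $\eta_t:=\LC_{\gamma^x_u[0,t]}(0)$, Corollary \ref{cor::levellines_interior_conditionalmean} together with Proposition \ref{prop::levellines_targeted_interior_radialsle4} gives $\eta_0=u$, $\eta_\tau\in\{-\lambda,\lambda\}$, and $\{\eta_\tau=-\lambda\}=\{L_u\text{ counterclockwise}\}$. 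Proposition \ref{prop::gff_localsets_bm}, combined with the normalization $\CR(\U;0)=1$, says that $(\eta_t-u)_{0\le t\le\tau}$ is a standard Brownian motion in its natural time parameter, stopped on first exiting the interval $(-\lambda-u,\lambda-u)$.

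For the first assertion, I would couple all loops $(L_u)_{u\in(-\lambda,\lambda)}$ to the same field $h$. Proposition \ref{prop::levellines_interior_loop_interaction} says that once $L_{u_0}$ is counterclockwise then $L_v$ is counterclockwise and $\inte(L_v)\supset\inte(L_{u_0})$ for every $v\in(-\lambda,u_0)$, so $u\mapsto\CR(\inte(L_u);0)$ is monotone nonincreasing throughout the counterclockwise regime. By Lemma \ref{lem::levellines_interior_levelloop}, $\PP[L_u\text{ is clockwise}]=(\lambda+u)/(2\lambda)$; choosing $u_n=-\lambda+n^{-2}$ and applying Borel--Cantelli shows that almost surely $L_{u_n}$ is counterclockwise for every sufficiently large $n$, and hence $\CR(\inte(L_{u_n});0)$ is eventually nondecreasing and bounded above by $1$. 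Its almost sure limit $L\in(0,1]$ therefore satisfies $\E[-\log L]=\lim_n\E[-\log\CR(\inte(L_{u_n});0)]$ by monotone convergence. But optional stopping applied to $(\eta_t-u)^2-t$ gives $\E[\tau]=(\lambda-u)(\lambda+u)$, so $\E[-\log\CR(\inte(L_u);0)]=\E[\tau]\to 0$ as $u\to-\lambda$, forcing $L=1$ almost surely; monotonicity in $u$ then upgrades this to convergence along arbitrary $u\to-\lambda$.

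For the second assertion, I would directly solve the Dirichlet problem on the interval $(-\lambda-u,\lambda-u)$ for the function $f(x):=\E^x[T\,\mathbf 1_{B_T=-\lambda-u}]$, where $T$ is the exit time of a Brownian motion starting at $x$: the function $f$ satisfies $\tfrac12 f''(x)=-(\lambda-u-x)/(2\lambda)$ with Dirichlet data $f(-\lambda-u)=f(\lambda-u)=0$. Integrating twice yields $f(0)$ in closed form, and dividing by $\PP^0[B_T=-\lambda-u]=(\lambda-u)/(2\lambda)$ produces the conditional expectation $\E[\tau\mid L_u\text{ CCW}]$ as an explicit polynomial whose dependence on $u$ is controlled, in the regime $u\to-\lambda$ relevant to \eqref{eqn::log_cr_additive}, by a universal multiple of $\lambda+u$ depending only on $\lambda$. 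The main technical subtlety in the whole argument is the clean identification of the level-line time parameter with the Brownian-motion time in Proposition \ref{prop::gff_localsets_bm}, which is exactly what the normalization $z=0$ secures; once that identification is in hand, both parts of the lemma reduce to standard exit-time bookkeeping together with a Borel--Cantelli / monotone-convergence argument.
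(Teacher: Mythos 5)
Your reduction to the stopped Brownian motion $(\eta_t-u)_{0\le t\le\tau}$ is exactly the right picture, and the first assertion is handled correctly: the monotonicity of $u\mapsto\CR(\inte(L_u);0)$ on the counterclockwise regime plus Borel--Cantelli along $u_n=-\lambda+n^{-2}$ yields an almost surely (eventually) monotone sequence of conformal radii, and $\E[\tau_u]=(\lambda-u)(\lambda+u)\to 0$ then forces the limit to be $1$; this matches the paper's argument, which packages the same two ingredients as ``monotonicity $+$ convergence in distribution to $1$.'' (Small nit: in the middle of your paragraph you call $u\mapsto\CR$ ``nonincreasing'' but you mean that it is nondecreasing as $u\downarrow-\lambda$, which is what you correctly use later.)

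For the second assertion you should finish the computation you set up, because it reveals that equation \eqref{eqn::log_cr_additive} as written is actually \emph{false}. Solving
\[
\tfrac12 f''(x)=-\frac{\lambda-u-x}{2\lambda},\qquad f(-\lambda-u)=f(\lambda-u)=0,
\]
gives $f(0)=\frac{1}{6\lambda}(\lambda-u)(\lambda+u)(3\lambda-u)$, and dividing by $p(0)=(\lambda-u)/(2\lambda)$ yields
\[
\E\bigl[-\log\CR(\inte(L_u);z)\,\big|\,L_u\text{ CCW}\bigr]
=\frac{f(0)}{p(0)}=\frac{1}{3}(\lambda+u)(3\lambda-u),
\]
which passes the sanity check $\E[\tau]=\E[\tau\mid\mathrm{CCW}]\PP[\mathrm{CCW}]+\E[\tau\mid\mathrm{CW}]\PP[\mathrm{CW}]=(\lambda-u)(\lambda+u)$ but is manifestly \emph{not} of the form $c(\lambda+u)$ for a constant $c$ (e.g.\ at $u=0$ one gets $\lambda^2$, at $u=-\lambda/2$ one gets $\tfrac{7}{12}\lambda^2$, not $\tfrac12\lambda^2$). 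The paper instead argues by an additivity-in-distribution identity $F(\delta)\overset{d}{=}F(\eps)+\tilde F(\delta-\eps)$ built from Proposition \ref{prop::levellines_interior_loop_interaction}; but that identity conflates two different conditionings, since $\{L_u\text{ CCW}\}\subsetneq\{L_v\text{ CCW}\}$ for $u>v$, so the law of $-\log\CR(\inte(L_v);z)$ conditioned on $\{L_u\text{ CCW}\}$ is not the law conditioned on $\{L_v\text{ CCW}\}$. Indeed additivity would force $\E[F(\delta)]$ to be linear in $\delta$, while the Brownian computation shows it is the strictly concave $\tfrac13\delta(4\lambda-\delta)$, so the means are subadditive, not additive. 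You should therefore drop the hedge ``a universal multiple of $\lambda+u$ in the regime $u\to-\lambda$'' and state the closed form; you should also note that the only downstream use of \eqref{eqn::log_cr_additive} (in Lemma \ref{lem::levellines_interior_limit_clockwise}) needs merely the one-sided bound $\E[\cdot\mid\mathrm{CCW}]\le\tfrac{4\lambda}{3}(\lambda+u)$, which your formula does supply, so the remainder of the paper is unaffected.
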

\begin{figure}[ht!]
\begin{center}
\includegraphics[width=0.63\textwidth]{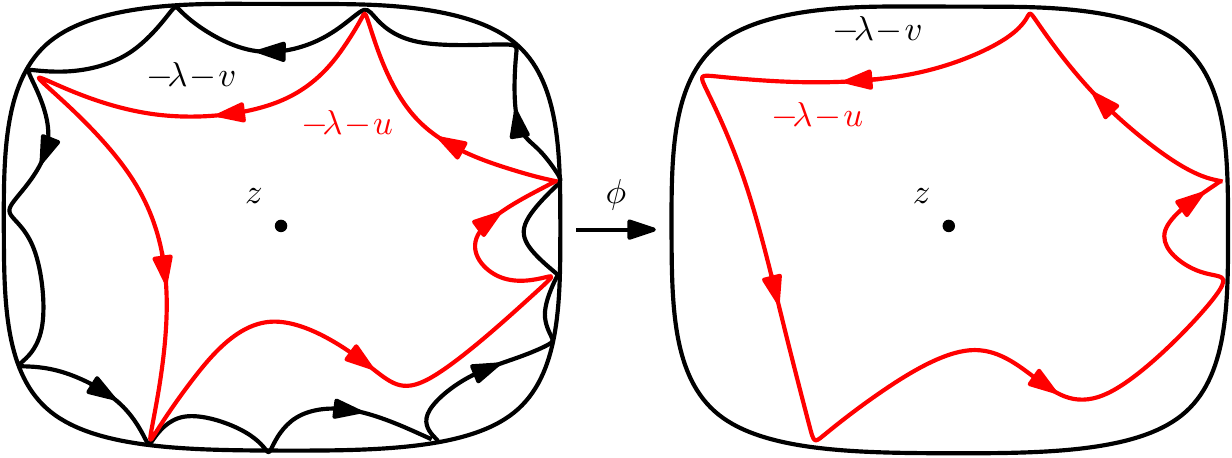}
\end{center}
\caption{\label{fig::levellines_interior_log_cr_additive} In the proof of Equation (\ref{eqn::log_cr_additive}), let $\phi$ be the conformal map from $\inte(L_v)$ onto $\U$, then $\phi(L_u)$ has the same law as the level loop with height $-\lambda+\delta-\eps$.}
\end{figure}
\begin{proof}
\textit{First}, we show that $\CR(\inte(L_u);z)$ is almost surely monotone as $u\to-\lambda$. For $u_0\in (-\lambda,\lambda)$, suppose that $L_{u_0}$ is counterclockwise. By Proposition \ref{prop::levellines_interior_loop_interaction}, we know that, for $u_0>u>v>-\lambda$, the loops $L_u$ and $L_v$ are counterclockwise and that $L_v$ is outside of $L_u$. Therefore,
\[\CR(\inte(L_u);z)\le\CR(\inte(L_v);z)\le 1.\]
Thus, on the event $[L_{u_0}\text{ is counterclockwise}]$, the sequence $(\CR(\inte(L_u);z), u\in (-\lambda,u_0))$ is monotone. Note that the event $[L_{u_0}\text{ is counterclockwise}]$ has probability $(\lambda-u_0)/2\lambda$. Letting $u_0\to-\lambda$, we obtain the conclusion.
\smallbreak
\textit{Second}, we have the following observations.
\begin{enumerate}
\item [(a)] The sequence $\CR(\inte(L_u);z)$ is almost surely monotone as $u\to -\lambda$.
\item [(b)] The sequence $\CR(\inte(L_u);z)$ converges to 1 in distribution as $u\to -\lambda$.
\end{enumerate}
Combining these two facts, we have that, almost surely, $\CR(\inte(L_u);z)\to 1$ as $u\to -\lambda$.
\smallbreak
\textit{Finally}, we show Equation (\ref{eqn::log_cr_additive}). For $u_0\in (-\lambda,\lambda)$, suppose that $L_{u_0}$ is counterclockwise. Then, for $u\in (-\lambda,u_0)$, the loop $L_u$ is counterclockwise; define
\[F(\lambda+u)=-\log\CR(\inte(L_u);z).\]
Let $\delta>\eps>0$ be small and set $u=-\lambda+\delta$, $v=-\lambda+\eps$. We have the following observations.
\begin{enumerate}
\item [(a)] The loops $L_u,L_v$ are counterclockwise and $L_v$ is outside of $L_u$. Define $\phi$ to be the conformal map from $\inte(L_v)$ onto $\U$ such that $\phi(z)=z,\phi'(z)>0$. Then
\[\CR(\inte(L_u);z)=\CR(\inte(L_v);z)\times\CR(\inte(\phi(L_u));z).\]
\item [(b)] Given $L_v$, the loop $\phi(L_u)$  has the same law as the level loop of zero-boundary GFF in $\U$ with height $-\lambda+\delta-\eps$. See Figure \ref{fig::levellines_interior_log_cr_additive}. Thus
\[-\log\CR(\inte(\phi(L_u));z)\overset{d}{=}F(\delta-\eps).\]
\end{enumerate}
Combining these two facts, we have that
\[F(\delta)\overset{d}{=}F(\eps)+\tilde{F}(\delta-\eps)\]
where $\tilde{F}(\delta-\eps)$ is a copy of $F(\delta-\eps)$ and is independent of $F(\eps)$. Therefore, there exists a universal constant $c$ such that
\[E[F(\lambda+u)\cond L_u\text{ is counterclockwise}]=c(\lambda+u).\]
\end{proof}

\begin{lemma}\label{lem::levellines_interior_limit_clockwise}
Suppose that $h$ is a zero-boundary $\GFF$ on $\U$. Fix a target point $z\in\U$. Let $L_u$ be the level loop of $h$ with height $u\in (-\lambda,\lambda)$ starting from the boundary targeted at $z$. Denote by $\mu_u^z$ the law of $L_u$ conditioned on the event $[L_u\text{ is clockwise}]$. Then, as $u\to -\lambda$, the measure $\mu_u^z$ converges to some limit measure, denoted by $\mu^z$, in Carath\'eodory topology seen from $z$.

Moreover, the limit measure $\mu^z$ is conformal invariant: for any M\"{o}bius transformation $\phi$ of $\U$, we have that
\[\phi\circ \mu^{z}=\mu^{\phi(z)}.\]
\end{lemma}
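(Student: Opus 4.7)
The plan is to use the nested structure of level loops at different heights (Proposition \ref{prop::levellines_interior_loop_interaction}), together with a uniform conformal-radius bound obtained from Lemma \ref{lem::levellines_interior_limit_counterclockwise}, to establish convergence of $\mu_u^z$; the conformal invariance of the limit will then be an easy consequence. First, I would couple all level loops $(L_u)_{u\in(-\lambda,\lambda)}$ on a single probability space through one zero-boundary GFF $h$ on $\U$. By Proposition \ref{prop::levellines_interior_loop_interaction}, on the event $E_u:=\{L_u\text{ clockwise}\}$ all loops $L_v$ with $v>u$ are also clockwise and the interiors nest, $\inte(L_u)\subset\inte(L_v)$; moreover for any $u_0\in(u,\lambda)$, given $L_{u_0}$, the conditional law of $L_u$ is that of a level loop on $\inte(L_{u_0})$ with height $\lambda-u_0+u$, conditioned clockwise. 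Letting $\phi_{L_{u_0}}\colon\inte(L_{u_0})\to\U$ be a conformal map fixing $z$, this conditional law is the pushforward of $\mu^z_{\lambda-u_0+u}$ under $\phi_{L_{u_0}}^{-1}$; this is the self-similar relation driving the proof.

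Second, I would establish tightness of $(\mu_u^z)$ in Carath\'eodory topology seen from $z$ by uniformly bounding the expected log-conformal-radius. Applying Lemma \ref{lem::levellines_interior_limit_counterclockwise} to $-h$ (which swaps clockwise/counterclockwise orientations and sends height $u$ to $-u$), I get
\begin{equation*}
\E\bigl[-\log\CR(\inte(L_u);z)\,\big|\,L_u\text{ clockwise}\bigr]=c(\lambda-u),
\end{equation*}
which is bounded by $2c\lambda$ as $u\to-\lambda$. By Markov's inequality the conformal radius does not collapse under $\mu_u^z$, so combined with Lemma \ref{lem::caratheodory_decreasing} and the monotonicity $\inte(L_u)\subset\inte(L_v)$ on $E_u$, the family $(\mu_u^z)$ is tight.

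Third, I would identify any subsequential limit via self-similarity. Fix $u_0\in(-\lambda,\lambda)$. The pushforward relation from the first step yields, for every bounded continuous $F$ on the space of loops in $\overline{\U}$,
\begin{equation*}
\int F\,d\mu_u^z=\int d\mu_{u_0}^z(L_{u_0})\int F\bigl(\phi_{L_{u_0}}^{-1}(L')\bigr)\,d\mu_{\lambda-u_0+u}^z(L').
\end{equation*}
As $u\to-\lambda$ the inner height $\lambda-u_0+u$ converges to $-u_0$, which sits in the interior of $(-\lambda,\lambda)$, so $\mu_{\lambda-u_0+u}^z$ is continuous in $u$ at this value. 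Any subsequential Carath\'eodory limit $\mu^z$ of $(\mu_u^z)$ must therefore satisfy the identity with $\mu_{-u_0}^z$ in place of $\mu_{\lambda-u_0+u}^z$, and this identity pins down $\mu^z$ uniquely. The main obstacle lies here: the map $\phi_{L_{u_0}}^{-1}$ is random, and one must verify that the induced pushforward acts continuously on measures in the Carath\'eodory topology seen from $z$. The $L^1$ bound on $-\log\CR(\inte(L_u);z)$ prevents mass from escaping at the degenerate end, but checking the pushforward continuity at the level of loops (rather than merely of their interior domains) is the technical heart of the proof.

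Finally, conformal invariance is immediate: for any M\"obius automorphism $\phi$ of $\U$, the field $h\circ\phi^{-1}$ has the law of $h$, and its level loops targeted at $\phi(z)$ are the $\phi$-images of the level loops of $h$ targeted at $z$. Hence $\phi\circ\mu_u^z=\mu_u^{\phi(z)}$ for every $u\in(-\lambda,\lambda)$, and passing to the limit $u\to-\lambda$ yields $\phi\circ\mu^z=\mu^{\phi(z)}$.
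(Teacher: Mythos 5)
Your plan takes a genuinely different route from the paper. The paper's proof is a direct Cauchy estimate: for bounded Lipschitz $F$ it shows $|\mu_u^z[F(-\log\CR)]-\mu_v^z[F(-\log\CR)]|\lesssim|u-v|$, using the nesting from Proposition~\ref{prop::levellines_interior_loop_interaction} (on $\{L_v\text{ clockwise}\}$, $\inte(L_v)\subset\inte(L_u)$ for $u>v$), the conformal-map identity $\log\frac{\CR(\inte(L_u);z)}{\CR(\inte(L_v);z)}=-\log\CR(\inte(\phi(L_v));z)$, and the additivity formula of Equation~(\ref{eqn::log_cr_additive}) to bound the right-hand side by a constant times $|u-v|$. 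Lemma~\ref{lem::caratheodory_decreasing} then converts convergence of the conformal-radius marginal into Carath\'eodory convergence via the nesting. Your scheme --- tightness, then subsequential limits identified through a self-similar pushforward identity --- is more elaborate and does not come together as written.

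Concretely: the gap you flag, pushforward continuity for a fixed conformal map $\phi_{L_{u_0}}^{-1}$ with $\phi_{L_{u_0}}^{-1}(z)=z$, is not a real obstruction. The Carath\'eodory topology seen from $z$ only sees the interior domain, and a fixed conformal map fixing $z$ preserves Carath\'eodory convergence of domains (post-compose the Riemann maps and absorb the constant rotation $e^{i\arg\phi'(z)}$); since level loops are simple, the boundary loop is recovered as the boundary of the domain and there is no separate ``loop-level'' issue. The genuine gap is elsewhere: your identification step invokes continuity of $w\mapsto\mu_w^z$ at the interior point $w=-u_0$ as if it were free, but this is the same flavor of statement as the endpoint convergence you are proving, and establishing it requires essentially the same nesting$+$conformal-radius estimate the paper uses directly --- at which point the endpoint convergence is already in hand and the tightness/self-similarity apparatus is superfluous. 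Also note the identity you write down, $\mu^z=\int\mu_{u_0}^z(dL_{u_0})\,\phi_{L_{u_0}}^{-1}\circ\mu_{-u_0}^z$, is not a ``pinning-down'' characterization since $\mu^z$ does not appear on the right; it is just an explicit formula whose validity is exactly the continuity you deferred. Your tightness observation, the derivation $\E[-\log\CR(\inte(L_u);z)\mid L_u\text{ clockwise}]=c(\lambda-u)$ via the $h\mapsto-h$ symmetry from Lemma~\ref{lem::levellines_interior_limit_counterclockwise}, and the final conformal-invariance argument are all correct and, in the last case, the same as the paper's.
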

\begin{proof}
For $u>v>-\lambda$, on the event $[L_v \text{ is clockwise}]$, we know that $L_u$ is also clockwise and that $L_v$ is inside of $L_u$. By Lemma \ref{lem::caratheodory_decreasing}, to show the convergence in Carath\'eodory topology, we only need to show the convergence in $\CR(\inte(L_u);z)$.

Suppose that $F$ is any bounded Lipschitz function.
\begin{eqnarray*}
\lefteqn{|\mu_u^z\left[F\left(-\log\CR(\inte(L);z)\right)\right]-\mu_v^z\left[F\left(-\log\CR(\inte(L);z)\right)\right]|}\\
&=&|\E[F(-\log\CR(\inte(L_u);z))-F(-\log\CR(\inte(L_v);z)) \cond L_v\text{ is clockwise}]|\\
&\lesssim& \E\left[\log \frac{\CR(\inte(L_u);z)}{\CR(\inte(L_v);z)} \cond L_v\text{ is clockwise}\right].
\end{eqnarray*}
Given $[L_v\text{ is clockwise}]$ and $L_u$, the conditional law of $L_v$ is the same as the level loop of a zero-boundary GFF with height $\lambda-u+v$. Define $\phi$ to be the conformal map from $\inte(L_u)$ onto $\U$ such that $\phi(z)=z, \phi'(z)>0$. Then we have that
\begin{eqnarray*}
\lefteqn{\E\left[\log \frac{\CR(\inte(L_u);z)}{\CR(\inte(L_v);z)} \cond L_v\text{ is clockwise}\right]}\\
&=&\E[-\log\CR(\inte(\phi(L_v));z)\cond L_v\text{ is clockwise}]\\
&\lesssim& |u-v| \qquad \text{(By Equation (\ref{eqn::log_cr_additive}))}.
\end{eqnarray*}
This implies the convergence.
\smallbreak
The conformal invariance in the limit measure is inherited from the conformal invariance in the level loops: for any M\"obius transformation $\phi$ of $\U$, the loop $\phi(L_u)$ has the same law as the level loop with height $u$ targeted at $\phi(z)$.
\end{proof}

We give some properties of the limit measure $\mu^z$ defined in Lemma \ref{lem::levellines_interior_limit_clockwise}.
Recall that $l$ is a bubble in $\U$ if $l\subset\overline{\U}$ is homeomorphic to the unit circle and $l\cap\partial\U$ contains exactly one point; and the point in $l\cap\partial\U$ is called the root of $l$, denoted by $R(l)$.
The (probability) measure $\mu^z$ is supported on clockwise bubbles in $\U$. For any $z\in\U$, let $l^z$ be a bubble with law $\mu^z$. By the conformal invariance in $\mu^z$, we know that, for any M\"obius transformation $\phi$, we have
\[\phi(l^z)\overset{d}{=}l^{\phi(z)}.\]
In particular, the root $R(l^0)$ is uniform over $\partial\U$.

\begin{figure}[ht!]
\begin{center}
\includegraphics[width=0.31\textwidth]{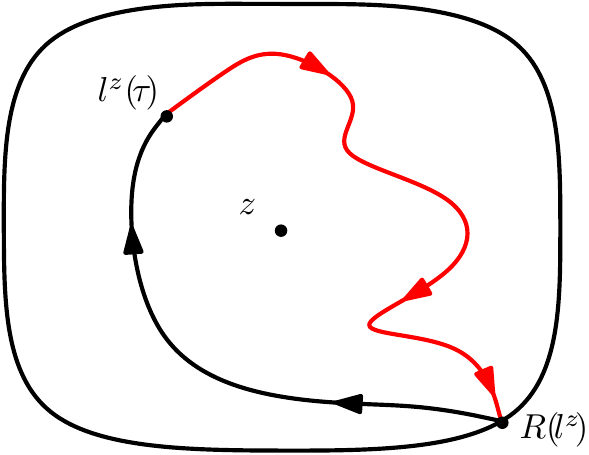}
\end{center}
\caption{\label{fig::levellines_interior_bubble_domainMarkov} Given $l^z[0,\tau]$, the conditional law of the remainder of the bubble is the same as chordal $\SLE_4$ conditioned on the event that $z$ is to the right of the path.}
\end{figure}
\smallbreak
The measure $\mu^z$ inherits the ``domain Markov property" from the level loop in the following sense. Suppose that $l^z$ is a bubble with law $\mu^z$. We parameterize $l^z$ clockwise by minus the log of the conformal radius seen from $z$:
\[l^z(0)=R(l^z);\quad \CR(\U\setminus l^z[0,t];z)=e^{-t}, 0\le t\le T;\quad l^z(T)=R(l^z).\]
Then the bubble $l^z$ satisfies the following property. For any stopping time $\tau<T$, given $l^z[0,\tau]$, the conditional law of $(l^z(t), \tau\le t\le T)$ is the same as chordal $\SLE_4$ (up to time change) in $\U\setminus l^z[0,\tau]$ from $l^z(\tau)$ to $R(l^z)$ conditioned on the event that $z$ is to the right of the path. See Figure \ref{fig::levellines_interior_bubble_domainMarkov}.
\smallbreak
We can define an infinite measure $\mu$ on bubbles: it is the measure on bubbles such that, for any $z\in\U$, it coincides with $\mu^z$ on the set of bubbles that surround $z$.

\textit{First}, we explain that $\mu$ is well-defined. For any two points $z,w\in\U$, we need to show that the two measures $\mu^z$ and $\mu^w$ coincide on the set of bubbles that surround both $z$ and $w$. To this end, we only need to show that, for any $u\in (-\lambda,\lambda)$, the two measures $\mu^z_u$ and $\mu^w_u$ coincide on the set of loops that surround both $z$ and $w$. Suppose that $h$ is a zero-boundary GFF on $\U$ and that $L^z_u$ (resp. $L^w_u$) is the level loop of $h$ with height $u$ starting from the boundary targeted at $z$ (resp. targeted at $w$). Then, when restricted to the set of loops that surround both $z$ and $w$, we have that
\begin{eqnarray*}
\lefteqn{\mu_u^z[\cdot]=\E[\cdot \cond L^z_u\text{ is clockwise}]}\\
&=& \E\left[\cdot 1_{\{L^z_u\text{ is clockwise}\}}\right]\times\frac{2\lambda}{\lambda+u}=\E\left[\cdot 1_{\{L^w_u\text{ is clockwise}\}}\right]\times\frac{2\lambda}{\lambda+u}\\
&=& \E[\cdot \cond L^w_u\text{ is clockwise}]=\mu_u^w[\cdot].
\end{eqnarray*}
%\[\mu_u^z[\cdot]=\E[\cdot \cond L^z_u\text{ is clockwise}]=\E\left[\cdot 1_{[L^z_u\text{ is clockwise}]}\right]\times\frac{2\lambda}{\lambda+u}=\E\left[\cdot 1_{[L^w_u\text{ is clockwise}]}\right]\times\frac{2\lambda}{\lambda+u}=\E[\cdot \cond L^w_u\text{ is clockwise}]=\mu_u^w[\cdot].\]
This implies that $\mu^z_u$ and $\mu^w_u$ coincide on the set of loops that surround both $z$ and $w$.

Furthermore, the requirement that $\mu$ coincides with the $\mu^z$'s full determines $\mu$. Thus $\mu$ is well-defined.
\smallbreak
\textit{Next}, we explain that $\mu$ is the same as the measure $M$ of $\SLE_4$ bubble measure defined in Section \ref{subsubsec::cle4_timeparameter}. We record this in the following lemma.

\begin{lemma}\label{lem::levellines_interior_limit_identify}
The infinite measure $\mu$ equals $M$, $\SLE_4$ bubble measure (oriented to be clockwise) uniformly rooted over the boundary.
\end{lemma}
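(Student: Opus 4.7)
The plan is to reduce the identification of the infinite measures $\mu$ and $M$ on bubbles in $\U$ to the identification of two probability measures on clockwise bubbles surrounding a fixed interior point, and then match these probability measures using the chordal $\SLE_4$ domain-Markov description of a bubble.

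First I would verify that for each $z\in\U$, the restrictions of $\mu$ and $M$ to bubbles surrounding $z$ are both probability measures of mass $1$. On the $\mu$ side, the restriction equals $\mu^z$ by construction, which is a probability measure by Lemma \ref{lem::levellines_interior_limit_clockwise}. On the $M$ side, the Poisson point process $(l_t,t\ge 0)$ of intensity $M$ introduced in Section \ref{subsubsec::cle_exploration} satisfies $\PP[\tau^z>t]=e^{-t}$, which forces the $M$-mass of the set of bubbles surrounding $z$ to equal $1$. Next I would show that under both normalized measures the root $R(l)\in\partial\U$ is uniformly distributed. For $\mu^0$ this is pointed out just after Lemma \ref{lem::levellines_interior_limit_clockwise}; for $M$ restricted to bubbles surrounding $0$, it follows from the conformal invariance of $M$ in Lemma \ref{lem::sle4_bubble_conformalinvariance} together with the rotational symmetry of the event $\{l\text{ surrounds }0\}$. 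It therefore suffices to match the conditional law of the bubble given its root equals some fixed point, say $-1$.

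For this final identification I would exploit the domain Markov description stated just before Lemma \ref{lem::levellines_interior_limit_identify}: parameterized clockwise by minus the log conformal radius seen from $0$, a bubble $l^0$ under $\mu^0$ has the property that, given $l^0[0,\tau]$ for any stopping time $\tau<T$, its continuation is chordal $\SLE_4$ in $\U\setminus l^0[0,\tau]$ from $l^0(\tau)$ to $-1$ conditioned on $0$ lying to its right. By Lemma \ref{lem::sle_chordal_radial_equivalence}, up to time change this conditioned chordal $\SLE_4$ coincides with the radial $\SLE_4(\underline{\rho})$ targeted at $0$ appearing in Proposition \ref{prop::levellines_targeted_interior_radialsle4}. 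The same description must hold under the normalized restriction of $M$ to bubbles surrounding $0$ rooted at $-1$: a bubble in $M$ is, by definition, an $\SLE_4$ excursion from $-1$ back to $-1$, and conditioning on $\{l\text{ surrounds }0\text{ clockwise}\}$ is exactly conditioning that the chordal $\SLE_4$ loop passes with $0$ to its right. To justify this rigorously, I would appeal to the defining approximation of $M$ by the rescaled laws of $l^{\eps}$ from Lemma \ref{lem::cle_discrete_bubble} together with the Carath\'eodory convergence in Proposition \ref{prop::caratheodory_convergence}, which together show that the bubbles produced by $M$ are generated by the chordal $\SLE_4$ driving function and satisfy the same domain Markov property.

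The main obstacle, as expected, is the uniqueness step: one must show that any probability measure on clockwise bubbles in $\U$ from $-1$ to $-1$ surrounding $0$, parameterized by minus the log conformal radius from $0$ and satisfying the chordal $\SLE_4$ domain Markov description above, coincides with the law of a chordal $\SLE_4$ excursion at $-1$ conditioned on $0$ being to the right. This is a Schramm-style classification applied to the conditioned chordal driving function, and once established it pins down $\mu^0$ as the normalized restriction of $M$ to bubbles surrounding $0$. By conformal invariance of both sides, the equality transfers from $z=0$ to every $z\in\U$, giving agreement on every set of bubbles that surround a common point, and hence $\mu=M$.
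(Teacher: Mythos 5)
Your proposal is correct and follows essentially the same route as the paper. The paper's own proof is quite terse: it notes the conformal invariance of $\mu$ (inherited from that of the $\mu^z$), reduces to showing $\mu^0=M^0$, then cites \cite[Section 6]{SheffieldWernerCLE} for the fact that the chordal $\SLE_4$ domain Markov property characterizes the bubble measure up to a multiplicative constant, and finally matches the two measures by observing that both have total mass $1$. Your version unpacks the same argument a bit more: you explicitly verify the $M$-mass $=1$ claim via the exponential law of $\tau^z$ (a detail the paper asserts without comment), you disintegrate over the root and note the uniform root distribution (which the paper leaves implicit inside the characterization), and you correctly flag the "Schramm-style classification" of the conditioned chordal driving function as the crux, which is precisely what the paper outsources to the Sheffield--Werner reference. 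The only stylistic difference is that the paper applies the characterization directly to the free-root measure $\mu^0$, whereas you first condition on a fixed root; since the root distributions already match by conformal invariance, these are equivalent. Either way the load-bearing step is the uniqueness statement from \cite{SheffieldWernerCLE}, which you identify but do not re-prove — and neither does the paper.
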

\begin{proof}
By the conformal invariance of $\mu^z$, we also have the conformal invariance of $\mu$: for any M\"obius transformation $\phi$ of $\U$, we have that
\[\phi\circ\mu=\mu.\]
To show that $\mu$ equals $M$, we only need to show that $\mu^0$, which is $\mu$ restricted to the bubbles that surround the origin, equals $M^0$, which is $M$ restricted to the bubbles that surround the origin. In fact, the domain Markov property of $\mu^0$ characterizes the bubble measure \cite[Section 6]{SheffieldWernerCLE}. Therefore $\mu^0$ is a multiple of $M^0$. Since the total mass of $\mu^0$ is 1 and the total mass of $M^0$ is also 1, we have that $\mu^0$ equals $M^0$.
\end{proof}

\subsection{Upward height-varying level lines}\label{subsec::interior_levellines_upward}
We will introduce height-varying level lines targeted at interior points. We do not plan to address the general case, we only focus on one particular type of height-varying level lines: upward height-varying level lines. Suppose that $h$ is a zero-boundary $\GFF$ on $\U$. Fix $r\in (0,1)$, a boundary point $x\in\partial\U$ and a target point $z\in\U$.
We define the upward height-varying level line of $h$ starting from $x$ targeted at $z$ with height difference $r\lambda$, denoted by $\gamma^{x\to z}_{(r)\uparrow}$ or $\gamma_{(r)\uparrow}$, in the following way.

For $k\ge 1$, set \[u_k=-\lambda+kr\lambda.\]
We start $\gamma_{(r)\uparrow}$ by the level line of $h$ with height $u_1$ starting from $x$ targeted at $z$, and define $\tau_1$ to be its continuation threshold. Let $U_1$ be the connected component of $\U\setminus\gamma_{(r)\uparrow}[0,\tau_1]$ that contains $z$. By Corollary \ref{cor::levellines_interior_conditionalmean}, we know that, given $\gamma_{(r)\uparrow}[0,\tau_1]$, the conditional mean $m_1$ of $h$ restricted to $U_1$ is either $2\lambda-r\lambda$ or $-r\lambda$; moreover,
\[\PP[m_1=2\lambda-r\lambda]=r/2,\quad \PP[m_1=-r\lambda]=1-r/2.\]
By Proposition \ref{prop::levellines_targeted_interior_radialsle4}, we know that the law of $\gamma_{(r)\uparrow}$ is the same as radial $\SLE_4(-r;-2+r)$. Denote by $(V^L_t,W_t,V^R_t)_{t\in [0,\tau_1]}$ the corresponding radial Loewner evolution. Note that
\[\tau_1=\inf\{t>0: V^L_t=W_t=V^R_t\}.\]
Assume $m_1=-r\lambda$, then, as $t\uparrow\tau_1$, we have that
\[\arg(W_t-z)-\arg(V^L_t-z)\to 2\pi,\quad \arg(V^R_t-z)-\arg(W_t-z)\to 0.\]
If $m_1=2\lambda-r\lambda$, we stop and set $N=1$, $T=\tau_1$. If $m_1=-r\lambda$, we continue.

Generally, given $\gamma_{(r)\uparrow}[0,\tau_k]$ and $m_k=-kr\lambda$ for some $k\ge 1$, we continue $\gamma_{(r)\uparrow}$ by the level line with height $u_{k+1}$ starting from $\gamma_{(r)\uparrow}(\tau_k)$ targeted at $z$, and define $\tau_{k+1}$ to be its continuation threshold. Let $U_{k+1}$ be the connected component of $\U\setminus\gamma_{(r)\uparrow}[0,\tau_{k+1}]$ that contains $z$. Given $\gamma_{(r)\uparrow}[0,\tau_{k+1}]$, the conditional mean $m_{k+1}$ of $h$ restricted to $U_{k+1}$ is either $2\lambda-(k+1)r\lambda$ (with chance $r/2$) or $-(k+1)r\lambda$ (with chance $1-r/2$).
Denote by $(V^L_t,W_t,V^R_t)_{t\in [\tau_k,\tau_{k+1}]}$ the corresponding radial Loewner evolution. Note that
\[\tau_{k+1}=\inf\{t>\tau_k: V^L_t=W_t=V^R_t\}.\]
Assume $m_{k+1}=-(k+1)r\lambda$, then, as $t\uparrow\tau_{k+1}$, we have that
\[\arg(W_t-z)-\arg(V^L_t-z)\to 2\pi,\quad \arg(V^R_t-z)-\arg(W_t-z)\to 0.\]
If $m_{k+1}=2\lambda-(k+1)r\lambda$, we stop and set $N=k+1$ and $T=\tau_{k+1}$. If $m_{k+1}=-(k+1)r\lambda$, we continue.

At each step, we have chance $r/2$ to stop. Therefore, we will stop at some finite step $N$ almost surely. When we stop, we have $T=\tau_{N}$ and \[m_N=2\lambda-Nr\lambda.\] Moreover, when $t\uparrow T$, we have that
\[\arg(W_t-z)-\arg(V^L_t-z)\to 0,\quad \arg(V^R_t-z)-\arg(W_t-z)\to 2\pi.\]

This path $\gamma_{(r)\uparrow}$ is called the \textbf{upward height-varying level line of $h$} with height difference $r\lambda$ starting from $x$ targeted at $z$. We call $N$ the \textbf{transition step} and $T$ the \textbf{transition time}. We summarize some basic properties of $\gamma_{(r)\uparrow}$ in the following.
\begin{enumerate}
\item [(a)] The path $\gamma_{(r)\uparrow}$ is parameterized by minus the log of the conformal radius:
\[\CR(\U\setminus\gamma_{(r)\uparrow}[0,t];z)=e^{-t}.\]
\item [(b)] The path $\gamma_{(r)\uparrow}$ is almost surely determined by $h$ and is almost surely continuous up to and including the transition time.
\item [(c)] The transition step $N$ satisfies geometric distribution:
\[\PP[N>n]=(1-r/2)^n,\quad \text{for all } n\ge 0.\]
\item [(d)] Suppose that $(\tau_k,1\le k\le N)$ is the sequence of height change times and that $(V^L_t,W_t,V^R_t)_{t\in [0,T]}$ is the corresponding Loewner evolution. Then, for $0\le k\le N-1$, the process $(V^L_t,W_t,V^R_t)_{t\in [\tau_k,\tau_{k+1}]}$ satisfies the SDE for radial $\SLE_4(-r;-2+r)$.
\end{enumerate}

Suppose that $h$ is a zero-boundary GFF. Fix $r\in (0,1)$ and a target point $z\in\U$. Let $\gamma$ be the upward height-varying level line of $h$ with height difference $r\lambda$ starting from some $x\in\partial\U$ targeted at $z$. Let $N$ be the transition step, $T$ be the transition time, and $(\tau_k, 1\le k\le N)$ be the sequence of height change times. Let $L_1$ be the level loop of $h$ with height $u_1=-\lambda+r\lambda$ starting from $\partial\U$ targeted at $z$. We know that $L_1$ is part of $\gamma[0,\tau_1]$. Denote by $\inte(L_1)$ the connected component of $\U\setminus L_1$ that contains $z$. Generally, given $(\gamma[0,\tau_k], L_1,...,L_k)$ for some $1\le k<N$, let $L_{k+1}$ be the level loop of $h$ restricted to $\inte(L_k)$ with height $u_{k+1}=-\lambda+(k+1)r\lambda$ starting from $L_k$ targeted at $z$. Denote by $\inte(L_{k+1})$ the connected component of $\inte(L_k)\setminus L_{k+1}$ that contains $z$. We know that $L_{k+1}$ is part of $\gamma[\tau_k,\tau_{k+1}]$.

In this way, we obtain a sequence of level loops $(L_1,...,L_N)$ which we call the \textbf{upward height-varying sequence of level loops of $h$} with height difference $r\lambda$ starting from $L_0=\partial\U$ targeted at $z$; and we call $N$ the transition step. We summarize some basic properties of the upward height-varying sequence of level loops $(L_1,...,L_N)$ in the following.
\begin{enumerate}
\item [(a)] The sequence $(L_1,...,L_N)$ is almost surely determined by $h$. The loops $L_1,...,L_{N-1}$ are counterclockwise and the loop $L_N$ is clockwise.
\item [(b)] For $1\le k< N$, the loop $L_{k+1}$ is contained in the closure of $\inte(L_k)$ and $L_{k+1}\cap L_k\neq\emptyset$.
\item [(c)] Given $(L_1,...,L_k)$ for $1\le k<N$, the conditional law of $h$ restricted to $\inte(L_k)$ is the same as a GFF with boundary value $-kr\lambda$.
\item [(d)] Given $(L_1,...,L_N)$, the conditional law of $h$ restricted to $\inte(L_N)$ is the same as a GFF with boundary value $2\lambda-Nr\lambda$.
\end{enumerate}

The following lemma addresses the interaction between two upward height-varying sequences of level loops.
\begin{lemma}\label{lem::levellines_interior_levelloops_upward}
Suppose that $h$ is a zero-boundary $\GFF$ on $\U$. Fix $r\in (0,1)$ and a target point $z\in\U$. Let $(L_n, 1\le n\le N)$ be the upward height-varying sequence of level loops of $h$ with height difference $r\lambda$ where $N$ is the transition step. Let $(\tilde{L}_n, 1\le n\le \tilde{N})$ be the upward height-varying sequence of level loops of $h$ with height difference $r\lambda/2$ where $\tilde{N}$ is the transition step. Then, almost surely, we have that
\[\tilde{L}_{2n}=L_n,\quad\text{for }1\le n\le N-1.\]
Moreover, there are two possibilities for $\tilde{L}_{2N-1}$: clockwise or counterclockwise.

If $\tilde{L}_{2N-1}$ is clockwise, then
\[\tilde{N}=2N-1, \quad \tilde{L}_{\tilde{N}}\subset\overline{\inte(L_N)}.\]

If $\tilde{L}_{2N-1}$ is counterclockwise, then
\[\tilde{N}=2N,\quad \tilde{L}_{\tilde{N}}=L_N.\]
\end{lemma}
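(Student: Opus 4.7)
The plan is to induct on $n$, combined with a case analysis at $n = N$ according to the orientation of $\tilde L_{2N-1}$, using the characterization Lemma \ref{lem::levellines_interior_levelloop_characterization} in nested sub-domains (via conformal invariance) together with the interaction Proposition \ref{prop::levellines_interior_loop_interaction} and the monotonicity Theorem \ref{thm::boundary_levelline_gff_interacting}.

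For the base case ($n=1$ on the event $\{N \ge 2\}$), $L_1 = L_{u_1}$ is counterclockwise; since $\tilde u_1 < u_1$, Proposition \ref{prop::levellines_interior_loop_interaction} gives that $\tilde L_1 = L_{\tilde u_1}$ is counterclockwise with $L_1 \subset \overline{\inte(\tilde L_1)}$, so $\tilde N \ge 2$ and $\tilde L_2$ is defined as the level loop of $h|_{\inte(\tilde L_1)}$ at height $u_1$. To identify $\tilde L_2$ with $L_1$, I would apply the conformally invariant form of Lemma \ref{lem::levellines_interior_levelloop_characterization} in the sub-domain $\inte(\tilde L_1)$ and verify that $L_1$ satisfies the three characterizing properties there: properties (1) and (3) follow immediately from $L_1$ being a level loop of $h$ in $\U$; for property (2), $L_1 \cap \partial \U \neq \emptyset$ and $\overline{\inte(\tilde L_1)} \cap \partial \U \subset \tilde L_1$ (since $\inte(\tilde L_1)$ is an open subset of $\U$ with boundary $\tilde L_1$), yielding $L_1 \cap \tilde L_1 \neq \emptyset$. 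Hence $L_1 = \tilde L_2$. The inductive step is identical after using conformal invariance to replace the ambient domain by $\inte(L_{n-1}) = \inte(\tilde L_{2(n-1)})$, in which the shifted restricted field is again a zero-boundary GFF; this simultaneously shows that each odd-indexed loop $\tilde L_{2n-1}$ for $n \le N-1$ is counterclockwise, so $\tilde L_{2N-1}$ is defined.

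For the transition at $n = N$, I would reduce once more by conformal invariance to $\U$ with a zero-boundary GFF and effective heights $\tilde u_1 < u_1$: $L_N$ is the effective level loop at height $u_1$ and is clockwise by definition of $N$, while $\tilde L_{2N-1}$ is the $(r/2)$-level loop at height $\tilde u_1$ with random orientation. If $\tilde L_{2N-1}$ is clockwise, Proposition \ref{prop::levellines_interior_loop_interaction} (second clause, with $u = \tilde u_1$ and $v = u_1 > \tilde u_1$) gives that $L_N$ lies outside $\tilde L_{2N-1}$ and is clockwise, so $\tilde L_{2N-1} \subset \overline{\inte(L_N)}$ and $\tilde N = 2N-1$. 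If $\tilde L_{2N-1}$ is counterclockwise, the $(r/2)$-exploration continues and yields $\tilde L_{2N}$; the identification $\tilde L_{2N} = L_N$ is then carried out via Lemma \ref{lem::levellines_interior_levelloop_characterization} in $\inte(\tilde L_{2N-1})$ by the same scheme as the base case.

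The main obstacle lies in this last subcase, where Proposition \ref{prop::levellines_interior_loop_interaction} does not directly yield the containment $L_N \subset \overline{\inte(\tilde L_{2N-1})}$ because $L_N$ and $\tilde L_{2N-1}$ have opposite orientations. My strategy is to fix a boundary point $x \in \partial \U$ and compare the two interior-targeted level lines $\gamma^x_{u_1}$ and $\gamma^x_{\tilde u_1}$, whose terminal loops are $L_N$ and $\tilde L_{2N-1}$ respectively. By the monotonicity of Theorem \ref{thm::boundary_levelline_gff_interacting}, extended to interior targets via the sequential boundary-targeted construction of Section \ref{subsec::interiror_levellines_interior}, and since $u_1 > \tilde u_1$, the path $\gamma^x_{u_1}$ stays to the left of $\gamma^x_{\tilde u_1}$. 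As $\tilde L_{2N-1}$ is counterclockwise, the left side of the level line tracing $\tilde L_{2N-1}$ is precisely $\inte(\tilde L_{2N-1})$; hence $\gamma^x_{u_1}$, and in particular its terminal loop $L_N$, is contained in $\overline{\inte(\tilde L_{2N-1})}$. Property (2) in the sub-domain then follows from $L_N \cap \partial \U \neq \emptyset$ and $\overline{\inte(\tilde L_{2N-1})} \cap \partial \U \subset \tilde L_{2N-1}$, giving $L_N \cap \tilde L_{2N-1} \neq \emptyset$ and completing the identification $L_N = \tilde L_{2N}$.
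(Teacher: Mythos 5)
Your proposal follows essentially the same architecture as the paper's proof: establish $\tilde L_{2n} = L_n$ for $n < N$ by combining Proposition \ref{prop::levellines_interior_loop_interaction} (to place $\tilde L_{2n-1}$ outside $L_n$) with Lemma \ref{lem::levellines_interior_levelloop_characterization} (to identify the two loops), track orientations to know $\tilde L_{2N-1}$ is well defined, and then split at $n=N$ according to its orientation, with Case 1 handled directly by the interaction proposition. The direction in which you apply the characterization lemma is the mirror image of the paper's (you verify $L_n$'s three properties \emph{inside} $\inte(\tilde L_{2n-1})$, the paper verifies $\tilde L_{2n}$'s properties in the ambient domain), and your explicit check of property (2) via $L_n\cap\partial\U\neq\emptyset$ together with $\overline{\inte(\tilde L_{2n-1})}\cap\partial\U\subset\tilde L_{2n-1}$ is a useful clarification of a step the paper passes over.

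Where you genuinely diverge is Case 2 ($\tilde L_{2N-1}$ counterclockwise). You frame the ``main obstacle'' as needing the containment $L_N\subset\overline{\inte(\tilde L_{2N-1})}$ across opposite orientations, and resolve it by invoking the monotonicity of Theorem \ref{thm::boundary_levelline_gff_interacting} for the full interior-targeted level lines $\gamma^x_{u_1},\gamma^x_{\tilde u_1}$. This is workable, but it is not what the paper does, and it is more than is needed: the paper avoids any cross-orientation comparison entirely by noting that, given $(L_1,\dots,L_{N-1})$, both $L_N$ \emph{and} $\tilde L_{2N}$ are the level loop of $h$ restricted to $\inte(L_{N-1})=\inte(\tilde L_{2N-2})$ at the same effective height $u_N$, whence they are equal by uniqueness --- exactly the same one-step characterization argument already used for $n<N$, applied one more time at the counterclockwise intermediate loop $\tilde L_{2N-1}$. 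In other words, Case 2 is not an obstacle if you compare in $\inte(L_{N-1})$ rather than in $\inte(\tilde L_{2N-1})$. Your route does buy something --- a direct geometric statement that the clockwise terminal bubble sits inside the last counterclockwise $(r/2)$-loop --- but it costs you the delicate ``stays to the left'' argument for paths winding around $z$, which needs more care than you give it (the initial segment of $\gamma^x_{u_1}$ emanating from $x\in\partial\U$ is generally not contained in $\overline{\inte(\tilde L_{2N-1})}$; only the terminal loop is). A small slip: in Case 1 you cite the ``second clause'' of Proposition \ref{prop::levellines_interior_loop_interaction}, but with $\tilde L_{2N-1}$ clockwise and $v=u_1>\tilde u_1$ it is the clockwise clause that applies; the logic is right, the label is not.
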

\begin{proof}
\textit{First}, we show that $\tilde{L}_{2n}=L_n$ for $1\le n\le N-1$. Suppose that $N>1$. We will explain the conclusion for $n=1$. We have the following observations.
\begin{enumerate}
\item [(a)] The loop $L_1$ is the level loop of $h$ with height $-\lambda+r\lambda$ and it is counterclockwise.
\item [(b)] The loop $\tilde{L}_1$ is the level loop of $h$ with height $-\lambda+r\lambda/2$.
\item [(c)] Given $\tilde{L}_1$, the loop $\tilde{L}_2$ is the level loop of $h$ restricted to $\inte(L_1)$ with height $-\lambda+r\lambda$. By Lemma \ref{lem::levellines_interior_levelloop_characterization}, we know that $\tilde{L}_2$ is the level loop of $h$ with height $-\lambda+r\lambda$ (without conditioning on $\tilde{L}_1$).
\end{enumerate}
Combining these three facts and Proposition \ref{prop::levellines_interior_loop_interaction}, we have that, given $L_1$ and on the event $[N>1]$, the loop $\tilde{L}_1$ is outside of $L_1$ and $\tilde{L}_2=L_1$. Iterating the same proof, we have that $\tilde{L}_{2n}=L_n$ for $1\le n \le N-1$.
\smallbreak
\textit{Next}, we discuss the relation between $\tilde{L}_{\tilde{N}}$ and $L_N$. Given $(L_1,...,L_{N-1}, L_N)$, we know that $\tilde{L}_{2N-2}=L_{N-1}$, and that $L_N$ is clockwise. Then there are two possibilities for $\tilde{L}_{2N-1}$: clockwise or counterclockwise.

Case 1. Assume that $\tilde{L}_{2N-1}$ is clockwise. Then we have that $\tilde{N}=2N-1$. We have the following observations.
\begin{enumerate}
\item [(a)] Given $(L_1,...,L_{N-1})$, the loop $\tilde{L}_{\tilde{N}}$ is the level loop of $h$ restricted to $\inte(L_{N-1})$ with height $-\lambda+(N-1)r\lambda+r\lambda/2$ and it is clockwise.
\item [(b)] Given $(L_1,...,L_{N-1})$, the loop $L_N$ is the level loop of $h$ restricted to $\inte(L_{N-1})$ with height $-\lambda+(N-1)r\lambda+r\lambda$ and it is clockwise.
\end{enumerate}
Combining these two facts and Proposition \ref{prop::levellines_interior_loop_interaction}, we have that $L_N$ is outside of $\tilde{L}_{\tilde{N}}$.

Case 2. Assume that $\tilde{L}_{2N-1}$ is counterclockwise. We have the following observations.
\begin{enumerate}
\item [(a)] Given $(L_1,...,L_{N-1})$, the loop $L_N$ is the level loop of $h$ restricted to $\inte(L_{N-1})$ with height $-\lambda+Nr\lambda$.
\item [(b)] Given $(L_1,...,L_{N-1})$, the loop $\tilde{L}_{2N}$ is the level loop of $h$ restricted to $\inte(L_{N-1})$ with height $-\lambda+Nr\lambda$.
\end{enumerate}
Combining these two facts, we have that $\tilde{L}_{2N}=L_N$. In particular, $\tilde{L}_{2N}$ is clockwise and $\tilde{N}=2N$.
\end{proof}

\begin{proposition}\label{prop::levelloops_upward_limit}
Suppose that $h$ is a zero-boundary $\GFF$ on $\U$ and that $z\in\U$ is a fixed target point. For $k\ge 1$, let $(L^k_n, 1\le n\le N^k)$ be the upward height-varying sequence of level loops of $h$ with height difference $2^{-k}\lambda$ where $N^k$ is the transition step. Define $L^k(z)$ to be the last loop in the sequence:
\[L^k(z)=L^k_{N^k}.\]
Then we have the following.
\begin{enumerate}
\item [(1)] The sequence $(2^{-k-1}N^k,k\ge 1)$ converges almost surely to some quantity denoted by $t^{\infty}(z)$; moreover, the quantity is almost surely determined by $h$ and satisfies the exponential distribution:
    \[\PP\left[t^{\infty}(z)>t\right]=e^{-t},\quad \text{for all }t\ge 0.\]
\item [(2)] The sequence $(L^k(z),k\ge 1)$ converges almost surely to some loop denoted by $L^{\infty}(z)$ in Carath\'eodory topology seen from $z$; moreover, the loop $L^{\infty}(z)$ is almost surely determined by $h$ and has the same law as the loop in $\CLE_4$ that surrounds $z$.
\item [(3)] Given $(L^k_n, 1\le n\le N^k)$ for all $k\ge 1$, the conditional law of $h$ restricted to $\inte(L^{\infty}(z))$ is the same as $\GFF$ with boundary value $2\lambda(1-t^{\infty}(z))$.
\end{enumerate}
\end{proposition}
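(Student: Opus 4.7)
My plan is as follows. For part (1), I would apply Lemma \ref{lem::levellines_interior_levelloops_upward} with $r = 2^{-k}$ and $\tilde r = 2^{-k-1}$: for every $k$, either $N^{k+1} = 2N^k - 1$ (with $L^{k+1}(z) \subseteq \overline{\inte(L^k(z))}$) or $N^{k+1} = 2N^k$ (with $L^{k+1}(z) = L^k(z)$). Setting $a_k := 2^{-k-1} N^k$, the increment is $a_{k+1} - a_k \in \{-2^{-k-2}, 0\}$, so $(a_k)$ is non-increasing and bounded below by $0$, hence converges a.s.\ to an $h$-measurable limit $t^\infty(z)$. Since $N^k$ is geometric of parameter $2^{-k-1}$ (the probability of a clockwise transition at each step), $a_k$ converges in distribution to $\mathrm{Exp}(1)$, and combining with the a.s.\ convergence yields $t^\infty(z) \sim \mathrm{Exp}(1)$.

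For part (2), I parameterize each $\gamma^k_{(2^{-k})\uparrow}$ by $-\log\CR$ from $z$ and let $T^k$ be its transition time, so $\CR(\inte(L^k(z));z) = e^{-T^k}$ and the consistency above gives $T^{k+1} \ge T^k$. By Proposition \ref{prop::gff_localsets_bm}, $W^k_t := \LC_{\gamma^k[0,t]}(z)$ is, after this time change, a standard Brownian motion started at $0$; by Corollary \ref{cor::levellines_interior_conditionalmean} applied at each transition, $W^k_{T^k} = 2\lambda - 2^{-k}\lambda N^k = 2\lambda(1 - a_k)$. Optional stopping applied to $W_t^2 - t$ then yields $\E[T^k] = \lambda^2\,\E[(2 - 2^{-k}N^k)^2] = 2(2 - 2^{-k})\lambda^2 \le 4\lambda^2$, so by monotone convergence $T^k \uparrow T^\infty < \infty$ a.s., and $\CR(\inte(L^k(z));z)$ decreases a.s.\ to the strictly positive limit $e^{-T^\infty}$. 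Lemma \ref{lem::caratheodory_decreasing} then produces a simply connected $D^\infty \ni z$ with $\inte(L^k(z)) \to D^\infty$ in Carath\'eodory topology seen from $z$; I set $L^\infty(z) = \partial D^\infty$, which is $h$-measurable as each $L^k(z)$ is.

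Identifying the law of $L^\infty(z)$ with that of the $\CLE_4$ loop around $z$ will be the main obstacle. The upward height-varying exploration is a Markov chain whose one-step kernel is: take the height-$(-\lambda + r_k\lambda)$ level loop of a zero-boundary $\GFF$ targeted at $z$; if counterclockwise (probability $1 - r_k/2$) replace the domain by its interior, if clockwise (probability $r_k/2$) stop. Conditional on clockwise, the loop has law $\mu^z_{-\lambda + r_k\lambda}$, which by Lemmas \ref{lem::levellines_interior_limit_clockwise} and \ref{lem::levellines_interior_limit_identify} converges as $k\to\infty$ to the $\SLE_4$ bubble measure $\mu^z = M^z$ underlying the $\CLE_4$ construction of Section \ref{subsubsec::cle4_timeparameter}; conditional on counterclockwise, the loop has expected $-\log\CR$ of order $r_k$ by Lemma \ref{lem::levellines_interior_limit_counterclockwise}. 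Hence as $k\to\infty$ this Markov chain converges to the Poisson point process of bubbles of intensity $M$ stopped at the first bubble surrounding $z$, and the analogue of Proposition \ref{prop::caratheodory_convergence} gives that the final loop converges in distribution, in Carath\'eodory topology seen from $z$, to the $\CLE_4$ loop surrounding $z$. Combined with the a.s.\ Carath\'eodory convergence from part (2), this identifies the law of $L^\infty(z)$.

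For part (3), I fix a test function $\phi$ with compact support $K \subset D^\infty$, so that $K \subset \inte(L^k(z))$ for all sufficiently large $k$. Conditionally on $(L^k_n, 1\le n\le N^k)$, the pairing $(h,\phi)$ is Gaussian with mean equal to the integral of $\phi$ against the harmonic extension to $\inte(L^k(z))$ of the constant $2\lambda(1 - a_k)$, and variance the double integral of $\phi\otimes\phi$ against the Green's function of $\inte(L^k(z))$. As $k\to\infty$ the boundary constant tends to $2\lambda(1 - t^\infty(z))$, and the Carath\'eodory convergence $\inte(L^k(z)) \to D^\infty$ forces the harmonic measure and Green's function to converge locally in $D^\infty$. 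This identifies the conditional law of $h|_{\inte(L^\infty(z))}$ given the full nested sequence as a $\GFF$ with boundary value $2\lambda(1 - t^\infty(z))$, completing the proof.
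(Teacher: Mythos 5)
Your proof is essentially correct, but for part~(2) you take a genuinely different route from the paper. The paper first proves convergence in distribution of $L^k(z)$ to the $\CLE_4$ loop around $z$ (via the Poisson point process of $\SLE_4$ bubbles from Section~\ref{subsec::cle}), then combines this with the a.s.\ monotonicity $\inte(L^{k+1}(z)) \subseteq \inte(L^k(z))$ from Lemma~\ref{lem::levellines_interior_levelloops_upward} to upgrade to a.s.\ Carath\'eodory convergence via Lemma~\ref{lem::caratheodory_decreasing}; the a.s.\ positivity of $\CR(\inte(L^\infty(z));z)$ is obtained for free from the distributional identification. You instead directly bound $\E[T^k]\le 4\lambda^2$ via the Brownian motion $W^k_t=\LC_{\gamma^k[0,t]}(z)$, get $T^\infty<\infty$ a.s.\ by monotone convergence, and then separately invoke the Poisson-bubble limit only to identify the law of $L^\infty(z)$. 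Your route produces a uniform quantitative bound on the transition time that the paper does not make explicit, at the cost of an extra optional-stopping argument. Parts (1) and (3), and the distributional identification, match the paper's reasoning.

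That optional-stopping step needs one more justification: the second Wald identity $\E[(W^k_{T^k})^2]=\E[T^k]$ is only automatic once one knows $\E[T^k]<\infty$, and $W^k$ is not bounded below on $[0,T^k]$ (one has $W^k_{\tau_n}=-2^{-k}n\lambda$ whenever the exploration continues past step $n$), so boundedness alone does not give it. It does follow from the iterative structure: $\tau_{n+1}-\tau_n$ is the exit time of a Brownian motion started at $0$ from the bounded interval $[-2^{-k}\lambda,\,(2-2^{-k})\lambda]$, whose expectation is $2^{-k}(2-2^{-k})\lambda^2$; since $\{N^k\ge n\}$ depends only on the first $n-1$ steps and is hence independent of $\tau_n-\tau_{n-1}$ by the strong Markov property, the first Wald identity gives
\[
\E[T^k]=\E[N^k]\cdot 2^{-k}(2-2^{-k})\lambda^2=2(2-2^{-k})\lambda^2<\infty,
\]
after which optional stopping of $W_t^2-t$ is valid. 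With this added, your argument is complete.
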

\begin{proof}
\textit{First}, we show the convergence of the sequence $(2^{-k-1}N^k, k\ge 1)$. By Lemma \ref{lem::levellines_interior_levelloops_upward}, for any $k\ge 1$, we have that, almost surely,
\[\text{either}\quad N^{k+1}=2N^k,\quad \text{or}\quad N^{k+1}=2N^k-1.\]
Therefore, almost surely,
\[0\le 2^{-k-1}N^k-2^{-k-2}N^{k+1}\le 2^{-k-2},\quad \text{for all }k\ge 1.\]
This implies the almost sure convergence of the sequence. Since that $N^k$ satisfies the geometric distribution
\[\PP[N^k>n]=(1-2^{-k-1})^n,\quad\text{for all }n\ge 0,\]
we know that the limit quantity $t^{\infty}(z)$ satisfies the exponential distribution.
\smallbreak
\textit{Second}, we show the convergence in distribution of the sequence $(L^k(z),k\ge 1)$. Consider the sequence $(L^k_n, 1\le n\le N^k)$, define $\Psi^k$ to be the conformal map from $\inte(L^k_n)$ onto $\U$, where $n=N^k-1$, such that $\Psi^k(z)=z, (\Psi^k)'(z)>0$. Note that the sequence $((\Psi^k)^{-1}(\U), k\ge 1)$ is decreasing.

Let $(l_t,t\ge 0)$ be the Poisson point process with intensity $\mu$. Define
\[t(z)=\inf\{t: l_t\text{ surrounds }z\}.\]
For each $t<t(z)$, let $f_t$ be the conformal map from the connected component of $\U\setminus l_t$ that contains $z$ onto $\U$ such that $f_t(z)=z,f_t'(z)>0$. From Section \ref{subsubsec::cle_exploration}, we know that the iterated conformal map $\Psi=\circ_{s<t(z)}f_s$ is well-defined.

We can show that $\Psi^k$ converges in distribution to $\Psi$ in Carath\'eodory topology as $k\to\infty$. Therefore, the loop $L^k(z)$ converges in distribution to $\Psi^{-1}(l_{t(z)})$ which has the same law as the loop in $\CLE_4$ that surrounds $z$. This implies the conclusion.
\smallbreak
\textit{Third}, we have the following observations.
\begin{enumerate}
\item [(a)] By the second step, we know that $L^k(z)$ converges in law to the loop in $\CLE_4$ that surrounds $z$.
\item [(b)] By Lemma \ref{lem::levellines_interior_levelloops_upward}, we know that, almost surely for all $k\ge 1$,
\[\inte(L^{k+1}(z))\subseteq \inte(L^k(z)).\]
\end{enumerate}
Combining these two facts, we have that $L^k(z)$ converges in Carath\'eodory topology almost surely to some limit $L^{\infty}(z)$ which has the same law as the loop in $\CLE_4$ that surrounds $z$.
\smallbreak
\textit{Finally}, we explain the conditional law of $h$ restricted to $\inte(L^{\infty}(z))$. Given $((L^k_n, 1\le n\le N^k), 1\le k\le m)$ for $m\ge 1$, we know that the conditional law of $h$ restricted to $\inte(L^m(z))$ is a GFF with boundary value $2\lambda(1-2^{-m-1}N^m)$. This holds for any $m\ge 1$. Combining this with the almost sure convergence of $2^{-m-1}N^m$ and $L^m(z)$, we obtain the conclusion.
\end{proof}

\subsection{Upward height-varying exploration trees}\label{subsec::interior_levellines_tree}
In this section, we start by analyzing the relation between two level loops with the same height targeted at distinct target points. Suppose that $h$ is a zero-boundary $\GFF$ on $\U$. Fix two interior points $y_1,y_2\in\U$.

For $i=1,2$, let $\gamma^{x\to y_i}_u$ be the level line of $h$ with height $u\in (-\lambda,\lambda)$ starting from $x\in\partial\U$ targeted at $y_i$, and let $\tau^{y_i}$ be its continuation threshold. From Proposition \ref{prop::levellines_interior_targetindependence}, we know that the two paths coincide up to and including the first disconnecting time after which the two paths continue towards their target points independently until they reach their continuation thresholds respectively.

For $i=1,2$, let $L_u^{y_i}$ be the level loop of $h$ with height $u$ starting from the boundary targeted at $y_i$; we know that $L_u^{y_i}$ is the boundary of the connected component of $\U\setminus\gamma^{x\to y_i}_u[0,\tau^{y_i}]$ that contains $y_i$. From the relation between $\gamma_u^{x\to y_1}$ and $\gamma_u^{x\to y_2}$, we know that there are two possibilities for the relation between $L_u^{y_1}$ and $L_u^{y_2}$: either $L_u^{y_1}=L_u^{y_2}$ (this happens when $\gamma_u^{x\to y_1}$ and $\gamma_u^{x\to y_2}$ hit their continuation threshold before the first disconnecting time, see Figure \ref{fig::levelloop_interaction_distincttargets}(a)) or $\inte(L_u^{y_1})\cap \inte(L_u^{y_2})=\emptyset$ (this happens when $\gamma_u^{x\to y_1}$ and $\gamma_u^{x\to y_2}$ hit the first disconnecting time before the continuation thresholds, see Figure \ref{fig::levelloop_interaction_distincttargets}(b)(c)).

\begin{figure}[ht!]
\begin{subfigure}[b]{0.31\textwidth}
\begin{center}
\includegraphics[width=\textwidth]{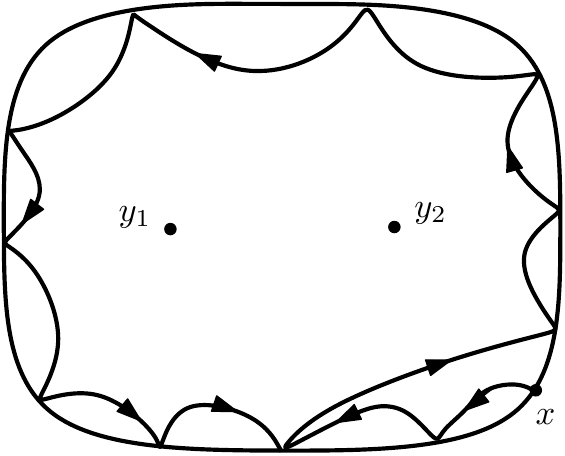}
\end{center}
\caption{$L_u^{y_1}=L_u^{y_2}$. The two paths hit the continuation threshold before the first disconnecting time.}
\end{subfigure}
$\quad$
\begin{subfigure}[b]{0.31\textwidth}
\begin{center}\includegraphics[width=\textwidth]{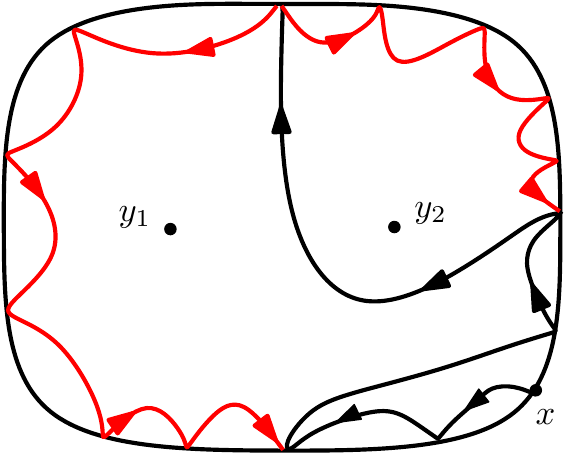}
\end{center}
\caption{The intersection $\inte(L_u^{y_1})\cap\inte(L_u^{y_2})$ is empty, and the two loops have different orientations.}
\end{subfigure}
$\quad$
\begin{subfigure}[b]{0.31\textwidth}
\begin{center}\includegraphics[width=\textwidth]{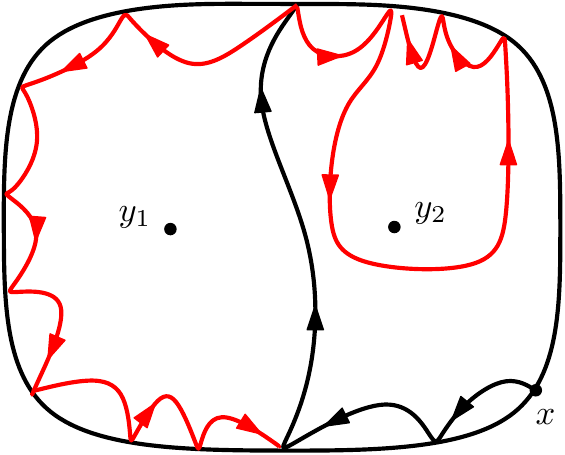}
\end{center}
\caption{The intersection $\inte(L_u^{y_1})\cap\inte(L_u^{y_2})$ is empty and the two loops have the same orientation.}
\end{subfigure}
\caption{\label{fig::levelloop_interaction_distincttargets} The relation between two level loops with the same height $u$ targeted at distinct target points $y_1,y_2$. }
\end{figure}

Fix $r\in (0,1)$. For $i=1,2$, let $(L_n^{y_i}, 1\le n\le N(y_i))$ be the upward height-varying sequence of level loops of $h$ with height difference $r\lambda$ starting from the boundary targeted at $y_i$ where $N(y_i)$ is the transition step. From the above analysis, we know that there exists a number $M\le N(y_1)\wedge N(y_2)+1$ such that
\[L_n^{y_1}=L_n^{y_2},\quad \text{for all } n\le M-1;\quad \text{and }\inte(L_n^{y_1})\cap\inte(L_n^{y_2})=\emptyset,\quad\text{for }n=M.\]
(If $L_n^{y_1}=L_n^{y_2}$ for all $n$, then we set $M=N(y_1)+1(=N(y_2)+1)$.)
Given $(L_n^{y_1}, L_n^{y_2}, n\le M)$, the two sequences continue towards their target points independently. We call $M$ the first disconnecting step for the two sequences $(L_n^{y_1}, 1\le n\le N(y_1))$ and $(L_n^{y_2}, 1\le n\le N(y_2))$.

Fix $r\in (0,1)$, let $\LZ$ be a fixed countable dense subset of $\U$. For $z\in\LZ$, let $(L_n^z, 1\le n \le N(z))$ be the upward height-varying sequence of level loops of $h$ with height difference $r\lambda$ targeted at $z$. We call the union of all loops $(L_n^z, 1\le n \le N(z))$ for all $z\in\LZ$ the \textbf{upward height-varying exploration tree of $h$} with height difference $r\lambda$, denoted by $\tree(r\lambda)$. We summarize some basic properties of the exploration tree $\tree(r\lambda)$.
\begin{enumerate}
\item [(a)] The tree $\tree(r\lambda)$ is almost surely independent of the choice of $\LZ$. Furthermore, it is almost surely determined by $h$.
\item [(b)] The tree $\tree(r\lambda)$ is conformal-invariant: for any M\"obius transformation $\phi$ of $\U$, we have
\[\phi(\tree(r\lambda))\overset{d}{=}\tree(r\lambda).\]
\item [(c)] For any two points $y_1,y_2\in\LZ$, the two upward height-varying sequences of level loops $(L_n^{y_1}, 1\le n\le N(y_1))$ and $(L_n^{y_2}, 1\le n\le N(y_2))$ satisfy the following property: the level loops coincide up to the first disconnecting step, after which the two sequences continue towards their target points independently.
\item [(d)] For any $z\in\LZ$, we denote by $L^r(z)$ the connected component of $\U\setminus\tree(r\lambda)$ that contains $z$, or equivalently $L^r(z)=L^z_n$ where $n=N(z)$. For any $z\in\LZ$, given $\tree(r\lambda)$, the conditional law of $h$ restricted to $\inte(L^r(z))$ is the same as GFF with boundary value \[2\lambda(1-\frac{r}{2}N(z)).\]
    For any $y_1,y_2\in\LZ$, given $\tree(r\lambda)$ and on the event that $[\inte(L^r(y_1))\cap\inte(L^r(y_2))=\emptyset]$, the restrictions of $h$ to $\inte(L^r(y_1))$ and to $\inte(L^r(y_2))$ are conditionally independent.
\end{enumerate}

Now, we are ready to complete the proof of Theorem \ref{thm::coupling_gff_cle4time}.
\begin{proof}[Proof of Theorem \ref{thm::coupling_gff_cle4time}] Suppose that $h$ is a zero-boundary GFF on $\U$. Fix a countable dense subset $\LZ$ of $\U$. For $k\ge 1$, let $\tree^k$ be the upward height-varying exploration tree of $h$ with height difference $2^{-k}\lambda$. For $z\in\LZ$, let $L^k(z)$ be the connected component of $\U\setminus\tree^k$ that contains $z$, and let $N^k(z)$ be the number such that, given $\tree^k$, the restriction of $h$ to $\inte(L^k(z))$ has boundary value $2\lambda(1-2^{-k-1}N^k(z))$.  From Proposition \ref{prop::levelloops_upward_limit}, we have the following observations.
\begin{enumerate}
\item [(a)] For all $z\in\LZ$, the sequence $(2^{-k-1}N^k(z),k\ge 1)$ converges almost surely to some quantity, denoted by $t^{\infty}(z)$.
\item [(b)] For all $z\in\LZ$, the sequence $(L^k(z),k\ge 1)$ converges almost surely to some loop, denoted by $L^{\infty}(z)$, in Carath\'eodory topology seen from $z$.
\item [(c)] For any $z\in\LZ$, given $(\tree^k, k\ge 1)$, the conditional law of $h$ restricted to $\inte(L^{\infty}(z))$ is the same as GFF with boundary value $2\lambda(1-t^{\infty}(z))$.
\item [(d)] For any $y_1,y_2\in\LZ$, given $(\tree^k, k\ge 1)$ and on the event that $[\inte(L^{\infty}(y_1))\cap\inte(L^{\infty}(y_2))=\emptyset]$, the restrictions of $h$ to $\inte(L^{\infty}(y_1))$ and to $\inte(L^{\infty}(y_2))$ are conditionally independent.
\end{enumerate}
Combining these four facts, we have that $h$ and $((L^{\infty}(z),t^{\infty}(z)), z\in\LZ)$ are coupled in the way described in Theorem \ref{thm::coupling_gff_cle4time}. From Proposition \ref{prop::levelloops_upward_limit}, the collection $((L^{\infty}(z),t^{\infty}(z)), z\in\LZ)$ has the same law as $\CLE_4$ with time parameter. This implies the existence of the coupling.
\end{proof}

\bibliographystyle{alpha}
\bibliography{bibliography}

\end{document}